\newcommand{\lp}[1]{\dot{e}_{#1}}
\DeclareMathOperator{\moLQ}{J}
\DeclareMathOperator{\moKr}{Kr}
\newcommand{\Kr}[1]{\moKr^{(#1)}}
\newcommand{\LQ}[1]{\moLQ^{(#1)}}
\newcommand{\CKr}[1]{\mathbb{C}\!\moKr^{(#1)}}
\newcommand{\CLQ}[1]{\mathbb{C}\!\moLQ^{(#1)}}
\DeclareMathOperator{\Vect}{Vect}
\DeclareMathOperator{\tw}{tw}
\DeclareMathOperator{\sst}{-sst}
\DeclareMathOperator{\KK}{K_0}
\DeclareMathOperator{\IC}{IC}
\DeclareMathOperator{\moBQ}{Q}
\newcommand{\BQ}[1]{\moBQ(#1)}
\DeclareMathOperator{\pt}{pt}
\newcommand{\Msp}{\mathcal{M}}
\newcommand{\Mst}{\mathfrak{M}}
\newcommand{\LL}{\mathbb{L}}
\newcommand{\QQ}{\mathbb{Q}}
\DeclareMathOperator{\opp}{op}
\DeclareMathOperator{\Hom}{Hom}
\DeclareMathOperator{\pl}{\mathscr{P}\!}
\DeclareMathOperator{\kss}{-Kss}
\DeclareMathOperator{\Sym}{\mathbb{S}ym}
\DeclareMathOperator{\CSym}{Sym}
\DeclareMathOperator{\even}{even}
\DeclareMathOperator{\odd}{odd}
\DeclareMathOperator{\codim}{codim}
\DeclareMathOperator{\Spec}{Spec}
\DeclareMathOperator{\id}{Id}
\DeclareMathOperator{\Id}{Id}
\DeclareMathOperator{\MHS}{MHS}
\DeclareMathOperator{\cl}{cl}
\DeclareMathOperator{\MMHS}{MMHS}
\DeclareMathOperator{\Wt}{wt}
\DeclareMathOperator{\parit}{par}
\DeclareMathOperator{\HO}{H}
\DeclareMathOperator{\sstable}{-sst}
\DeclareMathOperator{\sign}{sgn}
\DeclareMathOperator{\mMHS}{\overline{MHS}}
\DeclareMathOperator{\trop}{trop}
\DeclareMathOperator{\qtrop}{qtr}
\DeclareMathOperator{\prin}{prin}
\DeclareMathOperator{\In}{in}
\DeclareMathOperator{\BPS}{BPS}
\DeclareMathOperator{\stable}{-st}
\DeclareMathOperator{\vir}{vir}
\DeclareMathOperator{\Ends}{Ends}
\DeclareMathOperator{\Joints}{Joints}
\DeclareMathOperator{\sat}{sat}
\DeclareMathOperator{\EE}{\mathbb{E}}
\newcommand{\lcbs}{(\!(}
\newcommand{\rcbs}{)\!)}
\DeclareMathOperator{\GL}{GL}
\DeclareMathOperator{\Supp}{Supp}
\DeclareMathOperator{\Li}{Li}
\DeclareMathOperator{\Ad}{Ad}
\DeclareMathOperator{\ad}{ad}
\DeclareMathOperator{\as}{as}
\DeclareMathOperator{\up}{up}
\DeclareMathOperator{\udim}{\underline{\dim}}
\DeclareMathOperator{\midd}{mid}
\DeclareMathOperator{\ord}{ord}
\DeclareMathOperator{\can}{can}
\DeclareMathOperator{\ob}{ob}
\DeclareMathOperator{\Hall}{Hall}
\DeclareMathOperator{\Stab}{Stab}
\DeclareMathOperator{\scat}{Scat}
\DeclareMathOperator{\uf}{uf}
\DeclareMathOperator{\Exp}{Exp}
\DeclareMathOperator{\Log}{Log}
\DeclareMathOperator{\Hodge}{Hdg}
\DeclareMathOperator{\barr}{bar}
\DeclareMathOperator{\bad}{bad}
\def\!{\mskip-\thinmuskip}
\let\llb=\llbracket
\let\rrb=\rrbracket
\let\bb=\mathbb
\let\rar=\rightarrow
\let\f=\mathfrak
\let\s=\mathcal
\let\wh=\widehat
\let\wt=\widetilde
\let\mr=\mathring
\def\risom{\buildrel\sim\over{\smashedlongrightarrow}}
 \def\smashedlongrightarrow{\setbox0=\hbox{$\longrightarrow$}\ht0=1.25pt\box0}
\newcommand {\kk} {\Bbbk}
\newcommand {\kt} {\kk_t}
\newcommand {\Zt} {\Z_t}
\newcommand {\sQ} {\mathcal{Q}}
\newcommand {\jj} {\vec{\jmath}}
\newcommand {\kkk} {\vec{k}}
\newcommand{\g}[1]{1_{#1}}
\renewcommand{\theta}{\uptheta}
\renewcommand{\iota}{\upiota}
\renewcommand{\alpha}{\upalpha}
\renewcommand{\beta}{\upbeta}
\renewcommand{\gamma}{\upgamma}
\renewcommand{\delta}{\updelta}
\renewcommand{\zeta}{\upzeta}
\renewcommand{\pi}{\uppi\hspace{0.05em}}
\renewcommand{\xi}{\upxi}
\renewcommand{\chi}{\upchi}
\renewcommand{\sigma}{\upsigma}
\renewcommand{\Lambda}{\Uplambda}
\renewcommand{\Gamma}{\Upgamma}
\renewcommand{\phi}{\upphi}
\renewcommand{\nu}{\upnu}
\renewcommand{\tau}{\uptau}
\renewcommand{\mu}{\upmu}
\renewcommand{\eta}{\upeta}
\theoremstyle{plain}
 \newtheorem{thm}{Theorem}[section]
 \newtheorem{rem}[thm]{Remark}
 \newtheorem{lem}[thm]{Lemma}
  \newtheorem{prop}[thm]{Proposition}
  \newtheorem{conj}[thm]{Conjecture}
   \newtheorem{cor}[thm]{Corollary}
\newcommand{\Z}{{\mathbb Z}}
\newcommand{\Gr}{\operatorname{Gr}}
\theoremstyle{definition}
 \newtheorem{dfn}[thm]{Definition}
 \newtheorem{eg}[thm]{Example}
 \newtheorem{egs}[thm]{Examples}
\theoremstyle{remark} 
 \newtheorem{rmk}[thm]{Remark}
\newenvironment{myproof}[1][\proofname]{\proof[#1]\mbox{}}{\endproof}
\title{Strong positivity for quantum theta bases of quantum cluster algebras}
\author{Ben Davison}
\address{School of Mathematics\\
University of Edinburgh\\
Edinburgh EH9 3FD\\
UK}
\email{Ben.Davison{\char'100}ed.ac.uk }
\author{Travis Mandel}
\address{Department of Mathematics \\ University of Oklahoma \\ Norman, OK 73019 \\ USA}
\email{tmandel{\char'100}ou.edu}
\begin{document}

\begin{abstract}
We construct ``quantum theta bases,'' extending the set of quantum cluster monomials, for various versions of skew-symmetric quantum cluster algebras.  These bases consist precisely of the indecomposable universally positive elements of the algebras they generate, and the structure constants for their multiplication are Laurent polynomials in the quantum parameter with non-negative integer coefficients, proving the quantum strong cluster positivity conjecture for these algebras.  The classical limits recover the theta bases considered by Gross--Hacking--Keel--Kontsevich \cite{GHKK}.  Our approach combines the scattering diagram techniques used in loc. cit. with the Donaldson--Thomas theory of quivers.
\end{abstract}

\maketitle

\setcounter{tocdepth}{1}
\tableofcontents

\section{Introduction}\label{intro}

Cluster algebras, now a topic of significant, wide-ranging interest, were originally defined by Fomin and Zelevinsky \cite{FZ} with the goal of better understanding Lusztig's dual canonical bases \cite{LusDualCan} and total positivity \cite{LusTP}. It is therefore no surprise that questions about canonical bases for cluster algebras and their positivity properties are fundamental to the theory, cf. \cite[\S 4]{FG1}.  These questions were largely answered in the classical setup by Gross--Hacking--Keel--Kontsevich \cite{GHKK}.  However, the quantum setup presents new challenges, particularly in the proof of positivity.  The present paper uses ideas from Donaldson--Thomas (DT) theory \cite{KSmotivic,Keller,Nagao,DavMei,Bridge,MeiRei} in concert with scattering diagram technology \cite{KS,GrP2,GS11,CPS,FS,WCS,GHK1,GHKK,Man3} to prove positivity in the quantum setting.

We construct bases for various flavors of (skew-symmetric) quantum cluster $\s{A}$- and $\s{X}$-algebras, extending the set of quantum cluster variables, such that the structure constants for the multiplication are Laurent polynomials in the quantum parameter with non-negative integer coefficients, proving the ``quantum strong positivity conjecture.''  Moreover we prove that these bases consist precisely of the universally positive indecomposable elements of these algebras (in particular reproving the ``quantum positivity conjecture'' originally proved in \cite{DavPos}).  Furthermore, we use our constructions and results for quantum scattering diagrams to prove quantum analogues of all the main results of \cite{GHKK} for skew-symmetric quantum cluster algebras.

\subsection{Main results}\label{MainResultsSection}

In order to more fully state our main results, we briefly sketch the various constructions of (skew-symmetric) quantum cluster algebras, cf. \S \ref{qclusterdefs} for details.   
Let $L$ be a lattice with a rational skew-symmetric bilinear form $\omega$, and pick $D\in \bb{Z}_{\geq 1}$ such that $D\omega$ is $\bb{Z}$-valued.  This data determines a noncommutative algebra $\Z_t^{\omega}[L]$ over $\Zt \coloneqq\Z[t^{\pm 1/D}]$, generated by elements $z^v$ for $v\in L$, with relations determined by $z^{a}z^{b}=t^{\omega(a,b)}z^{a+b}$ for $a,b\in L$.  We call elements of $\Zt$ \textbf{positive} if their coefficients are in $\bb{Z}_{\geq 0}$.  A choice of strongly convex rational polyhedral cone $\sigma$ in $L_{\bb{R}}\coloneqq L\otimes \bb{R}$ determines a monoid $L^{\oplus}\coloneqq L\cap \sigma$ and a formal completion $\Z_{\sigma,t}^{\omega}\llb L\rrb\coloneqq \Z_t^{\omega}[L]\otimes_{\Z_t^{\omega}[L^{\oplus}]}\Z_t^{\omega}\llb L^{\oplus}\rrb$ consisting of quantum Laurent series.

Now, consider the data of a seed $S$, i.e., the data $\{N,I,E\coloneqq \{e_{i}\}_{i\in I},F,B\}$, where $N$ is a finite-rank lattice with basis $E$ indexed by $I$, $F$ is a subset of $I$, and $B(\cdot,\cdot)$ is a skew-symmetric $\bb{Q}$-valued bilinear form on $N$ such that $B(e_i,e_j)\in \bb{Z}$ whenever $i$ and $j$ are not both in $F$.  The seed $S$ determines a quantum torus algebra $\s{X}_q^S\coloneqq \Z^B_t[N]$, along with a completion $\wh{\s{X}}_q^S\coloneqq \Z_{\sigma_{S,\s{X}},t}^B\llb N\rrb$, where $\sigma_{S,\s{X}}$ denotes the cone spanned by $e_i$ for $i\in I\setminus F$.  Let $\mr{\s{X}}_q^S$ denote the skew-field of fractions of $\s{X}_q^S$.  Each sequence $\jj$ of elements of $I\setminus F$ determines a sequence of ``mutations,'' giving a new seed $S_{\jj}$ (equal to $S$ except for $E$), along with a $\Zt$-algebra isomorphism $\mu_{S,\jj}^{\s{X}}:\mr{\s{X}}_q^S\risom \mr{\s{X}}_q^{\mu_{\jj}(S)}$, cf \eqref{Xmut}.  One then defines
\begin{align*}
    \s{X}_q^{\up} \coloneqq \bigcap_{\jj} (\mu_{S,\jj}^{\s{X}})^{-1}(\s{X}_q^{S_{\jj}})\subset \mr{\s{X}}_q^S.
\end{align*}
A (quantum) global monomial is defined to be an element of $\s{X}_q^{\up}$ which, for some $\jj$, is equal to $(\mu_{S,\jj}^{\s{X}})^{-1}(z^v)$ for $z^v$ a monomial in $\s{X}_q^{S_{\jj}}$. One defines $\s{X}_q^{\ord}$ to be the subalgebra of $\s{X}_q^{\up}$ generated by the quantum global monomials.

Let $M\coloneqq \Hom(N,\bb{Z})$.  Suppose we have the additional data of a compatible pair, i.e., a rational skew-symmetric bilinear form $\Lambda$ on $M$ satisfying a certain compatibility condition \eqref{Lambda} with $B$ (the existence of such a $\Lambda$ being equivalent to the injectivity assumption of \cite{GHKK}).  Let $\sigma_{S,\s{A}}$ be the cone generated by $\{B(e_i,\cdot)\in M\colon i\in I\setminus F\}$.  Then one can similarly define $\s{A}_q^S$, $\wh{\s{A}}_q^S$, $\mr{\s{A}}_q^S$, $\s{A}_q^{\up}$, and $\s{A}_q^{\ord}$, replacing $\Z_t^B[N]$ and $\Z_{\sigma_{S,\s{X}},t}^B\llb N\rrb$ with $\Z_t^{\Lambda}[M]$ and $\Z_{\sigma_{S,\s{A}},t}^{\Lambda}\llb M\rrb$, respectively, and replacing $\mu_{S,\jj}^{\s{X}}$ with a certain isomorphism $\mu_{S,\jj}^{\s{A}}:\mr{\s{A}}_q^S\rar \mr{\s{A}}_q^{\mu_j(S)}$, cf. \eqref{clust_transf}.  The algebra $\s{A}_q^{\ord}$ is generated by the usual quantum cluster variables as in \cite{BZ}, or equivalently, by the (quantum) global monomials defined analogously to the construction of $\s{X}_q^{\ord}$.

We denote $L_{\s{X}}\coloneqq N$ and $L_{\s{A}}\coloneqq M$.  Let $\omega_{\s{X}}\coloneqq B$ and $\omega_{\s{A}}\coloneqq \Lambda$.  For convenience, we now begin using $\s{V}$ as a stand-in for either $\s{A}$ or $\s{X}$ so that we can describe both cases at once.

Our construction of theta bases for the $\s{V}$-algebras will involve the construction of a scattering diagram $\f{D}^{\s{V}_q}$ in $L_{\s{V},\bb{R}}\coloneqq L_{\s{V}}\otimes\bb{R}$. Each generic point $\sQ$ in $L_{\s{V},\bb{R}}$ then determines an isomorphism \begin{align}\label{iotaQintro}
    \iota_{\sQ}\colon \wh{\s{V}}^S_{q} \risom \Z_{\sigma_{S,\s{V}},t}^{\omega_{\s{V}}}\llb L_{\s{V}}\rrb,
\end{align}
cf. \S \ref{iotaQrmk}.  We refer to the set of isomorphisms $\iota_{\sQ}$ for generic $\sQ\in L_{\s{V},\bb{R}}$ as the scattering atlas for $\wh{\s{V}}^S_q$.  So for each $f\in \wh{\s{V}}_q^S$ and each generic $\sQ\in L_{\s{V},\bb{R}}$, we can use $\iota_{\sQ}$ to obtain an expansion
\begin{align*}
    \iota_{\sQ}(f)=\sum_{v\in L_{\s{V}}} a_{v,\sQ}z^v\in \Z_{\sigma_{S,\s{V}},t}^{\omega_{\s{V}}}\llb L_{\s{V}}\rrb,
\end{align*}
where each $a_{v,\sQ}$ is an element of $\Zt$.  We say that a nonzero element $f\in \wh{\s{V}}^{S}_{q}$ is universally positive with respect to the scattering atlas if each $a_{v,\sQ}$ (for each $v$ and each generic $\sQ$) is positive.  A universally positive element is called atomic (or indecomposable) if it cannot be written as a sum of two other universally positive elements.

In fact, for $\s{V}=\s{X}$, we can define an isomorphism $\iota_{\sQ}\colon\wh{\s{X}}^S_{q} \risom \Z_{\sigma_{S,\s{X}},t}^{B}\llb L_{\s{X}}\rrb$ for any generic $\sQ\in L_{\s{A}^{\prin},\bb{R}}\supset L_{\s{X},\bb{R}}$, cf. \S \ref{iotaQrmk}. Here, $L_{\s{A}^{\prin}}=M^{\prin}=M\oplus N$ is the analogue of $L_{\s{A}}$ associated to $S^{\prin}$, i.e., the seed with principal coefficients associated to $S$.  We refer to this set of isomorphisms $\iota_{\sQ}$ as the principal coefficients scattering atlas.  Note that the notions of universally positive and atomic elements make sense for this atlas as well.

\begin{thm}\label{MainThm}
Let $\s{V}$ denote either $\s{A}$ or $\s{X}$.  There is a topological\footnote{See \S \ref{TopStr} for background on topological bases.  Roughly, this means that we may need to allow infinite linear combinations of the basis elements.} $\Zt$-module basis $\{\vartheta_p\colon p\in L_{\s{V}}\}$ for $\wh{\s{V}}_q^S$, elements of which are called (quantum) theta functions, which contains all of the global monomials.  The basis $\{\vartheta_p\}_{p\in L_{\s{V}}}$ is uniquely characterised as the set of elements of $\wh{\s{V}}^{S}_q$ which are atomic with respect to the scattering atlas.  For $\s{V}=\s{X}$, this is the same as the set of elements of $\wh{\s{X}}^{S}_q$ which are atomic with respect to the principal coefficient scattering atlas.

For generic $\sQ$ in $ L_{\s{A}^{\prin},\bb{R}}$ (for the $\s{X}$ case) or $L_{\s{A},\bb{R}}$ (for the $\s{A}$ case), each $\iota_{\sQ}(\vartheta_p)$ is positive and $p$-pointed, i.e. admits an expansion
    \begin{equation}
        \label{iota_expansion}
    \iota_{\sQ}(\vartheta_p)=\sum_{v\in L_{\s{V}}^{\oplus}}c_{p,v,\sQ}(t)z^{p+v}
    \end{equation}
where each $c_{p,v,\sQ}(t)\in\mathbb{Z}_{\geq 0}[t^{\pm 1}]\subset \Zt$ is a positive Laurent polynomial in $t$ (not just $t^{1/D})$ and $c_{p,0,\sQ}(t)=1$.  Each $c_{p,v,\sQ}(t)$ is moreover pre-Lefschetz (cf. \S \ref{Section-Notation}), bar-invariant (i.e. $c_{p,v,\sQ}(t^{-1})=c_{p,v,\sQ}(t)$), and satisfies $c_{p,v,\sQ}(-t)=\pm c_{p,v,\sQ}(t)$, where the sign depends only on $v$ and $p$ (cf. Proposition \ref{ParitProp}).

For $p_1,p_2,p\in L_{\s{V}}$, let $\alpha(p_1,p_2;p)\in \Zt$ denote the corresponding structure constant, i.e., 
    \begin{equation}
    \label{c_expansion}
        \vartheta_{p_1}\vartheta_{p_2}=\sum_{p\in L_{\s{V}}} \alpha(p_1,p_2;p) \vartheta_p.
    \end{equation} 
    Then each $\alpha(p_1,p_2;p)\in\bb{Z}_t$ is positive (we thus call the theta basis ``strongly positive'').  Furthermore, $\alpha(p_1,p_2;p_1+p_2)=t^{\omega(p_1,p_2)}$, and $\alpha(p_1,p_2;p)=0$ whenever $p\notin p_1+p_2+L_{\s{V}}^{\oplus}$. 
    \end{thm}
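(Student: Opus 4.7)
The plan is to produce each $\vartheta_p$ as a sum over quantum broken lines in a consistent quantum scattering diagram $\f{D}^{\s{V}_q}\subset L_{\s{V},\bb{R}}$, and then to extract every clause of the theorem from the geometry of broken lines combined with positivity coming from Donaldson--Thomas theory. Concretely, I want a single positivity input---positivity (plus bar-invariance, the pre-Lefschetz property, and definite parity in $t$) of the wall-function coefficients---from which all statements about $\vartheta_p$, $c_{p,v,\sQ}(t)$, and $\alpha(p_1,p_2;p)$ will follow by standard (but $t$-enriched) GHKK bookkeeping.

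First I would construct $\f{D}^{\s{V}_q}$ so that every wall function is a product of exponentials whose exponents are positive, bar-invariant, pre-Lefschetz elements of $\Zt$ of definite parity. The crucial input is the identification of the wall-crossing automorphisms with DT generating functions attached to the relevant Jacobi algebra/quiver with potential, so that their $t$-coefficients are refined BPS numbers; positivity, bar-invariance, the pre-Lefschetz property, and the parity of $t$-coefficients of these invariants are the standard cohomological DT outputs (as in Davison--Meinhardt and Kontsevich--Soibelman). Consistency of the diagram is then obtained either inductively as in Kontsevich--Soibelman or by recognising the diagram as the image of a globally defined DT object constructed earlier in the paper.

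Given consistency, for each generic $\sQ\in L_{\s{V},\bb{R}}$ and each $p\in L_{\s{V}}$ I define the quantum broken lines $\gamma$ of asymptotic direction $p$ ending at $\sQ$ exactly as in the classical GHKK set-up, with the bending rule at a wall dictated by the quantum wall function, and set
\[
\iota_{\sQ}(\vartheta_p)=\sum_{\gamma}a(\gamma)\,z^{\Val(\gamma)}.
\]
Independence of $\sQ$ inside the scattering atlas, hence well-definedness of $\vartheta_p\in\wh{\s{V}}_q^S$, is the standard wall-crossing identity: the change in the broken-line sum across a wall is exactly intertwined by the wall-crossing isomorphism relating $\iota_{\sQ}$ and $\iota_{\sQ'}$. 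The straight broken line contributes $z^p$, yielding $c_{p,0,\sQ}=1$ and the support $\Val(\gamma)\in p+L_{\s{V}}^{\oplus}$, since each bend adds a positive generator of $L_{\s{V}}^{\oplus}$. Positivity of each $c_{p,v,\sQ}(t)$, together with bar-invariance, the pre-Lefschetz property, and the parity statement of \eqref{iota_expansion}, is inherited term-by-term from the wall-function coefficients, since each broken line contributes a product of finitely many such coefficients together with quantum-torus commutation factors of the same form.

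For the multiplication formula, evaluate $\iota_{\sQ}(\vartheta_{p_1}\vartheta_{p_2})$ at a generic $\sQ$ pushed far into the cone pointing toward $p$ (the quantum analogue of the GHKK argument), so that $\alpha(p_1,p_2;p)$ appears as a finite sum over pairs $(\gamma_1,\gamma_2)$ of broken lines whose values sum to $p$; each such term is a product of two positive pre-Lefschetz bar-invariant elements of $\Zt$ with a commutation factor $t^{\omega(\cdot,\cdot)}$, giving strong positivity. The unique pair with total value $p_1+p_2$ consists of two straight lines, producing $z^{p_1}z^{p_2}=t^{\omega(p_1,p_2)}z^{p_1+p_2}$ and hence $\alpha(p_1,p_2;p_1+p_2)=t^{\omega(p_1,p_2)}$; the vanishing for $p\notin p_1+p_2+L_{\s{V}}^{\oplus}$ is the broken-line convexity recorded above. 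The atomic characterisation then follows by combining positivity of $\vartheta_p$ (by construction) with pointedness: for any universally positive $f$, the expansion in $\{\vartheta_p\}$ has non-negative $\Zt$-coefficients obtained by repeatedly peeling off the minimal $p$ in the support and subtracting a positive multiple of $\vartheta_p$. The agreement of the two scattering atlases in the $\s{X}$ case is read off from the comparison between the $\s{X}$- and $\s{A}^{\prin}$-scattering diagrams established earlier. The main obstacle is the first step: producing the consistent quantum scattering diagram whose wall functions carry all of the required DT-theoretic positivity structure. Once that is in hand, all other parts of the theorem reduce to finite sums of positive monomials arising from broken lines.
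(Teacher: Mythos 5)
Your overall architecture matches the paper: define $\vartheta_p$ via quantum broken lines in a consistent quantum scattering diagram, deduce $p$-pointedness and well-definedness from the Cheung--Gross--Kontsevich--Soibelman wall-crossing identity (Theorem \ref{CPS}), read off each $\alpha(p_1,p_2;p)$ as a finite positive sum by pushing $\sQ$ near $p$ (Proposition \ref{StructureConstants}), and then deduce atomicity from positivity plus pointedness. All of the ``bookkeeping'' clauses of the theorem do reduce, as you say, to positivity, parity, bar-invariance and the pre-Lefschetz property of the scattering functions (Theorem \ref{PosScat}), propagated through \eqref{tAdBinomial} and Lemmas \ref{bar_prop}, \ref{Lefschetz_prop}, \ref{pL_prop}, \ref{parity_prop}.

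The genuine gap is exactly where you flag the main obstacle, namely the first step, and your proposed way through it does not work. You want to identify $\f{D}^{\s{V}_q}$ globally with a Donaldson--Thomas/Hall scattering diagram for a quiver with potential and then invoke ``standard'' positivity of the refined BPS numbers. Two things break. First, the identification of $\f{D}^{\s{A}_q}$ with a quantum stability scattering diagram for a quiver with potential requires the potential to be \emph{genteel}, and Lemma \ref{acyclic_genteel_new} shows this fails whenever the unfrozen quiver contains a non-loop cycle; the paper is explicit (remark after Proposition \ref{Chambers}) that Mou's result only gives agreement on the cluster complex, not globally. Second, even where a potential is available, positivity and the Lefschetz package for DT invariants of a quiver \emph{with} potential are not standard: the relevant invariants involve vanishing-cycle cohomology, and purity there is a nontrivial input (cf. \cite{DavPos}), not a formal consequence of \cite{DavMei}. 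What the paper actually does is rather different in this step: \S\ref{PosSect} uses the perturbation and change-of-monoid tricks to reduce all the way to two-wall local base cases, and \emph{those} local scattering diagrams are matched (Proposition \ref{Bridge_scat}, Corollary \ref{2WallPos}, Proposition \ref{2WallpL}) with DT theory of quivers \emph{without} potential, where positivity of the BPS invariants is automatic (they are intersection cohomology). In fact, the separate properties are not even proved uniformly: positivity uses DT of non-acyclic quivers (loops allowed, no potential); pre-Lefschetz uses a different construction that lands in the acyclic case so Meinhardt--Reineke applies; and bar-invariance needs no DT input at all -- it is the purely Lie-theoretic observation in the proof of Theorem \ref{PosScat} that the bar-invariant Lie subalgebra $\f{g}^{\barr}$ is closed under the scattering construction. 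As a minor further point, the ``repeated peeling'' phrasing of your atomicity argument is imprecise: universal positivity of $f$ need not persist after subtracting $a_p\vartheta_p$. The paper avoids iteration altogether -- for each $p$, choose $\sQ$ close enough to $p$ that $a_p$ equals the $z^p$-coefficient of $\iota_{\sQ}(f)$, which is positive; see Theorem \ref{qAtomic}.
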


Using the above quantum theta bases, we produce quantum analogues of all of the main results of \cite{GHKK} regarding the algebras associated to theta bases, in particular the $\s{A}$ and $\s{X}$ cluster algebras.  Here we collect together all of the main features and our results on these bases and algebras for the convenience of the reader.

Let $K_{\s{X}}\coloneqq \bb{Z}\langle e_i\colon i\in F\rangle\subset N$, and let $\kappa_{\s{X}} \colon N\rar K_{\s{X}}$ denote the projection with kernel $\bb{Z}\langle e_i\colon i\in I\setminus F\rangle$.  Consider the maps $B_1,B_2\colon N\rar M$, $B_1(n)\coloneqq  B(n,\cdot)$ and $B_2(n)\coloneqq B(\cdot,n)$.  Let $K_i\coloneqq \ker(B_i)$ (this is of course independent of $i$ since $B$ is skew-symmetric --- cf. Remark \ref{nonskew} for comments on the non skew-symmetric setup).  Let $\kappa_{\s{A}}$ denote the projection $M\rar M/(B_1(N)^{\sat})\cong K_2^*=:K_{\s{A}}$.

Finally, let $H_{\s{V}}$ denote the set of $u\in L_{\s{V}}$ such that $\omega_{\s{V}}(u,v)=0$ for all $v\in \sigma_{S,\s{V}}$, and let $\tau_u$ denote the $\Z_t$-module automorphism of $\Z_{\sigma_{S,\s{V}},t}^{\omega_{\s{V}}}\llb L_{\s{V}}\rrb$ taking $z^p$ to $z^{p+u}$ for each $p\in L_{\s{V}}$.  E.g., if $\omega_{\s{V}}(u,p)=0$, then $\tau_u(z^p)=z^uz^p$.

\begin{thm}
    \label{MainThm2}
    \begin{enumerate}
    \item The classical limits of the quantum theta functions are the classical theta functions of \cite[Thm. 0.3]{GHKK}. 
    \item Let $\Theta_{\s{V}}^{\midd}\subset L_{\s{V}}$ be the set of $p$ such that $\vartheta_p$ is a Laurent polynomial (as opposed to merely a formal Laurent series). Then $\vartheta_p\in\s{V}_q^{\up}$ for $p\in \Theta_{\s{V}}^{\midd}$.  The set $\Theta_{\s{V}}^{\midd}$ is the same as in the corresponding classical cluster algebra setup of \cite[Thm. 0.3]{GHKK}.  In particular, $\Theta_{\s{V}}^{\midd}=\Theta^{\midd}_{\s{V},\bb{R}}\cap L_{\s{V}}$ for some cone $\Theta^{\midd}_{\s{V},\bb{R}}\subset L_{\s{V},\mathbb{R}}$ which is convex (but not necessarily strongly convex, rational polyhedral, or closed) and remains convex under the application of any of the piecewise-linear automorphisms $T_{\jj}^{\s{V}}$ of $L_{\s{V},\mathbb{R}}$ as defined in \eqref{mujj} and \eqref{TjX}.
    \item The theta functions $\{\vartheta_p\colon p\in \Theta_{\s{V}}^{\midd}\}$ form a $\Zt$-module basis for the $\Z_t$-subalgebra $\s{V}_q^{\midd}$ of $\s{V}_q^{\up}$ which they generate.  Every global monomial is a theta function corresponding to some $p\in \Theta_{\s{V}}^{\midd}$, so we have the inclusions
    \begin{align*}
        \s{V}_q^{\ord} \subset \s{V}_q^{\midd} \subset \s{V}_q^{\up} 
        \subset \wh{\s{V}}_q^S.
    \end{align*}
    \item Let $\s{V}_q^{\can}$ denote the sub $\Zt$-algebra of $\wh{\s{V}}_q^S$ generated by the theta functions.  The theta functions form a topological $\Zt$-module basis for $\s{V}_q^{\can}$, and they form an ordinary $\Zt$-module basis if and only if the corresponding classical theta functions form a $\Z$-module basis for the $\Z$-algebra $\s{V}^{\can}$ which they generate.  If $\s{V}_q$ has enough global monomials (cf. Definition \ref{EGMdef} --- this is equivalent to the corresponding condition in the classical setup up as in \cite[Def. 0.11]{GHKK}), then $\s{V}_q^{\can}$ is finitely generated over $\Zt$, and each element of $\s{V}_q^{\up}$ is a finite $\bb{Z}_t$-linear combination of theta functions.  In particular, we then have $\s{V}_q^{\up} \subset \s{V}_q^{\can}$. 
    \item We have the equality $\s{V}_q^{\midd}=\s{V}_q^{\can}$, i.e., $\Theta_{\s{V}}^{\midd}=L_{\s{V}}$, if and only if the corresponding equality holds in the classical setting.  So if we furthermore have that $\s{V}_q$ has enough global monomials, then the full quantum Fock--Goncharov conjecture holds, by which we mean that $\s{V}_q^{\midd}=\s{V}_q^{\up}=\s{V}_q^{\can}$.
    \item Up to canonical isomorphism $\psi_{\jj}^{\s{V}}$, the $\Z_t$-algebras $\s{V}_q^{\can}$ and $\s{V}_q^{\midd}$ depend only on the mutation equivalence class of the initial seed, as do their theta functions (cf. Corollary \ref{TjCor} for the precise statement).  In addition, $\s{V}_q^{\can} \subset \wh{\s{V}}_q^{\up}$, the formal quantum upper cluster algebra defined in \S \ref{FormalUp}. 
    \item  The mutations $\mu_{S,\jj}^{\s{V}}$ respect the $K_{\s{V}}$-gradings of $\wh{\s{V}}_q^S$ and $\wh{\s{V}}_q^{S_{\jj}}$ induced by $\kappa_{\s{V}}$, thus implying that we have a $K_{\s{V}}$-grading on $\s{V}_q^{\up}$.
    More generally, the adjoint action of any path-ordered product associated to $\f{D}^{\s{V}_q}$ (or $\s{D}^{\s{A}_q^{\prin}}$ when $\s{V}=\s{X}$) respects the $K_{\s{V}}$-grading on $\wh{\s{V}}_q^S$.  Furthermore, the theta functions are homogeneous with respect to the $K_{\s{V}}$-grading on $\wh{\s{V}}_q^S$.  
\item Suppose that $\Lambda$ satisfies the compatibility condition \eqref{Lambda} for all $i\in I$ (not just $i\in I\setminus F$).  Then the map $B_1$ induces homomorphisms $B_1\colon \wh{\s{X}}_q^S\rar \wh{\s{A}}_q^S$ and $B_1\colon \s{X}_q^*\rar \s{A}_q^*$ for $*=\ord, \midd, \up$, or $\can$.  
Furthermore, for any $p\in L_{\s{X}}$, we have $B_1(\vartheta_p)=\vartheta_{B_1(p)}$. 
\item For $u\in H_{\s{V}}$, $p\in L_{\s{V}}$, and generic $\sQ \in L_{\s{V},\bb{R}}$, we have $\iota_{\sQ}(\vartheta_{u+p})=\tau_u(\iota_{\sQ}(\vartheta_p))$. In particular, for $u\in H_{\s{V}}$, $\iota_{\sQ}(\vartheta_u)=z^u$  for all generic $\sQ$, and so $\vartheta_u\in \s{V}^{\ord}_q$.  
\end{enumerate}
\end{thm}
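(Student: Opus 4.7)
The plan is to deduce Theorem \ref{MainThm2} from Theorem \ref{MainThm} in conjunction with the corresponding classical results of \cite{GHKK}, essentially by running a classical-limit comparison in parallel with the quantum scattering-diagram construction.  The central technical input is that $\f{D}^{\s{V}_q}$ specialises at $t=1$ to the classical scattering diagram of \cite{GHKK}: the wall functions are pre-Lefschetz Laurent polynomials in $t$ built, via the DT-theoretic input used to construct $\f{D}^{\s{V}_q}$, as $t$-deformations of classical BPS-type invariants, whose $t\to 1$ limits recover the classical wall functions.  Since broken lines are defined by a combinatorial recipe that reads off monomials from wall-function decorations, broken lines and hence theta functions specialise compatibly, proving part (1).

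For parts (2)--(5) I would exploit pointedness throughout.  The $p$-pointedness of $\vartheta_p$ from Theorem \ref{MainThm}, combined with an upper-triangularity argument, gives linear independence of $\{\vartheta_p\}$, while the vanishing $\alpha(p_1,p_2;p)=0$ for $p\notin p_1+p_2+L_{\s{V}}^{\oplus}$ together with strong positivity shows that $\s{V}_q^{\midd}$ is $\Zt$-spanned by $\{\vartheta_p\colon p\in\Theta_{\s{V}}^{\midd}\}$ and that $\s{V}_q^{\can}$ is topologically spanned by all theta functions.  The condition defining $\Theta_{\s{V}}^{\midd}$, namely that $\iota_{\sQ}(\vartheta_p)$ be a Laurent polynomial in each chamber, depends only on the supports of the wall functions, not on their $t$-dependence, so $\Theta_{\s{V}}^{\midd}$ coincides with its classical analogue; this gives (2).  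Global monomials have pointed expansions with unit leading term in each cluster torus and are identified with the atomic elements of the appropriate pointing, so they belong to the theta basis.  Finite generation under the enough-global-monomials hypothesis and the full Fock--Goncharov equalities in (5) then follow, since any discrepancy with the classical picture would survive the $t\to 1$ specialisation and contradict \cite{GHKK}.

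Parts (6)--(8) are consequences of mutation invariance and functoriality.  Mutation invariance of $\f{D}^{\s{V}_q}$, itself a consequence of the mutation invariance of the underlying DT generating series, gives the isomorphisms $\psi_{\jj}^{\s{V}}$ and the canonical identification of theta bases across mutation-equivalent seeds.  For the $K_{\s{V}}$-grading in (7), every wall function of $\f{D}^{\s{V}_q}$ is a series in monomials $z^v$ with $\kappa_{\s{V}}(v)=0$, so conjugation by each wall automorphism preserves the $K_{\s{V}}$-grading, every path-ordered product does as well, and theta functions are homogeneous because every broken-line contribution to $\iota_{\sQ}(\vartheta_p)$ has $\kappa_{\s{V}}$-image equal to $\kappa_{\s{V}}(p)$.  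Part (8) reduces to verifying that the ensemble map $B_1$ sends walls of $\f{D}^{\s{X}_q}$ to walls of $\f{D}^{\s{A}_q}$ and carries broken lines to broken lines.

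For part (9), if $u\in H_{\s{V}}$ then $\omega_{\s{V}}(u,v)=0$ for every exponent $v$ appearing in any wall function of $\f{D}^{\s{V}_q}$, so $z^u$ commutes with every wall-crossing automorphism and $\tau_u$ intertwines broken-line expansions; the $\iota_{\sQ}$-expansion of $\vartheta_{u+p}$ is thus obtained from that of $\vartheta_p$ by translating the initial exponent by $u$, and the case $p=0$ gives $\iota_{\sQ}(\vartheta_u)=z^u$.  The main obstacle is establishing the mutation invariance of $\f{D}^{\s{V}_q}$ at the quantum level in a form compatible with the DT-theoretic construction of its wall functions, since pre-Lefschetz positivity and bar-invariance must survive each mutation; once this is in hand the rest of Theorem \ref{MainThm2} is either a $t\to 1$ comparison with \cite{GHKK} or a direct consequence of the pointed strongly positive structure supplied by Theorem \ref{MainThm}.
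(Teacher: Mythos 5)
Your overall strategy is the one the paper uses: deduce each claim from Theorem \ref{MainThm} and the classical results of \cite{GHKK} by running a classical--limit comparison in parallel with the quantum scattering construction, with positivity guaranteeing that nothing degenerates at $t=1$.  But several of your specific justifications either misidentify the mechanism or leave a nontrivial step unaddressed.

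For (2), your claim that the Laurent-polynomiality of $\iota_{\sQ}(\vartheta_p)$ ``depends only on the supports of the wall functions, not on their $t$-dependence'' is not a valid argument.  A priori the quantum scattering diagram could have more nontrivial walls than the classical one (some quantum wall function might become trivial at $t=1$), or a broken-line coefficient that is a nonzero Laurent polynomial in $t$ could vanish at $t=1$; in either case the set of broken lines contributing nonzero monomials would differ between the quantum and classical settings, and $\Theta_{\s{V}}^{\midd}$ could fail to match.  The correct mechanism is positivity of broken-line coefficients: a nonempty sum of nonnegative integers cannot vanish, so the set of broken lines with nonzero attached monomial is identical in the quantum and classical cases, and finiteness is equivalent.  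This is the content the paper extracts from Theorem \ref{PosScat} via Corollary \ref{UpTheta} and Lemma \ref{Thetaprime}.  The same gap reappears in your argument for (3): the fact that $\s{V}_q^{\midd}$ is spanned as a genuine (not merely topological) $\Zt$-module needs the statement that products of theta functions in $\s{V}_q^{\midd}$ are \emph{finite} linear combinations, and the constraint $\alpha(p_1,p_2;p)=0$ for $p\notin p_1+p_2+L_{\s{V}}^{\oplus}$ alone does not give finiteness --- you need the comparison with the classical (known-finite) structure constants, which again uses nonvanishing via positivity.

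For (6), two things are missing.  First, the containment $\s{V}_q^{\can}\subset\wh{\s{V}}_q^{\up}$ is a formal quantum Laurent phenomenon: one must show that every theta function, when pushed into any cluster chart via the path-ordered product and the piecewise-linear identification, lands in the corresponding formal quantum torus, not merely in the skew-field.  Your proposal does not address this; the paper establishes it (Corollary \ref{FormalLaurent}) after developing the mutation invariance of broken lines (Proposition \ref{MutInvBrokenLines}) and the cone-compatibility arguments in Corollary \ref{TjCor}.  Second, you assert that mutation invariance of $\f{D}^{\s{V}_q}$ follows from ``mutation invariance of the underlying DT generating series.''  That is not the paper's route: the wall functions of $\f{D}^{\s{V}_q}$ are not constructed directly as DT generating series, and the paper instead bootstraps mutation invariance (Proposition \ref{mut-inv}) from its known classical analogue, again using positivity of wall functions to rule out cancellation when passing between the two.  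If you want to argue via DT theory you would need to first establish an equivalence between $\f{D}^{\s{V}_q}$ and a stability scattering diagram, which holds only under a genteelness hypothesis (Definition \ref{genteel}) that fails for general seeds, so this route does not go through as stated.  Parts (1), (7), (8), and (9) of your sketch are essentially correct and match the paper's treatment in Propositions \ref{q2classicalTheta}, \ref{homprop}, \ref{HV} and Lemmas \ref{pi1}, \ref{pi1Theta}, although for (1) you should be aware that relating the $\s{A}$- and $\s{X}$-theta functions to the $\s{A}^{\prin}$-theta functions requires the structure maps $\rho$ and $\xi$ of Lemmas \ref{prin2A}--\ref{prin2X}, since the classical $\s{A}$ and $\s{X}$ theta functions in \cite{GHKK} are themselves constructed via the principal coefficients diagram.
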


\subsection{The cluster atlas}\label{cluster_atlas}

There is a fan $\s{C}$ forming a sub cone-complex of $\f{D}^{\s{A}_q}$ (alternatively, of $\f{D}^{\s{A}_q^{\prin}}$, the scattering diagram associated to $S^{\prin}$), called the cluster complex, whose elements naturally correspond to the seeds $S_{\jj}$ (cf. Proposition \ref{Chambers}).  For $\sQ_{\jj}$ a generic point in the chamber corresponding to $S_{\jj}$, denote $\iota_{\jj}\coloneqq \iota_{\sQ_{\jj}}$. This set of charts $\iota_{\jj}$ is what we call the cluster atlas.  In \eqref{psij}, we also define a certain linear automorphism $\psi_{\jj}$ of $M$ which induces automorphisms of $\kk_t^{\Lambda}[M]$, also denoted $\psi_{\jj}$.  Proposition \ref{Chambers}(4) says that $\psi_{\jj}\circ \iota_{\jj}$ and $\mu^{\s{V}}_{S,\jj}$ agree on $\s{V}_q^{\up}$, so the restriction of the cluster atlas to $\s{V}_q^{\up}$ is the usual set of clusters on the quantum upper cluster algebra.  See \S \ref{iotaQrmk} for more details.

In \cite{FG1}, the notion of being universally positive and atomic (called ``extremal'' there) is defined using the cluster atlas.  The scattering atlas or principal coefficients scattering atlas typically includes more charts than the cluster atlas, so universal positivity with respect to the scattering atlas is a stronger condition.  Indeed, it is often strictly stronger --- \cite[Thm 1.2]{LLZ} shows that the atomic elements would  often be linearly dependent (at least in the classical setup) if we used the cluster atlas.  Atomicity with respect to the scattering atlas was suggested and proven for the classical setting in \cite{ManAtomic}.

The quantum global monomials generating $\s{A}_q^{\ord}$ are precisely those $\vartheta_p$ for $p\in \s{C}$, with the quantum cluster variables corresponding to the primitive generators for the rays of $\s{C}$, and we have $\s{C}\subset \Theta_{\s{A},\bb{R}}^{\midd}$, cf. Theorem \ref{MainThm2}(3) above. 
Applying the statement from Theorem \ref{MainThm} that $c_{p,v,\sQ}(t^{-1})=c_{p,v,\sQ}(t)\in\mathbb{Z}_{\geq 0}[t^{\pm 1}]$ in the special case in which both $p$ and $\sQ$ are in $\s{C}$, we thus deduce the main result of \cite{DavPos}:
\begin{cor}[Quantum positivity conjecture]\label{posLaurent}
The expression of any quantum cluster variable in any cluster is a bar-invariant Laurent polynomial in $t$ with non-negative integer coefficients. 
\end{cor}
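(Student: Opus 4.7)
The plan is to deduce this directly from Theorem \ref{MainThm} combined with the identification of quantum cluster variables with theta functions indexed by the cluster complex, as sketched in the paragraph preceding the corollary. Specifically, I would proceed in three short steps.

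First, I would translate the statement into theta-function language. By Theorem \ref{MainThm2}(3) (or equivalently by the discussion of \S \ref{cluster_atlas}), every quantum cluster variable in $\s{A}_q^{\ord}$ equals $\vartheta_p$ for some $p\in \s{C}$, where $p$ is a primitive generator of a ray of the cluster complex (lying inside $\Theta_{\s{A}}^{\midd}$). Likewise, ``any cluster'' corresponds to some chamber $\sigma_{\jj}$ of $\s{C}$ and the cluster expansion of an element $f\in\s{A}_q^{\up}$ in that cluster is, up to the canonical linear automorphism $\psi_{\jj}$, given by $\iota_{\jj}(f)=\iota_{\sQ_{\jj}}(f)$ for any generic $\sQ_{\jj}\in \sigma_{\jj}$ (by Proposition \ref{Chambers}(4), which identifies $\psi_{\jj}\circ\iota_{\jj}$ with the cluster-mutation isomorphism $\mu_{S,\jj}^{\s{A}}$ on $\s{A}_q^{\up}$). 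In particular, both the ``source ray'' $p$ and the ``target chamber'' $\sQ_{\jj}$ sit inside the cluster complex $\s{C}$.

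Second, I would simply invoke the expansion \eqref{iota_expansion} from Theorem \ref{MainThm} with this $p$ and this $\sQ=\sQ_{\jj}$:
\[
\iota_{\sQ_{\jj}}(\vartheta_p)=\sum_{v\in L_{\s{A}}^{\oplus}} c_{p,v,\sQ_{\jj}}(t)\,z^{p+v},
\]
where each coefficient $c_{p,v,\sQ_{\jj}}(t)\in\mathbb{Z}_{\geq 0}[t^{\pm 1}]$ is a non-negative integer Laurent polynomial in $t$ satisfying $c_{p,v,\sQ_{\jj}}(t^{-1})=c_{p,v,\sQ_{\jj}}(t)$, i.e., it is bar-invariant. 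Applying the linear map $\psi_{\jj}$ (which permutes monomials $z^w$ without altering their $\Z_t$-coefficients) produces the ordinary cluster expansion, and this operation preserves both the non-negativity of coefficients and their bar-invariance. This already yields the conclusion: the expansion in the cluster $S_{\jj}$ is a bar-invariant Laurent polynomial in $t$ with non-negative integer coefficients.

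Third, since the argument did not use any special feature of either the source ray $p$ or the target chamber $\sQ_{\jj}$ beyond their lying in $\s{C}$, the statement holds for any cluster variable expressed in any cluster, giving Corollary \ref{posLaurent}. Strictly speaking there is nothing to overcome at the corollary level: the only non-trivial input is Theorem \ref{MainThm}. The genuine obstacle — which is what the bulk of the paper is devoted to — is establishing that the scattering-diagram coefficients $c_{p,v,\sQ}(t)$ lie in $\mathbb{Z}_{\geq 0}[t^{\pm 1}]$ and are bar-invariant, and this is precisely where the Donaldson--Thomas-theoretic interpretation of the quantum scattering diagram is needed. Once Theorem \ref{MainThm} is in hand, the quantum positivity conjecture of \cite{DavPos} is an immediate specialization.
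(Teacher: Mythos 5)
Your proposal is correct and follows essentially the same route as the paper: identify quantum cluster variables with theta functions $\vartheta_p$ for $p$ a primitive generator of a ray of $\s{C}$, then specialize the bar-invariance and non-negativity of the coefficients $c_{p,v,\sQ}(t)$ from Theorem \ref{MainThm} to the case where both $p$ and $\sQ$ lie in the cluster complex. The only addition is your explicit remark that $\psi_{\jj}$ merely permutes monomials without altering coefficients, which the paper leaves implicit.
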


Taking the classical limit of Corollary \ref{posLaurent} recovers the main result of \cite{LS}, i.e. the positivity conjecture for skew-symmetric cluster algebras.

In fact, \cite{DavPos} shows something more than Corollary \ref{posLaurent} --- it shows that the coefficients have Lefschetz type (as defined in \S \ref{Section-Notation}).  This motivates our main conjecture:
\begin{conj}\label{MainConj}
For any $p\in L_{\s{V}}$ and generic $\sQ\in L_{\s{V},\bb{R}}$, the coefficients of $\iota_{\sQ}(\vartheta_p)$ have Lefschetz type.  
\end{conj}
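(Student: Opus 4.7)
The plan is to upgrade the pre-Lefschetz property already established in Theorem \ref{MainThm} to a genuine Lefschetz property by realising each $c_{p,v,\sQ}(t)$ as the Poincar\'e polynomial of a cohomology group equipped with a hard-Lefschetz $\mathfrak{sl}_2$-action, extending the approach of \cite{DavPos} from global monomials (the cluster atlas case) to arbitrary theta functions (the full scattering atlas case).

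First I would return to the broken-line expansion of $\iota_{\sQ}(\vartheta_p)$ underlying the proof of Theorem \ref{MainThm}. Each broken line ending at $\sQ$ with final exponent $p+v$ contributes a monomial whose coefficient is an iterated product of wall-factor contributions, each expressible through the refined BPS invariants of the Jacobi algebra $\operatorname{Jac}(Q_S,W_S)$ attached to the initial seed. Summing over such broken lines yields the combinatorial formula for $c_{p,v,\sQ}(t)$. The first task is to rewrite this sum as the Poincar\'e polynomial of a single graded vector space, rather than a sum of products of Poincar\'e polynomials.

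Second, the candidate geometric object is a moduli stack of framed, Harder--Narasimhan-filtered representations of $\operatorname{Jac}(Q_S,W_S)$ (or its cyclic double), where the filtration type encodes the combinatorial pattern of a broken line terminating at $\sQ$, the dimension vector encodes $v$, and the framing records the output exponent $p+v$. Using an iterated cohomological integration map in the sense of \cite{DavMei}, one hopes to identify $c_{p,v,\sQ}(t)$ with the Poincar\'e polynomial of the critical (or BPS) cohomology of this stack with respect to $W_S$. To produce the hard-Lefschetz action, one then seeks a proper map from this moduli stack to a target equipped with a relative ample class; the relative hard Lefschetz theorem of Beilinson--Bernstein--Deligne then furnishes the desired $\mathfrak{sl}_2$-action. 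In the cluster-atlas case, this programme was carried out in \cite{DavPos} via intersection cohomology of coarse moduli spaces of representations of $\operatorname{Jac}(Q_S,W_S)$, and the proposal is to upgrade that construction to cover non-cluster chambers.

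The principal obstacle is precisely step two: the broken-line formula naturally expresses $c_{p,v,\sQ}(t)$ as a sum of products of BPS-style Poincar\'e polynomials, and products of Lefschetz polynomials are not in general Lefschetz. One therefore cannot argue term-by-term but must construct a \emph{single} geometric incarnation of the whole coefficient. This requires a cohomological Hall algebra interpretation of iterated wall-crossing in which the product operation corresponds to pushforward along a proper correspondence of moduli stacks, so that the decomposition theorem applies to the composed map and furnishes one global Lefschetz operator. Making this construction sufficiently functorial with respect to the scattering-atlas combinatorics, and compatible with the positivity and bar-invariance structures produced in Theorem \ref{MainThm}, is where I expect the substantive difficulty to lie.
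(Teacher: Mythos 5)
This statement is labelled a \emph{conjecture} in the paper and is not proved there: in general the paper establishes only the weaker pre-Lefschetz property (Proposition~\ref{bar-pL}), together with the full Lefschetz statement under the extra hypothesis that the exchange matrix is that of an acyclic quiver (Theorem~\ref{AcyclicLefschetz}). So your text should be read as a research programme rather than judged against an extant proof.

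That said, your diagnosis of the obstruction is incorrect in a way that matters. You assert that ``products of Lefschetz polynomials are not in general Lefschetz'' and conclude that one cannot argue term-by-term along a broken line. This is false: $[a]_t[b]_t$ is the character of $V_a\otimes V_b$, and Clebsch--Gordan decomposes this as a non-negative sum of characters $[c]_t$, so polynomials of Lefschetz type form a sub-semiring of $\bb{Z}[t^{\pm 1}]$. The paper relies on exactly this closure: Lemma~\ref{Lefschetz_prop} shows that a single wall-crossing $\Ad_{\EE(-\epsilon[a]_tz^v)}$ already produces Lefschetz coefficients (realised as $\chi_t$ of $\CSym^n$ or $\bigwedge^n$ of $V_a\otimes V_{|b|}$), and because Lefschetz type is preserved by products and sums, the paper observes that Conjecture~\ref{MainConj} (via Conjecture~\ref{LefschetzTheta}) would follow immediately from Conjecture~\ref{Lefschetz_conjecture} --- that is, from knowing that the \emph{wall functions} of the consistent scattering diagram can be taken to be $\EE(-p_{\f{d}}(t)z^{v_{\f{d}}})$ with each $p_{\f{d}}(t)$ of Lefschetz type. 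The genuinely open step is therefore at the level of wall functions, not of iterated broken-line products. As the paper explains after Proposition~\ref{2WallpL}, the pre-Lefschetz argument succeeds because, after an even monomial substitution, each $\pl_m(t)$ reduces to $1$ or $[2]_t$, and $[2]_t$ can be realised as the unique nonvanishing slope-$0$ BPS invariant of the acyclic Kronecker quiver $\Kr{2}$; the analogous trick fails for $[r]_t$ with $r\geq 3$ because $\Kr{r}$ then has nonvanishing higher BPS invariants $\Omega_{\Kr{r},(n,n)}^{(1,-1)}\neq 0$ for $n\geq 2$, so one cannot realise $\EE(-[r]_tz^v)$ as the full wall function of an acyclic quiver. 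Your suggestion to geometrise the whole coefficient $c_{p,v,\sQ}(t)$ as one critical-cohomology group with a proper map and apply relative hard Lefschetz is a reasonable direction in the spirit of \cite{DavPos}, but it should be re-aimed: the missing ingredient is a candidate proper morphism (hence a Lefschetz operator) for chambers $\sQ$ outside the cluster complex, not a workaround for a non-existent failure of multiplicativity.
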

The main theorem of \cite{DavPos} ensures that Conjecture \ref{MainConj} holds for $\s{V}=\s{A}$ with $\sQ$ and $p$ both in the cluster complex.  We show that the coefficients of $\iota_{\sQ}(\vartheta_p)$ always at least satisfy a weaker condition which we call pre-Lefschetz.  Also, using a result of \cite{MeiRei}, we prove Conjecture \ref{MainConj} in the acyclic cases (i.e., for $\s{A}_q^{\can}$ and $\s{X}_q^{\can}$   associated to seeds whose corresponding quiver has acyclic unfrozen part), cf. \S \ref{MainProof}.

\begin{thm}\label{AcyclicLefschetz}
Conjcture \ref{MainConj} holds in acyclic cases.
\end{thm}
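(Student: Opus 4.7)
The plan is to combine the construction of the scattering diagram $\f{D}^{\s{V}_q}$ in terms of cohomological Donaldson--Thomas theory with the main structural result of \cite{MeiRei} on refined DT invariants of quivers whose underlying graph is acyclic.

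First I recall what the already-established pre-Lefschetz statement supplies. The walls of $\f{D}^{\s{V}_q}$ (or of $\f{D}^{\s{A}_q^{\prin}}$ in the $\s{X}$-case) are decorated by wall-functions built from refined DT invariants $\Omega_d(t)$ of the quiver $Q$ attached to the seed $S$, and each coefficient $c_{p,v,\sQ}(t)$ appearing in \eqref{iota_expansion} is obtained by summing contributions of broken lines with endpoint $\sQ$ and initial slope $p$. Each such contribution is a product of wall-function contributions, and the whole sum is $\Z_{\geq 0}[t^{\pm 1}]$-linear in shifted monomials in the $\Omega_d(t)$. Thus each $c_{p,v,\sQ}(t)$ is a finite $\Z_{\geq 0}[t^{\pm 1}]$-linear combination of products of shifted $\Omega_d(t)$.

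Second I would invoke \cite{MeiRei}: when the unfrozen part of $Q$ is acyclic, for every dimension vector $d$ and every generic choice of stability, the refined DT invariant $\Omega_d(t)$ coincides (up to an explicit shift) with the Poincar\'e polynomial of the intersection cohomology of the coarse moduli space $\Msp_d^{\sst}$ of semistable $Q$-representations of dimension $d$. Since $\Msp_d^{\sst}$ is projective, its intersection cohomology is a pure Hodge structure satisfying hard Lefschetz, and hence each $\Omega_d(t)$ is of Lefschetz type in the sense of \S\ref{Section-Notation}.

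Third I would check that the class of Lefschetz-type polynomials is closed under the operations which express $c_{p,v,\sQ}(t)$ in terms of the $\Omega_d(t)$: multiplication by integer powers of $t$ (Tate twists), non-negative integer sums (a direct sum of pure hard-Lefschetz Hodge structures remains hard Lefschetz after appropriate re-indexing of the Lefschetz operator), and products (hard Lefschetz is preserved under tensor products via K\"unneth). Combining this closure with the first two steps yields the theorem. The main obstacle is in step two: matching the $\Omega_d(t)$ attached to the walls of our scattering diagram with those described geometrically in \cite{MeiRei}. The former are built using stability conditions determined by the normal direction to each wall, while \cite{MeiRei} works with a single generic stability; the identification relies on the stability-independence of cohomological DT invariants in the acyclic case.
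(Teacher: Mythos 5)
Your step 2 (invoking Meinhardt--Reineke to get Lefschetz type for the wall functions of $\f{D}^{\s{V}_q}$ in the acyclic case) is essentially the content of Theorem \ref{acyclic_scat_thm} / Proposition \ref{Bridge_scat}, and that part is sound. But there is a genuine gap in steps 1 and 3.

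You assert that each $c_{p,v,\sQ}(t)$ is a ``finite $\Z_{\geq 0}[t^{\pm 1}]$-linear combination of products of shifted $\Omega_d(t)$'', and then that Lefschetz type is preserved under multiplication by integer powers of $t$, non-negative integer sums, and products. The last closure claim is false: Lefschetz type polynomials are by definition $\Z_{\geq 0}$-sums of the palindromic polynomials $[n]_t$, hence bar-invariant, and multiplication by $t^m$ with $m\neq 0$ destroys bar-invariance (e.g.\ $t^2[2]_t = t+t^3$ is not a sum of $[n]_t$'s). So even if the description of $c_{p,v,\sQ}(t)$ in terms of ``shifted'' $\Omega_d(t)$ were accurate, the closure argument would not yield Lefschetz type. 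This is exactly the reason the paper separates the pre-Lefschetz property (which \emph{is} stable under shifts by even powers of $t$, cf.\ Lemma \ref{pL_prop}) from the Lefschetz property.

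More fundamentally, the broken-line bending operation is not simply multiplication by the wall function: it is the adjoint action $\Ad_{\EE(-\epsilon p_{\f{d}}(t)z^{v_{\f{d}}})}$, and the coefficients it produces are governed by a plethystic formula, not by $\Z_{\geq 0}[t^{\pm 1}]$-linear combinations of products. The missing ingredient is Lemma \ref{Lefschetz_prop}, which shows that if the wall function is $\EE(-\epsilon[a]_tz^v)$ then the coefficients of $\Ad_{\EE(-\epsilon[a]_tz^v)}(z^p)$ are characters of $\bigwedge^n(V_a\otimes V_{|b|})$ or $\CSym^n(V_a\otimes V_{|b|})$ for $\mathfrak{sl}_2$-representations $V_a,V_{|b|}$, hence of Lefschetz type; this uses the identities \eqref{deComp}--\eqref{unComp} and the interpretation via $\Exp$ and $\mathfrak{sl}_2$-representation theory, not a formal closure property. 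Without this lemma (or some substitute for it), your argument does not show that the theta function coefficients inherit Lefschetz type from the wall functions. Note also that Theorem \ref{acyclic_scat_thm} requires the hypothesis $p_i(t)=1$ on the initial walls, which holds here precisely because the cluster initial scattering diagram $\f{D}^{\s{V}_q}_{\In}$ has scattering functions $\Psi_t(z^{v_i})=\EE(-z^{v_i})$ --- this cannot be relaxed to arbitrary Lefschetz $p_i(t)$ without a version of Proposition \ref{2WallpL} for $\Kr{r}$ with $r>2$, which is exactly the open obstacle noted at the end of \S \ref{2wallSection}.
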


\subsection{Dequantization}

As an application of our results, we generalize the main result of \cite{GLSflat}, which we shall now recall.  Note that setting $t^{1/D}$ equal to $1$ determines a homomorphism 
$$\lim_{t\rar 1}\colon\Z_t^{\omega}[L]\rar \Z[L]$$
(for convenience, we write $\lim_{t\rar 1}$ instead of $\lim_{t^{1/D}\rar 1}$).  This extends to the formal versions, giving a homomorphism
\begin{align*}
    \lim_{t\rar 1} \colon \Z_{\sigma,t}^{\omega}\llb L\rrb \rar \Z_{\sigma}\llb L\rrb\coloneqq \Z[L]\otimes_{\Z[L^{\oplus}]} \Z\llb L^{\oplus} \rrb.
\end{align*}
The images under this homomorphism are what we refer to as ``classical limits'' in Theorem \ref{MainThm2}(1).  

For $*$ denoting $\ord,\midd,\up$, or $\can$, the algebras $\s{V}^*_q$ are $\mathbb{Z}_t$-algebras, and torsion-free as $\mathbb{Z}_t$-modules.  As such it is natural to consider the ``classical limits'' 
\[
\cl(\s{V}_q^*)\coloneqq \s{V}^*_q/(t^{1/D}-1)\s{V}^*_q
\]
as dequantizations of the quantum cluster algebras that we are considering in this paper.  On the other hand, there are already well-defined classical cluster algebras $\s{V}^*\subset \bb{Z}_{\sigma_{S,\s{V}}}\llb L_{\s{V}}\rrb$, and it is natural to ask whether these two dequantizations are the same.  

To approach this question we consider the restriction of $\lim_{t \rightarrow 1}$ to $\s{V}_q^*$, which for each choice of $*$ above has image contained in $\s{V}^*$.  The kernels of these homomorphisms of course always contain the ideal generated by $(t^{1/D}-1)$, but the reverse containment is not so obvious, i.e. it is not clear a priori that the induced map $\cl(\s{V}^*_q)\rightarrow \s{V}^*$ is injective.  In fact, the main result of \cite{GLSflat} is that the kernel of $\lim_{t\rar 1}:\s{A}_q^{\ord}\rar \s{A}^{\ord}$ really is equal to the ideal $( t^{1/D}-1 ) \s{A}_q^{\ord}$, and the natural map $\cl(\s{A}_q^{\ord})\rightarrow \s{A}^{\ord}$ is an isomorphism, assuming that $\s{A}^{\ord}=\s{A}^{\up}$, and further assuming that $\s{A}^{\ord}$ has a $\bb{Z}$-grading with finite-dimensional homogeneous components. As an application of our main results, we obtain a new proof of their result without the grading condition.  More generally, we prove the following:

\begin{thm}\label{flat}
Let $\s{V}$ denote $\s{A}$ or $\s{X}$.  Then $\lim_{t\rar 1}$ induces an injective morphism $\cl(\s{V}_q^*)\rightarrow \s{V}^*$ if $*=\midd, \up$ or $\can$.  The same holds for $*=\ord$ whenever $\s{V}^{\ord}=\s{V}^{\midd}$ (which at least holds when $\s{V}^{\ord}=\s{V}^{\up}$).  Moreover, the natural map $\cl(\s{V}_q^*)\rightarrow \s{V}^*$ is surjective if $*$=$\ord,\midd$ or $\can$ (and so it is surjective for $*=\up$ if $\s{V}^{\up}=\s{V}^{\midd}$, which at least holds if $\s{V}^{\ord}=\s{V}^{\up}$).
\end{thm}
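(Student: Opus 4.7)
The plan is to deploy the quantum theta basis from Theorems~\ref{MainThm} and~\ref{MainThm2} in conjunction with the classical results of \cite{GHKK}, using throughout that $t^{1/D}-1$ is a non-zero-divisor in every algebra at hand, that $\Zt/(t^{1/D}-1)\cong \Z$, and that by Theorem~\ref{MainThm2}(1) the classical limits $\vartheta_p^{\tu{cl}}\coloneqq\lim_{t\to 1}\vartheta_p$ form the corresponding classical $\Z$-basis of $\s{V}^{\midd}$ (and a topological $\Z$-basis of $\s{V}^{\can}$).

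\emph{Surjectivity}: In the $\ord$ case, the classical cluster variables generate $\s{V}^{\ord}$ and are the images of the quantum cluster variables under $\lim_{t\to 1}$. In the $\midd$ and $\can$ cases the quantum theta basis maps onto the classical theta basis, with due care for the topology in the $\can$ case. For $*=\up$ under the hypothesis $\s{V}^{\up}=\s{V}^{\midd}$, the chain $\s{V}_q^{\midd}\subset \s{V}_q^{\up}$ reduces surjectivity to the $\midd$ case.

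\emph{Injectivity for $*=\midd,\can$}: Given $f=\sum_p a_p\vartheta_p\in \s{V}_q^*$ with $\lim_{t\to 1}f=0$, the $\Z$-linear independence of $\{\vartheta_p^{\tu{cl}}\}$ forces $a_p\in(t^{1/D}-1)\Zt$ for every $p$, so $g\coloneqq\sum_p \bigl(a_p/(t^{1/D}-1)\bigr)\vartheta_p$ has the same support as $f$, lies in $\s{V}_q^*$, and satisfies $f=(t^{1/D}-1)g$. \emph{Injectivity for $*=\up$}: Apply the same coefficient-wise reasoning in each quantum torus $\s{V}_q^{S_{\jj}}$ after transporting $f$ via $\mu_{S,\jj}^{\s{V}}$; this shows that $g\coloneqq f/(t^{1/D}-1)$, formed inside the skew-field $\mr{\s{V}}_q^S$ using that $t^{1/D}-1$ is central, satisfies $\mu_{S,\jj}^{\s{V}}(g)\in \s{V}_q^{S_{\jj}}$ for every $\jj$, and hence lies in $\s{V}_q^{\up}$.

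The \emph{main obstacle} is injectivity for $*=\ord$. It suffices to establish the saturation identity $\s{V}_q^{\ord}\cap(t^{1/D}-1)\s{V}_q^{\midd}=(t^{1/D}-1)\s{V}_q^{\ord}$, since composing with injectivity for $\midd$ then yields the claim. Under $\s{V}^{\ord}=\s{V}^{\midd}$, surjectivity for $\ord$ and injectivity for $\midd$ combine to iteratively lift any $g\in\s{V}_q^{\midd}$ with $(t^{1/D}-1)g\in\s{V}_q^{\ord}$ into $\s{V}_q^{\ord}$ modulo successively higher powers of $(t^{1/D}-1)$, but the resulting series does not converge in $\s{V}_q^{\midd}$ in any obvious sense. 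My plan to resolve this is to exploit the $K_{\s{V}}$-grading on the relevant algebras supplied by Theorem~\ref{MainThm2}(7), together with the $K_{\s{V}}$-homogeneity of theta functions, to reduce to working within a single graded piece where the iterative lifting ought to terminate.
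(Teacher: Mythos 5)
Your argument for surjectivity, for injectivity in the $\midd$ and $\can$ cases, and for injectivity in the $\up$ case is essentially the paper's. For $\up$, the paper argues contrapositively: if $g\in\wh{\s{V}}_q^S$ fails to be a Laurent polynomial in some cluster chart, then multiplication by the nonzero-divisor $t^{1/D}-1$ cannot repair this, so $(t^{1/D}-1)g\notin\s{V}_q^{\up}$; you instead divide inside the skew-field and check chart by chart that $g=f/(t^{1/D}-1)$ remains a Laurent polynomial. These are the same argument run in opposite directions, and both are fine.

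The genuine gap is in the $\ord$ case, as you yourself suspect. Your proposed iterative lifting modulo successive powers of $t^{1/D}-1$ does not converge, and the $K_{\s{V}}$-grading of Theorem~\ref{MainThm2}(7) will not rescue it: the $K_{\s{V}}$-graded pieces of $\s{V}_q^{\ord}$ need not be finitely generated $\kt$-modules (and $K_{\s{V}}$ may even be trivial, in which case the grading is vacuous), so there is no mechanism forcing your iteration to terminate within a graded piece, and the limit series you would construct lives in $\wh{\s{V}}_q^S$ rather than in $\s{V}_q^{\ord}$. The tool you are missing is Lemma~\ref{Thetaprime}, which converts the hypothesis $\s{V}^{\ord}=\s{V}^{\midd}$ directly into the algebra identity $\s{V}_q^{\ord}=\s{V}_q^{\midd}$, so that the $\ord$ case is subsumed outright into the $\midd$ case you have already settled, with no separate saturation statement to prove. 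The engine behind that lemma is strong positivity (Theorem~\ref{qPosStrong}): since the structure constants $\alpha(p_1,\ldots,p_s;p)$ have nonnegative integer coefficients, $\alpha(p_1,\ldots,p_s;p)\neq 0$ precisely when its classical limit is nonzero, so no cancellation occurs under $\lim_{t\to 1}$. Taking $\Theta$ to be the set of exponents of global monomials and $\Theta'=\Theta_{\s{V}}^{\midd}$, the hypothesis $\s{V}^{\ord}=\s{V}^{\midd}$ says $\{\?{\vartheta}_p\}_{p\in\Theta'}$ is a $\Z$-basis of $\s{V}^{\ord}=\lim_{t\to 1}(\s{V}_q^{\ord})$; nonvanishing of structure constants then shows that every product of global-monomial theta functions is a \emph{finite} $\kt$-linear combination of $\{\vartheta_p\}_{p\in\Theta'}$, whence $\{\vartheta_p\}_{p\in\Theta'}$ is an honest $\kt$-module basis of $\s{V}_q^{\ord}$ and $\s{V}_q^{\ord}=\s{V}_q^{\midd}$. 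You should replace the iterative lifting with this appeal to Lemma~\ref{Thetaprime}; the $K_{\s{V}}$-grading is a dead end here.
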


Geiss--Leclerc--Schr{\"o}er had suggested this application of quantum theta functions to the second author while \cite{GLSflat} was in progress. 
Thanks to our positivity result, we are now able to show that their suggestion was correct. 
See \S \ref{MainProof} for the proof.

\begin{rmk}
The statement that $\cl(\s{V}_q^*)\rar \s{V}^*$ is an isomorphism means that $\s{V}_q^*$ is a deformation quantization of $\s{V}^*$.  By construction, the quantum cluster $\s{X}$-variety of \cite[\S 3]{FG1} (and the quantum cluster $\s{A}$-variety that could be analogously defined) is, in the sense of \cite[Def. 5]{Bou3}, a deformation quantization of the corresponding classical cluster variety.  The surjectivity of $\cl(\s{V}_q^{\up})\rar \s{V}^{\up}$ then follows if the cluster variety is affine (at least up to codimension $2$), cf. \cite[Lem. 7]{Bou3}.
\end{rmk}

\subsection{Additional properties and context for theta functions}\label{PosIntro}
It is clear that being atomic as in  Theorem \ref{MainThm} uniquely determines the theta functions, at least once the scattering atlas is given.  It would be interesting to have additional characterizations of the theta bases which could be checked using only the cluster algebra structure.  E.g., \cite[Conj. 14]{LLRZpnas}  suggests that the quantum theta basis $\{\vartheta_p\colon p\in \s{L}_{\s{V}}\}$ may be the unique minimal topological $\bb{Z}_t$-module basis amongst the set $\s{S}$ of all pointed, bar-invariant, strongly positive topological $\Z_t$-module bases for $\s{V}_q^{\can}$ including all global monomials.  Here, by minimal one means that $f_p-\vartheta_p\in \wh{\s{V}}_q^S$ has positive coefficients whenever $f_p$ is the $p$-pointed element in another such basis.

A standard argument (cf. \cite[\S 5]{LLRZpnas}) shows that for $\s{V}=\s{A}$, any (not necessarily minimal) basis in $\s{S}$ is universally positive with respect to the cluster atlas.  The conjecture of \cite{LLRZpnas} would follow if every basis in $\s{S}$ was universally positive with respect to the scattering atlas.  On the other hand, if the theta functions are atomic with respect to the cluster atlas, then it would follow that they are a minimal basis in $\s{S}$, though possibly not the unique minimal basis in $\s{S}$.  
In fact, \cite[Conj. 14]{LLRZpnas} is stated for quantum greedy bases (which are defined for rank $2$ quantum cluster algebras) instead of  quantum theta bases, but we expect the two to be equal whenever both are defined --- the classical analog of this equivalence was proved in \cite{CGMMRSW}.  

We also expect our bases to agree with those defined for quantum cluster algebras from surfaces as in \cite{AK} --- quantum Laurent positivity for these was proved in \cite[Thm 5.7]{Al} for disks and then in \cite[Thm. 1.1]{CKKO} in general.  We further expect agreement with a quantum version of the bracelets basis of \cite{MSW} --- this would prove \cite[Conj. 4.20]{Thurst}.   On the other hand, the ``triangular bases,'' as considered for some quantum cluster algebras in \cite{Qin}, seem to be different from ours --- \cite[Conj. 13]{LLRZpnas} suggests that they might be the \textit{maximal} bases in $\s{S}$.

We note that our construction of the quantum theta functions was previously given in \cite{Man3}.  However, all results on positivity properties and results stating that these theta functions really are related to the cluster algebras (as opposed to just the formal version $\wh{\s{V}}_q^S$ without the cluster structure) are new.  Since our quantum theta functions agree with those of \cite{Man3}, we know that they satisfy the following:
\begin{enumerate}
    \item Fock and Goncharov's quantum Frobenius map conjecture \cite[Conj. 4.8.6]{FG1}, cf. \cite[Thm. 4.3]{Man3};
    \item The structure constants of the form $\alpha(p_1,p_2;0)$ and $\alpha(p_1,p_2,p_3;0)$ are sufficient to uniquely determine all the structure constants \cite[Thm. 2.16]{Man3};
    \item All the structure constants and the scattering diagrams are determined by certain refined counts of tropical disks (which may have negative multiplicities).
\end{enumerate}
The classical versions of (2) and (3) above are used in \cite{ManFrob} to show that the classical theta functions are determined by certain descendant log Gromov--Witten numbers --- in fact, all the classical structure constants admit Gromov--Witten theoretic descriptions \cite{KY,GSInt2}, and as \cite{KY} notes, the classical strong positivity follows from the unobstructedness of the relevant moduli of curves.

The anticipation of such mirror symmetry type statements was the motivation for the development of the techniques used in \cite{GHKK}, cf. \cite{KS,GPS,GS11,CPS,WCS,GHK1,GHK3,GHS}.  It would be interesting to find similar geometric descriptions in the quantum setting.  Indeed, it is known that the tropical disk counts relevant to rank $2$ scattering diagrams are related to certain real/open Gromov--Witten invariants \cite{Mikq} or higher-genus Gromov--Witten invariants \cite{Bou,Bou2}, and we expect versions of such invariants to describe the quantum theta function multiplication more generally.

On the other hand, from the perspective of representation theory, it would be desirable to understand positivity in terms of a monoidal categorification of the (quantum) cluster algebra, cf. \cite{HernLec,Nak,KKKO}.  In this perspective, the (quantum) cluster algebra is identified with the Grothendieck ring of a monoidal category, and we expect the theta functions to correspond to simple objects of the category --- the atomicity statement of Theorem \ref{MainThm} is strong evidence for this.  Coefficients of the multiplication should then arise as graded dimensions of vector spaces decomposing the tensor products of these simple objects, cf. \cite[\S 1.5.4]{AlBPS}. Our arguments proceed by considering categorifications of the functions appearing on walls in scattering diagrams.  We leave for the future the project of categorifying the theta functions themselves (and hence the cluster algebra).

\subsection{Positivity for scattering diagrams from DT theory}
As in \cite{GHKK}, we deduce our results regarding (quantum) cluster algebras from positivity results for scattering diagrams, which in turn are derived from positivity results in DT theory.  We now explain how all of these positivity results fit together.

First, note that the scattering diagram $\f{D}^{\s{V}_{q}}$ comes about in a particular way.  Namely, we start with a simple, inconsistent scattering diagram $\f{D}_{\In}^{\s{V}_{q}}$ for which the only walls are full hyperplanes $v_i^{\omega\perp}$, and the functions on these walls are $\EE(-z^{v_i})$.  See \S \ref{pleth_sec} for the definition of $\EE$ in terms of plethystic exponentials/quantum dilogarithms.  To this initial scattering diagram of incoming walls, we then add outgoing walls  to obtain the consistent scattering diagram $\f{D}^{\s{V}_q}$ (see Theorem \ref{KSGS}).  

The important thing to note is that if all new walls carry functions of the form $\EE(-p_{\f{d}}(t)z^{v_{\f{d}}})$ for $p_{\f{d}}(t)\in \Zt$ positive, then the coefficients $c_{p,v,\sQ}(t)$ and $\alpha(p_1,p_2;p)$ appearing in Theorem \ref{MainThm} are positive elements of $\Zt$ as well.  By the lemmas of \S \ref{AdPres}, the same statement is true if we replace the property ``positive'' by ``bar-invariant,'' ``pre-Lefschetz,''  or ``Lefschetz.''  The constant polynomial $p(t)=1$ enjoys all four of these properties, and so the functions on the initial walls satisfy all of these properties.  So whichever of ``positive,'' ``bar-invariant,'' ``pre-Lefschetz,'' or ``Lefschetz'' we have chosen as the property we want to prove that theta functions possess, our job becomes to prove the corresponding \textit{preservation} theorem for scattering diagrams, stating that when we start adding walls as in Theorem \ref{KSGS} in order to make $\f{D}_{\In}^{\s{V}_q}$ consistent, we can proceed by only adding walls with functions that are positive/bar-invariant/pre-Lefschetz/Lefschetz.  So far we are just recalling the strategy of \cite{GHKK} for proving positivity of classical theta functions (the other three properties have no analog in the non-quantum setting).  

The precise form of our preservation theorem (regarding positive, bar-invariant, and pre-Lefschetz incoming functions) is given in Theorem \ref{PosScat}.  For now we are concentrating on the positivity result, since that is the most fundamental.  Similarly to \cite[\S C.3]{GHKK}, via perturbation arguments and various other tricks, the preservation theorem reduces to the case in which $\f{D}^{\s{V}_q}_{\In}$ consists of only two walls, with attached functions $\EE(-t^{m_1}z^{v_1})$ and $\EE(-t^{m_2}z^{v_2})$.  In the classical case, there are no powers of $t$ to worry about, and as in \cite[\S C.3, Step IV]{GHKK}, one can further reduce to the case in which $\omega(v_1,v_2)=1$, and so there is just one two-wall base case.  It is straightforward to prove by hand that the consistent scattering diagram built out of this base case is positive --- one only needs to add a single outgoing wall (by the pentagon identity), and its attached function is positive.  

In the quantum case, we have to allow $\omega(v_1,v_2)$ to take any value in $\mathbb{Z}_{\geq 1}$, and furthermore the problem turns out to depend fundamentally on the parities of $m_1$ and $m_2$.  So we are confronted by four \textit{infinite} families of base cases, almost all of which are extremely complicated (with infinitely many outgoing walls, almost always dense in some region of nonzero measure --- see for instance Examples \ref{dense_example} and \ref{Kronecker_example} in \S \ref{denseSec}).

Our approach to proving positivity in all of these base cases is to translate the problem into the DT theory of quiver representations.  The links between scattering diagrams, quantum dilogarithm identities and DT theory is well established (see \cite{Keller}, \cite{WCS} or \cite{Bridge} for background).  We show that the positivity problem in the base cases involves the DT invariants of a quiver depending on $n=\omega(v_1,v_2)$, and the parities of $m_1$ and $m_2$.  In the case in which $m_1$ and $m_2$ are both even, this turns out to be a problem that has already been solved: positivity amounts to proving positivity of the DT invariants for the $n$-Kronecker quiver, which has been established as a very special case of a positivity result of Meinhardt and Reineke regarding acyclic quivers \cite{MeiRei} using refined DT theory.

In all three of the other infinite families, the situation is not so simple: the quiver for which we have to establish positivity of DT invariants involves loops, i.e. it is non-acyclic.  To deal with these cases, we prove a generalization of the result of \cite{MeiRei}, namely the positivity of refined DT invariants for arbitrary quivers.  This is the content of our Theorem \ref{DT_pos_thm}, which we restate here:
\begin{thm}[Positivity for refined DT invariants]
\label{DTpos_restate}
Let $\zeta\in \mathbb{R}^{Q_0}$ be a generic stability condition.  Let $S_{\theta}^{\zeta}\subset \bb{Z}_{\geq 0}^{Q_0}$ be the submonoid of dimension vectors of slope $\theta$ with respect to $\zeta$.  There is an equality of generating series 
\begin{equation*}
\sum_{v\in \mathbb{Z}_{\geq 0}^{Q_0}}\chi_{t}(\HO(\Mst_v(Q),\mathbb{Q}))x^vt^{\chi_Q(v,v)}=\prod_{\infty\xrightarrow{\theta} -\infty}\prod_{0\neq v\in S_{\theta}^{\zeta}}\EE(f_v(t)x^v)
\end{equation*}
where the $f_v(t)\in\mathbb{Z}_{\geq 0}[t^{\pm 1}]$ have positive coefficients and are even or odd Laurent polynomials, depending on whether $\chi_Q(v,v)$ is odd or even, respectively, where $\chi_Q(\cdot,\cdot)$ is the Euler form for $Q$.
\end{thm}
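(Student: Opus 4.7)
The plan is to derive the identity from the cohomological integrality theorem for quivers and positivity for BPS cohomology, the second of these being the real content. First, for a generic stability condition $\zeta$, the Harder--Narasimhan stratification of the moduli stack $\Mst_v(Q)$ induces a factorization at the level of $t$-weighted cohomology generating series indexed over slopes $\theta$, taken in decreasing order:
\begin{equation*}
\sum_{v\in \mathbb{Z}_{\geq 0}^{Q_0}} \chi_t(\HO(\Mst_v(Q),\mathbb{Q})) x^v t^{\chi_Q(v,v)} = \prod_{\infty\xrightarrow{\theta} -\infty} \sum_{v\in S_{\theta}^{\zeta}} \chi_t(\HO(\Mst_v^{\zeta\sst}(Q),\mathbb{Q})) x^v t^{\chi_Q(v,v)}.
\end{equation*}
The normalization by $t^{\chi_Q(v,v)}$ and the choice of slope ordering ensure the factors multiply correctly in the relevant quantum torus.

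Second, I would apply cohomological integrality to each slope-$\theta$ factor. This theorem expresses the semistable cohomology generating series as the plethystic exponential $\EE$ of the $t$-weighted Poincar\'e series of the BPS cohomology $\BPS_v^{\zeta,\theta}$ of the moduli of $\zeta$-semistable representations of slope $\theta$. Setting $f_v(t) \coloneqq \chi_t(\BPS_v^{\zeta,\theta})$ matches the target product form, and the entire theorem reduces to the claim that each $\chi_t(\BPS_v^{\zeta,\theta})$ lies in $\mathbb{Z}_{\geq 0}[t^{\pm 1}]$ and has the stated parity.

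Third, I would prove positivity and parity by identifying the BPS cohomology with the hypercohomology of a perverse sheaf on the coarse moduli space of $\zeta$-semistable representations of slope $\theta$. Since BPS cohomology is then literally a graded cohomology group of a constructible complex (rather than a virtual Euler characteristic), its graded dimensions are nonnegative integers, giving the positivity. The parity assertion reduces to showing that this perverse sheaf is, up to a cohomological shift whose parity matches that of $\chi_Q(v,v)$, self-dual and concentrated in a single parity of degrees; the shift is dictated precisely by $\chi_Q(v,v)$ via Poincar\'e duality on the BPS complex.

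The main obstacle is handling the non-acyclic case. When $Q$ is acyclic, Meinhardt--Reineke identify the BPS sheaf with the intersection complex of the coarse moduli space of $\zeta$-stable representations, an object whose positivity is immediate. Once $Q$ has loops, the simple locus is no longer smooth and this identification breaks down; one must instead use the full machinery of cohomological DT theory, invoking dimensional reduction and the structure of the cohomological Hall algebra to realize the BPS sheaf as a genuine (not merely virtual) perverse sheaf. Tracking this identification carefully, and verifying that the parity statement is preserved by the relevant shifts, is the crux of the argument.
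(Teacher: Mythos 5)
Your overall strategy is the right one and matches the paper: factor the stack cohomology along the Harder--Narasimhan stratification, apply cohomological integrality to each slope, and define $f_v(t)$ as the Poincar\'e series of the BPS cohomology, which is an honest graded vector space so its dimensions are nonnegative. However, there are two substantial misconceptions in your steps three and four.

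First, the ``main obstacle'' you identify is not actually there. The identification of BPS cohomology with the (shifted) intersection cohomology of the coarse moduli space $\Msp_v^{\zeta\sst}(Q)$ is \emph{not} special to acyclic quivers: it is the definition used in the paper (see \eqref{BPS_def}) and is exactly what Meinhardt--Reineke prove for arbitrary quivers without potential. The smoothness of the stable locus is irrelevant here, since the intersection complex is defined on singular varieties. What genuinely fails for non-acyclic quivers is \emph{properness} of $\Msp_v^{\zeta\sst}(Q)$; consequently Poincar\'e duality and hard Lefschetz fail, so one loses the Lefschetz property and bar-invariance, but positivity survives. You do not need dimensional reduction, vanishing cycles, or the CoHA structure theory at all: the paper explicitly avoids them because $W=0$, and the two inputs used are the categorified wall-crossing and integrality theorems of Davison--Meinhardt, which in the $W=0$ case reduce to statements about constant coefficients.

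Second, and more seriously, your parity argument is incorrect as stated. You propose to deduce the parity from self-duality/Poincar\'e duality of the BPS complex; but for non-acyclic quivers the moduli space is not projective, Poincar\'e duality fails for its intersection cohomology, and the intersection cohomology of a general variety is in any case not concentrated in a single parity. The fact that $\IC(\Msp_v^{\zeta\sst}(Q),\QQ)$ \emph{is} concentrated in a single parity here is a genuinely non-obvious feature of these specific moduli spaces. The paper's argument for this is different and relies on the \emph{categorified} versions of both the wall-crossing and integrality theorems (not just their decategorified generating-series shadows): one obtains inclusions of graded vector spaces
\begin{align*}
\BPS_v^{\zeta}(Q)\otimes\HO(\pt/\mathbb{C}^*,\QQ)_{\vir}\ \subset\ \mathcal{A}_v^{\zeta}(Q)\ \subset\ \HO(\Mst_v(Q),\QQ)_{\vir},
\end{align*}
and the rightmost term is $\HO(\pt/\GL_v,\QQ)$ up to a shift by $\chi_Q(v,v)$, hence concentrated in a single parity. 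Parity of $\mathcal{A}_v^{\zeta}(Q)$, and then of $\BPS_v^{\zeta}(Q)$ after dividing off the odd-degree factor $\HO(\pt/\mathbb{C}^*,\QQ)_{\vir}$, follows. If you only use the numerical wall-crossing formula, as in your step one, you do not get these inclusions, so you need to upgrade step one to the isomorphism of graded vector spaces from the start, not merely the equality of generating series.
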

This theorem turns out to be an easy consequence of the cohomological wall crossing and integrality theorems of \cite{DavMei}.  In fact, it follows from a very special case since we do not have to consider potentials (equivalently, we set $W=0$).  We hope that for the audience coming from the theory of cluster algebras, the absence of potentials and the resulting absence of arguments involving vanishing cycles and monodromic motives/mixed Hodge modules from e.g. \cite{Nagao, Efi, DavPos} will make this paper more approachable.  Using Theorem \ref{DTpos_restate}, we can prove preservation of positivity for all four infinite families of 2-wall base cases (cf. Corollary \ref{2WallPos}) and deduce all of our other positivity results.

By combining the parity statement of Theorem \ref{DTpos_restate} with our positivity result for scattering diagrams and the consistency of Bridgeland's Hall algebra scattering diagrams \cite{Bridge}, we prove a general parity statement for scattering diagrams, Theorem \ref{parity_cor}.  This then implies the parity statement $c_{p,v,\sQ}(-t)=\pm c_{p,v,\sQ}(t)$ from Theorem \ref{MainThm}, cf. Proposition \ref{ParitProp}.

The strategy for proving preservation of the pre-Lefschetz property is the same as for the preservation of positivity.  The main difference in the proof is that the preservation result for the basic two wall case (Proposition \ref{2WallpL}) can be proved by translating it into the DT theory of acyclic quivers, allowing us to use the Meinhardt--Reineke theorem to settle it.  The pre-Lefschetz property implies positivity, and so this provides a second proof of all of the positivity results in this paper. 
\subsection{Logical structure of the paper}
We derive our main results as consequences of results on scattering diagrams and DT theory.  On the other hand, we have deferred all of the DT theory until after the proofs of Theorems \ref{MainThm} and \ref{MainThm2} at the end of Section \S \ref{QCA_sec}, in an effort to not over-burden the reader who is primarily interested in cluster algebras.  The result is that the logical structure of the paper is somewhat nonlinear, and so for the reader's benefit we have drawn a map of it below.  Since we have pared the paper down to its essentials in this diagram, we have ignored all results and parts of results regarding Lefschetz type/bar invariance/parity while drawing it.
\bigskip{\center 
\begin{tikzpicture}
\definecolor{tempcolor}{RGB}{240,240,240};

\draw [rounded corners, fill=tempcolor] (4.9,5.5) rectangle (9.1,6.8);
\node at (7,6.5) {\textbf{Section \ref{QCA_sec}}};
\node at (7,6.1) {Theta functions and};
\node at (7,5.8) {quantum cluster algebras};
\draw [rounded corners,thick, ->](9.1,6.2) -- (12.6,6.2) -- (12.6,6.675);
\draw [rounded corners,thick, ->](9.1,5.8) -- (9.8, 5.8)--(9.8, 2.85)--(12.6,2.85)--(12.6,3.775);
\node at (11.2,3) {\scriptsize cor \ref{UpTheta}, prop \ref{midX}};
\node at (11.2,3.425) {\scriptsize global monomials};
\node at (11.2,3.225) {\scriptsize are $\vartheta$-functions};

\draw [rounded corners, fill=tempcolor] (5.5,3.9) rectangle (8.6,5.2);
\node at (7,4.9) {\textbf{Section \ref{theta_sec}}};
\node at (7,4.5) {Results on};
\node at (7,4.2) {theta functions};
\draw [rounded corners,thick, ->](8.6,4.3) -- (9.4,4.3) -- (9.4,2.35) -- (12.8,2.35) -- (12.8,3.775);
\node at (11.5,2.2) {\scriptsize atomicity};
\node at (11.5,2) {\scriptsize thm \ref{qAtomic}};

\draw [rounded corners, fill=tempcolor] (6,2.2) rectangle (8.4,3.5);
\node at (7.2,3.2) {\textbf{Section \ref{PosSect}}};
\node at (7.2,2.8) {Reduction to};
\node at (7.2,2.5) {rank 2};
\draw [rounded corners,thick, ->](7.2,2.2) -- (7.2,.8) -- (8.175,.8);

\draw [rounded corners] (10.8,6.7) rectangle (15,8.2);
\node at (12.9,7.8) {\textbf{Theorem \ref{MainThm2}}};
\node at (12.9,7.4) {Structure of quantum};
\node at (12.9,7.1) {cluster algebras};

\draw [rounded corners] (10.6,3.8) rectangle (15,5.3);
\node at (12.8,4.9) {\textbf{Theorem \ref{MainThm}}};
\node at (12.8,4.5) {Strong positivity+atomicity};
\node at (12.8,4.2) {for quantum theta bases};
\draw [thick, ->](13,5.3) -- (13,6.675);

\draw [rounded corners] (8.2,0) rectangle (12.2,1.5);
\node at (10.2,1.1) {\textbf{Theorem \ref{PosScat}}};
\node at (10.2,0.7) {Preservation of positivity};
\node at (10.2,0.4) {for scattering diagrams};
\draw [thick, rounded corners, ->] (12.2,.75) -- (13,.75) -- (13,3.775);
\node at (14,2.88) {\scriptsize positivity thms};
\node at (14,2.68) {\scriptsize \ref{qPosLoc} and \ref{qPosStrong}};

\draw [rounded corners] (2.5,0) rectangle (6.1,1.5);
\node at (4.3,1.1) {\textbf{Proposition \ref{Bridge_scat}}};
\node at (4.3,0.7) {Positivity for stability};
\node at (4.3,0.4) {scattering diagrams};
\draw [thick, ->](6.1,.6) -- (8.175,.6);
\node at (7.1,.25) {\scriptsize cor \ref{2WallPos}};
\node at (7.1,.45) {\scriptsize 2 wall case};

\draw [rounded corners] (.7,4.1) rectangle (4.1,5.6);
\node at (2.4,5.2) {\textbf{Theorem \ref{DT_pos_thm}}};
\node at (2.4,4.8) {Positivity for DT};
\node at (2.4,4.5) {invariants of quivers};
\draw [thick, rounded corners, ->] (1.8,4.1) -- (1.8,.75) -- (2.475,.75);
\end{tikzpicture}}
\medbreak

\subsection{Acknowledgements} During the writing of the paper, both authors were supported by the starter grant ``Categorified Donaldson--Thomas theory'' No. 759967 of the European Research Council. BD was also supported by a Royal Society university research fellowship.  We would like to thank Dylan Allegretti, Pierrick Bousseau, Geiss--Leclerc--Schr{\"o}er, Sean Keel, Bernhard Keller, Lang Mou, Fan Qin, and Dylan Rupel for useful discussions.  BD is especially grateful to Sean Keel for the opportunity to visit UT Austin and for his patient explanations there.  Finally, we thank the anonymous referee for their careful reading of the manuscript and many helpful suggestions.

\section{Quantum scattering diagrams}
\label{QSD_sec}

\subsection{Laurent polynomials}\label{Section-Notation}

\subsubsection{Notation}
Let $\kk$ be an arbitrary characteristic $0$ field, or more generally, any commutative ring containing $\bb{Q}$.  Denote $\kt\coloneqq \kk[t^{\pm 1/D}]$ for a fixed positive integer $D$ (we do not worry about the ambiguity of $D$ in this notation because one could instead just take $\kt$ to include all fractional powers of $t$). 

For $a\in \bb{Z}$, we denote
\begin{align}\label{at}
    (a)_t\coloneqq t^a-t^{-a} \in \bb{Z}[t^{\pm 1}],
\end{align}
and when $a\in \bb{Z}_{\geq 1}$ we denote
\begin{align}\label{at2}
    [a]_t\coloneqq \frac{(a)_t}{(1)_t}=\frac{t^a-t^{-a}}{t-t^{-1}}=t^{-a+1}+t^{-a+3}+\ldots+t^{a-3}+t^{a-1}\in \bb{Z}[t^{\pm 1}].
\end{align}
We will use $t$ throughout to denote the quantum deformation parameter, avoiding $q^{1/2}$, as the meaning of this parameter is ambiguous\footnote{See e.g. \cite[Rem 4.25]{DMSS} for a discussion of this point.} up to a sign. 
We say $p(t)$ is even if $p(t)=p(-t)$ and odd if $p(t)=-p(-t)$.  We define the parity function on the set of polynomials satisfying $p(-t)=\pm p(t)$ by
  \[
  \parit(p(t))=\begin{cases}0& \textrm{if }p(t) \textrm{ is even}\\
  1& \textrm{if }p(t) \textrm{ is odd.}\end{cases}
  \]
Note that $\parit([a]_t)\equiv a+1$ (mod $2$).

We say that a polynomial $p(t)\in\mathbb{Z}[t^{\pm 1}]$ is of \textbf{Lefschetz type} if it is a sum of the polynomials $[n]_t$ for $n\in\mathbb{Z}_{\geq 1}$.  For $n\in\mathbb{Z}$ we define
\begin{equation}\label{plPolys}
\pl_n(t)=\begin{cases}t^n&\textrm{if }n\textrm{ is even}\\ t^{n-1}[2]_t=t^{n-2}+t^n &\textrm{if }n \textrm{ is odd.} \end{cases}
\end{equation}
and we say that $p(t)$ is \textbf{pre-Lefschetz}, or just \textbf{pL}, if it is a sum of polynomials $\pl_n(t)$ for $n\in\mathbb{Z}$.   A Lefschetz type polynomial is pL.  We say that $p(t)$ is \textbf{bar-invariant} if it is invariant under the transformation $t\mapsto t^{-1}$.

\subsubsection{The Lefschetz property}
The origin of Lefschetz polynomials in geometry is as follows.  Let $X$ be an irreducible variety, and assume that for some complex of constructible sheaves $\mathcal{F}$ on $X$ there are isomorphisms
\[
\HO^i(X,\mathcal{F})\cong \HO^{2\dim(X)-i}(X,\mathcal{F})^*
\]
for all $i\in\mathbb{Z}$.  The classical example would be given by setting $\mathcal{F}$ to be the constant sheaf $\mathbb{Q}_X$, where $X$ is smooth projective --- then the above isomorphism is given by Poincar\'e duality.  We normalise the cohomology of $X$ with coefficients $\mathcal{F}$ by writing
\[
V=\HO(X,\mathcal{F})_{\vir}\coloneqq \HO(X,\mathcal{F})[\dim(X)].
\]
Then the Poincar\'e polynomial
\begin{align}\label{chit}
\chi_t(V)\coloneqq\sum_{i\in \mathbb{Z}}\dim(V^i )t^{i}
\end{align} is bar-invariant.  Assume that, moreover, $V$ carries an operator $\mathscr{L}\colon V\rightarrow V[-2]$, i.e. a linear map $\mathscr{L}$ of cohomological degree 2, such that for every $i\geq 1$ the linear map $\mathscr{L}^i\colon V^{-i}\rightarrow V^i$ is an isomorphism.  Then $\chi_t(V)$ is of Lefschetz type.  In the classical example mentioned above, this isomorphism is provided by the hard Lefschetz theorem, and the operator $\mathcal{L}$ is the Lefschetz operator, defined by capping with a hyperplane section.

\subsection{Quantum tori}\label{qtori}
\subsubsection{The quantum torus algebra}\label{qtoralg}
Given any finite-rank lattice $L$, we write $L_{\bb{Q}}\coloneqq L\otimes \bb{Q}$ and $L_{\bb{R}} \coloneqq  L\otimes \bb{R}$.  We denote the dual pairing between $L$ and its dual lattice $L^*\coloneqq \Hom(L,\bb{Z})$ by $\langle \cdot,\cdot\rangle$.  We call a nonzero vector $n\in L$ \textbf{primitive} if it is not a positive integer multiple of any other element of $L$.  If $m=km'$ for $k\in \bb{Z}_{\geq 0}$ and $m'\in L$ primitive, we call $k$ the \textbf{index} of $m$, denoted $|m|$.

Let $L$ denote a lattice of finite rank $r$, equipped with a $\frac{1}{D}\bb{Z}$-valued skew-symmetric bilinear form $\omega$ for some $D\in \bb{Z}_{\geq 1}$ (we take $\kk_t\coloneqq \kk[t^{\pm 1/D}]$ for this $D$).  We consider the \textbf{quantum torus algebra}
\begin{align}\label{qtor}
    \kk_t[L]\coloneqq \kt\langle z^v\colon v\in L\rangle /\langle z^{a}z^{b}=t^{\omega(a,b)}z^{a+b} \colon a,b\in L\rangle.
\end{align}
This is a unital associative algebra, which is noncommutative if $\omega$ is nonzero.  Note that $\kk_t[L]$ is $L$-graded.  We may write $\kk_t[L]$ as $\kk^{\omega}_t[L]$ if $\omega$ is not clear from context.

Now fix a strongly convex rational polyhedral cone $\sigma\subset L_{\bb{R}}$.  Let $L^{\oplus}\coloneqq \sigma\cap L$, and $L^+\coloneqq L^{\oplus} \setminus \{0\}$. 
 For each $k\in \bb{Z}_{\geq 1}$, let
\begin{align}\label{kNplus}
kL^+\coloneqq \{v_1+\ldots+v_k\in L\colon v_i\in L^+ \mbox{ for each } i=1,\ldots,k\}.
\end{align} 
We consider the formal Laurent series ring $\kk_{\sigma,t}\llb L\rrb\coloneqq \kk_t[L]\otimes_{\kk_t[L^{\oplus}]} \kk_t\llb L^{\oplus}\rrb$, i.e., $\kk_{\sigma,t}\llb L\rrb$ is spanned by formal sums of the form $\sum_{v\in L^{\oplus}} a_vz^{v+w}$ for $w\in L$ and coefficients $a_v\in \kt$. Where the cone $\sigma$ is clear from the context or not explicitly important, we will omit it from the notation, writing $\kk_t\llb L\rrb\coloneqq \kk_{\sigma,t}\llb L\rrb$.

For $p\in L$, we say $f\in \kk_{\sigma,t}\llb L\rrb$ is $p$-pointed if it has the form \begin{align*}
    f=z^p\left(1+\sum_{v\in L^+} a_vz^v\right)
\end{align*} 
with $a_v\in \kt$.

\subsubsection{Topological structure}\label{TopStr}

In general, the theta functions will only provide topological bases, roughly meaning that we may need infinite linear combinations of the theta functions in order to reach every element of the algebras we consider.  We carefully review this notion here.

For the purposes of the following, we endow $\kk_t$ with the discrete topology.  We make $\kk_t[L]$ into a topological $\kk_t$-module as follows: let $\f{m}$ denote the unique maximal monomial ideal of $\kk_t[L^{\oplus}]$, i.e., $\f{m}$ is spanned by monomials $z^v$ for $v\in L^+$, and so $\f{m}^k$ is spanned by monomials $z^v$ for $v\in kL^+$.  Let $\f{U}_0$ denote the set of $\kk_t$-submodules $U\subset \kk_t[L]$ which are closed under left (equivalently, right) multiplication by elements of $\f{m}$ and such that for every $f\in \kk_t[L]$, there exists some $k$ such that $\f{m}^kf\subset U$.  Let $\f{U}$ denote the set of cosets of these submodules, i.e., $\f{U}\coloneqq \{a+U\colon a\in \kk_t[L],\medspace U\in \f{U}_0\}$.  We consider the topology on $\kk_t[L]$ with basis of open sets $\f{U}$.  Note that $\f{U}_0$ is a neighborhood basis for the point $0$, and the intersection of the sets in $\f{U}_0$ is zero, so the topology is Hausdorff.
 
For each $S\subset L$, denote
\begin{align*}
    U_S\coloneqq \bigoplus_{v\in S} \kt\cdot z^v\subset \kk_t[L].
\end{align*}
Then the $\kk_t$-submodule $U_S\subset \kk_t[L]$ is open if $S$ satisfies the following two conditions:
\begin{enumerate}
    \item $S$ is closed under addition by elements of $L^{\oplus}$;
    \item
    For all $v\in L$ there exists some $k$ such that $v+kL^+\subset S$.
\end{enumerate}
It will sometimes be useful to let $L^{\pm}$ denote the $\bb{Z}$-span of $L^{\oplus}$ in $L$, and let $\pi\colon L\rar\?{L}\coloneqq L/L^{\pm}$ denote the projection.

\begin{dfn}\label{CauchyDef}
Let $R$ be a commutative ring, which we endow with the discrete topology, and let $A$ be a topological $R$-module with a basis of open neighborhoods of $0$ consisting of $R$-submodules.  By a \textbf{Cauchy sequence} in $A$, we mean a sequence $(f_k)_k$ of elements of $A$, indexed by $k\in\bb{Z}_{\geq 1}$, such that for any open neighborhood $U$ of $0\in A$, we have $f_{k_1}-f_{k_2}\in U$ for all sufficiently large $k_1,k_2$.  Two Cauchy sequences $(f_k)_k$ and $(g_k)_k$ are then equivalent if, for each open neighborhood $V$ of $0$, we have $f_k-g_k\in V$ for all sufficiently large $k$.  The assumption that $0$ has a basis of neighborhoods that are modules ensures that this is an equivalence relation. 
\end{dfn}

Note that the Cauchy completion $\wh{A}$ of $A$, i.e., the set of all equivalence classes of Cauchy sequences in $A$, naturally inherits a topological $R$-module structure from $A$. More precisely, $(f_{k})_k+(g_k)_k=(f_k+g_k)_k$, and $r\cdot (g_k)_k=(r\cdot g_k)_k$; a subset $\wh{U}\subset \wh{A}$ is open if it is the set of all classes of Cauchy sequences equivalent to sequences of elements of some open set $U\subset A$.

\begin{prop}\label{CauchyProp}
With the above topology, $\kk_t\llb L\rrb$ is the Cauchy completion of $\kk_t[L]$.
\end{prop}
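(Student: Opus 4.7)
The plan is to identify the Cauchy completion $\wh{\kk_t[L]}$ with the inverse limit $\varprojlim_S \kk_t[L]/U_S$ (where $S$ runs over subsets satisfying the two displayed conditions), and then to match the latter with $\kk_t\llb L\rrb$.  First I would check that the $U_S$ form a neighbourhood basis of $0$ inside $\f{U}_0$: for any $U \in \f{U}_0$, the set $S_U := \{v \colon z^v \in U\}$ is valid (closure under $+L^{\oplus}$ follows from $\f{m}$-closure of $U$, and the second condition from the existence for each $v$ of $k_v$ with $\f{m}^{k_v} z^v \subset U$), and clearly $U_{S_U} \subset U$.  Writing $T = L\setminus S$, the quotient $\kk_t[L]/U_S$ is the $\Zt$-module of finitely supported $\Zt$-valued functions on $T$, so an element of $\wh{\kk_t[L]}$ is equivalently an assignment $(a_v)_{v \in L}$ satisfying $|\{v \in T \colon a_v \neq 0\}| < \infty$ for every valid $T$ (call this \emph{local finiteness}).

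Coefficient-wise stabilisation along a Cauchy sequence uses that $T_v := v - L^{\oplus}$ is valid: it is closed under subtraction by $L^{\oplus}$ by construction, and strong convexity of $\sigma$ provides a linear functional $\ell$ strictly positive on $L^+$ with $\ell(y) \geq k\ell_0$ for $y \in kL^+$, so that $(v' + kL^+)\cap T_v = \emptyset$ for large $k$.  Local finiteness on a general valid $T$ then follows from the Cauchy property: for some $K = K(T)$ one has $a_v = a_v^{(K)}$ for all $v \in T$, so $\{v \in T \colon a_v \neq 0\} \subset \supp(g_K)$ is finite.  The inclusion $\kk_t\llb L\rrb \hookrightarrow \{\text{locally finite }(a_v)\}$ is immediate: any valid $T$ meets $w + L^{\oplus}$ finitely (the second validity condition applied to $v' = w$ gives $T \cap (w + kL^+) = \emptyset$, and $L^{\oplus}\setminus kL^+$ is finite by strong convexity).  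Partial sums of any $f \in \kk_t\llb L\rrb$ then form a Cauchy sequence converging to $f$, so the remaining task is to establish the reverse inclusion: every locally finite assignment lies in $\kk_t\llb L\rrb$.

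I would argue this by contradiction in two stages.  First, suppose the support met infinitely many cosets of $L^{\pm}$ in $L$; choose distinct-coset representatives $v_i$ in the support and form $T := \bigcup_i (v_i - L^{\oplus})$.  This $T$ is valid because for any $v'$, the condition $v_i - v' \in kL^+ \subset \sigma \subset L^{\pm}$ forces $\pi(v_i) = \pi(v')$, so at most one $i$ contributes, and the admissible $k$'s are bounded by $\ell(v_i - v')/\ell_0$; but then $\{v_i\} \subset T \cap \supp$ is infinite, contradicting local finiteness.  Second, within a single coset $\pi^{-1}(c)$, containment in an $L^{\oplus}$-translate is equivalent by conic duality to each generator $\ell_j$ of $\sigma^{\vee}$ being bounded below on the support (one then obtains a common lattice translate by shifting any real solution deep into $-\sigma$, using that $\sigma$ is full-dimensional inside $L^{\pm}_{\mathbb{R}}$ and contains an interior lattice point).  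If some $\ell_j$ were unbounded below, pick $u_n$ in the support with $\ell_j(u_n) \to -\infty$; then $T := \bigcup_n (u_n - L^{\oplus})$ is still valid, because $u_n - v' \in \sigma$ would require $\ell_j(u_n) \geq \ell_j(v')$, failing for large $n$, with $k$ again bounded for each of the finitely many remaining $n$ via $\ell$; once more $\{u_n\} \subset T \cap \supp$ is infinite, a contradiction.  The main obstacle is this last step, which simultaneously uses a strictly positive interior functional (to cap the validity parameter $k$) and possibly non-strict boundary generators $\ell_j$ (to witness unboundedness), together with the translation between the linear-algebraic condition ``$\inf \ell_j > -\infty$ for all $j$'' and the geometric condition of lying inside a single $L^{\oplus}$-translate.
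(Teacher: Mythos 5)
Your proof is correct, and it follows a genuinely different route from the paper's, although the two share the same core combinatorial obstruction. The paper works directly with Cauchy sequences: partial sums give the inclusion of $\kk_t\llb L\rrb$ into the completion, and for the converse it first shows that the union of supports of a Cauchy sequence meets only finitely many $L^\pm$-cosets, then reduces to a top-dimensional $\sigma$ and runs a somewhat bespoke argument against the open set $\bigcap_N\bigl(L\setminus(v_N-L^\oplus)\bigr)$ using the function $|v|_-$. You instead identify the completion abstractly as $\varprojlim_S \kk_t[L]/U_S$ over the cofinal directed family $\{U_S\}$ (the cofinality check via $S_U=\{v\colon z^v\in U\}$ is a clean step the paper leaves implicit), recast its elements as ``locally finite'' coefficient assignments, and then reduce everything to a purely set-theoretic question about which subsets of $L$ can occur as supports. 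You settle that by the same two-stage contradiction against sets of the form $\bigcup_i(v_i-L^\oplus)$ — coset-finiteness first, then a within-coset bound — but for the second stage you replace $|v|_-$ by the facet normals of $\sigma^\vee$ together with conic duality to characterize containment in an $L^\oplus$-translate. This buys a cleaner separation of the topological bookkeeping from the combinatorics, and the $\sigma^\vee$ argument is more transparent than the paper's $|v|_-$. Two details worth spelling out if you write this up: the finiteness of $L^\oplus\setminus kL^+$ requires Gordan's lemma (finite generation of the monoid $L^\oplus$), not just the existence of the strictly positive functional $\ell$ — the latter only gives $kL^+\subset\{\ell\ge k\ell_0\}$, not the reverse inclusion you need; and in the duality step one should note that $v_i-v'\in kL^+\subset L^\pm$ (not merely $\in\sigma$), and give a sentence on replacing a real solution of $\ell_j(w)\le c_j$ by a lattice point in the correct $L^\pm$-coset, though your parenthetical about translating by a large multiple of an interior lattice point of $\sigma$ is exactly the right idea.
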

\begin{proof}
Given an element $f=\sum_{v\in L^{\oplus}}a_{v}z^{v+w}\in z^w\kk_t\llb L^{\oplus}\rrb\subset \kk_t\llb L\rrb$, $a_v$ denoting coefficients in $\kt$, we form the sequence of Laurent polynomials $f_k=\sum_{v\in L^{\oplus}}a_{v,k}z^{v+w}\in \kk_t[L]$ via the rule  
\[
a_{v,k}=\begin{cases} a_{v}&\textrm{if }v\notin w+ kL^+\\ 0 &\textrm{otherwise.}\end{cases}
\]
Let $U\subset \kk_t[L]$ be an open set containing 0.  Then there exists a $k_U\in \bb{Z}_{\geq 1}$ such that $z^{w}\mathfrak{m}^{k_U}\subset U$, from which it follows that $f_{k_1}-f_{k_2}\in U$ for all $k_1,k_2\geq k_U$, i.e., $(f_k)_k$ is indeed Cauchy.  Since every element of $\kk_t\llb L\rrb$ is a finite sum of such elements $f$, we see that $\kk_t\llb L\rrb$ is contained in the Cauchy completion of $\kk_t[L]$.

For the reverse implication we break the proof into two steps, firstly reducing to the special case in which $\sigma$ is a top dimensional cone.  Let $(g_k)_k$ be a sequence in $\kk_t[L]$, and let $S\subset L$ be defined as the set of $v$ such that the $z^v$-coefficient of $g_k$ is nonzero for some $k$.  Let $\?{S}\coloneqq \pi(S)$, let $s\colon \?{S}\rar S$ be a section of $\pi|_S$, and let $U\coloneqq U_{L\setminus (s(\?{S})-L^{\oplus})}$.  Then $U$ is open.  If $\?{S}$ is infinite, then since each $g_k$ is a Laurent polynomial, for all $k$ we must be able to find a $k'>k$ such that 
$g_k-g_{k'}\not\in U$.  In particular, if $(g_k)_k$ is Cauchy, then the set $S$ must be finite.  So since $\kk_t\llb L\rrb$ is closed under finite sums, it suffices to deal with the case $|\?{S}|=1$.  Then by restricting to $U_{S+L^{\pm}}$, we can assume that $\sigma$ is a top-dimensional cone.

So we make this assumption.  For $v\in L$ we define 
\[
\lvert v\lvert_-=\min(k\in\mathbb{Z}_{\geq0} \colon v=a-b \textrm{ with }a\in L^{\oplus}\textrm{ and }b\in kL^+).
\]
For instance $v\in L^{\oplus}$ if and only if $\lvert v\lvert_-=0$.

Now let $(g_k)_k$ be a Cauchy sequence, and assume for a contradiction that there does not exist a $w$ with $g_{k}\in z^{w}\kk_t[L^{\oplus}]$ for each $k$.  Then for all $N\in \mathbb{Z}_{\geq 0}$, there exists a $v_N\in L$ with $\lvert v_N\lvert_- > N$ such that the $z^{v_N}$ coefficient of $g_k$ is nonzero for some $k$ 
--- otherwise we could take 
\[
w=\sum_{v\in NL^+\setminus (N+1)L^+}v,
\]
and then we would have $g_k\in z^{-w}\kk_t[L^{\oplus}]$ for all $k$.  
After fixing such a choice of $v_N$ for each $N$, let 
\[
S=\bigcap_{N\geq 0} (L\setminus (v_N-L^{\oplus})). 
\]
We claim that $U_S$ is open.  The fact that $S+L^+\subset S$ is clear, so to prove the claim, we fix $v\in L$ and show that there exists a $k$ such that $v+kL^+\subset S$.

If $N\geq |v|_-$, then $|v_N|_->|v|_-$, and so $v_N\notin v+L^+$.  Since there are only finitely many $N\in \bb{Z}_{\geq 0}$ with $N<|v|_-$, and since $\bigcap_k (v+kL^+)=\emptyset$, there exists some fixed $k$ such that $v_N\notin v+kL^+$ for any $N$.  Then for this $k$, we have $(v+kL^+)\cap (v_N-L^{\oplus})=\emptyset$ for all $N$, and so $v+kL^+\subset S$, as desired.  On the other hand, by construction, there is no $N$ such that $g_n-g_{n'}\in U_S$ for all $n,n'\geq N$, contradicting the assumption that $(g_k)_k$ is Cauchy. 
\end{proof}

For $A$ a topological $R$-module, we say that $S$ \textbf{topologically spans} $A$ over $R$ if, for every $f\in A$ and each open neighborhood $U$ of $0$, there exists a finite linear combination $g=\sum_{s\in S} a_s s$ with $a_s\in R$ such that $f-g\in U$.  Equivalently, $A$ is the closure of the space spanned by $S$ in the usual sense.  One says that $S$ forms a \textbf{topological $R$-module basis} for $A$ if no proper subset of $S$ topologically spans $A$.  In the case in which $\{0\}$ is open, this is just the usual notion of a basis.

One easily sees the following:
\begin{lem}\label{zbas}
The set $\{z^v\}_{v\in L}$ is a topological $\kt$-module basis for $\kk_t\llb L\rrb$.
\end{lem}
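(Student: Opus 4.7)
The plan is to verify the two defining properties of a topological basis: topological spanning and minimality.

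For topological spanning, Proposition \ref{CauchyProp} identifies $\kk_t\llb L\rrb$ with the Cauchy completion of $\kk_t[L]$, so $\kk_t[L]$ is dense. Since $\{z^v\}_{v\in L}$ is already an ordinary $\kt$-module basis of $\kk_t[L]$, any element of $\kk_t\llb L\rrb$ can be approximated modulo an arbitrary open neighborhood of $0$ by a finite $\kt$-linear combination of the $z^v$.

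For minimality, fix $v_0\in L$. The plan is to show that the coefficient functional $[z^{v_0}]\colon \kk_t[L]\to \kt$ sending $\sum a_v z^v$ to $a_{v_0}$ is continuous once $\kt$ carries its discrete topology. Being linear and continuous with discrete target, it then extends uniquely and continuously to $\kk_t\llb L\rrb$. Since this extension vanishes on the $\kt$-span of $\{z^v\}_{v\neq v_0}$ and takes the value $1$ on $z^{v_0}$, it separates $z^{v_0}$ from the closure of this span, establishing that $\{z^v\}_{v\neq v_0}$ is not a topological spanning set.

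Continuity reduces to finding an open submodule inside the kernel. I would take $S\coloneqq L\setminus(v_0 - L^{\oplus})$ and consider $U_S\coloneqq\bigoplus_{v\in S}\kt z^v\subset\ker[z^{v_0}]$. Verifying $U_S\in\mathfrak{U}_0$ amounts to checking the two conditions of \S\ref{TopStr}: (1) $S+L^{\oplus}\subset S$, which follows from $L^{\oplus}+u\subset L^{\oplus}$ for $u\in L^{\oplus}$; and (2) for each $v\in L$ there exists $k$ with $v+kL^+\subset S$. Using the identity $kL^+ + L^{\oplus}=kL^+$ (valid for $k\geq 1$), condition (2) reduces to requiring $v_0-v\notin kL^+$ for $k$ large, which follows since $\bigcap_k kL^+=\emptyset$ by the strong convexity and rationality of $\sigma$ (a strictly positive $\bb{Q}$-linear functional on $\sigma\setminus\{0\}$ attains a positive minimum on $L^+$, which then forces $\ell(kL^+)\to\infty$). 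This finiteness property of the $kL^+$ is the only mildly technical step; everything else is a direct unravelling of definitions and the Cauchy completion description of Proposition \ref{CauchyProp}.
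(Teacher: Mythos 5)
Your proof is correct. The paper gives no explicit argument for Lemma~\ref{zbas} itself (it is prefaced by ``one easily sees''), but its generalisation Lemma~\ref{PointedBasis} is proved, and the minimality step there uses exactly your separating open submodule $U_S$ with $S=L\setminus(p-L^{\oplus})$: the paper inspects the $z^{s}$-coefficient of a would-be approximation directly and derives a contradiction, whereas you package the same comparison as continuity of the coefficient functional $[z^{v_0}]$ together with its unique continuous extension to the completion, which is a tidy reformulation of the same idea. Your spanning argument takes a genuinely different route: the paper constructs approximants for an arbitrary $p$-pointed family by recursion modulo $U_{w+kL^+}$, while you invoke density of $\kk_t[L]$ in its Cauchy completion via Proposition~\ref{CauchyProp}; your route is simpler but is tailored to the monomial basis $\{z^v\}$ and would not immediately generalise to Lemma~\ref{PointedBasis}. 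The only substantive input you need beyond unwinding definitions is that $\bigcap_k kL^+=\emptyset$, which you justify correctly via a strictly positive linear functional on the strongly convex rational cone $\sigma$ (finite generation of $L^{\oplus}$ by Gordan's lemma bounds it below on $L^+$), and which the paper itself uses in the proof of Proposition~\ref{CauchyProp}.
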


More generally, we have the following:
\begin{lem}\label{PointedBasis}
Let $\{f_p\}_{p\in L}$ be a subset of $\kk_t\llb L\rrb$, indexed by $L$, such that $f_p$ is $p$-pointed for each $p$.  Then $\{f_p\}_{p\in L}$ is a topological $\kk_t$-module basis for $\kk_t\llb L\rrb$.
\end{lem}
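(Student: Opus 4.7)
My plan is to reduce the lemma to Lemma~\ref{zbas} (which gives that $\{z^v\}_{v\in L}$ is a topological basis) by exploiting the fact that $p$-pointedness of $f_p$ makes the change of basis between $\{z^v\}$ and $\{f_p\}$ triangular with respect to the partial order $v\leq v'$ iff $v'-v\in L^\oplus$, with $1$'s on the diagonal.  I will separately establish topological spanning and minimality, both relying on the strong convexity of $\sigma$.

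For topological spanning, fix $f\in\kk_t\llb L\rrb$ and an open neighborhood $\widehat{U_S}$ of $0$.  By Proposition~\ref{CauchyProp}, $f$ has support contained in $\bigcup_{w\in W}(w+L^\oplus)$ for some finite $W\subset L$.  The hypotheses on $S$ provide, for each $w$, some $k_w$ with $w+k_wL^+\subset S$; since $L^\oplus\setminus k_wL^+$ is finite by strong convexity (using a linear functional strictly positive on $\sigma\setminus\{0\}$), the set $T\coloneqq\bigcup_{w\in W}(w+L^\oplus)\setminus S$ is finite.  Totally ordering $T$ compatibly with $\leq$, I iteratively solve a finite lower-triangular linear system, with $1$'s on the diagonal thanks to $p$-pointedness, for scalars $c_p\in\kk_t$, $p\in T$, such that $g\coloneqq\sum_{p\in T}c_pf_p$ satisfies $\pi_v(f-g)=0$ for every $v\in T$, where $\pi_v$ extracts the $z^v$-coefficient.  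Then $\supp(f-g)\subset\bigcup_{w\in W}(w+L^\oplus)\setminus T\subset S$, so $f-g\in\widehat{U_S}$.

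For minimality, for each $p_0\in L$ I will construct a continuous $\kk_t$-linear functional $\phi_{p_0}\colon\kk_t\llb L\rrb\to\kk_t$ with $\phi_{p_0}(f_p)=\delta_{p,p_0}$.  Formally inverting the triangular change of basis produces scalars $b_{u,v}\in\kk_t$ (with $b_{0,v}=1$) corresponding to a formal expansion $z^v=\sum_{u\in L^\oplus}b_{u,v}f_{v+u}$, and I set
\[
\phi_{p_0}(g)\coloneqq\sum_{v\leq p_0}b_{p_0-v,v}\,\pi_v(g).
\]
The identity $\phi_{p_0}(f_p)=\delta_{p,p_0}$ then reduces to a matrix identity over the finite interval $[p,p_0]\coloneqq(p+L^\oplus)\cap(p_0-L^\oplus)$, which holds by the formal inversion.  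Continuity follows since $\widehat{U_{L\setminus(p_0-L^\oplus)}}$ is an open neighborhood of $0$ (verifiable in the same way as $\widehat{U_S}$ above) on which $\phi_{p_0}$ vanishes identically.  If $\{f_p\colon p\neq p_0\}$ were to topologically span $\kk_t\llb L\rrb$, continuity of $\phi_{p_0}$ would force every element of the closure of the $\kk_t$-span of $\{f_p\colon p\neq p_0\}$ to have $\phi_{p_0}$-value $0$, contradicting $\phi_{p_0}(f_{p_0})=1$.

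The main obstacle throughout is verifying that the natural candidate for $\phi_{p_0}$ is a well-defined, continuous functional on the entire formal Laurent series ring $\kk_t\llb L\rrb$; this ultimately rests on strong convexity of $\sigma$, which ensures both that the support of any $g\in\kk_t\llb L\rrb$ meets the cone $p_0-L^\oplus$ in only finitely many lattice points (so the defining sum for $\phi_{p_0}(g)$ is finite) and that sufficiently large upward-closed open sets $S$ can be chosen disjoint from $p_0-L^\oplus$ (so $\phi_{p_0}$ vanishes on a genuine open neighborhood of $0$).
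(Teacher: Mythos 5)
Your proof is correct, and the spanning half is essentially the same inductive/triangular change-of-basis argument as the paper's, just phrased as solving one finite lower-triangular system rather than an inductive refinement modulo $U_{w+kL^+}$.

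Where you genuinely differ from the paper is in the minimality half. Both arguments hinge on the same open set $U_{L\setminus(p_0-L^\oplus)}$ and on the finiteness of $L^\oplus\cap(u-L^\oplus)$ for $u\in L^\oplus$ (i.e.\ strong convexity of $\sigma$), but they package it differently. You explicitly build the continuous dual functional $\phi_{p_0}$ that reads off the $f_{p_0}$-coordinate, which requires formally inverting the change of basis and then checking that the resulting sum is finite on every element of $\kk_t\llb L\rrb$ and that the functional kills an open neighborhood of $0$. The paper instead avoids constructing the dual functional: it takes the monomial $z^p$ as test element, supposes it is approximated modulo $U_{L\setminus(p-L^\oplus)}$ by a finite combination $\sum_{s\in S'}a_sf_s$ with $s\in L'\subsetneq L$, observes by $s$-pointedness that one may assume $S'\subset p-L^+$, and then reads off the $z^s$-coefficient at a $\leq$-minimal $s\in S'$ to reach a contradiction. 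The paper's route is leaner because there is no inversion and no functional to verify; your route makes the dual basis explicit, which is arguably more transparent and in fact gives slightly more (a concrete continuous coordinate functional $\phi_{p_0}$) at the cost of the extra bookkeeping you correctly flag in your last paragraph. Both are valid.
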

\begin{proof}
Let $$f=\sum_{v\in L^{\oplus}} a_vz^{w+v}\in z^w\kk_t\llb L^{\oplus}\rrb \subset \kk_t\llb L\rrb.$$
Modulo $U_{w+L^+}$, the pointedness implies that $f_w\equiv z^w$, and so $f\equiv g_0\coloneqq a_0f_w$.  Now suppose that there exist constants $b_v\in \kk_t$ for $v\in L^{\oplus}\setminus kL^+$ such that $$f\equiv g_k\coloneqq \sum_{v\in L^{\oplus}\setminus kL^+} b_vf_{w+v}\quad\quad \mbox{(modulo $U_{w+kL^+}$).}$$  Then there exist additional constants $b_v\in \kk_t$ for $v\in kL^+\setminus (k+1)L^+$ such that
\begin{align*}
    f-g_k&\equiv \sum_{v\in w+(kL^+\setminus(k+1)L^+)} b_vz^{w+v} \quad\quad \mbox{(modulo $U_{w+(k+1)L^+}$)}\\
    &\equiv \sum_{v\in w+(kL^+\setminus(k+1)L^+)} b_vf_{w+v} \quad\quad \mbox{(modulo $U_{w+(k+1)L^+}$)},
\end{align*} 
the last equivalence using that $f_{w+v}$ is $(w+v)$-pointed.  Thus, $$f\equiv \sum_{v\in L^{\oplus}\setminus (k+1)L^+} b_vf_{w+v}\quad\quad \mbox{(modulo $U_{w+(k+1)L^+}$)}.$$
Proceeding in this way we recursively define $b_v\in\kk_t$ for all $v\in L^{\oplus}$.  For every open neighborhood $U$ of $0$, we have that $U_{w+kL^+}\subset U$ for sufficiently large $k$, hence $f\equiv \sum_{v\in L^{\oplus}\setminus kL^+} b_vf_{w+v}$ modulo $U$.  Since every element of $\kk_t\llb L\rrb$ is a finite sum of elements like $f$, we see that $\{f_p\}_{p\in L}$ topologically spans.

We now show that no proper subset of $\{f_p\}_{p\in L}$ topologically spans $\kk_t\llb L\rrb$.  Say for a contradiction that there is such a proper subset $\{f_p\}_{p\in L'}$ for some $L'\subset L$.  Let $p\in L\setminus L'$.  We define $S=L\setminus (p-L^{\oplus})$.  Then $U_S$ is open.  Assume that we can pick a finite subset $S'\subset L'$ such that $(\sum_{s\in S'}a_sf_s)-z^p\in U_S$ for some set of nonzero $a_s\in\kk_t$.  If $s\notin p-L^{\oplus}$, then $f_s\in U_S$ by definition and $s$-pointedness, and so we can assume that $S'\subset p-L^{\oplus}$.  In fact, $S'\subset p-L^+$ since $p\notin L'$.  Now pick $s\in S'$ minimal with respect to the partial ordering: $l\leq l'$ if $l'\in l+L^{\oplus}$.  Then the $z^s$ coefficient of $(\sum_{s\in S'}a_sf_s)-z^p$ is $a_s\neq 0$, and since the $z^s$ coefficient of every element of $U_S$ is zero, this gives a contradiction. 
\end{proof}
We note that the same argument applies in the classical limit.

\begin{rem}\label{formal_mult}
Let $\{f_p\}_{p\in L}$ be as in Lemma \ref{PointedBasis}.  By the lemma, for $p,p'\in L$ we may write 
\[
f_pf_{p'}=\sum_{v\in p+p'+L^{\oplus}}a(p,p';v)f_v
\]
for certain $a(p,p';v)\in \kk_t$.  Let $A$ denote the $\kk_t$-algebra spanned by formal sums $\sum_{v\in w+L^{\oplus}}b_v\mathscr{F}_v$, for various $w\in L$ and some collection of symbols $\{\mathscr{F}_v\}_{v\in L}$, with structure constants given by $a(p,p';v)$, i.e. for coefficients $b_v,c_v\in \kk_t$ we define
\[
\left(\sum_{v\in w+L^{\oplus}}b_v\mathscr{F}_v \right)\left( \sum_{v\in w'+L^{\oplus}}c_v\mathscr{F}_v\right)=\sum_{v\in w+w'+L^{\oplus}}\sum_{\substack{u\in w+L^{\oplus}\\u'\in w'+L^{\oplus}}}a(u,u';v)b_u c_{u'}\mathscr{F}_{v}
\]
where the sum is well defined since unless $v-(u+u')\in L^{\oplus}$ the structure constant $a(u,u';v)$ is zero.  Then as in the first part of the proof of Proposition \ref{CauchyProp}, a formal power series $\sum_{v\in w+L^{\oplus}}b_v\mathscr{F}_v$ defines a Cauchy convergent series in $\kk_t\llb L\rrb$ by replacing each $\mathscr{F}_v$ with $f_v$, and the induced map $A\rightarrow\kk_t\llb L\rrb$ is an isomorphism of algebras via the proof of Lemma \ref{PointedBasis}.  
\end{rem}

\subsubsection{The quantum torus Lie algebra}\label{qTorLie}

Let $L_0\subset L$ be a sublattice such that $\omega(u,v)\in \bb{Z}$ whenever $u\in L_0$ and $v\in L$.  Define $L_0^+\coloneqq L_0\cap L^+$. 
Consider the Lie algebra over $\kk[t^{\pm 1}]$
\begin{align*}
    \f{g}'\coloneqq \bigoplus_{v\in L_0^+} \kk[t^{\pm 1}]\cdot z^v
\end{align*}
with bracket $[z^a,z^b]=(\omega(a,b))_t z^{a+b}$, using the notation of \eqref{at}.  Note that this is a Lie subalgebra of the commutator algebra of $\kk_t[L]$.  Now define the \textbf{quantum torus Lie algebra} $\f{g}=\f{g}_{L_0^+,\omega}$ to be the Lie subalgebra of $\f{g}'\otimes_{\kk[t^{\pm 1}]} \kk(t)$ generated over $\kk[t^{\pm 1}]$ by the elements of the form 
\begin{align}\label{hatz}
    \hat{z}^v\coloneqq \frac{z^v}{(|v|)_{t}}=\frac{z^v}{t^{|v|}-t^{-|v|}}
\end{align} 
for $v\in L_0^+$, recalling that $|v|$ denotes the index of $v$.  We define an $L_0^+$-grading on $\f{g}$ by letting the degree $v$ part be
\begin{align*}
    \f{g}_v\coloneqq \f{g}\cap (\kk(t)\cdot z^v) \subset \f{g}.
\end{align*}
Here, being $L_0^+$-graded means that for all $a,b\in L_0^+$, we have $[\f{g}_a,\f{g}_b]\subset \f{g}_{a+b}$.  
Note that $\f{g}$ and $\omega$ satisfy the following compatibility condition:
\begin{align}\label{skewCondition}
\mbox{if~}\omega(a,b)=0, \mbox{~then~} [\f{g}_{a},\f{g}_{b}]=0.    
\end{align}

The adjoint action of $\f{g}'$ induces an action of the Lie algebra $\f{g}$ on the noncommutative torus $\kk_t[L]$ by $L$-graded $\kt$-algebra derivations:
\begin{align*}
    \ad_{\hat{z}^a}(z^b) = \frac{(\omega(a,b))_t}{(|a|)_{t}} z^{a+b} \in \kk_t[L].
\end{align*}
Here, we use our assumption that $\omega(a,b)\in \bb{Z}$, as well as the observation that if $a$ and $c$ are positive integers with $a|c$, then 
\begin{align}\label{tquotient}
    \frac{(c)_{t}}{(a)_{t}} = \sum_{k=1}^{c/a} t^{-c+(2k-1)a}\in \kk[t^{\pm 1}]\subset \kt.
\end{align}

Let $\f{g}^{\geq k}\subset \f{g}$ denote the Lie algebra ideal spanned by the summands $\f{g}_v$ with $v\in kL_0^+$.  We define $\f{g}_k\coloneqq \f{g}/\f{g}^{\geq (k+1)}$, and
\begin{align*}
    \hat{\f{g}}\coloneqq \varprojlim_k \f{g}_k.
\end{align*}

By the Baker--Campbell--Hausdorff formula, we can apply $\exp$ to $\f{g}_k$ and $\hat{\f{g}}$ to obtain unipotent (respectively, pro-unipotent) algebraic groups $G_k$ and $\hat{G}=\varprojlim_k G_k$, respectively.  Note that $\hat{G}$ comes with a projection to $G_k$ for each $k\in \bb{Z}_{\geq 1}$, and that we naturally have $$\hat{G}\subset 1+\f{m}\kk[t^{\pm 1}]\llb L^{\oplus}\rrb\subset \kk[t^{\pm 1}]\llb L^{\oplus}\rrb\subset \kk_t\llb L^{\oplus}\rrb \subset \kk_t\llb L\rrb$$ for $\f{m}$ the maximal monomial ideal of $\kk_t[L^{\oplus}]$.

The (adjoint) action of $\f{g}$ on $\kk_t[L]$ by $L$-graded $\kt$-derivations induces an action of $\hat{\f{g}}$ on $\kk_t\llb L\rrb$ by (topologically)\footnote{A topological $L$-grading on a topological $R$-module $A$ is an $L$-grading for an $R$-submodule which topologically spans $A$.  For convenience, we may just refer to this as an $L$-grading.} $L$-graded $\kt$-derivations, hence an (Adjoint) action of $\hat{G}$ on $\kk_t\llb L\rrb$ by $\kt$-algebra automorphisms: for $g=\exp(b)\in \hat{G}$ and $a\in \kk_t\llb L\rrb$, the action is $\Ad_g(a)=gag^{-1}=\exp([b,\cdot])(a)$.

\begin{lem}\label{AdHomeo}
For each $g\in \hat{G}$, $\Ad_g\colon \kk_t\llb L\rrb\rar \kk_t\llb L\rrb$ is a homeomorphism. 
\end{lem}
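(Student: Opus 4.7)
The plan is to establish continuity of $\Ad_g$ at $0$ by exhibiting a neighborhood basis of $0$ that is preserved by $\Ad_g$; then $\kt$-linearity (which holds because $\Ad_g$ is a $\kt$-algebra automorphism) extends continuity to all of $\kk_t\llb L\rrb$, and applying the same argument to $g^{-1}\in\hat{G}$ produces a continuous inverse $\Ad_{g^{-1}}=\Ad_g^{-1}$.

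By the definition of the topology in \S\ref{TopStr} and the argument in the proof of Proposition \ref{CauchyProp}, a neighborhood basis of $0$ in $\kk_t\llb L\rrb$ is given by the natural extensions of the sets $U_S$ to the completion, where $S\subset L$ is closed under addition by elements of $L^{\oplus}$ and for each $v\in L$ some translate $v+kL^+$ lies in $S$. The crucial support property follows from writing $g=\exp(b)$ with $b\in\hat{\f g}$, using $\Ad_g=\exp(\ad_b)$, and noting that $\ad_{\hat z^w}$ raises the $L$-degree by $w\in L_0^+\subset L^{\oplus}$. Hence $\frac{1}{n!}\ad_b^n(z^v)$ is supported in $v+nL_0^+\subset v+L^{\oplus}$, and so $\Ad_g(z^v)$ is supported in $v+L^{\oplus}$.

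I would then verify that $\Ad_g(U_S)\subset U_S$ for any such $S$. Given $f=\sum_{v}a_v z^v\in U_S$, i.e.\ $\supp(f)\subset S$, the element $\Ad_g(f)=\sum_{v}a_v\,\Ad_g(z^v)$ has support contained in $\bigcup_{v\in\supp(f)}(v+L^{\oplus})\subset S$, where the last inclusion uses the $L^{\oplus}$-closure of $S$. Well-definedness of the sum as an element of $\kk_t\llb L\rrb$ follows from strong convexity of $\sigma$: the support $\supp(f)$ is contained in some finite union $\bigcup_i (w_i+L^{\oplus})$, and for any fixed $w\in L$ the intersection $\bigl(\bigcup_i (w_i+L^{\oplus})\bigr)\cap (w-L^{\oplus})$ is finite, so each $z^w$-coefficient of $\Ad_g(f)$ is a finite $\kt$-linear combination of coefficients coming from the $\Ad_g(z^v)$.

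Finally, preservation of a basic neighborhood of $0$ combined with $\kt$-linearity upgrades continuity at $0$ to continuity everywhere. Since $g^{-1}\in\hat{G}$, the same argument shows $\Ad_{g^{-1}}$ is continuous, so $\Ad_g$ is a homeomorphism. The principal subtlety is checking convergence of $\Ad_g(f)$ in the completion, and that is essentially the only place where the strong convexity of $\sigma$ is used; the rest is a routine consequence of the fact that $\Ad_g$ shifts $L$-gradings only by elements of $L_0^+\subset L^{\oplus}$.
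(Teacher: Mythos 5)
Your proof is correct, and it takes a genuinely different route from the paper's. The paper's argument first establishes $\Ad_g(\hat{\f{m}})=\hat{\f{m}}$ for $\hat{\f{m}}$ the maximal monomial ideal, then characterizes the neighborhood basis $\f{S}$ of $0$ abstractly by the two conditions $\hat{\f{m}}^kU\subset U$ and $\hat{\f{m}}^kf\subset U$ for $k\gg 0$, and finally shows $\Ad_g$ takes the \emph{class} $\f{S}$ into itself (not each individual $U\in\f{S}$); this requires pushing the characterizing conditions through $\Ad_g$, and it is there that invertibility of $\Ad_g$ and the identity $\Ad_g(\hat{\f{m}})=\hat{\f{m}}$ are used. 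You instead encode the same underlying structural fact concretely, as $\supp(\Ad_g(z^v))\subset v+L^{\oplus}$, and then show that $\Ad_g$ preserves each individual monomial neighborhood $U_S$ whose index set $S$ is closed under addition of $L^{\oplus}$. This is a cleaner reduction and avoids the equivalence-juggling with the abstract conditions; its one cost is that you rely on the $U_S$ (with $S$ satisfying the two conditions in \S\ref{TopStr}) forming a neighborhood basis of $0$ in the completion, which the paper states only as a sufficient condition for openness. That gap is short to close — given $U\in\f{U}_0$, set $S=\bigcup_{v\in L}(v+k_v L^+)$ for suitable $k_v$, and check $U_S\subset U$ — but it should be verified explicitly rather than attributed to the proof of Proposition \ref{CauchyProp}, which uses particular $U_S$'s but does not assert they form a basis. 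The remainder (continuity at $0$ plus translation-invariance gives continuity everywhere; applying the argument to $g^{-1}$ gives a continuous inverse) is routine and correct, and your remark on the well-definedness of $\Ad_g(f)$ via strong convexity is really a re-verification of a fact already built into the construction of the $\hat{G}$-action in \S\ref{qTorLie}, so it is harmless but not load-bearing.
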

\begin{proof}
Let $\hat{\f{m}}$ be the subspace of $\kk_t\llb L\rrb$  topologically spanned by the elements $z^n$ for $n\in L^+$.   Since $\f{g}$ is graded by $L_0^+\subset L^+$ and the action respects the grading, the action of $\hat{G}$ must preserve $\hat{\f{m}}$, i.e., $\Ad_g(\hat{\f{m}})\subset \hat{\f{m}}$. Since the same holds for $g^{-1}$, we have $\Ad_{g^{-1}}(\hat{\f{m}})\subset \hat{\f{m}}$, so $\hat{\f{m}}\subset \Ad_{g}(\hat{\f{m}})$. Hence, $\Ad_g(\hat{\f{m}})=\hat{\f{m}}$.

Let $\f{S}$ be the set of open sets obtained by taking closures of images of the sets in $\f{U}_0$ under the inclusion $\kk_t[L]\rightarrow \kk_t\llb L\rrb$.  So $\f{S}$ is a basis of open neighborhoods of $0\in\kk_t\llb L\rrb$, and translates of sets in $\f{S}$ form a basis for all open sets in $\kk_t\llb L\rrb$.  Now, a set $U\ni 0$ is in $\f{S}$ if and only if it is a $\kk_t$-submodule of $\kk_t\llb L\rrb$ such that for all $k\in \mathbb{Z}_{\geq 0}$ we have $\wh{\f{m}}^kU\subset U$, and, for all $f\in \kk_t\llb L\rrb$, there exists a $k_0\in \bb{N}$ such that $k>k_0$ implies $\wh{\f{m}}^kf\subset U$.  Applying $\Ad_g$ to both sides, the first containment is equivalent to $\wh{\f{m}}\Ad_g(U)\subset \Ad_g(U)$ and so $\Ad_g(U)$ satisfies the first condition for being in $\f{S}$.  The second containment is equivalent to $\wh{\f{m}}^k\Ad_g(f)\subset \Ad_g(U)$.  Then since $\Ad_g$ is a bijection, this is equivalent to the condition that for all $f\in \kk_t\llb L\rrb$, there exists some $k_0\in \bb{N}$ such that $k>k_0$ implies $\wh{\f{m}}^k f\subset \Ad_g(U)$.  But this is equivalent to the second condition for $\Ad_g(U)$ to be in $\f{S}$.  So $\Ad_g$ takes sets in $\f{S}$ to sets in $\f{S}$.  Additionally, $\Ad_g(U+f)=\Ad_g(U)+\Ad_g(f)$ and so $\Ad_g$ takes translates (under addition) of sets in $\f{S}$ to other translates of sets in $\f{S}$.  Applying the same argument to $\Ad_{g^{-1}}=\Ad_g^{-1}$ yields the claim.
\end{proof}

\subsubsection{Classical limits}\label{scS}
One can take \textbf{classical limits} of everything above.  In place of $\kk_t[L]$, one considers the usual torus algebra $\kk[L]$.  Taking $t^{1/D}\mapsto 1$ defines a surjective graded algebra homomorphism
\begin{align*}
    \lim_{t\rar 1}\colon  \kk_t[L] \rar \kk[L].
\end{align*}

Similarly, we can define the (classical) torus Lie algebra,
\begin{align*}
 \f{g}^{\cl}\coloneqq \bigoplus_{v\in L_0^+} \kk\cdot z^v,
\end{align*}
equipped with the bracket
\begin{align*}
    [z^a,z^b]\coloneqq \omega(a,b) z^{a+b}.
\end{align*}
Then $t\mapsto 1$ and $\hat{z}^v\mapsto \frac{1}{|v|}z^v$ determines a graded Lie algebra homomorphism
\begin{align*}
    \lim_{t\rar 1}\colon \f{g}\rar \f{g}^{\cl}.
\end{align*}
Indeed,
\begin{align*}
    \lim_{t\rar 1} [\hat{z}^a,\hat{z}^b]_{\f{g}}=\lim_{t\rar 1}\left(\frac{(\omega(a,b))_{t}(|a+b|)_{t}}{(|a|)_{t}(|b|)_{t}} \hat{z}^{a+b}\right) = \frac{\omega(a,b)}{|a||b|}z^{a+b}=[z^a/|a|,z^b/|b|]_{\f{g}^{\cl}}.
\end{align*}
We similarly see that these maps $\lim_{t\rar 1}$ respect the action of $\f{g}$ on $\kk_t[L]$: 
\begin{align*}
        \lim_{t\rar 1} \ad_{\hat{z}^a}(z^b)=\lim_{t\rar 1}\left(\frac{(\omega(a,b))_{t}}{(|a|)_{t}} z^{a+b}\right) = \frac{\omega(a,b)}{|a|}z^{a+b}=[z^a/|a|,z^b]_{\f{g}^{\cl}}.
\end{align*}
Since the morphisms $\lim_{t\rar 1}$ respect the $L$-gradings, they induce morphisms $\lim_{t\rar 1}$ on the respective completed Lie algebras, giving $\lim_{t\rar 1} \colon  \kk_t\llb L\rrb \rar \kk\llb L\rrb$, $\lim_{t\rar 1} \colon \hat{\f{g}}\rar \hat{\f{g}}^{\cl}$, and $\lim_{t\rar 1} \hat{G}\rar \hat{G}^{\cl}$, each intertwining the corresponding actions on $\kt\llb L\rrb$ and $\kk\llb L\rrb$.

\subsection{Scattering diagrams}\label{WallScat}

\subsubsection{Walls and scattering diagrams}

We assume as above that $L$ is a lattice equipped with a skew-symmetric rational bilinear form $\omega$, $L_0\subset L$ is a sublattice satisfying $\omega(u,v)\in \bb{Z}$ whenever $u\in L_0$ and $v\in L$, and $\sigma\subset L_{\mathbb{R}}$ is a strongly convex rational polyhedral cone.  We let $\mathfrak{g}$ be any $L_0^+$-graded Lie algebra satisfying \eqref{skewCondition}.  One defines $\f{g}_k$, $\hat{\f{g}}$, $G_k$, and $\hat{G}$ analogously to the case in which $\f{g}=\f{g}_{L_0^+,\omega}$.

Given a primitive $v\in L_0^+$ (primitive as an element of $L_0$), we define a Lie subalgebra
\begin{align*}
    \f{g}_v^{\parallel}\coloneqq \prod_{k\in \bb{Z}_{>0}} \f{g}_{kv}\subset \hat{\f{g}},
\end{align*}
and a pro-unipotent algebraic subgroup $G_v^{\parallel}\coloneqq \exp(\f{g}_v^{\parallel})\subset \hat{G}$.  Note that $\f{g}$ satisfying \eqref{skewCondition} implies that $\f{g}_v^{\parallel}$ and $G_v^{\parallel}$ are Abelian.  Consider the map $\omega_1\colon L\rar L^*$, $v\mapsto \omega(v,\cdot)$.  We denote 
\begin{align*}
    v^{\omega\perp}\coloneqq \omega_1(v)^{\perp}=\{a\in L_{\bb{R}}\colon \omega(v,a)=0\} \subset L_{\bb{R}}.
\end{align*}

A \textbf{wall} in $L_{\bb{R}}$ over $\f{g}$ is a pair $(\f{d},f_{\f{d}})$, where\footnote{Our definition of walls is consistent with that of \cite[Def. 2.2]{Man3} (for $m_{\f{d}}$ there taken to be $\omega(v_{\f{d}},\cdot)\in L^*$ in our setup) but slightly different from that of \cite[Def. 1.4]{GHKK}.  In the convention used by \cite{GHKK}, the support $\f{d}$ would live in $L^*_{\bb{R}}$ and would be parallel to $v_{\f{d}}^{\perp}$.  We find that our approach is more natural for the quantum setup, especially for defining the scattering diagrams $\f{D}^{\s{X}_q}$ of \S \ref{Initial}.   See \cite[Rmk. 2.3]{Man3} or \S \ref{ComparisonTricks} for an explanation of how to pass between the different conventions.\label{ConventionsFootnote}}
\begin{itemize}
    \item $f_{\f{d}}\in G_{v_{\f{d}}}^{\parallel}$ for some primitive $v_{\f{d}}\in L_0^+\setminus \ker(\omega_1)$, and
    \item $\f{d}$ is a closed, convex (but not necessarily strongly convex), $(r-1)$-dimensional, rational-polyhedral affine cone in $L_{\bb{R}}$ parallel to $v_{\f{d}}^{\omega\perp}$.
\end{itemize} 
Here, by a closed, convex, rational polyhedral affine cone in $L_{\bb{R}}$, we mean an intersection of sets of the form $\{v\in L_{\bb{R}}\colon \langle u_i,v\rangle \geq \langle u_i,v_0\rangle\}$ for some fixed (but possibly not uniquely determined) $v_0\in L_{\bb{R}}$ called the apex of the cone, and some finite collection $\{u_i\}$ of elements of $L^*$.  Note that the primitive vector $v_{\f{d}}\in L_0^+$ is uniquely determined by $f_{\f{d}}$ (assuming $f_{\f{d}}\neq 1$).  We call $-v_{\f{d}}$ the \textbf{direction} of the wall.  A wall is called \textbf{incoming} if $v+\bb{R}_{\geq 0}v_{\f{d}}\subset \f{d}$ for all $v\in \f{d}$, and it is called \textbf{outgoing} otherwise.

A \textbf{scattering diagram} $\f{D}$ in $L_{\bb{R}}$ over $\f{g}$ 
is a set of walls in $L_{\bb{R}}$ over $\f{g}$ such that for each $k >0$, there are only finitely many $(\f{d},f_{\f{d}})\in \f{D}$ with $f_{\f{d}}$ not projecting to $1$ in $G_k$.  For each $k\in \bb{Z}_{>0}$ and scattering diagram $\f{D}$ over $\f{g}$, let $\f{D}_k\subset \f{D}$ denote the finite scattering diagram over $\f{g}$ consisting of the $(\f{d},f_{\f{d}})\in \f{D}$ for which $f_{\f{d}}$ is nontrivial in the projection to $G_k$.  We may also view $\f{D}_k$ as a scattering diagram over $\f{g}_k$ by taking the projections of the functions $f_{\f{d}}$.

\subsubsection{Path-ordered products}
\label{pop_sec}
We will sometimes denote a wall $(\f{d},f_{\f{d}})$ by just $\f{d}$.  Denote $\Supp(\f{D})\coloneqq  \bigcup_{\f{d}\in \f{D}} \f{d}$, and \begin{align}\label{joints}
\Joints(\f{D})\coloneqq  \bigcup_{\f{d}\in \f{D}} \partial \f{d} \cup \bigcup_{\substack{\f{d}_1,\f{d}_2\in \f{D}\\
		               \dim \f{d}_1\cap \f{d}_2 = r-2}} \f{d}_1\cap \f{d}_2. 
\end{align}

Consider a smooth immersion $\gamma\colon [0,1]\rar L_{\bb{R}}\setminus \Joints(\f{D})$ with endpoints not in $\Supp(\f{D})$ which is transverse to each wall of $\f{D}$ it crosses.  Let $(\f{d}_i,f_{\f{d}_i})$, $i=1,\ldots, s$, denote the walls of $\f{D}_k$ crossed by $\gamma$, and say they are crossed at times $0<t_1\leq \ldots \leq t_s<1$, respectively (if $t_i=t_{i+1}$, then the fact that each $G_v^{\parallel}$ is Abelian will imply that the ordering of these two walls does not affect \eqref{WallCross2} and therefore does not matter).  Define 
\begin{align}\label{WallCross}
\theta_{\f{d}_i}\coloneqq f_{\f{d}_i}^{\sign \omega(v_{\f{d}_i},-\gamma'(t_i))} \in G_k.
\end{align}
Let 
\begin{align}\label{WallCross2}
    \theta_{\gamma,\f{D}}^k\coloneqq \theta_{\f{d}_s} \cdots \theta_{\f{d}_1}\in G_k,
\end{align} and define the path-ordered product:
\begin{align}\label{pathprod}
\theta_{\gamma,\f{D}}\coloneqq  \varprojlim_k \theta_{\gamma,\f{D}}^k \in \hat{G}.
\end{align}

\begin{dfn}
Two scattering diagrams $\f{D}$ and $\f{D}'$ are  \textbf{equivalent} if $\theta_{\gamma,\f{D}} = \theta_{\gamma,\f{D}'}$ for each smooth immersion $\gamma$ as above (assuming transversality with the walls of both $\f{D}$ and $\f{D}'$).  A scattering diagram $\f{D}$ is \textbf{consistent} if each $\theta_{\gamma,\f{D}}$ depends only on the endpoints of $\gamma$, or equivalently, if $\theta_{\gamma,\f{D}}=\id$ whenever $\gamma$ is a closed path.
\end{dfn}

\begin{egs}\label{EquivEx}
~

\begin{enumerate}
    \item If $(\f{d},f_1)$ and $(\f{d},f_2)$ are two walls of a scattering diagram with the same support and direction, then replacing these two walls with the single wall $(\f{d},f_1f_2)$ results in an equivalent scattering diagram.
    \item Replacing a wall $(\f{d},f_{\f{d}})\in \f{D}$ with a pair of walls $(\f{d}_i,f_{\f{d}})$, $i=1,2$, such that $\f{d}_1\cup \f{d}_2=\f{d}$ and $\codim(\f{d}_1\cap \f{d}_2)=2$ produces an equivalent scattering diagram.
    \item One says that $x\in L_{\bb{R}}$ is \textbf{general} if it is contained in at most one hyperplane of the form $u^{\perp}$ for $u\in L^*$.  For $\f{D}$ a scattering diagram in $L_{\bb{R}}$ over $\f{g}$ and $x\in L_{\bb{R}}$ general, denote $f_{x,\f{D}}\coloneqq \prod_{\f{d}\ni x} f_{\f{d}}\in \hat{G},$ where the product is over all walls $(\f{d},f_{\f{d}})\in \f{D}$ with $\f{d}\ni x$.  Note that $\f{g}$ satisfying \eqref{skewCondition} and $x$ being general ensures that these $f_{\f{d}}$ commute, so $f_{x,\f{D}}$ is well-defined.  One easily sees (cf. \cite[Lem. 1.9]{GHKK}) that two scattering diagrams $\f{D}$ and $\f{D}'$ in $L_{\bb{R}}$ over $\f{g}$ are equivalent if and only if $f_{x,\f{D}}=f_{x,\f{D}'}$ for all general $x\in L_{\bb{R}}$.
\end{enumerate}
\end{egs}

\subsubsection{Operations on scattering diagrams}
\label{ScatOperations}
We introduce a few tools that will prove useful throughout the paper in passing between (consistent) scattering diagrams.  As above, we retain our standing assumptions on $L,L_0,\omega,\sigma$ and allow $\f{g}$ to be any $L_0^+$-graded Lie algebra satisfying \eqref{skewCondition}.

Let $\pi\colon \mathfrak{g}\rightarrow \mathfrak{g}'$ (or $\pi\colon \hat{\f{g}}\rar \hat{\f{g}}'$) be a morphism of $L_0^+$-graded Lie algebras satisfying \eqref{skewCondition}.  We denote by $\exp\pi\colon \hat{G}\rightarrow\hat{G}'$ the induced morphism of pro-unipotent algebraic groups.  Let $\f{D}=\{(\f{d}_j,f_j)\}_{j\in J}$ be a scattering diagram in $L_{\mathbb{R}}$ over $\f{g}$.  Then we define $\pi_*\f{D}=\{  (\f{d}_j,\exp\pi(f_j))\}_{j\in J}$.  This gives a scattering diagram in $L_\mathbb{R}$ over $\mathfrak{g}'$, as we have not altered the gradings of the functions or the underlying cones for the walls.  The scattering diagram $\pi_*\f{D}$ is consistent if $\f{D}$ is.

\begin{prop}
Let $\f{D}=\{(\f{d}_j,f_j)\}_{j\in J}$ be a scattering diagram in $L_{\mathbb{R}}$ over $\f{g}$.  We define the \textbf{opposite} scattering diagram $\f{D}^{\opp}=\{(\f{d}_j,f_j)\}_{j\in J}$, a scattering diagram in $L^-_{\mathbb{R}}$ over $\f{g}^{\opp}$ (i.e., $\f{g}$ with its bracket negated), where $L^-_{\bb{R}}$ is identified with $L_{\bb{R}}$ and is endowed with the form $-\omega$.  Then $\f{D}$ is consistent if and only if $\f{D}^{\opp}$ is, and the subset $J_{\In}\subset J$ parameterizing incoming walls is the same for both scattering diagrams.
\end{prop}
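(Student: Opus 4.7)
My plan is to verify that each wall $(\f{d},f_{\f{d}})\in\f{D}$ remains a wall of $\f{D}^{\opp}$ with the same attached data, to check that the incoming condition is invariant, and then to relate the path-ordered products of $\f{D}$ and $\f{D}^{\opp}$ in order to transfer consistency. The first step is essentially formal: nothing in the definition of a wall depends on the sign of $\omega$, since the kernel $\ker\omega_1\subset L$ and the hyperplane $v^{\omega\perp}\subset L_{\bb{R}}$ are invariant under $\omega\mapsto -\omega$, so the set of allowed primitive vectors $v_{\f{d}}\in L_0^+\setminus\ker\omega_1$ and the allowed affine cones $\f{d}$ parallel to $v_{\f{d}}^{\omega\perp}$ coincide. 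Moreover, by \eqref{skewCondition} each $\f{g}_v^{\parallel}$ is abelian, so its pro-unipotent group structure is insensitive to the sign of the bracket, and $f_{\f{d}}\in G_{v_{\f{d}}}^{\parallel}$ determines the same element in either setting. Since the primitive direction $-v_{\f{d}}$ is unchanged, the condition $v+\bb{R}_{\geq 0}v_{\f{d}}\subset\f{d}$ for all $v\in\f{d}$ holds for the same walls, proving the statement about $J_{\In}$.

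For the consistency part, I would directly compute that for every immersed path $\gamma$ transverse to the walls one has
\begin{equation*}
\theta_{\gamma,\f{D}^{\opp}}=\bigl(\theta_{\gamma,\f{D}}\bigr)^{-1},
\end{equation*}
where both sides are viewed as elements of the set underlying both $\hat{G}$ and $\hat{G}^{\opp}$. Two effects combine to give this identity. First, the Baker--Campbell--Hausdorff series shows that $\hat{G}^{\opp}$, the pro-unipotent group of $\f{g}^{\opp}$, is canonically identified with the opposite group of $\hat{G}$: the underlying set is the same and multiplication is reversed. Consequently the factors in the path-ordered product \eqref{WallCross2} for $\f{D}^{\opp}$ appear in the reverse order to those for $\f{D}$. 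Second, the exponent $\sign\omega(v_{\f{d}_i},-\gamma'(t_i))$ in \eqref{WallCross} gets negated when $\omega$ is replaced by $-\omega$, so the individual factors satisfy $\theta^{\f{D}^{\opp}}_{\f{d}_i}=(\theta^{\f{D}}_{\f{d}_i})^{-1}$. Order reversal combined with this termwise inversion gives the displayed equality.

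The equivalence of consistency follows at once: specialising to closed loops, the identities of $\hat{G}$ and $\hat{G}^{\opp}$ coincide as set elements, so $\theta_{\gamma,\f{D}}=\id$ if and only if $\theta_{\gamma,\f{D}^{\opp}}=\id$. I do not anticipate any real obstacle in this argument; the only minor delicacy is the canonical identification of $\hat{G}^{\opp}$ with the opposite group of $\hat{G}$, which reduces to the standard fact that the Baker--Campbell--Hausdorff formula in $\f{g}^{\opp}$ coincides with that in $\f{g}$ after transposing its arguments.
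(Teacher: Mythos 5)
Your proof is correct and follows essentially the same route as the paper: you dismiss the wall and incoming-wall claims as formal consequences of the definitions, and then prove the consistency equivalence by establishing $\theta_{\gamma,\f{D}^{\opp}}=(\theta_{\gamma,\f{D}})^{-1}$, obtained by combining order reversal in the opposite group with the sign flip in the wall-crossing exponent $\sign\omega(v_{\f{d}_i},-\gamma'(t_i))$ under $\omega\mapsto-\omega$.
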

\begin{proof}

The fact that the above pairs really define walls is immediate from the definitions, as is the statement regarding incoming walls, since the underlying cones of the walls, as well as their directions, are identified.

Now let $\gamma$ be a smooth immersion as in \S \ref{pop_sec}.  Then 
\begin{align*}
\theta^k_{\gamma,\f{D}^{\opp}}=&\theta^{-1}_{\f{d}_s}\cdot_{\opp}  \theta^{-1}_{\f{d}_{s-1}}\cdots_{\opp} \theta^{-1}_{\f{d}_{1}}\\=&
\theta^{-1}_{\f{d}_{1}}\cdots \theta^{-1}_{\f{d}_s}\\=
&(\theta_{\f{d}_{s}}\cdots \theta_{\f{d}_1})^{-1}
\end{align*}
and the statement regarding consistency follows.
\end{proof}

\begin{cor}
\label{reverse_cor}
Let $\f{D}=\{(\f{d}_j,f_j)\}_{j\in J}$ be a scattering diagram in $L_{\mathbb{R}}$ over $\f{g}$.  Then $\f{D}^-\coloneqq \{(\f{d}_j,f_j^{-1})\}_{j\in J}$ is a scattering diagram in $L^-_{\bb{R}}$ over $\mathfrak{g}$, with the same incoming walls as $\f{D}$, which is consistent if and only if $\f{D}$ is.
\end{cor}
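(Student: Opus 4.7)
The plan is to deduce this corollary from the preceding proposition combined with the functoriality of scattering diagrams under morphisms of $L_0^+$-graded Lie algebras discussed at the start of \S \ref{ScatOperations}. The key observation is that $\f{D}^-$ differs from $\f{D}^{\opp}$ only in that its wall functions have been inverted and the Lie algebra $\f{g}^{\opp}$ has been replaced by $\f{g}$, and both of these changes are implemented simultaneously by pushforward along a natural isomorphism of Lie algebras.

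First I would introduce the negation map $\nu\colon \f{g}^{\opp}\to \f{g}$ defined by $\nu(x)=-x$. A direct check confirms that $\nu$ is a homomorphism of Lie algebras: for $x,y\in\f{g}^{\opp}$,
\[
\nu([x,y]_{\f{g}^{\opp}}) = \nu(-[x,y]_{\f{g}}) = [x,y]_{\f{g}} = [-x,-y]_{\f{g}} = [\nu(x),\nu(y)]_{\f{g}}.
\]
Negation trivially preserves the $L_0^+$-grading and the compatibility \eqref{skewCondition}, so $\nu$ is a morphism of $L_0^+$-graded Lie algebras of the sort considered in \S \ref{ScatOperations}. Passing to the completion and taking $\exp$ yields a group homomorphism $\exp\nu\colon \hat{G}^{\opp}\rightarrow \hat{G}$ which sends each group element to its inverse (since $\exp(-x)=\exp(x)^{-1}$ and the opposite group structure inverts the multiplication).

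Next I would apply the preceding proposition to obtain $\f{D}^{\opp}=\{(\f{d}_j,f_j)\}_{j\in J}$ as a scattering diagram in $L^-_{\bb{R}}$ over $\f{g}^{\opp}$, having the same incoming walls as $\f{D}$ and with consistency equivalent to that of $\f{D}$. Then the pushforward construction from \S \ref{ScatOperations} produces
\[
\nu_*(\f{D}^{\opp}) = \{(\f{d}_j,\exp(\nu)(f_j))\}_{j\in J} = \{(\f{d}_j,f_j^{-1})\}_{j\in J} = \f{D}^-,
\]
which is a scattering diagram in $L^-_{\bb{R}}$ over $\f{g}$. Since pushforward along $\nu$ preserves consistency (as noted at the start of \S \ref{ScatOperations}), we conclude that $\f{D}^-$ is consistent iff $\f{D}^{\opp}$ is, iff $\f{D}$ is. The statement about incoming walls is immediate because neither operation alters the underlying cones $\f{d}_j$ or the primitive directions $v_{\f{d}_j}$, which are what determine incomingness.

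I don't anticipate any real obstacle here; the content of the corollary is essentially that the two ways of flipping things (reversing the Lie bracket versus inverting the group elements) differ by the canonical isomorphism $\nu$. The only thing requiring a moment's care is confirming that $\nu$ is a homomorphism from $\f{g}^{\opp}$ to $\f{g}$ (rather than, say, an automorphism of $\f{g}$), which is a one-line calculation.
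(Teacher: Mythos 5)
Your proof is correct and matches the paper's argument exactly: the paper introduces the same isomorphism $s\colon \f{g}^{\opp}\rightarrow \f{g}$, $a\mapsto -a$, observes that $\f{D}^- = s_*(\f{D}^{\opp})$, and concludes from the opposite-scattering-diagram proposition together with functoriality of $\pi_*$. You have simply supplied the one-line homomorphism check and the identity $\exp(-x)=\exp(x)^{-1}$ that the paper leaves implicit.
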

\begin{proof}
There is an isomorphism
\begin{align*}
    s\colon &\f{g}^{\opp}\rightarrow \f{g}\\
    &a\mapsto -a
\end{align*}
and $\f{D}^-=s_*(\f{D}^{\opp})$.
\end{proof}
It will prove extremely useful to be able to take direct images of scattering diagrams along morphisms of lattices as well as graded Lie algebras, and so we generalize the definition of $\pi_*$ given above.  We say that a scattering diagram is \textbf{saturated} if for every wall $(\f{d},f)$ in $\f{D}$, the cone $\f{d}$ is closed under addition of elements in $\ker(\omega_1)$.  It is easy to check that every consistent scattering diagram is equivalent to a saturated scattering diagram.
\begin{lem}\label{QuotientScat}
Consider $L,L_0,\omega$ as above, and assume moreover that we have a lattice $\wt{L}$ with a sublattice $\wt{L}_0$ and a $\bb{Q}$-valued skew-symmetric form $\wt{\omega}$ on $\wt{L}$. Let $\pi\colon \wt{L}\rar L$ be a map of lattices such that $\wt{\omega}=\pi^* \omega$ and $\pi_{\bb{R}}\colon \wt{L}_{\bb{R}}\rightarrow L_{\bb{R}}$ is surjective.  Let $\wt{\sigma}\subset \wt{L}_{\bb{R}}$ be a strongly convex rational polyhedral cone such that $\pi(\wt{\sigma})\subset \sigma$.  Set $\wt{L}_0^+=(\wt{L}_0\cap \wt{\sigma})\setminus \{0\}$.

Let $\f{g}$ and $\wt{\f{g}}$ be $L^+$ and $\tilde{L}^+$-graded Lie algebras satisfying \eqref{skewCondition} with respect to $\omega$ and $\wt{\omega}$ respectively.  We assume that we are given a morphism of Lie algebras $\varpi\colon \wt{\f{g}}\rightarrow \f{g}$, such that $\varpi(\wt{\f{g}}_v)\subset \f{g}_{\pi(v)}$ for all $v\in \wt{L}_0^+$ (where we write $\f{g}_0=0$).  We denote by $\exp\varpi$ the induced morphism of the pro-unipotent algebraic groups associated to the completions of $\wt{\f{g}}$ and $\f{g}$. 

If 
$\wt{\f{D}}\coloneqq \{(\f{d}_j,f_{j})\}_{j\in J}$ is a scattering diagram in $\wt{L}_{\bb{R}}$ over $\wt{\f{g}}$, then 
\begin{align}\label{piD}
    \f{D}\coloneqq (\pi,\varpi)_*(\wt{\f{D}})\coloneqq \{(\pi(\f{d}_j),\exp\varpi(f_{j}))\}_{j\in J}
\end{align}
is a scattering diagram in $L_{\bb{R}}$ over $\f{g}$.  If $\wt{\f{D}}$ is consistent and saturated, then so is $\f{D}$. 
If $(\f{d}_j, f_j)$ is incoming then so is the corresponding wall in $\f{D}$, and if $\wt{\f{D}}$ is saturated, then the reverse implication also holds.
\end{lem}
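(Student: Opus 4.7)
The plan is to verify the four conclusions --- being a scattering diagram, saturation, consistency, and the incoming/outgoing statement --- in that order. The workhorse throughout is the observation that $\wt\omega = \pi^*\omega$ forces $\ker(\pi_{\bb R}) \subset \ker(\wt\omega_{1,\bb R})$, since for $\wt u \in \ker(\pi)$ and any $w \in \wt L_{\bb R}$ one has $\wt\omega(\wt u, w) = \omega(0, \pi(w)) = 0$, together with its immediate companion $v_{\f{d}_j}^{\wt\omega\perp} = \pi_{\bb R}^{-1}(\pi(v_{\f{d}_j})^{\omega\perp})$.

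First I would check that $\f{D}$ is a scattering diagram. Walls with $\pi(v_{\f{d}_j}) = 0$ give $\exp\varpi(f_j) = 1$ by the convention $\f{g}_0 = 0$ and the grading compatibility of $\varpi$, so they can be ignored. For the rest, writing $\pi(v_{\f{d}_j}) = a v'$ with $v' \in L_0$ primitive and $a \in \bb{Z}_{\geq 1}$, the grading places $\exp\varpi(f_j) \in G_{v'}^{\parallel}$; the image $\pi(\f{d}_j)$ is then a closed convex rational polyhedral affine cone (linear images of such are of such) sitting inside $v'^{\omega\perp}$ and of full dimension $r - 1$ since $\pi_{\bb R}$ carries $v_{\f{d}_j}^{\wt\omega\perp}$ surjectively onto $v'^{\omega\perp}$ while $\f{d}_j$ spans $v_{\f{d}_j}^{\wt\omega\perp}$. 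Finiteness modulo $\f{g}_k$ is inherited from $\wt{\f{D}}$. For saturation, given $u \in \ker(\omega_1)$ I would lift to $\wt u$ via surjectivity of $\pi_{\bb R}$; by the central observation $\wt u \in \ker(\wt\omega_{1,\bb R})$, so saturation of $\wt{\f{D}}$ yields $\f{d}_j + \wt u = \f{d}_j$, which pushes down to $\pi(\f{d}_j) + u = \pi(\f{d}_j)$. The same input gives $\f{d}_j + \ker(\pi_{\bb R}) = \f{d}_j$, hence $\f{d}_j = \pi_{\bb R}^{-1}(\pi(\f{d}_j))$ --- the identification that powers everything else.

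The main obstacle will be consistency. Given a closed loop $\gamma$ in $L_{\bb R}$ transverse to $\f{D}$ with endpoints off $\Supp(\f{D})$, I would lift to a path $\wt\gamma$ in $\wt L_{\bb R}$ with $\pi_{\bb R}\circ\wt\gamma = \gamma$, perturbing so $\wt\gamma$ is transverse to $\wt{\f{D}}$ and avoids $\Joints(\wt{\f{D}})$; transversality of $\wt\gamma$ at $\f{d}_j$ coincides with that of $\gamma$ at $\pi(\f{d}_j)$, both reducing to $\wt\omega(v_{\f{d}_j},\wt\gamma'(t_i)) \neq 0$. I would then close $\wt\gamma$ into a loop by concatenating a straight segment $s$ from $\wt\gamma(1)$ to $\wt\gamma(0)$, whose direction lies in $\ker(\pi_{\bb R})$. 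The identification $\f{d}_j = \pi_{\bb R}^{-1}(\pi(\f{d}_j))$ means each wall is a union of lines parallel to $\ker(\pi_{\bb R})$, so $s$ is either contained in a given wall or disjoint from it; since $\gamma(0)=\gamma(1) \notin \Supp(\f{D})$ its lifts lie in no wall of $\wt{\f{D}}$, forcing $s$ to miss $\Supp(\wt{\f{D}})$ entirely and contribute trivially to the path-ordered product. Consistency of $\wt{\f{D}}$ then gives $\theta_{\wt\gamma*s, \wt{\f{D}}} = 1 = \theta_{\wt\gamma,\wt{\f{D}}}$, and a wall-by-wall matching --- crossings of $\wt\gamma$ through $\f{d}_j$ correspond bijectively to crossings of $\gamma$ through $\pi(\f{d}_j)$ with the same signs, and $\exp\varpi$ intertwines the attached group elements --- yields $\theta_{\gamma,\f{D}} = \exp\varpi(\theta_{\wt\gamma,\wt{\f{D}}}) = 1$. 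Finally, the incoming/outgoing statement is a bookkeeping exercise: applying $\pi$ to $\wt v + \bb R_{\geq 0}v_{\f{d}_j} \subset \f{d}_j$ and using that $\pi(v_{\f{d}_j})$ is a positive multiple of the primitive direction of $\pi(\f{d}_j)$ gives the forward direction, while the converse, under saturation, pulls the incoming condition back along $\pi_{\bb R}$ via $\f{d}_j = \pi_{\bb R}^{-1}(\pi(\f{d}_j))$.
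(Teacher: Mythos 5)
Your proposal is correct and takes essentially the same approach as the paper: identify each wall with its $\pi_{\bb{R}}$-preimage (using saturation together with the observation $\ker\pi_{\bb{R}}\subset\ker\wt{\omega}_{1,\bb{R}}$), lift paths, and intertwine path-ordered products via $\exp\varpi$. Your step of explicitly closing the lifted path with a segment parallel to $\ker\pi_{\bb{R}}$ that misses $\Supp(\wt{\f{D}})$ makes precise a bookkeeping point that the paper's consistency argument leaves implicit.
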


\begin{proof}
Firstly, since $\pi(k\wt{L}_0^+)\subset kL_0^+$, there is a morphism of quotient groups $\exp\varpi\colon\wt{G}_k\rightarrow G_k$ (this is also why the induced map on the completions is well-defined) and the finiteness condition for $\f{D}$ to be a scattering diagram is satisfied.  

Let $\f{d}_j$ be a wall of $\wt{\f{D}}$.  Since $\pi_{\bb{R}}$ is surjective, the codimension of the span of $\pi(\f{d}_k)$ is at most one.  On the other hand, $\pi(v_{\f{d}_j})\notin \ker(\omega_1)$ since $v_{\f{d}_j}\notin\ker(\wt{\omega}_1)$, and so $\pi(\f{d}_j)\subset \pi(v_{\f{d}_j})^{\omega\perp}$ has codimension at least one, i.e. it has the required codimension: one.

Now, let $\gamma\subset L_{\bb{R}}$ be a path for which $\theta_{\gamma,\f{D}}$ is defined.  Saturation of the walls of $\wt{\f{D}}$ implies that they are invariant under translation by elements of $\ker(\pi)\otimes \bb{R}$, and so any smooth lift $\wt{\gamma}'$ of $\gamma'$ to $\wt{L}_{\bb{R}}$ will satisfy $\exp \varpi(\theta_{\wt{\gamma}',\wt{\f{D}}})=\theta_{\gamma',\f{D}}$.  It follows that consistency of $\wt{\f{D}}$ implies consistency of $\f{D}$, as desired.

Let $(\f{d}_j,f_j)$ be a wall in $\wt{\f{D}}$.  Then there is a (possibly non-unique) $\alpha_j\in \wt{L}_{\bb{R}}$ such that $\f{d}_j+\alpha_j$ has the origin as an apex.  Then the cone $\pi(\f{d}_j)+\pi(\alpha_j) \subset L_{\bb{R}}$ has the origin as an apex as well.  If $(\f{d}_j,f_j)$ is incoming, then $v_{\f{d}_j}\in \f{d}_j+\alpha_j$, and so $\pi(v_{\f{d}_j})\in \pi(\f{d}_j + \alpha_j) =  \pi(\f{d}_j)+\pi(\alpha_j)$, meaning that the corresponding wall in $\f{D}$ is incoming.

Conversely, suppose that the wall in $\f{D}$ indexed by $j$ is incoming, so $\pi(v_{\f{d}_j})\in \pi(\f{d}_j)+\pi(\alpha_j)$. Then there exists a $\tilde{v}\in \f{d}_j+\alpha_j$ such that $\pi(\tilde{v})-\pi(v_{\f{d}_j})=0$, and so $\tilde{v}-v_{\f{d}_j}\in \ker(\wt{\omega}_1)$.  Now if $\wt{\f{D}}$ is saturated, then $v_{\f{d}_j}\in\f{d}_j+\alpha_j$, so $\f{d}_j$ is incoming, as desired.
\end{proof}
Finally, as well as being able to take the above direct image of scattering diagrams, it will be useful to be able to take inverse images, at least along inclusions of lattices.
\begin{lem}
\label{RestrictScat}
Let $L,L_0,\omega,\sigma$ be as above.  Let $i\colon \wt{L}\hookrightarrow L$ be a saturated sublattice such that the projection of $\sigma$ along $L_{\bb{R}}\rightarrow L_{\bb{R}}/\wt{L}_{\bb{R}}$ is strongly convex, and define $\wt{\sigma}\coloneqq \sigma\cap \wt{L}_{\bb{R}}$, $\wt{L}_0\coloneqq L_0\cap \wt{L}$, $\wt{L}_0^+\coloneqq (\wt{L}_0\cap \wt{\sigma})\setminus \{0\}$, $\wt{\f{g}}\coloneqq \bigoplus_{v\in \wt{L}_0^{+}}\f{g}_v$,  and $\wt{\omega}=\omega\lvert_{\wt{L}}$.  We denote by
\[
p\colon \bigoplus_{v\in L_0^+}\f{g}_v\rightarrow \bigoplus_{v\in \wt{L}_0^+}\f{g}_v.
\]
the retraction of Lie algebras obtained by taking the quotient by the factors $\f{g}_v$ for $v\notin \wt{L}_0^+$.

Let $\f{D}\coloneqq \{(\f{d}_j,f_{j})\}_{j\in J}$ be a scattering diagram in $L_{\bb{R}}$ over $\f{g}$.  Denote by $J'\subset J$ those walls such that $v_{\f{d}_j}\in \wt{L}_0$ and $\codim_{\wt{L}_{\bb{R}}}(\wt{L}_{\bb{R}}\cap \f{d}_j) = 1$ (so in particular, $v_{\f{d}_j}\notin  \ker(\wt{\omega})$). We assume that $\codim_{\wt{L}_{\bb{R}}}(\wt{L}_{\bb{R}}\cap \partial \f{d}_j) \geq 2$ for each $j\in J'$ --- up to equivalence, this always holds for consistent $\f{D}$. Then
\[
i^*\f{D}\coloneqq \{(\f{d}_j\cap \wt{L}_{\bb{R}},\exp(p)(f_{j}))\}_{j\in J'}
\]
is a scattering diagram in $\wt{L}_{\bb{R}}$ over $\wt{\f{g}}$ which is consistent if $\f{D}$ is.  The incoming walls of $i^*\f{D}$ are given by those $j\in J'$ such that $(\f{d}_j,f_j)$ is an incoming wall of $\f{D}$.
\end{lem}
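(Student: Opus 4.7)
The plan is to verify three assertions in order: (i) $i^*\f{D}$ is a valid scattering diagram in $\wt{L}_{\bb{R}}$ over $\wt{\f{g}}$; (ii) consistency of $\f{D}$ implies consistency of $i^*\f{D}$; and (iii) the incoming walls of $i^*\f{D}$ are precisely those indexed by $j\in J'$ for which $(\f{d}_j,f_j)$ is incoming in $\f{D}$. For (i) I would check the wall conditions directly: saturation of $\wt{L}\subset L$ implies $\wt{L}_0=L_0\cap \wt{L}$ is saturated in $L_0$, so primitivity of $v_{\f{d}_j}$ in $L_0$ descends to primitivity in $\wt{L}_0$; the cone $\f{d}_j\cap \wt{L}_{\bb{R}}$ is closed, convex, rational polyhedral, of codimension one in $\wt{L}_{\bb{R}}$ by hypothesis, and parallel to $v_{\f{d}_j}^{\omega\perp}\cap \wt{L}_{\bb{R}}=v_{\f{d}_j}^{\wt{\omega}\perp}$, with $v_{\f{d}_j}\notin \ker(\wt{\omega}_1)$ forced again by the codimension-one condition; $\exp(p)(f_j)\in \wt{G}_{v_{\f{d}_j}}^{\parallel}$ because $p$ preserves the grading; and the finiteness condition is inherited from $\f{D}$ since $J'\subset J$.

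The main content is (ii), which I propose to prove by a perturbation argument. Fix $k\in \bb{Z}_{>0}$; only finitely many walls of $\f{D}$ act nontrivially on $G_k$. Let $\wt{\gamma}\colon [0,1]\rar \wt{L}_{\bb{R}}$ be a loop avoiding $\Joints(i^*\f{D}_k)$ and transverse to the walls it meets. Assuming $\wt{L}\subsetneq L$ (otherwise the lemma is immediate), choose a generic $\eta\in L_{\bb{R}}\setminus \wt{L}_{\bb{R}}$ and set $\gamma_\epsilon\coloneqq \wt{\gamma}+\epsilon\eta$; the hypothesis $\codim_{\wt{L}_{\bb{R}}}(\wt{L}_{\bb{R}}\cap \partial\f{d}_j)\geq 2$ lets us pick $\epsilon>0$ small enough that $\gamma_\epsilon$ avoids $\Joints(\f{D}_k)$ and meets every wall of $\f{D}_k$ transversally. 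The walls of $\f{D}_k$ split into three classes: walls with $v_{\f{d}_j}\notin \wt{L}_0$ contribute trivially to $\exp(p)(\theta_{\gamma_\epsilon,\f{D}_k})$, since $\exp(p)(f_j)=1$; walls with $v_{\f{d}_j}\in \wt{L}_0\cap \ker(\wt{\omega}_1)$ satisfy $\omega(v_{\f{d}_j},\gamma_\epsilon'(t))=\omega(v_{\f{d}_j},\wt{\gamma}'(t))=0$, so for generic $\eta$ with $\omega(v_{\f{d}_j},\eta)\neq 0$ the affine translate $\wt{L}_{\bb{R}}+\epsilon\eta$ lies entirely on one side of the supporting hyperplane of $\f{d}_j$ and $\gamma_\epsilon$ does not meet it; and walls with $j\in J'$ are crossed by $\gamma_\epsilon$ exactly when $\wt{\gamma}$ crosses $\f{d}_j\cap \wt{L}_{\bb{R}}$, with identical sign factor $\sign\wt{\omega}(v_{\f{d}_j},\wt{\gamma}'(t))$. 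Combining these, $\exp(p)(\theta_{\gamma_\epsilon,\f{D}_k})=\theta_{\wt{\gamma},i^*\f{D}_k}$, so consistency of $\f{D}$ forces $\theta_{\wt{\gamma},i^*\f{D}_k}=1$, and then $i^*\f{D}$ is consistent after passing to the inverse limit over $k$.

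For (iii), the direction of the wall is preserved and the underlying cone becomes $\f{d}_j\cap \wt{L}_{\bb{R}}$. If $\f{d}_j$ is incoming in $\f{D}$, then for any $u\in \f{d}_j\cap \wt{L}_{\bb{R}}$ the ray $u+\bb{R}_{\geq 0}v_{\f{d}_j}$ lies in $\f{d}_j$ and, since $v_{\f{d}_j}\in \wt{L}$, in $\wt{L}_{\bb{R}}$, hence in $\f{d}_j\cap \wt{L}_{\bb{R}}$. Conversely, suppose $i^*\f{d}_j$ is incoming, write $\f{d}_j=\alpha+C$ for its recession cone $C$, and pick $u$ in the nonempty set $\f{d}_j\cap \wt{L}_{\bb{R}}$; the incoming condition gives $(u-\alpha)+tv_{\f{d}_j}\in C$ for all $t\geq 0$, so dividing by $t$ and letting $t\to\infty$, closedness of $C$ yields $v_{\f{d}_j}\in C$ and $\f{d}_j$ is incoming. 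The main obstacle I anticipate is the transversality and genericity work in (ii): confirming in detail that walls with $v_{\f{d}_j}\in \wt{L}_0\cap \ker(\wt{\omega}_1)$ are genuinely avoided by a generic perturbation, and that the codimension-two boundary hypothesis translates into the required avoidance of joints along $\wt{L}_{\bb{R}}$.
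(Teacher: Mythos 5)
Your argument follows the same perturbation strategy as the paper, with the incoming-wall claim worked out in more detail than the paper bothers to (it dismisses that as ``immediate''), but the consistency step has a genuine gap in the wall classification. You split the walls of $\f{D}_k$ into three classes --- those with $v_{\f{d}_j}\notin\wt{L}_0$ (killed by $\exp(p)$), those with $v_{\f{d}_j}\in\wt{L}_0\cap\ker(\wt{\omega}_1)$ (pushed away by a generic translation), and those in $J'$ (tracked by the restricted path) --- but there is a fourth class you have not accounted for: walls with $v_{\f{d}_j}\in\wt{L}_0$ and $v_{\f{d}_j}\notin\ker(\wt{\omega}_1)$ but with $\codim_{\wt{L}_{\bb{R}}}(\wt{L}_{\bb{R}}\cap\f{d}_j)\geq 2$ (including $\wt{L}_{\bb{R}}\cap\f{d}_j=\emptyset$), so $j\notin J'$. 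For such a wall, $\exp(p)(f_j)\neq 1$, yet $\wt{L}_{\bb{R}}\cap\f{d}_j$ is \emph{not} a joint of $i^*\f{D}$, so $\wt{\gamma}$ is under no obligation to avoid it; if $\wt{\gamma}$ passes through a point of this codimension-$\geq 2$ set, then for small $\epsilon$ the translate $\gamma_\epsilon$ will cross $\f{d}_j$ (one checks directly from $\omega(v_{\f{d}_j},\wt{\gamma}'(t))\neq 0$ that the translated crossing persists), contributing a factor not matched in $\theta_{\wt{\gamma},i^*\f{D}_k}$. The paper handles precisely this by a preliminary perturbation of $\gamma$ \emph{within} $\wt{L}_{\bb{R}}$ to avoid all walls $\f{d}$ of $\f{D}$ with $\codim(\wt{L}_{\bb{R}}\cap\f{d})\geq 2$ --- a codimension-$\geq 2$ avoidance that leaves $\theta_{\gamma,i^*\f{D}_k}$ unchanged --- before pushing the path off $\wt{L}_{\bb{R}}$; your single translation by $\epsilon\eta$ cannot accomplish this, because $\eta$ is transverse to $\wt{L}_{\bb{R}}$ and does nothing to separate $\wt{\gamma}$ from low-dimensional bad loci inside $\wt{L}_{\bb{R}}$.

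Two smaller points. First, the lemma's parenthetical assertion that the boundary condition $\codim_{\wt{L}_{\bb{R}}}(\wt{L}_{\bb{R}}\cap\partial\f{d}_j)\geq 2$ always holds up to equivalence for consistent $\f{D}$ is a claim requiring proof (the paper supplies one at the end, using consistency around the offending joint plus \eqref{skewCondition} to glue the violating walls as in Example \ref{EquivEx}(2)); your writeup silently leaves this unaddressed. Second, your translation $\gamma_\epsilon=\wt{\gamma}+\epsilon\eta$ is a one-parameter family, so the claim that generic small $\epsilon$ makes $\gamma_\epsilon$ avoid $\Joints(\f{D}_k)$ and be transverse to every wall deserves more care than just invoking genericity of $\eta$; the paper instead allows a general small perturbation in $L_{\bb{R}}$, which gives enough freedom that the transversality is routine.
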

\begin{proof}
The proof of finiteness is as in the proof of Lemma \ref{QuotientScat}, and the fact that the cones in $i^*\f{D}$ have the right dimension follows from the assumptions, so $i^*\f{D}$ is indeed a scattering diagram.  The claim regarding incoming walls is immediate from the definition.

Let $\gamma$ be a smooth closed path $[0,1]\rar \wt{L}_{\bb{R}}\setminus \Joints(i^*\f{D})$ with base-points not in $\Supp(i^*\f{D})$ which is transverse to each wall of $i^*\f{D}$ it crosses, which we may moreover consider as a path in $L_{\bb{R}}$.  Fix a $k\geq 1$.  First, we may perturb $\gamma$ so that it avoids any walls $\f{d}$ of $\f{D}$ satisfying $\codim(\wt{L}_{\bb{R}}\cap\f{d})\geq 2$, without affecting the associated path ordered product $\theta^k_{\gamma,\f{D}}$.  We then perturb $\gamma$ in $L_{\bb{R}}$ to a smooth closed path $\gamma'$ which is transverse to all walls of $\f{D}_{k}$, has base-point in the complement of $\Supp(\f{D}_{k})$, crosses the same walls of $\{(\f{d}_j,f_j)\colon j\in J'\}\cap \f{D}_k$ as before in the same order as before, and crosses no walls $\f{d}_j$ with $v_{\f{d}_j}\in\ker(\wt{\omega}_1)$.  Then $\theta_{\gamma,i^*\f{D}}^k = \exp(p)(\theta_{\gamma',\f{D}}^k)=\exp(p)(1)=1$.  The consistency claim follows. 

We now consider the claim that, for consistent $\f{D}$ considered up to equivalence, the condition $\codim_{\wt{L}_{\bb{R}}}(\wt{L}_{\bb{R}}\cap \partial \f{d}_j) \geq 2$ is always satisfied for $j\in J'$.  Suppose $\f{d}_j$ is a wall failing this condition.  Then consistency around the corresponding joint of $\f{d}_j$ forces there to be additional such walls.  But for any two such walls $\f{d}_j,\f{d}_{j'}$, we have $\wt{\omega}(v_{\f{d}_j},v_{\f{d}_{j'}})=0$, hence $[f_j,f_{j'}]=0$ by \eqref{skewCondition}.  The only possibility then is that the collection of such walls can be glued (as in Example \ref{EquivEx}(2)) to remove the existence of such joints.
\end{proof}

\subsubsection{Existence and uniqueness of consistent scattering diagrams}\label{existence_sssec}\label{ComparisonTricks}

The following theorem is a fundamental result on scattering diagrams.  The two-dimensional case over $\f{g}^{\cl}$ was proved in \cite{KS}, while the generalization to higher dimensions (and more general affine spaces than just $L_{\bb{R}}$) was proved in \cite{GS11}.  An extension to more general Lie algebras, including the quantum torus Lie algebra $\f{g}_{L_0^+,\omega}$, is given by \cite[Prop. 3.2.6, 3.3.2]{WCS}, (cf. \cite[Thm. 1.21]{GHKK} for a review of this argument in the cluster setup).

\begin{thm}\label{KSGS}
Let $\f{D}_{\In}$ be a finite scattering diagram in $L_{\bb{R}}$ over $\f{g}$ whose only walls are incoming.  Then there is a unique-up-to-equivalence scattering diagram $\f{D}$, also denoted $\scat(\f{D}_{\In})$, such that $\f{D}$ is consistent, $\f{D} \supset \f{D}_{\In}$, and $\f{D}\setminus \f{D}_{\In}$ consists only of outgoing walls.
\end{thm}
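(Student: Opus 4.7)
My approach is the standard order-by-order construction. For each $k\geq 1$ I would build a finite scattering diagram $\f{D}^{(k)}$ over $\f{g}_k = \f{g}/\f{g}^{\geq k+1}$ which is consistent (as a scattering diagram over $\f{g}_k$), contains the projection of $\f{D}_{\In}$, and differs from it only by outgoing walls; the desired $\f{D}$ is then the projective limit $\varprojlim_k \f{D}^{(k)}$. The base case $k=1$ is immediate: $\f{g}_1$ is abelian, since for any $a,b\in L_0^+$ we have $[\f{g}_a,\f{g}_b]\subset\f{g}_{a+b}\subset\f{g}^{\geq 2}$, so every finite collection of walls is automatically consistent modulo $\f{g}^{\geq 2}$ and one may take $\f{D}^{(1)}=\f{D}_{\In}$.

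For the inductive step, suppose $\f{D}^{(k)}$ has been constructed. Viewed as a scattering diagram over $\f{g}_{k+1}$ it may be inconsistent, with the discrepancy concentrated around the codimension-$2$ joints of $\f{D}^{(k)}$: for a small transverse loop $\gamma_\f{j}$ around a joint $\f{j}$, the product $\theta_{\gamma_\f{j},\f{D}^{(k)}}$ lies in $\exp(\f{g}^{\geq k+1})$ by consistency at the previous order, and modulo $\f{g}^{\geq k+2}$ its logarithm decomposes as $\eta_\f{j}=\sum_v \eta_{\f{j},v}\in\bigoplus_v\f{g}_v$ with $v\in(k+1)L_0^+\setminus(k+2)L_0^+$; the compatibility \eqref{skewCondition} prevents components with $v\in\ker(\omega_1)$ from appearing in this logarithm. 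For each nonzero $\eta_{\f{j},v}$ I would adjoin one outgoing wall with apex along $\f{j}$, supported in $v^{\omega\perp}$, extending in the half-space opposite to $v$, and carrying the function $\exp(-\eta_{\f{j},v})\in G_v^{\parallel}$ modulo $\f{g}^{\geq k+2}$. The resulting walls are outgoing by construction, and using Example \ref{EquivEx}(1)--(2) one can merge or split parallel walls to obtain a well-defined $\f{D}^{(k+1)}$.

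The main obstacle, and the real content of the argument, is the claim that such outgoing cancellations always suffice. I would prove this by the classical two-dimensional reduction: slicing transverse to $\f{j}$ converts the local consistency problem into a planar scattering problem at a single vertex, where the standard Kontsevich--Soibelman / Baker--Campbell--Hausdorff calculation shows that each graded component $\eta_{\f{j},v}$ is cancelled by a single ray in direction $-v$. The equation to solve is linear at this order because $G_v^{\parallel}$ is abelian, and the condition $v\notin\ker(\omega_1)$ (forced by \eqref{skewCondition}) guarantees that a wall in direction $v$ is permissible. Reassembling the planar rays from each sliced joint into $(r-1)$-dimensional outgoing walls parallel to $v^{\omega\perp}$ is straightforward since joints have codimension $2$. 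Finiteness at each order follows because $\f{D}_{\In}$ is finite and only finitely many joints contribute nontrivially modulo $\f{g}^{\geq k+2}$.

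For uniqueness up to equivalence, by Example \ref{EquivEx}(3) it suffices to show that $f_{x,\f{D}}$ is determined at every general $x\in L_{\bb{R}}$. I would induct on $k$ again: if the order-$k$ parts of two such extensions $\f{D}$ and $\f{D}'$ are equivalent, then their order-$(k+1)$ obstructions $\eta_\f{j}$ agree at every joint, and the outgoing walls added at order $k+1$ to either diagram must cancel these same obstructions. Any remaining freedom (choice of apex along $\f{j}$, and splitting or merging of parallel walls) is absorbed into equivalence by Example \ref{EquivEx}(1)--(2), so $\f{D}^{(k+1)}$ and $(\f{D}')^{(k+1)}$ are equivalent, and hence $\f{D}$ and $\f{D}'$ are equivalent in the limit.
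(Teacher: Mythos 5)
Your proposal is correct in outline, but it takes a genuinely different route from the paper. You redo the classical Kontsevich--Soibelman/Gross--Siebert order-by-order construction from scratch: at each order $k$, detect obstructions $\eta_{\f{j},v}$ at joints, use the graded structure to cancel each $\eta_{\f{j},v}$ by a single outgoing wall, and observe that \eqref{skewCondition} rules out obstructions with $v\in\ker(\omega_1)$, so the needed walls are always permissible even when $\omega$ is degenerate. The paper does not reproduce this argument. Instead it outsources: uniqueness is deferred to \cite[Thm.~3.5]{CM}, existence to the perturbed scattering diagrams $\f{D}_k^{\infty}$ of \cite[\S3.2.2]{Man3}, and, as an alternative existence route, the paper applies the ``principal coefficients trick'' of \S\ref{ScatOperations} (embed $L\into L^{\circ}$, extend $\omega$ to a unimodular form $\omega^{\circ}$, run the \cite{WCS} argument there, then restrict back via Lemma~\ref{RestrictScat}). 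That reduction buys the ability to cite \cite{WCS} verbatim without re-examining the degenerate case; your direct argument buys self-containment and makes visible exactly where \eqref{skewCondition} is used. Both are legitimate, and your treatment of the $\ker(\omega_1)$ point is in fact correct: any BCH term landing in $\f{g}_v$ with $v\in\ker(\omega_1)$ has outermost bracket $[\f{g}_a,\f{g}_b]$ with $a+b=v$, which forces $\omega(a,b)=\omega(a,v)=0$ and hence vanishing of the bracket, while the linear BCH terms lie in $\f{g}_{jv_{\f{d}}}$ with $v_{\f{d}}\notin\ker(\omega_1)$.

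There is one small but genuine gap. Your base case asserts that ``every finite collection of walls is automatically consistent modulo $\f{g}^{\geq 2}$'' because $\f{g}_1$ is abelian. This is false if a wall is a proper subset of its affine hyperplane: a small loop linking the boundary of such a wall crosses it exactly once, so $\theta_{\gamma,\f{D}}^1$ is a nontrivial element of $G_1$ and abelianness does not help. (The alternating-sign cancellation you are implicitly invoking requires that each wall be a full affine hyperplane, so that a closed transverse loop crosses it an even number of times.) Incoming walls need not be full hyperplanes under the paper's definition. The fix is the one the paper makes explicit right after Theorem~\ref{KSGS}: first replace each incoming wall by the full affine hyperplane it spans, splitting off the extra pieces as outgoing walls and absorbing the change into equivalence via Example~\ref{EquivEx}(2). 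With that normalization in place your base case and inductive step go through as written.
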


As noted in Footnote \ref{ConventionsFootnote}, our definition of a wall is slightly different from that used in \cite{WCS,GHKK}, and our quantum torus Lie algebras are different from the Lie algebras used in \cite{GS11}, so we briefly explain how to recover Theorem \ref{KSGS} in our setup.  First, we note that the uniqueness statement follows from exactly the same argument used in the proof of \cite[Thm. 3.5]{CM} (just replace the $\Lambda_{\bb{R}}^{\vee}$ there with $\Lambda_{\bb{R}}$, which in our notation is $L_{\bb{R}}$ --- also, $\f{g}_k$ there is $\f{g}_{k-1}$ here).   On the other hand, the existence follows from taking the asymptotic scattering diagram of the perturbed scattering diagram $\f{D}_k^{\infty}$ constructed in \cite[\S 3.2.2]{Man3}.  We note that the assumption in \cite[Thm. 3.5]{CM} that walls of $\f{D}_{\In}$ are full hyperplanes is not needed for the uniqueness proof, and since incoming walls can always be extended to full affine hyperplanes by adding outgoing walls and then applying the equivalence of Example \ref{EquivEx}(2), we can assume this property holds when applying the existence proof.

Alternatively, one may use the operations of \S \ref{ScatOperations} to recover the existence result in our setup directly from that of \cite{WCS}: 

We apply what we call the principal coefficients trick, which involves essentially the same procedure used for defining cluster algebras with principal coefficients, cf. \S \ref{DinS}.  Let $v_1,\ldots,v_r$ be a basis for $L$, and consider the lattice  $L^{\circ}\coloneqq \bb{Z}\langle v_1,\ldots,v_r,f_1,\ldots,f_r\rangle$ for $f_1,\ldots,f_r$ some new formal generators.  Denote by $i_{\bb{R}}\colon L_{\bb{R}}\hookrightarrow L^{\circ}_{\bb{R}}$, the natural inclusion.  We extend $\omega$ to a skew-symmetric form $\omega^{\circ}$ on $L^{\circ}$ by defining $\omega^{\circ}(v_i,f_j)=\delta_{ij}$ and $\omega^{\circ}(f_i,f_j)=0$ for all $i,j$.  We set $\sigma^{\circ}=i_{\bb{R}}(\sigma)$.  The matrix for $\omega^{\circ}$ in this basis is $\left(\begin{matrix}A & B \\ C & D \end{matrix}\right)$, where $A,B,C,D$ are $r\times r$ matrices, $A$ being the matrix for $\omega$, $B$ the identity matrix, $C$ negative the identity, and $D$ the $0$-matrix.  Such a matrix has determinant $1$, so the map $\omega_1^{\circ}\colon L^{\circ}\rar (L^{\circ})^*$, $v\mapsto \omega^{\circ}(v,\cdot)$, is an isomorphism.  

Let $\f{D}_{\In}$ be as in Theorem \ref{KSGS}.  As noted above, we may assume that the initial walls are full affine hyperplanes, i.e. all the walls are of the form $(v_j^{\omega\perp},f_{\f{d}_j})$ for $v_j\in L^+$ indexed by $j\in J$.  We define $\f{D}_{\In}^{\circ}$ to be the scattering diagram in $L^{\circ}_{\bb{R}}$ over $\f{g}$ with walls given by $(i(v_j)^{\omega^{\circ}\!\perp},f_{\f{d}_{v_j}})$ for $j\in J$.  The scattering diagram $\f{D}^{\circ}_{\In}$ has only incoming walls.  Let $\scat(\f{D}_{\In}^{\circ})$ be a consistent scattering diagram in $L^{\circ}_{\bb{R}}$ over $\f{g}$ such that $\scat(\f{D}_{\In}^{\circ})\setminus \f{D}^{\circ}_{\In}$ only contains outgoing walls.  Then by Lemma \ref{RestrictScat} the scattering diagram $i^* \scat(\f{D}_{\In}^{\circ})$ is consistent, with incoming walls equal to 
\[
i^*\f{D}^{\circ}_{\In}=\f{D}_{\In}.
\]

So now for the existence proof, it suffices to deal with the cases where $\omega$ is unimodular, i.e., $\omega_1\colon L\rar L^*$ is an isomorphism.  For such cases, the two viewpoints on scattering diagrams can easily be seen to be equivalent as follows: given a wall parallel to $v_{\f{d}}^{\perp}$ and having direction $-\omega_1(v_{\f{d}})$ as in the \cite{GHKK} perspective, applying $\omega_1^{-1}$ to the support and direction of the wall yields a wall in our perspective.  Paths and the corresponding path-ordered products are similarly related using $\omega_1$ and $\omega_1^{-1}$ to pass between the two perspectives.  Thus, existence follows from the usual formulation of the result.

\subsection{The quantum and plethystic exponentials}\label{pleth_sec}

\subsubsection{The quantum exponential}
 For $\f{g}=\f{g}_{L_0^+,\omega}$ and $v\in L_0^+$, define the quantum dilogarithm
\begin{align}\label{Li}
    -\Li(-z^v;t)\coloneqq \sum_{k=1}^{\infty} \frac{(-1)^{k-1}}{k(t^k-t^{-k})} z^{kv}\in \hat{\f{g}}.
\end{align}
Note that $k$ divides the index $|kv|$, so $\frac{z^{kv}}{t^k-t^{-k}}=\frac{(|kv|)_t}{(k)_t}\hat{z}^{kv}$ has coefficient in $\kk[t^{\pm 1}]$ by \eqref{tquotient}, hence $-\Li(-z^v;t)$ is indeed in $\hat{\f{g}}$.  Consider the \textbf{quantum exponential} 
\begin{align*}
    \Psi_t(z^v)\coloneqq \exp(-\Li(-z^v;t))\in \hat{G}.
\end{align*}
$\Psi_t(z^v)$ is the plethystic exponential (defined in \S \ref{Sec:plethystic} below) of $\frac{z^vt}{1-t^2}$, implying that
\begin{align}\label{Psit}
    \Psi_t(z^v)=\prod_{k=1}^{\infty} \frac{1}{1+t^{(2k-1)}z^{v}}
\end{align}
(this equality is well-known; cf. \cite[Lem. 8]{Kir}).
 
Let $p\in L$, and let $\epsilon\coloneqq \sign \omega(v,p)$.  Using \eqref{Psit}, one easily checks that the adjoint action of $\Psi_t(z^v)^{\epsilon}$ on $z^p\in \kk_t[L]$ is given by
\begin{align}\label{AdPsi}
\Ad_{\Psi_t(z^v)^{\epsilon}}(z^p)=\Psi_t(z^v)^{\epsilon}z^p \Psi_t(z^v)^{-\epsilon} = z^p \prod_{k=1}^{|\omega(v,p)|} (1+t^{\epsilon(2k-1)}z^{v}).
\end{align}

We use the notation $(0)_t!\coloneqq 1$ and
\begin{align*}
    (k)_t!\coloneqq (k)_t(k-1)_t\cdots(2)_t(1)_t
\end{align*}
for $k\in \bb{Z}_{\geq 1}$.  For $a,k\in \bb{Z}_{\geq 0}$ with $a>k$, we consider the bar-invariant quantum binomial coefficient
\begin{align}\label{qbinom}
    \binom{a}{k}_t\coloneqq \frac{(a)_t!}{(k)_t!(a-k)_t!} \in \bb{Z}_{\geq 0}[t^{\pm 1}].
\end{align}
With this notation, the quantum analog of the binomial theorem gives
\begin{align}\label{qbinomthm}
   \prod_{k=1}^{a} (1+t^{\epsilon(2k-1)}x)=\sum_{k=0}^a t^{\epsilon ka}\binom{a}{k}_t x^k.
\end{align}
Combining \eqref{AdPsi} with \eqref{qbinomthm}, one obtains
\begin{align*}
    \Ad_{\Psi_t(z^v)^{\epsilon}}(z^p) = \sum_{k=0}^{|\omega(v,p)|} \binom{|\omega(v,p)|}{k}_t z^{kv+p}.
\end{align*}
One easily extends this to obtain
\begin{align}\label{tAdBinomial}
    \Ad_{\Psi_t(t^{a}z^v)^{\epsilon}}(z^p) = \sum_{k=0}^{|\omega(v,p)|} \binom{|\omega(v,p)|}{k}_t t^{ka} z^{kv+p}
\end{align}
for any $a\in \bb{Z}$.

\subsubsection{The plethystic exponential}\label{Sec:plethystic}

We generalize the above via the \textbf{plethystic exponential}.  Let $R=\mathbb{Z}\lcbs t\rcbs \llb x_1,\ldots x_n\rrb$ and let $\mathscr{I}\subset R$ be the maximal ideal generated by $x_1,\ldots,x_n$.  We write $K=\mathbb{Z}_{\geq 0}^n$ and for $v\in K$ we define $x^v=\prod x_i^{v_i}$ as usual.  The plethystic exponential is a group isomorphism from $\mathscr{I}$  (viewed as a group under addition) to $1+\mathscr{I}$, defined by 
\begin{align*}
    \Exp_{-t,x_1,\ldots,x_n}\left(\sum_{\substack{0\neq v\in K\\r\in \bb{Z}}} a_{vr} x^v(-t)^r\right) = \prod_{\substack{0\neq v\in K\\r\in \bb{Z}}} (1-x^v(-t)^r)^{-a_{vr}}.
\end{align*}
We denote by
\begin{equation}
    \label{Log_def}
    \Log_{-t,x_1,\ldots,x_n}\colon 1+\mathscr{I} \rightarrow \mathscr{I}
\end{equation}
the inverse isomorphism of groups.  The plethystic exponential can be equivalently defined (see \cite{Get96}) in terms of the Adams operators $\psi_k(f(x_1,\ldots x_n,-t))=f(x_1^k,\ldots x_n^k,(-t)^k)$ via
\[
\Exp_{-t,x_1,\ldots,x_n}(f(x_1,\ldots x_n,-t))=\exp\left(\sum_{k\geq 1}\psi_k(f(x_1,\ldots x_n,-t))/k\right).
\]

Given $f(x_1,\ldots x_n,t)\in \mathscr{I}$ we define 
\[\EE(f(x_1,\ldots x_n,t))\coloneqq \Exp_{-t,x_1,\ldots,x_n}\left(f(x_1,\ldots x_n,t)(t+t^3+t^5+\ldots)\right).\]

It follows from the definition that
\begin{align}\label{plus-prod}
    \EE(f(z^v,t)+g(z^v,t))=&\EE(f(z^v,t))\cdot \EE(g(z^v,t))\\
\label{EPsi}
\EE(-z^v)=&\EE(z^v)^{-1}=\Psi_t(z^v)\\
    \label{bosvfer}
\EE(f(z^v,t))=&\EE(f(t^{-j}z^v,t))_{z^{v}\mapsto t^{j}z^{v}}&\textrm{for }j\in 2\cdot\mathbb{Z}.
\end{align}
Note that \eqref{bosvfer} fails if we allow odd $j$.

Similarly, it is straightforward to compute that for any $a\in \bb{Z}$, we have
\begin{align}\label{EEeven}
    \EE(-t^{2a}z^v)=\EE(t^{2a}z^v)^{-1}=\Psi_t(t^{2a}z^v),
\end{align}
and
\begin{align}\label{EEodd}
    \EE(t^{2a+1}z^v)=\prod_{k=1}^{\infty} \frac{1}{1-t^{2a+2k}z^{v}}.
\end{align}
Using the identity $\frac{1}{1-x}=\prod_{r=0}^{\infty} (1+x^{2^r})$, \eqref{EEodd} becomes
\begin{align}\nonumber
    \EE(t^{2a+1}z^v) &= \prod_{k=1}^{\infty}\prod_{r=0}^{\infty} [1+(t^{2a+2k}z^{v})^{2^r}] \\
    &=\prod_{r=0}^{\infty}\prod_{k=1}^{\infty} [1+(t^{2^r})^{2k-1} (t^{2^r(2a+1)})z^{2^rv})] \nonumber \\
    &=\prod_{r=0}^{\infty}\Psi_{t^{2^r}}(t^{2^r(2a+1)}z^{2^rv})^{-1},\nonumber
\end{align}
i.e.,
\begin{align}\label{EEoddPsi}
    \EE(-t^{2a+1}z^v)=\prod_{r=0}^{\infty}\Psi_{t^{2^r}}(t^{2^r(2a+1)}z^{2^rv}).
\end{align}

\begin{lem}\label{log_barinv}
Let $p(t)\in\mathbb{Z}[t^{\pm 1}]$.  Then $\log(\EE(-p(t)z^v))$ is bar-invariant if and only if $p(t)$ is.
\end{lem}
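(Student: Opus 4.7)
The plan is to compute $\log(\EE(-p(t)z^v))$ explicitly in the distinguished basis $\{\hat{z}^{kv}\}_{k\geq 1}$ of $\hat{\f{g}}$; bar-invariance of the whole element and bar-invariance of $p(t)$ will then be visibly equivalent term-by-term. Here ``bar-invariant'' for an element of $\hat{\f{g}}$ is taken to mean bar-invariance of each coefficient with respect to the basis $\hat{z}^v$, which is the natural choice since $\f{g}$ is a $\kk[t^{\pm 1}]$-module with these generators (note that the coefficients of $\log(\EE(-p(t)z^v))$ in the $z^v$-basis live only in $\kk(t)$).

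The first step is to apply the Adams-operator formulation \eqref{Log_def} of the plethystic exponential. Setting $g \coloneqq -p(t)z^v\cdot t/(1-t^2)$, one has $\log(\EE(-p(t)z^v)) = \sum_{k\geq 1} \psi_k(g)/k$, where $\psi_k(z^v)=z^{kv}$ and $\psi_k(-t)=(-t)^k$, so $\psi_k(t)=(-1)^{k+1}t^k$ and $\psi_k(t^2)=t^{2k}$. Substituting, then using the identity $t^k/(1-t^{2k})=-1/(k)_t$ and passing from the $z^{kv}$ basis to the $\hat{z}^{kv}$ basis via $z^{kv}=(kd)_t\,\hat z^{kv}$, where $d \coloneqq |v|$, a direct computation yields
\[ \log(\EE(-p(t)z^v)) = \sum_{k\geq 1} \frac{(-1)^{k+1}}{k}\cdot\frac{(kd)_t}{(k)_t}\cdot p\!\left((-1)^{k+1}t^k\right)\hat z^{kv}. \]

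By \eqref{tquotient} the factor $(kd)_t/(k)_t=\sum_{j=1}^{d}t^{k(2j-1-d)}$ is a Laurent polynomial in $t$ whose exponents are symmetric about zero, hence bar-invariant; the scalar $(-1)^{k+1}/k$ is bar-invariant as well. So the coefficient of $\hat z^{kv}$ is bar-invariant if and only if $p\!\left((-1)^{k+1}t^k\right)$ is. Since $\left((-1)^{k+1}t^k\right)^{-1}=(-1)^{k+1}t^{-k}$, the latter is equivalent to the identity $p(s) = p(s^{-1})$ evaluated at $s=(-1)^{k+1}t^k$. If $p(t)$ is bar-invariant this holds identically in $s$, and hence for every $k$, giving the ``if'' direction; conversely, specialising to $k=1$ directly yields $p(t)=p(t^{-1})$, giving ``only if''. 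The whole argument is a finite and transparent computation, and I do not anticipate any serious obstacle beyond committing to the right notion of bar-invariance on $\hat{\f{g}}$ before carrying out the expansion.
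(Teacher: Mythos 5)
Your proof is correct, and it takes a genuinely different route from the paper's. The paper first splits $p(t)$ into its odd part $\alpha(t)$ and even part $\beta(t)$, then for the odd part invokes the factorization \eqref{EEoddPsi} of $\EE(-t^{2a+1}z^v)$ into an infinite product of quantum dilogarithms $\Psi_{t^{2^r}}$, and finally takes $\log$ and exploits bar-invariance of the individual $\Li$-terms together with $\alpha_i=\alpha_{-i}$ (the even case is handled via \eqref{EEeven}). You instead bypass the parity split and the $\prod_r\Psi_{t^{2^r}}$ machinery entirely by applying the Adams-operator characterization of $\Exp$ directly, arriving at the closed formula
\[
\log(\EE(-p(t)z^v)) = \sum_{k\geq 1}\frac{(-1)^{k+1}}{k}\cdot\frac{(kd)_t}{(k)_t}\cdot p\!\left((-1)^{k+1}t^k\right)\hat z^{kv},
\]
after which both implications drop out term by term. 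This is more uniform and makes the bar-invariance mechanism fully visible: the auxiliary factor $(kd)_t/(k)_t$ is bar-invariant and nonzero, so bar-invariance of the $k$-th coefficient is exactly $p(s)=p(s^{-1})$ at $s=(-1)^{k+1}t^k$, which for $k=1$ gives the converse and holds for all $k$ once $p$ is bar-invariant. You also did well to flag, up front, that ``bar-invariant'' for an element of $\hat{\f{g}}$ means coefficient-wise invariance in the $\{\hat z^{kv}\}$ basis rather than the $\{z^{kv}\}$ basis; this is implicit in the paper (and is the convention required for the adjoint action in Lemma \ref{bar_prop} to preserve bar-invariance, since each $\ad_{\hat z^a}(z^b)=\frac{(\omega(a,b))_t}{(|a|)_t}z^{a+b}$ has bar-invariant coefficient), but making it explicit is a genuine improvement in clarity. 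In short: same conclusion, cleaner and more self-contained mechanics.
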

\begin{proof}
First, assume that $p(t)$ is bar-invariant.  We can write $p(t)=\alpha(t)+\beta(t)$ where $\alpha(t)$ is odd and $\beta(t)$ is even, and both are bar-invariant.  Since 
\begin{align*}
\log\left[\EE(-\alpha(t)-\beta(t))\right]=&\log\left[\EE(-\alpha(t))\EE(-\beta(t))\right]\\
=&\log\left[\EE(-\alpha(t))\right]+\log\left[\EE(-\beta(t))\right],
\end{align*}
it suffices to prove the claim for both $\alpha(t)$ and $\beta(t)$ separately.  Write $\alpha(t)=\sum_i \alpha_it^i$.  Then by \eqref{EEoddPsi} we have
\begin{align*}
    \log[\EE(\alpha(t))]=&\log\left(\prod_{r=0}^{\infty}\prod_{i\in\mathbb{Z}\lvert   \alpha_i\neq 0}\Psi_{t^{2^r}}(t^{2^ri}z^{2^rv})^{\alpha_i}\right)\\
    =&-\sum_{r=0}^{\infty}\sum_{i\in\mathbb{Z}\lvert \alpha_i\neq 0}\alpha_i\Li(-t^{2^ri}z^{2^rv};t^{2^r})
\end{align*}
and the statement follows from
\[
\Li(-t^{2^ri}z^{2^rv};t^{2^r})_{t\mapsto t^{-1}}=\Li(-t^{-2^ri}z^{2^rv};t^{2^r})
\]
and $\alpha_i=\alpha_{-i}$.  The proof for $\beta(t)$ is similar, but simpler.

The reverse implication follows from the fact that the $z^v$ coefficient of $\log(\EE(-p(t)z^v))$ is precisely $p(t)/(1)_t$.
\end{proof}

\subsubsection{Characteristic function interpretation of the plethystic exponential}

The operations $\Exp$ and $\EE$ have a natural interpretation in terms of generating functions of graded vector spaces, which we now explain.  Let $K=\mathbb{Z}_{\geq 0}^n$ be as above.  Let $\mathcal{C}$ denote the Abelian category of $(K\oplus \mathbb{Z})$-graded vector spaces.  We consider the $\mathbb{Z}$-grading as a cohomological grading, and for $V\in \ob(\mathcal{C})$ and $v\in K$ we denote by $V_v$ the cohomologically graded vector space obtained by taking the $v$th graded piece with respect to the $K$-grading.  We denote by $\Vect_{K}^+$ the full subcategory of $\mathcal{C}$ containing those objects such that for every $i\in \mathbb{Z}$ and $v\in K$, the vector space $V_v^i$ is finite-dimensional, and for every $v\in K$ there is an $N_v\in \bb{Z}$ such that $V_v^i=0$ for all $i\leq N_v$.  We define the \textbf{characteristic function} of an object of $\Vect_K^+$ by 
\begin{equation}
    \label{char_fn_def}
    \chi_{K,t}(V)=\sum_{v\in K}\sum_{i\in \mathbb{Z}}\dim(V_v^i)x^vt^i\in R.
\end{equation}
Given an object $V\in \Vect^+_K$ we can form the free supercommutative algebra $\Sym(V)$ generated by $V$, i.e. the quotient of the free tensor algebra of $V$ by the two-sided ideal spanned by elements of the form
\begin{align*}
    a\otimes b -(-1)^{\alpha \beta}b\otimes a
\end{align*}
for $a$ and $b$ homogeneous elements with respect to the cohomological grading, of degrees $\alpha$ and $\beta$.  Decomposing $V=V^{\odd}\oplus V^{\even}$, with $V^{\odd}$ the summand of $V$ concentrated in odd cohomological degree, and $V^{\even}$ the summand concentrated in even cohomological degree, we have
\[
\Sym(V)\cong\CSym(V^{\even})\otimes \bigwedge(V^{\odd})
\]
where $\CSym(V^{\even})$ is the free commutative algebra generated by $V^{\even}$ and $\bigwedge(V^{\odd})$ is the free exterior algebra generated by $V^{\odd}$.

The object $\Sym(V)$ will again be an object of $\Vect_K^+$ if $V_0=0$.  This condition also implies that $\chi_{K,t}(V)\in\mathscr{I}$.  The plethystic exponential is uniquely determined by the facts that it is a group homomorphism and satisfies
\begin{equation}
\label{cat_Exp}
\Exp_{-t,x_1,\ldots x_n}(\chi_{K,t}(V))=\chi_{K,t}(\Sym(V))
\end{equation}
for $V\in\ob(\Vect_K^+)$ satisfying the condition that $V_0=0$.  We write $B\coloneqq \HO(\pt/\mathbb{C}^*,\mathbb{Q})[-1]=\mathbb{Q}[u]$ where $u^r$ has cohomological degree $2r+1$.  Then if $a\in V$ has degree $(v,i)$, i.e. cohomological degree $i$ and $K$-degree $v$, the element $a\otimes u^r\in V\otimes B$ has degree $(v,i+2r+1)$.  The tensor product $V\otimes B$ is again an object of $\Vect_K^+$.  Assuming that $V_0=0$, we may write
\begin{equation}
\label{cat_Expb}
\EE(\chi_{K,t}(V))=\chi_{K,t}\big(\Sym\big(V\otimes \HO(\pt/\mathbb{C}^*,\mathbb{Q})[-1]\big)\big).
\end{equation}

Following \cite[Sec. 6.1]{CoHA} we define the \textbf{quantum tropical vertex group} $G^{\qtrop}$ to be the closure of the subgroup of $\hat{G}$ generated by the elements $\EE(-t^a z^v)$, and we define the \textbf{quantum tropical vertex algebra} $\mathfrak{g}^{\qtrop}\subset \hat{\mathfrak{g}}$ to be the image of $G^{\qtrop}$ under $\log$.  By \eqref{EEeven} and \eqref{EEoddPsi}, $\mathfrak{g}^{\qtrop}$ is a Lie subalgebra of $\hat{\f{g}}$, considered as a Lie algebra over $\mathbb{Z}$.  Precisely, it is the closure of the Lie algebra generated by quantum dilogarithms, which in turn is a Lie subalgebra of our quantum torus Lie algebra $\hat{\f{g}}_{L_0^+,\omega}$.

\subsection{Main result for scattering diagrams} 

The following theorem, for which the proof will be completed in \S \ref{PosSect}--\S\ref{2wallSection}, is the quantum analog of \cite[Thm. 1.13]{GHKK} and is the key to all our other results.

\begin{thm}\label{PosScat}
Let $\f{D}_{\In}$ be a finite scattering diagram in $L_{\bb{R}}$ over $\f{g}=\f{g}_{L_0^+,\omega}$ of the form
\begin{align}\label{DIn}
\f{D}_{\In}=\left\{(v_i^{\omega\perp},\EE(-p_i(t)z^{v_i}))\colon v_i\in L_0^+,i=1,\ldots,s\right\},
\end{align}
where $p_i(t)\in\mathbb{Z}_{\geq 0}[t^{\pm 1}]$ for all $i$.
Then there is some $\f{D}$ representing the equivalence class of $\scat(\f{D}_{\In})$ such that the incoming walls of $\f{D}$ are precisely the walls \eqref{DIn}, and all walls of $\f{D}$ are of the form 
\begin{align}\label{Dwalls}
    (\f{d},\EE(-p_{\f{d}}(t)z^{v_{\f{d}}}))
\end{align}
for various cones $\f{d}\subset L_{\bb{R}}$, vectors $v_{\f{d}}\in L_0^+$, and Laurent polynomials $p_{\f{d}}(t)$ with positive integer coefficients. If each $p_i(t)$ is of pL type, then we can choose the $p_{\f{d}}(t)$ to be of pL type as well.  If each $p_i(t)$ is bar-invariant in addition to having positive integer coefficients, then we can take each $p_{\f{d}}(t)$ to be bar-invariant as well. 

In particular, by \eqref{EEeven} and \eqref{EEoddPsi}, we could alternatively find $\f{D}$ representing $\scat(\f{D}_{\In})$ such that the set of incoming walls of this $\f{D}$ forms a scattering diagram equivalent to $\f{D}_{\In}$, and such that all the scattering functions of $\f{D}$ have the form $\Psi_t(t^{2a}z^v)$ or $\Psi_{t^{2^r}}(t^{2^r(2a+1)}z^{2^rv})$ for various $a\in \bb{Z}$, $r\in \bb{Z}_{\geq 0}$, and $v\in L_0^+$.
\end{thm}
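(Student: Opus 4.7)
The plan is to proceed by induction on the order $k$, using the truncation $\scat(\f{D}_{\In})_k$ associated to the finite quotient group $G_k$ and progressively extending a representative whose walls have the claimed form. Suppose inductively that $\scat(\f{D}_{\In})_k$ is represented by a scattering diagram $\f{D}^{(k)}$ whose walls are of the form \eqref{Dwalls} with positive (respectively pL, respectively bar-invariant) $p_{\f{d}}(t)$; the inductive step is to show that finitely many additional walls of the same form can be added to produce a representative $\f{D}^{(k+1)}$ of $\scat(\f{D}_{\In})_{k+1}$. Using the splitting and saturation equivalences of Example \ref{EquivEx} together with a small perturbation, the consistency defect at order $k+1$ is concentrated in neighborhoods of the finitely many codimension-two joints of $\f{D}^{(k)}$, and by Lemma \ref{RestrictScat} applied to a rank-two slice transverse to such a joint, the problem reduces to the rank-two two-wall base case: given initial walls $(v_i^{\omega\perp},\EE(-t^{m_i}z^{v_i}))$ for $i=1,2$ on a rank-two lattice with $n\coloneqq \omega(v_1,v_2)\in\bb{Z}_{\geq 1}$, show that the consistent completion can be represented with all outgoing $p_{\f{d}}(t)$ of the claimed form. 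The reduction from an arbitrary positive initial $p_i(t)$ to monomials $t^{m_i}$ uses the multiplicative identity \eqref{plus-prod} and the fact that a positive Laurent polynomial is a $\bb{Z}_{\geq 0}$-linear combination of monomials.

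For positivity, I split the two-wall base case into four infinite families indexed by the parities of $(m_1,m_2)$. In each family, the consistency identity computing $\scat(\f{D}_{\In})$ can be matched, via the standard correspondence between scattering diagrams and quantum Hall algebra identities \cite{Keller,Bridge,WCS}, with the wall-crossing identity for semistable representations of a quiver $Q_{n,m_1,m_2}$, whose Euler form and loop data are determined by $n$ together with the parity of $(m_1,m_2)$. The coefficients of the outgoing wall functions are thereby identified with refined DT invariants of $Q_{n,m_1,m_2}$. When both $m_i$ are even, $Q_{n,m_1,m_2}$ is the $n$-Kronecker quiver and positivity is the Meinhardt--Reineke theorem \cite{MeiRei}; in each of the other three families $Q_{n,m_1,m_2}$ has loops, and positivity is the content of Theorem \ref{DT_pos_thm}, which is derived from the cohomological integrality theorem of \cite{DavMei} specialised to zero potential. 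The main obstacle is precisely the presence of loops in the three non-Kronecker families: this is what forces us beyond the purely acyclic statement of \cite{MeiRei} and requires the full cohomological wall-crossing input.

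The two refinements are handled by the same reduction. For pL preservation, the rank-two base case (Proposition \ref{2WallpL}) can be arranged so that the DT invariants in question are those of an \emph{acyclic} quiver, so the original Meinhardt--Reineke theorem suffices to give $p_{\f{d}}(t)$ of Lefschetz and hence pL type. For bar-invariance, Lemma \ref{log_barinv} reduces the problem to bar-invariance of each added element $\log\EE(-p_{\f{d}}(t)z^{v_{\f{d}}})\in\hat{\f{g}}$. The bar involution $t\mapsto t^{-1}$ sends any consistent scattering diagram to a consistent scattering diagram with the same set of wall directions, and by hypothesis fixes the initial data $\f{D}_{\In}$; the uniqueness portion of Theorem \ref{KSGS} then ensures that the bar-involute of $\f{D}^{(k+1)}$ is equivalent to $\f{D}^{(k+1)}$, allowing each newly added $p_{\f{d}}(t)$ to be chosen bar-invariant order by order.
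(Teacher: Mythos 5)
Your overall plan — split into monomial walls, induct on order, reduce to a two-wall base case, settle the base case via DT theory of a quiver attached to $(n,m_1,m_2)$, and use Meinhardt--Reineke on an auxiliary acyclic quiver for the pre-Lefschetz refinement — matches the paper's strategy (\S\ref{PosSect}, Corollary \ref{2WallPos}, Proposition \ref{2WallpL}). But the reduction step as you've phrased it does not work. You invoke Lemma \ref{RestrictScat} to pass to a ``rank-two slice transverse to the joint,'' yet that lemma restricts to a genuine sublattice $\wt L\subset L$ and only retains walls whose direction $v_{\f d}$ lies in $\wt L_0$ and whose support meets $\wt L_{\bb R}$ in codimension one. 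Walls through a joint $\f j$ have $\omega_1(v_{\f d})$ in the rank-two annihilator of $\f j$, but $v_{\f d}$ itself need not lie in any rank-two sublattice of $L$, and a $2$-plane transverse to $\f j$ meets $\f j$ in a single point, so it captures none of the relevant wall supports. Moreover, even a generic perturbation leaves joints with arbitrarily many walls through them: what it controls is only the number of \emph{order-one} walls per joint (at most two). To handle the remaining higher-order walls, the paper's argument is essentially forced to use the change of monoid trick of \S\ref{MonoidTrick}: lift to a lattice $\wt L$ in which the $v_i$ become a basis of $\wt L_0$, so that any wall of $\scat(\wt{\f D}_{k+1,p,\In})$ whose direction involves a $\wt v_j$ with $j\geq 3$ has strictly smaller order in $\wt L_0^+$ than its projection does in $L_0^+$, and is handled by the inductive hypothesis; only the pure $\{\wt v_1,\wt v_2\}$ contributions survive, giving the genuine two-wall reduction. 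Without this mechanism the induction does not close, so this is a real gap in your argument rather than a notational difference.

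A smaller issue: your bar-invariance argument assumes the substitution $t\mapsto t^{-1}$ fixes the initial wall functions, but a direct computation from \eqref{Li} shows $\overline{\log\Psi_t(z^v)}=-\log\Psi_t(z^v)$ when $t\mapsto t^{-1}$ is applied to the $z$-coordinate coefficients, i.e.\ the naive involution sends $\Psi_t(z^v)$ to $\Psi_t(z^v)^{-1}$, inverting every wall function rather than fixing them. One must either compose with group inversion (which sends consistent diagrams to consistent diagrams and does fix $\f D_{\In}$) or, as the paper does, pass to the $\kk$-Lie subalgebra $\f g^{\barr}$ of elements whose coefficients \emph{in the $\hat z$-basis} are bar-invariant, which is closed under the bracket and contains $\log\EE(-p_i(t)z^{v_i})$ by Lemma \ref{log_barinv}; applying Theorem \ref{KSGS} over $\f g^{\barr}$ then gives the claim directly. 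Your statement ``the bar involution fixes $\f D_{\In}$'' needs this correction to be made precise.
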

\begin{proof}
For the statements regarding $p_i(t)$ being positive or pL, we will in \S \ref{PosSect} inductively reduce to the case of just two initial walls.  Then Corollary \ref{2WallPos} and Proposition \ref{2WallpL} will prove the positivity and pL statements, respectively, for these two-wall cases.

For bar-invariance, recall from Lemma \ref{log_barinv} that $p(t)$ is bar-invariant if and only if $\log(\EE(-p(t)z^v))$ is bar-invariant.
Note that we can view $\f{g}=\f{g}_{L_0^+,\omega}$ as a Lie algebra over $\kk$ rather than $\kk[t^{\pm 1}]$, and then we can consider the Lie subalgebra $\f{g}^{\barr}$ (over $\kk$) consisting of the bar-invariant elements.  So the bar-invariance of the Laurent polynomials $p_i(t)$ implies that we can view $\f{D}_{\In}$ as a scattering diagram over $\f{g}^{\barr}$, and then applying Theorem \ref{KSGS}  over $\f{g}^{\barr}$ yields $\f{D}$ defined over $\f{g}^{\barr}$ as well.  Then each $p_{\f{d}}(t)$ must be bar-invariant, as desired.
\end{proof}

\begin{rmk}
In \cite[\S 8]{Bou2}, the polynomials $p_{\f{d}}(t)$ of \eqref{Dwalls} for certain scattering diagrams are interpreted as (higher-genus) log BPS invariants.  Theorem \ref{PosScat} thus recovers the integrality result of \cite[Thm. 33]{Bou2}, and furthermore, proves positivity for these log BPS invariants, as well as bar-invariance and pL type (we note that positivity for acyclic cases was proved in \cite[Prop. 41]{Bou2}).  Theorem \ref{parity_cor} also yields a parity result for these log BPS invariants.  
\end{rmk}

Theorem \ref{PosScat} combined with \eqref{tAdBinomial} will imply that the coefficients of the monomials attached to our broken lines in \S \ref{BrokenLines} are Laurent polynomials in $t$ with positive integer coefficients.  This is the key to proving Theorems \ref{qPosLoc} and \ref{qPosStrong}, hence Theorem \ref{MainThm}.

\begin{conj}\label{Lefschetz_conjecture}
For $\f{D}_{\In}$ as in \eqref{DIn}, if each $p_i(t)$ is of Lefschetz type, then we can choose $\f{D}$ representing $\scat(\f{D}_{\In})$ so that all walls are as in \eqref{Dwalls} with each $p_{\f{d}}(t)$ of Lefschetz type.  
\end{conj}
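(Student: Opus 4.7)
The plan is to mirror the structure of the proof of Theorem \ref{PosScat}, replacing ``positive'' or ``pL'' by ``Lefschetz type'' throughout. The class of Lefschetz polynomials is closed under sums and under multiplication by $[n]_t$ for any $n\in\bb{Z}_{\geq 1}$, so the lemmas of \S\ref{AdPres} and the perturbation/reduction arguments of \S\ref{PosSect} go through verbatim to reduce the statement of Conjecture \ref{Lefschetz_conjecture} to the preservation of Lefschetz type for the four infinite families of two-wall base cases with incoming functions $\EE(-t^{m_1}z^{v_1})$ and $\EE(-t^{m_2}z^{v_2})$, $\omega(v_1,v_2)=n\in\bb{Z}_{\geq 1}$, stratified by the parities of $m_1,m_2$.

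For each two-wall base case I would then translate the problem into the DT theory of the associated quiver via Bridgeland's Hall algebra scattering diagrams (Proposition \ref{Bridge_scat}), exactly as in Corollary \ref{2WallPos} and Proposition \ref{2WallpL}. When $m_1,m_2$ are both even, the associated quiver is the $n$-Kronecker quiver, which is acyclic, and Meinhardt--Reineke's refined DT invariant theorem produces the Lefschetz-type factorization directly; this reproduces Theorem \ref{AcyclicLefschetz} as a special case. Together with the reductions above, this already settles Conjecture \ref{Lefschetz_conjecture} for any $\f{D}_{\In}$ whose consistent completion can be reduced to only acyclic two-wall cases.

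The main obstacle lies in the three remaining two-wall families, for which the associated quiver $Q$ has loops. The input from Theorem \ref{DT_pos_thm} only asserts that the $f_v(t)$ have non-negative integer (and even/odd) coefficients, which is strictly weaker than being of Lefschetz type. The substantive task is therefore to strengthen Theorem \ref{DT_pos_thm} to a Lefschetz-type statement for arbitrary quivers: namely, that each $f_v(t)$ is a $\bb{Z}_{\geq 0}$-combination of $[k]_t$'s. Categorically, this amounts to producing an $\f{sl}_2$-triple acting on the BPS cohomology of $Q$ at each dimension vector, compatible with the cohomological integrality isomorphism of Davison--Meinhardt. The natural candidate Lefschetz operator is not induced by an ample class (the moduli stack $\Mst_v(Q)$ is typically neither smooth nor projective, and carries loops); instead, one would look to construct it from the perverse filtration associated to the morphism $\Mst_v(Q) \to \Msp_v(Q)$ to the coarse moduli space, and use the BPS Lie algebra action to transport the Lefschetz structure from stalks to the global cohomology. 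Verifying hard Lefschetz for that filtration, together with the requisite compatibility, is where the genuine difficulty lies.

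Once such a refined DT Lefschetz theorem is established, its combination with Proposition \ref{Bridge_scat} yields Lefschetz preservation for each two-wall base case, and then the reductions from \S\ref{PosSect} upgrade this to Conjecture \ref{Lefschetz_conjecture} in full generality; via \eqref{tAdBinomial} and the broken line arguments of \S\ref{BrokenLines}, this would in turn prove Conjecture \ref{MainConj} unconditionally.
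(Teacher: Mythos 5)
This statement is a conjecture, and the paper itself only proves it in the special acyclic case with $p_i(t)=1$ (Theorem \ref{acyclic_scat_thm}), by applying Proposition \ref{Bridge_scat} directly to $\f{D}_{\In}$ rather than via a two-wall reduction. Your proposal is an outline, not a proof, and the outline has two genuine gaps.

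First, the reduction to the four families of monomial base cases $\EE(-t^{m_i}z^{v_i})$ does not preserve the Lefschetz property. In the perturbation trick of \S\ref{PertTrick} one decomposes each $p_i(t)$ into ``atoms'' and factors the wall accordingly; for the Lefschetz statement the right atoms are the $[j]_t$'s, since $t^a$ with $a\neq 0$ is \emph{not} of Lefschetz type (every $[n]_t$ is supported symmetrically about $t^0$, so no positive sum of them can equal an off-centre monomial). The correct base cases therefore carry functions $\EE(-[r_1]_tz^{v_1})$ and $\EE(-[r_2]_tz^{v_2})$, exactly as the paper says in the closing remark of \S\ref{2wallSection}. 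Relatedly, your ``$m_1,m_2$ both even'' claim does not go through: the substitution $z^{v_i}\mapsto t^{m_i}z^{v_i}$ multiplies each outgoing coefficient by $t^{\alpha m_1+\beta m_2}$, which preserves pL but destroys Lefschetz unless $m_i=0$.

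Second, and more decisively, the ``refined DT Lefschetz theorem for arbitrary quivers'' you identify as the substantive task is false. As the paper records just before Proposition \ref{Pos1}, once $Q$ has oriented cycles the spaces $\Msp_v^{\zeta\sst}(Q)$ are generally not projective, so their intersection cohomology fails Poincar\'e duality and hard Lefschetz; there is no $\mathfrak{sl}_2$-action to be found on $\BPS_v^{\zeta}(Q)$ because the generating series themselves are not bar-invariant. The Jordan quiver is already a counterexample: $\Omega_{\LQ{1},1}=t^{-1}$ by \eqref{ref_bos}. The obstruction the paper actually leaves open is of a different nature: to encode $\EE(-[r_i]_tz^{v_i})$ for $r_i>2$ one would want a $\Kr{r_i}$-type piece in the auxiliary quiver, but for $r_i>2$ the $r_i$-Kronecker quiver has non-primitive BPS invariants $\Omega^{(1,-1)}_{\Kr{r_i},(n,n)}\neq 0$ for $n\geq 2$ (Examples \ref{coh_K2}, \ref{coh_Kplus}), so the clean dictionary between the two-wall base case and a stability scattering diagram, which drives the proof of Proposition \ref{2WallpL}, breaks down.
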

For cases of interest for studying theta bases of quantum cluster algebras associated to \textit{acyclic} quivers, we can in fact prove Conjecture \ref{Lefschetz_conjecture}:

\begin{thm}
\label{acyclic_scat_thm}
Assume that $\f{D}_{\In}$ is as in \eqref{DIn}, and furthermore that each $p_i(t)=1$, and also that the $s\times s$ matrix $(\omega(v_i,v_j))_{i,j}$ is the signed adjacency matrix of an acyclic quiver $Q$.  Then Conjecture \ref{Lefschetz_conjecture} holds for $\scat(\f{D}_{\In})$.
\end{thm}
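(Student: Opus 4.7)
The plan is to exploit the link between scattering diagrams and Donaldson--Thomas theory developed earlier in the paper, reducing the Lefschetz-type statement to a geometric fact about moduli of semistable representations of an acyclic quiver. In outline, $\scat(\f{D}_{\In})$ will be identified (after a change of lattice) with Bridgeland's Hall algebra scattering diagram for $Q$, and the polynomials appearing on its walls will be recognised as Poincar\'e polynomials of projective varieties carrying a hard Lefschetz operator.

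The first step is to use the hypothesis that $(\omega(v_i,v_j))_{i,j}$ is the signed adjacency matrix of $Q$ to realise $\f{D}_{\In}$ as a pushforward. Let $\wt{L} = \bb{Z}^{Q_0}$, let $\alpha_i$ denote its standard basis, and let $\f{D}_{\In}^Q$ denote the initial scattering diagram on $\wt{L}_{\bb{R}}$ consisting of the hyperplanes attached to the simple representations $S_i$ carrying scattering functions $\EE(-z^{\alpha_i})$. The map of lattices $\pi\colon \wt{L} \rar L$, $\alpha_i \mapsto v_i$, intertwines the skew-symmetrised Euler form of $Q$ with $\omega$, so by Lemma \ref{QuotientScat} the pushforward $\pi_*\scat(\f{D}_{\In}^Q)$ is a consistent scattering diagram over $\f{g}$ extending $\f{D}_{\In}$, and hence represents $\scat(\f{D}_{\In})$. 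It therefore suffices to prove the Lefschetz statement for the walls of $\scat(\f{D}_{\In}^Q)$.

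By Proposition \ref{Bridge_scat}, $\scat(\f{D}_{\In}^Q)$ coincides with Bridgeland's Hall algebra scattering diagram for $Q$, and so its walls take the form $(\f{d}, \EE(-f_v(t) z^v))$ with $v \in \bb{Z}^{Q_0}_{\geq 0}$ and $f_v(t)$ equal to the refined DT invariant of $\zeta$-semistable representations of $Q$ of dimension vector $v$, for a stability condition $\zeta$ determined by the chamber containing $\f{d}$. For acyclic $Q$, the main theorem of Meinhardt--Reineke \cite{MeiRei} identifies each $f_v(t)$ (up to the standard normalisation) with the Poincar\'e polynomial of the intersection cohomology of the coarse moduli space of $\zeta$-semistable representations of slope equal to that of $v$; this moduli space is projective in the acyclic case by King's GIT construction. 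The hard Lefschetz theorem for intersection cohomology of projective varieties then implies that each $f_v(t)$ is of Lefschetz type.

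Putting the pieces together, $\pi_*\scat(\f{D}_{\In}^Q)$ is a representative of $\scat(\f{D}_{\In})$ in which every outgoing wall carries a function of the required form $\EE(-p_{\f{d}}(t) z^{v_{\f{d}}})$ with $p_{\f{d}}(t)$ Lefschetz. If $\pi$ merges two walls of $\scat(\f{D}_{\In}^Q)$ with the same support and direction into a single wall of the pushforward, then by Example \ref{EquivEx}(1) their attached polynomials are added, and sums of Lefschetz polynomials are Lefschetz by definition, so no issue arises at this stage. The main obstacle, and the geometric heart of the argument, is thus the Meinhardt--Reineke Lefschetz statement on the DT side; once this is invoked, the conclusion is a formal consequence of the scattering diagram framework built in \S \ref{PosSect}. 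This is also why the acyclic hypothesis is essential: in the presence of oriented cycles the relevant moduli stacks need no longer admit projective coarse moduli schemes, and the hard Lefschetz input breaks down.
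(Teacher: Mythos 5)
Your approach matches the paper's: the paper proves Theorem~\ref{acyclic_scat_thm} via Proposition~\ref{Bridge_scat} (specifically its final sentence), whose proof in the acyclic, $p_i=1$ case runs precisely through the pipeline you describe: realize the scattering diagram as a pushforward of the stability scattering diagram $\f{D}_{\Stab}$ for $Q$, use acyclicity (via genteelness, Corollary~\ref{GenteelCor}) to conclude that the incoming walls are exactly the expected ones, and invoke Corollary~\ref{MRcor} (Meinhardt--Reineke, projectivity of $\Msp^{\zeta\sst}_v(Q)$ in the acyclic case, and hard Lefschetz for intersection cohomology) to get the Lefschetz type of every outgoing wall function.

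There is one technical gap you should close. Lemma~\ref{QuotientScat} requires $\pi_{\bb{R}}\colon \wt{L}_{\bb{R}}\rar L_{\bb{R}}$ to be surjective, but your $\pi(\alpha_i)=v_i$ only surjects onto the span of the $v_i$, which need not be all of $L_{\bb{R}}$. If the $v_i$ do not span, the images $\pi(\f{d})$ of walls would have codimension $\geq 2$ and fail to be walls, so the pushforward is not a scattering diagram. The paper handles this explicitly in the first paragraph of the proof of Proposition~\ref{Bridge_scat}: enlarge the collection of vectors by adjoining a minimal set of additional $v_i$'s (with attached trivial functions) so that they span $L_{\bb{R}}$, run the construction there, and then project to the quotient Lie algebra by the extraneous graded pieces at the end. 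Your argument needs this preliminary step. A smaller stylistic remark: the object you want to identify $\scat(\f{D}_{\In}^Q)$ with is the stability scattering diagram $\f{D}_{\Stab}$ over the quantum torus Lie algebra $\f{g}_Q$, rather than $\f{D}_{\Hall}$ itself; $\f{D}_{\Stab}$ is obtained from $\f{D}_{\Hall}$ by applying $\log\!\int$ and the sign-reversal of Corollary~\ref{reverse_cor}, and only after this step are the wall functions expressed as $\EE(-\Omega_{Q,v}^{\zeta}z^v)$.
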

This follows by combining results of \cite{Bridge} and \cite{MeiRei}.  Since both of these papers are written in terms of DT theory, we defer the proof to \S \ref{PosPar}, where the result will appear as part of Proposition \ref{Bridge_scat}.  Then in \S \ref{parity_proof}, we will combine Proposition \ref{Bridge_scat} with Theorem \ref{PosScat} to prove the following.

\begin{thm}
\label{parity_cor}
For $\f{D}_{\In}$ as in \eqref{DIn} with each $p_i(t)$ having positive integer coefficients, assume that for $1\leq i\leq s'$ the polynomial $p_i(t)$ is even, and that for $s'<i\leq s$ the polynomial $p_i(t)$ is odd.  Define a bilinear form on $\mathbb{Z}^s=\bb{Z}\langle e_1,\ldots,e_s\rangle$ by
\begin{equation}\label{lambda_def}
\lambda(e_i,e_j)=\begin{cases} \max(0,\omega(v_i,v_j)) &\textrm{if~}i\neq j\\
1 &\textrm{if~}i=j\leq s'\\
0&\textrm{if~}i=j>s'.
\end{cases}
\end{equation}
Then we can find $\f{D}$ representing $\f{D}_{\scat}$ such that all the walls are of the form 
\begin{align}
\label{wall_parities}
(\f{d},\EE(-p_{\f{d}}(t)z^{v_{\f{d}}}))
\end{align}
for various cones $\f{d}\subset L_{\mathbb{R}}$, where the Laurent polynomials $p_{\f{d}}(t)$ are positive and are either even or odd, with $\parit(p_{\f{d}}(t))\equiv 1+\lambda(a,a)$ (mod $2$) for some $a=(a_1,\ldots,a_s)\in \bb{Z}_{\geq 0}^s$ satisfying $v_{\f{d}}=\sum_i a_i v_i$, $a_i\in \mathbb{Z}_{\geq 0}$. 
\end{thm}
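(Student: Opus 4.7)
The positivity of each $p_{\f{d}}(t)$ is immediate from Theorem \ref{PosScat}, so the new content is the parity constraint. My plan is to identify the parities of wall functions in $\f{D}$ with those of refined DT invariants of an auxiliary quiver $Q$ built from the initial data, and then transport this identification to $\f{D}$.

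The key structural observation is that $\lambda$ agrees modulo $2$ with the Euler form of a natural quiver. Define $Q$ to be the quiver on $s$ vertices with no loops at vertex $i$ when $i \leq s'$, one loop at vertex $i$ when $i > s'$, and $\max(0, \omega(v_i, v_j))$ arrows from $i$ to $j$ for $i \neq j$. Then $\chi_Q(e_i, e_i) = \lambda(e_i, e_i)$ and $\chi_Q(e_i, e_j) = -\lambda(e_i, e_j)$ for $i \neq j$, so $\chi_Q(a,a) \equiv \lambda(a,a) \pmod{2}$ for every $a \in \bb{Z}_{\geq 0}^s$, since $-1 \equiv 1 \pmod 2$ kills the off-diagonal sign discrepancy.

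I would then invoke the stability scattering diagram $\f{D}_Q$ of $Q$ built via Bridgeland's Hall algebra construction (handled by Proposition \ref{Bridge_scat}) together with the parity portion of Theorem \ref{DT_pos_thm}: the walls of $\f{D}_Q$ carry functions $\EE(-f_v(t) z^v)$ with $f_v(t)$ positive and $\parit(f_v(t)) \equiv 1 + \chi_Q(v,v) \pmod{2}$, and the initial walls are $\EE(-z^{e_i})$ for $i \leq s'$ and $\EE(-tz^{e_i})$ for $i > s'$, exhibiting exactly the prescribed parity pattern. Pushing $\f{D}_Q$ forward along the lattice map $\pi\colon \bb{Z}^s \to L$, $e_i \mapsto v_i$, via Lemma \ref{QuotientScat} yields a consistent scattering diagram in $L_{\bb{R}}$ whose walls inherit these parities; when each $p_i \in \{1, t\}$ according to its prescribed parity, uniqueness in Theorem \ref{KSGS} identifies the pushforward with $\scat(\f{D}_{\In})$, giving the claim in this case.

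The main obstacle is handling general positive polynomials $p_i(t)$ of the prescribed parities rather than merely the monomial case. To resolve this, I would adapt the strategy from the bar-invariance argument at the end of the proof of Theorem \ref{PosScat}: work inside a $\bb{Z}/2$-graded Lie subalgebra $\f{g}^{\parit}$ of $\f{g}$ consisting of parity-definite elements. Since $\EE(-p(t) z^v)$ lies in a parity-definite subspace whenever $p(t)$ is pure-parity, and since the quantum torus Lie bracket $[z^a, z^b] = (\omega(a,b))_t z^{a+b}$ has coefficient of parity $\omega(a,b) \pmod 2$ (matching the off-diagonal of $\lambda$ modulo $2$), the Baker--Campbell--Hausdorff formula preserves the relevant $\bb{Z}/2$-grading. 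Applying Theorem \ref{KSGS} within $\f{g}^{\parit}$ then confines every outgoing wall function to the prescribed parity component, and combining this with the parity identification obtained from $\f{D}_Q$ in the simple case yields the asserted parity identity for some admissible $a$ with $v_{\f{d}} = \sum_i a_i v_i$.
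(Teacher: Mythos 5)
Your first-paragraph observation that $\lambda \equiv \chi_Q \pmod 2$ is correct and matches the mechanism the paper uses. However, the second paragraph has a genuine gap: the uniqueness argument in Theorem \ref{KSGS} can identify the pushforward of $\f{D}_Q$ with $\scat(\f{D}_{\In})$ \emph{only} when the two diagrams have the same incoming walls, and by Corollary \ref{GenteelCor} / Lemma \ref{acyclic_genteel_new} this happens precisely when $Q(V)$ is acyclic. When $Q(V)$ has a cycle that is not composed of loops, $Q(V,P)$ is not genteel, so $\f{D}_Q$ (and hence its pushforward to $L_{\bb{R}}$) has \emph{strictly more} incoming walls than $\f{D}_{\In}$, and your identification fails. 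This is not a corner case --- it is exactly the situation in which the parity statement is non-trivial. The paper handles this by a different mechanism entirely: it lifts the \emph{entire} (possibly larger) set of incoming walls of $\scat(\f{D}^{\circ}_{\In})$ to an enlarged lattice $\wt{L} = \bb{Z}\langle \wt{u}_1,\ldots,\wt{u}_r,\wt{v}_i: i \in I\rangle$ (not just to $\bb{Z}^s$), so that the walls of $\scat(\wt{\f{D}}_{\In})$ sit inside $\scat((\wt{\f{D}}^{\circ}_{\In})_k)$ as the sub-diagram of walls with direction in the $I'$-span; pushing forward, each wall function of $\f{D}$ differs from the corresponding wall function of the stability scattering diagram by something with nonnegative coefficients, and positivity then forces the parities to agree. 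This lift-and-compare step is the heart of the paper's proof and is missing from your proposal.

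Your third paragraph sketches a genuinely different strategy --- encoding the parity constraint as a $\bb{Z}/2$-graded Lie subalgebra $\f{g}^{\parit}$, in analogy with the bar-invariance argument in Theorem \ref{PosScat} --- and this idea, if developed carefully, \emph{would} close the gap and would subsume the pushforward argument entirely (so the $\f{D}_Q$ step would become unnecessary). But several verifications are glossed over. One must work over $\bb{Z}^s$ (via the change of monoid trick) so that the grading $\ell(a) := \lambda(a,a) \bmod 2$ is well-defined, since in $L$ the decomposition $v_{\f{d}} = \sum a_i v_i$ is not unique. One must then check the cocycle identity $\ell(a+b) \equiv \ell(a) + \ell(b) + \wt{\omega}(a,b) \pmod 2$, which amounts to $\lambda(a,b) + \lambda(b,a) \equiv \wt{\omega}(a,b)$; this uses that $\max(0,\omega(v_i,v_j)) + \max(0,\omega(v_j,v_i)) = |\omega(v_i,v_j)| \equiv \omega(v_i,v_j)$ and that the diagonal terms appear with coefficient $2$. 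One must also check that $\log(\EE(-p_i(t)z^{e_i}))$ lies in $\hat{\f{g}}^{\parit}$ at \emph{every} degree $ke_i$, not just $k = 1$; this follows from \eqref{EEeven}/\eqref{EEoddPsi} but is a non-trivial computation. Finally, one must argue as in the bar-invariance case that equivalence of the parity-definite scattering diagram with a positive one of the form in Theorem \ref{PosScat} forces each $p_{\f{d}}(t)$ to have the stated parity, which requires isolating the primitive-degree coefficient $p_{\f{d}}/(1)_t$. None of this is done, and the way the paragraph is framed --- as a patch for general $p_i$ rather than as a replacement for the pushforward argument --- suggests the structural point was not recognized.
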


\section{Broken lines and theta bases}\label{BrokenLines}
\label{theta_sec}

\subsection{Definitions and basic properties}\label{theta-def}

Fix a consistent scattering diagram $\f{D}$ in $L_{\bb{R}}$ over $\f{g}$.  Let $p\in L$, $\sQ\in L_{\bb{R}}\setminus \Supp(\f{D})$.  A \textbf{broken line} $\gamma$ with respect to $\f{D}$ with  ends $(p,\sQ)$ is the data of a continuous map $\gamma\colon (-\infty,0]\rar L_{\bb{R}}\setminus \Joints(\f{D})$, values $-\infty < \tau_0 \leq \tau_1 \leq \ldots \leq \tau_{\ell} = 0$, and for each $i=0,\ldots,\ell$, an associated monomial $c_iz^{v_i} \in  \kk_t[L]$, with $c_i\in \kk[t^{\pm 1}]$ and $v_i\in L$, such that:
\begin{enumerate}[label=(\roman*), noitemsep]
\item $\gamma(0)=\sQ$.
\item For $i=1\ldots, \ell$, $\gamma'(\tau)=-v_i$ for all $\tau\in (\tau_{i-1},\tau_{i})$.  Similarly, $\gamma'(\tau)=-v_0$ for all $\tau\in (-\infty,\tau_0)$. 
\item $c_0=1$ and $v_0=p$.
\item For $i=0,\ldots,\ell-1$, $\gamma(\tau_i)\in \Supp(\f{D})$.  Let 
\begin{align}\label{fi}
f_i\coloneqq \prod_{\substack{(\f{d},f_{\f{d}})\in \f{D} \\ \f{d}\ni \gamma(\tau_i)}} f_{\f{d}}^{\sign\omega(v_{\f{d}},v_i)} \in \hat{G}.
\end{align}
I.e., $f_i$ is the $\epsilon\rar 0$ limit of the element $\theta_{\gamma|_{(\tau_i-\epsilon,\tau_i+\epsilon)},\f{D}}$ defined in \eqref{pathprod} (using a smoothing of $\gamma$). Then $c_{i+1}z^{v_{i+1}}$ is a homogeneous (with respect to the $L$-grading) term of $\Ad_{f_i}(c_iz^{v_i})$, not equal to $c_iz^{v_i}$ (this term $c_iz^{v_i}$ is instead obtained by forgetting the time $\tau_i$ from the data).
\end{enumerate}

If every wall of $\f{D}$ has apex at the origin, then we say that $\sQ\in L_{\bb{R}}$ is \textbf{generic} if $\sQ$ is not in $v^{\perp}$ for any $v\in L^*$.  For more general $\f{D}$, we call $\sQ$ generic as long as it is not in some bad\footnote{Specifically, $\sQ\in L_{\bb{R}}\setminus \Supp(\f{D})$ is generic unless it is an endpoint for some ``degenerate broken line'' which passes through a joint, cf. \cite[Rmk. 4.5]{CPS}.} measure-zero subset of $L_{\bb{R}}$ (we shall have no need for broken lines associated to these more general $\f{D}$).  For each $p\in L$ and each generic $\sQ\in L_{\bb{R}}$, we define an element $\vartheta_{p,\sQ}\in \kk_t\llb L\rrb$ which we view as the local expression of a \textbf{theta function} in a chart corresponding to $\sQ$.  For any $p\in L$, we define
\begin{align}\label{vartheta-dfn}
\vartheta_{p,\sQ}\coloneqq \sum_{\Ends(\gamma)=(p,\sQ)} c_{\gamma}z^{v_{\gamma}} \in \kk_t\llb L\rrb
\end{align}
(the fact that $\vartheta_{p,\sQ}$ is well-defined is part of Proposition \ref{TopBasis}).  Here, the sum is over all broken lines $\gamma$ with ends $(p,\sQ)$, and $c_{\gamma}z^{v_{\gamma}}$ denotes the monomial attached to the final straight segment of $\gamma$.   In particular, we always have $\vartheta_{0,\sQ}=1$.  We define $A_{\sQ}^{\can}$ to be the $\kt$-algebra generated by the set of theta functions $\Theta_{\sQ}\coloneqq \{\vartheta_{p,\sQ}\colon p\in L\}$.

The following is essentially \cite[Prop. 2.13]{Man3}, but we include a more detailed proof here for completeness.  We note that exactly the same argument can be used in the classical setting.

\begin{prop}\label{TopBasis}
For fixed generic $\sQ \in L_{\bb{R}}$ and any $p\in L$, \eqref{vartheta-dfn} gives a well-defined element $\vartheta_{p,\sQ} \in \kk_t\llb L\rrb$ of the form \begin{align}\label{pointed}
\vartheta_{p,\sQ}=z^p\left(1+\sum_{v\in L_0^+} a_vz^v\right).
\end{align}
 Furthermore, the theta functions $\Theta_{\sQ}\coloneqq \{\vartheta_{p,\sQ}\colon p\in L\}$ form a topological $\kt$-module basis
 for $A_{\sQ}\coloneqq \kk_t\llb L \rrb$.  Hence, $\Theta_{\sQ}$ also forms a topological basis for the subalgebra $A^{\can}_{\sQ}\subset A_{\sQ}$.
\end{prop}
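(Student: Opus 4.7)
The plan is to first establish that each $\vartheta_{p,\sQ}$ is well-defined in $\kk_t\llb L\rrb$ and is $p$-pointed of the form \eqref{pointed}; the topological basis statement will then follow essentially for free from Lemma \ref{PointedBasis}.

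The structural heart of the argument is that at each bend of a broken line at $\tau_i$, the element $f_i$ lies in $G_{v_{\f{d}_i}}^{\parallel} = \exp\bigl(\prod_{k \geq 1} \f{g}_{k v_{\f{d}_i}}\bigr)$, so by expanding $\Ad_{f_i} = \exp(\ad_{\log f_i})$ every non-identity homogeneous term of $\Ad_{f_i}(c_iz^{v_i})$ has direction $v_i + kv_{\f{d}_i}$ for some $k \in \bb{Z}_{\geq 1}$. Iterating over the bends, every broken line $\gamma$ with ends $(p,\sQ)$ satisfies $v_\gamma - p = \sum_i k_i v_{\f{d}_i} \in L_0^{\oplus}$, with each $v_{\f{d}_i} \in L_0^+$ and each $k_i \in \bb{Z}_{\geq 1}$.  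Since $L_0^+ \subset L^+ \subset \sigma \setminus \{0\}$ and $\sigma$ is strongly convex, any nonempty such sum is nonzero, so the only broken line with $v_\gamma = p$ is the trivial one $\gamma(\tau) = \sQ - \tau p$ (with $\ell = 0$), contributing exactly $z^p$ with coefficient $1$.  This will yield the pointed form \eqref{pointed}, once convergence is known.

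To interpret the sum \eqref{vartheta-dfn} in $\kk_t\llb L \rrb$, I would fix $k \in \bb{Z}_{\geq 1}$ and show that only finitely many broken lines $\gamma$ produce monomials with $v_\gamma - p \notin kL^+$; this suffices by the definition of the topology reviewed in \S\ref{TopStr}.  For such $\gamma$, strong convexity of $\sigma$ forces the set of decompositions $v_\gamma - p = \sum k_i v_{\f{d}_i}$ into $L_0^+$ to be finite.  The defining finiteness of the scattering diagram (only finitely many walls of $\f{D}$ are nontrivial modulo $G_k$) restricts the $v_{\f{d}_i}$ that can appear with nontrivial effect, effectively reducing the problem to the finite truncation $\f{D}_k$.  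For each fixed bend sequence, genericity of $\sQ$ determines the broken line uniquely by iterated backward propagation from $\sQ$, since each straight segment of prescribed direction meets the next wall at a specific point (the avoided measure-zero bad set is exactly where this rigidity fails).  Combining, we obtain convergence and pointedness, and Lemma \ref{PointedBasis} immediately gives that $\Theta_{\sQ}$ is a topological $\kt$-module basis for $A_{\sQ} = \kk_t\llb L\rrb$, hence topologically spans, with no proper subset topologically spanning, the subalgebra $A_{\sQ}^{\can}$ they generate.

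The main obstacle is the convergence/finiteness verification: while each of the ingredients (strong convexity of $\sigma$, finiteness of $\f{D}_k$, rigidity from genericity of $\sQ$) is individually clean, assembling them to definitively preclude infinitely many contributing broken lines at each truncation level is where the real technical work sits, beyond the essentially formal appeal to Lemma \ref{PointedBasis}.  This is precisely the content that the authors indicate is being expanded upon relative to \cite[Prop. 2.13]{Man3}.
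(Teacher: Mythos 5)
Your plan is the paper's plan: establish the $p$-pointed form \eqref{pointed} from the graded structure of $\Ad_{f_i}$ and then invoke Lemma \ref{PointedBasis}, with all the genuine content concentrated in the finiteness/convergence step. But you stop precisely at that step, listing the three ingredients (strong convexity, finiteness of $\f{D}_k$, genericity of $\sQ$) without assembling them, and that assembly is the proof, not a loose end.

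The paper closes the count with a single bookkeeping function. Define $\delta\colon L^{\oplus}\rightarrow\bb{Z}_{\geq 0}$ by $\delta(0)=0$ and, for $n\in L^{+}$, $\delta(n)$ equal to the largest $k$ with $n\in kL^+$; this is superadditive, $\delta(n_1+n_2)\geq\delta(n_1)+\delta(n_2)$. Fix $v\in p+L^+$ and aim to show only finitely many broken lines have $v_\gamma=v$. Two observations do it. First, if a bend occurs at a wall $(\f{d},f_{\f{d}})\in\f{D}\setminus\f{D}_{\delta(v-p)}$, then the increment it contributes already has $\delta>\delta(v-p)$, forcing $\delta(v_\gamma-p)>\delta(v-p)$; so one may restrict to the finite scattering diagram $\f{D}_{\delta(v-p)}$. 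This is the precise version of your informal reduction to a finite truncation, and it depends on the target $v$ through $\delta(v-p)$ rather than on a fixed $k$. Second, each kink strictly increases $\delta(v_\gamma-p)$, so there are at most $\delta(v-p)$ kinks. A broken line is reconstructed by flowing backward from $\sQ$ and is determined by its kinks, i.e.\ which walls of $\f{D}_{\delta(v-p)}$ it bends at and by which homogeneous terms; with finitely many walls, at most $\delta(v-p)$ kinks, and finitely many admissible terms per kink (again bounded by $\delta$), the count is finite, and genericity of $\sQ$ is what makes the backward reconstruction unambiguous. Once you fill in this $\delta$-argument, the remainder of your write-up coincides with the paper's proof.
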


\begin{proof}
Given $n\in L^{+}\subset L$, let $\delta(n)\in \bb{Z}_{\geq 1}$ denote the largest number $k$ such that $n\in kL^+$ as defined in \eqref{kNplus}.  We extend this to $\delta\colon L^{\oplus}\rar \bb{Z}_{\geq 0}$ by taking $\delta(0)\coloneqq 0$.  Note that $\delta(n_1+n_2)\geq \delta(n_1)+\delta(n_2)$ for any $n_1,n_2\in L^{\oplus}$.

Since $\f{g}$ is $L_0^+$-graded and acts on $\kk_t\llb L\rrb$ by $L$-graded $\kt$-derivations, we see that for any $v\in L_0^+$, $f\in \hat{G}_v^{\parallel}$, $c\in \kk[t^{\pm 1}]$, and $p\in L$, the element $\Ad_f(cz^p)\in \kk_t\llb L\rrb$ must have the form
\begin{align*}
    \Ad_f(cz^p)=cz^p\left(1+\sum_{k\in \bb{Z}_{>0}}c_k z^{kv}\right)
\end{align*}
for some collection of coefficients $c_k\in \kk[t^{\pm 1}]$.  From this it is clear that if $\gamma$ is a broken line with ends $(p,\sQ)$, then $v_{\gamma}\in p+L^{\oplus}$.  Furthermore, we see that $v_{\gamma}=p$ if and only if $\gamma$ is the unique straight broken line with ends $(p,\sQ)$, and in this case we have $c_{\gamma}z^{\gamma}=z^p$.  In particular, once we show that $\vartheta_{p,\sQ}$ is well-defined, it will immediately follow that it has the form given in \eqref{pointed}.

Now to show that $\theta_{p,\sQ}$ is well-defined, we need to show that for each fixed $v\in p+L^+$, there are only finitely many broken lines $\gamma$ with ends $(p,\sQ)$ and with $v_{\gamma}=v$.  
Suppose $(\f{d},f_{\f{d}})\in \f{D}\setminus \f{D}_{\delta(v-p)}$.  Then if $(\f{d},f_{\f{d}})$ contributes to a bend of a broken line $\gamma$ with ends $(p,\sQ)$, we have $\delta(v_{\gamma}-p)>\delta(v-p)$.  Thus, it suffices to restrict to the finite scattering diagram $\f{D}_{\delta(v-p)}$.

By starting at $\tau=0$ and flowing backwards, we see that a broken line $\gamma$ with ends $(p,\sQ)$ and $v_{\gamma}=v$ is uniquely determined by its kinks, i.e., by the data of where it bends and the degrees $\delta$ which it bends by.  But since each kink increases $\delta(v_{\gamma}-p)$, such $\gamma$ can have at most $\delta(v-p)$ many kinks, and since $\f{D}_{\delta(v-p)}$ is finite, there are only finitely many places where $\gamma$ can cross a wall of $\f{D}_{\delta(v-p)}$.  This shows that there are only finitely many such $\gamma$, as desired, and so $\vartheta_{p,\sQ}$ is well-defined.

Finally, since \eqref{pointed} says that $\vartheta_{p,\sQ}$ is $p$-pointed, Lemma \ref{PointedBasis} tells us that $\Theta_{\sQ}$ is indeed a topological basis for $\kk_t\llb L\rrb$.
\end{proof}

The theta functions also satisfy the following important compatibility condition, due to \cite{CPS} in the classical limit and \cite[Thm 2.14]{Man3} in the quantum setup.

\begin{thm}\label{CPS}
Consider $\f{D}=\scat(\f{D}_{\In})$ as in Theorem \ref{KSGS}.  Fix two generic points $\sQ_1,\sQ_2\in L_{\bb{R}}$.  Let $\gamma$ be a smooth path in $L_{\bb{R}}\setminus \Joints(\f{D})$ from $\sQ_1$ to $\sQ_2$, transverse to each wall of $\f{D}$ it crosses.  Then for any $p\in L$,
\begin{align*}
    \vartheta_{p,\sQ_2}=\Ad_{\theta_{\gamma,\f{D}}} (\vartheta_{p,\sQ_1}).
\end{align*}
\end{thm}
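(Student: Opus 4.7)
The plan is as follows. First, by consistency of $\f{D}$, the automorphism $\Ad_{\theta_{\gamma,\f{D}}}$ depends only on the endpoints $\sQ_1, \sQ_2$, so I may replace $\gamma$ by any convenient smooth path from $\sQ_1$ to $\sQ_2$ in $L_{\bb{R}} \setminus \Joints(\f{D})$. After a small perturbation I can arrange that $\gamma$ crosses the walls of $\f{D}$ transversally and one at a time; concatenating the resulting wall-crossing identities then reduces the statement to the case in which $\gamma$ crosses a single wall $(\f{d}, f_{\f{d}})$.

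Second, I would reduce to a finite problem. Both $\vartheta_{p, \sQ_1}$ and $\vartheta_{p, \sQ_2}$ are $p$-pointed elements of $\kk_t\llb L\rrb$, and $\Ad_{\theta_{\gamma, \f{D}}}$ is a homeomorphism by Lemma \ref{AdHomeo}, so by Lemma \ref{PointedBasis} it suffices to verify the identity modulo any given open neighborhood of $0$. By the finiteness argument in the proof of Proposition \ref{TopBasis}, for each $v$ only broken lines bending at walls of $\f{D}_k$ (for $k$ sufficiently large depending on $v$) can contribute to the coefficient of $z^{p+v}$ in $\vartheta_{p,\sQ_i}$, so the problem reduces to a finite scattering diagram.

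Third, for the single-wall case, let $t_0$ denote the crossing time and set $\epsilon_0 \coloneqq \sign \omega(v_{\f{d}}, -\gamma'(t_0))$, so that $\theta_{\gamma, \f{D}} = f_{\f{d}}^{\epsilon_0}$. I would construct a bijective correspondence between broken lines ending at $\sQ_2$ and pairs consisting of a broken line $\beta$ ending at $\sQ_1$ together with a homogeneous summand of $\Ad_{f_{\f{d}}^{\epsilon_0}}(c_{\beta} z^{v_{\beta}})$. Concretely, take $\sQ_2$ close enough to $\sQ_1$ that near $\gamma(t_0)$ the only wall between them is $\f{d}$. Given such a $\beta$ with terminal monomial $c_{\beta}z^{v_{\beta}}$, extend its last segment along $-v_{\beta}$: if that extension does not cross $\f{d}$, $\beta$ contributes just $c_{\beta}z^{v_{\beta}}$ to $\vartheta_{p,\sQ_2}$, matching the leading term of $\Ad_{f_{\f{d}}^{\epsilon_0}}(c_{\beta}z^{v_{\beta}})$; if it does cross $\f{d}$, then every homogeneous term of $\Ad_{f_{\f{d}}^{\epsilon_0}}(c_{\beta}z^{v_{\beta}})$ arises as the terminal monomial of a broken line obtained by optionally bending at $\f{d}$ as in \eqref{fi}. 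Summing over $\beta$ then yields $\Ad_{f_{\f{d}}^{\epsilon_0}}(\vartheta_{p,\sQ_1}) = \vartheta_{p,\sQ_2}$.

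The main technical obstacle is verifying that the sign $\epsilon_0$ appearing in the path-ordered product agrees with the corresponding sign in the bending rule \eqref{fi} for a broken line whose terminal direction $-v_{\beta}$ is aligned with $-\gamma'(t_0)$ (up to its approach side to $\f{d}$), and, relatedly, that the correspondence above is genuinely bijective --- so that each broken line ending at $\sQ_2$ (including those whose last bend is exactly at $\f{d}$ near the crossing) is accounted for exactly once. These compatibilities are essentially forced by the definitions once the correspondence is set up, by comparison with the classical argument of \cite{CPS} and its quantum extension in \cite{Man3}.
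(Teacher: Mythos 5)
The paper does not actually prove this theorem; it is cited from \cite{CPS} for the classical limit and \cite[Thm.~2.14]{Man3} for the quantum case, so there is no proof in the paper to compare against directly.  Your outline does follow the broad strategy of \cite{CPS}: reduce to a single wall-crossing by consistency and transversality, reduce to finitely many walls by working modulo an open set, and then match up broken lines.  That part is fine.

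However, the central step --- the claimed correspondence between broken lines ending at $\sQ_1$ and those ending at $\sQ_2$ --- is asserted rather than argued, and as stated it does not work.  You say: given $\beta$ ending at $\sQ_1$, ``extend its last segment along $-v_\beta$''; if that extension crosses $\f{d}$ you then ``optionally bend at $\f{d}$'' to produce a broken line ending at $\sQ_2$, and if not you claim $\beta$ ``contributes $c_\beta z^{v_\beta}$ to $\vartheta_{p,\sQ_2}$.''  But the extension of $\beta$ past $\sQ_1$ is a ray in direction $-v_\beta$, which generically is not parallel to $\sQ_2-\sQ_1$, so the extended path (bent at $\f{d}$ or not) does not pass through $\sQ_2$ at all.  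In neither case have you produced a broken line with the correct endpoint, so the claim that $\beta$ ``contributes'' to $\vartheta_{p,\sQ_2}$, or that the homogeneous summands of $\Ad_{f_{\f{d}}^{\epsilon_0}}(c_\beta z^{v_\beta})$ are realized by broken lines ending at $\sQ_2$, is unsupported.

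What is actually needed --- and what constitutes the substance of the \cite{CPS} argument --- is a \emph{deformation} of broken lines under a continuous motion of the endpoint: as $\sQ$ slides from $\sQ_1$ to $\sQ_2$, each existing broken line must be rigidly reshaped so that its bend points stay on their respective walls and its segments remain parallel to the fixed directions $-v_i$, and one must track precisely when this family degenerates (a segment becomes tangent to a wall, a bend point hits a joint, a segment collapses to length zero) and verify that the resulting creation/annihilation of broken lines across $\f{d}$ reproduces $\Ad_{f_{\f{d}}^{\epsilon_0}}$.  This is a genuine transversality-and-degeneration analysis; it is not ``essentially forced by the definitions.''  You correctly flag the sign compatibility between \eqref{WallCross} and \eqref{fi} as a technical point, but that is secondary to the missing deformation argument.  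As written, the proposal identifies the right reduction but leaves the heart of the theorem unproved; a complete proof should either carry out the degeneration analysis of \cite{CPS} (adapted to the quantum setting as in \cite[Thm.~2.14]{Man3}), or cite those references, as the paper itself does.
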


As a special case of Theorem \ref{CPS}, if $\sQ_1$ and $\sQ_2$ are two generic points in $L_{\bb{R}}$ in the same path component of $L_{\bb{R}}\setminus \Supp(\f{D})$, then there is an equality $\vartheta_{p,\sQ_1}=\vartheta_{p,\sQ_2}$.

Given $p_1,\ldots,p_s,p\in L$, the structure constant $\alpha(p_1,\ldots,p_s;p)\in \kk_t$ is defined by 
\begin{align*}
    \vartheta_{p_1,\sQ}\cdots \vartheta_{p_s,\sQ} = \sum_{p\in \sum_{i=1}^s p_i+L^{\oplus}} \alpha(p_1,\ldots,p_s;p) \vartheta_{p,\sQ}.
\end{align*}
Here, Theorem \ref{CPS} and Lemma \ref{AdHomeo} ensures that these structure constants are independent of the generic choice of $\sQ$.  

Let $A$ denote the $\kk_t$-algebra defined as in Remark \ref{formal_mult}, replacing the formal symbols $\mathscr{F}_v$ there with formal symbols $\vartheta_v$, and using the structure constants $\alpha(p_1,p_2;q)$ defined above.  Let $A^{\can}$ denote the $\kt$-subalgebra generated by $\Theta=\{\vartheta_p\colon p\in L\}$.  By Remark \ref{formal_mult}, the maps $\iota_{\sQ}\colon  \vartheta_p\mapsto \vartheta_{p,\sQ}$ give $\kk_t$-algebra isomorphisms 
\[
\iota_{\sQ}\colon A\risom A_{\sQ}=\kk_t\llb L\rrb
\]
for each $\sQ$.

The maps $\iota_{\sQ}$ restrict to isomorphisms $\iota_{\sQ}\colon A^{\can}\risom A_{\sQ}^{\can}$.  Theorem \ref{CPS} and Lemma \ref{AdHomeo} ensure that the isomorphisms $\iota_{\sQ_2}\circ \iota_{\sQ_1}^{-1}\colon A_{\sQ_1}\risom A_{\sQ_2}$ or $A_{\sQ_1}^{\can}\risom A_{\sQ_2}^{\can}$, can be given directly by $\theta_{\gamma,\f{D}}$ for $\gamma$ a path from $\sQ_1$ to $\sQ_2$.

The following formula for the structure constants is \cite[Prop. 2.15]{Man3} (the classical case being \cite[Prop 6.4(3)]{GHKK}):
\begin{prop}\label{StructureConstants}
For any $p_1,\ldots,p_s,p\in L$, the corresponding structure constant is given by 
\begin{align}\label{alpha}
 \alpha(p_1,\ldots,p_s;p)z^p= \sum_{\substack{\gamma_1,\ldots,\gamma_s \\
    	               \Ends(\gamma_i)=(p_i,\sQ), \medspace i=1,\ldots,s \\
		                v_{\gamma_1}+\ldots+v_{\gamma_s} = p \\
		                }}
	                (c_{\gamma_1}z^{v_{\gamma_1}}) \cdots (c_{\gamma_s}z^{v_{\gamma_s}}) \in z^p\cdot \kt,
\end{align}
where $\sQ$ is any generic point of $L_{\bb{R}}$ which is sufficiently close to $p$.
\end{prop}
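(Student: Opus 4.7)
My plan is to compare $z^p$-coefficients on both sides of the defining equation
\[
\vartheta_{p_1,\sQ}\cdots\vartheta_{p_s,\sQ}=\sum_{p'\in L}\alpha(p_1,\ldots,p_s;p')\vartheta_{p',\sQ}.
\]
The left-hand side, expanded via the broken line formula \eqref{vartheta-dfn} for each factor, becomes a formal sum
\[
\sum_{\gamma_1,\ldots,\gamma_s}(c_{\gamma_1}z^{v_{\gamma_1}})\cdots(c_{\gamma_s}z^{v_{\gamma_s}})
\]
of ordered products in the quantum torus $\kt[L]$, so that the subsum over $s$-tuples with $v_{\gamma_1}+\ldots+v_{\gamma_s}=p$ is exactly the element of $z^p\cdot\kt$ appearing on the right-hand side of the proposition (the appropriate $t$-powers from the quantum torus relations are automatically incorporated). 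On the right-hand side of the defining equation, pointedness of each $\vartheta_{p',\sQ}$ (Proposition \ref{TopBasis}) implies that its $z^{p'}$-coefficient equals $1$ and its support lies in $p'+L^{\oplus}$; hence the $p'=p$ summand contributes $\alpha(p_1,\ldots,p_s;p)z^p$ to the $z^p$-component, while the remaining contributions can only come from $p'\in p-L^{+}$.

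Thus the proposition reduces to the following vanishing lemma, which I expect to be the main obstacle: for generic $\sQ$ sufficiently close to $p$, the $z^p$-coefficient of $\vartheta_{p',\sQ}$ is zero whenever $p'\neq p$. Unwinding the definition, this asserts that there is no broken line $\gamma$ with $\Ends(\gamma)=(p',\sQ)$, $p'\neq p$, and $v_\gamma=p$. Such a $\gamma$ would have to terminate in a straight segment in direction $-p$ reaching $\sQ$, so that extending this segment backward from $\sQ$ gives a ray in direction $+p$ which must bend at some wall before connecting to the preceding (differently directed) segment. My plan is to choose $\sQ$ in the relative interior of a chamber of $\f{D}$ whose closure contains $p$, and close enough to $p$ that the segment from $p$ to $\sQ$ encounters no walls. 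Since broken lines with $v_\gamma=p$ starting from some $p'\in p-L^{+}$ can only involve walls of the finite sub-diagram $\f{D}_{\delta(p-p')}$ (with $\delta$ as in the proof of Proposition \ref{TopBasis}), I would rule out such $\gamma$ by an induction on $\delta$: any candidate bend would force the preceding segment's direction or location to conflict with the wall-free neighborhood of $p$ chosen for $\sQ$. Granting this vanishing, the two $z^p$-coefficient computations agree, which, after multiplication by $z^p$ and use of the broken line expansion of the product, yields the stated formula.
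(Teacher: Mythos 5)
The paper does not prove this proposition; it cites \cite[Prop.~2.15]{Man3} (with the classical case being \cite[Prop.~6.4(3)]{GHKK}), so you are reconstructing the argument rather than matching a proof in the text. Your overall strategy is the right one and matches the literature: expand both sides of $\vartheta_{p_1,\sQ}\cdots\vartheta_{p_s,\sQ}=\sum_{p'}\alpha(p_1,\ldots,p_s;p')\vartheta_{p',\sQ}$, extract the $z^p$-coefficient, use pointedness, and reduce to a vanishing claim for broken lines. One small point to be explicit about: the $p'$ you need to rule out lie in the \emph{finite} set $\bigl(\sum_i p_i+L^{\oplus}\bigr)\cap\bigl(p-L^{\oplus}\bigr)$, and by the argument in the proof of Proposition~\ref{TopBasis} a broken line with initial exponent $p'$ and final exponent $p$ can bend only on walls in $\f{D}_N$ for $N=\max_{p'}\delta(p-p')$; fixing this $N$ once and for all is what makes the rest of the argument well-posed.

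The gap is in the geometric step. You propose to choose $\sQ$ ``close enough to $p$ that the segment from $p$ to $\sQ$ encounters no walls,'' and then claim that a candidate last bend would ``conflict with the wall-free neighborhood of $p$ chosen for $\sQ$.'' This does not work: the last bend lies on the ray $\sQ+\bb{R}_{\geq 0}p$ (the final straight segment, traced backwards, has velocity $+p$), and that ray is \emph{unbounded} and can meet walls arbitrarily far from $p$, well outside any bounded wall-free neighborhood. A wall-free Euclidean neighborhood of $p$ imposes no constraint on where a wall can cross $\sQ+\bb{R}_{\geq 0}p$ at large time. The correct argument must control the entire ray, and it relies crucially on the fact that every wall of $\f{D}=\scat(\f{D}_{\In})$ is a cone with apex at the origin. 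Concretely: for $t>0$ one has $\sQ+tp=(1+t)\bigl(\tfrac{1}{1+t}\sQ+\tfrac{t}{1+t}p\bigr)$, so the ray $\sQ+\bb{R}_{\geq 0}p$ lies in $\bb{R}_{\geq 1}\cdot[\sQ,p]$ and its projectivization is contained in the short arc from $\bar\sQ$ toward $\bar p$. Since $\f{D}_N$ is finite, for any wall $\f{d}$ not containing $\bb{R}_{>0}p$ the closed cone $\bar{\f{d}}$ misses $\bar p$, so for $\sQ$ close enough to $p$ the whole ray misses $\f{d}$; and for any wall $\f{d}$ with $p\in\f{d}\subset v_{\f{d}}^{\omega\perp}$, genericity of $\sQ$ gives $\sQ\notin v_{\f{d}}^{\omega\perp}$, and since $p\in v_{\f{d}}^{\omega\perp}$ is a linear hyperplane, $\sQ+tp\notin v_{\f{d}}^{\omega\perp}$ for all $t$. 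Combining these two cases shows $\sQ+\bb{R}_{\geq 0}p$ avoids $\Supp(\f{D}_N)$, so the only broken line ending at $\sQ$ with $v_\gamma=p$ and bends confined to $\f{D}_N$ is the unbent one with $p'=p$. With this replacement, your argument closes; as written, the justification of the vanishing claim is incorrect.
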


\subsection{Properties preserved by adjoint actions}\label{AdPres}

The following lemma implies that if $\f{D}$ has the form of \eqref{Dwalls} with each $p_{\f{d}}(t)$ being bar-invariant and having positive integer coefficients, then the theta functions defined with respect to $\f{D}$ will be bar-invariant as well. 

\begin{lem}\label{bar_prop}
Let $v\in L_0^+$, $p\in L$, and $\epsilon\coloneqq \sign\omega(v,p)$.  Suppose $p(t)$ is a bar-invariant positive Laurent polynomial in $t$.  Then $\Ad_{\EE\left(-\epsilon p(t)z^v\right)}(z^p)$ is bar-invariant.
\end{lem}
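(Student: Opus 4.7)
The strategy is to use Lemma \ref{log_barinv} to lift bar-invariance from $L \coloneqq \log \EE(-\epsilon p(t) z^v) \in \hat{\f{g}}$ to the adjoint action $\Ad_{\EE(-\epsilon p(t) z^v)}(z^p) = \exp(\ad_L)(z^p)$, by showing that $\ad_L$ preserves the subspace of $\kk_t \llb L \rrb$ consisting of elements all of whose monomial coefficients lie in the bar-invariant part of $\kk_t$.

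First I would observe that since $p(t)$ is bar-invariant and $\epsilon \in \{\pm 1\}$, so is $\epsilon p(t)$, whence Lemma \ref{log_barinv} gives that $L$ is bar-invariant as an element of $\hat{\f{g}}$. Expanding $L = \sum_{k \geq 1} c_k(t) \hat z^{kv}$ in terms of the canonical generators (an expansion which exists because $L$ lives in the closure of the commutative Lie subalgebra generated by $\hat z^v$, and all higher brackets of $\hat z^v$ with itself vanish), bar-invariance of $L$ in this basis means precisely that $c_k(t^{-1}) = c_k(t)$ in $\kk[t^{\pm 1}]$ for every $k$.

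Next I would apply the explicit derivation formula from \S \ref{qTorLie} to obtain
\[
\ad_L(z^q) = \sum_{k \geq 1} c_k(t) \cdot \frac{(k\omega(v,q))_t}{(k)_t} \, z^{kv + q}
\]
for any $q \in L$. The coefficient of $z^{kv+q}$ in $\kk_t$ is a product of the bar-invariant scalar $c_k(t)$ and the ratio $(k\omega(v,q))_t/(k)_t$, which is itself bar-invariant because both the numerator and the denominator are negated under $t \mapsto t^{-1}$. By linearity, $\ad_L$ therefore preserves the subspace of $\kk_t \llb L \rrb$ whose $z^w$-coefficients are all bar-invariant. Since $z^p$ lies in this subspace trivially, induction on $j$ shows each $\ad_L^j(z^p)$ does as well, and hence so does $\exp(\ad_L)(z^p) = \sum_{j \geq 0} \ad_L^j(z^p)/j!$, which is exactly $\Ad_{\EE(-\epsilon p(t) z^v)}(z^p)$.

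The main conceptual subtlety is to keep straight the two a priori different notions of bar-invariance at play. Bar-invariance of $L \in \hat{\f{g}}$ is measured in the $\hat z^{kv}$-basis, while bar-invariance of the output in $\kk_t \llb L \rrb$ is measured in the $z^w$-monomial basis; these two bases differ by the bar-anti-invariant factors $(k)_t$, so a priori bar-invariance in one sense does not imply bar-invariance in the other. The linchpin of the argument is that the denominator $(k)_t$ in the derivation formula precisely cancels this discrepancy, converting $\hat{\f{g}}$-style bar-invariance of $L$ into $\kk_t \llb L \rrb$-style bar-invariance of the coefficients of $\ad_L(z^q)$.
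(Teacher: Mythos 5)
Your proof is correct and follows essentially the same route as the paper's second (``Alternatively, \dots'') argument: apply Lemma \ref{log_barinv} to obtain bar-invariance of $\log\EE(-\epsilon p(t)z^v)$ in $\hat{\f{g}}$, and then note that the adjoint action of bar-invariant elements of $\hat{\f{g}}_{L_0^+,\omega}$ preserves bar-invariance in $\kk_{\sigma,t}^{\omega}\llb L\rrb$. What you add — and what the paper leaves implicit in a single clause — is the explicit verification of that last step, including the correct identification of the two a priori different notions of bar-invariance (in the $\hat z^{kv}$-basis on $\hat{\f{g}}$ versus in the $z^w$-monomial basis on $\kk_t\llb L\rrb$) and the observation that the bar-anti-invariant denominator $(|kv|)_t$ appearing in $\ad_{\hat z^{kv}}(z^q)=\frac{(\omega(kv,q))_t}{(|kv|)_t}z^{kv+q}$ precisely converts one into the other. (One small imprecision: you write the denominator as $(k)_t$, which implicitly assumes $v$ is primitive; in general it is $(|kv|)_t=(k|v|)_t$, but since this is still negated under $t\mapsto t^{-1}$ your argument goes through unchanged.) The paper also offers a first, more computational proof via \eqref{EEeven}, \eqref{EEoddPsi}, and \eqref{tAdBinomial}, which you do not pursue; your Lie-theoretic route is the cleaner of the two for explaining \emph{why} the statement holds.
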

\begin{proof}
This can be seen using \eqref{EEeven} or \eqref{EEoddPsi} to relate $\EE(- \epsilon p(t)z^v)$ to a product of functions of the form $\Psi_{t^a}(t^{-b}z^{cv})\Psi_{t^a}(t^bz^{cv})$ for various $a,b,c$, and then applying \eqref{tAdBinomial} to compute how these act on $z^p$.  Alternatively, by Lemma \ref{log_barinv}, bar-invariance of $p(t)$ implies bar-invariance of $\log(\EE\left(- p(t)z^v\right))$, and then the claim follows from noting that the adjoint action of bar-invariant elements of $\hat{\f{g}}_{L_0^+,\omega}$ on $\kk_{\sigma,t}^{\omega}\llb L\rrb$ preserves bar-invariance, and so $\exp([\log(\EE(-p(t)z^v)),\cdot])$ does as well.
\end{proof}

The next lemma implies that if Conjecture \ref{Lefschetz_conjecture} holds, then the coefficients of the theta functions are in fact of Lefschetz type as predicted in Conjecture \ref{MainConj}, cf. Conjecture \ref{LefschetzTheta}.  In particular, combining Theorem \ref{acyclic_scat_thm} with Lemma \ref{Lefschetz_prop} will yield Theorem \ref{AcyclicLefschetz}, cf. \S \ref{MainProof}.

\begin{lem}\label{Lefschetz_prop}
Let $v\in L_0^+$, $p\in L$, and let $\epsilon\coloneqq \sign\omega(v,p)$.  Then 
\[
\Ad_{\EE\left(-\epsilon[a]_tz^v\right)}(z^p)=\sum_{r\geq 0} p_r(t)z^{p+rv}
\]
where the Laurent polynomials $p_r(t)$ are of Lefschetz type.
\end{lem}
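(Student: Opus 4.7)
The plan is to compute $\Ad_{\EE(-\epsilon[a]_t z^v)}(z^p)$ explicitly by factoring the quantum exponential and then to recognise the resulting coefficients as characters of $\mathrm{SL}_2$-modules, from which the Lefschetz decomposition follows for free.

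First I would decompose $[a]_t=\sum_{j=0}^{a-1}t^{\ell_j}$ with $\ell_j=2j-(a-1)$ and invoke the homomorphism property \eqref{plus-prod} to write
\[
\EE(-\epsilon[a]_t z^v)=\prod_{j=0}^{a-1}\EE(-\epsilon t^{\ell_j}z^v),
\]
where all factors commute as formal series in the single element $z^v$.  According to the parity of $a$ (equivalently, of each $\ell_j$) one applies either \eqref{EEeven} or the combination of \eqref{EEodd} and \eqref{EEoddPsi} to express each $\EE(-\epsilon t^{\ell_j}z^v)$ as an explicit product of quantum dilogarithms.  Iterating \eqref{tAdBinomial} and using the key stability $\omega(v,p+mv)=\omega(v,p)=\epsilon n$ (so that the relevant quantum binomial does not change as we work our way through the product), one arrives at the closed-form expression
\[
p_r(t)=\sum_{\substack{k_0+\cdots+k_{a-1}=r\\ 0\leq k_j\leq n}}\prod_{j=0}^{a-1}\binom{n}{k_j}_{t}\,t^{k_j\ell_j}
\]
in the case $a$ odd, with an analogous but more intricate sum when $a$ is even.

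The core of the argument is the identification of each $p_r(t)$ as the character $\chi_t(\wedge^r(V_n\otimes V_a))$, where $V_a$ denotes the $a$-dimensional irreducible representation of $\mathrm{SL}_2$ (which has character $[a]_t$) and $V_n$ is the $n$-dimensional trivial representation.  The combinatorial sum above is then the standard character formula for the exterior power of the direct sum $V_n\otimes V_a\cong V_a^{\oplus n}$: the quantum binomials $\binom{n}{k_j}_t$ appear as Poincar\'e polynomials of Grassmannians $\Gr(k_j,n)$ indexing the fixed locus of the Cartan torus action on $\Gr(r,V_n\otimes V_a)$, while the shifts $t^{k_j\ell_j}$ record the torus weights.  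Once this identification is made, Lefschetz-ness of $p_r(t)$ is immediate, since every finite-dimensional $\mathrm{SL}_2$-module decomposes as a direct sum of irreducibles $V_m$ with characters $[m]_t$.  I would sanity-check the identification in small cases, e.g.\ $a=1$ gives $p_r(t)=\binom{n}{r}_t$, Lefschetz as the Poincar\'e polynomial of $\Gr(r,n)_{\vir}$; and $a=2$, $n=1$ gives, by a direct telescoping computation, $p_r(t)=[r+1]_t$.

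The main obstacle is making the $\mathrm{SL}_2$-identification rigorous, especially for $a$ even, where the elementary factors coming from \eqref{EEoddPsi} produce quantum binomials $\binom{2^{r'}n}{k}_{t^{2^{r'}}}$ that are not individually of Lefschetz type; the Lefschetz structure only emerges after summing the full contribution.  Two possible routes: (a) prove the identification by induction on $a$, using $[a+2]_t=[a]_t+t^{a+1}+t^{-(a+1)}$ and Clebsch--Gordan for $\mathrm{SL}_2$-tensor products to propagate the Lefschetz decomposition through multiplication by $\EE(-\epsilon t^{\pm(a+1)}z^v)$; or (b) realise $p_r(t)$ geometrically as a shifted Poincar\'e polynomial of $\Gr(r,V_n\otimes V_a)$ via a Białynicki--Birula decomposition under the Cartan $T\subset\mathrm{SL}_2$-action, so that the Lefschetz property becomes a direct consequence of hard Lefschetz for this smooth projective variety.
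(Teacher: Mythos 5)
Your explicit formula for $p_r(t)$ in the case $a$ odd is correct, but the representation-theoretic justification you give for why it is Lefschetz does not work. You declare $V_n$ to be the $n$-dimensional \emph{trivial} representation, so $V_n\otimes V_a\cong V_a^{\oplus n}$; but $\chi_t(\wedge^r(V_a^{\oplus n}))$ is the $x^r$-coefficient of $\prod_j(1+t^{\ell_j}x)^n$, whose coefficients involve \emph{ordinary} binomials (e.g.\ $a=1$, $n=2$ gives $p_1(t)=2$), not the quantum binomials in your formula (which give $[2]_t$).  Your closed form actually equals $\chi_t(\wedge^r(V_a\otimes V_n))$ with $V_n$ the \emph{irreducible} $n$-dimensional $\mathfrak{sl}_2$-module: by the quantum binomial theorem, $\prod_{i=0}^{n-1}(1+t^{2i-(n-1)+\ell_j}x)=\sum_{k}\binom{n}{k}_t\, t^{k\ell_j}x^k$, and taking the product over $j$ reproduces your sum.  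This is precisely the identification the paper makes, with $n=|b|=|\omega(v,p)|$.  Your Grassmannian sanity check for $a=1$ is in fact consistent with the irreducible identification (its Poincar\'e polynomial is $\binom{n}{r}_t$), not the trivial one, so the fix here is simply a matter of naming the right module.

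The $a$ even case is the more substantial gap, and here your route diverges genuinely from the paper's.  Once any $\ell_j$ is odd you are forced through the infinite-product expansion \eqref{EEoddPsi}, and neither of your proposed patches is comparable to the paper's one-line move: conjugating $z^v$ past $z^p$ picks up $t^{2\omega(v,p)}$, an \emph{even} power of $t$, so by \eqref{bosvfer} and the additivity \eqref{plus-prod} one obtains directly
\[
\EE(-\epsilon[a]_tz^v)\,z^p\,\EE(\epsilon[a]_tz^v)=z^p\,\Exp_{-t,z}\left([a]_t[|b|]_t\,t^{b}z^v\right),\qquad b=\omega(v,p).
\]
The $z^{p+nv}$ coefficient is then read off via \eqref{cat_Exp} as $\chi_t(\wedge^n(V_a\otimes V_{|b|}))$ or $\chi_t(\CSym^n(V_a\otimes V_{|b|}))$ according to the parity of $a$, and both are Lefschetz in one stroke.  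Your factorization of $[a]_t$ into monomials bypasses this identity and consequently inherits exactly the parity obstacles that the identity is built to avoid.
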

\begin{proof}
Let $K=\mathbb{Z}_{\geq 0}$ and let $V$ be an integrable $K$-graded $\mathfrak{sl}_2$-representation.  Let $h$ be a nonzero element in the Cartan subalgebra of $\mathfrak{sl}_2$.  We give $V$ a cohomological grading by placing the $i$th weight space with respect to the action of $h$ in cohomological degree $i$.  Then $V$ acquires a $K\oplus \mathbb{Z}$ grading, and 
\begin{equation}
    \label{sl2char}
\chi_{K,t}(V)=\sum_{n\in K} f_n(t)x^n
\end{equation}
with each $f_n(t)$ a Lefschetz type polynomial.  Let $V_i$ be the irreducible $i$-dimensional $\mathfrak{sl}_2$-representation (without any $K$-grading).  Then the characteristic function $\chi_t(V)$ of $V_i$ is $[i]_t$, and so the characteristic function of a $K\oplus\mathbb{Z}$-graded vector space $V$ satisfies the condition that all $f_n(t)$ defined as in \eqref{sl2char} are of Lefschetz type if and only if $V$ can be constructed from a $K$-graded $\mathfrak{sl}_2$-representation in this way.

We calculate 
\begin{align}\label{deComp}
\EE\left(-\epsilon [a]_tz^v\right)z^p\EE\left(\epsilon[a]_tz^v\right)=&z^p\EE\left(-\epsilon[a]_tt^{2\omega(v,p)}z^v\right)\EE\left(\epsilon [a]_tz^v\right)\\ \nonumber
=&z^p\Exp_{-t,z}\left([a]_t\frac{t^{-\lvert \omega(v,p)\lvert}-t^{\lvert \omega(v,p)\lvert}}{t^{-1}-t}t^{\omega(v,p)}z^v\right)\\
=&z^p\Exp_{-t,z}\left([a]_t[|b|]_tt^{b}z^v\right)\label{unComp}
\end{align}
where $b=\omega(v,p)$.  The parity of the polynomial
\[
[a]_t[\lvert b\lvert]_tt^b=\left(t^a-t^{-a}\right)\left(t^{\lvert b\lvert}-t^{-\lvert b\lvert}\right)t^b/(t-t^{-1})^2
\]
is equal to the parity of the number $a$, or equivalently, that of $1+\parit([a]_t)$.

Now we deal with two cases.  If $a$ is odd, the $z^{nv}$ coefficient of $\Exp_{-t,z}([a]_t[|b|]_tt^bz^v)$ is equal to $t^{nb}\chi_{t}(\bigwedge^n(V_a\otimes V_{|b|}))$ by (\ref{cat_Exp}), and so the $z^{p+nv}$ coefficient of (\ref{unComp}) is $\chi_{t}(\bigwedge^n(V_a\otimes V_{|b|}))$.  In particular, this coefficient is of Lefschetz type since it arises from an integrable $\mathfrak{sl}_2$-representation.  Similarly, if $a$ is even, then by (\ref{cat_Exp}) again, the $z^{p+nv}$ coefficient of (\ref{unComp}) is equal to $\chi_{t}(\CSym^n(V_a\otimes V_{|b|}))$, and again of Lefschetz type.
\end{proof}

The next lemma plays a similar role in showing that if the functions on walls are all pre-Lefschetz, then so are the theta functions.
\begin{lem}
\label{pL_prop}
Let $v\in L_0^+$, $p\in L$, let $n\in \mathbb{Z}_{
\geq 0}$, and let $\epsilon\coloneqq \sign\omega(v,p)$.  Then 
\[
\Ad_{\EE\left(-\epsilon\!\pl_n(t)z^v\right)}(z^p)=\sum_{r\geq 0} p_r(t)z^{p+rv}
\]
where the Laurent polynomials $p_r(t)$ are of pre-Lefschetz type.
\end{lem}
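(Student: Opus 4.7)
The plan is to adapt the categorified calculation used in the proof of Lemma~\ref{Lefschetz_prop}. Applying the identity \eqref{plus-prod} and the commutation relation $\EE(X(z^v))z^p = z^p\EE(X(t^{2\omega(v,p)}z^v))$ exactly as there gives
\[
\Ad_{\EE(-\epsilon\pl_n(t)z^v)}(z^p) = z^p\,\Exp_{-t,z}\!\left(\pl_n(t)\,[|b|]_t\,t^b\, z^v\right),
\]
where $b=\omega(v,p)$, and so by \eqref{cat_Exp} the coefficient $p_r(t)$ equals $t^{-rb}\chi_t(\Sym^r W)$ for $W$ a cohomologically graded vector space in $K$-degree one satisfying $\chi_t(W)=\pl_n(t)[|b|]_t t^b$, with $\Sym$ the super-symmetric power.

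The new input is to write $\pl_n(t)$ as the character of an $\mathfrak{sl}_2$-representation shifted by an \emph{even} cohomological degree, via $\pl_{2k}(t)=\chi_t(V_1[-2k])$ and $\pl_{2k+1}(t)=t^{2k}[2]_t=\chi_t(V_2[-2k])$. Combined with $[|b|]_t t^b=\chi_t(V_{|b|}[-b])$, this identifies
\[
W\;\cong\;\begin{cases}V_{|b|}\bigl[-(n+b)\bigr]&n\text{ even},\\(V_2\otimes V_{|b|})\bigl[-(n-1+b)\bigr]&n\text{ odd}.\end{cases}
\]
A parity count shows that the cohomological degrees of $W$ all have parity $n+1\pmod 2$, so $W$ is purely odd when $n$ is even and purely even when $n$ is odd. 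In either case the super-symmetric power collapses to an ordinary exterior or symmetric power of the underlying $\mathfrak{sl}_2$-representation, and after canceling the commutation factor $t^{-rb}$ one obtains
\[
p_r(t)=\begin{cases}t^{rn}\,\chi_t\bigl(\textstyle\bigwedge^{\!r}V_{|b|}\bigr)&n\text{ even},\\[.25em]
t^{r(n-1)}\,\chi_t\bigl(\CSym^r(V_2\otimes V_{|b|})\bigr)&n\text{ odd}.\end{cases}
\]
Crucially, $rn$ and $r(n-1)$ are both even, and in each case the second factor is a Lefschetz polynomial, being the character of a finite-dimensional $\mathfrak{sl}_2$-representation.

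Thus the whole argument comes down to the elementary fact that $t^{2m}L(t)$ is pre-Lefschetz whenever $L(t)$ is of Lefschetz type, which I expect to be the main point requiring care. Since pre-Lefschetz polynomials are closed under $\bb{Z}_{\geq 0}$-linear combinations, this reduces to verifying that $t^{2m}[c]_t$ is pre-Lefschetz for every $c\ge 1$: when $c$ is odd, all exponents of $t^{2m}[c]_t$ are even and it decomposes as a sum of monomials $\pl_{2j}=t^{2j}$; when $c$ is even, the $c$ odd exponents group into $c/2$ consecutive pairs of the form $t^{2j-1}+t^{2j+1}=\pl_{2j+1}$. Evenness of the shift is essential here, since the analogous statement fails for odd shifts (e.g.\ $t\cdot[3]_t=t^3+t+t^{-1}$ is not pre-Lefschetz), which is precisely why the categorification above was tailored to produce even shifts in both cases.
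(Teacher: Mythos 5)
Your proof is correct and takes essentially the same approach as the paper's: the paper reduces, via the substitution $z^v\mapsto t^m z^v$ for $m$ even, to the cases $n=0,1$ and then invokes Lemma~\ref{Lefschetz_prop}, whereas you inline the categorified computation from that lemma with $\pl_n(t)$ in place of $[a]_t$. The two are the same in content --- the crucial point in both is that the cohomological shift is \emph{even} (your observation that $rn$ and $r(n-1)$ are always even is precisely what makes the paper's substitution trick work), and that an even power of $t$ times a Lefschetz polynomial is pre-Lefschetz, which you verify explicitly and the paper uses implicitly in the form ``pL is invariant under multiplication by even powers of $t$.''
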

\begin{proof}
Substituting $z^v\mapsto t^mz^v$ for $m$ even, we deduce that 
\[
\Ad_{\EE\left(-\epsilon t^m\!\pl_n(t)z^v\right)}(z^p)=\sum_{r\geq 0} t^{mr}p_r(t)z^{p+rv}.
\]
Since the property of being pre-Lefschetz is invariant under multiplication by even powers of $t$, we deduce that it is enough to prove the cases $n=0,1$, since all other $\pl_n(t)$ are obtained from $\pl_0(t)$ and $\pl_1(t)$ by multiplication by even powers  of $t$.  Both $\pl_0(t)$ and $\pl_1(t)$ are Lefschetz, and so in both of these cases, by Lemma \ref{Lefschetz_prop} the polynomials $p_r(t)$ are Lefschetz, and hence pre-Lefschetz.
\end{proof}

The next lemma enables us to control the parities of some theta function coefficients in $\kk[t^{\pm 1}]$.
\begin{lem}\label{parity_prop}
Let $v\in L_0^+$, $p\in L$, and let $\epsilon=\sign \omega(v,p)$.  Let $a(t),b(t)\in\bb{Z}_{\geq 0}[t^{\pm 1}]$ be positive Laurent polynomials, satisfying $\parit(a(t))=\alpha$ and $\parit(b(t))=\beta$.  Then
\[
\Ad_{\EE\left(-\epsilon a(t)z^v\right)}(b(t)z^p)=\sum_{r\geq 0} p_r(t)z^{p+rv}
\]
where $\parit(p_r(t))\equiv\beta+r(\omega(p,v) + \alpha+1)$ (mod $2$). 
\end{lem}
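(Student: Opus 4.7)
The plan is to reduce the adjoint action to a manageable closed form involving $\Exp_{-t,z}$, and then exploit the categorification of the plethystic exponential to track parities.

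First, since $b(t)\in\mathbb{Z}_t$ is central in $\kk_t[L]$, I would factor it out: it contributes $\beta$ to the parity of each $p_r(t)$, and reduces us to the case $b(t)=1$. Next, using the commutation $z^p z^v = t^{2\omega(p,v)} z^v z^p$ and the homomorphism property \eqref{plus-prod} of $\EE$, I would rewrite
\[
\Ad_{\EE(-\epsilon a(t)z^v)}(z^p) = z^p\cdot \EE\!\left(\epsilon a(t)\!\left(1 - t^{2\omega(v,p)}\right)z^v\right).
\]
Writing $b\coloneqq \omega(v,p)$ and converting $\EE$ to $\Exp_{-t,z}$ via $t+t^3+\cdots = t/(1-t^2)$, exactly as in the transition from \eqref{deComp} to \eqref{unComp} in the proof of Lemma \ref{Lefschetz_prop}, this would simplify to
\[
z^p\cdot \Exp_{-t,z}\!\left(a(t)\,t^b\,[|b|]_t\,z^v\right).
\]

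The heart of the argument is determining the parity of the $z^{rv}$-coefficient $f_r(t)$ of this plethystic exponential. Set $h(t)\coloneqq a(t)\,t^b\,[|b|]_t$. The parities of the factors $a(t)$, $t^b$, $[|b|]_t$ are $\alpha$, $b$, and $b+1$ respectively, so $\parit(h)\equiv \alpha+1\pmod{2}$; moreover $h(t)$ is positive. I would realise $h(t)=\chi_t(V)$ for a locally finite $\mathbb{Z}$-graded vector space $V$ concentrated in cohomological degrees of parity $\alpha+1$. Then by the categorification formula \eqref{cat_Exp}, $f_r(t)=\chi_t(\Sym^r V)$.

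At this point I would split into two cases. If $\alpha=1$ then $V$ sits in even degrees, so $\Sym^r V=\CSym^r V$ also lies in even degrees and $\parit(f_r)\equiv 0$. If $\alpha=0$ then $V$ sits in odd degrees, so by supercommutativity $\Sym^r V=\Lambda^r V$ is spanned by wedges of $r$ odd-degree elements, which lie in degrees of parity $r$, giving $\parit(f_r)\equiv r$. In both cases $\parit(f_r)\equiv r(\alpha+1)\pmod{2}$. Combined with the factor $t^{r\omega(p,v)}$ arising from $z^pz^{rv}=t^{r\omega(p,v)}z^{p+rv}$ and the previously extracted $b(t)$ factor, we obtain $p_r(t)=b(t)\,t^{r\omega(p,v)}\,f_r(t)$, and summing parities yields the claimed formula. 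The main subtlety lies precisely in this even-vs-odd dichotomy on $\Sym V$, which is what forces the $(\alpha+1)$-shift; the preliminary reduction to the $\Exp_{-t,z}$ form is just a sign calculation analogous to those already appearing in the proofs of Lemmas \ref{Lefschetz_prop} and \ref{pL_prop}.
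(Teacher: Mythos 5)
Your argument is correct and uses the same core mechanism as the paper's own proof: reduce the adjoint action to the plethystic form $z^p\,\Exp_{-t,z}\!\left(h(t)z^v\right)$ with $h(t)=a(t)\,t^b\,[|b|]_t$, then read off parities from the $\CSym$--versus--$\bigwedge$ dichotomy coming from the categorification \eqref{cat_Exp} of $\Exp_{-t,z}$. The paper's proof first writes $a(t)$ as a sum of monomials $a_i t^i$ (all of parity $\alpha$) and uses \eqref{plus-prod} to reduce to the case where both $a(t)$ and $b(t)$ are monomials before invoking the calculation around \eqref{unComp}; you sidestep that reduction by categorifying the general positive polynomial $h(t)$ directly, which is a mild streamlining but not a substantively different route.
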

\begin{proof}
Define positive numbers $a_i$ by $a(t)=\sum_{i=m}^n a_it^i$ for $[m,n]$ a sufficiently large interval with $m,n\in \bb{Z}$, $n-m\in 2\cdot\bb{Z}_{\geq 0}$.  Then writing
\begin{align*}
\Ad_{\EE\left(-\epsilon a(t)z^v\right)}(b(t)z^p)=&b(t)\Ad_{\EE\left(-\epsilon a(t)z^v\right)}(z^p)\\
=&b(t)\Ad_{\EE\left(-\epsilon t^m z^v\right)}^{a_m}\left( \Ad_{\EE\left(-\epsilon t^{m+2}z^v \right)}^{a_{m+2}}\left(\cdots \Ad_{\EE\left(-\epsilon t^nz^v \right)}^{a_n}\left(z^p\right)\right)\right)
\end{align*}
we see that it is enough to prove the lemma in the special case in which both $a(t)$ and $b(t)$ are monomials.  For this the calculation is as in \eqref{unComp}.
\end{proof}

\subsection{Main results for theta functions}\label{PosTheta}

For the entirety of \S \ref{PosTheta} we assume that $\f{D}_{\In}$ is as in \eqref{DIn} with each $p_i(t)$ a Laurent polynomial in $t$ with positive integer coefficients, and $\f{D}$ is a consistent scattering diagram representing $\scat(\f{D}_{\In})$ and having one of the forms described in Theorem \ref{PosScat}.

\subsubsection{Positivity}
Equation \eqref{tAdBinomial}, together with the definition of a broken line, implies the following:

\begin{lem}\label{BrokenPos}
Let $\f{D}$ be a consistent scattering diagram representing $\scat(\f{D}_{\In})$ as in Theorem \ref{PosScat} such that all scattering functions have the form $\Psi_t(t^{2a}z^v)$ or $\Psi_{t^{2^r}}(t^{2^r(2a+1)}z^{2^r v})$ for various $a\in \bb{Z}$, $r\in \bb{Z}_{\geq 0}$.  Let $\gamma$ be a broken line with ends $(p,\sQ)$, and let $cz^{v}$ be the monomial attached to some straight segment of $\gamma$, other than the initial straight segment (whose attached monomial is $z^p$).  Then $v\in p+L^{+}\subset L$, and $c$ is a Laurent polynomial in $t$ with positive integer coefficients.
\end{lem}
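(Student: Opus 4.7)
The plan is to induct on the index $i$ of the straight segment of $\gamma$, exploiting the recursive condition (iv) that $c_{i+1}z^{v_{i+1}}$ is a homogeneous summand of $\Ad_{f_i}(c_iz^{v_i})$. The base case $i=0$ gives $c_0z^{v_0}=z^p$, and the inductive step reduces the claim to the following local assertion: for any allowed scattering function $f_{\f{d}}$ and any monomial $cz^u$ with $c\in\mathbb{Z}_{\geq 0}[t^{\pm 1}]$, the adjoint action $\Ad_{f_{\f{d}}^{\pm 1}}(cz^u)$ is a $\mathbb{Z}_{\geq 0}[t^{\pm 1}]$-linear combination of monomials of the form $z^{u+kv_{\f{d}}}$ with $k\in\mathbb{Z}_{\geq 0}$. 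Condition (iv) then ensures that at each bend the increment $v_{i+1}-v_i$ is a strictly positive integer combination of primitive vectors $v_{\f{d}}\in L_0^+\subset L^+$, so inductively $v_{i+1}-p\in L^+$; positivity of $c_{i+1}$ follows by iterating the local assertion through the factors in \eqref{fi}.

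The proof of the local assertion splits into the two allowed forms of $f_{\f{d}}$. For $f_{\f{d}}=\Psi_t(t^{2a}z^{v_{\f{d}}})$, it is immediate from \eqref{tAdBinomial}: the expansion $\sum_k\binom{|\omega(v_{\f{d}},u)|}{k}_tt^{2ka}z^{kv_{\f{d}}+u}$ has coefficients in $\mathbb{Z}_{\geq 0}[t^{\pm 1}]$ by \eqref{qbinom}. For $f_{\f{d}}=\Psi_{t^{2^r}}(t^{2^r(2a+1)}z^{2^rv_{\f{d}}})$, I would derive an analogue of \eqref{AdPsi} by writing this function as the infinite product $\prod_{k\geq 1}(1+t^{2^r(2k+2a)}z^{2^rv_{\f{d}}})^{-1}$, pushing each linear factor past $z^u$ via the commutation relation $(1+t^bz^w)z^u=z^u(1+t^{b+2\omega(w,u)}z^w)$, and observing that the resulting ratio telescopes. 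The telescoping works precisely because the $z$-exponent $2^rv_{\f{d}}$ matches the parameter $t^{2^r}$: the relevant shift $2\omega(2^rv_{\f{d}},u)=2^{r+1}\omega(v_{\f{d}},u)$ is a multiple of $2^{r+1}$, so the infinite product reduces to a finite product
\[
z^u\prod_{k=1}^{|\omega(v_{\f{d}},u)|}\bigl(1+t^{2^r(2k+2a)}z^{2^rv_{\f{d}}}\bigr)^{\pm 1},
\]
whose expansion manifestly has coefficients in $\mathbb{Z}_{\geq 0}[t^{\pm 1}]$ and exponents of the form $u+(2^rj)v_{\f{d}}$ with $j\in\mathbb{Z}_{\geq 0}$.

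The main obstacle is the second case, where one must resist the temptation to aggregate the $\Psi_{t^{2^r}}$-factors into a single $\EE(-t^{2a+1}z^v)$ via \eqref{EEoddPsi}: since Theorem \ref{PosScat} allows such factors to appear \emph{individually} as wall functions, each must be handled on its own, and so the telescoping calculation above is genuinely needed. Once the two local formulas are in hand, the iteration through the factors of $f_i$ is straightforward, as positivity and the $L^{\oplus}$-shift property are both closed under composition of adjoint actions. I would conclude by noting that condition (iv) forces at least one nontrivial $k\geq 1$ at each bend, which upgrades $L^{\oplus}$ to $L^+$ for all $i\geq 1$ and completes the induction.
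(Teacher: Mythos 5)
Your proposal is correct and follows essentially the same route as the paper: the lemma is a direct consequence of the adjoint‐action formula \eqref{tAdBinomial} (and its analogue for $\Psi_{t^{2^r}}$), iterated through the factors of $f_i$ in \eqref{fi} and combined with the bending rule~(iv). Two small points of hygiene are worth flagging. First, the $\pm 1$ exponents in both your statement of the local assertion and your final finite‐product formula are misleading: the broken‐line definition \eqref{fi} and \eqref{WallCross} always apply $\Ad_{f_{\f{d}}^{\epsilon}}$ with $\epsilon=\sign\omega(v_{\f{d}},v_i)$, which is exactly the sign for which the telescoping collapses to an honest finite product $z^u\prod_{k=1}^{|\omega(v_{\f{d}},u)|}\bigl(1+t^{\ast}z^{2^rv_{\f{d}}}\bigr)$ with exponent $+1$ throughout; for the opposite sign the result is a genuine power series with alternating signs and positivity would fail, so the reader must see that the wrong sign never occurs. (The $t$-exponent $\ast$ also depends on $\epsilon$, so the uniform $2^r(2k+2a)$ is only right for $\epsilon=+1$.) Second, the telescoping can be bypassed: \eqref{tAdBinomial} is stable under the substitution $t\mapsto t^{2^r}$, $z^{v}\mapsto z^{2^rv}$, since $\omega(2^rv_{\f{d}},u)=2^r\omega(v_{\f{d}},u)$ matches the rescaled $s=t^{2^r}$ commutation, and one obtains $\Ad_{\Psi_{s}(s^{2a+1}z^{2^rv_{\f{d}}})^{\epsilon}}(z^u)=\sum_{k=0}^{|\omega(v_{\f{d}},u)|}\binom{|\omega(v_{\f{d}},u)|}{k}_{s}s^{k(2a+1)}z^{k\cdot 2^rv_{\f{d}}+u}$ directly; the quantum binomials $\binom{\cdot}{\cdot}_{s}$ lie in $\mathbb{Z}_{\geq 0}[t^{\pm 2^r}]\subset\mathbb{Z}_{\geq 0}[t^{\pm 1}]$, so the second case reduces to the first with no extra work, which is presumably what the paper has in mind when it treats the lemma as immediate.
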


The following is an immediate corollary of Lemma \ref{BrokenPos} together with the definition of $\vartheta_{p,\sQ}$.

\begin{thm}[Universal positivity]\label{qPosLoc}
For any $p\in L$ and generic $\sQ\in L_{\bb{R}}$, the function $\vartheta_{p,\sQ}\in \kk_t\llb L\rrb$ has the form
\begin{align*}
    \vartheta_{p,\sQ}=z^p+ \sum_{v\in L^+} c_vz^{p+v}
\end{align*}
where each $c_v$ is a Laurent polynomial in $t$ with positive integer coefficients.
\end{thm}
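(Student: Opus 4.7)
The plan is to deduce this directly from Theorem \ref{PosScat} and Lemma \ref{BrokenPos}, since once the scattering diagram $\f{D}$ has been put in the good form promised by Theorem \ref{PosScat}, the structural statement about $\vartheta_{p,\sQ}$ is essentially a bookkeeping exercise over broken lines. More precisely, I would first invoke Theorem \ref{PosScat} to choose a representative $\f{D}$ of $\scat(\f{D}_{\In})$ such that every wall function is of the form $\Psi_t(t^{2a}z^v)$ or $\Psi_{t^{2^r}}(t^{2^r(2a+1)}z^{2^r v})$ for appropriate $a\in\bb{Z}$, $r\in\bb{Z}_{\geq 0}$, and $v\in L_0^+$. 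The theta function $\vartheta_{p,\sQ}$ is independent (up to equivalence of scattering diagrams, which preserves both the set of theta functions and the notion of positivity of coefficients) of the choice of representative, so we may work with this specific $\f{D}$.

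Next, I would write $\vartheta_{p,\sQ} = \sum_{\gamma} c_\gamma z^{v_\gamma}$ following \eqref{vartheta-dfn}, summing over all broken lines $\gamma$ with ends $(p,\sQ)$. The unique straight broken line (which exists by condition (iii) of the broken line definition: $c_0 = 1$ and $v_0 = p$) contributes exactly $z^p$. For any other broken line, Lemma \ref{BrokenPos} gives $v_\gamma \in p + L^+$ and $c_\gamma \in \bb{Z}_{\geq 0}[t^{\pm 1}]$. Grouping the contributions of broken lines by their final exponent vector, and using that the coefficient $c_v$ of $z^{p+v}$ in $\vartheta_{p,\sQ}$ is a finite sum (by the finiteness of broken lines with fixed ends and fixed final exponent, established in the proof of Proposition \ref{TopBasis}) of such positive Laurent polynomials, we obtain the claimed expansion.

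The only subtlety to check is that Lemma \ref{BrokenPos} truly applies to every bend encountered by a broken line: at each bend $\gamma(\tau_i)$, the relevant group element $f_i$ defined in \eqref{fi} is a product of scattering functions of the form listed above, and the monomial $c_{i+1}z^{v_{i+1}}$ is a homogeneous component of $\Ad_{f_i}(c_i z^{v_i})$. By \eqref{tAdBinomial} (inductively applied, one factor $\Psi_t(t^a z^{v})$ at a time, noting that iterated adjoint actions preserve the positivity of the resulting Laurent polynomial coefficients since the quantum binomial coefficients in \eqref{qbinom} lie in $\bb{Z}_{\geq 0}[t^{\pm 1}]$), each homogeneous component of $\Ad_{f_i}(c_i z^{v_i})$ is a positive Laurent polynomial multiple of a monomial $z^{v_{i+1}}$ with $v_{i+1} - v_i \in \bb{Z}_{\geq 0} v_{\f{d}}$ for the wall directions $v_{\f{d}}$ crossed. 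Iterating from $v_0 = p$ to $v_\ell = v_\gamma$ shows that $v_\gamma - p \in L^\oplus$, and by the condition that successive monomials at a bend differ from the previous one, any non-straight broken line produces $v_\gamma \in p + L^+$.

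The main obstacle here is not the corollary itself but the positivity result for scattering diagrams (Theorem \ref{PosScat}) and its downstream consequence Lemma \ref{BrokenPos}; given these, the present statement follows by simply assembling broken line contributions. I would therefore keep the proof of Theorem \ref{qPosLoc} very short, noting that it follows immediately by summing the positivity statement of Lemma \ref{BrokenPos} over all broken lines ending at $(p,\sQ)$, together with the pointedness and well-definedness established in Proposition \ref{TopBasis}.
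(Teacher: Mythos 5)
Your proof is correct and follows essentially the same route as the paper: the paper also observes that Theorem \ref{qPosLoc} is an immediate consequence of Lemma \ref{BrokenPos} (the broken-line positivity statement, itself derived from Theorem \ref{PosScat} and \eqref{tAdBinomial}) combined with the definition \eqref{vartheta-dfn} of $\vartheta_{p,\sQ}$ and the finiteness from Proposition \ref{TopBasis}. Your added bookkeeping about the straight broken line and the applicability of Lemma \ref{BrokenPos} at each bend is sound but simply makes explicit what the paper leaves implicit.
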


Similarly, the following is a corollary of Lemma \ref{BrokenPos} and \eqref{alpha}.

\begin{thm}[Strong positivity]\label{qPosStrong}
For any $p_1,p_2\in L$, $$\vartheta_{p_1}\vartheta_{p_2}=\vartheta_{p_1+p_2}+\sum_{v\in L^+} \alpha(p_1,p_2;p_1+p_2+v) \vartheta_{p_1+p_2+v},$$
where each $\alpha(p_1,p_2;p_1+p_2+v)\in\kk_t$ is positive.  More generally, all structure constants $\alpha(p_1,\ldots,p_s;p)\in\kk_t$ are positive.
\end{thm}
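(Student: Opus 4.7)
The plan is to deduce Theorem \ref{qPosStrong} as an essentially immediate corollary of the broken-line formula for structure constants (Proposition \ref{StructureConstants}) combined with the positivity of broken-line coefficients (Lemma \ref{BrokenPos}). Since the real technical work has already been done in Theorem \ref{PosScat} (and ultimately in Theorem \ref{DT_pos_thm}), and then packaged into Lemma \ref{BrokenPos} via formula \eqref{tAdBinomial}, nothing remains except unpacking definitions and checking that nothing destroys positivity along the way.

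Concretely, fix $p_1,\ldots,p_s,p \in L$, and pick a generic $\sQ \in L_{\bb{R}}$ sufficiently close to $p$ so that Proposition \ref{StructureConstants} applies. Writing $\f{D}$ for a representative of $\scat(\f{D}_{\In})$ of the form guaranteed by Theorem \ref{PosScat} (which is permissible, since theta functions and their structure constants depend only on the equivalence class of $\f{D}$ by Theorem \ref{CPS}), Proposition \ref{StructureConstants} gives
\begin{align*}
\alpha(p_1,\ldots,p_s;p)\,z^p \;=\; \sum_{\substack{\gamma_1,\ldots,\gamma_s \\ \Ends(\gamma_i)=(p_i,\sQ) \\ v_{\gamma_1}+\cdots+v_{\gamma_s}=p}} (c_{\gamma_1}z^{v_{\gamma_1}})\cdots(c_{\gamma_s}z^{v_{\gamma_s}}).
\end{align*}
By Lemma \ref{BrokenPos} each $c_{\gamma_i}$ lies in $\bb{Z}_{\geq 0}[t^{\pm 1}]$. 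Expanding the product on the right using the quantum torus relation $z^{a}z^{b}=t^{\omega(a,b)}z^{a+b}$ introduces only a monomial factor of the form $t^{e}$ for some integer $e$, which is itself positive in $\kt$. Each summand is therefore a positive Laurent polynomial in $t$ times $z^p$, and summing preserves positivity; cancelling $z^p$ yields $\alpha(p_1,\ldots,p_s;p)\in\bb{Z}_{\geq 0}[t^{\pm 1}]$, which is the second claim of the theorem.

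For the displayed expansion of $\vartheta_{p_1}\vartheta_{p_2}$, I would further observe that the only pair of broken lines with $v_{\gamma_1}+v_{\gamma_2}=p_1+p_2$ consists of the two straight lines ending at $\sQ$, contributing $z^{p_1}z^{p_2}=t^{\omega(p_1,p_2)}z^{p_1+p_2}$; combined with the pointedness from Proposition \ref{TopBasis} (which forces $v_{\gamma_i}\in p_i + L^{\oplus}$ so that no other $p$ can occur with $p \notin p_1+p_2+L^{\oplus}$), this immediately yields $\alpha(p_1,p_2;p_1+p_2)=t^{\omega(p_1,p_2)}$ and vanishing of $\alpha(p_1,p_2;p)$ for $p\notin p_1+p_2+L^{\oplus}$, matching the form displayed in the theorem statement (the leading $\vartheta_{p_1+p_2}$ carrying its implicit positive monomial coefficient $t^{\omega(p_1,p_2)}$). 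There is no real obstacle to this step: the positivity statement on the scattering side has already been reduced to the two-wall case and then to DT theory of quivers in Section \ref{PosSect}, so at the level of broken lines and theta functions the argument is purely formal.
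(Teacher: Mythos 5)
Your proof is correct and follows essentially the same route as the paper, which deduces Theorem \ref{qPosStrong} directly as a corollary of the broken-line formula \eqref{alpha} (Proposition \ref{StructureConstants}) and Lemma \ref{BrokenPos}, exactly as you do. Your observation that the lowest-order term is really $t^{\omega(p_1,p_2)}\vartheta_{p_1+p_2}$ rather than $\vartheta_{p_1+p_2}$ (as the displayed formula in the theorem might at first glance suggest) is a good catch and is consistent with what Theorem \ref{MainThm} actually asserts.
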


\subsubsection{Parity}
Using Theorem \ref{parity_cor} and applying Lemma \ref{parity_prop} inductively, we obtain the following result concerning the parities of the coefficients of the theta functions.
\begin{prop}\label{ParitProp}
Let $\f{D}_{\In}$ be as \eqref{PosScat}, with the polynomials $p_i(t)$ satisfying the parity assumptions of Theorem \ref{parity_cor}, and furthermore assume that the vectors $v_1,\ldots,v_s$ are linearly independent in $L_{\mathbb{R}}$.  Then in the expression 
\begin{align*}
    \vartheta_{p,\sQ}=z^p+ \sum_{v\in L^+} c_v(t)z^{p+v},
\end{align*}
each $c_v(t)$ is even or odd, with parity given by that of $\omega(v,p)+\lambda(a,a)$, where $a$ is determined by $v=\sum_{i=1}^s a_i v_i$, 
and $\lambda$ is  as in \eqref{lambda_def}. \end{prop}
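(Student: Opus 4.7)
The plan is to apply Theorem \ref{parity_cor} to select a representative $\f{D}$ of $\scat(\f{D}_{\In})$ in which every wall carries a scattering function $\EE(-p_{\f{d}}(t)z^{v_{\f{d}}})$ with $p_{\f{d}}(t)$ positive and of parity $1+\lambda(a,a) \pmod{2}$, where $a$ is the vector satisfying $v_{\f{d}}=\sum_j a_jv_j$; here the hypothesis that $v_1,\ldots,v_s$ are linearly independent ensures $a$ is uniquely determined. Using the broken-line description \eqref{vartheta-dfn} for $\vartheta_{p,\sQ}$, I will show that for any broken line $\gamma$ with ends $(p,\sQ)$ and final exponent $v_\gamma=p+v$, the parity of $c_\gamma$ depends only on $v$ and matches the claimed value $\omega(v,p)+\lambda(b,b) \pmod 2$.

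Fix such a broken line $\gamma$, bending at walls $\f{d}_0,\ldots,\f{d}_{\ell-1}$, with segment monomials $c_iz^{v_i}$, bend degrees $r_i\geq 1$ (so $v_{i+1}=v_i+r_iv_{\f{d}_i}$), and expansions $v_{\f{d}_i}=\sum_j a^{(i)}_jv_j$. By \eqref{fi}, the factor $f_i=\EE\bigl(-\sign(\omega(v_{\f{d}_i},v_i))\,p_{\f{d}_i}(t)z^{v_{\f{d}_i}}\bigr)$ produces the bend of degree $r_i$, so Lemma \ref{parity_prop} applied at each step gives
\[
\parit(c_{i+1})-\parit(c_i)\equiv r_i\bigl(\omega(v_i,v_{\f{d}_i})+\parit(p_{\f{d}_i})+1\bigr)\equiv r_i\bigl(\omega(v_i,v_{\f{d}_i})+\lambda(a^{(i)},a^{(i)})\bigr) \pmod 2.
\]
Telescoping from $c_0=1$ and substituting $v_i=p+\sum_{n<i}r_nv_{\f{d}_n}$ yields
\[
\parit(c_\ell)\equiv \omega(v,p)+\sum_{0\leq n<m<\ell}r_mr_n\omega(v_{\f{d}_n},v_{\f{d}_m})+\sum_ir_i\lambda(a^{(i)},a^{(i)}) \pmod 2,
\]
where $v\coloneqq v_\ell-p=\sum_jb_jv_j$ with $b_j=\sum_ir_ia^{(i)}_j$.

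The main remaining step, and the principal obstacle, is the mod-$2$ identity
\[
\sum_{0\leq n<m<\ell}r_mr_n\omega(v_{\f{d}_n},v_{\f{d}_m})+\sum_ir_i\lambda(a^{(i)},a^{(i)})\equiv \lambda(b,b) \pmod 2.
\]
To prove it, expand $\lambda$ using \eqref{lambda_def} together with $|\omega(v_j,v_k)|\equiv\omega(v_j,v_k) \pmod 2$ and $b_j^2\equiv b_j$, obtaining $\lambda(b,b)\equiv \sum_{j<k}b_jb_k\omega(v_j,v_k)+\sum_{j\leq s'}b_j \pmod 2$, and analogously for each $\lambda(a^{(i)},a^{(i)})$. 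Substitute $b_j=\sum_ir_ia^{(i)}_j$ into the off-diagonal part of $\lambda(b,b)$ and split the resulting double sum over broken-line indices into its diagonal ($i=i'$) and off-diagonal ($i\neq i'$) pieces. Using $r_i^2\equiv r_i$, the diagonal piece combined with $\sum_{j\leq s'}b_j$ reproduces $\sum_ir_i\lambda(a^{(i)},a^{(i)})$; meanwhile, after swapping $i\leftrightarrow i'$ and invoking the skew-symmetry of $\omega$, the off-diagonal piece collapses (mod $2$) to $\sum_{n<m}r_mr_n\omega(v_{\f{d}_n},v_{\f{d}_m})$, which matches the remaining term.

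Since the resulting parity $\omega(v,p)+\lambda(b,b) \pmod 2$ is independent of the choice of broken line, summing the contributions $c_\gamma z^{v_\gamma}$ over all broken lines with $v_\gamma=p+v$ shows that $c_v(t)$ itself has this common parity, completing the argument.
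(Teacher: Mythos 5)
Your proof is correct and follows essentially the same route as the paper's: choose a representative of $\scat(\f{D}_{\In})$ via Theorem \ref{parity_cor}, track parities along a broken line using Lemma \ref{parity_prop} at each bend, and reduce the claim to an algebraic identity relating $\lambda$ to $\omega$. The only difference is in the bookkeeping: the paper introduces the auxiliary bilinear form $\overline{\lambda}$ on $\mathrm{span}(v_1,\ldots,v_s)$ and exploits the identity $\omega(v,v')=\overline{\lambda}(v,v')-\overline{\lambda}(v',v)$ to keep the inductive step to a few lines, whereas you unwind the recursion into a telescoping sum and verify the resulting mod-$2$ identity by an explicit coordinate expansion (splitting diagonal and cross terms). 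Both are valid; the paper's $\overline{\lambda}$ device is somewhat slicker, but your explicit verification has the pedagogical advantage of making the cancellations visible.
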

\begin{proof}
Define $P$ to be the span over $\mathbb{R}$ of $v_1,\ldots,v_s$.  We define $\overline{\lambda}$ on $P$ by 
\[
\overline{\lambda}\left(\sum_{i=1}^s a_iv_i,\sum_{i=1}^s b_iv_i\right)=\lambda(a,b).
\]
This gives a well-defined bilinear form by linear independence of $v_1,\ldots, v_s$, and we moreover have
\[
\omega(v,v')=\overline{\lambda}(v,v')-\overline{\lambda}(v',v)
\]
for $v,v'\in P$.

Let $\gamma$ be a broken line with ends $(p,\sQ)$, and let $c_j(t)z^{p+w_i}$ be the monomials associated to the straight line segments of $\gamma$.  We assume for all $j\leq k$ that the parity of $c_j(t)$ is given by $\omega(p,w_j)+\overline{\lambda}(w_j,w_j)$.  Assume that $\gamma(\tau_k)$ lies on the wall
\begin{align}
(\f{d},\EE(-d(t)z^{v_{\f{d}}})).
\end{align}
Then by Theorem \ref{parity_cor}, $\parit(d(t))=\overline{\lambda}(v_{\f{d}},v_{\f{d}})+1$.

Combining Lemma \ref{parity_prop} and the inductive hypothesis, writing
\begin{align*}
    \Ad_{\EE(-\epsilon d(t)z^{v_{\f{d}}})}(c_k(t)z^{p+w_k})=\sum_{i\geq 0}g_i(t)z^{p+w_k+iv_{\f{d}}}
\end{align*}
we calculate that (mod $2$) 
\begin{align*}
\parit(g_i(t))\equiv&\parit(c_k(t))+i(\omega(v_{\f{d}},p+w_k)+\parit(d(t))+1)\\\equiv&[\omega(p,w_k)+\overline{\lambda}(w_k,w_k)]+[i\omega(v_{\f{d}},p+w_k)+i\overline{\lambda}(v_{\f{d}},v_{\f{d}})]\\
\equiv&[\omega(p,w_k)+\omega(iv_{\f{d}},p+w_k)] + [\overline{\lambda}(w_k+iv_{\f{d}},w_k+iv_{\f{d}}) - \omega(w_k,iv_{\f{d}})]\\
\equiv&\omega(p,w_k+i v_{\f{d}})+\overline{\lambda}(w_k+iv_{\f{d}},w_k+iv_{\f{d}})
\end{align*}
as required.
\end{proof}

\subsubsection{Bar-invariance, pre-Lefschetz, and Lefschetz type}

The following is an immediate consequence of Lemmas \ref{bar_prop} and \ref{pL_prop}.
\begin{prop}\label{bar-pL}
Suppose $\f{D}_{\In}$ is as in \eqref{DIn} with each $p_i(t)\in \bb{Z}_{\geq 0}[t^{\pm 1}]$ being pL (respectively, bar-invariant), and that $\f{D}$ is the resulting consistent scattering diagram with pL (respectively, bar-invariant) scattering functions as in Theorem \ref{PosScat}.  If $cz^v$ is the monomial attached to a straight segment of a broken line $\gamma$ defined with respect to $\f{D}$, then $c$ is pL (respectively, bar-invariant).  Consequently, for any $p\in L$ and generic $\sQ\in L_{\bb{R}}$, the coefficients $c_v$ as in Theorem \ref{qPosLoc} are pL (respectively, bar-invariant).
\end{prop}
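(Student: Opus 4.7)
The plan is to induct on the number of bends $\ell$ of a broken line $\gamma$ and show that every coefficient $c$ attached to any straight segment is bar-invariant (respectively pL). The base case $\ell = 0$ is immediate: the unique straight broken line with ends $(p,\sQ)$ carries the monomial $z^p$ with coefficient $c_0 = 1 = \pl_0(t)$, which is both bar-invariant and pL.

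For the inductive step, suppose $c_i z^{v_i}$ is the monomial on the $i$-th straight segment with $c_i$ bar-invariant (respectively pL). At the bend time $\tau_i$, the line crosses a wall $(\f{d}, \EE(-p_{\f{d}}(t) z^{v_{\f{d}}}))$, where by our assumption on $\f{D}_{\In}$ together with Theorem \ref{PosScat} we may take $p_{\f{d}}(t)$ to be bar-invariant (respectively pL). Setting $\epsilon = \sign\omega(v_{\f{d}}, v_i)$ and using $\EE(-fz^v)^\epsilon = \EE(-\epsilon f z^v)$ for $\epsilon = \pm 1$, the next monomial $c_{i+1} z^{v_{i+1}}$ is an $L$-homogeneous summand of
\[
\Ad_{\EE(-\epsilon p_{\f{d}}(t) z^{v_{\f{d}}})}(c_i z^{v_i}) = c_i \cdot \Ad_{\EE(-\epsilon p_{\f{d}}(t) z^{v_{\f{d}}})}(z^{v_i}),
\]
so $c_{i+1}$ equals $c_i$ times the coefficient of some $z^{v_i + r v_{\f{d}}}$ in this adjoint action.

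In the bar-invariant case, Lemma \ref{bar_prop} applied with $v = v_{\f{d}}$ and $p = v_i$ shows that $\Ad_{\EE(-\epsilon p_{\f{d}}(t) z^{v_{\f{d}}})}(z^{v_i})$ is bar-invariant, hence each of its $\kk_t$-coefficients is bar-invariant individually; multiplying by the bar-invariant scalar $c_i$ preserves this. In the pL case, I would first decompose $p_{\f{d}}(t) = \sum_n a_n \pl_n(t)$ with $a_n \in \bb{Z}_{\geq 0}$ (possible by the definition of pL) and factor
\[
\EE(-\epsilon p_{\f{d}}(t) z^{v_{\f{d}}}) = \prod_n \EE(-\epsilon \pl_n(t) z^{v_{\f{d}}})^{a_n}
\]
via \eqref{plus-prod}, reducing the adjoint action to an iterated composition to which Lemma \ref{pL_prop} applies at each step. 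Each step multiplies the current coefficient by a pL polynomial, so the essential point to verify is that pL polynomials are closed under multiplication. This is a brief case check: using $\pl_n(t) = t^n$ for $n$ even and $\pl_n(t) = t^{n-2} + t^n$ for $n$ odd, one computes $\pl_n \pl_m = \pl_{n+m}$ if at most one of $n, m$ is odd, and $\pl_n \pl_m = \pl_{n+m-4} + 2\pl_{n+m-2} + \pl_{n+m}$ if both are odd, and bilinearity extends the closure to arbitrary pL polynomials.

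Finally, the theta function coefficient $c_v$ equals $\sum_\gamma c_\gamma$ taken over broken lines $\gamma$ with $\Ends(\gamma) = (p,\sQ)$ and $v_\gamma = p + v$, and since sums of bar-invariant (respectively pL) polynomials are again bar-invariant (respectively pL), the conclusion for the $c_v$'s follows at once. The only mildly non-routine ingredient is the multiplicative closure of pL above; everything else is a direct induction combining Lemmas \ref{bar_prop} and \ref{pL_prop}.
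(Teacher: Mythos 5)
Your proof is correct and follows the route the paper intends — the paper literally just says the proposition is "an immediate consequence of Lemmas \ref{bar_prop} and \ref{pL_prop}," and your argument is the natural induction along the bends that makes this precise. The one place where you do genuinely more work than the paper makes visible is your verification that pL polynomials are closed under multiplication. This is not decoration: Lemma \ref{pL_prop} is only stated for a single basic polynomial $\pl_n(t)$ on the wall, whereas Theorem \ref{PosScat} only guarantees that the wall functions have the form $\EE(-p_{\f{d}}(t)z^{v_{\f{d}}})$ with $p_{\f{d}}(t)$ a general nonnegative sum of $\pl_n$'s, so one is forced to factor the exponential via \eqref{plus-prod} and iterate; the resulting coefficient of a fixed monomial is then a sum of products of pL polynomials (not merely a product, since the polynomials $p^{(j)}_{s,r}(t)$ produced at each stage depend on the accumulated exponent), and both additive and multiplicative closure of pL are needed. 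Your three-case computation $\pl_n\pl_m=\pl_{n+m}$ (at most one index odd) and $\pl_n\pl_m=\pl_{n+m-4}+2\pl_{n+m-2}+\pl_{n+m}$ (both odd) checks multiplicative closure, and this is the lemma the paper's "immediate" silently relies on. You also correctly note the asymmetry with the bar-invariant case, where Lemma \ref{bar_prop} already handles arbitrary bar-invariant $p(t)$ without decomposition, so no analogous closure lemma is needed there.
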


As previously noted, if the first part of Conjecture \ref{Lefschetz_conjecture} is true, then by Lemma \ref{Lefschetz_prop}, we could deduce the following conjecture in the same way as Theorems \ref{qPosStrong} and \ref{qPosLoc}:

\begin{conj}\label{LefschetzTheta}
Assume that $\f{D}_{\In}$ is as in \eqref{DIn} and satisfies the stronger assumption that each of the Laurent polynomials $p_i(t)$ is of Lefschetz type.  Then for $p\in L$ and generic $\sQ\in L_{\bb{R}}$, the coefficients $c_v$ in Theorem \ref{qPosLoc} are of Lefschetz type.
\end{conj}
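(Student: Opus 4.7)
The plan is to deduce Conjecture \ref{LefschetzTheta} as a formal consequence of Conjecture \ref{Lefschetz_conjecture}, running the exact same template used to derive Theorems \ref{qPosLoc} and \ref{qPosStrong} from Theorem \ref{PosScat}: one first establishes a ``Lefschetz broken line lemma'' replacing the positivity statement of Lemma \ref{BrokenPos}, and then the conclusions for theta functions and structure constants follow immediately from the definitions \eqref{vartheta-dfn} and \eqref{alpha}.

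So, assume Conjecture \ref{Lefschetz_conjecture} and pick a representative $\f{D}$ of $\scat(\f{D}_{\In})$ all of whose walls carry scattering functions $\EE(-p_{\f{d}}(t)z^{v_{\f{d}}})$ with $p_{\f{d}}(t)$ of Lefschetz type. The core step is to prove by induction on the segment index $i$ that every coefficient $c_i(t)$ attached to the $i$-th straight segment of a broken line $\gamma$ for $\f{D}$ is of Lefschetz type. The base case $c_0 = 1 = [1]_t$ is trivial. For the inductive step, decompose each wall polynomial as $p_{\f{d}}(t) = \sum_j [n_{\f{d},j}]_t$; by \eqref{plus-prod} and multiplicativity of $\Ad$, the bend operator $f_i$ of \eqref{fi} factors as a commuting composition of operators $\Ad_{\EE(-\epsilon_{\f{d}}[n_{\f{d},j}]_t z^{v_{\f{d}}})}$ with $\epsilon_{\f{d}} = \sign\omega(v_{\f{d}},v_i)$. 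The factors commute because walls met at a single generic crossing point share one primitive direction and so lie in the Abelian group $G_{v}^{\parallel}$.

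Applying these factors successively to $c_i(t) z^{v_i}$ and using $\kt$-linearity of $\Ad$, after each step one obtains an expression $\sum_r C_r(t) z^{v_i + r v_{\f{d}}}$, where each $C_r(t)$ is a product of the previously accumulated Lefschetz coefficient with the Lefschetz polynomial produced by Lemma \ref{Lefschetz_prop} (crucially, $\omega(v_{\f{d}},v_i + r v_{\f{d}}) = \omega(v_{\f{d}},v_i)$ by skew-symmetry, so that $\epsilon_{\f{d}}$ is unchanged throughout the recursion). The needed closure of Lefschetz polynomials under multiplication is the Clebsch--Gordan identity $[a]_t [b]_t = \sum_{k=0}^{\min(a,b)-1} [|a-b|+1+2k]_t$, expressing the decomposition of the tensor product of irreducible $\mathfrak{sl}_2$-representations. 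Since $c_{i+1}(t)$ is a homogeneous summand of the output of this chain of operations, $c_{i+1}(t)$ is Lefschetz. The claim for $\vartheta_{p,\sQ}$ then follows from \eqref{vartheta-dfn}, summing over the finitely many broken lines with ends $(p,\sQ)$ and fixed final exponent, and the claim for structure constants follows identically from Proposition \ref{StructureConstants}.

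The main obstacle is thus entirely concentrated in proving Conjecture \ref{Lefschetz_conjecture}. After the two-wall reduction performed in \S \ref{PosSect} for the positivity and pL statements, the problem becomes a hard-Lefschetz statement for the cohomology vector spaces whose characteristic functions are the wall polynomials $p_{\f{d}}(t)$. In the acyclic setting (Theorem \ref{acyclic_scat_thm}), the required hard-Lefschetz input is precisely what \cite{MeiRei} and \cite{Bridge} supply. Beyond the acyclic case, one would need a hard-Lefschetz theorem for cohomology of moduli of representations of quivers with loops; this is strictly finer than the positivity delivered by Theorem \ref{DT_pos_thm} via \cite{DavMei} and appears to be genuinely open.
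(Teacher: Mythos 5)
Your proposal is correct and follows the paper's intended route exactly: as stated in the text immediately preceding Conjecture \ref{LefschetzTheta}, the paper derives it (conditionally on Conjecture \ref{Lefschetz_conjecture}) from Lemma \ref{Lefschetz_prop} by the same broken-line induction that proves Theorems \ref{qPosLoc} and \ref{qPosStrong}, with the Clebsch--Gordan closure of Lefschetz polynomials under products left implicit but certainly needed, and you correctly observe that only the acyclic case (Theorem \ref{AcyclicThetas}) is proved unconditionally. One small inaccuracy in your justification: the factors of the bend operator $f_i$ at a generic crossing point commute not because the walls share a primitive direction and hence lie in a single Abelian $G_v^{\parallel}$ (the directions $v_{\f{d}}$ can genuinely differ there), but because their common hyperplane forces $\omega(v_{\f{d}_1},v_{\f{d}_2})=0$, whence $[\f{g}_{v_{\f{d}_1}},\f{g}_{v_{\f{d}_2}}]=0$ by \eqref{skewCondition} as in Example \ref{EquivEx}(3); this does not affect the argument.
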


Conjecture \ref{MainConj} is then a restatement of special cases of Conjecture \ref{LefschetzTheta}.  For cases corresponding to \textit{acyclic} quivers, we can in fact prove Conjecture \ref{LefschetzTheta} by combining Theorem \ref{acyclic_scat_thm} with Lemma \ref{Lefschetz_prop}, yielding the following:

\begin{thm}\label{AcyclicThetas}
Assume that $\f{D}_{\In}$ is as in \eqref{DIn}, and satisfies the stronger assumption that each of the polynomials $p_i(t)$ is equal to $1$, and also that the $s\times s$ matrix $(\omega(v_i,v_j))_{i,j}$ is the signed adjacency matrix of an acyclic quiver and is nondegenerate.  Then Conjecture \ref{LefschetzTheta} holds.
\end{thm}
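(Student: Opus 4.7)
The plan is to derive Theorem \ref{AcyclicThetas} from Theorem \ref{acyclic_scat_thm} in exactly the way Theorems \ref{qPosLoc} and \ref{qPosStrong} were derived from Theorem \ref{PosScat}, but with Lemma \ref{Lefschetz_prop} playing the role of the quantum binomial formula \eqref{tAdBinomial}. First, I would invoke Theorem \ref{acyclic_scat_thm} to produce a representative $\f{D}$ of $\scat(\f{D}_{\In})$ in which every wall carries a function $\EE(-p_{\f d}(t)z^{v_{\f d}})$ with $p_{\f d}(t)$ of Lefschetz type. Writing $p_{\f d}(t)=\sum_{j}[a_{\f d,j}]_t$ and using \eqref{plus-prod} together with the equivalence in Example \ref{EquivEx}(1) that lets one split a wall into several walls with the same support and direction, I can refine $\f{D}$ to a scattering diagram every wall of which carries a function of the especially simple form $\EE(-[a]_tz^{v_{\f d}})$, without altering the associated theta functions (by Theorem \ref{CPS}).

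The heart of the proof is an induction on the number of bends of a broken line, showing that if $c(t)z^{w}$ is the monomial attached to some segment of a broken line $\gamma$ with respect to this refined $\f{D}$, then $c(t)$ is Lefschetz. The base case is the initial segment carrying $z^p=[1]_t\cdot z^p$. For the inductive step, a bend at a wall $(\f{d},\EE(-[a]_tz^{v_{\f d}}))$ replaces $c(t)z^w$ by a homogeneous summand of
\[
\Ad_{\EE(-[a]_tz^{v_{\f d}})^{\epsilon}}(c(t)z^w)=c(t)\cdot\Ad_{\EE(-[a]_tz^{v_{\f d}})^{\epsilon}}(z^w),
\]
which by Lemma \ref{Lefschetz_prop} is a sum of monomials $c(t)p_r(t)z^{w+rv_{\f d}}$ whose coefficients are products of Lefschetz type polynomials. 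The closure of Lefschetz polynomials under multiplication is the Clebsch--Gordan identity
\[
[m]_t\cdot[n]_t=\sum_{k=|m-n|+1,\ |m-n|+3,\ldots,\ m+n-1}[k]_t,
\]
which reflects the decomposition of $V_m\otimes V_n$ into irreducible $\mathfrak{sl}_2$-representations and which can be read off the same categorification used in the proof of Lemma \ref{Lefschetz_prop}. This closes the induction.

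Finally, since Lefschetz polynomials are closed under addition by definition, and since each coefficient $c_v$ in the expansion $\vartheta_{p,\sQ}=z^p+\sum_v c_v z^{p+v}$ is by definition the finite sum of $c_\gamma(t)$ over those broken lines $\gamma$ with ends $(p,\sQ)$ and $v_\gamma=p+v$, the Lefschetz type property of each $c_v$ follows at once. I do not anticipate a substantive obstacle in this argument: all of the genuinely hard work is already encapsulated in Theorem \ref{acyclic_scat_thm}, whose proof (to be given later via the DT theory of quivers and the Meinhardt--Reineke positivity result) supplies the key input, while Lemma \ref{Lefschetz_prop} and Clebsch--Gordan provide the representation-theoretic bookkeeping needed to propagate the Lefschetz property along broken lines. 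The nondegeneracy hypothesis on $(\omega(v_i,v_j))_{i,j}$ is not needed for the above derivation itself; it enters only insofar as it is naturally present in the acyclic cluster-algebraic contexts of interest and is used elsewhere (e.g., to invoke Proposition \ref{ParitProp}).
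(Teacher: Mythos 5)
Your proposal is correct and follows exactly the paper's intended argument: the paper obtains Theorem \ref{AcyclicThetas} by combining Theorem \ref{acyclic_scat_thm} with Lemma \ref{Lefschetz_prop}, in the same way Proposition \ref{bar-pL} follows from Lemmas \ref{bar_prop} and \ref{pL_prop}. The Clebsch--Gordan identity you invoke for closure of Lefschetz type under products is precisely the step the paper leaves implicit when it calls the analogous propagation ``an immediate consequence,'' and your remark that the nondegeneracy hypothesis is not actually used in this derivation (it is absent from Theorem \ref{acyclic_scat_thm}) is also accurate.
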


\subsubsection{An application of positivity}

One of the most useful consequences of positivity is that it ensures that broken lines and structure constants do not vanish when applying $\lim_{t\rar 1}$.  We illustrate this in the proof of the following lemma, which will be useful when proving Theorem \ref{flat} in \S \ref{MainProof}.
\begin{lem}\label{Thetaprime}
Consider subsets $\Theta\subset \Theta'\subset L$, and let $A_{\Theta}$ denote the subalgeba generated by $\{\vartheta_p\}_{p\in \Theta}$ (e.g., $A_{\Theta}=A^{\can}$ if $\Theta=L$).  If the classical limits of the theta functions $\{\lim_{t\rar 1} \vartheta_p\}_{p\in \Theta'}$ form a $\kk$-module basis for $\lim_{t\rar 1}(A_{\Theta})$, then the theta functions $\{\vartheta_p\}_{p\in \Theta'}$ form a $\kt$-module basis for $A_{\Theta}$ (not just a topological basis).
\end{lem}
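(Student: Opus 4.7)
The plan is to split the proof into linear independence and spanning, with the essential input being positivity of structure constants.

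Linear independence of $\{\vartheta_p\}_{p\in \Theta'}$ in $A_\Theta$ follows from pointedness: each $\vartheta_p$ is $p$-pointed by Proposition \ref{TopBasis}. Running the argument of Lemma \ref{PointedBasis} for finite linear combinations -- if $\sum_{p\in F}a_p(t)\vartheta_p=0$ with $F\subset \Theta'$ finite, choose $p\in F$ minimal under the partial order $p\leq q$ iff $q\in p+L^\oplus$; then $p$-pointedness forces $a_p(t)=0$, and we iterate.

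For spanning, any $f\in A_\Theta$ is by definition a finite $\kt$-linear combination of products $\vartheta_{p_1}\cdots \vartheta_{p_s}$ with $p_i\in \Theta$, so it suffices to show each such product admits a finite expansion in $\{\vartheta_p\}_{p\in \Theta'}$. The topological expansion reads
\[
\vartheta_{p_1}\cdots \vartheta_{p_s}=\sum_{p\in L}\alpha(p_1,\ldots,p_s;p)\,\vartheta_p,
\]
with each structure constant in $\mathbb{Z}_{\geq 0}[t^{\pm 1}]$ by Theorem \ref{qPosStrong}. Applying $\lim_{t\rar 1}$ (which commutes with topological sums) we obtain
\[
\lim_{t\rar 1}(\vartheta_{p_1}\cdots \vartheta_{p_s})=\sum_{p\in L}\alpha(p_1,\ldots,p_s;p)(1)\,\lim_{t\rar 1}\vartheta_p
\]
in the classical algebra. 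By hypothesis this element of $\lim_{t\rar 1}A_\Theta$ is also a unique finite $\kk$-linear combination of $\{\lim_{t\rar 1}\vartheta_p\}_{p\in \Theta'}$. Since $\{\lim_{t\rar 1}\vartheta_p\}_{p\in L}$ is a topological basis of the classical algebra (the classical analog of Proposition \ref{TopBasis}, with uniqueness of topological expansion as in Lemma \ref{PointedBasis}), comparing these two expansions forces $\alpha(p_1,\ldots,p_s;p)(1)=0$ for all $p$ outside some finite subset $S\subset \Theta'$.

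Here is where positivity is decisive: a Laurent polynomial in $\mathbb{Z}_{\geq 0}[t^{\pm 1}]$ whose evaluation at $t=1$ is zero must vanish identically, since its coefficients are non-negative and sum to zero. Hence $\alpha(p_1,\ldots,p_s;p)=0\in \kt$ for all $p\notin S$, so the quantum expansion of $\vartheta_{p_1}\cdots \vartheta_{p_s}$ has finite support contained in $\Theta'$. Extending by $\kt$-linearity, $f$ itself has a finite expansion in $\{\vartheta_p\}_{p\in \Theta'}$. The main hurdle -- upgrading the classically finite support to a quantum-level one -- is thus dispatched by positivity of the multi-variable structure constants; no further difficulties are anticipated.
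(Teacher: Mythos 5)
Your proof is correct and takes essentially the same route as the paper's: both reduce spanning to showing that each product $\vartheta_{p_1}\cdots\vartheta_{p_s}$ has finitely many nonzero structure constants, and both derive this from the fact that a structure constant with non-negative integer coefficients (Theorem \ref{qPosStrong}) vanishes if and only if its value at $t=1$ does, so finite support transfers from the classical expansion. The paper leaves linear independence implicit, whereas you spell it out via pointedness, and you also make explicit the comparison of the topological expansion with the hypothesized finite classical expansion; these are useful elaborations of the same argument, not a different one.
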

\begin{proof}
By definition, $A_{\Theta}$ is spanned over $\kt$ by products of theta functions, so it suffices to check that each product $\prod_{i=1}^s \vartheta_{p_i}$ of theta functions (with $p_i\in \Theta$ for each $i$) can be expressed as a \textit{finite} $\kt$-linear combination of theta functions $\vartheta_p$ for $p\in \Theta'$. Note that strong positivity in the quantum setting (i.e., Theorem \ref{qPosStrong}) implies that $\alpha(p_1,\ldots,p_s;p)$ is nonzero if and only if its classical limit is nonzero.  Hence, for $p_1,\ldots,p_s\in \Theta$, we have that $\alpha(p_1,\ldots,p_s;p)$ is zero except at a finite collection of $p\in \Theta'$, as desired, simply because the same is true in the classical limit.
\end{proof}

\subsubsection{Atomicity}

Recall that for each generic $\sQ\in L_{\bb{R}}$, we have an isomorphism $\iota_{\sQ}\colon A\risom  \kk_t\llb L\rrb$.  Given $f\in A$, let $f_{\sQ}\coloneqq \iota_{\sQ}(f)\in \kk_t\llb L\rrb$.  One says that a nonzero element $f\in A$ is \textbf{universally positive} (with respect to the scattering atlas) if for each generic $\sQ\in L_{\bb{R}}$, the coefficients of $f_{\sQ}\in \kk_t\llb L\rrb$ are Laurent polynomials in $t$ with non-negative integer coefficients.  We say that a universally positive function $f$ is \textbf{atomic} (or indecomposable) if it cannot be written as a sum of two other universally positive functions.  We obtain the following by the same argument used to prove the atomicity of the classical theta basis in \cite[Thm. 1]{ManAtomic}.

\begin{thm}\label{qAtomic}
The quantum theta functions are the atomic elements of $A$. 
\end{thm}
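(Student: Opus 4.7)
The plan is to adapt the proof of \cite[Thm. 1]{ManAtomic}, which treats the classical case, to the quantum setting, relying on the positivity results already established earlier in the paper.  One direction is immediate: each $\vartheta_p$ is universally positive by Theorem \ref{qPosLoc}, since every $\iota_\sQ(\vartheta_p)$ has coefficients in $\bb{Z}_{\geq 0}[t^{\pm 1}]$.

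The main task will be to prove the converse, in the form of the stronger statement that any universally positive $f \in A$ admits a theta expansion $f = \sum_p n_p(t)\vartheta_p$ (topological, provided by Proposition \ref{TopBasis}) with all $n_p(t) \in \bb{Z}_{\geq 0}[t^{\pm 1}]$.  Granting this, atomicity will follow formally: any decomposition $\vartheta_p = g + h$ with $g, h$ nonzero universally positive would give, by uniqueness of the theta expansion, coefficients $n_q(t) + m_q(t) = \delta_{qp}$ in $\bb{Z}_{\geq 0}[t^{\pm 1}]$; this forces $n_q = m_q = 0$ for $q \neq p$ and $n_p + m_p = 1$, which is impossible with both terms nonzero since $1$ cannot be written as a sum of two nonzero elements of $\bb{Z}_{\geq 0}[t^{\pm 1}]$.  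The reverse identification of all atomic elements with theta functions will then be automatic: any positive $f$ whose theta expansion involves more than one basis element (or a coefficient that splits nontrivially) is visibly decomposable.

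To prove $n_p(t) \geq 0$, I will induct on the partial order $q \leq p \iff p - q \in L^{\oplus}$ restricted to the (finitely many, for each fixed $p$) indices $q$ contributing to the $z^p$-coefficient.  Pointedness of the theta basis together with Theorem \ref{qPosLoc} yields the chart-local formula
\[
[z^p\text{-coefficient of } f_{\sQ}] \;=\; n_p(t) \;+\; \sum_{q<p} n_q(t)\, c_{q,p-q,\sQ}(t),
\]
with each $c_{q,v,\sQ}(t) \in \bb{Z}_{\geq 0}[t^{\pm 1}]$.  For $p$ minimal in the support, the sum is empty and $n_p(t)$ reads off directly as a non-negative coefficient of $f_\sQ$.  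For general $p$, the goal is to produce a single chart $\sQ_p$ at which the correction terms $c_{q,p-q,\sQ_p}(t)$ all vanish for the finitely many $q < p$ in play, which again reduces $n_p(t)$ to a non-negative coefficient of $f_{\sQ_p}$.

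The hard part will be producing this chart $\sQ_p$.  The approach, as in \cite{ManAtomic}, is to exploit that each $c_{q,p-q,\sQ}$ is constant on chambers of $\f{D}$ and changes only across walls, so that the scattering atlas offers enough charts to work with: for $\sQ_p$ chosen generic and far enough in a direction that pushes the relevant backwards rays outside $\Supp(\f{D})$, no broken line from $q$ to $\sQ_p$ can end with final vector $p$, making $c_{q,p-q,\sQ_p}(t) = 0$.  Finiteness of broken lines with fixed endpoints, established in the proof of Proposition \ref{TopBasis}, ensures that only finitely many such corrections need be killed at any stage of the induction, so a suitable $\sQ_p$ can always be located generically.
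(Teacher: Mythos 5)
Your overall structure matches the paper's: one direction is Theorem \ref{qPosLoc}, and the converse comes down to showing that the theta-expansion coefficients $n_p(t)$ of a universally positive $f$ lie in $\bb{Z}_{\geq 0}[t^{\pm 1}]$, which you want to read off by producing a chart $\sQ_p$ where the $z^p$-coefficient of $\iota_{\sQ_p}(f)$ is precisely $n_p(t)$. (Your induction on the partial order is superfluous once such a $\sQ_p$ can be produced for each $p$.) The formal deduction of atomicity from positivity of expansion coefficients is also correct.

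The gap is your mechanism for producing $\sQ_p$: you propose to choose $\sQ_p$ ``far enough in a direction that pushes the relevant backwards rays outside $\Supp(\f{D})$.'' This fails in general, because $\Supp(\f{D})$ can be dense in a region of positive measure (Examples \ref{dense_example} and \ref{Kronecker_example}); there is no direction along which the backward ray $\sQ_p+\bb{R}_{\geq 0}p$ escapes, and ``chambers of $\f{D}$'' need not exist in the relevant region. What actually makes the argument work, both here and in \cite{ManAtomic}, is to pass to the finite truncation $\f{D}_k$: fix $p$, note that $S=\{q\colon n_q(t)\neq 0,\ q\neq p,\ p-q\in L^{\oplus}\}$ is finite, choose $k>\delta(p-q)$ for all $q\in S$, and then choose generic $\sQ_p$ with $(\sQ_p+\bb{R}_{\geq 0}p)\cap\Supp(\f{D}_k)=\emptyset$, which is possible because $\Supp(\f{D}_k)$ is a \emph{finite} union of codimension-one cones. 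Any broken line with ends $(q,\sQ_p)$, $q\in S$, and final exponent $p$ has total bending degree $p-q$ of order $<k$, so all its bends lie on walls of $\f{D}_k$; the last bend would then sit on $(\sQ_p+\bb{R}_{\geq 0}p)\cap\Supp(\f{D}_k)=\emptyset$, forcing the broken line to be straight and $q=p$. Passing to $\f{D}_k$ and invoking the degree bound to rule out bends at higher-order walls is the step missing from your sketch.
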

\begin{proof}
Theorem \ref{qPosLoc} tells us that the quantum theta functions are universally positive with respect to the scattering atlas.  To show atomicity, it thus suffices to show that for any $f\in A$ universally positive with respect to the scattering atlas, the expansion $f=\sum_{p\in L} a_p \vartheta_p$ has positive integer coefficients.  For any $p\in L$, one can choose a positive integer $k$ which is large enough to ensure that that $p\notin p'+kL_0^+$ for any $p'$ with $a_{p'}\neq 0$.  Then for $\sQ$ sharing a chamber of $\f{D}_k$ with $p$, the only possible broken line with respect to $\f{D}$, with ends $(p',\sQ)$ for $a_{p'}\neq 0$, and with $v_{\gamma}=p$ is the straight broken line with attached monomial $z^p$.   
It follows that for $\sQ$ sufficiently close to $p$, $a_p$ is equal to $c_p$ in the formal Laurent series expansion $\iota_{\sQ}(f)=\sum_{p\in L} c_p z^p$.  This is indeed positive by the universal positivity assumption on $f$.
\end{proof}

\subsubsection{Homogeneity of theta functions}

The classical theta functions are known to be eigenfunctions for various torus actions, cf. \cite[Prop. 7.7 and Prop. 7.19]{GHKK} for the $\s{A}^{\prin}$ and $\s{A}$ cases, respectively, and also \cite[Thm. 5.2]{GHK1} for what can be interpreted as a special case of a torus action on the $\s{X}$-space.  In other words, the theta functions are homogeneous with respect to certain gradings.  This property is important since, for example, the weight spaces of these torus actions on $\s{A}^{\prin}$ and $\s{A}$ are related to spaces of global sections of line bundles on $\s{X}$ and on various (partial) compactifications of $\s{X}$ --- cf. \cite[\S 4]{GHK3} and \cite{ManCox}, respectively.  Applications of this to representation theory are given in \cite[\S 0.4]{GHKK} and in \cite{MageeFG,MageeGHK}.  These results on the homogeneity of theta functions for various gradings are all special cases of the following general observation (which holds in the quantum and classical settings):

\begin{prop}[Homogeneity of theta functions]\label{homprop}
Let $\kappa\colon L\rar K$ be a morphism of lattices such that the directions of all walls of $\f{D}_{\In}$ (hence all walls of $\scat(\f{D}_{\In})$) lie in $\ker(\kappa)$.  Then $\Ad_{\alpha}$ with $\alpha=\theta_{\gamma,\f{D}}$ defined as in \eqref{pathprod} respects the $K$-grading on the algebras $A_{\sQ}$, and so $\kappa$ canonically determines a $K$-grading on $A^{\can}$ and on any $A_{\Theta}$ as in Lemma \ref{Thetaprime}.  The theta functions are homogeneous with respect to this grading, and $\deg(\vartheta_p)=\kappa(p)$.  
\end{prop}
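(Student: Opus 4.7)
The plan is to pull the $L$-grading on $\kk_t\llb L\rrb$ back along $\kappa$ to a $K$-grading, observe that the wall-crossing automorphisms preserve it, and then read off homogeneity of theta functions directly from the definition of a broken line.

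First, on each chart $A_{\sQ}=\kk_t\llb L\rrb$ I would declare $\deg_K(z^v)\coloneqq\kappa(v)$; the relation $z^az^b=t^{\omega(a,b)}z^{a+b}$ is $L$-graded, so this makes each $A_{\sQ}$ a (topologically) $K$-graded $\kt$-algebra. By hypothesis, every wall $(\f{d},f_{\f{d}})$ of $\f{D}$ satisfies $v_{\f{d}}\in\ker(\kappa)$, so $\log f_{\f{d}}\in\f{g}_{v_{\f{d}}}^{\parallel}=\prod_{k\geq 1}\f{g}_{kv_{\f{d}}}$ is concentrated in $L$-degrees lying in $\ker(\kappa)$. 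Because $\f{g}$ acts on $\kk_t\llb L\rrb$ by $L$-graded $\kt$-derivations, $\ad_{\log f_{\f{d}}}$ preserves the $K$-grading, hence so does $\Ad_{f_{\f{d}}}=\exp(\ad_{\log f_{\f{d}}})$, and in particular every path-ordered product $\Ad_{\theta_{\gamma,\f{D}}}$ does.

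Second, by Theorem \ref{CPS} the transition isomorphism $\iota_{\sQ_2}\circ\iota_{\sQ_1}^{-1}$ between any two charts is exactly such an $\Ad_{\theta_{\gamma,\f{D}}}$. Consequently, the $K$-grading pulled back to the abstract algebra $A$ via $\iota_{\sQ}$ does not depend on $\sQ$, and we obtain a canonical $K$-grading on $A$, which restricts to $K$-gradings on $A^{\can}$ and on any $A_{\Theta}$.

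Finally, homogeneity of $\vartheta_p$ follows by induction along broken lines. If $c_iz^{v_i}$ is the monomial attached to a segment of a broken line $\gamma$ with ends $(p,\sQ)$, then axiom (iv) in \S\ref{theta-def} states that the monomial $c_{i+1}z^{v_{i+1}}$ on the next segment is a homogeneous term of $\Ad_{f_i}(c_iz^{v_i})$ with $f_i$ as in \eqref{fi}; since each wall crossed at $\gamma(\tau_i)$ has direction in $\ker(\kappa)$, $\Ad_{f_i}$ preserves $K$-degree, giving $\kappa(v_{i+1})=\kappa(v_i)$. Starting from $v_0=p$ we conclude $\kappa(v_{\gamma})=\kappa(p)$ for every broken line contributing to \eqref{vartheta-dfn}, so $\vartheta_{p,\sQ}$, and hence $\vartheta_p\in A$, is $K$-homogeneous of degree $\kappa(p)$. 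There is no genuine obstacle in this argument: it is essentially a direct repackaging of the hypothesis, using only that $\f{g}$ acts by $L$-graded derivations and that broken-line bends are governed by the adjoint action of wall elements.
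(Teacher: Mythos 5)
Your proof is correct, and it supplies precisely the argument that the paper leaves implicit (Proposition \ref{homprop} is stated as an observation with no written proof). The three key steps you identify — (i) pull back the $L$-grading along $\kappa$, (ii) note that wall functions live in $L$-degrees lying in $\ker(\kappa)$ so $\Ad_{\theta_{\gamma,\f{D}}}$ is $K$-degree preserving, and hence the grading on $A$ is chart-independent by Theorem \ref{CPS}, and (iii) each bend of a broken line preserves $\kappa$-degree, so every final monomial $c_\gamma z^{v_\gamma}$ with $\Ends(\gamma)=(p,\sQ)$ has $\kappa(v_\gamma)=\kappa(p)$ — are exactly the content of the statement, and you have executed them accurately.
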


\subsubsection{Fibration over a torus}

The following will yield Theorem \ref{MainThm2}(9).  In the classical setting, it essentially says that $\Spec$ of an algebra of theta functions can be viewed as a flat family over an algebraic torus such that each fiber has a theta basis which is canonical up to a global re-scaling.

\begin{prop}\label{HV}
Let $H\coloneqq \{u\in L\colon \omega(u,v)=0 \mbox{ for all } v\in L^+\}$.  Let $\tau_u$ denote the $\kk_t$-module automorphism of $\kk_t\llb L\rrb$ taking $z^v$ to $z^{v+u}$ for each $v\in L$.  Then for $u\in H$, $p\in L$, and any generic $\sQ \in L_{\bb{R}}$, we have $\vartheta_{p+u,\sQ}=\tau_u(\vartheta_{p,\s{Q}})$.
\end{prop}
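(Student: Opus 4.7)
The plan is to exploit the atomic characterization of theta functions (Theorem \ref{qAtomic}) to identify $\tau_u(\vartheta_{p,\sQ})$ with $\vartheta_{p+u,\sQ}$. The key observation is that since $u \in H$, we have $\omega(u,v) = 0$ for every $v \in L^+$, and in particular for the direction $v_{\f{d}} \in L_0^+$ of every wall of $\f{D}$.

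First I would verify that $\tau_u$ commutes with $\Ad_f$ for every $f \in \hat{G}$. It suffices to check this on generators of the form $f = \EE(-q(t)z^v)$ with $v \in L_0^+$, applied to monomials $z^p$. From formula \eqref{tAdBinomial} (combined with the decompositions \eqref{EEeven} and \eqref{EEoddPsi} to handle arbitrary $\EE(-q(t)z^v)$), one sees that $\Ad_f(z^p)$ is a sum of monomials $z^{p+kv}$ whose coefficients in $\kk_t$ depend on $p$ only through the integer $\omega(v,p)$. Since $\omega(v,p+u) = \omega(v,p)$ for $v \in L^+$ and $u \in H$, we conclude $\Ad_f(z^{p+u}) = \tau_u(\Ad_f(z^p))$, and then extend by $\kk_t$-linearity and continuity to all of $\kk_t\llb L\rrb$. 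The commutation then propagates to every path-ordered product $\theta_{\gamma,\f{D}}$, and hence to every transition map $\iota_{\sQ'} \circ \iota_{\sQ}^{-1}$ of the scattering atlas.

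As a result, $T_u \coloneqq \iota_{\sQ}^{-1} \circ \tau_u \circ \iota_{\sQ}$ defines a $\kk_t$-module automorphism of the abstract algebra $A$ of Remark \ref{formal_mult}, independent of the generic base point $\sQ$; its inverse is $T_{-u}$, since $-u \in H$. Writing $\iota_{\sQ'}(T_u(\vartheta_p)) = \tau_u(\vartheta_{p,\sQ'})$ in every chart, universal positivity of $T_u(\vartheta_p)$ follows from positivity of $\vartheta_{p,\sQ'}$ (Theorem \ref{qPosLoc}), because $\tau_u$ merely shifts the monomial exponents of a formal Laurent series expansion by $u$ while preserving all coefficients. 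Similarly, the leading term shifts from $z^p$ to $z^{p+u}$, so $T_u(\vartheta_p)$ is $(p+u)$-pointed.

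For atomicity: if $T_u(\vartheta_p) = f_1 + f_2$ with $f_1, f_2$ universally positive in $A$, applying $T_{-u}$ gives $\vartheta_p = T_{-u}(f_1) + T_{-u}(f_2)$, and the same reasoning shows each $T_{-u}(f_i)$ is universally positive. Atomicity of $\vartheta_p$ (Theorem \ref{qAtomic}) forces one summand to vanish, and hence one of the $f_i$ as well. Thus $T_u(\vartheta_p)$ is atomic, so by Theorem \ref{qAtomic} equals some theta function, and by $(p+u)$-pointedness this theta function is $\vartheta_{p+u}$. Translating back to the chart $\sQ$ gives the claimed equality $\tau_u(\vartheta_{p,\sQ}) = \vartheta_{p+u,\sQ}$. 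The principal piece of work is the initial commutation claim $\Ad_f \circ \tau_u = \tau_u \circ \Ad_f$; once that is in hand, the remainder of the argument uses only uniqueness and atomicity results already established earlier in Section \ref{theta_sec}.
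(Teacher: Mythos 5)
Your proof is correct, but it takes a genuinely different route from the paper's. Both arguments turn on the same key observation: since $u\in H$, the element $z^u$ commutes with every $g\in\hat{G}$, and therefore $\tau_u$ commutes with $\Ad_g$ and hence with every path-ordered product (so with every transition map of the scattering atlas). After that the approaches diverge. The paper argues directly at the level of broken lines: since $H_{\bb{R}}$ is parallel to every wall of $\f{D}$, projecting modulo $H_{\bb{R}}$ gives a bijection between broken lines with ends $(p,\sQ)$ and those with ends $(p+u,\sQ)$, under which the attached monomials are related by $\tau_u$; the identity $\vartheta_{p+u,\sQ}=\tau_u(\vartheta_{p,\sQ})$ then falls out of the defining sum \eqref{vartheta-dfn}. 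You instead package the commutation as an automorphism $T_u=\iota_\sQ^{-1}\circ\tau_u\circ\iota_\sQ$ of $A$, check that $T_u(\vartheta_p)$ is universally positive (via Theorem \ref{qPosLoc}), $(p+u)$-pointed, and atomic (by the standard decomposition argument pushed through the invertible $T_{-u}$), and then invoke Theorem \ref{qAtomic} to conclude it must be $\vartheta_{p+u}$. The paper's route is more elementary and self-contained: it uses nothing but the structure of walls and broken lines, and in particular does not rely on any positivity statement, so it would apply to theta functions of arbitrary consistent scattering diagrams over $\f{g}_{L_0^+,\omega}$, not merely those satisfying the positivity hypotheses of \S\ref{PosTheta}. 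Your route is shorter once the heavy machinery (positivity, atomicity) is in place, and it nicely illustrates the power of the atomic characterization, but it inherits the standing assumptions of \S\ref{PosTheta} that the paper's direct argument does not actually need.
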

\begin{proof}
Note that $H_{\bb{R}}$ is parallel to all walls of $\f{D}$, so we can project to $L_{\bb{R}}/H_{\bb{R}}$ without changing the set of walls crossed by a broken line.  Also, $u\in H$ implies that $gz^u=z^u g$ for all $g\in \hat{G}$, hence
\begin{align*}
    \Ad_g(\tau_uz^v)=\Ad_g(t^{-\omega(u,v)}z^uz^v)=t^{-\omega(u,v)}z^u\Ad_g(z^v) = \tau_u(\Ad_g(z^v)).
\end{align*}
It follows that the projections to $L_{\bb{R}}/H_{\bb{R}}$ of the broken lines with ends $(p,\sQ)$ are precisely the same as for the projections of the broken lines with ends $(p+u,\sQ)$, except that the monomials attached to the straight segments of the latter broken lines are obtained by applying $\tau_u$ to the corresponding monomials in the former set of broken lines.  The proposition then follows.
\end{proof}

\section{Quantum cluster algebras}
\label{QCA_sec}
\subsection{Seeds}\label{DinS}

We review the definitions of seeds and cluster algebras from the perspective of \cite{FG1}.  A (skew-symmetric) \textbf{seed} is a collection of data \begin{align}\label{S}
     S=\{N,I,E\coloneqq \{e_{i}\}_{i\in I},F,B\},
\end{align} where $N$ is a finitely generated free Abelian group, $I$ is a finite index-set, $E$ is a basis for $N$ indexed by $I$, $F$ is a subset of $I$, and $B(\cdot,\cdot)$ is a skew-symmetric $\bb{Q}$-valued bilinear form on $N$ such that $B(e_i,e_j)\in \bb{Z}$ whenever $i$ and $j$ are not both in $F$.   When $i\in F$, we say $e_i$ is a \textbf{frozen vector}.  Let $N_{\uf}$ denote the span of $\{e_i\}_{i\in I\setminus F}$ in $N$.  If the seed $S$ is not clear from the context, we may write the data with subscripts $S$ to clarify, e.g., $S=\{N_S,I_S,E_S=\{e_{S,i}\},F_S,B_S\}$.

Let $M= N^*\coloneqq \Hom_{\mathbb{Z}}(N,\mathbb{Z})$.  We define elements $e^*_i\in M$ by $e^*_i(e_j)=\delta_{ij}$, i.e. the $e^*_i$ form a dual basis to the $e_i$.  We define maps $B_1$ and $B_2$ from $N$ to $M$ by $B_1\colon n\mapsto B(n,\cdot)$ and $B_2\colon n\mapsto B(\cdot,n)$.  Given a seed $S$ and a skew-symmetric $\bb{Q}$-valued bilinear form $\Lambda$ on $M$, we say (following the quantization of cluster algebras from \cite{BZ}) that $(S,\Lambda)$ forms a \textbf{compatible pair} if\footnote{Our $B$ is actually what \cite{FG1} would call $\epsilon$, and this is the transpose of what is typically called $B$ in the Fomin-Zelevinsky \cite{FZ} perspective used by \cite{BZ}.}
\begin{align}\label{Lambda}
\begin{array}{c l}
\Lambda(\cdot,B_1(e_i)) = e_i 
 &\mbox{ for each $i\in I\setminus F$.}
\end{array}
\end{align}
Note that the existence of such a $\Lambda$ is equivalent to the condition that $B_1|_{N_{\uf}}$ is injective (called the ``Injectivity Assumption'' in \cite{GHKK}).  Also note that $\Lambda(u,v)\in \bb{Z}$ for any $u\in B_1(N_{\uf})$, $v\in M$. 

Given a seed $S$ as above, one associates another seed $S^{\prin}$ defined as follows:
\begin{itemize}[noitemsep]
\item $N_{S^{\prin}}\coloneqq N\oplus M$, often written as just $N^{\prin}$.
\item $I_{S^{\prin}}$ is the disjoint union of two copies of $I$.  We will call them $I_1$ and $I_2$ to distinguish between them.
\item $E_{S^{\prin}}\coloneqq \{(e_i,0)\colon i\in I_1\} \cup \{(0,e_i^*)\colon i \in I_2\}$.
\item $F_{S^{\prin}}\coloneqq F_1\cup I_2$, where $F_1$ is $F$ viewed as a subset of $I_1$.
\item $B_{S^{\prin}}((n_1,m_1), (n_2,m_2)) \coloneqq B(n_1,n_2) + m_2(n_1) - m_1(n_2)$.
\end{itemize}
We denote the analog of $B_1$ for $B_{S^{\prin}}$ by $B^{\prin}_1\colon N^{\prin}\rar M^{\prin}\coloneqq (N^{\prin})^* = M\oplus N$, identifying $N=N^{**}$.  I.e.,
\begin{align}\label{pi1prin}
    B^{\prin}_1\colon (n,m)\mapsto B_{S^{\prin}}((n,m),\cdot) = (B_1(n)-m,n).
\end{align}
Note that the form $B_{S^{\prin}}$ is unimodular (i.e., has determinant $1$) on $N_{S^{\prin}}$ by the argument given for $\omega^{\circ}$ in \S \ref{ComparisonTricks}.  Thus, $B^{\prin}_1$ is an isomorphism, and so one can always find a skew-symmetric form $\Lambda^{\prin}$ on $M^{\prin}$ such that $(S^{\prin},\Lambda^{\prin})$ forms a compatible pair.  For example, we may take $\Lambda^{\prin}$ to be the skew-symmetric bilinear form on $M^{\prin}$ determined by 
\begin{align}\label{LambdaPrin}
    \Lambda^{\prin}(B^{\prin}_1(a),B^{\prin}_1(b))=B_{S^{\prin}}(a,b).
\end{align}

Alternatively, if $(S,\Lambda)$ is a compatible pair, we can obtain a compatible $\Lambda^{\prin}$ for $S^{\prin}$ by taking $\Lambda^{\prin}=\rho^* \Lambda$ where $\rho$ is the map $M\oplus N\rar M$, $(m,n)\mapsto m$.  That is, we can choose $\Lambda^{\prin}$ to be given by
\begin{align}\label{Lambda_rho}
\Lambda^{\prin}((m_1,n_1),(m_2,n_2))\coloneqq \Lambda(m_1,m_2)
\end{align}
for all $(m_1,n_1),(m_2,n_2)\in M^{\prin}$.

\subsection{Construction of quantum cluster algebras}\label{qclusterdefs}
Given a compatible pair $(S,\Lambda)$, we define the quantum torus algebra $$\s{A}_q^S\coloneqq \kk^{\Lambda}_t[M]$$ as in \eqref{qtor}.  For each $i\in I$, we denote
\begin{align*}
A_{S,i}\coloneqq z^{e_{S,i}^*}\in \s{A}_q^S  
\end{align*}
where $\{e_{S,i}^*\}_{i\in I}$ is the basis for $M$ dual to $E_S$.   
Let 
\begin{align}\label{CS+}
    C_S^+\coloneqq \{m\in M_{\bb{R}}\colon \langle e_i,m\rangle \geq 0 ~ \forall  i\in I\setminus F\}.
\end{align}   Let $\mr{\s{A}}_q^S$ denote the skew-field of fractions of $\s{A}_q^S$ --- cf. \cite[\S 11]{BZ} for a brief review of skew-fields of fractions including a proof of the fact that quantum torus algebras satisfy the (left) Ore condition and thus have well-defined (right) skew-fields of fractions, into which they embed.

Similarly, we obtain from $S$ (without needing the existence of a compatible $\Lambda$) another quantum torus algebra $$\s{X}_q^S\coloneqq \kk^{B}_t[N].$$  
We write $\mr{\s{X}}_q^S$ for the skew-field of fractions of $\s{X}_q^S$.

Given a seed $S=(N,I,E=\{e_i\}_{i\in I},F,B)$, the {\bf mutation} of $S$ with respect to $j\in I\setminus F$ is the seed $\mu_j(S)\coloneqq (N,I,E'=\{e_i'\}_{i\in I},F,B)$, where the vectors $e_i'$ are defined by
\begin{align}\label{eprime}
    e'_{i} \coloneqq  \mu_{j}(e_{i})\coloneqq \left\{ \begin{array}{lr}
  e_{i}+\max(0,B(e_i,e_j))e_{j} &\mbox{if $i\neq j$~} \\
    -e_{j} &\mbox{if $i=j$.} 
	\end{array} \right.
\end{align}

We can use the new seed to define another copy of the $\s{X}$ version of the quantum torus algebra, $\s{X}_q^{\mu_j(S)}=\kk_t^B[N_{\mu_j(S)}]\cong\s{X}_q^S$ --- this last isomorphism is a by-product of the fact that the construction of the quantum torus algebra is independent of $E_S$. For the $\s{A}$ version, recall that $(S,\Lambda)$ being a compatible pair implies that 
\[
B_1\colon N_{\uf,\bb{R}}\rar B_1(N_{\uf,\bb{R}})\subset M_{\bb{R}}
\]
is an isomorphism onto its image, and by \eqref{Lambda}, $m\mapsto \Lambda(\cdot,m)$ is the inverse isomorphism.  Since mutation preserves $N_{\uf}$ and $B$, it follows that for each $j\in I\setminus F$, $(\mu_j(S),\Lambda)$ is again a compatible pair.   It therefore makes sense to define $\s{A}_q^{\mu_j(S)}=\kk_t^{\Lambda}[M]$, using the same $\Lambda$ as in the definition of $\s{A}_q^S$.

In order to consider formal versions of the quantum cluster algebras, we define certain completions of $\kk_t^B[N]$ and $\kk_t^{\Lambda}[M]$.  Taking $\sigma_{S,\s{X}}$ to be the cone generated by $\{e_{S,i}\colon i\in I\setminus F\}$, we consider the completion $$\wh{\s{X}}_q^{S}\coloneqq \kk_{\sigma_{S,\s{X}},t}^B\llb N\rrb.$$  
We similarly define the formal completion $$\wh{\s{A}}_q^S\coloneqq \kk_{\sigma_{S,\s{A}},t}^{\Lambda}\llb M\rrb,$$ where $\sigma_{S,\s{A}}$ is the cone spanned by $\{B_1(e_{S,i})\colon i\in I\setminus F\}$ (the existence of $\Lambda$ satisfying \eqref{Lambda} ensures that the vectors in this set are linearly independent, thus determine a strongly convex cone).  Similarly in the principal coefficients case, we define $\wh{\s{A}}_q^{\prin,S}\coloneqq \wh{\s{A}}_q^{S^{\prin}}$.

For $j\in I\setminus F$, the quantum dilogarithm $\Psi_t(z^{e_j})$ is an element of the completion $\wh{\s{X}}^S_q=\kk^B_{\sigma_{S,\s{X}},t}\llb N\rrb$.  One may easily verify, e.g., using \eqref{tAdBinomial}, that for $p\in N$ satisfying $B(e_j,p)\leq 0$ (with $e_j=e_{S,j}$), we have
\begin{align}\label{Xmut}
\mu_{S,j}^{\s{X}}(z^p)\coloneqq \Ad_{\Psi_t(z^{e_{j}})^{-1}}(z^{p})=\sum_{n=0}^{B(p,e_{j})}\binom{B(p,e_{j})}{n}_tz^{p+ne_{j}}.
\end{align}
It follows that the automorphism $\Ad_{\Psi_t(z^{e_j})^{-1}}\colon  \wh{\s{X}}^S_q\risom \wh{\s{X}}^S_q$ induces a well-defined algebra isomorphism 
\[
\mu_{S,j}^{\s{X}}\colon \mr{\s{X}}_q^S  \risom \mr{\s{X}}^{\mu_j(S)}_q,
\]
with inverse defined similarly and induced by  $\Ad_{\Psi_t(z^{e_j})}$. 

We define 
\[
\mu_{S,j}^{\s{A}}\colon\mr{\s{A}}_q^S \risom \mr{\s{A}}^{\mu_j(S)}_q
\]
similarly using the algebra automorphism $\Ad_{\Psi_t(z^{B_1(e_j)})^{-1}}$ of $\wh{\s{A}}_q^S=\kk_{\sigma_{S,\s{A}},t}^{\Lambda}\llb M\rrb$.  
By \eqref{tAdBinomial} again, we deduce that
\begin{equation}\label{clust_transf}
\mu_{S,j}^{\s{A}}(A_{S,i})=\begin{cases}A_{S,i}& \textrm{if }i\neq j\\
A_{S,i}+z^{e_i^*+B_1(e_i)}&\textrm{if }i=j.\end{cases}
\end{equation}
One refers to $\mu_{S,j}^{\s{A}}$ and $\mu_{S,j}^{\s{X}}$ as the cluster $\s{A}$-mutations and cluster $\s{X}$-mutations, respectively.

Now consider an $s$-tuple $\jj=(j_1,\ldots,j_s)\subset (I\setminus F)^s$.  For $k=1,\ldots,s,s+1$ we let $\jj_k$ denote the tuple $(j_1,\ldots,j_{k-1})$.  The tuple $\jj$ determines a new seed 
\begin{align}\label{Sjj}
S_{\jj}\coloneqq \mu_{j_s}\circ \cdots \circ \mu_{j_1}(S),    
\end{align}
as well as an isomorphism $$\mu_{S,\jj}^{\s{A}}\coloneqq \mu_{{S_{\jj_s},}j_s}^{\s{A}}\circ \cdots \circ \mu_{{S_{\jj_1},}j_1}^{\s{A}}\colon \mr{\s{A}}_q^S \risom \mr{\s{A}}^{\mu_{\jj}(S)}_q,$$ 
and similarly, an isomorphism $\mu_{{S},\jj}^{\s{X}}\colon \mr{\s{X}}_q^S \risom \mr{\s{X}}^{\mu_{\jj}(S)}_q$.  If $\jj$ is empty, these are the identity isomorphisms.  In particular, $S_{\jj_1}=S$, and $S_{\jj_{s+1}}=S_{\jj}$.  Define
\begin{align*}
    \s{A}^{\jj}_q\coloneqq (\mu_{{S},\jj}^{\s{A}})^{-1}(\s{A}_q^{S_{\jj}}) \subset \mr{\s{A}}_q^S,
\end{align*}
and similarly,
\begin{align*}
    \s{X}^{\jj}_q\coloneqq (\mu_{{S},\jj}^{\s{X}})^{-1}(\s{X}_q^{S_{\jj}}) \subset \mr{\s{X}}_q^S.
\end{align*}
For each $i\in I$, we obtain an element $A_{\jj,i}\coloneqq (\mu_{{S},\jj}^{\s{A}})^{-1}(A_{S_{\jj},i})\in \s{A}^{\jj}_q$.  The elements $$\bigcup_{\jj} \{A_{\jj,i}\colon i\in I\}\cup \{A_{\jj,i}^{-1}\colon i\in F\}\subset \s{A}^{\jj}_q$$ are called the \textbf{quantum cluster variables}.  By a \textbf{quantum cluster monomial}, we mean an element of the form $(\mu_{{S},\jj}^{\s{A}})^{-1}(z^m)\in \s{A}^{\jj}_q$ for $m\in C_{S_{\jj}}^+$.

The \textbf{ordinary quantum cluster algebra} $\s{A}_q^{\ord}$ is defined to be the sub $\kt$-algebra of $\mr{\s{A}}_q^S$ generated by the quantum cluster variables.  The \textbf{upper quantum cluster algebra $\s{A}_q^{\up}$} is defined by
\begin{align*}
    \s{A}_q^{\up} \coloneqq  \bigcap_{\jj} \s{A}^{\jj}_q \subset \mr{\s{A}}_q^S
\end{align*}
where the intersection is over all finite tuples of elements of $I \setminus F$.  The algebras $\s{A}^{\jj}_q$ are called \textbf{clusters} of $\s{A}_q^{\up}$ (note that two different tuples $\jj$ may yield the same cluster). 
The ``quantum Laurent phenomenon'' \cite[Cor. 5.2]{BZ} says that the quantum cluster variables can be expressed as quantum Laurent polynomials in every cluster, i.e.,
\begin{align}\label{qLaurentPhen}
    \s{A}_q^{\ord}\subset \s{A}_q^{\up}.
\end{align}

The \textbf{upper quantum cluster $\s{X}$-algebra $\s{X}^{\up}_q$} is similarly defined by
\begin{align*}
    \s{X}^{\up}_q \coloneqq  \bigcap_{\jj} \s{X}^{\jj}_q \subset  \mr{\s{X}}_q^S.
\end{align*}
We say an element $f\in \s{X}^{\up}_q$ is a \textbf{(quantum) global monomial} if there is some $\jj$ such that $\mu_{{S},\jj}^{\s{X}}(f)$ is a monomial $z^{n}\in \s{X}_q^{S_{\jj}}$ for some $n\in N_{S_{\jj}}$.   Then the \textbf{ordinary quantum cluster $\s{X}$-algebra $\s{X}^{\ord}_q$} is defined to be the subalgebra of $\s{X}_q^{\up}$ generated by the quantum global monomials.

We may define global monomials in $\s{A}_q^{\up}$ analogously to the definition in $\s{X}_q^{\up}$.  In light of \eqref{qLaurentPhen}, one sees that the quantum cluster monomials are global monomials.  On the other hand, let $m\in M\setminus C^+_{S_{\mathbf{j}}}$.  Then in $\s{A}_q^{\mu_{\jj}(S)}$, we have $z^m=t^{a}z^{m'}A_{S_{\jj},j}^{-k}$ for some $j\in I\setminus F$, $m'$ in the span of $\{e_{S_{\jj},i}^*\}_{i\in I\setminus \{j\}}$,  $a\in\frac{1}{D}\mathbb{Z}$, and $k\in \bb{Z}_{\geq 1}$.  By \eqref{clust_transf} it follows that $\mu^{\mathcal{A}}_{S_{\jj},j}(z^m)$ is a genuine rational function, and not a Laurent polynomial, and so $z^m$ is not a global monomial.  In conclusion, the quantum cluster monomials are precisely the global monomials in $\s{A}_q^{\up}$.

Given a seed $S$ and a compatible $\Lambda^{\prin}$ for the corresponding seed with principal coefficients $S^{\prin}$, we write $\s{A}_q^{\prin,\up}$ to denote the upper quantum cluster algebra with principal coefficients, i.e., the upper quantum cluster algebra associated to the compatible pair $(S^{\prin},\Lambda^{\prin})$.  Similarly, $\s{A}_q^{\prin,\ord}$ denotes the ordinary quantum cluster algebra with principal coefficients.

The following lemma relates $\s{A}_q^{\prin,\up}$ to $\s{A}_q^{\up}$ and $\s{X}^{\up}_q$.  

\begin{lem}\label{xirho}
Let $S$ be a seed, and let $(S^{\prin},\Lambda^{\prin})$ be a compatible pair.   If $\Lambda^{\prin}|_{B^{\prin}_{1}((N,0))}$ is given as in \eqref{LambdaPrin}, then the map $\xi\colon N\rar M\oplus N$, $n\mapsto (B_1(n),n)$ induces injective homomorphisms $\xi\colon {\s{X}}^{S}_q\hookrightarrow {\s{A}}_q^{\prin,S}$, extending to $\xi\colon \mr{\s{X}}^{S}_q\hookrightarrow \mr{\s{A}}_q^{\prin,S}$ and $\xi\colon \wh{\s{X}}^{S}_q\hookrightarrow \wh{\s{A}}_q^{\prin,S}$, which intertwine all sequences of mutations, thus also inducing an injection $\xi\colon \s{X}^{\up}_q\hookrightarrow \s{A}_q^{\prin,\up}$. 

On the other hand, suppose that $(S,\Lambda)$ is a compatible pair and that $\Lambda^{\prin}$ is given as in \eqref{Lambda_rho}.  Then the map $\rho\colon M\oplus N\rar M$, $(m,n)\mapsto m$ induces surjective homomorphisms $\rho\colon {\s{A}}_q^{\prin,S}\rar {\s{A}}_q^{S}$ and $\rho\colon \wh{\s{A}}_q^{\prin,S}\rar \wh{\s{A}}_q^{S}$ intertwining sequences of mutations, thus also inducing $\rho\colon \s{A}_q^{\prin,\up}\rar \s{A}_q^{\up}$.
\end{lem}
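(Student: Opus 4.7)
The plan is to verify each claim (algebra homomorphism, extension to the relevant completions/skew-fields, compatibility with mutations, injectivity/surjectivity) on generators, using the explicit formulas \eqref{LambdaPrin}, \eqref{Lambda_rho}, \eqref{pi1prin}, \eqref{Xmut}, and \eqref{clust_transf}. Both parts proceed by essentially the same scheme: check that the lattice map $\xi$ (respectively $\rho$) pulls back (respectively is compatible with) the skew-form used to define the quantum torus multiplication, observe that the corresponding dilogarithm generating the $\mathcal{A}^{\prin}$-mutation pulls back to the one generating the $\mathcal{X}$- (respectively $\mathcal{A}$-) mutation, and conclude.

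For the first assertion, the key observation is that $\xi(n) = (B_1(n),n) = B^{\prin}_1((n,0))$ by \eqref{pi1prin}. Then using the definition \eqref{LambdaPrin},
\[
\Lambda^{\prin}(\xi(n_1),\xi(n_2)) = \Lambda^{\prin}\!\left(B^{\prin}_1((n_1,0)),\, B^{\prin}_1((n_2,0))\right) = B_{S^{\prin}}((n_1,0),(n_2,0)) = B(n_1,n_2),
\]
so $z^a\mapsto z^{\xi(a)}$ gives an algebra homomorphism $\mathcal{X}_q^S \hookrightarrow \mathcal{A}_q^{\prin,S}$. Injectivity is immediate since $\xi$ is a split injection of lattices (project to the second summand). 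Because $\xi$ maps the cone $\sigma_{S,\mathcal{X}} = \langle e_i : i\in I\setminus F\rangle_{\geq 0}$ onto $\langle \xi(e_i) : i\in I\setminus F\rangle_{\geq 0} \subset \sigma_{S^{\prin},\mathcal{A}}$, the map extends to a $\kt$-algebra homomorphism $\wh{\mathcal{X}}_q^S \hookrightarrow \wh{\mathcal{A}}_q^{\prin,S}$ on formal completions; the extension to skew-fields of fractions follows by functoriality of the Ore localization, using that $\xi$ is an injective homomorphism of quantum tori (which are Ore domains). For intertwining mutations, note that $j\in I\setminus F$ corresponds to an unfrozen index of $S^{\prin}$ in $I_1$, and $B^{\prin}_1(e_{S^{\prin},j}) = B^{\prin}_1((e_j,0)) = \xi(e_j)$, so the dilogarithm $\Psi_t(z^{B^{\prin}_1(e_{S^{\prin},j})})^{-1}$ generating $\mu^{\mathcal{A}}_{S^{\prin},j}$ is precisely $\xi$ applied to the dilogarithm $\Psi_t(z^{e_j})^{-1}$ generating $\mu^{\mathcal{X}}_{S,j}$. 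Since $\xi$ is an algebra homomorphism, it commutes with the corresponding adjoint actions, giving $\xi\circ \mu^{\mathcal{X}}_{S,j} = \mu^{\mathcal{A}}_{S^{\prin},j}\circ \xi$ on the appropriate skew-fields; iterating yields compatibility with arbitrary mutation sequences, hence an injection $\xi\colon \mathcal{X}_q^{\up}\hookrightarrow \mathcal{A}_q^{\prin,\up}$.

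For the second assertion, the analogous calculation using \eqref{Lambda_rho} gives
\[
\Lambda^{\prin}((m_1,n_1),(m_2,n_2)) = \Lambda(m_1,m_2) = \Lambda(\rho(m_1,n_1),\rho(m_2,n_2)),
\]
so $z^{(m,n)}\mapsto z^{\rho(m,n)} = z^m$ is an algebra homomorphism $\mathcal{A}_q^{\prin,S}\twoheadrightarrow \mathcal{A}_q^S$, manifestly surjective. The cone $\sigma_{S^{\prin},\mathcal{A}}$ is spanned by $B^{\prin}_1((e_i,0)) = (B_1(e_i),e_i)$ for $i\in I\setminus F$, whose images under $\rho$ span $\sigma_{S,\mathcal{A}}$, so we get $\rho\colon \wh{\mathcal{A}}_q^{\prin,S}\rightarrow \wh{\mathcal{A}}_q^S$. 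Finally, $\rho(B^{\prin}_1((e_j,0))) = B_1(e_j)$, so $\rho$ sends the mutation dilogarithm for $\mathcal{A}_q^{\prin,S}$ at index $j\in I\setminus F$ to that for $\mathcal{A}_q^S$, giving $\rho\circ \mu^{\mathcal{A}}_{S^{\prin},j} = \mu^{\mathcal{A}}_{S,j}\circ \rho$, and thus a map on upper cluster algebras.

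The verifications are all essentially formal once one unwinds the definitions; the only subtlety (and the main thing to be careful about) is in checking that the formula for $\mu^{\mathcal{A}}_{S^{\prin},j}$ really uses the dilogarithm $\Psi_t(z^{B^{\prin}_1((e_j,0))})^{-1}$ and not some other variant, and in recording that the Ore condition plus injectivity allow one to extend $\xi$ and $\rho$ to skew-fields of fractions compatibly with the mutation isomorphisms (which a priori are defined only on those skew-fields rather than on the quantum tori themselves).
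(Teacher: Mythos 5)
Your proof is correct and follows essentially the same route as the paper's: verify compatibility of the lattice maps with the skew-forms on generators, observe $\xi(e_i)=B_1^{\prin}(e_i,0)$ (resp. $\rho(B_1^{\prin}(e_i,0))=B_1(e_i)$) to see the mutation dilogarithms correspond, then propagate.

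One small inaccuracy in your closing remark: you suggest that both $\xi$ and $\rho$ extend to the skew-fields of fractions, but $\rho$ is not injective, so it does not extend to skew-fields — and indeed the lemma does not claim it does, only that it extends to completions and upper cluster algebras. The intertwining for $\rho$ is most cleanly checked at the level of the completed quantum tori, where the mutation adjoint actions live, and then inherited on the subalgebras $\s{A}_q^{\prin,\up}$ and $\s{A}_q^{\up}$; this is what the paper does and what your argument actually uses, even though the paragraph as phrased overstates what $\rho$ does.
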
 
\begin{proof}
First we must check that $\xi$ and $\rho$ induce well-defined maps of the quantum torus algebras.  For $\xi$, this means checking that 
\begin{align}\label{LambdaPrinB}
    \Lambda^{\prin}(\xi(n_1),\xi(n_2))=B(n_1,n_2).
\end{align}
By the definition of $\xi$, the left-hand side of \eqref{LambdaPrinB} is $\Lambda^{\prin}((B_1(n_1),n_1),(B_1(n_2),n_2))$, which by \eqref{pi1prin} equals $\Lambda^{\prin}(B^{\prin}_1(n_1,0),B^{\prin}_1(n_2,0))$.  The equality \eqref{LambdaPrinB} is now equivalent to the condition that $\Lambda^{\prin}|_{B_{1,\prin}((N,0))}$ is given as in \eqref{LambdaPrin}.  It is clear that $\xi$ is injective and thus extends to the skew-fields of fractions.  Since $\xi(e_i)=B_1^{\prin}(e_i)$, we see that $\xi(\sigma_{S,\s{X}})= \sigma_{S^{\prin},\s{A}}$, and so $\xi$ extends to the completions as well.

Similarly, the condition for $\rho$ to induce a well-defined map of quantum torus algebras is just that $\Lambda^{\prin}((m_1,n_1),(m_2,n_2))=\Lambda(\rho(m_1,n_1),\rho(m_2,n_2))$, and this is the condition we imposed.  Since $\rho(B_1^{\prin}(e_i))=B_1(e_i)$, we have $\rho(\sigma_{S^{\prin},\s{A}})=\sigma_{S,\s{A}}$, and so $\rho$ extends to $\wh{\s{A}}_q^{\prin,S}\rar \wh{\s{A}}_q^S$.

It now suffices to check that $\xi$ and $\rho$ commute with mutations.  Since the mutations are automorphisms of the corresponding formal quantum cluster algebras, it suffices to check that $\xi$ and $\rho$ commute with the mutations associated to the initial seed (because the same formulas define $\xi$ and $\rho$ in the coordinate system associated to any other seed).  For $\xi$, using \eqref{pi1prin} we have that $\xi(e_i)=(B_1(e_i),e_i)=\pi^{\prin}_1(e_i,0)$, so $\xi$ maps $\Psi_t(z^{e_i})$ to $\Psi_t(z^{\pi^{\prin}_1(e_i)})$, and the commutativity of $\xi$ with the mutations follows.  Similarly, for $\rho$, we have $\rho(\pi^{\prin}_1(e_i)) = \rho(B_1(e_i),e_i) =  B_1(e_i)$, so $\rho$ maps $\Psi_t(z^{\pi^{\prin}_1(e_i)})$ to $\Psi_t(z^{B_1(e_i)})$, as desired. 
\end{proof}

\begin{lem}\label{pi1}
Let $(S,\Lambda)$ be a compatible pair, and suppose $\Lambda(\cdot,B_1(e_i))=e_i$ for all $i\in I$, not just $i\in I\setminus F$.  Then $B_1\colon N\rar M$ induces homomorphisms $B_1\colon \s{X}_q^{\up} \rar \s{A}_q^{\up}$ and $B_1\colon \wh{\s{X}}_q^S\rar \wh{\s{A}}_q^{S}$. 
\end{lem}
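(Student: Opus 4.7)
The plan is to follow the template of Lemma \ref{xirho}, adapted to the lattice map $B_1\colon N\to M$. First, I would verify that the rule $z^n\mapsto z^{B_1(n)}$ defines a well-defined algebra homomorphism $B_1\colon \s{X}_q^S\to\s{A}_q^S$. The required compatibility is $\Lambda(B_1(n_1),B_1(n_2))=B(n_1,n_2)$ for all $n_1,n_2\in N$. Testing against basis vectors and invoking the strengthened hypothesis $\Lambda(\cdot,B_1(e_i))=e_i$ (now valid for \emph{all} $i\in I$, including frozen indices), we get $\Lambda(B_1(e_j),B_1(e_i))=\langle e_i,B_1(e_j)\rangle=B(e_j,e_i)$, and bilinearity yields the claim for arbitrary $n_1,n_2$. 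Since $\{B_1(e_i)\colon i\in I\setminus F\}$ are precisely the generators of $\sigma_{S,\s{A}}$, we have $B_1(\sigma_{S,\s{X}})\subset\sigma_{S,\s{A}}$, so $B_1$ extends continuously to $B_1\colon\wh{\s{X}}_q^S\to\wh{\s{A}}_q^S$, settling the second claim of the lemma.

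Next, I would verify that $B_1$ intertwines the cluster mutations. Because $B_1$ is a continuous algebra homomorphism of the completions, it sends $\Psi_t(z^{e_j})$ to $\Psi_t(z^{B_1(e_j)})$. Since $\mu_{S,j}^{\s{X}}=\Ad_{\Psi_t(z^{e_j})^{-1}}$ and $\mu_{S,j}^{\s{A}}=\Ad_{\Psi_t(z^{B_1(e_j)})^{-1}}$, the identity $\Ad_{B_1(g)}\circ B_1=B_1\circ\Ad_g$ on completions yields
\[
B_1\circ\mu_{S,j}^{\s{X}}=\mu_{S,j}^{\s{A}}\circ B_1.
\]
To iterate this over sequences $\jj$, I must check that the strengthened compatibility is preserved under mutation; this is a direct calculation from \eqref{eprime}, using linearity of $B_1$ and of $\Lambda(\cdot,-)$. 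Applying Step~1 to each seed $S_{\jj}$ then produces the analogous homomorphisms $B_1\colon\s{X}_q^{S_{\jj}}\to\s{A}_q^{S_{\jj}}$, and combining this with the single-step intertwining gives $B_1\circ\mu_{S,\jj}^{\s{X}}=\mu_{S,\jj}^{\s{A}}\circ B_1$ for every $\jj$.

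Finally, given $f\in\s{X}_q^{\up}\subset\s{X}_q^S$, the element $B_1(f)\in\s{A}_q^S$ is defined. For each $\jj$, $\mu_{S,\jj}^{\s{X}}(f)\in\s{X}_q^{S_{\jj}}$ by definition of $\s{X}_q^{\up}$, and hence $B_1(\mu_{S,\jj}^{\s{X}}(f))\in\s{A}_q^{S_{\jj}}$. By the mutation intertwining in the completion, this element equals $\mu_{S,\jj}^{\s{A}}(B_1(f))$, so $B_1(f)\in\s{A}_q^{\jj}$ for every $\jj$, giving $B_1(f)\in\s{A}_q^{\up}$. The main technical subtlety is that $B_1\colon N\to M$ need not be injective (if $B$ is degenerate), so $B_1$ does not obviously extend to the skew-fields of fractions $\mr{\s{X}}_q^S\to\mr{\s{A}}_q^S$; one must instead pass the mutation intertwining through the formal completions $\wh{\s{X}}_q^S$ and $\wh{\s{A}}_q^S$ (where the adjoint actions defining mutation genuinely live) in order to identify $B_1(\mu_{S,\jj}^{\s{X}}(f))$ with $\mu_{S,\jj}^{\s{A}}(B_1(f))$.
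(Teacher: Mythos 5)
Your proof is correct and follows essentially the same approach as the paper: verify $\Lambda(B_1(u),B_1(v))=B(u,v)$ to get a well-defined map of quantum torus algebras, use $B_1(\sigma_{S,\s{X}})\subset\sigma_{S,\s{A}}$ to extend to completions, and intertwine mutations via $B_1(\Psi_t(z^{e_j}))=\Psi_t(z^{B_1(e_j)})$. Your closing remark that $B_1$ may fail to be injective---so it does not extend to the skew-fields of fractions and the mutation intertwining must instead be checked in the formal completions---is a genuine subtlety that the paper's terse ``the claim for the upper quantum cluster algebras follows'' glosses over; the step where you re-verify the strengthened compatibility in mutated seeds is harmless but redundant, since the compatibility $\Lambda(B_1\cdot,B_1\cdot)=B$ only involves $\Lambda$ and $B$, neither of which changes under mutation, so the algebra map $B_1$ is seed-independent.
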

\begin{proof}
The condition is equivalent to requiring that $\Lambda(B_1(u),B_1(v))=B(u,v)$ for all $u,v\in N$, and it follows that $B_1$ induces a well-defined map of quantum torus algebras.  This extends to the formal quantum tori since $B_1(\sigma_{S,\s{X}})=\sigma_{S,\s{A}}$.   Compatibility with mutations now follows from the definitions of $\mu_j^{\s{X}}$ and $\mu_j^{\s{A}}$ in terms of conjugation by $\Psi_t(z^{e_j})$ and $\Psi_t(z^{B_1(e_j)})$, respectively, and the observation that $B_1(\Psi_t(z^{e_j})) = \Psi_t(z^{B_1(e_j)})$.  The claim for the upper quantum cluster algebras follows.
\end{proof}

\subsection{The scattering diagrams associated to a seed}\label{Initial}

Given a compatible pair $(S,\Lambda)$, we define a scattering diagram $\f{D}^{\s{A}_q}$ as follows.  Take $\f{g}\coloneqq \f{g}_{M_0^+,\Lambda}$ for $M_0\coloneqq B_1(N_{\uf})\subset M$ and $M_0^+\coloneqq (M_0\cap \sigma_{S,\s{A}})\setminus \{0\}$. 
We then take the following as our initial scattering diagram in $M_{\bb{R}}$ over $\f{g}$:
\begin{align*}
    \f{D}^{\s{A}_q}_{\In}\coloneqq \{(e_i^{\perp},\Psi_t(z^{B_1(e_i)}))\colon i\in I\setminus F\}.
\end{align*}
Note that we indeed have $B_1(e_i)^{\Lambda\perp}=e_i^{\perp}\subset M_{\bb{R}}$ by \eqref{Lambda}.  We then take $\f{D}^{\s{A}_q}\coloneqq \scat(\f{D}^{\s{A}_q}_{\In})$ as in Theorem \ref{KSGS}.

We define $\f{D}_{\In}^{\s{A}_q^{\prin}}$ and $\f{D}^{\s{A}_q^{\prin}}$ in the same way as $\f{D}_{\In}^{\s{A}_q}$ and $\f{D}^{\s{A}_q}$ above, but using a compatible pair $(S^{\prin},\Lambda^{\prin})$ in place of $(S,\Lambda)$.

Similarly, given a seed $S$ and taking $N^+\coloneqq (N\cap \sigma_{S,\s{X}})\setminus \{0\}$, we construct the $\s{X}$-scattering diagram in $N_{\bb{R}}$ over $\f{g}_{N^+,B}$ as follows:
\begin{align*}
    \f{D}^{\s{X}_q}_{\In}\coloneqq \{(B_2(e_i)^{\perp},\Psi_t(z^{e_i}))\colon i\in I\setminus F, ~B_2(e_i)\neq 0\} \quad \mbox{and} \quad \f{D}^{\s{X}_q}\coloneqq  \scat(\f{D}^{\s{X}_q}_{\In}).
\end{align*}
Note that we indeed have $B_2(e_i)^{\perp} = e_i^{B\perp}$.  Also, the condition that $B_2(e_i)\neq 0$ ensures that $e_i\notin \ker(B)$, as required for our definition of a wall.  We can always treat $e_i\in \ker(B)$ as a frozen vector without changing the construction of the $\s{X}$-spaces or $\f{D}^{\s{X}_q}_{\In}$, so for simplicity, one may assume that no unfrozen vectors in $\ker(B)$ exist.   We make the following observations:

\begin{lem}\label{cluster-scat-pos}
All hypotheses (hence conclusions) from Theorem \ref{PosScat} apply to the scattering diagrams $\f{D}^{\s{A}_q}$, $\f{D}^{\s{A}_q^{\prin}}$, and $\f{D}^{\s{X}_q}$.  Similarly, all results of \S \ref{PosTheta} (except possibly Theorem \ref{AcyclicThetas}) apply to the theta functions constructed from these scattering diagrams.
\end{lem}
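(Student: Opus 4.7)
The proof is essentially a matter of unpacking the definitions of $\f{D}^{\s{A}_q}_{\In}$, $\f{D}^{\s{A}_q^{\prin}}_{\In}$, and $\f{D}^{\s{X}_q}_{\In}$ and checking that each fits the template \eqref{DIn} required by Theorem \ref{PosScat} with the polynomials $p_i(t) \equiv 1$. The key identity to invoke is \eqref{EPsi}, which says $\Psi_t(z^{v}) = \EE(-z^{v})$, so every initial wall function has the form $\EE(-1\cdot z^{v_i})$. The constant polynomial $1\in \mathbb{Z}_{\geq 0}[t^{\pm 1}]$ is simultaneously positive, bar-invariant, pre-Lefschetz (it equals $\pl_0(t)$), and of Lefschetz type (it equals $[1]_t$), so every hypothesis variant of Theorem \ref{PosScat} will be verified at once.

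For $\f{D}^{\s{A}_q}_{\In}$, the walls are indexed by $i\in I\setminus F$, with $v_i = B_1(e_i)$. The hypothesis $(S,\Lambda)$ being a compatible pair, via \eqref{Lambda}, gives $B_1(e_i)^{\Lambda\perp} = e_i^{\perp}$, matching the required form $v_i^{\omega\perp}$. Moreover the injectivity of $B_1|_{N_{\uf}}$ ensures $B_1(e_i)\neq 0$, so $v_i \in M_0^+$ (and these vectors are a generating set for $\sigma_{S,\s{A}}$ by definition, hence linearly independent). The case of $\f{D}^{\s{A}_q^{\prin}}_{\In}$ is identical with $(S^{\prin},\Lambda^{\prin})$ replacing $(S,\Lambda)$; here $B^{\prin}_1$ is even an isomorphism, so linear independence is automatic. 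For $\f{D}^{\s{X}_q}_{\In}$, one takes $v_i = e_i$ for $i\in I\setminus F$ with $B_2(e_i)\neq 0$; the identity $e_i^{B\perp} = B_2(e_i)^{\perp}$ (immediate from the skew-symmetry of $B$) matches the required form, while the condition $B_2(e_i)\neq 0$ is precisely what guarantees $e_i\notin \ker(\omega_1)$ as needed in the definition of a wall. Linear independence here follows because the $e_i$'s are part of the basis $E$ of $N$.

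With the hypotheses of Theorem \ref{PosScat} verified in each case, the positivity, bar-invariance, and pL conclusions for the corresponding consistent scattering diagrams follow immediately. The theta-function results of \S \ref{PosTheta} (Theorems \ref{qPosLoc}, \ref{qPosStrong}, Propositions \ref{bar-pL}, \ref{ParitProp}) are stated precisely under the assumption that $\f{D}$ representing $\scat(\f{D}_{\In})$ has been produced in the form given by Theorem \ref{PosScat}, so they transfer verbatim; the parity statement of Proposition \ref{ParitProp} applies because all $p_i(t) = 1$ are even and the relevant $v_i$'s have been shown to be linearly independent. Theorem \ref{AcyclicThetas} is excluded simply because it additionally requires the matrix $(\omega(v_i,v_j))_{i,j}$ to be the signed adjacency matrix of an acyclic quiver, a hypothesis that generally fails for these three scattering diagrams and that will only be imposed later in the acyclic-case arguments of \S \ref{MainProof}. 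There is no real obstacle in the argument; the entire content is bookkeeping, and the only point requiring a moment of care is checking that the support hyperplanes $e_i^{\perp}$ (resp.\ $B_2(e_i)^\perp$) appearing in the initial diagrams genuinely agree with $v_i^{\omega\perp}$ under the ambient skew form used on $M$ or $N$.
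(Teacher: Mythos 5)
Your proof is correct and spells out precisely what the paper leaves implicit, since the paper states this lemma without a proof: the initial wall functions $\Psi_t(z^{v_i})$ equal $\EE(-z^{v_i})$ by \eqref{EPsi}, which matches \eqref{DIn} with $p_i(t)=1$, a polynomial that is simultaneously positive, bar-invariant, pL, and Lefschetz, and the supports and directions agree with what Theorem \ref{PosScat} requires via \eqref{Lambda} and skew-symmetry of $B$. One tiny point: the parenthetical ``these vectors are a generating set for $\sigma_{S,\s{A}}$ by definition, hence linearly independent'' has the logic reversed --- generating a cone does not imply independence; rather, injectivity of $B_1|_{N_{\uf}}$ (equivalently, the existence of a compatible $\Lambda$) applied to the basis $\{e_i\}_{i\in I\setminus F}$ of $N_{\uf}$ gives linear independence of the $B_1(e_i)$, which is what you want for Proposition \ref{ParitProp}.
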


For the rest of this section, we assume that all consistent scattering diagrams are in one of the forms described in the conclusions of Theorem \ref{PosScat}.

\begin{rmk}[Non skew-symmetric quantum cluster algebras]\label{nonskew}
Throughout this article, we assume that all seeds are skew-symmetric, i.e., that the bilinear form $B$ is skew-symmetric.  More generally though, $B$ might be merely ``skew-symmetrizable,'' i.e., of the form $B(e_i,e_j)=d_j\beta(e_i,e_j)$ for some skew-symmetric $\bb{Q}$-valued form $\beta$ and some numbers $d_j\in \bb{Q}_{> 0}$ such that $B(e_i,e_j)\in \bb{Z}$ whenever $i$ and $j$ are not both in $F$.  Then following \cite{BZ} for the $\s{A}$-side and \cite{FG1} for the $\s{X}$-side, the constructions of the quantum cluster algebras change in the following ways.  The maps $B_1$ and $B_2$ are defined in the same way using $B$, but the quantum torus algebra $\s{X}_q^S$ is defined using $\beta$, i.e., $\s{X}_q^S\coloneqq \kk_t^{\beta}[N]$. The compatibility condition \eqref{Lambda} is changed to $\Lambda(\cdot,B_1(e_i))=\frac{1}{d_i}e_i$.  The mutations $\mu_j^{\s{A}}$ and $\mu_j^{\s{X}}$ are defined using 
$\Ad_{\Psi_{t^{1/d_i}}(z^{B_1(e_i)})^{-1}}$ and $\Ad_{\Psi_{t^{1/d_i}}(z^{e_i})^{-1}}$, respectively.  Then as suggested in \cite[\S 4.2]{Man3}, the appropriate initial scattering diagrams seem to be $ \f{D}^{\s{A}_q}_{\In}\coloneqq \{(e_i^{\perp},\Psi_{t^{1/d_i}}(z^{e_i}))\colon i\in I\setminus F\}$ and
$\f{D}^{\s{X}_q}_{\In}\coloneqq \{(B_2(e_i)^{\perp},\Psi_{t^{1/d_i}}(z^{B_1(e_i)}))\colon i\in I\setminus F, B_2(e_i)\neq 0\}$, because crossing these walls recovers the quantum mutations associated to the initial seed as defined in \cite{BZ} and \cite{FG1}, respectively (defining these scattering diagrams in a way compatible with $\lim_{t\rar 1}$ requires some careful modifications to the definition of $\f{g}$).  However, it is suspected that positivity for quantum theta functions will fail in this general setup --- in rank $2$, the quantum theta functions are expected to agree with the quantum greedy bases, and positivity for some greedy basis elements (outside the cluster complex) is known to fail, cf. \cite[end of \S 3]{LLRZpnas}.
\end{rmk}

\subsection{Relating theta functions for different spaces}
In this subsection we explain how the theta functions for the algebras $\wh{\s{A}}_q$, $\wh{\s{A}}_q^{\prin}$, and $\wh{\s{X}}_q$ are related to each other.  We then use this to relate the quantum theta functions to the classical theta functions as defined in \cite{GHKK}.  We will use the maps $\xi\colon N\rar M\oplus N$ and $\rho\colon  M\oplus N \rar M$ from Lemma \ref{xirho}, and by abuse of notation we extend these to maps between the respective lattices tensored with $\bb{R}$.

Suppose that $(S,\Lambda)$ is a compatible pair and that $\Lambda^{\prin}$ is given by $\rho^* \Lambda$ as in \eqref{Lambda_rho}, so we can consider $\rho$ as in Lemma \ref{xirho}.  Then $\ker(\rho)=(0,N_{\bb{R}})$ is parallel to the supports of the walls in $\f{D}_{\In}^{\s{A}_q^{\prin}}$, hence also parallel to the supports of the walls in $\f{D}^{\s{A}_q^{\prin}}$.  
Furthermore, as observed in the proof of Lemma \ref{xirho}, $\rho$ maps $\Psi_t(z^{B^{\prin}_{1}(e_i)})$ to $\Psi_t(z^{B_1(e_i)})$.  Thus, using the uniqueness of the consistent scattering diagram in Theorem \ref{KSGS}, we have (up to equivalence) that
\begin{align}\label{rhoDA}
    \f{D}^{\s{A}_q}=\{(\rho(\f{d}),\rho(f))\colon (\f{d},f)\in \f{D}^{\s{A}_q^{\prin}}\},
\end{align}
and furthermore, for each $(\f{d},f)\in \f{D}^{\s{A}_q^{\prin}}$, $\f{d}=\rho^{-1}(\rho(\f{d}))$.  We thus obtain the following (cf. Lemma \ref{QuotientScat}):

\begin{lem}\label{prin2A}
Suppose that $(S,\Lambda)$ and $(S^{\prin},\Lambda^{\prin})$ are compatible pairs such that $\Lambda(m_1,m_2)=\Lambda^{\prin}((m_1,n_1),(m_2,n_2))$ for all $(m_1,n_1),(m_2,n_2)\in M^{\prin}$, so $\rho$ is well-defined as in Lemma \ref{xirho}.  Then for any $p\in M^{\prin}$ and generic $\sQ\in M_{\bb{R}}^{\prin}$, the theta function $\vartheta_{p,\sQ}$ defined with respect to $\f{D}^{\s{A}_q^{\prin}}$ and the theta functions $\vartheta_{\rho(p),\rho(\sQ)}$ defined with respect to $\f{D}^{\s{A}_q}$ are related via
\begin{align*}
    \vartheta_{\rho(p),\rho(\sQ)}=\rho(\vartheta_{p,\sQ}).
\end{align*}
\end{lem}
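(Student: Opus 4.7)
The plan is to promote the algebra homomorphism $\rho\colon \wh{\s{A}}_q^{\prin,S}\rightarrow \wh{\s{A}}_q^S$ of Lemma \ref{xirho} to a bijective correspondence between the broken lines that define each side of the claimed equality. Specifically, I will show that the map $\gamma\mapsto \rho\circ\gamma$ establishes a bijection between broken lines for $\f{D}^{\s{A}_q^{\prin}}$ with ends $(p,\sQ)$ and broken lines for $\f{D}^{\s{A}_q}$ with ends $(\rho(p),\rho(\sQ))$, such that the attached monomials correspond via $\rho$. Summing over this bijection and comparing with \eqref{vartheta-dfn} will yield the lemma.

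For the forward direction, let $\gamma$ be a broken line in $M^{\prin}_{\bb{R}}$ with segment data $\{(v_i, c_i z^{v_i})\}_{i=0}^{\ell}$ and bending times $\tau_i$. I will verify that $\rho\circ\gamma$, equipped with monomials $c_i z^{\rho(v_i)}$, is a broken line in $M_{\bb{R}}$ for $\f{D}^{\s{A}_q}$. The straight-segment and endpoint conditions are immediate from linearity of $\rho$. The wall-crossing locations coincide because \eqref{rhoDA} shows that every wall $\f{d}$ of $\f{D}^{\s{A}_q^{\prin}}$ equals $\rho^{-1}(\rho(\f{d}))$, and the scattering functions correspond via $\rho$. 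Preservation of the bending rule then reduces to the identity $\rho(\Ad_{f_i}(c_i z^{v_i})) = \Ad_{\rho(f_i)}(\rho(c_i z^{v_i}))$, which holds because $\rho$ is an algebra homomorphism; the bending direction signs agree since $\Lambda^{\prin}=\rho^{*}\Lambda$ gives $\sign \Lambda^{\prin}(v_{\f{d}},v_i) = \sign \Lambda(\rho(v_{\f{d}}),\rho(v_i))$.

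For the inverse, given a broken line $\gamma'$ downstairs with bending multiplicities $k_i\in \bb{Z}_{\geq 0}$ at the walls $\rho(\f{d}_i)$ (so that at each bend the direction changes from $v'_i$ to $v'_i+k_i\rho(v_{\f{d}_i})$, with coefficient determined by \eqref{tAdBinomial}), I will define the lift by $v_0=p$ and recursively $v_{i+1}=v_i+k_i v_{\f{d}_i}$, using the unique upstairs wall $\f{d}_i=\rho^{-1}(\rho(\f{d}_i))$ together with its direction vector. The resulting piecewise linear path starting at $\sQ$ is a broken line that projects to $\gamma'$: since $\ker\rho=(0,N_{\bb{R}})$ is contained in every wall of $\f{D}^{\s{A}_q^{\prin}}$, the path hits the upstairs walls at exactly the same times that $\gamma'$ hits the downstairs walls. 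The coefficients computed upstairs via \eqref{tAdBinomial} coincide with those computed downstairs, as the binomial coefficient $\binom{|\Lambda^{\prin}(v_{\f{d}_i},v_i)|}{k_i}_t$ and the accompanying power of $t$ depend only on $k_i$, $\rho(v_{\f{d}_i})$, and $\rho(v_i)$.

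The argument is essentially a matter of bookkeeping once \eqref{rhoDA} and Lemma \ref{xirho} are in place, and I do not anticipate any serious obstacle; the only point requiring care is checking that the bending multiplicities match on fibers of $\rho$, which holds because $v_0=p$ is fixed upstairs and all subsequent directions are determined by the downstairs bendings via the recursion above. Assembling everything, we conclude
\[
\rho(\vartheta_{p,\sQ}) \;=\; \sum_{\gamma} \rho\!\left(c_\gamma z^{v_\gamma}\right) \;=\; \sum_{\gamma'} c_{\gamma'}z^{v_{\gamma'}} \;=\; \vartheta_{\rho(p),\rho(\sQ)}.
\]
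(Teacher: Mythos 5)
Your proof is correct, and it takes the same approach the paper (implicitly) intends: the paper establishes \eqref{rhoDA} and the facts that $\ker(\rho)=(0,N_{\bb{R}})$ is contained in every wall support of $\f{D}^{\s{A}_q^{\prin}}$ and that $\rho$ intertwines the scattering functions, then simply asserts the lemma "cf.\ Lemma~\ref{QuotientScat}" with no further proof. You fill in exactly the missing step — promoting the direct-image correspondence of walls to a bijection of broken lines — which is the natural and intended route.

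Two small points worth flagging but not gaps. First, you should note (you do so implicitly) that genericity of $\sQ$ in $M^{\prin}_{\bb{R}}$ implies genericity of $\rho(\sQ)$ in $M_{\bb{R}}$: for $v\in N$, $\langle v,\rho(\sQ)\rangle=\langle(v,0),\sQ\rangle\neq 0$. Second, in the forward direction one should observe that distinct homogeneous terms of $\Ad_{f_i}(c_iz^{v_i})$ map to distinct terms downstairs, which holds because the wall direction $v_{\f{d}_i}=B^{\prin}_1(n,0)=(B_1(n),n)$ with $n\in N_{\uf}$ has $\rho(v_{\f{d}_i})=B_1(n)\neq 0$; this makes your bijection of bending choices (and hence of broken lines) genuinely bijective rather than merely surjective. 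Both are routine and consistent with the spirit of your argument.
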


Now suppose that $\Lambda^{\prin}$ is given as in \eqref{LambdaPrin} so we can consider the map $\xi$ as in Lemma \ref{xirho}. We similarly saw in the proof of Lemma \ref{xirho} that $\xi$ takes the scattering functions of $\f{D}_{\In}^{\s{X}_q}$ to those of $\f{D}_{\In}^{\s{A}_q^{\prin}}$.  Furthermore, if $n\in B_2(e_i)^{\perp}$, then $\xi(n)=(B_1(n),n)\in (e_i,0)^{\perp}$, so $\xi$ maps the supports of walls of $\f{D}_{\In}^{\s{X}_q}$ to the supports of the respective walls of $\f{D}_{\In}^{\s{A}_q^{\prin}}$.  It follows that we can use $\xi$ to identify $\f{D}^{\s{X}_q}$ with the intersection of $\f{D}^{\s{A}_q^{\prin}}$ and \begin{align}\label{xiN}
    \xi(N_{\bb{R}})=\{(m,n)\in M^{\prin}_{\bb{R}}\colon m-B_1(n)=0\}.
\end{align}
I.e.,
\begin{align}\label{xiDX}
    \f{D}^{\s{X}_{q}}=\{(\xi^{-1}(\f{d}),f_{\f{d}})\colon (\f{d},f_{\f{d}})\in \f{D}^{\s{A}_q^{\prin}},\medspace \codim(\xi^{-1}(\f{d}))=1\}.
\end{align}

Note that directions of walls in $\f{D}^{\s{A}^{\prin}_q}$ lie in \begin{align}\label{Nprime}
    N'\coloneqq B_1^{\prin}(N\oplus 0) = \{(B_1(n),n)\in M\oplus N\colon n\in N\},
\end{align} 
and by \eqref{xiN}, this is the same as $\xi(N)$.  Define 
\begin{align}
\label{Mprime}
    \kappa\colon M\oplus N&\rightarrow M\\
    (m, n) &\mapsto m-B_1(n).\nonumber
\end{align}
We note that this map $\kappa$ agrees with the grading $\deg$ considered in \cite{ManCox} and the weight $w$ as in \cite[(B.3)]{GHKK}.  Then $N'=\ker(\kappa)$, and by Proposition \ref{homprop}, $\wh{\s{A}}^{\prin}_q$ is $M$-graded. 

\begin{lem}\label{prin2X}
Let $S$ be a seed, and let $(S^{\prin},\Lambda^{\prin})$ be a compatible pair such that $\Lambda^{\prin}|_{B^{\prin}_{1}((N,0))}$ is given as in \eqref{LambdaPrin}, so the map $\xi$ of Lemma \ref{xirho} is well-defined.  For $p\in N$ and generic $\sQ\in N_{\bb{R}}$, the theta function $\vartheta_{p,\sQ}$ defined with respect to $\f{D}^{\s{X}_q}$ is related to the theta function $\vartheta_{\xi(p),\xi(\sQ)}$ defined with respect to $\f{D}^{\s{A}_q^{\prin}}$ via
\begin{align*}
    \xi(\vartheta_{p,\sQ})=\vartheta_{\xi(p),\xi(\sQ)}.
\end{align*}
\end{lem}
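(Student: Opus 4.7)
The plan is to produce a bijection $\gamma\leftrightarrow\xi\circ\gamma$ between broken lines for $\f{D}^{\s{X}_q}$ with ends $(p,\sQ)$ and broken lines for $\f{D}^{\s{A}_q^{\prin}}$ with ends $(\xi(p),\xi(\sQ))$, transporting each attached monomial $cz^v$ to $cz^{\xi(v)}=\xi(cz^v)$. Applying $\xi$ termwise to the broken line expansion \eqref{vartheta-dfn} of $\vartheta_{p,\sQ}$ then yields the broken line expansion of $\vartheta_{\xi(p),\xi(\sQ)}$, giving the lemma.

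The starting observation is that every wall of $\f{D}^{\s{A}_q^{\prin}}$ has direction lying in $\xi(N)=N'\subset M^{\prin}$: this holds for the initial walls by \eqref{pi1prin} and is preserved by $\scat$ since outgoing walls have directions that are positive integral combinations of the initial ones. Using this together with \eqref{xiDX}, each wall $(\f{d},f_{\f{d}})$ of $\f{D}^{\s{A}_q^{\prin}}$ fits into one of three classes according to how $\f{d}$ meets $\xi(N_{\bb{R}})$: (a) $\f{d}\supset\xi(N_{\bb{R}})$, which is equivalent via the identity $\Lambda^{\prin}(\xi(n),\xi(n'))=B(n,n')$ (from the proof of Lemma \ref{xirho}) to $v_{\f{d}}=\xi(n)$ for some $n\in\ker(B)$; (b) $\f{d}\cap\xi(N_{\bb{R}})$ has codimension $\geq 2$ in $\xi(N_{\bb{R}})$; or (c) $\f{d}\cap\xi(N_{\bb{R}})$ has codimension $1$ in $\xi(N_{\bb{R}})$, giving precisely the walls of $\f{D}^{\s{X}_q}$ via \eqref{xiDX}. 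Type-(a) walls act trivially by adjoint action on every monomial $z^{\xi(n')}$, because their scattering functions are formal series in $z^{kv_{\f{d}}}$ and $\Lambda^{\prin}(kv_{\f{d}},\xi(n'))=kB(n,n')=0$.

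The forward direction is then straightforward. The path $\xi\circ\gamma$ lies in $\xi(N_{\bb{R}})$, has initial direction $\xi(p)$ and endpoint $\xi(\sQ)$, and bends only at the type-(c) walls of $\f{D}^{\s{A}_q^{\prin}}$ corresponding to walls of $\f{D}^{\s{X}_q}$ crossed by $\gamma$; the monomial transformations at each bend match because $\xi$ is an algebra homomorphism (Lemma \ref{xirho}) and sends the wall function of $\f{D}^{\s{X}_q}$ at $\xi^{-1}(\f{d})$ to $f_{\f{d}}$, while the sign $\sign(\omega(v_{\f{d}},v_i))$ governing the conjugation is preserved since $\Lambda^{\prin}(\xi(v_{\f{d}}),\xi(v_i))=B(v_{\f{d}},v_i)$. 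For the converse, let $\tilde\gamma$ be a broken line for $\f{D}^{\s{A}_q^{\prin}}$ with ends $(\xi(p),\xi(\sQ))$. Since all wall directions lie in $\xi(N)$ and the initial direction $\xi(p)$ is already in $\xi(N)$, every direction appearing along $\tilde\gamma$ is in $\xi(N)$, so $\tilde\gamma$ stays inside the affine subspace $\xi(N_{\bb{R}})$. It cannot bend at a type-(a) wall (trivial adjoint action leaves no new homogeneous term distinct from the incoming one), and for generic $\sQ$ it avoids all type-(b) walls. Hence $\tilde\gamma=\xi\circ\gamma$ for a unique broken line $\gamma$ of $\f{D}^{\s{X}_q}$ with ends $(p,\sQ)$.

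The main subtlety is the genericity argument for avoiding type-(b) walls, since $\xi(\sQ)$ is never generic as a point of $M^{\prin}_{\bb{R}}$ but is confined to the proper subspace $\xi(N_{\bb{R}})$. This is handled by a standard measure-zero argument: the preimage in $N_{\bb{R}}$ of the locus of $\xi(\sQ)$ for which some $\f{D}^{\s{A}_q^{\prin}}$-broken line ending at $\xi(\sQ)$ passes through a codimension-$\geq 2$ intersection $\f{d}\cap\xi(N_{\bb{R}})$ has positive codimension in $N_{\bb{R}}$, and hence is disjoint from the generic locus inherited from the definition of broken lines for $\f{D}^{\s{X}_q}$. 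Alternatively, the identity can first be established for $\sQ$ lying in a single chamber of $N_{\bb{R}}\setminus\Supp(\f{D}^{\s{X}_q})$, and then propagated to all generic $\sQ$ via Theorem \ref{CPS}, using the fact that the path-ordered products $\theta_{\gamma,\f{D}^{\s{X}_q}}$ and $\theta_{\xi\circ\gamma,\f{D}^{\s{A}_q^{\prin}}}$ intertwine through $\xi$ by the agreement of the relevant wall functions already noted above.
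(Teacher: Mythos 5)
Your proof takes the same route as the paper: a termwise bijection $\gamma \leftrightarrow \xi\circ\gamma$ between broken lines, justified by the fact that wall directions of $\f{D}^{\s{A}_q^{\prin}}$ lie in $N'=\xi(N)$ and that $\xi$ intertwines the relevant wall-crossing actions, so that \eqref{xiDX} matches up the walls seen by the two families of broken lines. The paper's own proof is a single sentence that simply cites \eqref{xiDX} and the $M$-grading, so what you have written is essentially a careful expansion of that argument rather than a genuinely different approach. Your extra care is valuable: you make explicit the trichotomy of walls of $\f{D}^{\s{A}_q^{\prin}}$ according to how they meet $\xi(N_{\bb{R}})$, you check that wall functions with direction $v_{\f{d}}\in\xi(\ker B)$ act trivially on $z^{\xi(n')}$, and — most importantly — you flag the genericity issue that $\xi(\sQ)$ is never generic in $M^{\prin}_{\bb{R}}$, which the paper silently glosses over, and you sketch two reasonable ways to handle it (a relative measure-zero argument inside $\xi(N_{\bb{R}})$, or reduction to a single chamber followed by propagation via Theorem \ref{CPS}).

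One small imprecision: your characterization of type (a) — the claimed equivalence between $\f{d}\supset\xi(N_{\bb{R}})$ and $v_{\f{d}}=\xi(n)$ with $n\in\ker(B)$ — only holds in the forward direction. Since $\f{d}$ is merely a rational polyhedral \emph{cone} inside the hyperplane $v_{\f{d}}^{\Lambda^{\prin}\perp}$ rather than the whole hyperplane, the condition $v_{\f{d}}\in\xi(\ker B)$ guarantees $\xi(N_{\bb{R}})\subset v_{\f{d}}^{\Lambda^{\prin}\perp}$ but not $\xi(N_{\bb{R}})\subset\f{d}$. This is harmless because the property you actually use downstream — trivial adjoint action on $z^{\xi(n')}$ — follows directly from $v_{\f{d}}\in\xi(\ker B)$ alone, irrespective of how much of $\xi(N_{\bb{R}})$ lies in the support $\f{d}$.
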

\begin{proof}
For $\gamma$ a broken line in $\f{D}^{\s{X}_q}$, we obtain a broken line in $\f{D}^{\s{A}^{\prin}_q}$ by considering $\xi\circ \gamma$ and replacing the monomials $c_iz^{v_i}$ associated to straight sections with $c_iz^{\xi(v_i)}$.  By \eqref{xiDX} these are exactly the broken lines with ends of $M$-degree zero.
\end{proof}

We similarly find the following:
\begin{lem}\label{pi1Theta}
Suppose $\Lambda(\cdot,B_1(e_i))=e_i$ for all $i\in I$ so that Lemma \ref{pi1} applies.  Then for all $p\in N$ and generic $\sQ\in N_{\bb{R}}$, the theta function $\vartheta_{p,\sQ}$ defined with respect to $\f{D}^{\s{X}_q}$ is related to the theta function $\vartheta_{B_1(p),B_1(\sQ)}$ defined with respect to $\f{D}^{\s{A}_q^{\prin}}$ via  \begin{align*}
    B_1(\vartheta_{p,\sQ})=\vartheta_{B_1(p),B_1(\sQ)}.
\end{align*}
\end{lem}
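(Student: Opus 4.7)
The plan is to factor $B_1\colon N\to M$ as the composition $\rho\circ \xi$, where $\xi\colon N\to M\oplus N$ and $\rho\colon M\oplus N\to M$ are the maps of Lemma \ref{xirho}, and then invoke Lemmas \ref{prin2X} and \ref{prin2A} in succession. Since $(\rho\circ\xi)(n)=\rho(B_1(n),n)=B_1(n)$, the factorization is immediate on the level of lattices and extends to the algebra homomorphisms once a single form $\Lambda^{\prin}$ on $M^{\prin}$ is chosen that satisfies the hypotheses of \emph{both} lemmas simultaneously.

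First I would set $\Lambda^{\prin}\coloneqq \rho^*\Lambda$, i.e.\ $\Lambda^{\prin}((m_1,n_1),(m_2,n_2))\coloneqq \Lambda(m_1,m_2)$. Then \eqref{Lambda_rho} holds by construction, so Lemma \ref{prin2A} applies. It remains to check that \eqref{LambdaPrin} holds on $B_1^{\prin}((N,0))$, i.e.\ that $\Lambda^{\prin}(B^{\prin}_1(n_1,0),B^{\prin}_1(n_2,0))=B(n_1,n_2)$. Using $B^{\prin}_1(n_i,0)=(B_1(n_i),n_i)$ this becomes $\Lambda(B_1(n_1),B_1(n_2))=B(n_1,n_2)$, which is equivalent to the strengthened compatibility assumption $\Lambda(\cdot,B_1(e_i))=e_i$ for all $i\in I$ (as already observed in the proof of Lemma \ref{pi1}). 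Hence Lemma \ref{prin2X} also applies with this same $\Lambda^{\prin}$, and $\xi\colon \wh{\s{X}}^S_q\hookrightarrow \wh{\s{A}}_q^{\prin,S}$ and $\rho\colon \wh{\s{A}}_q^{\prin,S}\twoheadrightarrow \wh{\s{A}}_q^S$ are both well-defined and compatible with the respective scattering atlases.

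Chaining the two lemmas, for $p\in N$ and generic $\sQ\in N_{\bb{R}}$,
\begin{align*}
B_1(\vartheta_{p,\sQ})
=\rho\bigl(\xi(\vartheta_{p,\sQ})\bigr)
=\rho\bigl(\vartheta_{\xi(p),\xi(\sQ)}\bigr)
=\vartheta_{\rho(\xi(p)),\,\rho(\xi(\sQ))}
=\vartheta_{B_1(p),B_1(\sQ)},
\end{align*}
where the first theta function is computed with respect to $\f{D}^{\s{X}_q}$, the intermediate one with respect to $\f{D}^{\s{A}_q^{\prin}}$ (by Lemma \ref{prin2X}), and the last with respect to $\f{D}^{\s{A}_q}$ (by Lemma \ref{prin2A}). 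This yields the stated identity.

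There is no real obstacle here: the argument is purely a bookkeeping matter once one recognizes the factorization $B_1=\rho\circ\xi$, and the only nontrivial verification is that the strengthened compatibility hypothesis $\Lambda(\cdot,B_1(e_i))=e_i$ for all $i\in I$ (rather than just $i\in I\setminus F$) is exactly what is required to make $\rho^*\Lambda$ simultaneously satisfy both \eqref{LambdaPrin} on $B^{\prin}_1((N,0))$ and \eqref{Lambda_rho}. Alternatively, one could give a direct broken-line proof mimicking Lemma \ref{prin2X}: under the hypothesis, $B_1$ carries each wall $(B_2(e_i)^{\perp},\Psi_t(z^{e_i}))$ of $\f{D}^{\s{X}_q}_{\In}$ to (a portion of) $(e_i^{\perp},\Psi_t(z^{B_1(e_i)}))\in\f{D}^{\s{A}_q}_{\In}$, and by uniqueness in Theorem \ref{KSGS} this extends to the consistent scattering diagrams, so broken lines and their attached monomials are transported as required; but the factorization approach is cleaner and reuses machinery already in place.
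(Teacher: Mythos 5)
Your proposal is correct. The factorization $B_1=\rho\circ\xi$ is exactly the right decomposition, and you correctly identify the one nontrivial bookkeeping point: that $\Lambda^{\prin}=\rho^*\Lambda$ simultaneously satisfies the hypothesis of Lemma \ref{prin2A} (tautologically) and the hypothesis of Lemma \ref{prin2X} (because the strengthened compatibility $\Lambda(\cdot,B_1(e_i))=e_i$ for \emph{all} $i\in I$ forces $\Lambda(B_1(u),B_1(v))=B(u,v)$, which is precisely \eqref{LambdaPrin} restricted to $B^{\prin}_1((N,0))$). The paper itself proves the lemma only via ``We similarly find\ldots'', pointing at the broken-line argument of Lemma \ref{prin2X}; your modular argument chains the two previously established lemmas instead of re-running that argument directly, which is cleaner and makes explicit where the strengthened compatibility hypothesis is used. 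One small remark in your favor: the statement in the paper says the right-hand theta function is ``defined with respect to $\f{D}^{\s{A}_q^{\prin}}$,'' but since $B_1(p)\in M$ (not $M^{\prin}$) and $B_1(\vartheta_{p,\sQ})\in \wh{\s{A}}_q^S$, this is a typo for $\f{D}^{\s{A}_q}$; your conclusion lands on the correct scattering diagram.
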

\begin{prop}\label{q2classicalTheta}
The classical limits of the quantum theta functions in $\wh{\s{A}}_q^S$, $\wh{\s{A}}^{\prin,S}_q$ and $\wh{\s{X}}_q^S$
are the corresponding classical theta functions of \cite{GHKK}.
\end{prop}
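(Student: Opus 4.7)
The plan is to show that $\lim_{t\to 1}$, as defined in \S \ref{scS}, intertwines the broken-line construction of quantum theta functions with the classical construction from \cite{GHKK}. Using Lemmas \ref{prin2A} and \ref{prin2X} together with the evident classical analogues of these lemmas (proved by the same arguments applied to the classical scattering diagrams), the three cases reduce to one; I would work with $\wh{\s{A}}_q^{\prin,S}$, recovering the $\s{A}$ and $\s{X}$ cases by applying $\rho$ or restricting along $\xi$, both of which commute with $\lim_{t\to 1}$.

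The first substantive step is to identify $\lim_{t\to 1}(\f{D}^{\s{A}_q^{\prin}})$ with the classical GHKK scattering diagram $\f{D}^{\s{A}^{\prin}}$, up to equivalence. For the initial walls this is a direct computation using \eqref{AdPsi}: the Adjoint action of $\Psi_t(z^{B_1^{\prin}(e_i)})^{\pm 1}$ on $z^p$ specializes at $t=1$ to the standard classical wall-crossing $z^p(1+z^{B_1^{\prin}(e_i)})^{\pm|\omega(B_1^{\prin}(e_i),p)|}$. Combined with the compatibility of $\lim_{t\to 1}$ with the completed Lie algebras and pro-unipotent groups recorded in \S \ref{scS}, this identifies $\lim_{t\to 1}(\f{D}^{\s{A}_q^{\prin}}_{\In})$ with $\f{D}^{\s{A}^{\prin}}_{\In}$. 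Since consistency is preserved by $\lim_{t\to 1}$, and Theorem \ref{PosScat} guarantees that all non-initial walls of $\f{D}^{\s{A}_q^{\prin}}$ are outgoing, the uniqueness clause of Theorem \ref{KSGS} applied in the classical setting forces the desired equality.

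Next I would match broken lines: given a quantum broken line $\gamma$ with ends $(p,\sQ)$ and attached monomials $c_iz^{v_i}$, the same underlying path with monomials $c_i(1)z^{v_i}$ is a broken line in $\f{D}^{\s{A}^{\prin}}$, because the $t\to 1$ limit of the quantum binomial $\binom{n}{k}_t$ appearing in \eqref{tAdBinomial} is the ordinary binomial $\binom{n}{k}$ that enters the classical bend formulas. The delicate point, and the main obstacle, is that this assignment must be a bijection on broken lines with fixed ends with matching coefficients; this relies essentially on the universal positivity of Lemma \ref{BrokenPos}, which ensures each $c_i \in \bb{Z}_{\geq 0}[t^{\pm 1}]$ and hence $c_i(1) \geq 1 > 0$, preventing cancellations or disappearance of broken lines in the limit. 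Summing over broken lines then yields $\lim_{t\to 1}\vartheta_{p,\sQ}$ equal to the classical theta function of \cite{GHKK} with the same endpoints, and the $\s{A}$ and $\s{X}$ cases follow by the reduction above.
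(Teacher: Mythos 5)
Your proposal is correct and follows essentially the same route as the paper: reduce to $\wh{\s{A}}_q^{\prin,S}$ via $\rho$ and $\xi$ (Lemmas \ref{prin2A} and \ref{prin2X}), check that $\lim_{t\to 1}$ carries $\f{D}^{\s{A}_q^{\prin}}$ to the GHKK scattering diagram, and conclude by matching broken lines. The paper compresses the middle step into ``it is straightforward to check''; you unpack it via the uniqueness clause of Theorem \ref{KSGS} and correctly identify that positivity (Lemma \ref{BrokenPos}) is what prevents walls or broken-line contributions from vanishing at $t=1$ — though note that the fact that non-initial walls of $\scat(\f{D}_{\In})$ are outgoing comes from Theorem \ref{KSGS} itself rather than from Theorem \ref{PosScat}.
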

\begin{proof}
It is straightforward to check that applying $\lim_{t\rar 1}$ to $\f{D}^{\s{A}_q}$ yields the corresponding classical scattering diagram $\f{D}_{\In,\textbf{s}}$ of \cite{GHKK}.  The claim thus follows for $\wh{\s{A}}_q^{\prin,S}$, since for these cases the \cite{GHKK} theta functions are constructed directly from the broken lines corresponding to $\lim_{t\rar 1} \f{D}_q^{\s{A}_t^{\prin}}=\f{D}_{\In,\textbf{s}^{\prin}}$.  For $\wh{\s{A}}_q^{S}$, \cite[\S 7.2]{GHKK} constructs the theta functions by applying the classical analog of our map $\rho$ from Lemma \ref{xirho}.\footnote{The reason that \cite{GHKK} does not just directly construct the theta functions for the $\s{A}$-space using $\f{D}_{\In,\textbf{s}}$ is that they allow $\s{A}$-spaces for which their ``injectivity assumption'' is not satisfied, i.e., for which there is no compatible pair $\Lambda$, hence no quantization.  In such cases, there might not exist a strictly convex cone $\sigma_{S,\s{A}}$ containing the negative directions of the initial walls.}  Lemma \ref{prin2A} states that $\rho$ takes quantum theta functions to quantum theta functions, so the classical limits of the quantum theta functions in $\wh{\s{A}}_q$ are indeed the classical theta functions of \cite{GHKK}.

In the classical limit, applying $\xi^{-1}$ to the theta functions of $M$-degree $0$ is precisely the construction of the theta functions on the $\s{X}$-space in \cite[Constr. 7.11]{GHKK}.  So by Lemma \ref{prin2X}, the classical limits of our theta functions in $\wh{\s{X}}^S_q$ are the classical theta functions of \cite{GHKK}.
\end{proof}

\subsection{The cluster complex}\label{ChamberSection}

Let $(S,\Lambda)$ be a compatible pair.  For each $j\in I\setminus F$, define an integral piecewise linear bijection $T_j\colon M_{\bb{R}}\rar M_{\bb{R}}$ by
\begin{align}\label{Tj}
    T_j(m)\coloneqq 
    \begin{cases} m+\langle e_j,m\rangle B_{1}(e_j) & \mbox{if $\langle e_j,m\rangle \geq 0$} \\ m & \mbox{otherwise.}
    \end{cases}
\end{align}
We may write $T_j$ as $T_j^S$ if $S$ is not clear from context --- the vector $e_j$ above is more precisely $e_{S,j}$.

Note that by \eqref{eprime}, we have for each $i\in I\setminus \{j\}$ that
\begin{align}\label{Tkei}
    T_j^S(B_1(e_{S,i}))=B_1(e_{\mu_j(S),i}),
\end{align}
while $T_j^S(B_1(e_{S,j}))=-B_1(e_{\mu_j(S),j})$.  We note that one may use the compatibility condition \eqref{Lambda} to express $T_j$ in terms of $\Lambda$ as follows:
\begin{align*}
    T_j(m)\coloneqq 
    \begin{cases} m+\Lambda(m,B_1(e_j)) B_{1}(e_j) & \mbox{if $\Lambda(m,B_1(e_j)) \geq 0$} \\ m & \mbox{otherwise.}
    \end{cases}
\end{align*}

Given a tuple $\jj=(j_1,\ldots,j_s)\subset (I\setminus F)^s$, we define\footnote{As noted in \cite[Prop. 2.4]{GHKK}, $T^S_{\jj}$ may be viewed as the Fock--Goncharov tropicalization of $\mu_{S,\jj}$.} 
\begin{align}\label{mujj}
T_{\jj}\coloneqq T_{\jj}^{\s{A}}\coloneqq T^{S_{\jj_s}}_{j_s}\circ \cdots \circ T^{S_{\jj_1}}_{j_1}\colon M_{\bb{R}}\rar M_{\bb{R}}.
\end{align}
Given generic $\sQ\in M_{\bb{R}}$, let 
\begin{align}\label{psijQ}
    \psi_{\jj,\sQ}\colon M\risom M
\end{align} denote the linear automorphism obtained by restricting $T_{\jj}$ to a neighborhood of $\sQ$ and then extending linearly.  Writing $T^S_{j,-}(m)=m$ and $T^S_{j,+}(m)=m+\langle e_{S,j},m\rangle B_1(e_j)$, we see that each $T^{S}_{j,\pm}$ preserves $\Lambda$.  Since $\psi_{\jj,\sQ}$ is a composition of such maps, it follows that $\psi_{\jj,\sQ}$ preserves $\Lambda$ and thus induces an automorphism of $\kk_t^{\Lambda}[M]$.

Recall that for each seed $S_{\jj}$, we have a cone $C_{S_{\jj}}^+\subset M_{\bb{R}}$ defined as in \eqref{CS+} to be the dual to the cone spanned by $\{e_{S_{\jj},i}\}_{i\in I\setminus F}$.  This may be computed explicitly using the dual to the mutation, i.e.,
\begin{align*}
    (e_{\mu_j(S),i}^*)\coloneqq  \begin{cases}
     e_{S,i}^* \quad &\mbox{for $i\neq j$}\\
   -e_{S,j}^*+\sum_{k\neq j} \max(0,B(e_{S,k},e_{S,j}))e_{S,k}^*\quad  & \mbox{for $i=j$.}
    \end{cases}
\end{align*}

The following property of $\f{D}^{\s{A}_q}$ is key for relating the theta functions to the cluster algebras.
\begin{prop}[Chamber structure of $\f{D}^{\s{A}_q}$]\label{Chambers}
There exists a fan\footnote{Cones of $\s{C}$ are convex, but if $F\neq \emptyset$, they will not be strongly convex.} $\s{C}$ in $M_{\bb{R}}$, called \textbf{the cluster complex}, such that
\begin{enumerate}
    \item $\s{C}$ is a sub cone-complex of the cone-complex induced by the supports of the walls of $\f{D}^{\s{A}_q}$.
    \item The chambers of $\s{C}$ (i.e., the top-dimensional cones of $\s{C}$) are in bijective correspondence with the clusters of $\s{A}_q^{\up}$.  Furthermore, the chamber $\s{C}_{S_{\jj}}$ corresponding to the seed $S_{\jj}$ is equal to $T_{\jj}^{-1}(C_{S_{\jj}}^+)$, for $C_{S_{\jj}}^+$ defined as in \eqref{CS+}.
    \item Each chamber of $\s{C}$ has exactly $\#(I\setminus F)$ facets.  For two seeds $S_{\jj}$ and $S_{\kkk}$ mutation equivalent to $S$, the chambers $\s{C}_{S_{\jj}}$ and $\s{C}_{S_{\kkk}}$ in $\s{C}$ intersect along a facet $\f{d}$ if and 
    only if the clusters $\s{A}_q^{\jj}$ and $\s{A}_q^{\kkk}$ are related by a mutation $\mu^{\s{A}}_j$ for some $j\in I\setminus F$, 
    i.e., if $\mu_{S_{\jj},j}^{\s{A}}(\s{A}_q^{\jj} )=\s{A}_q^{\kkk}$.  Let $v= \pm \psi_{\jj}^{-1}(B_1(e_{S_{\jj},j}))$, with the sign chosen so that $v$ lies in $\sigma_{S,\s{A}}$ (using $\kkk$ instead of $\jj$ would result in the opposite sign).
     Then $\f{d}\subset v^{\Lambda\perp}$, and the scattering function along $\f{d}$ is $\Psi_t(z^{v})$. 
     \item For generic $\sQ_{\jj}\in \s{C}_{S_{\jj}}$, denote
     \begin{align}\label{psij}
         \psi_{\jj}\coloneqq \psi_{\jj,\sQ_{\jj}}.
     \end{align}
     Let $\sQ$ be another generic point, this time in $\s{C}_S$, and let $\gamma$ be a path from $\sQ$ to $\sQ_{\jj}$ for which the path-ordered product $\f{p}=\theta_{\gamma,\f{D}^{\s{A}_q}}$ is well-defined.  Let
     \begin{align*}
         \iota_{\jj}\coloneqq \Ad_{\f{p}}.
     \end{align*}
     Then on $\s{A}_q^{\up}$, we have
     \begin{align*}
         \psi_{\jj}\circ \iota_{\jj} = \mu_{\jj}^{\s{A}}.
     \end{align*}
\end{enumerate}
\end{prop}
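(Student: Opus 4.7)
The plan is to prove the four claims by induction on the length of the mutation sequence $\jj$. The base case $\jj = \emptyset$ is immediate: $T_\emptyset = \psi_\emptyset = \id$, $C_S^+$ is bounded exactly by the hyperplanes $e_i^{\perp}$ for $i \in I\setminus F$ supporting the initial walls of $\f{D}^{\s{A}_q}_{\In}$, $\iota_\emptyset = \id$, and $\mu^{\s{A}}_\emptyset = \id$. For the inductive step, suppose (1)--(4) hold for $\jj$, and consider $\jj' = (\jj, j)$ for some $j \in I\setminus F$. The first key combinatorial identity is
\[
T_{\jj'}^{-1}(C_{S_{\jj'}}^+) = T_{\jj}^{-1}\bigl((T_j^{S_{\jj}})^{-1}(C_{\mu_j(S_{\jj})}^+)\bigr),
\]
and using the mutation formula \eqref{eprime} together with the definition \eqref{Tj} of $T_j^{S_\jj}$, one checks that $(T_j^{S_\jj})^{-1}(C_{\mu_j(S_\jj)}^+)$ is precisely the cone adjacent to $C_{S_\jj}^+$ across the facet lying in $e_{S_\jj,j}^{\perp}$ (with the inequality $\langle e_{S_\jj,j},\cdot\rangle\geq 0$ reversed).

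Next, I would transfer the chamber structure from the classical setting. Applying $\lim_{t\to 1}$ kills no wall support since each $\Psi_t(z^v)$ has nontrivial classical limit; by uniqueness in Theorem \ref{KSGS}, the support of $\f{D}^{\s{A}_q}$ agrees with that of the classical scattering diagram of \cite[\S 2]{GHKK}, for which (1)--(3) are established there. This identifies $\s{C}$ as a fan inside $\f{D}^{\s{A}_q}$ with chambers $\s{C}_{S_\jj} = T_\jj^{-1}(C_{S_\jj}^+)$, and shows that $\s{C}_{S_\jj}$ and $\s{C}_{S_{\jj'}}$ meet along a facet $\f{d}$ contained in $v^{\Lambda\perp}$ for $v = \pm \psi_\jj^{-1}(B_1(e_{S_\jj,j}))$, the sign chosen so that $v\in\sigma_{S,\s{A}}$. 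That $v$ has this form follows from \eqref{Tkei}: $\psi_\jj$, being locally the composition of tropical mutations near $\sQ_\jj$, sends $B_1(e_{S,i})$-directions to the corresponding $B_1(e_{S_\jj,i})$-directions up to sign.

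To pin down the scattering function on $\f{d}$, note that $\f{d}$ carries some function $f_\f{d} \in G^\parallel_v$. By consistency of $\f{D}^{\s{A}_q}$ around a codimension-two joint inside $\overline{\s{C}}_{S_\jj}$ together with the inductive hypothesis (which identifies the path-ordered products from $\sQ$ to $\sQ_\jj$ with $\psi_\jj^{-1}\circ \mu_\jj^{\s{A}}$ on $\s{A}_q^{\up}$) and the fact that each seed mutation $\mu_{S_\jj,j}^{\s{A}}$ is conjugation by $\Psi_t(z^{B_1(e_{S_\jj,j})})^{-1}$, the function on $\f{d}$ must be $\Psi_t(z^v)$, as this is the unique element of $G_v^\parallel$ whose conjugation by $\psi_\jj$ gives $\Psi_t(z^{B_1(e_{S_\jj,j})})^{\pm 1}$. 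This proves (3).

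Finally, part (4) follows by composing the inductive step with this wall-crossing. Choose a path $\gamma'$ from $\sQ$ to $\sQ_{\jj'} \in \s{C}_{S_{\jj'}}$ that first travels to $\sQ_\jj$ and then crosses only $\f{d}$; then $\iota_{\jj'} = \Ad_{\Psi_t(z^v)^\epsilon}\circ \iota_\jj$ for the appropriate sign $\epsilon$ given by \eqref{WallCross}. Since $\psi_{\jj'} = T_j^{S_\jj}\circ \psi_\jj$ (restricted to the appropriate half-space), and $T_j^{S_\jj}$ preserves $\Lambda$ while intertwining $\Ad_{\Psi_t(z^v)^\epsilon}$ with $\Ad_{\Psi_t(z^{B_1(e_{S_\jj,j})})^{-1}}$ on $\s{A}_q^{\up}$, we obtain
\[
\psi_{\jj'}\circ \iota_{\jj'} = \mu_{S_\jj,j}^{\s{A}}\circ \psi_\jj\circ \iota_\jj = \mu_{S_\jj,j}^{\s{A}}\circ \mu_\jj^{\s{A}} = \mu_{\jj'}^{\s{A}},
\]
completing the induction. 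The main obstacle will be the careful sign bookkeeping in the last step: matching the sign $\epsilon$ coming from \eqref{WallCross} (which depends on the direction of $\gamma'$ relative to $\omega_1(v)$) with the sign in $v = \pm \psi_\jj^{-1}(B_1(e_{S_\jj,j}))$, and confirming that the piecewise-linear map $T_j^{S_\jj}$ acts as the correct linear piece on each side of the wall so that the intertwining identity actually yields $\Psi_t(z^{B_1(e_{S_\jj,j})})^{-1}$ (i.e.\ the mutation) rather than its inverse.
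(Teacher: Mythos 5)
Your transfer of the chamber geometry from the classical scattering diagram of \cite{GHKK}, and the inductive framework for (4), match what the paper does, but the step where you pin down the scattering function on the facet $\f{d}$ is circular. You propose to determine $f_\f{d}$ from ``consistency of $\f{D}^{\s{A}_q}$ around a codimension-two joint \ldots\ together with the inductive hypothesis'' that $\psi_\jj\circ\iota_\jj=\mu_\jj^{\s{A}}$. Consistency is a closed-loop constraint around the joint; it does not by itself single out $f_\f{d}$. And to actually extract $f_\f{d}$ from the relation $\iota_{\jj'} = \Ad_{f_\f{d}}^\epsilon\circ\iota_\jj$, you would need to already know $\psi_{\jj'}\circ\iota_{\jj'}=\mu_{\jj'}^{\s{A}}$, which is precisely part (4) for $\jj'$ --- the very statement you then derive in the next paragraph assuming $f_\f{d}=\Psi_t(z^v)$. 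The observation that $\Psi_t(z^v)$ is ``the unique element of $G_v^\parallel$'' with a given image under $\psi_\jj$ is vacuous because $\psi_\jj$ is an isomorphism of groups; the content is in showing that $\psi_\jj(f_\f{d})$ actually equals $\Psi_t(z^{B_1(e_{S_\jj,j})})^{\pm 1}$.

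The missing ingredient is the interplay between the classical limit and the bar-invariance part of Theorem \ref{PosScat}. Writing $f_\f{d}=\EE(-p(t)z^v)$, the classical identification only tells you $p(1)=1$; Theorem \ref{PosScat} ensures $p(t)\in\bb{Z}_{\geq 0}[t^{\pm 1}]$ is bar-invariant, and a bar-invariant Laurent polynomial with non-negative integer coefficients satisfying $p(1)=1$ is forced to be $p(t)=1$. (Alternatively, one can apply Proposition \ref{mut-inv} --- the quantum analogue of \cite[Thm.\ 1.24]{GHKK} --- inductively; your combinatorial identity $T_{\jj'}^{-1}(C_{S_{\jj'}}^+) = T_\jj^{-1}\bigl((T_j^{S_\jj})^{-1}(C_{\mu_j(S_\jj)}^+)\bigr)$ captures the geometric half of that argument, but you never invoke the proposition to read off the wall function on $e_{S_\jj,j}^{\perp}$.) Note also that even in the paper's proof, the final identity of linear maps $\psi_\jj\circ\psi_{\jj_s}^{-1}$ in part (4) is reduced to its classical counterpart, so your sign and half-space bookkeeping for $\psi_{\jj'}$ will eventually require a comparable appeal to the classical result.
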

The classical version of Proposition \ref{Chambers}(1)--(3) follows from applying \cite[Thm. 1.24]{GHKK} inductively, cf. \cite[Construction 1.30]{GHKK}.  The quantum analog of \cite[Thm. 1.24]{GHKK} is Proposition \ref{mut-inv}, and Proposition \ref{Chambers}(1)--(3) similarly follows from applying this inductively.

The classical version of Proposition \ref{Chambers}(4) is equivalent to \cite[Thm. 4.4]{GHKK}, specifically, cf. the commutative diagram (4.6) of loc. cit in the case $v=v'$ there.  The arguments of loc. cit. apply to yield the quantum version in the same way, i.e., as a consequence of Proposition \ref{mut-inv}.

While deducing (1)--(3) from Proposition \ref{mut-inv} is fairly simple, the proof of (4) is more involved.\footnote{The fact that two seeds correspond to the same cluster \textit{only} if they correspond to the same chamber of $\s{C}$ is actually not immediate from Proposition \ref{mut-inv}.  This instead follows from (4).}  We therefore present a different argument which uses positivity to deduce Proposition \ref{Chambers} directly from its classical analog.  We will need the following Lemmas:

\begin{lem}\label{TjAdLem}
For any linear automorphism $\phi\colon M\risom M$, any $v,m\in M$, and $\epsilon=\pm 1$, we have
\begin{align}\label{TjAdEqn}
    \phi\circ \Ad^{\epsilon}_{\Psi_t(z^v)} \circ \phi^{-1}(z^m) = \Ad^{\epsilon}_{\Psi_t(z^{\phi(v)})}(z^m)
\end{align}
in $\mr{\s{A}}_q^S$.  Here, $\Psi_t(z^v)$ is viewed as an element of $\kk_{t,\sigma}^{\Lambda}\llb M\rrb$ for some cone $\sigma$ containing $v$, and $\Psi_t(z^{\phi(v)})$ is viewed as an element of $\kk_{t,\sigma'}^{\Lambda}\llb M\rrb$ for some cone $\sigma'$ containing $\phi(v)$.  As usual, we define $\phi$ on $\mr{\s{A}}_q^S$ via $\phi(z^m)\coloneqq z^{\phi(m)}$, and similarly for $\phi^{-1}$.
\end{lem}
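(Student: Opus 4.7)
The plan is to reduce the identity to the formal functoriality $\phi\circ\Ad_g\circ\phi^{-1}=\Ad_{\phi(g)}$, valid for any algebra automorphism $\phi$ and any invertible element $g$. The key point --- implicit in the statement of the lemma, and valid in all its applications below (where $\phi=\psi_{\jj,\sQ}$, which was observed just above to preserve $\Lambda$) --- is that $\phi$ preserves $\Lambda$. Under this hypothesis, $\phi$ extends via $z^m\mapsto z^{\phi(m)}$ to a $\kk_t$-algebra automorphism of $\kk_t^{\Lambda}[M]$, of its skew-field of fractions $\mr{\s{A}}_q^S$, and of the formal completions $\kk_{t,\sigma}^{\Lambda}\llb M\rrb$ for any choice of cone $\sigma$; this is immediate from the calculation $\phi(z^a z^b)=t^{\Lambda(a,b)}z^{\phi(a+b)}=t^{\Lambda(\phi(a),\phi(b))}z^{\phi(a)+\phi(b)}=\phi(z^a)\phi(z^b)$.

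Granting this, the next step is to compute $\phi(\Psi_t(z^v))$. Because the monomials $z^{kv}$ commute among themselves --- one has $z^{kv}z^{lv}=z^{(k+l)v}$ thanks to $\Lambda(v,v)=0$ --- the element $\Psi_t(z^v)\in\kk_{t,\sigma}^{\Lambda}\llb M\rrb$ admits a universal expansion $\Psi_t(z^v)=\sum_{k\geq 0}a_k(t)\, z^{kv}$ whose coefficients $a_k(t)$ depend only on $k$ and not on $v$. Applying the algebra automorphism $\phi$ term by term then yields
\[
\phi(\Psi_t(z^v))=\sum_{k\geq 0}a_k(t)\, z^{k\phi(v)}=\Psi_t(z^{\phi(v)}),
\]
the last equality being the same universal expansion, which is available since $\Lambda(\phi(v),\phi(v))=0$ as well. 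In particular $\phi(\Psi_t(z^v)^{\epsilon})=\Psi_t(z^{\phi(v)})^{\epsilon}$ for $\epsilon=\pm 1$.

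Finally, I would invoke the identity $\phi\circ\Ad_g\circ\phi^{-1}=\Ad_{\phi(g)}$ with $g=\Psi_t(z^v)^{\epsilon}$ and evaluate at $z^m$ to obtain the displayed equality. The only bookkeeping concern is that $\Ad^{\epsilon}_{\Psi_t(z^v)}$ naturally acts on $\kk_{t,\sigma}^{\Lambda}\llb M\rrb$ for a cone $\sigma$ containing $v$, while $\Ad^{\epsilon}_{\Psi_t(z^{\phi(v)})}$ lives in the analogous completion for a cone $\sigma'\ni\phi(v)$; but since by \eqref{tAdBinomial} the result of applying either operator to a monomial $z^m$ is in fact a (rational) element of the common skew-field $\mr{\s{A}}_q^S$, working there dispenses with the issue. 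This is the only mild obstacle; no genuine difficulty arises.
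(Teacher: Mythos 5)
Your proof is correct and takes essentially the same route as the paper's: observe that $\phi$ extends to a $\kk_t$-algebra isomorphism between the relevant completions/localizations, compute $\phi(\Psi_t(z^v))=\Psi_t(z^{\phi(v)})$, and then invoke functoriality of conjugation. You also (helpfully) flag the implicit hypothesis that $\phi$ preserves $\Lambda$; without it the rule $z^m\mapsto z^{\phi(m)}$ is not an algebra homomorphism and the identity fails, but this hypothesis indeed holds for the only $\phi$ to which the lemma is applied, namely $\psi_{\jj}$, as the paper notes just before stating the lemma.
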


\begin{proof}
Note that $\phi$ induces a $\kt$-algebra isomorphism $\phi\colon\kk_t^{\Lambda}[M]\otimes_{\kk_t[z^v]} \kk_t\llb z^v\rrb\rar \kk_t^{\Lambda}[M]\otimes_{\kk_t[z^v]} \kk_t\llb z^{\phi(v)}\rrb$, so the left-hand side of \eqref{TjAdEqn} becomes \begin{align*}
    \phi(\Psi_t(z^v)^{\epsilon} \phi^{-1}(z^m) \Psi_t(z^v)^{-\epsilon}) = \Psi_t(z^{\phi(v)})^{\epsilon} (z^m) \Psi_t(z^{\phi(v)})^{-\epsilon}.
\end{align*}
This is the right-hand side of \eqref{TjAdEqn}, as desired.
\end{proof}

\begin{lem}\label{AdAd}
If 
$n\coloneqq \Lambda(v,p)$ and $z^p\in \kk_t^{\Lambda}[M]$, then
\begin{align*}
   \Ad_{\Psi_t(z^{-v})} \circ \Ad_{\Psi_t(z^v)}(z^p)=z^{p+nv}.
\end{align*}
\end{lem}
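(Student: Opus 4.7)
The plan is to compute each adjoint action separately using~\eqref{AdPsi} and then assemble. Without loss of generality I take $n > 0$: the case $n = 0$ is immediate since then $\omega(\pm v, p) = 0$, so both products in~\eqref{AdPsi} are empty and each adjoint action fixes $z^p$. The case $n < 0$ follows from the $n > 0$ case by swapping $v \leftrightarrow -v$, using that $\Ad_{\Psi_t(z^v)}$ and $\Ad_{\Psi_t(z^{-v})}$ commute (their logarithms lie in the commutative Lie subalgebra generated by $\{z^{kv}\}_{k\in\mathbb{Z}}$, since $\omega(v,v)=0$).

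For $n > 0$, equation~\eqref{AdPsi} with $\epsilon = +1$ gives
\[
\Ad_{\Psi_t(z^v)}(z^p) = z^p Q, \qquad Q \coloneqq \prod_{k=1}^{n}(1 + t^{2k-1}z^v) \in \kk_t[z^v].
\]
For the other factor, the direct application of~\eqref{AdPsi} with $v \rightsquigarrow -v$ produces only the ``wrong'' sign, since $\omega(-v,p) = -n < 0$ forces $\epsilon = -1$; it yields $\Ad_{\Psi_t(z^{-v})^{-1}}(z^p) = z^p R$ with $R \coloneqq \prod_{k=1}^{n}(1 + t^{-(2k-1)}z^{-v}) \in \kk_t[z^{-v}]$. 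The trick I plan to use is that $R$ and $\Psi_t(z^{-v})$ both lie in $\kk_t\llb z^{-v}\rrb$ and therefore commute, so $\Ad_{\Psi_t(z^{-v})}(R) = R$; applying $\Ad_{\Psi_t(z^{-v})}$ to the identity $\Ad_{\Psi_t(z^{-v})^{-1}}(z^p) = z^p R$ and inverting then gives $\Ad_{\Psi_t(z^{-v})}(z^p) = z^p R^{-1}$. Since $Q$ commutes with $\Psi_t(z^{-v})$ as well (because $z^v z^{-v} = z^{-v} z^v = 1$), we obtain
\[
\Ad_{\Psi_t(z^{-v})} \circ \Ad_{\Psi_t(z^v)}(z^p) = \Ad_{\Psi_t(z^{-v})}(z^p Q) = z^p R^{-1} Q.
\]

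The final step is the polynomial identity $R^{-1}Q = t^{n^2} z^{nv}$, which I plan to derive from the factorization
\[
1 + t^{2k-1}z^v = (1 + t^{-(2k-1)}z^{-v})\cdot(t^{2k-1} z^v),
\]
valid since $z^{-v} z^v = 1$; the two factors on the right commute with each other and with the corresponding factors for different $k$, because $z^v$ and $z^{-v}$ commute. Taking the product over $k = 1, \ldots, n$ and using $\sum_{k=1}^{n}(2k-1) = n^2$ together with $(z^v)^n = z^{nv}$ gives $Q = R \cdot t^{n^2} z^{nv}$, hence $R^{-1}Q = t^{n^2} z^{nv}$. Then $z^p \cdot t^{n^2} z^{nv} = t^{n^2 + \omega(p,nv)} z^{p+nv} = t^{n^2 - n^2} z^{p+nv} = z^{p+nv}$, as desired. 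The main obstacle is conceptual rather than computational: $\Psi_t(z^v)$ and $\Psi_t(z^{-v})$ naturally belong to different formal completions of $\kk_t^{\Lambda}[M]$ (associated to opposite rays), so one cannot multiply them directly in a single completion; the inversion trick for $\Ad_{\Psi_t(z^{-v})}$ sidesteps this by reducing all algebraic manipulation to the polynomial subalgebra $\kk_t[z^{\pm v}]$.
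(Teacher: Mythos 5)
Your proof is correct and takes essentially the same route as the paper: compute both adjoint actions via \eqref{AdPsi} (using the inversion trick to get the $\epsilon=-1$ case in the correct orientation), then cancel $Q$ against $R$ via the telescoping factorization $1+t^{2k-1}z^v=(1+t^{-(2k-1)}z^{-v})t^{2k-1}z^v$. The paper simply writes the resulting product $z^p(1+z^vt)\cdots(1+z^vt^{2n-1})(1+z^{-v}t^{1-2n})^{-1}\cdots(1+z^{-v}t^{-1})^{-1}=z^pz^{nv}t^{n^2}$ and collapses it in one line, whereas you spell out the inversion step for $\Ad_{\Psi_t(z^{-v})}$, the commutation arguments, and the polynomial identity explicitly; all of these details are sound, and your closing remark about the two completions matches the caveat the paper places just before the lemma.
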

In order to make sense of the operators on the left hand side, we proceed as in the definition of $\mu_j^{\s{A}}$.  First we embed $\kk_t^{\Lambda}[M]\hookrightarrow \mr{\s{A}}_q^S$.  Then $\Ad_{\Psi_t(z^v)}(z^p)$ is an automorphism of (some) completion of $\kk_t^{\Lambda}[M]$ containing $\Psi_t(z^v)$, which induces an automorphism of $\mr{\s{A}}_q^S$, while $\Ad_{\Psi_t(z^{-v})}$ is an automorphism of some (other) completion of $\kk_t^{\Lambda}[M]$, inducing another automorphism of $\mr{\s{A}}_q^S$.
\begin{proof}
Suppose $n\geq 0$.  Then 
\begin{align*}
   \Ad_{\Psi_t(z^{-v})} \circ \Ad_{\Psi_t(z^v)}(z^p)=&z^p(1+z^vt)(1+z^vt^3)\cdots (1+z^vt^{2n-1})(1+z^{-v}t^{1-2n})^{-1}\cdots (1+z^{-v}t^{-1})^{-1} \\
    =&z^p z^{nv}t^{n^2} \\
    =&z^{p+nv}.
\end{align*}
A similar calculation applies for $n\leq 0$.
\end{proof}

\begin{myproof}[Proof of Proposition \ref{Chambers}]
The existence of the fan $\s{C}$ satisfying Properties (1) and (2) follows from the classical case \cite[Construction 1.30]{GHKK} because positivity implies that the set of walls is unchanged when we take the $t\mapsto 1$ limit.  Consider a wall $(\f{d},f)$ of $\s{C}$ with associated vector $v = \pm \psi_{\jj}^{-1}(B_1(e_{S_{\jj},j}))$ as in the statement of (3).  The classical analog of (3) says that
\begin{align}\label{limf}
    \lim_{t\rar 1} f = \lim_{t\rar 1} \Psi_t(z^{v}).
\end{align}  
Theorem \ref{PosScat} tells us that $f=\bb{E}(-p(t)z^{v})$ for $p(t)$ a bar-invariant Laurent polynomial in $t$ with positive integer coefficients.  By \eqref{limf}, $p(1)$ must equal $1$, and so (using positivity and bar-invariance) the only possibility is that $p(t)=1$, as desired.

For (4), let $\jj=(j_1,\ldots,j_s)$, and suppose inductively that $\psi_{\jj_s}\circ \iota_{\jj_s}=\mu^{\s{A}}_{\jj_s}$.  By (3), we know that the wall-crossing automorphism for crossing from $\s{C}_{\jj_s}$ to $\s{C}_{\jj}$ is given by $\Ad^{\epsilon}_{ \Psi_t(z^{\psi_{\jj}^{-1}(-\epsilon v)})}$, where $v=B_1(e_{S_{\jj_s},j_s})$, so $\mu^{\s{A}}_{S_{\jj_s},j_s}=\Ad^{-1}_{\Psi_t(z^v)}$.  The sign $\epsilon=\pm 1$ is, on one hand, determined by the requirement that $\epsilon v\in \sigma_{S,\s{A}}$.  On the other hand, by the definition of the wall-crossing automorphism \eqref{WallCross}, the exponent for $\Ad$ should be $\sign(\Lambda(\psi_{\jj_s}^{-1}(-\epsilon v),p))$ for $p$ in the interior of $\s{C}_{\jj_s}$.  To see that this is $\epsilon$, we compute
\begin{align*}
    \sign(\Lambda(\psi_{\jj_s}^{-1}(-\epsilon v),p))=-\epsilon \sign(\Lambda(v,\psi_{\jj_s}(p))) = \epsilon \sign(\langle e_{S_{\jj_s},j_s}, \psi_{\jj_s}(p)\rangle)=\epsilon,
\end{align*}
where the last equality uses part (2) to say that $p\in C_{S_{\jj_s}}^+$.

Now, we have
\begin{align*}
    \psi_{\jj}\circ \iota_{\jj}&=\psi_{\jj}\circ \Ad^{\epsilon}_{\Psi_t(z^{\psi_{\jj}^{-1}(-\epsilon v)})} \circ \iota_{\jj_s} \\
    &= \psi_{\jj}\circ \Ad^{\epsilon}_{\Psi_t(z^{\psi_{\jj}^{-1}(-\epsilon v)})} \circ \psi_{\jj_s}^{-1}\circ \mu^{\s{A}}_{S,\jj_s} \\
    &= \Ad^{\epsilon}_{\Psi_t(z^{-\epsilon v})} \circ \psi_{\jj} \circ \psi_{\jj_s}^{-1}\circ \mu^{\s{A}}_{S,\jj_s},
\end{align*}
where the second equality uses the inductive assumption and the last equality uses Lemma \ref{TjAdLem}.  Since $\mu^{\s{A}}_{S,\jj}=\mu^{\s{A}}_{S_{\jj_s},j_s} \circ \mu^{\s{A}}_{S,\jj_s}$, applying $(\mu^{\s{A}}_{\jj_s})^{-1}$ on the right and applying $\Ad^{-\epsilon}_{\Psi_t(z^{-\epsilon v})}$ on the left reduces the desired result to
\begin{align*}
    \psi_{\jj}\circ \psi_{\jj_s}^{-1} = \Ad^{-\epsilon}_{\Psi_t(z^{-\epsilon v})} \circ \Ad^{-1}_{\Psi_t(z^v)}.
\end{align*}
For $\epsilon=-1$, the right-hand side is the identity, and for $\epsilon=1$, Lemma \ref{AdAd} tells us that the right-hand side is given by $z^p\mapsto z^{p-\Lambda(v,p)v}$.  So it suffices to check that $\psi_{\jj}\circ \psi_{\jj_s}^{-1}$ is determined by the appropriate linear map.  Since the above arguments all apply in the classical limit in the same way, and we know the result holds in the classical limit, it follows that $\psi_{\jj}\circ \psi_{\jj_s}^{-1}$ is indeed determined by this linear map, and the result follows.
\end{myproof}

\begin{rmk}
\cite{Mou} proves the analogous chamber decomposition for Hall algebra scattering diagrams (as in \cite{Bridge}) associated to quivers with non-degenerate potential.  It is not always the case that the associated quantum stability scattering diagram is equivalent to $\f{D}^{\s{A}_q}$ (unless the potential is genteel as in Def. \ref{genteel}), but \cite[Thm. 1.1]{Mou} together with Proposition \ref{Chambers} above imply that, if the potential is non-degenerate, then the two will at least agree on the support of $\s{C}$. \end{rmk}

\subsection{Theta functions as elements of the quantum cluster algebras}\label{thetas-in-clusters}

We now use Proposition \ref{Chambers} to deduce our results on the ordinary and middle quantum cluster algebras.  Recall the linear maps $\psi_{\jj}$ as in \eqref{psij}, as well as the induced automorphisms of the quantum torus algebras, also denoted $\psi_{\jj}$.

\begin{cor}\label{UpTheta}
Consider $p\in M$ and generic $\sQ,\sQ_{\jj}$ in $\s{C}_S$ and $\s{C}_{S_{\jj}}$, respectively.  Suppose $\vartheta_{p,\sQ}\in \s{A}_q^S$, i.e., $\vartheta_{p,\sQ}$ is a Laurent polynomial, not just a formal Laurent series.  Then $\vartheta_{p,\sQ_{\jj}}$ is also a Laurent polynomial.  Furthermore, $\psi_{\jj}(\vartheta_{p,\sQ_{\jj}})= \mu^{\s{A}}_{S,\jj}(\vartheta_{p,\sQ})\in \s{A}_q^{S_{\jj}}$, so $\vartheta_{p}$ determines an element of $\s{A}_q^{\up}$.

If $p\in \s{C}_{S_{\jj}}^+\cap M$ and $\sQ$ is a generic point in the interior of $\s{C}_{S_{\jj}}^+$, then $\vartheta_{p,\sQ}=z^p$, and so $\vartheta_{p,\sQ}$ is a quantum cluster monomial.  On the other hand, all quantum cluster monomials are quantum theta functions.
\end{cor}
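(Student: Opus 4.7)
The plan combines Theorem \ref{CPS}, Proposition \ref{Chambers}(4), and universal positivity (Theorem \ref{qPosLoc}) to reduce the Laurent polynomial assertion to the classical theorem of \cite{GHKK}. First I will establish the identity $\psi_{\jj}(\vartheta_{p,\sQ_{\jj}})=\mu^{\s{A}}_{S,\jj}(\vartheta_{p,\sQ})$ in a completion of $\kk_t^{\Lambda}[M]$. Choosing a path $\gamma$ from $\sQ$ to $\sQ_{\jj}$ in $M_{\mathbb{R}}\setminus \Joints(\f{D}^{\s{A}_q})$ transverse to the walls it crosses, Theorem \ref{CPS} gives $\vartheta_{p,\sQ_{\jj}}=\iota_{\jj}(\vartheta_{p,\sQ})$ with $\iota_{\jj}\coloneqq \Ad_{\theta_{\gamma,\f{D}^{\s{A}_q}}}$. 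Both $\psi_{\jj}\circ\iota_{\jj}$ and $\mu^{\s{A}}_{S,\jj}$ extend to continuous $\kt$-algebra automorphisms $\wh{\s{A}}_q^S\risom \wh{\s{A}}_q^{S_{\jj}}$ (the former by Lemma \ref{AdHomeo}; the latter because each $\Ad_{\Psi_t(z^{B_1(e_j)})^{-1}}$ is continuous on the appropriate completion). They agree on the dense subalgebra $\s{A}_q^{\up}$ by Proposition \ref{Chambers}(4), hence on all of $\wh{\s{A}}_q^S$, and applying them to $\vartheta_{p,\sQ}\in\s{A}_q^S$ yields the identity.

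To deduce that $\vartheta_{p,\sQ_{\jj}}$ is itself a Laurent polynomial, I will transfer finiteness of support from the classical setting. By Proposition \ref{q2classicalTheta}, $\lim_{t\to 1}\vartheta_{p,\sQ}$ and $\lim_{t\to 1}\vartheta_{p,\sQ_{\jj}}$ are the corresponding classical theta functions of \cite{GHKK}. Since $\vartheta_{p,\sQ}\in\s{A}_q^S$, its classical limit lies in $\s{A}^S$ and so has finite support; the classical version of the corollary proved in \cite{GHKK} (established there by the same combination of classical wall-crossing consistency and the chamber structure of the cluster complex) then implies that $\lim_{t\to 1}\vartheta_{p,\sQ_{\jj}}$ is also a Laurent polynomial in $\s{A}^{S_{\jj}}$, i.e.\ has finite support. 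By universal positivity (Theorem \ref{qPosLoc}), each coefficient of $\vartheta_{p,\sQ_{\jj}}$ lies in $\mathbb{Z}_{\geq 0}[t^{\pm 1}]$ and so vanishes if and only if its value at $t=1$ vanishes. Hence $\vartheta_{p,\sQ_{\jj}}$ has the same (finite) support as its classical limit and is a Laurent polynomial in $\s{A}_q^{S_{\jj}}$. As $\jj$ is arbitrary, $\vartheta_p\in \bigcap_{\jj}\s{A}_q^{\jj}=\s{A}_q^{\up}$.

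For the cluster monomial statement, assume $p\in\s{C}_{S_{\jj}}^+\cap M$ and $\sQ$ is in the interior of the same convex chamber of the cluster complex. The straight ray from $\sQ$ in direction $-p$ stays inside this chamber and hence crosses no walls of $\f{D}^{\s{A}_q}$, so the only broken line with ends $(p,\sQ)$ is the trivial one with attached monomial $z^p$, giving $\vartheta_{p,\sQ}=z^p$. The identity from the first part then yields $\mu^{\s{A}}_{S,\jj}(z^p)=z^{\psi_{\jj}(p)}$, and since $\s{C}_{S_{\jj}}=T_{\jj}^{-1}(C^+_{S_{\jj}})$ with $\psi_{\jj}$ agreeing with $T_{\jj}$ on this chamber, we have $\psi_{\jj}(p)\in C^+_{S_{\jj}}\cap M$; therefore $z^{\psi_{\jj}(p)}$ is a quantum cluster monomial of the seed $S_{\jj}$, realizing $\vartheta_p$ as a cluster monomial. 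Conversely, any cluster monomial $(\mu^{\s{A}}_{S,\jj})^{-1}(z^m)$ for $m\in C^+_{S_{\jj}}\cap M$ coincides with $\vartheta_p$ for $p\coloneqq\psi_{\jj}^{-1}(m)\in\s{C}_{S_{\jj}}$ by the forward direction.

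The main obstacle is the combined use of continuity and positivity to upgrade the formal equality $\psi_{\jj}\circ\iota_{\jj}=\mu^{\s{A}}_{S,\jj}$ (a priori only stated on $\s{A}_q^{\up}$ in Proposition \ref{Chambers}(4)) into an honest assertion that $\vartheta_{p,\sQ_{\jj}}$ is a Laurent polynomial rather than a formal series. Extending the equality to $\wh{\s{A}}_q^S$ requires the compatibility of both sides with the $\f{m}$-adic topology, while promoting this formal identity to the conclusion that the rational function $\mu^{\s{A}}_{S,\jj}(\vartheta_{p,\sQ})$ actually has finite Laurent support uses universal positivity in an essential way to identify supports of quantum and classical theta functions.
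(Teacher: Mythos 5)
Your argument for transferring Laurent polynomiality from the classical setting via universal positivity is essentially the paper's own argument and is fine. However, there are two places where the proposal departs from what can be justified.

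The first concerns the order of deductions. You try to establish $\psi_{\jj}\circ\iota_{\jj}=\mu^{\s{A}}_{S,\jj}$ on all of $\wh{\s{A}}_q^S$ by a density-and-continuity argument, but both ingredients are questionable: $\s{A}_q^{\up}$ is \emph{not} in general dense in $\wh{\s{A}}_q^S$ in the $\f{m}$-adic topology (there need not be any element of $\s{A}_q^{\up}$ with leading term $z^m$ when $m$ lies outside the support of the cluster complex), and $\mu^{\s{A}}_{S,\jj}$ is a composite of operators $\Ad_{\Psi_t(z^{B_1(e_{S_{\jj_k},j_k})})^{-1}}$, each of which acts continuously on a \emph{different} completion (the one attached to the seed $S_{\jj_k}$), so there is no obvious continuous extension $\wh{\s{A}}_q^S\to\wh{\s{A}}_q^{S_{\jj}}$. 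The paper avoids both issues by establishing the Laurent polynomiality of every $\vartheta_{p,\sQ_{\jj}}=\iota_{\jj}(\vartheta_{p,\sQ})$ \emph{first} — exactly the argument you give independently — which places $\vartheta_{p,\sQ}\in\bigcap_{\jj}\iota_{\jj}^{-1}(\s{A}_q^{S_{\jj}})=\s{A}_q^{\up}$, so that Proposition \ref{Chambers}(4) applies to it directly, no completion needed. Your proof would be repaired by simply reordering the two halves.

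The second gap is more substantive. In the cluster monomial part, showing that the straight ray $\tau\mapsto\sQ-\tau p$ stays in the convex chamber $\s{C}_{S_{\jj}}^+$ and hence crosses no walls establishes only that the trivial broken line does not bend. It does \emph{not} rule out other broken lines with ends $(p,\sQ)$ that leave the chamber, bend at walls elsewhere, and return to $\sQ$; nothing in the definition forbids this a priori. Excluding such broken lines requires the convexity results for broken lines proved in \cite[Prop.\ 3.6, 3.8]{GHKK} (or the quantum version, which the paper obtains from the classical one via positivity: any quantum broken line has a nonvanishing classical limit, and the classical result says there is only one). As written, your inference "so the only broken line is the trivial one" is not justified.
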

\begin{proof}
The statement that each $\vartheta_{p,\sQ_{\jj}}$ is a Laurent polynomial follows from its classical analog \cite[Prop. 7.1]{GHKK} and the positivity of broken lines --- since the classical limit has contributions from only finitely many broken lines, and applying $\lim_{t\rar 1}$ does not cause any broken lines to vanish, the quantum theta function must consist of contributions from only finitely many broken lines as well.  The claim that $\psi_{\jj}(\vartheta_{p,\sQ_{\jj}}) = \mu^{\s{A}}_{S,\jj}(\vartheta_{p,\sQ})\in \s{A}_q^{S_{\jj}}$ then follows from Theorem \ref{CPS} and Proposition \ref{Chambers}(4).

For the statements on cluster monomials, \cite[Prop. 3.6 and 3.8]{GHKK} together show that in the classical limit, the only broken line contributing to $\vartheta_{p,\sQ}$ is the straight line with attached monomial $z^p$.  By positivity, this is also the only broken line for the quantum analog (any others would have non-trivial classical limit), and since it has no kinks, the attached monomial must again be the initial monomial $z^p$.
\end{proof}

Let $\Theta_{\s{A}}^{\midd}$ denote the set of $p\in M$ such that $\vartheta_{p,\sQ}$ is a Laurent polynomial for some generic $\sQ\in \s{C}$ (hence for all generic $\sQ\in \s{C}$ by Corollary \ref{UpTheta}).  I.e., $\Theta_{\s{A}}^{\midd}$ is the set of $p$ such that $\vartheta_p\in \s{A}_q^{\up}$ rather than just $\wh{\s{A}}_q^S$.  In particular, $$\s{C}\cap M\subset \Theta_{\s{A}}^{\midd},$$ because for $p\in \s{C}\cap M$ and $\sQ$ a generic point of $M_{\bb{R}}$ sharing a maximal cone of $\s{C}$ with $p$, Corollary \ref{UpTheta} tells us that $\vartheta_{p,\sQ}=z^p$, which in particular is a Laurent polynomial (this yields a new proof of the quantum Laurent phenomenon \eqref{qLaurentPhen}).  In fact, using positivity again, we find that $\Theta_{\s{A}}^{\midd}$ is the same in the quantum setup as in the classical setup, and furthermore, we find that all of \cite[Theorem 7.5]{GHKK} carries over to the quantum setup (cf. Lemma \ref{Thetaprime}).  In particular, we have the following:
\begin{prop}\label{MidAll}
Let $\s{A}_q^{\midd}$ denote the $\kt$-submodule of $\s{A}_q^{\up}$ spanned by $\{\vartheta_p\}_{p\in \Theta_{\s{A}}^{\midd}}$.  Then $\s{A}_q^{\midd}$ is naturally a subalgebra of $\s{A}_q^{\up}$, and we have inclusions of algebras
\begin{align*}
    \s{A}_q^{\ord} \subset \s{A}_q^{\midd} \subset \s{A}_q^{\up} \subset \wh{\s{A}}_q.
\end{align*}
Furthermore, the classical $t\mapsto 1$ limit of $\s{A}_q^{\midd}$ is the classical middle cluster algebra, called $\midd(\s{A})$ in \cite{GHKK}.
\end{prop}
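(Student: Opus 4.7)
The strategy I would pursue is to adapt the proof of the classical analog \cite[Thm. 7.5]{GHKK} to the quantum setting, using quantum positivity (Theorems \ref{qPosLoc} and \ref{qPosStrong}) as the essential extra ingredient that allows facts in the classical setting to be promoted to the quantum setting. The observation underlying the adaptation is that, by strong positivity, there can be no cancellation in either the structure-constant expansion $\vartheta_{p_1}\vartheta_{p_2}=\sum_p\alpha(p_1,p_2;p)\vartheta_p$ or the pointed expansions $\iota_{\sQ}(\vartheta_p)=\sum_v c_{p,v,\sQ}(t)z^{p+v}$, and so finiteness properties in the classical limit transfer directly to the quantum setting (cf. Lemma \ref{Thetaprime}).

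The main step is to show that $\s{A}_q^{\midd}$ is closed under multiplication. Fix $p_1,p_2\in\Theta_{\s{A}}^{\midd}$, so $\vartheta_{p_1},\vartheta_{p_2}\in\s{A}_q^{\up}$ and hence $\vartheta_{p_1}\vartheta_{p_2}\in\s{A}_q^{\up}$. By Theorem \ref{qPosStrong} we write $\vartheta_{p_1}\vartheta_{p_2}=\sum_p\alpha(p_1,p_2;p)\vartheta_p$ with all structure constants positive in $\kt$. Fix a generic $\sQ\in\s{C}_S$ and apply $\iota_{\sQ}$: by Corollary \ref{UpTheta}, $\iota_{\sQ}(\vartheta_{p_1}\vartheta_{p_2})$ is a Laurent polynomial, while each $\iota_{\sQ}(\vartheta_p)$ is a pointed series with positive coefficients. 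Since addition of pointed series with positive coefficients cannot produce cancellation, every $p$ with $\alpha(p_1,p_2;p)\neq 0$ must have $\iota_{\sQ}(\vartheta_p)$ a Laurent polynomial, i.e.\ $p\in\Theta_{\s{A}}^{\midd}$. To see the sum is finite (so the product genuinely lives in the span of the $\vartheta_p$'s), apply $\lim_{t\to 1}$: by Proposition \ref{q2classicalTheta} this yields the corresponding classical product of classical theta functions, whose expansion is finite by \cite[Thm.\ 7.5]{GHKK}, and positivity of $\alpha(p_1,p_2;p)$ ensures that no quantum structure constant vanishes under $t\mapsto 1$, forcing finiteness in the quantum setting. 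This gives closure and also shows that $\s{A}_q^{\midd}$ is spanned by the $\vartheta_p$ with $p\in\Theta_{\s{A}}^{\midd}$ as an actual (not just topological) $\kt$-module.

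The inclusion $\s{A}_q^{\ord}\subset\s{A}_q^{\midd}$ follows from Corollary \ref{UpTheta}: every quantum cluster monomial equals $\vartheta_p$ for some $p\in\s{C}\cap M$, and $\s{C}\cap M\subset\Theta_{\s{A}}^{\midd}$ since such a $\vartheta_p$ expands as $z^p$ (hence a Laurent polynomial) in the cluster chart containing $p$. The inclusions $\s{A}_q^{\midd}\subset\s{A}_q^{\up}\subset\wh{\s{A}}_q^S$ are built into the definitions. For the classical limit, note that by positivity a broken line contributing to $\vartheta_{p,\sQ}$ in the quantum setting never vanishes under $t\mapsto 1$, so $\Theta_{\s{A}}^{\midd}$ coincides with its classical counterpart (this is the content of Theorem \ref{MainThm2}(2)). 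Then Proposition \ref{q2classicalTheta} identifies $\lim_{t\to 1}\vartheta_p$ with the classical theta function indexed by $p$, and these classical theta functions span $\midd(\s{A})$ by \cite[Thm.\ 7.5]{GHKK}; Lemma \ref{Thetaprime} closes the loop by giving equality rather than just inclusion.

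The principal obstacle is the finiteness part of the closure argument --- making sure $\vartheta_{p_1}\vartheta_{p_2}$ is a \emph{finite} $\kt$-combination of $\vartheta_p$'s rather than an infinite (topological) one. Both the assertion that each summand has $p\in\Theta_{\s{A}}^{\midd}$ and the assertion of finiteness reduce, via strong positivity and the no-cancellation principle, to the corresponding classical finiteness statement proved in \cite[Thm.\ 7.5]{GHKK}; Lemma \ref{Thetaprime} is the packaged form of this reduction.
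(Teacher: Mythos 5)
Your proposal is correct and follows the same route the paper takes: the paper's (very terse) argument also reduces to the classical statement \cite[Thm.\ 7.5]{GHKK} via positivity and Lemma \ref{Thetaprime}, exactly as you do. You have fleshed out the no-cancellation and finiteness details more explicitly than the paper bothers to, but the ideas match.
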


For the quantum $\s{X}$-algebras, recall from the proof of Proposition \ref{q2classicalTheta} that the theta functions in $\wh{\s{X}}_q^S$ can be obtained by applying $\xi^{-1}$ to the $M$-degree $0$ theta functions of $\wh{\s{A}}_q^{\prin,S}$, for $M$-degree as in \eqref{Mprime}.  Let $$\Theta^{\midd}_{\s{X}}\coloneqq \Theta_{\s{A}^{\prin}}^{\midd}\cap N',$$
viewed as a subset of $N$ using $\xi^{-1}$.  Here, $\Theta_{\s{A}^{\prin}}^{\midd}$ is defined in the same way as $\Theta_{\s{A}}^{\midd}$, but for the seed $S^{\prin}$ in place of $S$.  By \cite[Lem. 7.10(3)]{GHKK}, the $t\mapsto 1$ limit of any $\vartheta_p$ with $p\in \Theta^{\midd}_{\s{X}}$ is an element of $\s{X}^{\up}$ (as opposed to just $\wh{\s{X}}^S$).  This together with positivity implies that $\vartheta_p$ must have been in $\s{X}_q^{\up}$ (as opposed to $\wh{\s{X}}_q^S$).  In summary, we have the following:
\begin{prop}\label{midX}
Let $\s{X}_q^{\midd}$ denote the $\kt$-submodule of $\wh{\s{X}}_q^S$ spanned by $\{\vartheta_p\}_{p\in \Theta^{\midd}_{\s{X}}}$.  Then  $\s{X}_q^{\midd}$ is naturally a subalgebra of $\s{X}^{\up}_q$, and we have the inclusions of algebras
\begin{align*}
   \s{X}_q^{\ord} \subset \s{X}^{\midd}_q \subset \s{X}^{\up}_q \subset \wh{\s{X}}_q.
\end{align*}
Furthermore, the classical $t\mapsto 1$ limit of $\s{X}_q^{\midd}$ is the algebra $\midd(\s{X})$ of \cite{GHKK}.
\end{prop}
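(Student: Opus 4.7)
The strategy is to mirror the proof of Proposition \ref{MidAll}, using the $\xi$-dictionary of Lemma \ref{prin2X} to port $\s{A}^{\prin}$-side facts across to $\s{X}$. The inclusion $\s{X}_q^{\midd}\subset\s{X}_q^{\up}$ is already established in the paragraph immediately preceding the proposition: since $\lim_{t\to 1}\vartheta_p\in\s{X}^{\up}$ by \cite[Lem. 7.10(3)]{GHKK} for $p\in\Theta^{\midd}_{\s{X}}$, the positivity of broken lines in Theorem \ref{qPosLoc} forbids any cancellations under $t\mapsto 1$, so the expansion of $\vartheta_p$ in every quantum cluster chart must already be a Laurent polynomial. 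The final inclusion $\s{X}_q^{\up}\subset\wh{\s{X}}_q^S$ is the quantum Laurent phenomenon applied to the empty mutation sequence, and $\s{X}_q^{\ord}\subset\s{X}_q^{\midd}$ follows by adapting Corollary \ref{UpTheta} through Lemma \ref{prin2X}: each quantum global monomial is a theta function $\vartheta_p$ for $p$ in the $\s{X}$-cluster complex, and positivity forces the only broken line contributing in the corresponding chamber to be the straight one, so such $p$ automatically lies in $\Theta^{\midd}_{\s{X}}$.

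To see that $\s{X}_q^{\midd}$ is closed under multiplication I would apply Lemma \ref{Thetaprime} with $\Theta=\Theta'=\Theta^{\midd}_{\s{X}}$. GHKK established classically that $\{\lim_{t\to 1}\vartheta_p\}_{p\in\Theta^{\midd}_{\s{X}}}$ forms a $\kk$-module basis for $\midd(\s{X})$; combined with strong positivity of the structure constants (Theorem \ref{qPosStrong}), which guarantees that $\alpha(p_1,p_2;p)$ vanishes whenever its classical counterpart does, this implies that each product $\vartheta_{p_1}\vartheta_{p_2}$ is a \emph{finite} $\kt$-linear combination of $\vartheta_p$ for $p\in\Theta^{\midd}_{\s{X}}$. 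Lemma \ref{Thetaprime} simultaneously upgrades $\{\vartheta_p\}_{p\in\Theta^{\midd}_{\s{X}}}$ from a topological to an honest $\kt$-module basis for $\s{X}_q^{\midd}$.

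With the basis in hand, the classical-limit identification follows by applying $\lim_{t\to 1}$ term by term and invoking Proposition \ref{q2classicalTheta}: the image is the $\kk$-span of the classical $\s{X}$-theta functions indexed by $\Theta^{\midd}_{\s{X}}$, which is $\midd(\s{X})$ by definition. I anticipate no essential new obstacle, since all of the hard work — strong positivity, the $\xi$-intertwining with $\s{A}^{\prin}$, and the classical GHKK result — is already available; the proof is really a matter of transcribing the $\s{A}$-side argument of Proposition \ref{MidAll} through the $\xi$-dictionary, and the only mild subtlety is keeping track of the $M$-degree-zero condition built into the definition of $\Theta^{\midd}_{\s{X}}$ so that the relevant elements of $\s{A}_q^{\prin,\up}$ actually descend through $\xi^{-1}$.
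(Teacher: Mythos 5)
Your proof is correct, and it takes a route that differs from the paper's in one place. For closure of $\s{X}_q^{\midd}$ under multiplication, you re-run the Lemma \ref{Thetaprime} argument directly on the $\s{X}$-side with $\Theta=\Theta'=\Theta^{\midd}_{\s{X}}$, invoking the classical GHKK basis theorem for $\midd(\s{X})$ together with strong positivity to see that the quantum structure constants $\alpha(p_1,p_2;p)$ for $p_1,p_2\in\Theta^{\midd}_{\s{X}}$ are supported on finitely many $p\in\Theta^{\midd}_{\s{X}}$. The paper instead observes that $\s{X}_q^{\midd}=\s{A}_q^{\prin,\midd}\cap\xi(\wh{\s{X}}_q^S)$ and that each factor is a subalgebra: the first by Proposition \ref{MidAll} (which already did the Lemma \ref{Thetaprime} work on the $\s{A}^{\prin}$-side), and the second because $\xi(\wh{\s{X}}_q^S)$ is the degree-zero piece of the $M$-grading, which is an algebra grading by Proposition \ref{homprop}. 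Both arguments ultimately lean on positivity and the classical GHKK result; the paper's version is structurally tidier because it leverages the $\s{A}^{\prin}$ result already in hand and dispatches the ``$M$-degree-zero'' subtlety you flagged at the end by absorbing it into the grading statement, rather than tracking it through a second application of Lemma \ref{Thetaprime}. Your route has the minor virtue of being more self-contained on the $\s{X}$-side. For $\s{X}_q^{\ord}\subset\s{X}_q^{\midd}$ you and the paper agree: identify $\s{X}$-global monomials with degree-zero global monomials in $\s{A}_q^{\prin,\up}$ and apply the $\s{A}^{\prin}$-version of Corollary \ref{UpTheta}. One small terminological point: the inclusion $\s{X}_q^{\up}\subset\wh{\s{X}}_q^S$ is immediate from the definition of $\s{X}_q^{\up}$ as an intersection including the empty mutation sequence; calling it ``the quantum Laurent phenomenon'' conflates it with the inclusion $\s{X}_q^{\ord}\subset\s{X}_q^{\up}$, which is the statement that actually carries content.
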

Here, the fact that $\s{X}_q^{\midd}$ is closed under multiplication follows from the observation that it is the intersection of $\s{A}_q^{\prin,\midd}$ and $\xi(\wh{\s{X}}_q)$, and each of these is closed under multiplication.  The fact that $\s{X}_q^{\ord}\subset \s{X}_q^{\midd}$ follows from noting that global monomials in $\s{X}_q^{\up}$ are just global monomials in $\s{A}_q^{\prin,\up}$ having degree $0$ with respect to the $M$-grading (precisely as in the classical analog, \cite[Lem. 7.10(3)]{GHKK}), and these are theta functions in $\s{A}_q^{\prin,\midd}$.

\subsection{Atlases}\label{iotaQrmk}

We will now describe various atlases --- i.e., sets of morphisms to (formal) quantum torus algebras --- on our quantum cluster algebras.

\subsubsection{Atlases for $\s{A}$}

Let $\sQ,\sQ_0\in M_{\bb{R}}$ be generic with $\sQ_0$ in the interior of $C_S^+$.  Let $\gamma$ be a smooth path from $\sQ_0$ to $\sQ$ which is transverse to the walls of $\f{D}^{\s{A}_q}$.  Then the isomorphism $\iota_{\sQ}\colon \wh{\s{A}}_q^S\risom \kk_{\sigma_{S,\s{A}},t}^{\Lambda}\llb M\rrb$ of \eqref{iotaQintro} is given by the path-ordered product
\begin{align*}
    \iota_{\sQ}\coloneqq \theta_{\gamma,\f{D}^{\s{A}_q}}.
\end{align*}
These charts $\iota_{\sQ}$ for generic $\sQ\in M_{\bb{R}}$ form the scattering atlas for $\wh{\s{A}}^S_q$.

The subset of charts $\iota_{\sQ}$ for generic $\sQ\in \s{C}$ is what we call the cluster atlas.  For $\sQ\in C_{S_{\jj}}^+\subset \s{C}$, denote the corresponding chart $\iota_{\sQ}$ by $\iota_{\jj}$.  If $\sQ\in C_{S_{\jj}}^+$, then Proposition \ref{Chambers}(4) implies that $\psi_{\jj} \circ
\iota_{\sQ}=\mu_{S,\jj}^{\s{A}}$ on $\s{A}_q^{\up}$.  Since $\psi_{\jj}$ is a $\kt$-algebra automorphism of $\kk_t^{\Lambda}[M]$, we may view the restriction of the cluster atlas to $\s{A}_q^{\up}$ as agreeing with the usual set of quantum clusters.

\subsubsection{Atlases for $\s{X}$}\label{AtlasX}
To construct $\iota_{\sQ}$ for the $\s{X}$-spaces in terms of path-ordered products, we use the injections $\xi\colon \wh{\s{X}}_q^S\hookrightarrow \wh{\s{A}}_q^{\prin,S}$ and $\xi\colon N_{\bb{R}}\hookrightarrow M_{\bb{R}}^{\prin}$.  Let $\sQ$ be a generic point in $N_{\bb{R}}$, and let $\sQ_0$ be a generic point in the interior of $C_{S^{\prin}}^+\subset M_{\bb{R}}^{\prin}$.  Let $\gamma$ be a smooth path from $\sQ_0$ to $\xi(\sQ)$ in $M_{\bb{R}}^{\prin}$, transverse to the walls of $\f{D}^{\s{A}_q^{\prin}}$.  Then $\iota_{\sQ}\colon \wh{\s{X}}_q^S\risom \kk_{\sigma_{S,\s{X}},t}^{B}\llb M\rrb$ is given by
\begin{align*}
    \iota_{\sQ}\coloneqq \xi^{-1}|_{\xi(\wh{\s{X}}_q^S)}\circ \theta_{\gamma,\f{D}^{\s{A}_q^{\prin}}}\circ \xi.
\end{align*}
Here we use from the proof of Proposition \ref{q2classicalTheta} that scattering with respect to $\f{D}^{\s{A}_q^{\prin}}$ preserves the $M$-degree and that $\xi(\wh{\s{X}}_q^S)$ can be identified with the subalgebra of degree $0$ elements of $\wh{\s{A}}_q^{\prin,S}$.  This gives the scattering atlas for $\wh{\s{X}}_q^S$.

In fact, we could more generally allow $\gamma$ to end at any generic $\sQ\in M_{\bb{R}}^{\prin}$, not just restricting to $\sQ\in \xi(N_{\bb{R}})$, giving what we call the principal coefficients scattering atlas.  Theorems \ref{qPosLoc} and \ref{qAtomic} still hold for $\wh{\s{X}}_q^S$ when we allow these more general choices of $\sQ$, by the same arguments. 

Suppose $\sQ$ is in the cluster complex for $\s{A}_q^{\prin}$, say in the chamber corresponding to a sequence of mutations $\jj$, and denote the corresponding $\iota_{\sQ}$ by $\iota_{\jj}^{\prin}$.  Let $\psi_{\jj}^{\prin}$ denote the linear automorphism of $M^{\prin}$ defined as in \eqref{psij} but using $S^{\prin}$.  Then $\psi_{\jj}^{\prin}\circ\iota^{\prin}_{\jj}$ agrees with $\mu_{S^{\prin},\jj}^{\s{A}^{\prin}}$ on $\s{A}_q^{\prin,\up}$, and by further restricting, they agree on $\s{X}_q^{\up}$.  Thus, as in the $\s{A}$-case, we can view the cluster atlas on $\s{X}_q^{\up}$ as agreeing with the usual set of quantum clusters, as considered in \cite{FG1}.

\subsection{Enough global monomials}\label{EGM}

Consider the subalgebras $\s{A}_q^{\can}\subset \wh{\s{A}}_q^S$ and $\s{X}_q^{\can} \subset \wh{\s{X}}_q^S$ generated by the theta functions.  By Proposition \ref{TopBasis}, the theta functions form topological bases for these algebras.  The theta functions will form an ordinary basis precisely when the multiplication rule is polynomial, i.e., if for each $p_1,p_2$, there are only finitely many $p$ such that the structure constant $\alpha(p_1,p_2;p)$ is nonzero.  By strong positivity, nonzero structure constants never vanish when applying $\lim_{t\rar 1}$, so this polynomiality property is equivalent to the analogous property in the classical limit (alternatively, this equivalence follows from Lemma \ref{Thetaprime}, with $\Theta$ and $\Theta'$ both equal to the full lattice).  Thus,
\begin{prop}\label{AXcan}
The theta functions form a $\kt$-module basis for $\s{A}_q^{\can}$ (resp. $\s{X}_q^{\can}$) if and only if their classical limits form a $\kk$-module basis for the corresponding classical algebra $\s{A}^{\can}$ (resp. $\s{X}^{\can}$).
\end{prop}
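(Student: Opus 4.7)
The plan is to reduce the question to a comparison of the support of the structure constants $\alpha(p_1,p_2;p)$ between the quantum and classical settings. First I would observe that, by Lemma \ref{PointedBasis} together with its classical analogue and the pointedness of the classical theta functions (Proposition \ref{q2classicalTheta}), the sets $\{\vartheta_p\}_{p\in L_{\s{V}}}$ and $\{\lim_{t\to 1}\vartheta_p\}_{p\in L_{\s{V}}}$ are automatically linearly independent over $\kt$ and $\kk$, respectively, in the relevant ambient formal algebras. Consequently, proving the proposition reduces to the assertion that the quantum theta functions span $\s{V}_q^{\can}$ as an ordinary $\kt$-module if and only if their classical counterparts span $\s{V}^{\can}$ as an ordinary $\kk$-module.

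For the ``only if'' direction, I would apply $\lim_{t\to 1}$ directly. By Proposition \ref{q2classicalTheta} this map sends quantum theta functions to classical theta functions, and its image is by definition $\s{V}^{\can}$. If the quantum theta functions already span $\s{V}_q^{\can}$ over $\kt$, then taking classical limits of finite $\kt$-linear combinations produces finite $\kk$-linear combinations spanning $\s{V}^{\can}$.

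For the ``if'' direction, this is an immediate application of Lemma \ref{Thetaprime} with both $\Theta$ and $\Theta'$ set equal to $L_{\s{V}}$. That lemma is itself built on strong positivity (Theorem \ref{qPosStrong}), which guarantees that each structure constant $\alpha(p_1,\ldots,p_s;p)\in \kt$ vanishes if and only if its value at $t=1$ vanishes. Hence the finite-support property of the structure constants appearing in products of theta functions is the same in the quantum and classical settings, and spanning by finite $\kt$-linear combinations follows from spanning by finite $\kk$-linear combinations.

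The only delicate point, already handled by Lemma \ref{Thetaprime}, is the use of positivity to rule out the possibility that quantum structure constants might have strictly larger support than their classical limits. With strong positivity in hand there is no serious obstacle: the argument essentially amounts to unwinding the fact that both algebras are generated by their respective theta functions and comparing the two multiplication tables entry by entry via $\lim_{t\to 1}$.
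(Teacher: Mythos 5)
Your proof is correct and follows essentially the same route as the paper's: reduce to the question of spanning via pointedness/topological-basis considerations, observe that the quantum-to-classical direction is free because $\lim_{t\to 1}$ preserves finite linear combinations and maps generators to generators, and invoke Lemma \ref{Thetaprime} (i.e.\ strong positivity) for the reverse direction. The paper phrases the comparison in terms of polynomiality of the multiplication rule rather than spanning, but the content is identical, and you are right to note that positivity is only genuinely needed for the classical-to-quantum implication.
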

Conditions implying polynomiality in the classical setup are explored in \cite{GHKK}.  In particular, \cite[Thm. 0.12(1)]{GHKK} states that the multiplication of theta functions in $\s{A}^{\can}$ (resp. $\s{X}^{\can}$) is polynomial whenever $\s{A}^{\can}$ (resp. $\s{X}^{\can}$) has ``Enough Global Monomials,'' as we will now define.

Consider a Laurent polynomial $g=\sum_{v\in L} c_vz^v$ in a quantum torus algebra $\kk_t^{\omega}[L]$ or in the classical limit $\kk[L]$.  Let $u\in L^*$.  Then $g^{\trop}(u)$ is defined by
\begin{align*}
    g^{\trop}(u)\coloneqq \min_{v\in L|c_v\neq 0} \langle v,u\rangle.
\end{align*}
Geometrically (at least in the classical setup), this should be viewed as the valuation of $g$ along the toric boundary divisor corresponding to the direction $u$.

\begin{dfn}\label{EGMdef}
One says that $\s{A}_q$ (resp., $\s{X}_q$) has \textbf{enough global monomials (EGM)} if for every $u\in N_S$ (resp., every $u\in M_S$), there exists a global monomial $g\in \s{A}_q^{\ord}\subset \s{A}_q^S= \kk_t^{\Lambda}[M]$ (resp. $g\in \s{X}_q^{\ord}\subset \s{X}_q^S=\kk_t^{B}[N]$) such that $g^{\trop}(u)< 0$.
\end{dfn}
The analogous definition applies in the classical setup.  Of course, having EGM in the quantum setup is equivalent to having EGM in the classical setup by our usual argument: positivity implies that no terms vanish when applying $\lim_{t\rar 1}$.  See \cite[Prop. 0.14]{GHKK} for conditions implying that $\s{A}^{\prin}$ has EGM.

\begin{prop}\label{canEGM}
Suppose $\s{A}_q$ (resp. $\s{X}_q$) has enough global monomials.  Then $\s{A}_q^{\can}$ (resp., $\s{X}^{\can}_q$) is finitely generated over $\kt$.  Furthermore, each element of $\s{A}_q^{\up}$ (resp. $\s{X}_q^{\up}$) is a finite $\kk_t$-linear combination of theta functions, so in particular we have  $\s{A}_q^{\up} \subset \s{A}_q^{\can}$ (resp., $\s{X}_q^{\up}\subset \s{X}_q^{\can}$).
\end{prop}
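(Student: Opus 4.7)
The plan is to reduce both statements to their classical analogues in \cite[Thm. 0.12]{GHKK}, using strong positivity to lift the conclusions back to the quantum setting. I will concentrate on the $\s{A}$ case; the $\s{X}$ case then follows either by the same argument or by restricting to the $M$-degree zero part of the $\s{A}^{\prin}$ case via $\xi$ as in \S \ref{AtlasX}.

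First I observe that the quantum EGM hypothesis for $\s{A}_q$ is equivalent to the classical EGM hypothesis for $\s{A}$: a quantum global monomial and its classical limit share the same Newton polytope (no term is killed by $\lim_{t\to 1}$ thanks to strong positivity, Theorem \ref{qPosStrong}), and tropicalization only sees the Newton polytope. Hence by \cite[Thm. 0.12(1)]{GHKK}, $\s{A}^{\can}$ is finitely generated over $\kk$ and the classical theta functions form an ordinary $\kk$-module basis. Combined with Proposition \ref{AXcan}, this already gives that the quantum theta functions form an ordinary $\kt$-module basis for $\s{A}_q^{\can}$.

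Now for finite generation: by the classical result we may choose a finite set $T \subset \Theta_{\s{A}}^{\midd}$ such that the classical theta functions $\{\lim_{t\rar 1}\vartheta_p\}_{p\in T}$ generate $\s{A}^{\can}$ as a $\kk$-algebra, and such that each $p \in T$ corresponds to a global monomial. By Corollary \ref{UpTheta}, the lifts $\{\vartheta_p\}_{p\in T}$ are quantum global monomials and in particular lie in $\s{A}_q^{\up}\subset \s{A}_q^{\can}$. Let $\s{B}$ be the $\kt$-subalgebra of $\s{A}_q^{\can}$ they generate. Applying Lemma \ref{Thetaprime} with $\Theta=T$ and $\Theta' = M$ (valid because by choice of $T$, the classical theta functions $\{\lim_{t\to 1}\vartheta_p\}_{p\in M}$ form a $\kk$-basis for the classical limit $\lim_{t\to 1}\s{B} = \s{A}^{\can}$), we conclude that $\{\vartheta_p\}_{p\in M}$ is an ordinary $\kt$-basis for $\s{B}$. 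Since the same is true for $\s{A}_q^{\can}$ by Proposition \ref{AXcan}, we get $\s{B} = \s{A}_q^{\can}$, proving finite generation.

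For the inclusion $\s{A}_q^{\up}\subset \s{A}_q^{\can}$: given $f \in \s{A}_q^{\up}$, expand it in the topological theta basis as $f = \sum_{p\in M} a_p \vartheta_p$ with $a_p \in \kt$, and let $T_f \coloneqq \{p \in M \colon a_p \neq 0\}$. I claim $T_f$ is finite. Since the classical theta expansion of $\lim_{t\to 1}f \in \s{A}^{\up}$ is supported on $\lim_{t\to 1}T_f$ and is finite by \cite[Thm. 0.12(2)]{GHKK}, positivity (Theorem \ref{qPosLoc}, applied to each $\vartheta_p$ in a fixed chart $\iota_{\sQ}$) ensures that the classical specialization does not kill any $a_p$: if $\iota_{\sQ}(\vartheta_p)$ is $p$-pointed with non-negative coefficients summing to the nonzero classical theta function, then $a_p\neq 0$ in $\kt$ forces $\lim_{t\to 1}a_p\neq 0$. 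Thus $T_f$ injects into the (finite) classical support, and $f$ is a finite $\kt$-linear combination of theta functions, yielding $f\in \s{A}_q^{\can}$.

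The main obstacle in this outline is verifying that no cancellation occurs when passing to $t=1$ in the theta expansion of an arbitrary (not a priori positive) $f \in \s{A}_q^{\up}$. This is where the pointedness from Theorem \ref{MainThm}, combined with the positivity of each individual $\iota_{\sQ}(\vartheta_p)$, is essential: in a chart $\iota_{\sQ}$ sufficiently close to some $p_0 \in T_f$, the leading $z^{p_0}$ coefficient of $\iota_{\sQ}(f)$ equals $a_{p_0}$ exactly (no other $\vartheta_p$ contributes to this monomial, by pointedness and a large enough choice of truncation as in the proof of Theorem \ref{qAtomic}), so $a_{p_0}$ is a coefficient of the Laurent polynomial $\iota_{\sQ}(f)$ and its classical limit is automatically well-defined and nonzero precisely when $a_{p_0}$ is.
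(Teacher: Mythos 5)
The finite-generation half of your argument is essentially sound once stripped of an unnecessary (and false) assumption: you insist the finite generating set $T$ can be chosen to consist of global monomials, which would force $\s{A}^{\ord}=\s{A}^{\can}$ and is certainly not available in general. But you never need it: $\vartheta_p\in\s{A}_q^{\can}$ for \emph{every} $p\in M$ by definition of $\s{A}_q^{\can}$, so the lifts $\{\vartheta_p\}_{p\in T}$ belong to $\s{A}_q^{\can}$ regardless of whether they are global monomials. After deleting that clause, the combination of the classical finite generation, Lemma~\ref{Thetaprime}, and Proposition~\ref{AXcan} does give $\s{B}=\s{A}_q^{\can}$.

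The argument for $\s{A}_q^{\up}\subset\s{A}_q^{\can}$, however, has a genuine gap. You argue that the quantum support $T_f=\{p\colon a_p\neq 0\}$ injects into the (finite) classical support because ``the classical limit is automatically well-defined and nonzero precisely when $a_{p_0}$ is.'' That last equivalence is false: $a_{p_0}$ is just an element of $\kt=\kk[t^{\pm 1/D}]$, and $a_{p_0}\neq 0$ does not imply $\lim_{t\to 1}a_{p_0}\neq 0$ --- take $a_{p_0}=t-1$. Pointedness and positivity of the \emph{theta functions} $\iota_{\sQ}(\vartheta_p)$ tell you nothing about positivity of the \emph{coefficients} $a_p$ of an arbitrary $f\in\s{A}_q^{\up}$, which need not be a positive element at all. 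So $T_f$ could in principle be strictly larger than the classical support (indeed, if some $g\in\s{A}_q^{\up}$ had infinite theta support, then $f=(t-1)g$ would have $\lim_{t\to 1}f=0$ with trivial classical support while $T_f=T_g$ is infinite), and the claimed injection simply does not hold. Fixing this requires bounding $T_f$ a priori, not by passing to $t=1$, but by a convexity/valuation estimate against the global monomials --- which is exactly the content of \cite[Prop.~8.22]{GHKK}. The paper's proof takes the latter route: it observes that the GHKK argument uses only piecewise-linear convexity and the directions/exponents of broken lines, data which is unchanged in the quantum setting, so the classical proof carries over verbatim rather than being imported through $\lim_{t\to 1}$. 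Your reduction-via-positivity strategy works beautifully elsewhere in the paper precisely because the objects compared (theta coefficients, structure constants) \emph{are} positive; here the relevant objects are not, and the strategy breaks down.
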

\begin{proof}
The classical analog of the first statement for $\s{A}^{\prin}$ and $\s{X}$ is \cite[Cor. 8.21]{GHKK}, 
while the classical analog of the second statement for $\s{A}^{\prin}$ is \cite[Prop. 8.22]{GHKK}.  The same argument applies to prove the second statement for $\s{X}$, and if the injectivity assumption holds, then the same arguments also apply to $\s{A}$.  The proof of the quantum version is the same.  Indeed, the arguments of \cite[\S 8.1-\S 8.3]{GHKK} are based entirely on properties of convex piecewise-linear functions and on the directions of broken lines/exponents of the associated monomials, and this data is identical in the classical and quantum setups.
\end{proof}

\subsection{Proofs of the main theorems}\label{MainProof}
\begin{myproof}[Proof of Theorem \ref{MainThm}]
We now put our results together to complete the proof of Theorem \ref{MainThm}.  First, note that all broken lines, mutations, and actions of path-ordered products could be defined over $\bb{Z}_t$ rather than $\kk_t$, so we may work  over $\bb{Z}_t$ as in \S \ref{MainResultsSection}.  The statement that the theta functions form a topological basis for $\wh{\s{V}}_q^S$ is part of Proposition \ref{TopBasis}.  The statement that the global monomials are elements of the theta basis is Corollary \ref{UpTheta} in the $\s{A}$ case and the proof of Proposition \ref{midX} in the $\s{X}$ case.

The fact that $\iota_{\sQ}(\vartheta_p)$ has coefficients in $\Z[t^{\pm 1}]$ instead of $\Z[t^{\pm 1/D}]$ was already implicit in our definition of the theta functions in \S \ref{theta-def} --- the fundamental point here is the assumption that $\omega(L_0,L)$ is contained in $\bb{Z}$ instead of just $\bb{Q}$.  That $\vartheta_p$ is $p$-pointed is given by \eqref{pointed}.  Next we note that the functions on the walls of $\f{D}^{\s{V}_q}$ are bar-invariant and pre-Lefschetz by the corresponding statements in Theorem \ref{PosScat}.  That $\vartheta_p$ is bar-invariant and pre-Lefschetz then follows as a special case of Proposition \ref{bar-pL}.  The claim regarding the parity of the coefficients is a special case of Proposition \ref{ParitProp}.

Next, positivity of the coefficients $\alpha(p_1,p_2;p)$ is given by Theorem \ref{qPosStrong}.  Finally, atomicity is a special case of Theorem \ref{qAtomic}, with the extension to the principal coefficients scattering atlas following as in \S \ref{AtlasX}.
\end{myproof}

\begin{myproof}[Proof of Theorem \ref{MainThm2}]

(1) is just Proposition \ref{q2classicalTheta}.  (2) follows from our usual positivity arguments --- the cardinality of the set of broken lines with nonzero attached monomial is unchanged when we apply $\lim_{t\rar 1}$.  The statement that $\Theta_{\s{V}}^{\midd}=\Theta^{\midd}_{\s{V},\bb{R}}\cap L_{\s{V}}$ for $\Theta^{\midd}_{\s{V},\bb{R}}$ a convex cone then follows from \cite[Thm. 0.3(4)]{GHKK}.  That this remains a convex cone 
under the application of any $T_{\jj}^{\s{V}}$ is then a consequence of (6), which itself follows from Corollaries \ref{TjCor} and \ref{FormalLaurent}.

(3) is Proposition \ref{MidAll} for $\s{V}=\s{A}$ and Proposition \ref{midX} for $\s{V}=\s{X}$.  (4) is the combination of Propositions \ref{AXcan} and \ref{canEGM}.  The first statement of (5) is a special case of (2), and the second statement then follows from the last part of (4).  (7) follows from Proposition \ref{homprop}.

Next, (8) follows from Lemmas  \ref{pi1} and \ref{pi1Theta}.  The $*=\up$ case is part of Lemma \ref{pi1}, while the $*=\can$ case follows from the fact that $B_1$ takes theta functions to theta functions, which is the statement of Lemma \ref{pi1Theta}.  Combining these, a theta function in $\s{X}_q^{\up}$ maps to a theta function and also to an element of $\s{A}_q^{\up}$, so the $*=\midd$ case follows.  The $*=\ord$ case follows from noting that $B_1$ takes global monomials to global monomials.  The fact that the image has degree $0$ is immediate from the definition of the $K_{\s{A}}$--grading and from the homogeneity of the theta functions as in Proposition \ref{homprop}.

Finally, (9) is a special case of Proposition \ref{HV}.
\end{myproof}

\begin{myproof}[Proof of Theorem \ref{AcyclicLefschetz}]
By definition, $S$ being acyclic means that $(B(e_i,e_j))_{i,j\in I\setminus F}$ is the signed adjacency matrix of an acyclic quiver.  If there is a compatible $\Lambda$, then \eqref{Lambda} implies that $$\left(\Lambda(B_1(e_i),B_1(e_j))\right)_{i,j\in I\setminus F}$$ is also the signed adjacency matrix of an acyclic quiver.   Theorem \ref{acyclic_scat_thm} thus applies to by $\f{D}^{\s{X}_q}$ and $\f{D}^{\s{A}_q}$.  Finally, the claim for the theta functions follows using Lemma \ref{Lefschetz_prop}.
\end{myproof}

\begin{myproof}[Proof of Theorem \ref{flat}]
It follows from Theorem \ref{MainThm2}(2,3) for $*=\midd,\can$, or more directly for $*=\ord$, that the images of these restrictions are the corresponding classical analogs, so in these three cases they indeed give surjective homomorphisms $\lim_{t\rar 1}\colon \s{V}_q^*\rar \s{V}^*$.  Now, since $\langle t^{1/D}-1\rangle\wh{\s{V}}_q^S$ is the kernel of $\lim_{t\rar 1}\colon \wh{\s{V}}_q^S\rar \wh{\s{V}}^S\coloneqq \kk_{\sigma}\llb L\rrb$, the injectivity problem amounts to showing that
\begin{align*}
    \left(\langle t^{1/D}-1\rangle\wh{\s{V}}_q^S\right)\cap \s{V}_q^* \subset \langle t^{1/D}-1\rangle\s{V}_q^*
\end{align*}
(the reverse containment being obvious).

Suppose $*=\up$.  If $f\in \wh{\s{V}}_q^S\setminus \s{V}_q^{\up}$, then there is some cluster where $f$ is an infinite Laurent series (not a Laurent polynomial), and then $(t^{1/D}-1)f$ is also an infinite series in this cluster.  So then $(t^{1/D}-1)f\notin \s{V}_q^{\up}$.  I.e., $(t^{1/D}-1)f$ can be in $\s{V}_q^{\up}$ only if $f\in \s{V}_q^{\up}$. The desired containment follows.

Now suppose $*=\midd$ or $*=\can$.  Let $f=\sum_p a_p\vartheta_p\in \s{V}_q^*$  lie in the kernel of $\lim_{t\rightarrow 1}$.  Let $\?{\vartheta}_p$ denote the classical theta function corresponding to $p$ as in \cite{GHKK}, so $\?{\vartheta}_p=\lim_{t\rar 1}(\vartheta_p)$ by Theorem \ref{MainThm}(4).  Then 
\begin{align*}
    \lim_{t\rar 1} (f)=\sum_p \lim_{t\rar 1}(a_p) \?{\vartheta}_p,
\end{align*}
and since the classical theta functions are linearly independent, this implies that $\lim_{t\rar 1}(a_p)=0$ for each $p$.  Hence, each $a_p$ is divisible by $(t^{1/D}-1)$, and so $f$ must be divisible by $(t^{1/D}-1)$ in $\s{V}_q^{*}$, as desired.

Finally, suppose that $\s{V}^{\ord}=\s{V}^{\midd}$.  Since we have proven the claim for $*=\midd$, the $*=\ord$ case follows as long as $\s{V}_q^{\ord}=\s{V}_q^{\midd}$.  Let $\Theta$ denote the set of $p\in L_{\s{V}}$ for which $\vartheta_p$ is a global monomial (alternatively, a quantum cluster variable), and let $\Theta'=\Theta_{\s{V}}^{\midd}$.  Then the claim follows from Lemma \ref{Thetaprime}, because $\s{V}^{\ord}=\s{V}^{\midd}$ exactly means that $\{\lim_{t\rar 1}\vartheta_p\}_{p\in \Theta'}$ spans $A_{\Theta}=\s{V}^{\ord}$ as in the hypotheses there.
\end{myproof}

\section{Reduction of quantum positivity to the two-wall cases}\label{PosSect}

Our goal for the remainder of the paper is to prove Theorem \ref{PosScat}.  In this section, we will use the arguments of \cite[\S C.3]{GHKK} to reduce to the case of two initial walls, i.e., we will reduce to the case
\begin{align}\label{2In}
    \f{D}_{\In}=\{(v_1^{\omega\perp},\EE(-t^{a_1}z^{v_1})), (v_2^{\omega\perp},\EE(-t^{a_2}z^{v_2}))\}
\end{align}
for $a_1,a_2\in\mathbb{Z}$ to prove the positivity statement in Theorem \ref{PosScat}, and the case
\begin{align}\label{pLIn}
    \f{D}_{\In}=\{(v_1^{\omega\perp},\EE(-\!\pl_{a_1}(t)z^{v_1})), (v_2^{\omega\perp},\EE(-\!\pl_{a_2}(t)z^{v_2}))\}
\end{align}
to prove the statement regarding pL type polynomials.  Furthermore, we will be able to assume that $v_1$ and $v_2$ form part of a basis for $L$.  From here, \cite{GHKK} uses their ``change of lattice trick'' to further reduce to the case where $|\omega(v_1,v_2)|=1$.  However, this final trick does not work in the quantum setting!  We will therefore use different methods, developed in \S \ref{pos_DT_sec} and applied in \S \ref{2wallSection}, to tackle the cases of \eqref{2In} and \eqref{pLIn}.

\subsection{The change of monoid trick}\label{MonoidTrick}

For any $v\in L_0^+$, we say $v$ has \textbf{order $k$} if $v\in kL_0^+\setminus (k+1)L_0^+$.  The change of monoid trick from \cite[\S C.3]{GHKK} is essentially a way to reduce to the case where the vectors $v_1,\ldots,v_s$ in \eqref{DIn} all have order $1$ and form part of a basis for $L$.  We review this here.
 
Let $u_1,\ldots, u_r$ be a basis for $L$.  For $\f{D}_{\In}$ as in \eqref{DIn}, consider the lattice
$$\wt{L}\coloneqq \bb{Z}\langle 
\wt{v}_1,\ldots,\wt{v}_s,\wt{u}_1,\ldots,\wt{u}_r\rangle$$ together with the 
map 
\begin{align*}
\pi\colon &\wt{L}\rar L, \\ 
&\sum_{j=1}^{s} a_j\wt{v}_j+\sum_{i=1}^rb_i\wt{u}_i \mapsto 
\sum_{j=1}^s a_jv_j+\sum_{i=1}^rb_iu_i.
\end{align*}
The form $\omega$ on $L$ pulls back to give a new rational skew-symmetric form $\wt{\omega}\coloneqq \pi^*\omega$ on $\wt{L}$.  Let $\wt{\sigma}$ be the cone in $\wt{L}$ spanned by $\wt{v}_1,\ldots,\wt{v}_s$.  Note that $\pi(\wt{\sigma})\subset \sigma$.  Define $\wt{L}_0=\bb{Z}\langle\wt{v}_1\ldots,\wt{v}_s\rangle$ and $\wt{L}_0^+\coloneqq (\wt{\sigma}\cap \wt{L}_0)\setminus \{0\}$. Consider the scattering diagram $\wt{\f{D}}_{\In}$ in $\wt{L}_{\bb{R}}$ over $\f{g}_{\wt{L}_0^+,\wt{\omega}}$ given by
\begin{align*}
\wt{\f{D}}_{\In}\coloneqq \left\{(\pi^{-1}(\f{d}_i),\EE(-p_i(t)z^{\wt{v}_i}))\colon i=1,\ldots,s\right\}.
\end{align*}

Define the Lie algebra homomorphism
\begin{align*}
\varpi\colon&\f{g}_{\wt{L}_0^+,\wt{\omega}}\rightarrow \f{g}_{L_0^+,\omega}  \\
&\hat{z}^v\mapsto \frac{z^{\pi(v)}}{t^{\lvert v\lvert}-t^{-\lvert v\lvert}}.
\end{align*}
Then Lemma \ref{QuotientScat} gives us a well-defined consistent scattering diagram in $L_{\bb{R}}$ over $\f{g}_{L_0^+,\omega}$, namely,
$(\pi,\varpi)_*(\scat(\wt{\f{D}}_{\In}))\coloneqq \{(\pi(\f{d}),\varpi(f_{\f{d}})\colon (\f{d},f_{\f{d}})\in \scat(\wt{\f{D}}_{\In})\}$, and the incoming walls of $(\pi,\varpi)_*(\scat(\wt{\f{D}}_{\In}))$ correspond to the incoming walls of $\scat(\wt{\f{D}}_{\In})$, which are given by $\wt{\f{D}}_{\In}$.  It follows that the incoming walls of $(\pi,\varpi)_*(\scat(\wt{\f{D}}_{\In}))$ are precisely the walls of $\f{D}_{\In}$, so by the uniqueness part of Theorem \ref{KSGS}, we have that up to equivalence, 
\begin{align*}
    \scat(\f{D}_{\In})=(\pi,\varpi)_{*}(\scat(\wt{\f{D}}_{\In})).
\end{align*}

So to prove that each scattering function of $\scat(\f{D}_{\In})$ can be chosen to be of the form $\EE(-t^az^v)$ or $\EE(-\pl_{a}(t)z^v)$, it suffices to prove the analogous statement for $\scat(\wt{\f{D}}_{\In})$ in $\wt{L}_{\bb{R}}$ over $\f{g}_{\wt{L}_0^+,\wt{\omega}}$.  Note that each $\wt{v}_i$ has order $1$ in $\wt{L}_0^+$.  We thus reduce to the case where each $v_i$ has order $1$ and they in fact form a basis for $L^{\oplus}$ and for $L_0$.  

\subsection{The perturbation trick}\label{PertTrick}

Firstly, using \eqref{plus-prod} and Example \ref{EquivEx}(1), if $\f{D}_{\In}$ is a scattering diagram for which all the walls have the form given in (\ref{DIn}), we may replace $\f{D}_{\In}$ with an equivalent scattering diagram for which all of the walls are of the form
\begin{align}\label{basic_pos_wall}
(v_i^{\omega\perp},\EE(-t^{a}z^{v_i}))
\end{align}
where the number of such walls appearing in the equivalent scattering diagram is the coefficient of $t^a$ in $p_i(t)$.  Similarly, if all of the polynomials $p_i(t)$ are of pL type, we may replace $\f{D}_{\In}$ with an equivalent diagram in which all walls are of the form
\begin{equation}\label{basic_PL_wall}
(v_i^{\omega\perp},\EE(-\!\pl_j(t)z^{v_i}))
\end{equation}
where the number of copies of this wall is given by $\lambda_j$ in the decomposition
\begin{equation*}
p_i(t)=\sum_{j\in\mathbb{Z}}\lambda_j\pl_j(t).
\end{equation*}

For any scattering diagram $\f{D}$, the \textbf{asymptotic scattering diagram} $\f{D}_{\as}$ of $\f{D}$ is defined as follows: every wall $(v+\f{d},f_{\f{d}})\in \f{D}$, with $\f{d}$ denoting a rational polyhedral cone (apex at the origin) and $v \in L_{\bb{R}}$ translating this cone, is replaced by the wall $(\f{d},f_{\f{d}})$.  Note that consistency of $\f{D}$ implies consistency of $\f{D}_{\as}$.  Indeed, for each $k>0$ and each closed path $\gamma\colon [0,1]\rar L_{\bb{R}}$ crossing the walls of $\f{D}_{\as}$ transversely,  we can find $R\gg 0$ such that
\begin{align*}
    \theta_{\gamma,(\f{D}_k)_{\as}} = \theta_{R\gamma,(\f{D}_k)_{\as}} = \theta_{R\gamma,\f{D}_k}  = \id \in G_k,
\end{align*}
where $R\gamma$ is the path mapping $t$ to $R\cdot \gamma(t)$ in $L_{\bb{R}}$. Since this holds for every $k$, we deduce that $\theta_{\gamma,
\f{D}_{\as}}=\Id\in G$.

We deal with the positivity statement in Theorem \ref{PosScat} first.  So assume that we have replaced $\f{D}_{\In}$ with an equivalent scattering diagram in which all walls are of the form (\ref{basic_pos_wall}).  Now suppose that we deform $\f{D}_{\In}$ by translating each wall by some generic vector in $L_{\bb{R}}$, yielding a new scattering diagram $\f{D}'_{\In}$.  Let $\f{D}'\coloneqq \scat(\f{D}'_{\In})$.  Since $\f{D}'_{\as}=\scat(\f{D}_{\In})$, it suffices to check that all scattering functions of $\f{D}'$ (up to equivalence) have the form $\EE(-t^az^v)$ for various $v\in L_0^+$ and $a\in\mathbb{Z}$.

Suppose inductively that every wall of any such $\f{D}'_k$ (up to equivalence) is of the form $(\f{d},f_{\f{d}})$ for some $f_{\f{d}}=\EE(-t^az^v)$ in $G_k$.   Since $(\f{D}'_{\In})_1$ is already consistent over $\f{g}_1$ (because $\f{g}_1$ is Abelian and all walls of $\f{D}'_{\In}$ are full affine hyperplanes), all walls of $\f{D}'\setminus \f{D}'_{\In}$ have order $>1$ (the order of a non-trivial wall $(\f{d},f_{\f{d}})$ is the smallest integer $k$ for which the projection of $f_{\f{d}}$ to $G_k$ is non-trivial).  Hence, $\f{D}'_1=(\f{D}'_{\In})_1$ (up to equivalence), and since all walls of $\f{D}'_{\In}$ have the desired form, this yields the base case for the induction.

Now consider $\f{D}'_{k+1}$.  Since this scattering diagram is finite, we can first apply the equivalences of Example \ref{EquivEx}(1,2) to replace $\f{D}'_{k+1}$ with an equivalent scattering diagram for which any two walls intersect in codimension at least 2.  Then we again use Example \ref{EquivEx}(1) to replace $\f{D}'_{k+1}$ with an equivalent scattering diagram for which all functions on walls are of the form $\EE(\epsilon t^az^v)$ for $\epsilon= \pm 1$ and various $a\in\bb{Z}$ and $v\in L^{\oplus}$, and any two walls either have identical support, or intersect in codimension at least two, i.e., only along a joint.  We pick this $\f{D}'_{k+1}$ to be minimal, in the following sense: if two walls $(\f{d}_1,\EE(\epsilon_1 t^{a_1}z^{v_1}))$ and $(\f{d}_2,\EE(\epsilon_2 t^{a_2}z^{v_2}))$ satisfy $\f{d}_1=\f{d}_2$, $a_1=a_2$ and $v_1=v_2$, then $\epsilon_1=\epsilon_2$.  Since $(\f{D}'_{k+1})_k$ is equivalent to $\f{D}'_k$, it follows from the inductive assumption that if the order of $v$ is less than $k+1$, then any wall of the form $(\f{d},\EE(\epsilon t^{a}z^{v}))$ in $\f{D}'_{k+1}$ has $\epsilon=-1$, as required.  

So to complete the inductive step we consider a wall
$(\f{d},\EE(\epsilon t^{a}z^{v}))$
of $\f{D}'_{k+1}$ in which $v$ has order $k+1$.   If the wall is incoming, then $\epsilon=-1$ by assumption.  If it is not incoming, then $\f{d}$ contains a joint $\f{j}$, i.e, a codimension $2$ stratum of the affine polyhedral complex formed by $\f{D}'_{k+1}$.  Consider a point $p\in \f{j}$ that is not in any other joint of $\f{D}'_{k+1}$ (i.e., not in a codimension $3$ stratum of this affine polyhedral complex).  We define a new scattering diagram $\f{D}_{k+1,p}$ by taking only the walls of $\f{D}'_{k+1}$ that contain $p$ and extending them to infinity.  Precisely, for each wall $(\f{d},\EE(\epsilon' t^{a'}z^{v'}))$ of $\f{D}'_{k+1}$, with $p\in\f{d}$, the scattering diagram $\f{D}_{k+1,p}$ contains the wall 
\[
\left(\{p+\delta(x-p)\in L_{\bb{R}}\colon x\in \f{d},\delta\in \bb{R}_{\geq 0}\},\EE(\epsilon' t^{a'}z^{v'})\right).
\]
This scattering diagram is consistent, and contains as its unique joint the codimension 2 subspace of $L_{\bb{R}}$ containing $\f{j}$.  Also, since walls of $\f{D}_{\In}'$ were generically translated, and since all walls of $\f{D}'\setminus \f{D}'_{\In}$ have order $>1$, $\f{D}'_{k+1}$ has at most two order $1$ walls containing $\f{j}$, and so $\f{D}_{k+1,p}$ has at most two order $1$ walls.  Let us label the walls of $\f{D}_{k+1,p}$ as $\{(\f{d}_j,\EE(\epsilon_jt^{a_j}z^{v_j})\colon j=1,\ldots,s\}$.  We may assume that any order $1$ walls have $j=1$ or $2$.

We claim that, up to equivalence, we can assume that every incoming wall $\left(\f{d},\EE(\epsilon t^{a}z^{v})\right)$ of $\f{D}_{k+1,p}$ has $\epsilon=-1$.  If the order of $v$ is less than $k+1$, this follows from the inductive hypothesis.  If the order of $v$ equals $k+1$, since $\EE(-t^{a}z^{v})$ is central in $G_{k+1}$, we can add the wall $(v^{\omega\perp}+p,\EE(-t^{a}z^{v}))$ to $\f{D}_{k+1,p}$ without changing its equivalence class, and then by Examples \ref{EquivEx}(1,2) we can remove any incoming wall for which $\epsilon=1$.

Now we apply the change of monoid trick to $\f{D}_{k+1,p}$.  Let $\wt{\f{D}}_{k+1,p,\In}$ denote the lifted  scattering diagram in $\wt{L}_{\bb{R}}$ as in \S \ref{MonoidTrick}, and consider a wall  $(\f{d},f_{\f{d}})\in \scat(\wt{\f{D}}_{k+1,p,\In})$ whose projection to $L_{\bb{R}}$ has order $k+1$.    Since the order of $\wt{v}_j$ is less than the order of $v_j$ for $j\geq 3$, the order of $(\f{d},f_{\f{d}})$ is $\leq k$ in $\wt{L}_0^+$ unless the wall is in $\scat(\{(\wt{\f{d}}_j,\EE(-t^{a_j}z^{\wt{v}_j}))\colon j=1,2\})$.  If the order is $\leq k$, then the inductive assumption applies to show that $(\f{d},f_{\f{d}})$ has the desired form.  Thus, it suffices to check the claim for $\scat(\{(\wt{\f{d}}_j,\EE(-t^{a_j}z^{\wt{v}_j}))\colon j=1,2\})$.

We have thus reduced the positivity statement in Theorem \ref{PosScat} to the case of two incoming walls.  Relabelling and applying the change of monoid trick, we can assume that our initial scattering diagram is
\begin{align}\label{two_wall_SD}
    \f{D}_{\In}=\{(v_1^{\omega^{\perp}},\EE(-t^{a_1}z^{v_1})),(v_2^{\omega^{\perp}},\EE(-t^{a_2}z^{v_2}))\}
\end{align}
for $v_1,v_2$ forming a basis for $L^{\oplus}$.  This is exactly the special case of the positivity statement of Theorem \ref{PosScat} that is considered in Corollary \ref{2WallPos}.

Exactly the same argument, starting with replacing $\f{D}_{\In}$ by a scattering diagram in which all walls are of the form given in \eqref{basic_PL_wall}, and continuing to replace instances of $t^a$ for various $a$ in the proof by $\pl_a(t)$, reduces the statement regarding pL type polynomials in Theorem \ref{PosScat} to the case of two walls as in \eqref{pLIn}.

\section{Positivity for Donaldson--Thomas invariants}
\label{pos_DT_sec}
After a reminder of some notation and constructions regarding quiver representations in \S\ref{quivers_sec}, in \S\ref{coh_DT_sec} we prove Theorem \ref{DT_pos_thm}, our main positivity result for the refined DT invariants of the category of right modules for the path algebra $\mathbb{C} Q$ of a quiver $Q$.  This generalizes a positivity result in \cite{MeiRei} which applied only to the case of acyclic quivers.  The proof of this theorem is within the framework of \textit{cohomological} Donaldson--Thomas theory.  The reader who is more interested in cluster algebras than Donaldson--Thomas theory is advised to focus on this positivity theorem, as opposed to the one that follows it in \S\ref{ref_DT_sec}, since it is sufficient to prove the positivity of quantum theta functions and involves fewer unfamiliar concepts, so it should be easier to digest for the uninitiated.

On the other hand, since with a little work, the original positivity result of Meinhardt and Reineke \cite{MeiRei}, written in the language of \textit{refined} DT theory, can be used to prove the pL preservation statement of Theorem \ref{PosScat}, we present this version of the theory in \S\ref{ref_DT_sec}.  Since the pL property is stronger than positivity, this provides an alternative proof of the (strong) positivity of quantum theta functions.

\subsection{Quiver representations}
\label{quivers_sec}
We introduce the background notation for talking about either flavour of Donaldson--Thomas theory for quivers.  We will work with right modules throughout, and so our sign conventions will agree with \cite{DavPos} and differ slightly from \cite{DavMei}.  A \textbf{quiver} $Q$ is determined by two sets $Q_1$ and $Q_0$, the arrows and vertices respectively, along with a pair of maps $s,t\colon Q_1\rightarrow Q_0$ taking an arrow to its source and target, respectively.  We will always assume that $Q_0$ and $Q_1$ are finite.  In contrast with the usual situation in cluster algebras, we allow $Q$ to contain oriented cyclic paths of length $1$ and $2$.

We denote by $K=\mathbb{Z}_{\geq 0}^{Q_0}$ the semigroup of dimension vectors for $Q$.  For $i\in Q_0$ we denote by $\g{i}\in K$ the generator corresponding to $i$.  
We denote by $\mathbb{C}Q$ the free path algebra of $Q$ over $\mathbb{C}$, and for each $i\in Q_0$ we denote by $\lp{i}$ the ``lazy path'' of length $0$ beginning and ending at $i$.  Let $v\in K$.  A $v$-dimensional right $\mathbb{C}Q$-module $\rho$ is determined by a $Q_0$-tuple of complex vector spaces $\{\rho_i\coloneqq \rho\cdot \lp{i}\}_{i\in Q_0}$ with $\dim_{\mathbb{C}}(\rho_i)=v_i$
and a linear map $\rho(a)\colon \rho_{t(a)}\rightarrow \rho_{s(a)}$ for each $a\in Q_1$.  We denote by 
\begin{equation}
    \label{Qstackdef}
\mathfrak{M}_{v}(Q)\coloneqq \prod_{a\in Q_1}\Hom(\mathbb{C}^{v_{t(a)}},\mathbb{C}^{v_{s(a)}})/\GL_{v}
\end{equation}
the stack of $v$-dimensional right $\mathbb{C}Q$-modules, where
\[
\GL_{v}\coloneqq \prod_{i\in Q_0} \GL_{v_i}
\]
is the gauge group, acting by change of basis on each of the factors $\bb{C}^{v_i}$.  This stack is smooth, and so all open substacks of it are also smooth.  We denote by $\udim(\rho)=(\dim(\rho_i))_{i\in Q_0}\in K$ the \textbf{dimension vector} of a $\mathbb{C}Q$-module $\rho$.  Then
\[
\dim(\rho)=\sum_{i\in Q_0}\udim(\rho)_i.
\]
We define the Euler pairing on dimension vectors via
\begin{align*}
    \chi_Q\!\colon &\,\,\mathbb{Z}^{Q_0}\times\mathbb{Z}^{Q_0}\rightarrow \mathbb{Z}\\
    &(\gamma',\gamma'')\mapsto \sum_{i\in Q_0}\gamma'_i\gamma''_i-\sum_{a\in Q_1}\gamma'_{t(a)}\gamma''_{s(a)},
\end{align*}
and we define 
\begin{align}\label{BQ}
     B_Q( \gamma',\gamma'')=\chi_Q(\gamma'',\gamma')-\chi_Q(\gamma',\gamma'').
\end{align}
Where there is no possibility of confusion we will omit the superscript $Q$.  Note that $\dim \f{M}_v(Q)=-\chi_Q(v,v)$.

We denote by $\mathscr{B}_Q$ the $\mathbb{Z}\lcbs t\rcbs$-algebra freely generated as a $\mathbb{Z}\lcbs t\rcbs$-module by symbols $x^{v}$, for $v\in K$.  The multiplication is determined by
\[
x^{v}\cdot x^{v'}=t^{B( v,v')}x^{v+v'}.
\]
We complete with respect to the two sided ideal $I_Q\subset \mathscr{B}_Q$ spanned as a $\mathbb{Z}\lcbs t\rcbs$-module by symbols $x^v$ with $0\neq v\in K$, to obtain a ring that we denote $\hat{\mathscr{B}}_Q$.  For $i\in Q_0$ we write $x_i\coloneqq x^{\g{i}}$.  The \textbf{quantum tropical vertex group} $G^{\qtrop}_Q$ is defined to be the closure of the subgroup of the group of units $\hat{\mathscr{B}}_Q^{\times}$ generated by elements $\EE(t^nx^v)$ for $v\in K\setminus \{0\}$ and $n\in\mathbb{Z}$.  As in \S \ref{qTorLie}, this group is obtained by exponentiation from a Lie subalgebra (defined over $\mathbb{Z}$) $\f{g}_Q^{\qtrop}$ of a Lie algebra $\hat{\mathfrak{g}}_{Q}$.  The algebra $\hat{\mathfrak{g}}_{Q}$ is in turn  obtained by completing a Lie algebra $\mathfrak{g}_{Q}$ with respect to the sequence of Lie ideals $\mathfrak{g}_{Q}\cap I_Q^n$.  The Lie algebra $\mathfrak{g}_{Q}$ here is the Lie subalgebra of the commutator Lie algebra of $\mathscr{B}_Q$ generated over $\kt$ by the elements 
\[
\hat{x}^v\coloneqq x^v/(\lvert v\lvert)_t
\]
for $0\neq v\in K$, where $\lvert v\lvert$ is the index of $v$, i.e. the largest positive integer such that $v/\lvert v\lvert\in K$.  Then $\f{g}_Q^{\qtrop}$ is the closure of the $\mathbb{Z}$-span of the quantum dilogarithms $-\Li(-t^az^v;t)$.

\subsubsection{Stability conditions}
Let $\zeta\in \mathbb{R}^{Q_0}$ be a  \textbf{stability condition}.  We define the \textbf{slope} of a non-trivial $\bb{C}Q$-module $\rho$ by
\[
\mu_{\zeta}(\rho)=\frac{\zeta\cdot\udim(\rho)}{\dim(\rho)}.
\]
We will omit the subscript $\zeta$ when the choice of stability condition is clear.  We denote by $S_{\theta}^{\zeta}\subset \bb{Z}_{\geq 0}^{Q_0}$ the submonoid of dimension vectors of slope $\theta$ with respect to $\zeta$.  We say that a stability condition $\zeta$ is $\theta$-\textbf{generic} if $B(v,v')=0$ for all $v,v'\in S_{\theta}^{\zeta}$.  We say a stability condition is simply \textbf{generic} if it is $\theta$-generic for every $\theta$.

A $\mathbb{C}Q$-module is called $\zeta$-\textbf{semistable} if for all non-trivial proper submodules $\rho'\subset \rho$ we have the inequality of slopes $\mu_{\zeta}(\rho')\leq \mu_{\zeta}(\rho)$, and $\rho$ is called $\zeta$-stable if this inequality is strict for all non-trivial proper submodules.  We denote by 
\[
\Mst^{\zeta\sst}_v(Q)\subset \Mst_v(Q)
\]
the open substack of $v$-dimensional $\zeta$-semistable $\mathbb{C}Q$-modules.  We denote by $\Msp^{\zeta\sst}_v(Q)$ the coarse moduli space of $\zeta$-semistable $v$-dimensional $\mathbb{C}Q$-modules defined by King \cite{King}, and by $\Msp^{\zeta\stable}_v(Q)\subset \Msp^{\zeta\sst}_v(Q)$ the smooth open subvariety corresponding to $\zeta$-stable modules.\footnote{Strictly speaking, King defines these spaces as GIT quotients under the assumption that $\zeta\in\bb{Q}^{Q_0}$.  So we define these moduli spaces by taking a perturbation $\zeta'_v\in\bb{Q}^{Q_0}$ of $\zeta$, which will depend on $v$ in general, such that $\zeta$-(semi)stability for $v$-dimensional $\bb{C}Q$-modules is equivalent to $\zeta'$-(semi)stability. 
}  As it is constructed as a GIT quotient, the map from $\Msp^{\zeta\sst}_v(Q)$ to its affinization is projective.  On the other hand, by \cite[Thm. 1]{LeBPro} the functions on the affinization are generated by taking traces of evaluations of modules on oriented cycles in $Q$.  It follows that $\Msp^{\zeta\sst}_v(Q)$ is projective if and only if the support of $v$ does not contain an oriented cycle in $Q$, and $\Msp^{\zeta\sst}_v(Q)$ is projective for all $v\in K$ if $Q$ is acyclic.

Let $\rho$ be a non-trivial $\mathbb{C} Q$-module.  Then $\rho$ admits a unique \textbf{Harder--Narasimhan filtration}
\[
0=\rho_{(0)}\subset \rho_{(1)}\subset\ldots\rho_{(r)}=\rho
\]
for some $r\in\mathbb{Z}_{\geq 1}$, such that
\begin{enumerate}
    \item Each subquotient $\rho_{(i)}/\rho_{(i-1)}$ for $1\leq i\leq r$ is a non-trivial $\zeta$-semistable $\mathbb{C}Q$-module.
    \item There is a strict inequality of slopes $\mu_{\zeta}(\rho_{(i)}/\rho_{(i-1)})>\mu_{\zeta}(\rho_{(i+1)}/\rho_{(i)})$ for $1\leq i\leq r-1$.
\end{enumerate}
Let $I\subset (-\infty,\infty)$ be an interval.  We denote by $\Mst^{\zeta}_I(Q)\subset \Mst(Q)$ the open substack, the points of which correspond to modules $\rho$ satisfying the condition that each subquotient $\rho_{(i)}/\rho_{(i-1)}$ appearing in the Harder--Narasimhan filtration of $\rho$ has slope contained in $I$.  We define $\Mst_{I,v}^{\zeta}(Q)=\Mst_I^{\zeta}(Q)\cap \Mst_v(Q)$.
\subsection{Cohomological DT theory}
\label{coh_DT_sec}
We denote by $\Vect^+$ the Abelian category of cohomologically graded vector spaces $V$ such that $\HO^i(V)=0$ for $i\ll 0$, and for which $\dim(\HO^i(V))< \infty$ for all $i\in \mathbb{Z}$.  We denote by $\Vect^+_{K}$ the category of formal direct sums
\[
\bigoplus_{v\in K}V_{v}
\]
with $V_{v}\in \Vect^+$. 
 We denote by $\Vect^{\zeta,+}_{\theta}$ the full subcategory of $\Vect_{K}^+$ containing those $V$ such that $V_v=0$ for $v\notin S_{\theta}^{\zeta}$.  We give $\Vect_K^+$ the twisted monoidal structure, defined by
\begin{equation}
\label{mon_prod}    
(V'\otimes^{\tw}V'')_{v}=\bigoplus_{v'+v''=v}V'_{v'}\otimes V''_{v''}[B( v'',v')].
\end{equation}
The associator for this monoidal product is as defined in \cite[Sec 3.2]{DavMei}.  For $\zeta$ a $\theta$-generic stability condition, the shift by $B(v'',v')$ is trivial on $\Vect^{\zeta,+}_{\theta}$ and (\ref{mon_prod}) is part of the usual \textit{symmetric} monoidal structure on graded vector spaces, incorporating the Koszul sign rule with respect to the cohomological degree.  The monoidal product (\ref{mon_prod}) induces a (noncommutative) ring structure on $\KK(\Vect^+_{K})$.  The characteristic function
\begin{align}
\label{chit_def}
\chi_{K,t}\colon \KK(\Vect^+_{K})\rightarrow &\hat{\mathscr{B}}_Q\\
[V]\mapsto &\sum_{i\in\mathbb{Z}}\sum_{v\in K}\dim(\HO^i(V_{v}))t^ix^v \nonumber
\end{align}
is an isomorphism satisfying
\[
\chi_{K,t}(V'\otimes^{\tw} V'')=\chi_{K,t}(V')\chi_{K,t}(V'').
\]
Furthermore, (\ref{chit_def}) is an isomorphism of $\mathbb{Z}\lcbs t\rcbs$-algebras, where $t$ acts on $\KK(\Vect^{+}_K)$ by increasing cohomological degree by one.

For $v\in K$ we define $\s{A}_v^{\zeta}(Q)\coloneqq \HO\left(\Mst^{\zeta\sst}_v(Q),\mathbb{Q}\right)_{\vir}$.  Let $\zeta\in \mathbb{R}^{Q_0}$, and let $\theta\in (-\infty,\infty)$ be a slope.  We define
\[
\mathcal{A}_{\theta}^{\zeta}(Q)\coloneqq \bigoplus_{v\in S_{\theta}^{\zeta}}\HO\left(\Mst^{\zeta\sst}_v(Q),\mathbb{Q}\right)_{\vir}\in \ob(\Vect^{\zeta,+}_{\theta}).
\]
Here and elsewhere, if $X$ is a disjoint union of connected irreducible stacks $X=\coprod_{i\in I} X_i$ we set 
\[
\HO(X,\mathbb{Q})_{\vir}\coloneqq \bigoplus_{i\in I}\HO\left(X_i,\mathbb{Q}\right)[\dim(X_i)].
\]
Via the inclusion of categories $\Vect_{\theta}^{\zeta,+}\subset \Vect_K^+$ we can consider $\mathcal{A}_{\theta}^{\zeta}(Q)$ as an object in $\Vect_K^+$.  
\subsubsection{Cohomological DT invariants}
As in \cite{MeiRei} we define 
\begin{equation}
    \label{BPS_def}
\BPS^{\zeta}_v(Q)\coloneqq \begin{cases} \IC\left(\Msp_v^{\zeta\sst}(Q),\mathbb{Q}\right)[1-\chi_Q(v,v)] &\textrm{if }\Msp^{\zeta\stable}_v(Q)\neq \emptyset\\
0& \textrm{otherwise},\end{cases}
\end{equation}
where in the first line we have taken the total hypercohomology of the intersection complex on the (possibly singular) irreducible variety $\Msp^{\zeta\sst}_v(Q)$ --- i.e. the usual intersection cohomology of $\Msp^{\zeta\sst}_v(Q)$.  
The vector space $\BPS_v^{\zeta}(Q)$ is a finite-dimensional cohomologically graded vector space, and so is in particular an element of $\Vect^+$.  

Let $\mathbb{C}^*\subset \GL_v$ denote the subgroup containing $Q_0$-tuples of the form $(\lambda\cdot \Id_{v_{i}\times v_i})_{i\in Q_0}$ for $\lambda\in\mathbb{C}^*$.  Assuming that $\Msp^{\zeta\stable}_v(Q)\neq 0$, it is the quotient of a free action of $\GL_{v}/\mathbb{C}^*$ on an open subspace of $\prod_{a\in Q_1}\Hom(\mathbb{C}^{v_{t(a)}},\mathbb{C}^{v_{s(a)}})$.  Moreover, $\Msp_v^{\zeta\stable}(Q)$ is a dense open subvariety of $\Msp_v^{\zeta\sst}(Q)$.  It follows that
\begin{equation}
    \label{dim_count}
\dim(\Msp_v^{\zeta\stable}(Q))=\dim(\Msp_v^{\zeta\sst}(Q))=1-\chi_Q(v,v).
\end{equation}

\begin{rem}
Assume that the support of $v$ contains no oriented cycles in $Q$, so that $\Msp^{\zeta\sst}_v(Q)$ is projective.  Since the cohomological shift in the definition of $\BPS^{\zeta}_v$ is equal to 
the complex dimension of $\Msp^{\zeta\sst}_v(Q)$, it follows from Poincar\'e duality for intersection cohomology that $\chi_t(\BPS^{\zeta}_v(Q))$ is bar-invariant.  By the hard Lefschetz theorem for intersection cohomology \cite[Chap. 6]{BBD}, we deduce moreover that $\chi_t(\BPS^{\zeta}_v(Q)))$ is of Lefschetz type.
\end{rem}

\begin{lem}
\label{dimension_lemma}
For a quiver $Q$, dimension vector $v$ and stability condition $\zeta\in\bb{R}^{Q_0}$, $\BPS^{\zeta}_v(Q)$ is nonzero if and only if there exists a $\zeta$-stable $v$-dimensional $\mathbb{C}Q$-representation, and so in particular $1-\chi_Q(v,v)\geq 0$.  If there exists such a stable module, there are equalities
\[
\dim(\HO^{j}(\BPS^{\zeta}_v(Q)))=\begin{cases} 1 &\textrm{if }j=\chi_Q(v,v)-1\\
0&\textrm{if }\lvert j\lvert>1- \chi_Q(v,v).
\end{cases}
\]
\end{lem}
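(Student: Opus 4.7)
The plan is to derive the lemma directly from the definition \eqref{BPS_def} of $\BPS^{\zeta}_v(Q)$, the dimension count \eqref{dim_count}, and the standard fact that a nonempty irreducible complex algebraic variety has zeroth intersection cohomology equal to $\QQ$. Very little work is required beyond unwinding shifts of graded vector spaces.

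The nonvanishing dichotomy is almost immediate: by \eqref{BPS_def}, $\BPS^{\zeta}_v(Q) = 0$ whenever $\Msp^{\zeta\stable}_v(Q) = \emptyset$, so the implication $\BPS^{\zeta}_v(Q) \neq 0 \Rightarrow \Msp^{\zeta\stable}_v(Q) \neq \emptyset$ is automatic. Conversely, if $\Msp^{\zeta\stable}_v(Q)$ is nonempty then, as recalled in the paragraph preceding \eqref{dim_count}, it is the quotient of a nonempty open subscheme of the affine space $\prod_{a \in Q_1}\Hom(\bb{C}^{v_{t(a)}}, \bb{C}^{v_{s(a)}})$ by a free action of the connected group $\GL_v / \bb{C}^*$, and is therefore smooth and irreducible. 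Since the excerpt further asserts that $\Msp^{\zeta\stable}_v(Q)$ is dense in $\Msp^{\zeta\sst}_v(Q)$, the latter variety is irreducible as well, and \eqref{dim_count} gives $\dim \Msp^{\zeta\sst}_v(Q) = 1 - \chi_Q(v,v) \geq 0$, yielding in passing the claimed non-negativity $1-\chi_Q(v,v)\geq 0$.

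Writing $d \coloneqq 1 - \chi_Q(v,v) = \dim \Msp^{\zeta\sst}_v(Q)$, the shift in \eqref{BPS_def} together with the convention $\HO^j(F[n]) = \HO^{j+n}(F)$ gives
\[
\HO^j(\BPS^{\zeta}_v(Q)) = \HO^{j+d}(\IC(\Msp^{\zeta\sst}_v(Q),\QQ)).
\]
Specialising to $j = -d = \chi_Q(v,v) - 1$ identifies this with the zeroth intersection cohomology of the irreducible variety $\Msp^{\zeta\sst}_v(Q)$, which is one-dimensional; in particular $\BPS^{\zeta}_v(Q)$ is nonzero, completing the first claim. For the vanishing statement I invoke the standard support bounds for intersection cohomology: $\HO^i(\IC(X,\QQ)) = 0$ for $i < 0$ (trivial) and for $i > 2\dim X$ (bounded cohomological dimension of a $d$-dimensional complex variety). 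Under the above degree shift these translate to $\HO^j(\BPS^{\zeta}_v(Q)) = 0$ for $j + d < 0$ and $j + d > 2d$ respectively, i.e.\ precisely when $|j| > d = 1-\chi_Q(v,v)$, as required.

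There is essentially no real obstacle to the argument: everything reduces to invoking irreducibility of $\Msp^{\zeta\sst}_v(Q)$ (via density of the stable locus, asserted in the excerpt) together with the classical identification $\HO^0(\IC(X, \QQ)) = \QQ$ for nonempty irreducible complex varieties and the bound on the cohomological dimension. If anything is delicate it is only the density/irreducibility statement, which is however already part of the preceding discussion in the paper and may be cited directly.
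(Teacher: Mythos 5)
Your proposal is correct and follows essentially the same route as the paper: nonvanishing is read off directly from the definition, the one-dimensionality in degree $\chi_Q(v,v)-1$ comes from irreducibility of $\Msp^{\zeta\sst}_v(Q)$ (inherited from the dense open stable locus) together with $\HO^0(\IC(X,\QQ))=\QQ$ for an irreducible variety, and the vanishing bound comes from the fact that intersection cohomology of a $d$-dimensional complex variety is supported in degrees $[0,2d]$. The only substantive difference is that where you cite the identity $\HO^0(\IC(X,\QQ))=\QQ$ as a standard fact, the paper expends a paragraph deriving it from first principles (normalization is small, Whitney stratification with a compatible triangulation, a spanning top-degree intersection homology class, and Poincar\'e duality to pass to degree zero); both justifications are valid, and yours is the more common shortcut.
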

\begin{proof}
First we claim that for any quasiprojective variety $X$ the degree zero intersection cohomology $\IC^0(X,\bb{Q})$ has a natural basis indexed by the irreducible components of $X$, which we label $X_p$ for $p$ in an indexing set $P_X$.  This follows from definitions and foundational results in intersection homology, see \cite[Chapter 4]{KiWo}.  

In a little more detail:  Firstly, the morphism from the normalization $\wt{X}\rightarrow X$ is small, so $\IC(X,\bb{Q})\cong \IC(\wt{X},\bb{Q})\cong \oplus_{p\in P_{\tilde{X}}} \IC(\wt{X_p},\bb{Q})$, and so it is sufficient to prove the claim under the assumption that $X$ is irreducible.  By results of Whitney \cite{Whitney}, \L ojasiewicz \cite{Loj} and Goresky \cite{Gor}, there is a Whitney stratification of $X$ by complex subvarieties, along with a triangulation of $X$ respecting the stratification.  The sum of the top degree simplices of this triangulation gives a spanning top degree class in intersection homology, i.e. a degree zero class in intersection cohomology, by Poincar\'e duality.  This proves the claim.  

Then the first equality follows from the fact that $\Msp_v^{\zeta\stable}(Q)$ is a dense open subvariety of $\Msp_v^{\zeta\sstable}(Q)$, and is irreducible, as it is a free quotient of an irreducible variety.  The second equality follows from the fact that the degree of $\IC\left(\Msp_v^{\zeta\sst}(Q),\mathbb{Q}\right)$ is bounded above by $-2(\chi_Q(v,v)-1)$, since by \eqref{dim_count} this is the real dimension of $\Msp_v^{\zeta\sstable}(Q)$.
\end{proof}

\subsubsection{Cohomological wall crossing}
The following ``categorification'' of the wall crossing formula from Donaldson--Thomas theory is a very special case of \cite[Thm B]{DavMei} (see also \cite{FraRei} and the proof of \cite[Thm 3.21]{DavPos}).
\begin{thm}\label{coh_WCF_thm}
Let $\zeta\in\mathbb{R}^{Q_0}$ be a stability condition (not necessarily generic).  Then there is an isomorphism in $\Vect^+_{K}$:
\begin{equation}
\label{CWCE}
\HO(\Mst(Q),\mathbb{Q})_{\vir}\cong \bigotimes^{\tw}_{\infty \xrightarrow{\theta}-\infty}\mathcal{A}_{\theta}^{\zeta}(Q).
\end{equation}
\end{thm}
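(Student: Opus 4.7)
The plan is to establish the isomorphism via the Harder--Narasimhan (HN) stratification of $\Mst(Q)$, coupled with a degeneration result for the associated spectral sequence.

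First I would recall the HN stratification.  For each strictly decreasing sequence of slopes $\theta_1 > \theta_2 > \cdots > \theta_r$ and choice of dimension vectors $\underline{v} = (v_1,\ldots,v_r)$ with $v_i \in S_{\theta_i}^{\zeta} \setminus \{0\}$, the locally closed substack $\Mst^{\HN,\underline{v}}(Q) \subset \Mst(Q)$ parametrising modules whose HN filtration has subquotients of types $v_i$ fits into a diagram
\[
\Mst^{\HN,\underline{v}}(Q) \xleftarrow{p} \Mst^{\HN,\underline{v}}_{\mathrm{flag}}(Q) \xrightarrow{q} \prod_{i=1}^r \Mst^{\zeta\sst}_{v_i}(Q),
\]
where $p$ is a Zariski-locally trivial affine bundle (forgetting a splitting) and $q$ is a Zariski-locally trivial affine bundle (forgetting a filtration, modulo change of trivialisation).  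A standard Euler form computation identifies the difference of fibre dimensions with $\sum_{i<j} -\chi_Q(v_i,v_j)$, and combining with the dimension shifts built into the virtual cohomology convention $\HO(-,\mathbb{Q})_{\vir}$, the cohomological shifts will conspire to match the twisting $[B(v_j,v_i)]$ appearing in the monoidal product \eqref{mon_prod}.

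Next I would assemble the strata.  For the ordered set of slopes in $(-\infty,\infty)$, the HN stratification gives a filtration $F^{\leq \theta}\subset \HO(\Mst(Q),\mathbb{Q})_{\vir}$, whose associated graded is canonically identified, via the diagram above and the K\"unneth formula for the stacks $\Mst_{v_i}^{\zeta\sst}(Q)$, with the twisted tensor product $\bigotimes_{\infty \xrightarrow{\theta}-\infty}^{\tw} \mathcal{A}_{\theta}^{\zeta}(Q)$.  This already gives the numerical identity at the level of characteristic functions $\chi_{K,t}$, recovering the classical wall-crossing formula.

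The hard part will be promoting this associated-graded statement to a genuine isomorphism in $\Vect^+_K$.  This is exactly the content of the special ($W=0$) case of \cite[Thm B]{DavMei}.  The degeneration of the HN spectral sequence is ultimately a purity statement: in the absence of a potential, the relevant sheaves on moduli spaces are direct summands of pushforwards of shifted constant sheaves on smooth stacks, and the BM decomposition theorem of loc.\ cit.\ splits the pushforward along the semisimplification map $\Mst^{\zeta\sst}_v(Q) \to \Msp^{\zeta\sst}_v(Q)$ into copies of intersection complexes.  Combined with the cohomological integrality theorem, this realises each $\mathcal{A}_{\theta}^{\zeta}(Q)$ as a free supercommutative algebra object on $\BPS^{\zeta}(Q) \otimes \HO(\pt/\mathbb{C}^*,\mathbb{Q})$, and the freeness yields compatible splittings of the HN filtration.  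I would then invoke \cite[Thm B]{DavMei} to conclude the isomorphism \eqref{CWCE}, noting that, because we take $W=0$, we need none of the vanishing-cycle or monodromic mixed Hodge module machinery from \emph{loc.\ cit.}, and the argument reduces to a purity statement for ordinary perverse sheaves.
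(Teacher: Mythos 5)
Your proposal is correct and ultimately takes the same route as the paper, which proves the statement simply by observing that it is a very special case of \cite[Thm B]{DavMei} (with $W=0$). You additionally unwind the Harder--Narasimhan stratification and the purity/decomposition-theorem mechanism that underlies the cited result, which is accurate and consistent with the comparison the paper makes with Reineke's refined wall-crossing formula (Theorem \ref{refWCF}); the paper itself does not spell any of this out, but your sketch matches the intended argument.
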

Since the monoidal product is not symmetric, it is important to note the order in which we take it (i.e., moving from left to right as the slope decreases).  Also, in order to define the above infinite product, note that for each $v\in K$ there are only finitely many ways of decomposing $v=v_1+\ldots +v_l$ into nonzero dimension vectors in $K$.  In addition, the $v=0$ graded piece of every $\mathcal{A}_{\theta}^{\zeta}(Q)$ is equal to $\mathbb{Q}$, the one dimensional vector space concentrated in cohomological degree zero.  This is the monoidal unit in $\Vect^+$.  So
we may write the $v$-graded piece of the right-hand side of (\ref{CWCE}) as
\[
\bigoplus_{\substack{v_1,\ldots v_l\neq 0\\v_1+\ldots+v_l=v\\ \mu_{\zeta}(v_1)\geq \ldots\geq \mu_{\zeta}(v_l)}}\mathcal{A}^{\zeta}_{v_1}(Q)\otimes^{\tw}\cdots\otimes^{\tw}\mathcal{A}^{\zeta}_{v_l}(Q),
\]
which is, in particular, a cohomologically graded vector space which is finite-dimensional in every cohomological degree, as required.

Applying $\chi_{K,t}$ to both sides of (\ref{CWCE}), we deduce that there is an equality
\begin{equation}
\label{WCF}
\chi_{K,t}\left(\HO(\Mst(Q),\mathbb{Q})_{\vir}\right)=\prod_{\infty\xrightarrow{\theta} -\infty}\chi_{K,t}(\mathcal{A}_{\theta}^{\zeta}(Q))\in\hat{\mathscr{B}}_Q.
\end{equation}

\begin{cor}\label{par_cor}
The cohomologically graded vector space $\mathcal{A}_v^{\zeta}$ is entirely concentrated in even or odd degrees, depending on whether $\chi_Q(v,v)$ is even or odd, respectively. 
\end{cor}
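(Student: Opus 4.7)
The plan is to derive the parity statement from the cohomological wall-crossing formula (Theorem \ref{coh_WCF_thm}) together with the observation that the cohomology of the unrestricted stack $\Mst_v(Q)$ has the correct parity, running an induction on $\dim v \coloneqq \sum_{i \in Q_0} v_i$.

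First I would establish that $\HO(\Mst_v(Q), \mathbb{Q})_{\vir}$ is concentrated in degrees of parity $\chi_Q(v,v) \pmod{2}$.  This is immediate from the description of $\Mst_v(Q)$ in \eqref{Qstackdef} as the quotient of a contractible affine space by $\GL_v$: the cohomology coincides with $\HO(\BGL_v, \mathbb{Q})$, which is a polynomial ring on the Chern classes and so sits in even degrees.  Since $\dim \Mst_v(Q) = -\chi_Q(v,v)$, after applying the $\vir$ shift the cohomology lies in degrees of the stated parity.

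Next I would unpack the $K$-degree $v$ piece of the right-hand side of \eqref{CWCE}.  Iterating the definition \eqref{mon_prod}, each decomposition $v = v_1 + \cdots + v_l$ with $v_k \neq 0$ and strictly decreasing slopes $\mu_{\zeta}(v_1) > \cdots > \mu_{\zeta}(v_l)$ contributes a twisted tensor product $\mathcal{A}^{\zeta}_{v_1}(Q) \otimes \cdots \otimes \mathcal{A}^{\zeta}_{v_l}(Q)$ shifted by $\sum_{i < j} B_Q(v_j, v_i)$.  The distinguished term $l = 1$, $v_1 = v$ contributes $\mathcal{A}_v^{\zeta}(Q)$ with no shift, while every other term involves only factors $\mathcal{A}_{v_k}^{\zeta}(Q)$ with $\dim v_k < \dim v$.

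Finally I would run the induction, with $v = 0$ as the trivial base case.  Assuming the conclusion for all $v'$ with $\dim v' < \dim v$, each proper decomposition summand is concentrated in total cohomological degrees of parity
\[
\sum_k \chi_Q(v_k, v_k) + \sum_{i < j} B_Q(v_j, v_i) \equiv \chi_Q(v, v) \pmod{2},
\]
where the congruence uses $B_Q(v_j, v_i) \equiv \chi_Q(v_i, v_j) + \chi_Q(v_j, v_i) \pmod{2}$ together with the bilinear expansion of $\chi_Q(v,v)$.  Since the left-hand side of \eqref{CWCE} and all proper summands on the right are concentrated in this single parity, the remaining direct summand $\mathcal{A}_v^{\zeta}(Q)$ must be as well, completing the induction.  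The only subtlety I anticipate is a small bookkeeping check that the iterated shifts really aggregate into $\sum_{i < j} B_Q(v_j, v_i)$, but since we only need the answer modulo $2$ and $B_Q$ is antisymmetric, any sign or ordering ambiguity is irrelevant.
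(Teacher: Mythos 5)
Your argument is correct, and it rests on exactly the same two ingredients as the paper's proof (the wall-crossing isomorphism \eqref{CWCE} and the even-degree concentration of $\HO(\Mst_v(Q),\mathbb{Q})\cong\HO(\pt/\GL_v,\mathbb{Q})$), but the induction you run is superfluous. Once you have identified the $l=1$, $v_1=v$ term as a direct summand of the $v$-graded piece of the right-hand side of \eqref{CWCE}, you already know that $\mathcal{A}_v^{\zeta}(Q)$ injects into $\HO(\Mst_v(Q),\mathbb{Q})_{\vir}$; a subobject of a cohomologically graded vector space concentrated in a single parity is automatically concentrated in that parity, so there is no need to first verify the parity of the remaining summands. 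The paper's proof simply makes this observation: take the monoidal unit in every tensor factor except the $\theta$-factor to get an inclusion $\mathcal{A}_\theta^{\zeta}(Q)\subset\HO(\Mst(Q),\mathbb{Q})_{\vir}$, restrict to $K$-degree $v$, and conclude. Your parity bookkeeping with $B_Q(v_j,v_i)\equiv\chi_Q(v_i,v_j)+\chi_Q(v_j,v_i)\pmod 2$ and the aggregation of shifts as $\sum_{i<j}B_Q(v_j,v_i)$ is itself correct (I checked it against \eqref{mon_prod}), but it buys you nothing here, since all the work is already done once the inclusion is in hand.
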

\begin{proof}
Taking the monoidal unit in every term in the infinite tensor product in \eqref{CWCE} except the term corresponding to $\theta$, there is an inclusion
\[
\mathcal{A}_{\theta}^{\zeta}(Q)\subset \HO(\Mst(Q),\mathbb{Q})_{\vir},
\]
and so, since \eqref{CWCE} respects the $K$-grading, an inclusion  $\mathcal{A}_{v}^{\zeta}(Q)\subset \HO(\Mst_v(Q),\mathbb{Q})_{\vir}$ for $v\in S^{\zeta}_{\theta}$.  Now the result follows from the fact that 
\[
\HO(\Mst_v(Q),\mathbb{Q})\cong \HO(\pt/\GL_v,\mathbb{Q})
\]
is entirely concentrated in even degrees, and the dimension of $\Mst_v(Q)$ is $-\chi_Q(v,v)$.
\end{proof}
\begin{rmk}
A more elementary proof of the vanishing of even cohomology of $\HO(\Mst^{\zeta\sst}_v(Q),\QQ)$ avoiding e.g. the use of the decomposition theorem in \cite{DavMei} has more recently been provided in \cite[Thm 5.1]{FraRei}. 
\end{rmk}

\subsubsection{Integrality theorem}
The second main ingredient we use from cohomological DT theory is a  version of the integrality conjecture.  The conditions on $\zeta$ for this theorem are slightly stronger than in Theorem \ref{coh_WCF_thm}. Recall that if $\zeta$ is $\theta$-generic, the twist in the twisted monoidal product becomes trivial on $\Vect^{\zeta,+}_{\theta}$ and the monoidal product on $\Vect^{\zeta,+}_{\theta}$ can be made symmetric, with symmetrizing morphism incorporating the Koszul sign rule with respect to the cohomological degree.  In particular, under this genericity assumption, given $V\in \Vect^{\zeta,+}_{\theta}$ we may form
\begin{equation}
\label{SymDef}
\Sym(V)=\bigoplus_{n\geq 0}\Sym^n(V),
\end{equation}
the underlying graded object of the free \textit{super}commutative algebra generated by $V$.  Recall that this is isomorphic to 
\[
\CSym\left(\HO^{\textrm{even}}(V)\right)\otimes \bigwedge \HO^{\textrm{odd}}(V),
\]
the tensor product of the free commutative algebra generated by $\HO^{\textrm{even}}(V)$ and the free exterior algebra generated by $\HO^{\textrm{odd}}(V)$.  We ignore the extra grading on this vector space determined by the decomposition on the right hand side of \eqref{SymDef}. 
The object $\Sym(V)$ will again be an object of $\Vect^{\zeta,+}_{\theta}$ if $V_0=0$.
\begin{thm}\label{coh_int_thm}\cite[Thm. A]{DavMei}
Let $\zeta\in \mathbb{R}^{Q_0}$ be a $\theta$-generic stability condition.  There is an isomorphism in $\Vect^{\zeta,+}_{\theta}$
\begin{equation}
\label{CIE}
\mathcal{A}_{\theta}^{\zeta}(Q)\cong \Sym\left(\bigoplus_{0\neq v\in S_{\theta}^{\zeta}}\BPS_v^{\zeta}(Q)\otimes \HO(\pt/\mathbb{C}^*,\mathbb{Q})_{\vir}\right).
\end{equation}
\end{thm}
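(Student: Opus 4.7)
The plan is to analyse the good moduli space morphism $p\colon \Mst^{\zeta\sst}_v(Q)\rightarrow \Msp^{\zeta\sst}_v(Q)$, which sends a semistable module to the polystable module with the same Jordan--H\"older factors, and to extract \eqref{CIE} from a Beilinson--Bernstein--Deligne (BBD) style decomposition of $p_*\QQ$. The $\theta$-genericity assumption makes the twist in \eqref{mon_prod} trivial on $\Vect^{\zeta,+}_\theta$, so that the monoidal product there is genuinely supersymmetric and the right-hand side of \eqref{CIE} is a free supercommutative algebra; the theorem is then essentially a freeness statement for the natural algebra structure on $\bigoplus_v \HO(\Msp^{\zeta\sst}_v,\IC)_{\vir}$ induced by the direct-sum maps $\Msp^{\zeta\sst}_{v'}\times \Msp^{\zeta\sst}_{v''}\to \Msp^{\zeta\sst}_{v'+v''}$.

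First I would observe that the restriction of $p$ to the stable locus $\Msp^{\zeta\stable}_v(Q)$ is a trivial $B\Gm$-gerbe, cut out by the scalar automorphisms common to every nonzero representation, so that on this open locus the direct image contributes $\IC(\Msp^{\zeta\sst}_v)\otimes \HO(\pt/\C^*,\QQ)_{\vir}$, which after the shift in \eqref{BPS_def} is exactly $\BPS^\zeta_v(Q)\otimes \HO(\pt/\C^*,\QQ)_{\vir}$. Next I would stratify $\Msp^{\zeta\sst}_v(Q)$ by polystable type $(v_1,\ldots,v_l)$: a point of the stratum with pairwise distinct simple factors of dimension vectors $v_1,\ldots,v_l$ corresponds \'etale-locally, via Luna's slice theorem applied to a GIT presentation of $\Msp^{\zeta\sst}_v$, to the image of an open subset of $\Msp^{\zeta\stable}_{v_1}\times\cdots\times \Msp^{\zeta\stable}_{v_l}$, with an additional quotient by the permutations identifying factors of equal dimension. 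The decomposition theorem applied to $p$, together with relative hard Lefschetz, should then force the summands of $p_*\QQ$ to be IC sheaves supported on closures of these polystable strata, with multiplicity spaces controlled by the cohomology of the scalar $B\Gm$ attached to each factor. Taking global sections and reorganising the sum over polystable types as a symmetric product then yields \eqref{CIE}.

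The main obstacle is that $p$ is not proper and $\Mst^{\zeta\sst}_v$ is only an Artin stack, so one cannot apply BBD verbatim: the argument requires a stacky variant of the decomposition theorem combined with a local analysis near strictly polystable points. The crucial geometric input is an \'etale-local description of $p$ (via Luna's slice theorem) which reduces the computation of $Rp_*\QQ$ in a neighbourhood of a polystable module $\bigoplus_i \rho_i^{\oplus n_i}$ to the model case of the morphism $\prod_i \pt/\GL_{n_i}\rightarrow \pt$, whose cohomology provides the required $\Sym$ factors. Ruling out summands supported in higher codimension requires a purity and support-dimension argument, with Lemma \ref{dimension_lemma} used to match dimensions on both sides of \eqref{CIE}. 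Once these sheaf-theoretic inputs are in place, the passage to the vector-space statement is a formal manipulation using the free supercommutative structure on $(\Vect^{\zeta,+}_\theta,\otimes^{\tw})$, the generator of $\HO(\pt/\C^*,\QQ)_{\vir}$ in \eqref{CIE} tracking precisely the contribution of the universal scalar $\Gm$.
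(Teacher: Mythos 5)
The paper does not prove this statement: it is cited verbatim from \cite[Thm.~A]{DavMei}, so you are attempting to reconstruct the external proof rather than a proof given in the text.

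Your outline captures several correct orienting facts: $\theta$-genericity trivialises the twist in \eqref{mon_prod}, making the target a free supercommutative algebra; the restriction of $p$ to the stable locus is a trivial $B\Gm$-gerbe contributing $\BPS^{\zeta}_v\otimes\HO(\pt/\mathbb{C}^*)_{\vir}$; and the right endpoint is a freeness statement about $\bigoplus_v \HO(\Mst^{\zeta\sst}_v,\QQ)_{\vir}$. However, there is a genuine gap in the central step. Your local model for $p$ near a strictly polystable point $\rho=\bigoplus_i\rho_i^{\oplus n_i}$ is \emph{not} $\prod_i\pt/\GL_{n_i}\to\pt$. By the local structure theorem for good moduli spaces (Luna/Alper--Hall--Rydh), $p$ is \'etale-locally on the base modelled on $[\Ext^1(\rho,\rho)/\Aut(\rho)]\to \Ext^1(\rho,\rho)\sslash\Aut(\rho)$, where $\Aut(\rho)=\prod_i\GL_{n_i}$ acts on the self-extension space. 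Equivalently, the local model is the moduli problem for the Ext-quiver of the simple factors $\rho_i$ at the dimension vector $(n_1,\ldots,n_k)$ — which generically has arrows. So the "model case" you reduce to is not $\prod_i\pt/\GL_{n_i}\to\pt$ at all; it is a new instance of the same kind of problem (a quiver moduli stack over its coarse space), and the reduction is recursive, not terminal. Your claim that $\HO(\pt/\GL_{n_i})$ supplies the $\Sym$ factors only makes sense when the Ext-quiver has no arrows, and no mechanism is offered for the general case.

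The second omission is structural: the argument in \cite{DavMei} hinges on the direct-sum maps $\oplus\colon\Msp^{\zeta\sst}_{v'}\times\Msp^{\zeta\sst}_{v''}\to\Msp^{\zeta\sst}_{v'+v''}$ being \emph{finite}, which both makes $\bigoplus_v p_*\QQ^{\vir}$ into a (super)commutative algebra object in perverse sheaves on the coarse space and is the actual lever used (together with the support lemma and approximation of $\Mst$ by schemes) to constrain which IC summands can appear and with what multiplicities. Your sketch replaces this with "should then force \ldots" and "requires a purity and support-dimension argument," but neither the support constraint nor the treatment of the non-properness of $p$ is actually supplied; Lemma~\ref{dimension_lemma} gives degree bounds on $\BPS^{\zeta}_v$ but on its own does not exclude parasitic summands supported in smaller dimension. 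As written the proposal identifies the correct destination and some correct landmarks but does not constitute a proof.
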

Theorem \ref{coh_int_thm}, along with \eqref{cat_Expb}, implies that $\chi_{K,t}(\mathcal{A}_{\theta}^{\zeta}(Q))\in G^{\qtrop}_Q$.  Combining with \eqref{WCF} we deduce moreover that $\chi_{K,t}\left(\HO(\Mst(Q),\mathbb{Q})_{\vir}\right)\in G^{\qtrop}_Q$.
\begin{rem}
If $Q$ is symmetric, in the sense that between any two distinct vertices $i$ and $j$ there are as many arrows from $i$ to $j$ as from $j$ to $i$, then the degenerate stability condition $\zeta=(0,\ldots,0)$ is generic, and there is an equality
\[
\mathcal{A}_{\theta=0}^{\zeta}(Q)=\HO(\Mst(Q),\QQ)_{\vir}.
\]
Under these conditions, Efimov \cite{Efi} proved that $\mathcal{A}_{\theta=0}^{\zeta}(Q)\cong \Sym\left(V_{\mathrm{prim}}\otimes \HO(\pt/\mathbb{C}^*,\mathbb{Q})_{\vir}\right)$ for some $V_{\mathrm{prim}}\in \Vect^+_K$ such that each $V_{\mathrm{prim},v}$ has finite-dimensional total cohomology.  Theorem \ref{coh_int_thm} gives a precise definition of $V_{\mathrm{prim}}$ in this case.
\end{rem}

\subsubsection{Examples}
We let $\LQ{r}$ denote the quiver with $1$ vertex and $r$ loops.  The quiver $\LQ{1}$ is known as the Jordan quiver.  Whichever stability condition $\zeta\in\mathbb{R}$ we choose, all $\CLQ{r}$-modules are semistable, and they are stable if and only if they are simple.  Thus $\zeta$ is not relevant, and we drop it from the notation when considering the quivers $\LQ{r}$.
\begin{eg}\label{coh_ferm}
The variety $\Msp_1(\LQ{0})=\pt$ is smooth, and so 
\begin{align*}
\IC(\Msp_1(\LQ{0}),\mathbb{Q})=&\HO\left(\Msp_1(\LQ{0}),\mathbb{Q}\right)\\
=&\mathbb{Q}.
\end{align*}
On the other hand, there are no simple $n$-dimensional $\CLQ{0}$-modules for $n\geq 2$.  So
\[
\BPS_n(\LQ{0})=\begin{cases} \mathbb{Q} &\textrm{if }n=1\\
0&\textrm{otherwise.}\end{cases}
\]
\end{eg}
\begin{eg}\label{coh_bos}
Now we consider the one-loop, or Jordan quiver.  Again, $\Msp_1(\LQ{1})\cong\mathbb{A}^1$ is smooth, and now $\chi_{\LQ{1}}(1,1)=0$.  Also, there are still no simple modules of dimension greater than $1$.  So we deduce that
\[
\BPS_n(\LQ{1})=\begin{cases} \mathbb{Q}[1] &\textrm{if }n=1\\
0&\textrm{otherwise.}\end{cases}
\]
\end{eg}

\begin{eg}\label{coh_K2}
Let $\Kr{2}$ be the Kronecker quiver, with vertex set $\{1,2\}$ and with $2$ arrows from $2$ to $1$.  We choose the (generic) stability condition $\zeta=(1,-1)$, and consider $\zeta$-semistable $\CKr{2}$-modules of slope $0$.  
Under Beilinson's derived equivalence \cite{Beil} between $\CKr{2}$-modules and coherent sheaves on $\mathbb{P}^1$, $\zeta$-semistable $(n,n)$-dimensional $\CKr{2}$-modules correspond to coherent sheaves on $\mathbb{P}^1$ with zero-dimensional support, of length $n$.  Since such sheaves are simple precisely if $n=1$, we deduce 
\[
\BPS_{(n,n)}^{\zeta}(\Kr{2})=\begin{cases}\HO(\mathbb{P}^1,\mathbb{Q})_{\vir}=\mathbb{Q}[-1]\oplus \mathbb{Q}[1]& \textrm{if }n=1\\
0&\textrm{otherwise.}
\end{cases}
\]
\end{eg}
More generally, let $\Kr{r}$ denote the $r$-Kronecker quiver, with vertices $\{1,2\}$ and $r$ arrows, all oriented from $2$ to $1$.  
\begin{eg}
\label{coh_Kplus}
Let $\zeta=(1,-1)$ as above.  There are precisely three $\zeta$-stable $\CKr{1}$-modules, of dimension vectors $(1,0)$, $(0,1)$ and $(1,1)$.  As such, we deduce that
\[
\BPS_{(n,n)}^{\zeta}(\Kr{1})=\begin{cases}\HO(\pt,\mathbb{Q})_{\vir}=\mathbb{Q}& \textrm{if }n=1\\
0&\textrm{otherwise.}
\end{cases}
\]
It is easy to check that for $r\geq 3$ there are $\zeta$-stable $\CKr{r}$-modules of dimension $(n,n)$ for all $n\in\mathbb{Z}_{>0}$.  There is an equality $\BPS_{(1,1)}^{\zeta}(\Kr{r})=\HO(\mathbb{P}^{r-1},\QQ)_{\vir}$, and in contrast with the cases $r\leq 2$, the higher order terms $\BPS_{(n,n)}^{\zeta}(\Kr{r})$ for $n\geq 2$ are nontrivial by Lemma \ref{dimension_lemma}.
\end{eg}
\subsubsection{Positivity}
Combining Theorem \ref{coh_int_thm} with (\ref{WCF}), we deduce the following.
\begin{prop}\label{WCF2}
Let $\zeta\in\mathbb{R}^{Q_0}$ be a generic stability condition.  Then there is an equality
\[
\chi_{K,t}(\HO(\Mst(Q),\mathbb{Q})_{\vir})=\prod_{\infty\xrightarrow{\theta} -\infty}\EE\left(\sum_{0\neq v\in S_{\theta}^{\zeta}}\chi_t(\BPS^{\zeta}_v(Q))x^v\right)
\]
where $\chi_t$ is as in \eqref{chit}.
\end{prop}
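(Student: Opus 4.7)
The proof is essentially a direct assembly of the two main structural results already stated: the cohomological wall-crossing formula (Theorem \ref{coh_WCF_thm}) and the cohomological integrality theorem (Theorem \ref{coh_int_thm}), connected via the characteristic function interpretation of $\EE$ recorded in \eqref{cat_Expb}. The plan is therefore to apply $\chi_{K,t}$ to the isomorphism from Theorem \ref{coh_WCF_thm}, then to rewrite each factor using Theorem \ref{coh_int_thm} together with \eqref{cat_Expb}.

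First, I would apply $\chi_{K,t}$ to both sides of \eqref{CWCE}. Using that $\chi_{K,t}$ is a $\mathbb{Z}\lcbs t\rcbs$-algebra isomorphism intertwining $\otimes^{\tw}$ with the multiplication of $\hat{\mathscr{B}}_Q$, this gives (as recorded in \eqref{WCF})
\[
\chi_{K,t}\left(\HO(\Mst(Q),\mathbb{Q})_{\vir}\right)=\prod_{\infty\xrightarrow{\theta}-\infty}\chi_{K,t}(\mathcal{A}_{\theta}^{\zeta}(Q))
\]
in $\hat{\mathscr{B}}_Q$. One has to check that the infinite product makes sense, but this follows from the fact that for each $v\in K$ there are only finitely many ordered Harder--Narasimhan decompositions $v=v_1+\cdots+v_l$ with nonzero parts, combined with the fact that the $v=0$ graded piece of each $\mathcal{A}_\theta^\zeta(Q)$ is the monoidal unit $\mathbb{Q}$.

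Next, for each slope $\theta$, I would apply Theorem \ref{coh_int_thm}, which is where $\theta$-genericity of $\zeta$ is used; this gives
\[
\mathcal{A}_{\theta}^{\zeta}(Q)\cong \Sym\left(V_\theta\right), \qquad V_\theta\coloneqq \bigoplus_{0\neq v\in S_{\theta}^{\zeta}}\BPS_v^{\zeta}(Q)\otimes \HO(\pt/\mathbb{C}^*,\mathbb{Q})_{\vir},
\]
as objects of $\Vect^{\zeta,+}_\theta$. Since $\BPS_v^\zeta(Q)$ is concentrated in $K$-degree $v$, we have $\chi_{K,t}(V_\theta)=\sum_{0\neq v\in S_\theta^\zeta}\chi_t(\BPS_v^\zeta(Q))\, x^v$ (noting $\HO(\pt/\mathbb{C}^*,\mathbb{Q})_{\vir}=\HO(\pt/\mathbb{C}^*,\mathbb{Q})[-1]$). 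Formula \eqref{cat_Expb} then gives
\[
\chi_{K,t}(\mathcal{A}_\theta^\zeta(Q))=\chi_{K,t}(\Sym(V_\theta))=\EE\left(\sum_{0\neq v\in S_{\theta}^{\zeta}}\chi_t(\BPS^{\zeta}_v(Q))x^v\right).
\]
Substituting this into the previous display yields the claimed identity.

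There is no serious obstacle: the two nontrivial inputs (Theorems \ref{coh_WCF_thm} and \ref{coh_int_thm}) have already been invoked, and the remaining work is bookkeeping. The only mild subtlety is the degree shift in \eqref{cat_Expb} versus the definition of $\HO(\pt/\mathbb{C}^*,\mathbb{Q})_{\vir}$; but since $\dim(\pt/\mathbb{C}^*)=-1$, these shifts agree, and the plethystic exponential in the statement is exactly the one produced by the $\Sym$ on the right-hand side of \eqref{CIE}.
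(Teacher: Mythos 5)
Your proof is correct and follows essentially the same route as the paper's: apply $\chi_{K,t}$ to the cohomological wall-crossing isomorphism to obtain \eqref{WCF}, then use Theorem \ref{coh_int_thm} together with \eqref{cat_Expb} to rewrite each slope factor $\chi_{K,t}(\mathcal{A}_\theta^\zeta(Q))$ as a plethystic exponential of BPS characteristic functions. The paper presents the two steps in the opposite order but the content is identical, including the observation that $\HO(\pt/\mathbb{C}^*,\mathbb{Q})_{\vir}=\HO(\pt/\mathbb{C}^*,\mathbb{Q})[-1]$ matches the shift in \eqref{cat_Expb}.
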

\begin{proof}
From Theorem \ref{coh_int_thm}, we deduce that
\begin{align} \label{deCatA}
    \chi_{K,t}(\mathcal{A}_{\theta}^{\zeta}(Q))&=  \chi_{K,t}\left(\Sym\left(\bigoplus_{0\neq v\in S_{\theta}^{\zeta}}\BPS_v^{\zeta}(Q)\otimes \HO(\pt/\mathbb{C}^*,\mathbb{Q})[-1]\right)\right) \\ \nonumber
    &=\EE\left(\chi_{K,t}\left(\bigoplus_{0\neq v\in S_{\theta}^{\zeta}}\BPS_v^{\zeta}(Q)\right)\right) \\ \nonumber
    &=\EE\left(\sum_{0\neq v\in S_{\theta}^{\zeta}} \chi_t\left(\BPS_v^{\zeta}(Q)\right)x^v\right),
\end{align}
where the second equality follows from \eqref{cat_Expb}.  Then the result follows from (\ref{WCF}).
\end{proof}
We will use the following lemma:
\begin{lem}\label{BPS_cor}
Let $\zeta\in\mathbb{R}^{Q_0}$ be a $\theta$-generic stability condition, and assume that $\mu_{\zeta}(v)=\theta$.  Then the cohomologically graded vector space $\BPS_v^{\zeta}(Q)$ is entirely supported in even or odd degree, depending on whether $\chi_Q(v,v)$ is odd or even, respectively.
\end{lem}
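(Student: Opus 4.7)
The plan is to induct on $\dim(v)$, extracting information about $\BPS_v^{\zeta}(Q)$ from $\mathcal{A}_v^{\zeta}(Q)$ via the integrality theorem \ref{coh_int_thm} and comparing parities using Corollary \ref{par_cor}.  First I would use \eqref{CIE} to expand the $v$-graded piece of $\mathcal{A}_\theta^{\zeta}(Q)$ as a direct sum, indexed by partitions $v = \sum_u n_u u$ (with $0\neq u\in S^{\zeta}_{\theta}$), of terms $\bigotimes_u \Sym^{n_u}(\BPS_u^{\zeta}(Q) \otimes B)$, where $B=\HO(\pt/\mathbb{C}^*,\mathbb{Q})_{\vir}$ is purely odd.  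I would then separate the ``top'' partition $v = 1\cdot v$, which contributes exactly $\BPS_v^{\zeta}(Q) \otimes B$, from all other partitions, each of which involves only $u$ with $\dim(u) < \dim(v)$.

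Next I would exploit $\theta$-genericity of $\zeta$: this forces $B_Q(u,u')=0$ for all $u,u'\in S^{\zeta}_{\theta}$, equivalently $\chi_Q(u,u')=\chi_Q(u',u)$, making the Euler pairing symmetric on $S^{\zeta}_{\theta}$.  Granting the inductive hypothesis that each $\BPS_u^{\zeta}(Q)$ with $\dim(u) < \dim(v)$ is concentrated in parity $\chi_Q(u,u)+1 \pmod 2$, the tensor $\BPS_u^{\zeta}(Q)\otimes B$ lives in parity $\chi_Q(u,u)\pmod 2$, and supersymmetrisation (governed by the Koszul sign rule) puts $\Sym^{n_u}(\BPS_u^{\zeta}(Q)\otimes B)$ in parity $n_u\chi_Q(u,u)\pmod 2$.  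Hence every partition of $v$ that does not involve $\BPS_v^{\zeta}(Q)$ itself contributes in parity $\sum_u n_u\chi_Q(u,u)\pmod 2$.

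The crucial computation is the congruence
\[
\chi_Q(v,v)=\sum_{u,u'} n_u n_{u'}\chi_Q(u,u') \equiv \sum_u n_u\chi_Q(u,u)\pmod 2,
\]
where each off-diagonal pair contributes $n_u n_{u'}(\chi_Q(u,u')+\chi_Q(u',u))=2n_u n_{u'}\chi_Q(u,u')$, which is even by the symmetry above, and the diagonal correction $n_u(n_u-1)\chi_Q(u,u)$ is automatically even.  Combined with Corollary \ref{par_cor}, which pins the parity of $\mathcal{A}_v^{\zeta}(Q)$ to $\chi_Q(v,v)\pmod 2$, this forces the remaining summand $\BPS_v^{\zeta}(Q)\otimes B$ into the same parity; cancelling the odd parity of $B$ gives that $\BPS_v^{\zeta}(Q)$ has parity $\chi_Q(v,v)+1\pmod 2$, as desired.

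The main obstacle I anticipate is justifying that \eqref{CIE} genuinely splits parities as above; this relies on $\theta$-genericity trivialising the cohomological twist in the monoidal product \eqref{mon_prod}, so that $\Sym$ on $\Vect^{\zeta,+}_{\theta}$ really is the ordinary supersymmetric algebra and the Koszul sign rule applies without further correction.  Once this is in place, the inductive step reduces to the bookkeeping above.
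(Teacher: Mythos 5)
Your proof is correct, but it is considerably more involved than the paper's, which dispenses with the induction entirely. The paper simply observes that the ``top partition'' $v = 1\cdot v$ in \eqref{CIE} already gives a \emph{subobject}
\begin{align*}
\BPS_v^{\zeta}(Q)\otimes\HO(\pt/\mathbb{C}^*)_{\vir}\subset \mathcal{A}_{v}^{\zeta}(Q),
\end{align*}
so since $\mathcal{A}_v^{\zeta}(Q)$ is concentrated in a single parity (Corollary \ref{par_cor}), any subobject of it inherits that parity, and tensoring off the odd factor $\HO(\pt/\mathbb{C}^*)_{\vir}$ gives the result in one line. You correctly identified this top summand as special, but then went on to verify that all the \emph{other} partition summands also land in the correct parity (using the inductive hypothesis and the identity $\chi_Q(v,v)\equiv\sum_u n_u\chi_Q(u,u)\pmod 2$, which relies on $\theta$-genericity), and only then inferred the parity of the top summand by subtraction. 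That extra computation is never needed: once you know a direct summand of an object concentrated in one parity, you are done. Your approach does have the incidental virtue of showing that the congruence $\chi_Q(v,v)\equiv\sum_u n_u\chi_Q(u,u)\pmod 2$ makes \eqref{CIE} parity-consistent across all partitions, which serves as a useful sanity check on Corollary \ref{par_cor} and Theorem \ref{coh_int_thm} together, but it is strictly more work than the statement requires.
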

\begin{proof}
From Theorem \ref{coh_int_thm} we deduce that there is an inclusion
\begin{equation}\label{BPSin}
\BPS_v^{\zeta}(Q)\otimes\HO(\pt/\mathbb{C}^*)_{\vir}\subset \mathcal{A}_{v}^{\zeta}.
\end{equation}
On the other hand, $\HO(\pt/\mathbb{C}^*)_{\vir}$ is supported in odd cohomological degree, since $\dim(\pt/\mathbb{C}^*)=-1$, and by Corollary \ref{par_cor}, $\mathcal{A}_v^{\zeta}$ is supported in even or odd cohomological degree, depending on whether $\chi_Q(v,v)$ is even or odd, respectively.
\end{proof}

We now come to our main positivity result regarding Donaldson--Thomas invariants.  It will be a key ingredient in proving preservation of positivity in Corollary \ref{2WallPos}.  It generalizes the positivity statement of \cite[Cor. 1.2]{MeiRei}, which applies to the case in which $Q$ is acyclic.  The statement that the refined DT invariant is of Lefschetz type, or even bar-invariant, on the other hand, fails outside of the non-acyclic case.  This is a result of the fact that the moduli spaces we consider outside of the acyclic case are not proper, and so their intersection cohomology does not satisfy Poincar\'e duality or the hard Lefschetz theorem.

\begin{prop}\label{Pos1}
Let $\zeta\in\mathbb{R}^{Q_0}$ be a $\theta$-generic stability condition for a quiver $Q$.  Then
\[
\chi_{K,t}(\mathcal{A}^{\zeta}_{\theta}(Q))=\prod_{0\neq v\in S_{\theta}^{\zeta}}\EE(f_v(t)x^v)
\]
where each $f_v(t)\in\mathbb{Z}_{\geq 0}[t^{\pm 1}]$, and $\parit(f_v(t))\equiv \chi_Q(v,v)+1$ (mod $2$).  Moreover, $f_v(t)\neq 0$ if and only if there exists a $\zeta$-stable $\bb{C}Q$-module of dimension $v$, and if $f_v(t)\neq 0$ it can be written
\[
f_v(t)=t^{
\chi_Q(v,v)-1}(1+g_v(t))
\]
where $g_v(t)\in t\mathbb{Z}_{\geq 0}[t]$ has degree less than or equal to $2(1-\chi_Q(v,v))$.
\end{prop}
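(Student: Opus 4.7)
The plan is to assemble Proposition \ref{Pos1} as a direct consequence of the cohomological integrality theorem (Theorem \ref{coh_int_thm}) and the structural properties of the $\BPS$ vector spaces already established in Lemmas \ref{dimension_lemma} and \ref{BPS_cor}. The single substantive input is already proven (Theorem \ref{coh_int_thm}); the rest is bookkeeping.

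First, I would apply Theorem \ref{coh_int_thm} to obtain an isomorphism in $\Vect^{\zeta,+}_{\theta}$,
\[
\mathcal{A}^{\zeta}_{\theta}(Q) \cong \Sym\!\left(\bigoplus_{0\neq v\in S^{\zeta}_{\theta}} \BPS^{\zeta}_v(Q)\otimes \HO(\pt/\mathbb{C}^*,\mathbb{Q})_{\vir}\right),
\]
and then apply the characteristic function $\chi_{K,t}$. Using the categorical interpretation of the plethystic exponential in \eqref{cat_Expb}, and the fact that applying $\chi_{K,t}$ to a direct sum is additive, this gives
\[
\chi_{K,t}(\mathcal{A}^{\zeta}_{\theta}(Q)) = \EE\!\left(\sum_{0\neq v\in S^{\zeta}_{\theta}} \chi_t(\BPS^{\zeta}_v(Q))\,x^v\right) = \prod_{0\neq v\in S^{\zeta}_{\theta}} \EE(f_v(t)\,x^v),
\]
where we define $f_v(t) \coloneqq \chi_t(\BPS^{\zeta}_v(Q)) \in \bb{Z}[t^{\pm 1}]$. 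The last equality uses \eqref{plus-prod}.

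Next I would verify each claimed property of $f_v(t)$ in turn. Non-negativity of the coefficients is immediate, since these coefficients are dimensions of graded pieces of $\BPS^{\zeta}_v(Q)$. The parity statement $\parit(f_v(t)) \equiv \chi_Q(v,v)+1 \pmod 2$ is a direct restatement of Lemma \ref{BPS_cor}. The non-vanishing criterion $f_v(t)\neq 0 \iff \Msp^{\zeta\stable}_v(Q)\neq \emptyset$ follows from the definition \eqref{BPS_def}: if no stable module exists then $\BPS^{\zeta}_v(Q)=0$ by definition, and if a stable module exists then Lemma \ref{dimension_lemma} produces a nonzero cohomology class in degree $\chi_Q(v,v)-1$.

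Finally, for the shape $f_v(t) = t^{\chi_Q(v,v)-1}(1+g_v(t))$, I would combine two facts from Lemma \ref{dimension_lemma}. Assuming $\Msp^{\zeta\stable}_v(Q) \neq \emptyset$: the lemma tells us that $\HO^j(\BPS^{\zeta}_v(Q))$ vanishes for $|j| > 1-\chi_Q(v,v)$, and that the extremal degree $j=\chi_Q(v,v)-1$ contributes exactly a one-dimensional piece. Consequently $f_v(t)$ is supported in the window $[\chi_Q(v,v)-1,\,1-\chi_Q(v,v)]$ with leading coefficient $1$ at the bottom of the window, so factoring out $t^{\chi_Q(v,v)-1}$ yields $1+g_v(t)$ with $g_v(t)\in t\mathbb{Z}_{\geq 0}[t]$ (no $t^0$ contribution since the extremal piece is one-dimensional) and $\deg g_v(t) \leq (1-\chi_Q(v,v)) - (\chi_Q(v,v)-1) = 2(1-\chi_Q(v,v))$. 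There is no genuine obstacle in this proof; the entire content has been packaged into Theorem \ref{coh_int_thm}, Lemma \ref{dimension_lemma}, and Lemma \ref{BPS_cor}, and the only remaining task is the elementary numerical bookkeeping carried out above.
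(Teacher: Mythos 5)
Your proof is correct and is essentially a careful unpacking of the paper's one-line proof ("Combine \eqref{deCatA} and Lemmas \ref{dimension_lemma} and \ref{BPS_cor}"): you invoke Theorem \ref{coh_int_thm} and \eqref{cat_Expb} to reproduce \eqref{deCatA}, define $f_v(t)=\chi_t(\BPS_v^{\zeta}(Q))$, and then read off positivity, parity, nonvanishing, and the degree window from Lemmas \ref{BPS_cor} and \ref{dimension_lemma} exactly as the paper intends. The only point worth making explicit (which both you and the paper leave implicit) is that splitting $\EE\bigl(\sum_v f_v(t)x^v\bigr)$ into the product $\prod_v \EE(f_v(t)x^v)$ uses the $\theta$-genericity hypothesis, which forces $B(v,v')=0$ for $v,v'\in S_{\theta}^{\zeta}$ so that the $x^v$ commute and the ordering of the product is immaterial.
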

\begin{proof}
Combine \eqref{deCatA} and Lemmas \ref{dimension_lemma} and \ref{BPS_cor}.
\end{proof}
Combining Proposition \ref{Pos1} with (\ref{WCF}), we deduce positivity for refined Donaldson--Thomas invariants.
\begin{thm}[Positivity of DT invariants]
\label{DT_pos_thm}
Let $\zeta\in \mathbb{R}^{Q_0}$ be a generic stability condition.  There is an equality of generating series 
\begin{equation*}
\chi_{K,t}(\HO(\Mst(Q),\mathbb{Q})_{\vir})=\prod_{\infty\xrightarrow{\theta} -\infty}\prod_{0\neq v\in S_{\theta}^{\zeta}}\EE(f_v(t)x^v)
\end{equation*}
where the $f_v(t)\in\mathbb{Z}_{\geq 0}[t^{\pm 1}]$ have positive coefficients, $\parit(f_v(t))\equiv \chi_Q(v,v)+1$ (mod $2$) and $f_v(t)\neq 0$ if and only if there exists a $\zeta$-stable $\bb{C}Q$-module of dimension $v$, in which case we can write
\[
f_v(t)=t^{
\chi_Q(v,v)-1}(1+g_v(t))
\]
where $g_v(t)\in t\mathbb{Z}_{\geq 0}[t]$ has degree less than or equal to $2(1-\chi_Q(v,v))$.
\end{thm}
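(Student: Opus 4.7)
The plan is to assemble Theorem \ref{DT_pos_thm} directly from the wall-crossing formula \eqref{WCF} together with the finer structural result Proposition \ref{Pos1}. Concretely, I would first observe that a generic stability condition $\zeta$ is, by definition, $\theta$-generic for every slope $\theta$, so Proposition \ref{Pos1} applies simultaneously to each factor $\chi_{K,t}(\mathcal{A}_\theta^\zeta(Q))$ indexed by a slope $\theta$ in \eqref{WCF}. Substituting
\[
\chi_{K,t}(\mathcal{A}_\theta^\zeta(Q)) \;=\; \prod_{0\neq v\in S_\theta^\zeta}\EE(f_v(t)x^v)
\]
from Proposition \ref{Pos1} into \eqref{WCF} yields the claimed product decomposition. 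The non-negativity of coefficients, the parity statement $\parit(f_v(t))\equiv \chi_Q(v,v)+1\pmod 2$, the nonvanishing criterion, and the precise form $f_v(t)=t^{\chi_Q(v,v)-1}(1+g_v(t))$ with $g_v(t)\in t\mathbb{Z}_{\geq 0}[t]$ of degree $\leq 2(1-\chi_Q(v,v))$ all transfer directly from Proposition \ref{Pos1}, since those assertions are local to each slope stratum.

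The only issues requiring any care are bookkeeping ones. First, the double product is well-defined: for any fixed $v\in K$ there are only finitely many decompositions $v=v_1+\cdots+v_l$ with $v_i\neq 0$, so after expansion only finitely many terms contribute to the $x^v$-coefficient, as already noted in the discussion following Theorem \ref{coh_WCF_thm}. Second, the compatibility of \eqref{WCF} with the ordered product is preserved when we expand each $\chi_{K,t}(\mathcal{A}_\theta^\zeta(Q))$ into a further product indexed by $v\in S_\theta^\zeta$: since $\zeta$ is $\theta$-generic, all elements $v\in S_\theta^\zeta$ satisfy $B_Q(v,v')=0$ for $v'\in S_\theta^\zeta$, so the functions $\EE(f_v(t)x^v)$ in the inner product over $v\in S_\theta^\zeta$ commute pairwise inside $G^{\qtrop}_Q$, and the order within each slope does not matter.

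No genuinely new argument is required beyond the two cited results. The main technical inputs --- the cohomological wall-crossing theorem (Theorem \ref{coh_WCF_thm}), the cohomological integrality theorem (Theorem \ref{coh_int_thm}), and the parity bound from Lemma \ref{BPS_cor} --- have already been invoked to prove Proposition \ref{Pos1} and \eqref{WCF}. The potential obstacle, had it been present, would have been a failure of the parity statement at the level of $\BPS_v^\zeta(Q)$, but this is precisely Lemma \ref{BPS_cor}, which pins down the parity of $\BPS_v^\zeta(Q)$ via the embedding $\BPS_v^\zeta(Q)\otimes \HO(\pt/\mathbb{C}^*,\mathbb{Q})_{\vir}\hookrightarrow \mathcal{A}_v^\zeta(Q)$ combined with Corollary \ref{par_cor}. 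Thus the proof of Theorem \ref{DT_pos_thm} reduces to a substitution, and I would write it as a single short paragraph citing \eqref{WCF} and Proposition \ref{Pos1}.
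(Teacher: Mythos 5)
Your proof is correct and follows the same route as the paper, which proves Theorem \ref{DT_pos_thm} simply by combining Proposition \ref{Pos1} with the wall-crossing formula \eqref{WCF}; you spell out the bookkeeping (well-definedness of the double product, commutativity within each slope via $\theta$-genericity) that the paper leaves implicit.
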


\subsection{Refined Donaldson--Thomas theory}
\label{ref_DT_sec}
In this section we recall some results from refined Donaldson--Thomas theory, culminating in the Meinhardt--Reineke result \cite{MeiRei} on the positivity and Lefschetz type property of refined DT invariants of acyclic quivers.  As per the warnings at the beginning of \S\ref{pos_DT_sec}, we will need to recall some extra definitions first.  
\subsubsection{Graded mixed Hodge structures}
Recall that a \textbf{(rational) mixed Hodge structure} is given by the data of
\begin{itemize}
    \item a finite-dimensional vector space $V$ over $\mathbb{Q}$
    \item an ascending filtration $W_{\bullet}$ of $V$
    \item a descending filtration $F^{\bullet}$ of $V\otimes_{\mathbb{Q}} \mathbb{C}$
\end{itemize}
such that the filtration induced by $F^{\bullet}$ on $W_{n}\otimes_{\mathbb{Q}}\mathbb{C}/W_{n-1}\otimes_{\mathbb{Q}}\mathbb{C}$ determines a pure Hodge structure of weight $n$.  A \textbf{cohomologically graded mixed Hodge structure} is a possibly infinite formal direct sum $V=\bigoplus_{i\in\mathbb{Z}}V^i[-i]$ with each $V^i$ a rational mixed Hodge structure, i.e. a cohomologically graded object with $V^i$ in cohomological degree $i$.  We then write $\HO^i(V)=V^i$.

We denote by $\MHS$ the category of rational mixed Hodge structures.  We denote by $\MHS^+$ the category of partially bounded cohomologically graded mixed Hodge structures, i.e. those satisfying  
\begin{itemize}
    \item if $n\ll 0$ then $\Gr^W_n(\HO^i(V))=0$ for all $i$
    \item for all $n\in \mathbb{Z}$ the underlying vector space of $\bigoplus_{i\in\mathbb{Z}}\Gr^W_n(\HO^i(V))$ is finite-dimensional, and so it is naturally a pure weight $n$ Hodge structure.
\end{itemize}
The category $\MHS^+$ is a symmetric monoidal category, where the symmetrizing morphism incorporates the Koszul sign rule.  We call an object of $\MHS^+$ \textbf{pure} if $\Gr^W_i(\HO^j(V))=0$ for all $i\neq j$.  We denote by $\LL$ the pure object of $\MHS^+$ given by $\HO_c(\mathbb{A}^1,\mathbb{Q})$.  It is a one dimensional pure weight $2$ Hodge structure concentrated in cohomological degree $2$.  Given an element $V\in \MHS^+$ we define
\[
\chi_{\Wt}(V)=\sum_{i,n\in \mathbb{Z}}(-1)^i\dim_{\bb{Q}}(\Gr^W_n(\HO^i(V)))(-t)^n\in\mathbb{Z}\lcbs t \rcbs.
\]

The category $\MHS$ is a full subcategory of the category $\MMHS$ of monodromic mixed Hodge structures defined in \cite[Sec.7]{CoHA}, which we refer to for the facts about the extension of $\chi_{\Wt}$ recalled below.  We define the category of partially bounded monodromic mixed Hodge structures $\MMHS^+$ via the same conditions as for $\MHS^+$.  We make almost no use of this larger category, except for the following fact: there is a tensor square root $\mathbb{L}^{1/2}$ for $\mathbb{L}$ inside $\MMHS^+$, and we denote by $\overline{\MHS}^+$ the smallest full tensor subcategory of $\MMHS^+$ containing $\MHS^+$ and this tensor square root.  Since $\LL$ is concentrated in cohomological degree $2$, the object $\LL^{1/2}$ is concentrated in degree $1$.  Alternatively, as in \cite[Sec 3.4]{CoHA}, one can adjoin a formal tensor square root of $\mathbb{L}$ to $\MHS^+$ in order to define $\overline{\MHS}^+$.  For the purposes of this paper, the only important thing to note is that the relation
\[
\chi_{\Wt}(V\otimes V')=\chi_{\Wt}(V)\chi_{\Wt}(V')
\]
extends to the Grothendieck ring of objects in $\overline{\MHS}^+$, and we have
\[
\chi_{\Wt}(\mathbb{L}^{1/2})=t.
\]
Furthermore, if $V\in \overline{\MHS}^+$ is pure then $\chi_{\Wt}(V)\in\mathbb{Z}_{\geq 0}\lcbs t\rcbs$, and if $V$ is pure and carries a Lefschetz operator then $\chi_{\Wt}(V)$ is a Lefschetz type Laurent polynomial.

  We will slightly abuse notation and refer to objects in $\mMHS^+$ as cohomologically graded mixed Hodge structures, despite the fact that $\MHS^+$ is only a full subcategry of $\mMHS^+$.  We say an object of $\mMHS^+$ is of \textbf{Tate type} if each cohomologically graded piece is a direct sum of tensor powers of $\LL^{1/2}[1]$.  We denote by $\mMHS_{ K}^+$ the category of formal $K$-graded direct sums in $\mMHS^+$.  We make this into a monoidal category by setting
\[
(V'\otimes^{\tw}V'')_{v}=\bigoplus_{v'+v''=v}V'_{v'}\otimes V''_{v''}\otimes\LL^{B(v',v'')/2}.
\]
For $V\in \ob(\mMHS_{K}^+)$ we define
\[
\chi_{\Wt,K}(V)=\sum_{v\in K}\chi_{\Wt}(V_v)x^v\in \hat{\mathscr{B}}_Q.
\]
Then $\chi_{\Wt,K}(V'\otimes^{\tw}V'')=\chi_{\Wt,K}(V')\chi_{\Wt,K}(V'')$.

For $X$ an irreducible finite type global quotient Artin stack, we define the cohomologically graded mixed Hodge structures
\begin{align*}
\HO(X,\mathbb{Q})_{\vir}=&\HO(X,\mathbb{Q})\otimes \LL^{-\dim(X)/2}\\
\HO_c(X,\mathbb{Q})_{\vir}=&\HO_c(X,\mathbb{Q})\otimes \LL^{-\dim(X)/2}.
\end{align*}
The latter is a cohomologically graded mixed Hodge structure which may have nontrivial pieces of weight $n$ for $n\ll 0$, and so it is not an object of $\mMHS^+$.  As such we will only consider its (cohomologically graded) dual $\HO_c(X,\mathbb{Q})_{\vir}^*$ in the category of cohomologically graded mixed Hodge structures, which \textit{is} an object of $\mMHS^+$.

\subsubsection{Factorization and integrality}

We recall the following result of Reineke \cite[Sec. 6]{ReinHN}.
\begin{thm}\label{refWCF}
There is an equality of generating series
\[
\chi_{\Wt,K}\left(\bigoplus_{v\in K}\HO_c(\Mst_v(Q),\mathbb{Q})^*_{\vir}\right)=\prod_{\infty\xrightarrow{\theta} -\infty}\chi_{\Wt,K}\left(\bigoplus_{v\in S^{\zeta}_{\theta}}\HO_c(\Mst^{\zeta\sstable}_{v}(Q),\bb{Q})_{\vir}^*\right).
\]
\end{thm}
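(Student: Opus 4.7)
The plan is to prove this by stratifying the stack $\Mst_v(Q)$ using Harder--Narasimhan filtrations with respect to $\zeta$. First I would recall that, by the existence and uniqueness of the HN filtration, there is a decomposition into locally closed substacks
\[
\Mst_v(Q)=\bigsqcup_{(v_1,\ldots,v_r)}\Mst^{\zeta,\mathrm{HN}}_{(v_1,\ldots,v_r)}(Q),
\]
where the union runs over ordered tuples $(v_1,\ldots,v_r)$ of nonzero dimension vectors with $\sum v_i=v$ and strictly decreasing slopes $\mu_\zeta(v_1)>\cdots>\mu_\zeta(v_r)$, and $\Mst^{\zeta,\mathrm{HN}}_{(v_1,\ldots,v_r)}(Q)$ is the substack of modules whose HN-filtration has the prescribed type.

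Next I would use that sending a module with HN-filtration to its associated graded defines a morphism
\[
\Mst^{\zeta,\mathrm{HN}}_{(v_1,\ldots,v_r)}(Q)\longrightarrow \prod_{i=1}^{r}\Mst^{\zeta\sst}_{v_i}(Q)
\]
whose fibers parametrize the iterated extension data. A standard Ext computation shows that this morphism is an iterated vector bundle whose total relative dimension is $-\sum_{i<j}\chi_Q(v_i,v_j)$, the missing negative contributions in the Euler form coming from the unavoidable $\Hom$ and $\Ext^1$ groups between HN-pieces of different slopes. Applying additivity and Künneth for the compactly supported cohomology (taking the Verdier dual to pass to the $*_{\vir}$ convention used here) yields, for each HN-type,
\[
\HO_c\bigl(\Mst^{\zeta,\mathrm{HN}}_{(v_1,\ldots,v_r)}(Q),\mathbb{Q}\bigr)^*_{\vir}\cong \bigotimes_{i=1}^{r}\HO_c\bigl(\Mst^{\zeta\sst}_{v_i}(Q),\mathbb{Q}\bigr)^*_{\vir}\otimes \mathbb{L}^{a(v_1,\ldots,v_r)/2}
\]
for the appropriate Tate twist $a(v_1,\ldots,v_r)$; a bookkeeping check shows that this twist is exactly what is produced by expanding out the twisted monoidal product on the right-hand side with its factors $\mathbb{L}^{B(v',v'')/2}$ in the prescribed slope-decreasing order, using $B(v',v'')=\chi_Q(v'',v')-\chi_Q(v',v'')$ together with the diagonal correction from the virtual normalization $-\dim \Mst_v(Q)=\chi_Q(v,v)$.

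Finally I would assemble the statement by summing over HN-types: since every $v\in K$ admits only finitely many ordered decompositions $v=v_1+\cdots+v_r$ into nonzero pieces, the sum over HN-types stabilizes in each $K$-graded piece, and matches coefficient-by-coefficient the formal product on the right. Applying $\chi_{\Wt,K}$ and using its multiplicativity with respect to the twisted tensor product then yields the displayed equality in $\hat{\mathscr{B}}_Q$. The main obstacle I expect is the bookkeeping of the Tate twists and cohomological shifts: one has to reconcile the conventions for $*_{\vir}$, the sign of $B$ versus $\chi_Q$, and the fact that the twisted monoidal structure on $\mMHS^+_K$ uses $\mathbb{L}^{B(v',v'')/2}$ rather than $\mathbb{L}^{-\chi_Q(v',v'')}$; once this is pinned down, everything else is stratification plus additivity of $\HO_c$ and the standard affine-bundle computation for HN-strata.
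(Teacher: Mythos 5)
The paper does not actually prove Theorem~\ref{refWCF}; it cites Reineke's Harder--Narasimhan recursion \cite{ReinHN} and only notes the $t\mapsto t^{-1}$ translation between Reineke's $\HO_c$-statement and the dual-$\HO_c$ convention used here. What the paper does say, in introducing the variant Proposition~\ref{I_WCF}, is that the proof ``follows essentially from the existence of the Harder--Narasimhan stratification of $\Mst(Q)$ and the long exact sequence in compactly supported cohomology''---which is precisely the strategy you propose. So your route is the intended one (and the one Reineke uses, in the motivic/cohomological rather than point-counting form); there is no competing argument in the paper to compare it against.

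Two bookkeeping points you flagged do need fixing, and one of them is a genuine slip. First, the associated-graded map $\Mst^{\zeta,\mathrm{HN}}_{(v_1,\ldots,v_r)}(Q)\to\prod_i\Mst^{\zeta\sst}_{v_i}(Q)$ is not literally an iterated vector bundle: for $i<j$ (so $\mu_\zeta(v_i)>\mu_\zeta(v_j)$), the groups $\Hom(\rho_j,\rho_i)$ need not vanish (only $\Hom(\rho_i,\rho_j)$ does), so each iterated fiber is a quotient stack of the form $[\Ext^1(\rho_j,\rho_i)/\Hom(\rho_j,\rho_i)]$ with trivial action, rather than an affine space. This is harmless for $\chi_{\Wt,K}$ since the motivic contribution is still $\LL^{\dim\Ext^1-\dim\Hom}$, but it should be said. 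Second, and more substantively, your stated ``total relative dimension'' $-\sum_{i<j}\chi_Q(v_i,v_j)$ has the arguments of $\chi_Q$ transposed: the extension class for $0\to\rho_{(i-1)}\to\rho_{(i)}\to\rho_i\to 0$ lives in $\Ext^1(\rho_i,\rho_{(i-1)})$, so the relative motivic dimension is
\[
\sum_{i<j}\bigl(\dim\Ext^1(\rho_j,\rho_i)-\dim\Hom(\rho_j,\rho_i)\bigr)=-\sum_{i<j}\chi_Q(v_j,v_i),
\]
not $-\sum_{i<j}\chi_Q(v_i,v_j)$. The quantity you wrote is instead the \emph{codimension} of the HN stratum inside $\Mst_v(Q)$, i.e.\ $\dim\Mst_v(Q)-\dim\Mst^{\zeta,\mathrm{HN}}_{(v_1,\ldots,v_r)}(Q)$; conflating these two gives the wrong Tate twist. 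With $d=-\sum_{i<j}\chi_Q(v_j,v_i)$ the twist matches $\prod_{i<j}\LL^{B(v_i,v_j)/2}$ from the slope-decreasing twisted product, using $B(v',v'')=\chi_Q(v'',v')-\chi_Q(v',v'')$ and the virtual normalization $\dim\Mst_w(Q)=-\chi_Q(w,w)$, exactly as you anticipated the bookkeeping would have to show. Once those two corrections are made, your proposal is a correct proof.
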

Reineke's original result was written in terms of compactly supported cohomology (not its dual), and so one should make the substitution $t\mapsto t^{-1}$ to translate between his result and its statement above.  Note that since all of the stacks $\Mst$ in the above expression are smooth, by Poincar\'e duality we have 
\[
\HO_c(\Mst,\mathbb{Q})^*_{\vir}\cong \HO(\Mst,\mathbb{Q})_{\vir},
\]
and so we can rewrite Theorem \ref{refWCF} as
\[
\chi_{\Wt,K}\left(\bigoplus_{v\in K}\HO(\Mst_v(Q),\mathbb{Q})_{\vir}\right)=\prod_{\infty\xrightarrow{\theta} -\infty}\chi_{\Wt,K}\left(\mathcal{A}^{\zeta}_{\theta,\Hodge}(Q)\right),
\]
where
\[
\mathcal{A}^{\zeta}_{\theta,\Hodge}(Q)\coloneqq \bigoplus_{v\in S^{\zeta}_{\theta}}\HO(\Mst^{\zeta\sstable}_v(Q),\QQ)_{\vir}
\]
is the Hodge-theoretic upgrade of $\mathcal{A}_{\theta}^{\zeta}(Q)$ from \S \ref{coh_DT_sec}.
Later we will use a slight variant of this result.  The proof is the same, following essentially from the existence of the Harder--Narasimhan stratification of $\Mst(Q)$ and the long exact sequence in compactly supported cohomology.  
\begin{prop}\label{I_WCF}
Let $I=[a,b]\subset (-\infty,\infty)$ be an interval, and denote by $\Mst^{\zeta}_{I,v}(Q)\subset \Mst_v(Q)$ the open substack of modules $\rho$ such that the slope of every subquotient in the Harder--Narasimhan filtration of $\rho$ has slope contained in $I$.  Then there is an equality of generating series in $\hat{\mathscr{B}}_Q$:
\begin{equation}
    \label{WCFwt}
\chi_{\Wt,K}\left(\bigoplus_{v\in K}\HO_c(\Mst^{\zeta}_{I,v}(Q),\mathbb{Q})_{\vir}^*\right)=\prod_{b\xrightarrow{\theta} a}\chi_{\Wt,K}\left(\bigoplus_{v\in S^{\zeta}_{\theta}}\HO_c(\Mst^{\zeta\sstable}_{v}(Q),\QQ)_{\vir}^*\right).
\end{equation}
\end{prop}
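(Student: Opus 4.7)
The plan is to follow Reineke's proof of Theorem~\ref{refWCF} essentially verbatim, with only the modification that the Harder--Narasimhan strata now have subquotients whose slopes are restricted to $I=[a,b]$. By the very definition of $\Mst^{\zeta}_{I,v}(Q)$, the Harder--Narasimhan stratification of $\Mst_v(Q)$ restricts to a locally closed stratification
\[
\Mst^{\zeta}_{I,v}(Q)=\coprod_{\mathbf{v}}\Mst^{\zeta,\mathbf{v}}(Q),
\]
indexed by tuples $\mathbf{v}=(v_1,\ldots,v_r)$ of nonzero dimension vectors with $\sum_i v_i=v$ and strictly decreasing slopes $\mu_\zeta(v_1)>\cdots>\mu_\zeta(v_r)$ \emph{all contained in} $I$. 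This slope restriction is exactly what encodes the range $b\xrightarrow{\theta}a$ appearing in the product on the right-hand side; any HN type of $\Mst_v(Q)$ failing the slope condition contributes to neither side.

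Next I would invoke the standard description of HN strata for quiver representations: each $\Mst^{\zeta,\mathbf{v}}(Q)$ is an iterated affine bundle over the product $\prod_{i=1}^{r}\Mst^{\zeta\sst}_{v_i}(Q)$, built by successively choosing extension classes in $\Ext^1(\rho_i,F_{i-1})$. The strict decrease of slopes forces $\Hom(\rho_i,F_{i-1})=0$, so by the Euler form identity the fibers are genuinely affine of dimension $-\sum_{j<i}\chi_Q(v_i,v_j)$. Iterating the open--closed long exact sequence in compactly supported cohomology for this finite stratification, together with the K\"unneth formula and the fact that $\HO_c(\mathbb{A}^d,\mathbb{Q})$ is a single Tate-twisted line, yields
\[
\chi_{\Wt,K}\!\left(\bigoplus_{v\in K}\HO_c(\Mst^{\zeta}_{I,v}(Q),\mathbb{Q})^*_{\vir}\right)=\sum_{\mathbf{v}}t^{e(\mathbf{v})}\prod_{i=1}^{r}\chi_{\Wt,K}\!\left(\HO_c(\Mst^{\zeta\sst}_{v_i}(Q),\mathbb{Q})^*_{\vir}\right),
\]
where $e(\mathbf{v})$ is a bilinear expression in the pairs $(v_i,v_j)$ produced by the virtualization shift of the affine-bundle fibers.

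The decisive computation, and the one genuine check required, is that the Tate-weight shift $e(\mathbf{v})$ agrees exactly with the multiplicative twist produced by the ordered product in $\hat{\mathscr{B}}_Q$. Since
\[
x^{v_1}x^{v_2}\cdots x^{v_r}=t^{\sum_{i<j}B(v_i,v_j)}x^{v}
\]
and $B(v_i,v_j)=\chi_Q(v_j,v_i)-\chi_Q(v_i,v_j)$ is built from the Euler form in precisely the right way, this matching follows from the same dimension count used in Reineke's original argument. Once it is in place, reorganising the sum over HN types with strictly decreasing slopes in $I$ as a formal infinite product in $\hat{\mathscr{B}}_Q$ (with factors ordered from slope $b$ down to slope $a$) reproduces the right-hand side of \eqref{WCFwt}.

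The expected main obstacle is purely bookkeeping: keeping the Tate twists, virtualization conventions, and the sign/orientation of $B$ consistent so that the two powers of $t$ discussed above agree on the nose. No new ingredient beyond Reineke's proof is required, because the argument is local on the HN stratification: restricting to the open substack $\Mst^{\zeta}_{I,v}(Q)$ simply discards the HN strata whose slopes leave $I$, and correspondingly truncates the product on the right-hand side to the slope range $[a,b]$.
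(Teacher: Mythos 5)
Your proposal matches the paper's intended argument exactly: the paper itself gives no separate proof, merely remarking that ``the proof is the same'' as that of Theorem~\ref{refWCF} (Reineke), ``following essentially from the existence of the Harder--Narasimhan stratification of $\Mst(Q)$ and the long exact sequence in compactly supported cohomology,'' and you have filled in exactly that outline. One small correction to your intermediate bookkeeping: the $\Hom$ group that the slope inequalities force to vanish is $\Hom(F_{i-1},\rho_i)$ (the ``high slope to low slope'' direction), not $\Hom(\rho_i,F_{i-1})$, which can be nonzero; accordingly the fibers are not literal affine spaces but quotient stacks of the form $[\mathbb{A}^{\dim\Ext^1}/\mathbb{A}^{\dim\Hom}]$, whose compactly supported cohomology is nonetheless that of $\mathbb{A}^{-\chi_Q(v_i,\sum_{j<i}v_j)}$, so the weight count you want does go through.
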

\subsubsection{Integrality}
The main result of \cite{MeiRei} establishes that if the stability condition is generic, each term in the right hand side of (\ref{WCFwt}) is the plethystic exponential of a more manageable power series.  Denote by $\overline{x}$ the set $\{x^{\g{i}}\}_{i\in Q_0}$ of degree one monomials in $\hat{\mathscr{B}}_Q$.  To state their result in something close to its original form, for $\zeta$ a $\theta$-generic stability condition, we use the isomorphism \eqref{Log_def} to define elements $\Omega_{Q,v}^{\zeta}\in \mathbb{Z}\lcbs t\rcbs$ via
\[
\sum_{0\neq v\in S^{\zeta}_{\theta}} \Omega_{Q,v}^{\zeta}x^v=(t^{-1}-t)\Log_{-t,\overline{x}}\left(\chi_{\Wt,K}\left(\bigoplus_{v\in S^{\zeta}_{\theta}}\HO_c(\Mst^{\zeta\sstable}_{v}(Q),\QQ)_{\vir}^*\right)\right)
\]
so that we have the equation
\begin{equation}\label{OmegaDef}
\chi_{\Wt,K}\left(\bigoplus_{v\in S^{\zeta}_{\theta}}\HO_c(\Mst^{\zeta\sstable}_{v}(Q),\QQ)_{\vir}^*\right)=\EE\left(\sum_{0\neq v\in S^{\zeta}_{\theta}} \Omega_{Q,v}^{\zeta}x^v\right).
\end{equation}
These $\Omega_{Q,v}^{\zeta}$ are, by definition, the refined DT invariants.  We define 
\[
\BPS^{\zeta}_{v,\Hodge}(Q)\coloneqq \begin{cases} \IC(\Msp_v^{\zeta\sst}(Q),\underline{\mathbb{Q}}_{\textrm{sm}})\otimes\LL^{(\chi_Q(v,v)-1)/2} &\textrm{if }\Msp^{\zeta\stable}_v\neq \emptyset\\
0& \textrm{otherwise},\end{cases}
\]
where on the right we have taken the total hypercohomology of the intersection complex for the constant (shifted) mixed Hodge module $\underline{\QQ}_{\textrm{sm}}$ on the smooth locus of $\Msp_v^{\zeta\sstable}$.  Note that the underlying cohomologically graded vector space of $\BPS^{\zeta}_{v,\Hodge}(Q)$ is $\BPS^{\zeta}_{v}(Q)$.

\begin{thm}\cite{MeiRei}\label{MRtheorem}
Assume that $\zeta$ is $\theta$-generic.  Then there is an equality
\[
\Omega^{\zeta}_{Q,v}=\chi_{\Wt}(\BPS_{v,\Hodge}^{\zeta}(Q)).
\]
In particular, $\Omega_{Q,v}^{\zeta}\in\mathbb{Z}[t^{\pm 1}]$.
\end{thm}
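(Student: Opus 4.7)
The plan is to deduce the theorem by upgrading the cohomological integrality statement of Theorem \ref{coh_int_thm} to a Hodge-theoretic integrality statement, and then translating it into a generating-series identity via the weighted characteristic function $\chi_{\Wt,K}$.

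\textbf{Step 1 (Hodge-theoretic integrality).} The first step, and the main technical work, is to prove that the isomorphism of Theorem \ref{coh_int_thm} refines to an isomorphism in $\overline{\MHS}^+_{K}$:
\[
\mathcal{A}^{\zeta}_{\theta,\Hodge}(Q)\;\cong\; \Sym\!\left(\bigoplus_{0\neq v\in S^{\zeta}_{\theta}} \BPS_{v,\Hodge}^{\zeta}(Q)\otimes \HO(\pt/\mathbb{C}^*,\mathbb{Q})_{\vir}\right).
\]
This is the output of the cohomological DT machinery of \cite{DavMei} in the zero-potential case: one analyzes the pushforward of the constant (shifted) mixed Hodge module along the projective morphism $\Mst^{\zeta\sst}_v(Q)\to \Msp^{\zeta\sst}_v(Q)$, checks that the relative Hard Lefschetz property and the decomposition theorem apply at the level of (monodromic) mixed Hodge modules, and identifies the summand supported generically on the stable locus with $\BPS^{\zeta}_{v,\Hodge}(Q)$ via its definition in terms of the IC extension from the smooth locus. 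The $\HO(\pt/\mathbb{C}^*,\mathbb{Q})_{\vir}$ factor arises from the $B\mathbb{C}^*$-gerbe structure on $\Mst^{\zeta\sst}_v(Q)\to \Msp^{\zeta\sst}_v(Q)$ (the extra $\mathbb{C}^*$ of scalar automorphisms present on every semistable object). This is the step which requires genuine work; everything else will be formal.

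\textbf{Step 2 (Apply $\chi_{\Wt,K}$).} Apply $\chi_{\Wt,K}$ to both sides of Step 1. Since $\chi_{\Wt,K}$ is multiplicative with respect to the twisted monoidal structure on $\overline{\MHS}^+_{K}$, the categorified identity \eqref{cat_Expb} transports to this setting; moreover a direct computation yields
\[
\chi_{\Wt}\big(\HO(\pt/\mathbb{C}^*,\mathbb{Q})_{\vir}\big)=t+t^3+t^5+\cdots,
\]
which is exactly the weighting factor built into the definition of $\EE$. Consequently,
\[
\chi_{\Wt,K}\!\left(\mathcal{A}^{\zeta}_{\theta,\Hodge}(Q)\right)=\EE\!\left(\sum_{0\neq v\in S^{\zeta}_{\theta}}\chi_{\Wt}\!\left(\BPS^{\zeta}_{v,\Hodge}(Q)\right)x^{v}\right).
\]

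\textbf{Step 3 (Compare with the definition of $\Omega^{\zeta}_{Q,v}$).} Since $\Mst^{\zeta\sst}_v(Q)$ is a smooth stack, Poincar\'e duality gives the identification $\HO_c(\Mst^{\zeta\sst}_v(Q),\mathbb{Q})^{*}_{\vir}\cong \HO(\Mst^{\zeta\sst}_v(Q),\mathbb{Q})_{\vir}$ in $\overline{\MHS}^+$, hence the left-hand side of the previous display agrees with the left-hand side of \eqref{OmegaDef}. Comparing, we obtain
\[
\EE\!\left(\sum_{v}\chi_{\Wt}\!\left(\BPS^{\zeta}_{v,\Hodge}(Q)\right)x^{v}\right)=\EE\!\left(\sum_{v}\Omega^{\zeta}_{Q,v}\,x^{v}\right).
\]
Since $\EE$ is a bijection between $\mathscr{I}$ and $1+\mathscr{I}$, we may apply its inverse to extract the equality $\Omega^{\zeta}_{Q,v}=\chi_{\Wt}(\BPS^{\zeta}_{v,\Hodge}(Q))$ coefficient by coefficient in $v$. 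The membership $\Omega^{\zeta}_{Q,v}\in\mathbb{Z}[t^{\pm 1}]$ then follows from Lemma \ref{dimension_lemma}, which bounds the cohomological (and hence weight) support of $\BPS^{\zeta}_{v,\Hodge}(Q)$ in terms of $\chi_Q(v,v)$, together with the fact that $\IC$ of an irreducible variety with coefficients in a pure mixed Hodge module is pure.
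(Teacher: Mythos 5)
The paper does not give a proof of Theorem \ref{MRtheorem}: it simply cites \cite{MeiRei} and then remarks that the graded dual and the $\HO_c$-vs-$\HO$ differences in conventions cancel by Poincar\'e duality. Your proposal instead \emph{derives} the statement from the Hodge-theoretic upgrade of Theorem \ref{coh_int_thm} (i.e.\ Theorem A of \cite{DavMei}), which is genuinely different from the original Meinhardt--Reineke argument (the latter works via motivic wall-crossing and Hall-algebra techniques, not via perverse sheaves and the decomposition theorem). This derivation is sound and not circular, since \cite{DavMei}'s Theorem A is proved independently of \cite{MeiRei}; it is in fact the derivation the paper itself sketches in the later ``Purity'' subsubsection, which records the chain $\chi_t(\BPS^{\zeta}_v(Q))=\chi_{\Wt}(\BPS^{\zeta}_{v,\Hodge}(Q))=\Omega^{\zeta}_{Q,v}$ as a consequence of purity. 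What the modern route buys is a stronger conclusion (you obtain purity of the BPS cohomology for free, which the original proof does not directly provide), at the cost of invoking the heavier machinery of \cite{DavMei}.

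One small point to tighten: in Step 2 you apply the categorified $\EE$ identity \eqref{cat_Expb}, but that identity is stated in the paper only for $\chi_{K,t}$ on $\Vect^+_K$, not for $\chi_{\Wt,K}$ on $\overline{\MHS}^+_K$. Either prove the analogous identity at the level of the Grothendieck ring of $\overline{\MHS}^+_K$ (the $(-1)^i$ factor built into $\chi_{\Wt}$ exactly compensates the Koszul sign in the super-symmetric power, so the argument goes through), or invoke the purity result (Proposition \ref{Apurity} and its companion for $\BPS_{v,\Hodge}^{\zeta}$) to reduce to the already-stated identity for $\chi_{K,t}$. Also, framing Step 1 as ``the main technical work, is to prove'' overstates your task: the Hodge-theoretic refinement of the cohomological integrality theorem is exactly what \cite{DavMei} Theorem A asserts, so this is a citation, not new work.
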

As in the statement of Theorem \ref{refWCF} we are working with the graded duals of the cohomology groups considered by Meinhardt and Reineke.  In addition, they consider the compactly supported cohomology with coefficients in the IC complex to define $\BPS_{v,\Hodge}^{\zeta}(Q)$, whereas we have taken cohomology.  These two differences cancel out, by Poincar\'e duality for intersection cohomology groups.
\begin{eg}
\label{ref_examples}
For all of the quivers $Q$ considered in Examples \ref{coh_ferm}, \ref{coh_bos} and \ref{coh_K2} from \S \ref{coh_DT_sec}, it is easy to verify directly that $\BPS_{v,\Hodge}^{\zeta}(Q)$ is pure, so that $\chi_{\Wt}(\BPS_{v,\Hodge}^{\zeta}(Q))=\chi_t(\BPS_{v}^{\zeta}(Q))$.  So in particular, from the calculations given in these examples, we deduce that 
\begin{align}
    \Omega_{\LQ{0},n}=&\delta_{n1}\label{ref_ferm} \\
    \Omega_{\LQ{1},n}=&\delta_{n1}t^{-1}\label{ref_bos} \\ 
    \Omega_{\Kr{2},(n,n)}^{(1,-1)}=&\delta_{n1}[2]_t,\label{ref_Kron}
\end{align}
where we have used the delta function 
\[
\delta_{ab}=\begin{cases} 1& \textrm{ if }a=b\\
0& \textrm{otherwise.}
\end{cases}
\]
\end{eg}
Combining Proposition \ref{I_WCF} and Theorem \ref{MRtheorem} we deduce that
\begin{equation}\label{GrandForm}
\chi_{\Wt,K}\left(\bigoplus_{v\in K}\HO_c\left(\Mst^{\zeta}_{I,v}(Q),\mathbb{Q}\right)_{\vir}^*\right)=\prod_{b\xrightarrow{\theta} a}\EE\left(\sum_{0\neq v\in S^{\zeta}_{\theta}} \chi_{\Wt}\left(\BPS_{v,\Hodge}^{\zeta}(Q)\right)x^v\right).
\end{equation}

The following corollary of Theorem \ref{MRtheorem} is \cite[Cor 1.2]{MeiRei}.  It follows from purity and the existence of a Lefschetz operator on $\IC(X,\underline{\mathbb{Q}}_{\textrm{sm}})$ for $X$ a projective variety.  The part of the statement concerning parity is a consequence of the fact that $\Omega_{Q,v}^{\zeta}$ arises from a formal series  in the class $[\LL^{1/2}]$ in the Grothendieck ring of mixed Hodge structures, but on the other hand $\IC(\Msp_v^{\zeta\sstable},\underline{\mathbb{Q}}_{\textrm{sm}})\in \MHS^+$, a category in which $\LL$ has no square root. 
\begin{cor}\cite{MeiRei}\label{MRcor}
Let $Q$ be acyclic, and let $\zeta$ be $\theta$-generic, where $\mu(v)=\theta$.  Then $\Omega_{Q,v}^{\zeta}$ is of Lefschetz type and is concentrated in even or odd degrees, depending on whether $\chi_Q(v,v)$ is odd or even, respectively.  In particular, $\Omega_{Q,v}^{\zeta}$ has positive integer coefficients.
\end{cor}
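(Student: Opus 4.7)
My plan is to reduce the problem via Theorem \ref{MRtheorem} to establishing purity and the existence of a Lefschetz operator on the Hodge-theoretic BPS object
\[
\BPS^{\zeta}_{v,\Hodge}(Q)=\IC\bigl(\Msp_v^{\zeta\sst}(Q),\underline{\QQ}_{\textrm{sm}}\bigr)\otimes \LL^{(\chi_Q(v,v)-1)/2},
\]
and then to read off the three claims of the corollary (Lefschetz type, parity, positivity) from its weight polynomial. If $\Msp_v^{\zeta\stable}(Q)=\emptyset$ then $\BPS^{\zeta}_{v,\Hodge}(Q)=0$ and the claim is trivial, so I may assume otherwise.

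The crucial role of the acyclicity hypothesis is to ensure that $\Msp_v^{\zeta\sst}(Q)$ is projective. As recalled in \S\ref{quivers_sec}, this moduli space is projective over its affinization, and the coordinate ring of the affinization is generated by traces along oriented cycles of $Q$; acyclicity kills all such cycles, so the affinization is a point and $\Msp_v^{\zeta\sst}(Q)$ is an irreducible projective variety of complex dimension $1-\chi_Q(v,v)$.

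With projectivity in hand, I would invoke two standard consequences of the decomposition theorem of \cite{BBD}: $\IC(\Msp_v^{\zeta\sst}(Q),\underline{\QQ}_{\textrm{sm}})$ is pure as an object of $\MHS^+$, and by the hard Lefschetz theorem for intersection cohomology (\cite[Ch.~6]{BBD}) it carries a Lefschetz operator. Tensoring by the Tate twist $\LL^{(\chi_Q(v,v)-1)/2}$ preserves both properties --- it simply shifts weight and cohomological degree uniformly --- and so $\BPS^{\zeta}_{v,\Hodge}(Q)$ is itself pure and Lefschetz-equipped. By the criterion recorded at the start of \S\ref{ref_DT_sec}, this forces $\Omega^{\zeta}_{Q,v}=\chi_{\Wt}(\BPS^{\zeta}_{v,\Hodge}(Q))$ to be a Laurent polynomial of Lefschetz type, and positivity is then immediate.

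For the parity claim I would combine purity with Lemma \ref{BPS_cor}, which was already proved without any acyclicity assumption and determines the parity of the cohomological support of the underlying graded vector space of $\BPS^{\zeta}_{v,\Hodge}(Q)$ to be $\chi_Q(v,v)+1$ modulo $2$. Purity forces weight to equal cohomological degree, so the definition of $\chi_{\Wt}$ collapses on pure objects to the ordinary Poincar\'e polynomial $\chi_t$ appearing in \eqref{chit}; hence $\Omega^{\zeta}_{Q,v}$ has all of its powers of $t$ of the prescribed parity. I do not foresee a serious obstacle here: every input is on hand, and the only care required is the bookkeeping of the cohomological and weight shifts introduced by the Tate twist in the definition of $\BPS^{\zeta}_{v,\Hodge}(Q)$.
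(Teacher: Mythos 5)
Your argument is correct, and its overall shape matches the paper's sketch: acyclicity forces the affinization of $\Msp_v^{\zeta\sst}(Q)$ to be a point, so the King moduli space is irreducible projective of dimension $1-\chi_Q(v,v)$; purity and hard Lefschetz for intersection cohomology then give the Lefschetz operator, the Tate twist $\LL^{(\chi_Q(v,v)-1)/2}$ preserves these while recentering the grading, and the criterion recalled at the start of \S\ref{ref_DT_sec} turns this into the Lefschetz type (hence positivity) of $\chi_{\Wt}(\BPS_{v,\Hodge}^{\zeta}(Q))=\Omega_{Q,v}^{\zeta}$. The one place where you diverge is the parity claim. You derive it from purity (giving $\chi_{\Wt}=\chi_t$) combined with Lemma \ref{BPS_cor}, which pins down the parity of the cohomological grading of $\BPS_v^{\zeta}(Q)$; that lemma, however, rests on the cohomological integrality theorem of \cite{DavMei} (Theorem \ref{coh_int_thm}). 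The paper's sketch instead argues within the scope of \cite{MeiRei}: $\Omega_{Q,v}^{\zeta}=\chi_{\Wt}(\IC(\Msp_v^{\zeta\sst}(Q),\underline{\QQ}_{\mathrm{sm}}))\cdot t^{\chi_Q(v,v)-1}$, where the first factor is the weight polynomial of an object of $\MHS^+$ (rather than $\overline{\MHS}^+$, which contains $\LL^{1/2}$), and---using that the intersection cohomology of acyclic quiver moduli is of Tate type---only even powers of $t$ appear in that factor, so the explicit half-integral Tate twist accounts for the entire parity shift. Both routes are valid; yours leans on the heavier machinery of \cite{DavMei}, which has the advantage of working verbatim for non-acyclic quivers once projectivity is replaced by the purity results that follow Corollary \ref{MRcor}, while the paper's keeps the acyclic case self-contained relative to \cite{MeiRei}.
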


\subsubsection{Purity}
In this section we explain how it comes about that both versions of Donaldson--Thomas theory that we have presented can produce the same generating functions.  This coincidence is a byproduct of \textit{purity}.  Note that for pure objects of $\mMHS^+$ there is an equality
\[
\chi_{\Wt}(V)=\chi_t(V).
\]

Purity for cohomological DT invariants of quivers (possibly containing oriented cycles) without potential is in fact a direct consequence of the main theorems of \cite{DavMei}; we briefly explain how.

\begin{prop}\label{Apurity}
The object $\mathcal{A}_{\theta,\Hodge}^{\zeta}(Q)$ is pure of Tate type, and is moreover concentrated entirely in even or odd degrees, depending on whether $\chi_Q(v,v)$ is even or odd, respectively.
\end{prop}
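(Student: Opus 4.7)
The plan is to derive all three claims from a Hodge-theoretic upgrade of the cohomological integrality theorem (Theorem~\ref{coh_int_thm}) provided by \cite{DavMei}. That upgrade produces an isomorphism in $\mMHS^+$
\[
\mathcal{A}_{\theta,\Hodge}^{\zeta}(Q)\cong \Sym\left(\bigoplus_{0\neq v\in S_{\theta}^{\zeta}}\BPS_{v,\Hodge}^{\zeta}(Q)\otimes \HO(\pt/\mathbb{C}^*,\mathbb{Q})_{\vir}\right),
\]
lifting the underlying isomorphism of cohomologically graded vector spaces. The factor $\HO(\pt/\mathbb{C}^*,\mathbb{Q})_{\vir}$ is visibly pure of Tate type (after the virtual Tate twist it is a polynomial ring on a generator of cohomological degree one and weight one), and both tensor product and $\Sym$ preserve purity and Tate type inside $\mMHS^+$. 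Thus the problem reduces to proving that each $\BPS_{v,\Hodge}^{\zeta}(Q)=\IC(\Msp^{\zeta\sst}_v(Q),\underline{\QQ}_{\textrm{sm}})\otimes \LL^{(\chi_Q(v,v)-1)/2}$ is pure of Tate type.

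The next step is to establish this purity. When the support of $v$ contains no oriented cycle in $Q$ (in particular whenever $Q$ is acyclic), $\Msp^{\zeta\sst}_v(Q)$ is projective and purity is an immediate consequence of the BBD decomposition theorem. In the general case, I would use that the affinization morphism $\pi\colon \Msp^{\zeta\sst}_v(Q)\to \Spec \HO^0(\Msp^{\zeta\sst}_v(Q),\mathcal{O})$ is projective, since $\Msp^{\zeta\sst}_v(Q)$ is constructed as a GIT quotient of an affine representation variety by a reductive group. Applying BBD to $\pi$ splits $\pi_*\IC(\Msp^{\zeta\sst}_v(Q),\underline{\QQ}_{\textrm{sm}})$ into shifts of semisimple perverse sheaves on the affinization, and the hypercohomology of each summand is then pure thanks to the contracting $\mathbb{C}^*$-action on the affinization induced by scaling on the representation variety (which is the input used in \cite{DavMei} to deduce purity in exactly this setting). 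Tate type is a parallel but simpler check, coming from the fact that the full equivariant cohomology of the affine representation variety under the reductive gauge group $\GL_v$ is of Tate type, and this Tate structure descends to the IC summands of the decomposition above via the same $\mathbb{C}^*$-equivariance.

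Finally, the parity assertion comes for free from what has already been proved: Corollary~\ref{par_cor} shows that the $v$-graded piece of $\mathcal{A}_\theta^\zeta(Q)$ is concentrated in even or odd cohomological degree according to the parity of $\chi_Q(v,v)$, and since $\mathcal{A}_{\theta,\Hodge}^{\zeta}(Q)$ has this same underlying cohomologically graded vector space, the claim is automatic. The main obstacle in this plan is the purity of $\BPS_{v,\Hodge}^{\zeta}(Q)$ outside the acyclic case, where $\Msp_v^{\zeta\sst}(Q)$ is not projective; it is precisely here that one relies on the cohomological integrality machinery of \cite{DavMei} rather than an elementary application of BBD, and I would carefully invoke the statements there (specialized to $W=0$, so that no vanishing cycles or monodromic issues intervene) to conclude.
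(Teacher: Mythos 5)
Your proposal is correct in outline but takes a genuinely different — and heavier — route from the paper, and loses a bit of generality. The paper's own proof does \emph{not} go through the integrality isomorphism at all: it uses the Hodge-theoretic upgrade of the wall-crossing isomorphism (Theorem B of \cite{DavMei}, the Hodge lift of \eqref{CWCE}), observes that placing the monoidal unit in every factor except $\theta$ gives an embedding
\[
\mathcal{A}_{\theta,\Hodge}^{\zeta}(Q)\subset \bigoplus_{v\in S_{\theta}^{\zeta}}\HO(\Mst_v(Q),\QQ)\otimes \LL^{\chi_Q(v,v)/2},
\]
and then simply quotes that $\HO(\Mst_v(Q),\QQ)\cong\HO(\pt/\GL_v,\QQ)$ is pure, of Tate type and concentrated in even degrees. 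All three claims drop out in one step from this elementary fact; no analysis of the BPS sheaves is required. Your route instead uses the Hodge-theoretic integrality isomorphism (Theorem A of \cite{DavMei}) and reduces to purity and Tate type of $\BPS_{v,\Hodge}^{\zeta}(Q)$. Two caveats are worth flagging. First, in the paper's logical order, purity of $\BPS_{v,\Hodge}^{\zeta}(Q)$ is \emph{deduced} from Proposition \ref{Apurity} (in the subsequent proposition), so you are reversing the dependency and must supply an independent purity proof for the BPS cohomology; your sketch via BBD applied to the projective affinization map together with the contracting scaling $\mathbb{C}^*$-action is the right idea and is indeed how \cite{DavMei} proceeds, but this is re-proving a substantial piece of that paper rather than invoking a one-line fact about $\HO(\pt/\GL_v)$. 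Second, the integrality theorem (hence the $\Sym$ description you start from) is stated only under the hypothesis that $\zeta$ is $\theta$-generic, whereas the wall-crossing theorem holds for arbitrary $\zeta$; Proposition \ref{Apurity} carries no genericity hypothesis, so as written your argument proves a slightly weaker statement.
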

\begin{proof}
Theorem B in \cite{DavMei} gives the following Hodge theoretic upgrade of (\ref{CWCE}):
\[
\bigoplus_{v\in K}\HO(\Mst_v(Q),\QQ)_{\vir}\cong \bigotimes^{\tw}_{\infty \xrightarrow{\theta}-\infty}\mathcal{A}^{\zeta}_{\theta,\Hodge}(Q).
\]
In particular there is an embedding
\[
\mathcal{A}_{\theta,\Hodge}^{\zeta}(Q)\subset \bigoplus_{v\in S_{\theta}^{\zeta}}\HO(\Mst_v(Q),\QQ)\otimes \LL^{\chi_Q(v,v)/2}.
\]
Now the result follows from the fact that 
\[
\HO(\Mst_v(Q),\mathbb{Q})\cong \HO(\pt/\GL_v,\QQ)
\]
is pure, of Tate type, and concentrated in even degrees.
\end{proof}
\begin{prop}
Let $\zeta$ be a $\theta$-generic stability condition.  Then the mixed Hodge structure $\BPS_{v,\Hodge}^{\zeta}(Q)$ is pure, of Tate type, and concentrated entirely in even or odd degrees depending on whether $\chi_Q(v,v)$ is odd or even, respectively. 
\end{prop}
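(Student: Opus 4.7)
The plan is to lift Lemma~\ref{BPS_cor} and the cohomological integrality isomorphism \eqref{CIE} from $\Vect^{+}_{K}$ to $\mMHS^{+}_{K}$, using the Hodge-theoretic enhancement of Theorem~\ref{coh_int_thm} provided by Theorem~A of \cite{DavMei}. That enhancement gives an isomorphism
\[
\mathcal{A}_{\theta,\Hodge}^{\zeta}(Q)\cong \Sym\!\left(\bigoplus_{0\neq v\in S_{\theta}^{\zeta}}\BPS_{v,\Hodge}^{\zeta}(Q)\otimes \HO(\pt/\mathbb{C}^{*},\mathbb{Q})_{\vir}\right)
\]
in $\mMHS^{+}_{K}$. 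The key observation is that the symmetric algebra on the right has an obvious grading by its $\Sym^{n}$ summands, and the $\Sym^{1}$ piece provides a direct-summand embedding
\[
\BPS_{v,\Hodge}^{\zeta}(Q)\otimes \HO(\pt/\mathbb{C}^{*},\mathbb{Q})_{\vir}\hookrightarrow \mathcal{A}_{v,\Hodge}^{\zeta}(Q)
\]
in $\mMHS^{+}$, for each $v\in S_{\theta}^{\zeta}$.

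The argument will proceed in three steps. First, apply Proposition~\ref{Apurity}, which asserts that $\mathcal{A}_{v,\Hodge}^{\zeta}(Q)$ is pure of Tate type; because direct summands (and sub-quotients) of pure Tate-type objects in $\mMHS^{+}$ remain pure of Tate type, this transfers directly to $\BPS_{v,\Hodge}^{\zeta}(Q)\otimes \HO(\pt/\mathbb{C}^{*},\mathbb{Q})_{\vir}$. Second, since $\HO(\pt/\mathbb{C}^{*},\mathbb{Q})_{\vir}$ is itself pure of Tate type (a free $\mathbb{Q}[u]$-module in $\mMHS^{+}$ with $u$ of class $\LL$, twisted by $\LL^{1/2}$) and $\BPS_{v,\Hodge}^{\zeta}(Q)$ is finite-dimensional by Lemma~\ref{dimension_lemma}, one identifies $\BPS_{v,\Hodge}^{\zeta}(Q)$ (up to a half-Tate twist) with the lowest-weight, lowest-degree graded piece of the tensor product. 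Purity and Tate-type of the tensor product then force the same properties on $\BPS_{v,\Hodge}^{\zeta}(Q)$. Third, the parity statement is essentially a corollary: for a pure object of $\mMHS^{+}$ the weight filtration is synchronized with the cohomological grading, so the cohomological parity already established in Lemma~\ref{BPS_cor} translates into the statement that $\BPS_{v,\Hodge}^{\zeta}(Q)$ is concentrated in even or odd degrees according to the parity of $\chi_{Q}(v,v)+1$.

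The main obstacle is really just the first step --- invoking the Hodge-theoretic refinement of Theorem~\ref{coh_int_thm} with conventions (signs, half-Tate twists, shifts by $\dim$) matching those set up earlier in the paper. Once this is in hand, everything else is a formal consequence of standard facts about $\mMHS^{+}$: purity passes to direct summands; a tensor product of non-zero objects in $\mMHS^{+}$ is pure of Tate type only if each tensor factor is (after suitable normalization); and the cohomological and weight gradings agree on pure objects. No new Donaldson--Thomas computation is required --- the statement is obtained by upgrading results already used earlier in \S\ref{coh_DT_sec} from $\Vect^{+}$ to $\mMHS^{+}$.
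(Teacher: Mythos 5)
Your proposal matches the paper's own proof: both use the Hodge-theoretic upgrade of Theorem~\ref{coh_int_thm} (Theorem~A of \cite{DavMei}) to embed $\BPS_{v,\Hodge}^{\zeta}(Q)\otimes\LL^{1/2}$ into $\mathcal{A}_{v,\Hodge}^{\zeta}(Q)$ and then invoke Proposition~\ref{Apurity} together with the fact that a sub-object of a pure Tate-type object in $\mMHS^{+}$ is again pure of Tate type. The only inessential difference is your parity step, which appeals to weight/degree synchronization; in fact the parity assertion is already a statement about cohomological degrees, so it follows directly from Lemma~\ref{BPS_cor} (the underlying cohomologically graded vector space of $\BPS_{v,\Hodge}^{\zeta}(Q)$ being $\BPS_{v}^{\zeta}(Q)$), or equivalently from the degree shift by $\LL^{1/2}$ against the parity of $\mathcal{A}_{v,\Hodge}^{\zeta}(Q)$ recorded in Proposition~\ref{Apurity}.
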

\begin{proof}
Theorem A in \cite{DavMei} upgrades (\ref{CIE}) to the category of mixed Hodge structures.  In particular, there is an inclusion
\[
\BPS_{v,\Hodge}^{\zeta}(Q)\otimes\LL^{1/2}\subset \mathcal{A}_{v,\Hodge}^{\zeta}(Q).
\]
A sub mixed Hodge structure of a pure Tate type Hodge structure is pure of Tate type, and the result follows from Proposition \ref{Apurity}.
\end{proof}

In particular, it follows that for any $Q$, and for $\zeta$ a $\theta$-generic stability condition with $\mu(v)=\theta$, there are equalities.
\[
\chi_t(\BPS_{v}^{\zeta}(Q))=\chi_{\Wt}(\BPS_{v,\Hodge}^{\zeta}(Q))=\Omega_{Q,v}^{\zeta}.
\]

\section{Stability scattering diagrams and the basic two-wall cases}\label{2wallSection}

\subsection{Stability scattering diagrams}
\label{acyclic_sec}

In this section we recall the stability scattering diagrams of \cite{Bridge}.  Since that paper is written in terms of $Q$-representations, or equivalently \textit{left} $\mathbb{C}Q$-modules, when passing from our right $\mathbb{C}Q$-modules to Bridgeland's representations, one should replace $Q$ with $Q^{\opp}$.  

Given $\zeta\in \mathbb{R}^{Q_0}$, a $\mathbb{C}Q$-module $\rho$ is said to be $\zeta$ King-semistable if $\zeta\cdot \udim(\rho)=0$, and for every proper nonzero submodule $\rho'\subset \rho$ we have $\zeta\cdot\udim(\rho')\leq 0$.  We let 
\[
\Mst^{\zeta \kss}_v(Q)\subset \Mst_v(Q)
\]
denote the open substack of $\zeta$ King-semistable $\mathbb{C}Q$-modules.

Following the alternative scattering diagram convention mentioned in Footnote \ref{ConventionsFootnote}, it is shown in \cite{Bridge} that there is a consistent scattering diagram $\f{D}_{\Hall}$ in $\mathbb{R}^{Q_0}$
over a Lie algebra $\mathfrak{g}_{\Hall}$ such that the support of $\f{D}_{\Hall}$ is precisely the set of $\zeta$ for which there exists a $\zeta$ King-semistable $\mathbb{C}Q$-module, and functions at general points $\zeta$ of the walls are elements $1_{\mathrm{ss}}(\zeta)$ which lie in the pro-unipotent algebraic group $\hat{G}_{\Hall}$.   This group is defined to be a subgroup of the group of units of the (completed) motivic Hall algebra of the category of $\mathbb{C}Q$-modules.  We refer the reader to \cite[\S 5--6]{Bridge} for the original treatment of $\f{D}_{\Hall}$ and for the definition of $\hat{G}_{\Hall}$.

Denoting $B_1\colon \bb{R}^{Q_0}\rar (\bb{R}^{Q_0})^* = \bb{R}^{Q_0}$ (identifying $(\bb{R}^{Q_0})^*$ with $\bb{R}^{Q_0}$ using the dot product) for $B_1(u)\coloneqq B(u,\cdot)$ with $B$ as in \eqref{BQ}, we view $\f{D}_{\Hall}$ as a scattering diagram under our conventions by applying $B_1^{-1}$ to the support of each wall --- i.e., for us, the scattering function of $\f{D}_{\Hall}$ at a general point $u\in \bb{R}^{Q_0}$ is $1_{\mathrm{ss}}(B_1(u))$.  Here we must assume that $B\neq 0$ since otherwise $1_{\mathrm{ss}}(B_1(u))=\f{M}(Q)$ for all $u\in \bb{R}^{Q_0}$.  For $B\neq 0$, we have $\codim (\ker(B))\geq 2$, and then no general points will lie in $\ker(B)$,  so we get a well-defined scattering diagram.

We forego recalling the details of the definition of $\hat{G}_{\Hall}$ ourselves, instead remarking that by \cite[Thm 6.1]{joyce2007configurations} there is a group homomorphism 
\[
\int\colon \hat{G}_{\Hall}\rightarrow \hat{\mathscr{B}}_{Q}^{\times}
\]
where, by definition, there is an identity
\begin{align}\label{int1}
\int 1_{\mathrm{ss}}(\zeta)=\chi_{\Wt,K}\left(\bigoplus_{v\in K}\HO_c (\Mst^{\zeta \kss}_{v}(Q),\mathbb{Q})_{\vir}^*\right).
\end{align}

By the definition of King semistability, there is an equality
\[
\bigoplus_{v\in K}\HO_c(\Mst^{\zeta \kss}_{v}(Q),\mathbb{Q})_{\vir}=\bigoplus_{v\in S_0^{\zeta}}\HO_c(\Mst^{\zeta \sstable}_{v}(Q),\mathbb{Q})_{\vir},
\]
i.e. the notion of King semistability is the same as the one used throughout the paper, but with the added requirement that the slope is zero.

Let $\log \int\colon \hat{\f{g}}_{\Hall} \rar \hat{\f{g}}_Q$ denote the morphism of Lie algebras associated to $\int$.  We define the \textbf{stability scattering diagram} to be the scattering diagram $\f{D}_{\Stab}$ in $\bb{R}^{Q_0}$ over $\f{g}_Q$ given by 
\begin{align*}
    \f{D}_{\Stab}\coloneqq \left[\left(\log \int\right)_* \f{D}_{\Hall}\right]^-
\end{align*}
where $(\log \int)_*$ is defined as at the start of \S \ref{ScatOperations}, and $(\bullet)^-$ is defined as in Corollary \ref{reverse_cor}.

\begin{rem}
As noted above, in translating between this paper and \cite{Bridge} one should replace $Q$ with $Q^{\opp}$.  The result is that the skew-symmetric form $\langle \cdot,\cdot\rangle$ of \cite{Bridge} and the skew-symmetric form $B_Q(\cdot,\cdot)$ of this paper differ by a sign.
\end{rem}

For generic $\zeta$ satisfying $\zeta\cdot \g{i}=0$, we have
\[
\bigoplus_{v\in K}\HO_c(\Mst^{\zeta \kss}_{v}(Q),\mathbb{Q})^*_{\vir}=\bigoplus_{n\in\mathbb{Z}_{\geq 0}}\HO_c(\Mst_{n\g{i}}(Q),\mathbb{Q})^*_{\vir}.
\]
In particular, for each $i\in Q_0$ there is an incoming wall
\begin{equation}
    \label{B_walls_in}
(\g{i}^{B\perp},F_i);\quad\quad F_i=\chi_{\Wt,K}\left(\bigoplus_{n\in\mathbb{Z}_{\geq 0}}\HO_c(\Mst_{n\g{i}}(Q),\mathbb{Q})^*_{\vir}\right)^{-1}
\end{equation}
in $\f{D}_{\Stab}$ (assuming $1_i\notin \ker(B)$ --- otherwise our version of $\f{D}_{\Hall}$ has no wall associated to $1_i$). 

\begin{eg}\label{InEx}
Let $i\in Q_0$ satisfy $1_i\notin \ker(B_1)$.  If the vertex $i\in Q_0$ does not support a loop, then by Example \ref{coh_ferm} or \eqref{ref_ferm}, the corresponding incoming wall of $\f{D}_{\Stab}$ is 
\begin{align*}
    (\g{i}^{B\perp},\EE(-x_i)).
\end{align*}
Similarly, if there is a single loop based at $i$, then by Example \ref{coh_bos} or \eqref{ref_bos}, the corresponding incoming wall is
\begin{align*}
    (\g{i}^{B\perp},\EE(-t^{-1}x_i)).
\end{align*}
The reason we consider quivers with loops is precisely because they allow us to consider scattering functions $\EE(-p(t)x)$ where $p(t)$ has odd parity.
\end{eg}

\begin{dfn}\label{genteel}
A quiver $Q$ is called \textbf{genteel} if, up to equivalence, the only incoming walls of $\f{D}_{\Stab}$ are those of \eqref{B_walls_in} for $i \in Q_0$. 
\end{dfn}

A $\bb{C}Q$-module $\rho$ is called \textbf{self-(semi)stable} if it is King-(semi)stable for the stability condition $B(\udim(\rho),\cdot)$. In \cite[Def. 11.3]{Bridge}, a quiver (with potential) is called genteel if the only self-stable modules are supported at a single vertex.  As pointed out to us by Lang Mou and recently acknowledged in \cite[arXiv v4]{Bridge}, it is not clear that this implies the condition of Definition \ref{genteel}.  On the other hand, if every self-\textit{semi}stable object is supported at a single vertex, then the condition of Definition \ref{genteel} in fact follows easily.  Unfortunately, this self-semistable version of the definition turns out to be too strong, e.g., Lemma \ref{acyclic_genteel_new} would fail (as we shall see in the lemma's proof).  Hence our decision to modify the definition as above.

 \begin{eg}
 \label{J_genteel}
 Each of the quivers $\LQ{r}$ are genteel.  Indeed, $\LQ{r}$ has only one vertex, so the genteel condition is trivially satisfied.
 \end{eg}
In fact we can generalise this:

\begin{lem}
\label{acyclic_genteel_new}
Let $Q$ be a quiver.  Then $Q$ is genteel if and only if every cycle in $Q$ is composed of loops.
\end{lem}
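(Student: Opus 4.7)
The plan is to prove both implications separately, using the topological structure of $Q$ modulo its loops together with the description of $\f{D}_{\Stab}$ in terms of King-semistable $\bb{C}Q$-modules.

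For the direction ($\Leftarrow$), the assumption that every cycle of $Q$ consists of loops means that the quiver $Q^{\circ}$ obtained by deleting all loops is acyclic, so there is a topological ordering $i_1,\ldots,i_n$ of $Q_0$ with every non-loop arrow $a$ satisfying $s(a)=i_k$ and $t(a)=i_l$ for some $k<l$. For any right $\bb{C}Q$-module $\rho$, the subspaces $\rho^{(m)}\coloneqq \bigoplus_{k\leq m}\rho_{i_k}$ are then submodules, since $\rho(a)\colon \rho_{t(a)}\to\rho_{s(a)}$ sends higher-index to lower-index components and loops preserve each $\rho_{i_k}$. The goal is to show that any self-semistable $\rho$ with $v\coloneqq\udim(\rho)$ having $\lvert\supp(v)\rvert\geq 2$ gives rise to a self-semistable locus of codimension $\geq 2$ in $\mathbb{R}^{Q_0}$, and therefore contributes only to codim-$\geq 2$ strata of $\f{D}_{\Hall}$; such strata may be absorbed (up to equivalence, via Example~\ref{EquivEx}(2)) into joints, leaving only the basic walls \eqref{B_walls_in} as incoming walls of $\f{D}_{\Stab}$.

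The heart of the argument, and what I expect to be the main obstacle, is a peel-off step establishing that any such self-semistable $\rho$ decomposes as a direct sum $\rho\cong \bigoplus_{k\in\supp(v)}\rho'_{i_k}$ of single-vertex modules with no non-loop arrow of $Q$ joining any two vertices of $\supp(v)$. Letting $k_{\ast}$ be the smallest index in $\supp(v)$, one applies semistability to the submodule $\rho^{(k_\ast)}$ and combines this with the sign constraints on $B(\g{i_j},\g{i_{k_\ast}})$ coming from the topological ordering to force $B(v,\g{i_{k_\ast}}) = 0$, hence the absence of non-loop arrows from $\supp(v)$ into $i_{k_\ast}$. Checking that $\rho/\rho^{(k_\ast)}$ is itself self-semistable over the subquiver $Q_{>k_\ast}$ (the missing inequalities follow from the vanishing $B(\g{i_{k_\ast}},\g{i_j})=0$ for $j\in\supp(v)\setminus\{k_\ast\}$) then lets us iterate the argument. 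Once the direct-sum decomposition is established, each summand $\rho'_{i_k}$ is a submodule of $\rho$, so semistability gives $\zeta\cdot\g{i_k}\leq 0$ for every $k\in\supp(v)$, which together with $\zeta\cdot v=0$ and $v_{i_k}>0$ forces $\zeta_{i_k}=0$ for every such $k$, cutting out a locus of codimension $\lvert\supp(v)\rvert\geq 2$ as claimed.

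For the converse ($\Rightarrow$), I would produce an explicit non-basic incoming wall whenever $Q$ contains a cycle $i_1\to i_2\to\cdots\to i_n\to i_1$ with $n\geq 2$ and not all arrows loops. The natural candidate is the thin cyclic $\bb{C}Q$-module $\rho$ with $\rho_{i_k}=\mathbb{C}$ along the cycle, zero elsewhere, the cycle arrows acting as the identity, and all other arrows (including loops and arrows into or out of the cycle) acting as $0$. Any submodule $\rho'\subseteq\rho$ then satisfies $\rho'_{i_{k+1}}\subseteq \rho'_{i_k}$ cyclically (in indices mod $n$), forcing a chain $\rho'_{i_1}\supseteq\rho'_{i_2}\supseteq\cdots\supseteq\rho'_{i_1}$ and hence $\rho'=0$ or $\rho'=\rho$. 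Consequently $\rho$ is $\zeta$-semistable for every $\zeta\in v^\perp$ with $v\coloneqq\sum_k\g{i_k}$, so the wall supported on the full hyperplane $v^{B\perp}$ is incoming. Verifying that, after possibly extending $Q$ by an auxiliary vertex adjacent to a cycle vertex when $Q$ is a pure cycle (where $B_1(v)=0$), this new wall is genuinely distinct from all basic walls $\g{i}^{B\perp}$ then shows that $Q$ is not genteel, completing the proof.
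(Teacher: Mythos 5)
Your treatment of the ($\Leftarrow$) direction is essentially a correct iterative variant of the paper's argument: the paper handles it in one pass by taking, for each connected component $Q'$ of $Q^{\rho}$, the minimal supported vertex $\alpha\in Q'$ and computing $B(v,\g{\alpha})>0$ whenever $|Q'|>1$ (contradicting self-semistability), whereas you peel off the global minimum and pass to the quotient. Your peel-off does work, but note that verifying self-semistability of $\rho/\rho^{(k_*)}$ needs \emph{both} $B(v,\g{\alpha})=0$ and $B(\g{\alpha},\g{i_j})=0$ for the other supported $j$; you state the second, but it is the first (forced by the submodule inequality plus the topological ordering) that lets you deduce the absence of non-loop arrows and hence the second. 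Also, the conclusion of that vanishing is the absence of non-loop arrows \emph{out of} $i_{k_*}$ into $\supp(v)$ (arrows into $i_{k_*}$ from $\supp(v)$ are already excluded by the topological ordering), not ``into $i_{k_*}$'' as written.

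The genuine gap is in the ($\Rightarrow$) direction. You construct a simple module of dimension vector $v$, observe it is King-stable for every $\zeta\in v^{\perp}$, and then conclude ``so the wall supported on the full hyperplane $v^{B\perp}$ is incoming.'' But the existence of King-(semi)stable modules at $\zeta$ only tells you that $\zeta$ is in the support of the Hall-algebra scattering diagram $\f{D}_{\Hall}$; to get a wall of $\f{D}_{\Stab}$ you must show $\int 1_{\mathrm{ss}}(\zeta)\neq 0$, i.e.\ that the weighted Euler characteristic of $\bigoplus_w\HO_c(\Mst^{\zeta\kss}_w(Q),\mathbb{Q})^*_{\vir}$ does not vanish. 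Non-emptiness of a moduli stack does not by itself preclude such cancellation. The paper closes this gap by noting that $\Mst^{\zeta\kss}_v(Q)$ is nonempty and irreducible (so its top-degree compactly supported cohomology is one-dimensional), and then invoking the purity result (Proposition~\ref{Apurity}) to guarantee that $\chi_{\Wt,K}$ has nonnegative coefficients and hence cannot vanish on a nonzero object. Without this step the existence of the non-basic incoming wall is unproven. Finally, the proposed fix for the case $B_1(v)=0$ --- ``extending $Q$ by an auxiliary vertex'' --- is not a valid proof strategy: showing that a different quiver is not genteel says nothing about $Q$ itself, so whatever needs to be said about pure cycles must be argued within $Q$.
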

The proof is partially motivated by that of \cite[Lem 11.5]{Bridge}, which covers one of the implications in the case in which $Q$ is acyclic, but for the self-stable version of genteelness.  Also cf. \cite[Cor. 1.2(i)]{Mou} and \cite[Thm. 1.2.2]{Qin2} for genteelness results in the quantum and classical cases, respectively, for quivers $Q$ without loops and with a green-to-red sequence and non-degenerate potential.
\begin{proof}
Let $a_n\cdots a_1$ be a cycle in $Q$, and let $P\subset Q_0$ be the set of vertices occurring as $s(a_l)$ or $t(a_l)$ for $1\leq l\leq n$.  Set $N=\lvert P\lvert$.   Then consider any $N$-dimensional $\bb{C}Q$-module $\rho$, with dimension vector $v$ satisfying
\[
v_i=\begin{cases} 1 &\textrm{if }i\in P \\
0 &\textrm{otherwise}\end{cases}
\]
and for which the action of each $a_1,\ldots,a_n$ is an isomorphism.  Clearly this module is simple, and so in particular it is King-stable for any stability condition in $\udim(\rho)^{\perp}$.  

Let $\zeta\in \udim(\rho)^\perp$ be a general stability condition.  Then $\rho$ is $\zeta$-stable, and so $\Mst^{\zeta \kss}_{v}(Q)$ is nonempty.  The top degree compactly supported cohomology of this stack has, as a basis, the set of top-dimensional irreducible components; since the stack is irreducible, this cohomology is one-dimensional, and in particular nontrivial.  It follows that
\[
\bigoplus_{v\in K}\HO_c (\Mst^{\zeta \kss}_{v}(Q),\mathbb{Q})_{\vir}\neq 0
\]
and so by Proposition \ref{Apurity}
\[
\int 1_{\mathrm{ss}}(\zeta)\neq 0.
\]
It follows that $\udim(\rho)^{\perp}$ is covered by walls of $\f{D}_{\Stab}$, and in particular contains an incoming wall.  So if there exists a cycle as above with $N>1$, then $Q$ is not genteel.

On the other hand, assume that $Q$ contains no cycles that are not entirely composed of loops.  Equivalently, we can label the vertices $Q_0$ with numbers $\{1,\ldots,r\}$ in such a way that for $i>j$ there are no arrows from $i$ to $j$.

Given a $\bb{C}Q$-module $\rho$, let $Q_0^{\rho}$ denote the vertices of $Q$ which lie in the support of $\rho$, and let $Q^{\rho}$ be the subquiver of $Q$ consisting of these vertices and all arrows between them.  Given any connected component $Q'$ of $Q^{\rho}$, let $\alpha\in \{1,\ldots,r\}$ be the minimal number for which $\rho$ has non-trivial support $\rho_{\alpha}$ at the vertex $\alpha$.  We extend $\rho_{\alpha}$ by zero to obtain a submodule $\rho'\subset \rho$.  Then $B(\udim(\rho),\udim(\rho'))\geq 0$, with equality if and only if $\alpha$ is the only vertex of $Q'$.  Thus, $\rho$ cannot be self-semistable unless every component of $Q^{\rho}$ has only a single vertex (possibly with loops).

So if $\rho$ is self-semistable, it must be a direct sum of representations which are supported on disjoint vertices.  Suppose $\rho$ is semistable for some other stability condition $\zeta$.  Then each summand $\rho_{\alpha}$ of $\rho$ must be $\zeta$-semistable, thus forcing $\zeta$ to be in $\udim(\rho_{\alpha})^{\perp}$ for each $\rho_{\alpha}$.  So the locus of such $\zeta$ lives in codimension at least two (and thus does not form a wall) unless $\rho$ is supported at a single vertex, as desired.
\end{proof}

One may consider a possibly stronger notion of genteelness which requires that the only incoming walls of $\f{D}_{\Hall}$ (as opposed to $\f{D}_{\Stab}$) are $(\g{i}^{B\perp},F_i)$ with \begin{equation*}
F_i^{-1}=1_{\mathrm{ss}}(B_1(1_i)),
\end{equation*}
i.e., without application of $\chi_{\Wt,K}$.  We note that Lemma \ref{acyclic_genteel_new} holds for this stronger notion as well by the same proof.

\subsection{Positivity and parity proofs}\label{PosPar}

As usual, let $L_0$ be a lattice with a skew-symmetric integer valued form $\omega(\cdot,\cdot)$, let $\sigma$ be a strongly convex rational polyhedral cone in $L_{0,\bb{R}}$, and denote $L_0^{\oplus}\coloneqq L_0\cap \sigma$ and $L_0^+\coloneqq L_0^{\oplus}\setminus \{0\}$. Let $(V,P)$ be a pair consisting of a tuple of elements $V=(v_1,\ldots,v_s)$ of $L_0^+$, and nonzero Laurent polynomials $P=(p_1(t),\ldots,p_s(t))$ with each $p_i(t)\in\mathbb{Z}_{\geq 0}[t^{\pm 1}]$.  We will assume that there is some $1\leq s'\leq s$ such that $p_1(t),\ldots,p_{s'}(t)$ are even, and $p_{s'+1}(t),\ldots,p_s(t)$ are odd.  

We define a quiver $Q=Q(V,P)$ depending on this data as follows.  First define 
\[
Q_0=\{(i,\alpha)\colon i\in \{1,\ldots, s\} \textrm{ and }\alpha\in\{1,\ldots,p_i(1)\}\}.
\]
For each pair of vertices $(i,\alpha),(j,\beta)\in Q_0$ with $(i,\alpha)\neq (j,\beta)$, we add $\max(0,-\omega(v_i,v_j))$ arrows with source $(i,\alpha)$ and target $(j,\beta)$.  Finally, we add a loop to each $(i,\alpha)$ with $i>s'$. 
The following is a consequence of Lemma \ref{acyclic_genteel_new}.
\begin{cor}\label{GenteelCor}
Let $Q(V)$ be the quiver with vertices $Q(V)_0=V=\{v_1,\ldots,v_s\}$ and $\max(0,-\omega(v_i,v_j))$ arrows from $v_i$ to $v_j$ for all $i,j\leq s$.  Then $\BQ{V,P}$ is genteel if and only if $Q(V)$ is acyclic.  In particular, if $s=2$ then $\BQ{V,P}$ is genteel. 
\end{cor}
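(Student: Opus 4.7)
The plan is to apply Lemma \ref{acyclic_genteel_new}, which reduces the question to understanding the cycles in $Q(V,P)$. I would exploit the evident projection $\pi\colon Q(V,P)_0\rightarrow Q(V)_0$, $(i,\alpha)\mapsto v_i$, that sends each vertex of $Q(V,P)$ to its underlying index in $V$. Since the construction of $Q(V,P)$ puts $\max(0,-\omega(v_i,v_j))$ arrows between each pair of distinct vertices $(i,\alpha),(j,\beta)$ with $i\ne j$, each non-loop arrow of $Q(V,P)$ projects to an arrow of $Q(V)$; and since $\omega(v_i,v_i)=0$ there are no arrows between two distinct vertices of $Q(V,P)$ whose first coordinates coincide, so the only arrows killed under $\pi$ are the loops (of $Q(V,P)$ itself).

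For the forward direction I would assume $Q(V)$ is acyclic and take any cycle $C$ in $Q(V,P)$ that is not composed entirely of loops; such a $C$ contains at least one non-loop arrow, and its projection $\pi(C)$ is a closed walk in $Q(V)$ which uses at least one arrow. Since $Q(V)$ has no loops of its own ($\omega(v_i,v_i)=0$), such a closed walk forces $Q(V)$ to contain a genuine directed cycle, contradicting acyclicity. Lemma \ref{acyclic_genteel_new} then gives genteelness of $Q(V,P)$. For the reverse direction, if $Q(V)$ has a cycle $v_{i_0}\to v_{i_1}\to\cdots\to v_{i_n}=v_{i_0}$ (necessarily of length $\geq 2$, since $Q(V)$ is loop-free), I would lift it to $Q(V,P)$ by using the vertices $(i_k,1)$, which exist because each $p_i(t)$ is a nonzero positive Laurent polynomial and so $p_i(1)\geq 1$. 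The arrows of the cycle lift because $i_{k-1}\ne i_k$ along the way, producing a cycle in $Q(V,P)$ through more than one vertex, which cannot be composed solely of loops, so $Q(V,P)$ fails to be genteel.

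The ``in particular'' statement is almost immediate from the main equivalence: for $s=2$, skew-symmetry gives $\omega(v_1,v_2)=-\omega(v_2,v_1)$, so at most one of the two arrow counts $\max(0,-\omega(v_1,v_2))$ and $\max(0,-\omega(v_2,v_1))$ is nonzero, meaning all arrows of $Q(V)$ point in a single direction and $Q(V)$ is automatically acyclic. Since this is all purely combinatorial, I do not anticipate any real obstacle; the only place to be careful is the bookkeeping that no ``non-loop'' arrow inside a single fiber $\pi^{-1}(v_i)$ can ever be created, which is what lets the projection argument work cleanly.
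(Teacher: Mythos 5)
Your proof is correct and follows exactly the same route as the paper: reduce via Lemma \ref{acyclic_genteel_new} to showing that acyclicity of $Q(V)$ is equivalent to every cycle of $Q(V,P)$ being composed of loops, which the paper dispatches with the phrase ``straightforward to see'' and you carry out explicitly via the projection $(i,\alpha)\mapsto v_i$ (using $\omega(v_i,v_i)=0$ to rule out in-fiber arrows). The extra detail is welcome but the argument is the same.
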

\begin{proof}
Note that $Q(V)$ is isomorphic to $Q(V,(1,\ldots,1))$.  It is straightforward to see that $Q(V)$ being acyclic is equivalent to the condition that the only oriented cycles of $Q(V,P)$ are composed of loops.  The claim then follows from Lemma \ref{acyclic_genteel_new}.
\end{proof}

\begin{prop}
\label{Bridge_scat}
Let $L$ be a lattice with skew-symmetric rational bilinear form $\omega(\cdot,\cdot)$, and let $L_0\subset L$ be a sublattice on which $\omega(\cdot,\cdot)$ is integer valued, containing elements $v_1,\ldots,v_{s'},v_{s'+1},\ldots v_{s}$. For $a\in \mathbb{Z}^{s}$ write $v_a=\sum_i a_i v_i$.  Let $p_1(t),\ldots,p_{s}(t)\in\mathbb{Z}_{\geq 0}[t^{\pm 1}]$ satisfy 
\[
\parit(p_i(t))=\begin{cases} 0& 1\leq i\leq s'\\
1& s'<i\leq s.
\end{cases}
\]
We define the bilinear form $\lambda(\cdot,\cdot)$ on $\mathbb{Z}^{s}$ as in \eqref{lambda_def}.

Then there is a consistent scattering diagram $\f{D}$ in $L_{\bb{R}}$ over $\f{g}=\f{g}_{L_0^+,\omega}$ such that 
\begin{enumerate}
    \item The set of incoming walls contains the set of walls
    \begin{equation}\label{DWallsProp}
    \{(v_i^{\omega\perp},\EE(-p_i(t)z^{v_i}))\colon i=1,\ldots,s\}
    \end{equation}
    \item
    All walls have the form 
\[
    (\f{d},\EE(-p_a(t)z^{v_a}))\]
    for $a\in \mathbb{Z}^{s}_{\geq 0}$, where $p_a(t)\in\mathbb{Z}_{\geq 0}[t^{\pm 1}]$ and $\f{d}\subset v_a^{\omega\perp}$
    \item
    Each $p_a(t)$ is either odd or even, with parity given by $\lambda(a,a)+1$.
\end{enumerate}
If the quiver $Q(V)$ with vertices $v_1,\ldots,v_s$ and with $\max(0,-\omega(v_i,v_j))$ arrows from $v_i$ to $v_j$ is acyclic, then we can choose $\f{D}$ so that the incoming walls are precisely given by \eqref{DWallsProp}.  If $p_i(t)=1$ for each $i=1,\ldots,s$ (so $s'=s$) in addition to $Q(V)$ being acyclic, then we can further impose the condition that each $p_a(t)$ as in condition (2) above has Lefschetz type.
\end{prop}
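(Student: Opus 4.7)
The plan is to construct $\f{D}$ as a pushforward (via Lemma \ref{QuotientScat}) of a suitable twist of the stability scattering diagram of an auxiliary quiver $\wt Q$ built from $(V,P)$, and then read off the properties from Theorem \ref{DT_pos_thm} and Corollary \ref{MRcor}. First, decompose each $p_i(t)=\sum_k c_{i,k}t^{n_{i,k}}$ with each $n_{i,k}$ of parity matching $\parit(p_i)$. In analogy with the change-of-monoid trick of \S\ref{MonoidTrick}, introduce an ambient lattice $\wt L$ together with linearly independent elements $\wt v_{i,k,\alpha}$ (for $\alpha=1,\ldots,c_{i,k}$) and a $\bb Z$-linear projection $\pi\colon\wt L\to L$ sending $\wt v_{i,k,\alpha}\mapsto v_i$, with $\omega$ pulled back to a skew form $\wt\omega$ on $\wt L$.

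Next, define the quiver $\wt Q$ whose vertex set is the index set of the triples $(i,k,\alpha)$, with $\max(0,-\omega(v_i,v_j))$ arrows from each $(i,k,\alpha)$ to each distinct $(j,l,\beta)$ and a single loop on $(i,k,\alpha)$ exactly when $i>s'$. A direct computation gives $B_{\wt Q}=\pi^*\omega$ on $\bb Z^{\wt Q_0}$ (identified as a sublattice of $\wt L$ via $\g{(i,k,\alpha)}\mapsto \wt v_{i,k,\alpha}$). By the main results of \cite{Bridge}, the stability scattering diagram $\f{D}_{\Stab}^{\wt Q}$ is a consistent scattering diagram in $\wt L_{\bb R}$ over $\f g_{\wt L_0^+,\wt\omega}$, and by Example \ref{InEx} its simple-root incoming walls are $(\wt v_{i,k,\alpha}^{\wt\omega\perp},\EE(-z^{\wt v_{i,k,\alpha}}))$ when $i\leq s'$ and $(\wt v_{i,k,\alpha}^{\wt\omega\perp},\EE(-t^{-1}z^{\wt v_{i,k,\alpha}}))$ when $i>s'$.

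Since the $\wt v_{i,k,\alpha}$ are linearly independent, there is a $\bb Z$-linear character $c\colon \wt L\to\bb Z$ with $c(\wt v_{i,k,\alpha})=n_{i,k}$ for $i\leq s'$ and $c(\wt v_{i,k,\alpha})=n_{i,k}+1$ for $i>s'$; these values are all even by the parity assumption on $n_{i,k}$. The rescaling $z^v\mapsto t^{c(v)}z^v$ is a $\kt$-algebra automorphism of $\kt^{\wt\omega}[\wt L]$ preserving the $\wt L$-grading, hence induces an automorphism $\phi_c$ of the completion and of $\hat G$ that commutes with path-ordered products and therefore takes consistent scattering diagrams to consistent scattering diagrams with the same supports. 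A direct calculation via \eqref{Psit} shows that $\phi_c$ sends each incoming wall above to $(\wt v_{i,k,\alpha}^{\wt\omega\perp},\EE(-t^{n_{i,k}}z^{\wt v_{i,k,\alpha}}))$. Then applying Lemma \ref{QuotientScat} with $\pi$, and using \eqref{plus-prod} to combine the $\sum_k c_{i,k}=p_i(1)$ pushed-forward walls sharing support $v_i^{\omega\perp}$, yields the single incoming wall $(v_i^{\omega\perp},\EE(-p_i(t)z^{v_i}))$, establishing (1). Set $\f{D}$ equal to this scattering diagram.

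Properties (2) and (3) then follow from Theorem \ref{DT_pos_thm}: every wall of $\f{D}_{\Stab}^{\wt Q}$ carries a scattering function $\EE(f_v(t)x^v)$ with $f_v(t)\in\bb Z_{\geq 0}[t^{\pm 1}]$ of parity $\chi_{\wt Q}(v,v)+1$; positivity is preserved by $\phi_c$ and by pushforward, and since $c(v)$ is always even the parity after shift remains $\chi_{\wt Q}(v,v)+1$. A direct quadratic-form computation (using $a_{i,k,\alpha}^2\equiv a_{i,k,\alpha}\pmod 2$ and the arrow count in $\wt Q$) shows $\chi_{\wt Q}(v,v)\equiv \lambda(a,a)\pmod 2$ for $v$ projecting to $v_a=\sum a_iv_i$, matching the required parity $\lambda(a,a)+1$. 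When $Q(V)$ is acyclic, every cycle in $\wt Q$ is composed of loops; Lemma \ref{acyclic_genteel_new} then guarantees $\f{D}_{\Stab}^{\wt Q}$ has only simple-root incoming walls, so $\f{D}$ has precisely the walls in \eqref{DWallsProp}. Finally, when additionally $p_i(t)=1$, we have $n_{i,k}=0$, hence $c\equiv 0$, the shift is trivial, and $\wt Q$ coincides with $Q(V)$ (now acyclic); Corollary \ref{MRcor} then says each $f_v(t)$ is of Lefschetz type, a property preserved under pushforward by Lemma \ref{Lefschetz_prop}. The main obstacle, though ultimately a matter of careful combinatorial bookkeeping, will be the parity identity $\chi_{\wt Q}(v,v)\equiv\lambda(a,a)\pmod 2$, which requires simultaneously tracking the effects of the loops at $i>s'$, the arrow count $\max(0,-\omega(v_i,v_j))$, and the fact that the parities are independent of the chosen decomposition of each $p_i(t)$.
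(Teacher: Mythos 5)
Your proof takes essentially the same route as the paper's: build the auxiliary quiver $Q(V,P)$ from the data, take its stability scattering diagram, twist to restore the $t$-powers, and push forward to $L_{\bb{R}}$ via Lemma \ref{QuotientScat}, deducing positivity, parity, genteelness and the Lefschetz refinement from \S\ref{pos_DT_sec}--\S\ref{acyclic_sec}. The one structural difference is that you factor the paper's twisted projection $F\colon x_{i,\alpha}\mapsto t^{\eta_i(\alpha)}z^{v_i}$ into a rescaling automorphism $\phi_c$ of the quiver torus followed by an untwisted projection $\pi$, whereas the paper packages both into $F$ at once; these are equivalent. One small correction: Lemma \ref{Lefschetz_prop} concerns the adjoint action of wall functions on monomials (used for broken lines), not direct images of scattering diagrams, so it is the wrong citation for the last step. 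What you actually need is simply that when all $p_i(t)=1$ the map $F$ carries each scattering function $\EE(f_v(t)x^v)$ over to $\EE(f_v(t)z^{\pi(v)})$ unchanged, and any merging of walls with equal support and direction via \eqref{plus-prod} preserves the Lefschetz property since a sum of Lefschetz-type polynomials is again of Lefschetz type.
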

Note that the final statement above yields Theorem \ref{acyclic_scat_thm}.
\begin{proof}
It will be convenient to assume that the vectors $v_i$ span $L_{\bb{R}}$.  We can always assume this after possibly extending $L_0$ to some finite-index sublattice $L'_0$ of $L$, enlarging $\sigma$ to have full rank in $L_{\bb{R}}$, and then enlarging $V$ with a minimal set of additional vectors $v_i$ so that the resulting set $V'$ spans $L_{\bb{R}}$.   The polynomials $p_i(t)$ associated to $v_i\in V'\setminus V$ are taken to be, say, $1$.  After proving the results for this setup, one can then take the quotient by $\bigoplus_{v\in (L'_0)^+\setminus L_0^+} \f{g}_v$ to obtain the results in general.  So we now assume $V$ spans $L_{\bb{R}}$.

Recall the quiver $Q=Q(V,P)$ defined above.  
Let $K=\mathbb{Z}_{\geq 0}^{Q_0}$ be the monoid of dimension vectors for $\BQ{V,P}$.  There is a homomorphism of algebras
\[
F\colon \kk_t[K]\rightarrow \kk_t[L_0^{\oplus}]
\]
defined as follows.  Write $p_i(t)=\sum_l p_{i,l}t^l$.  For $(i,\alpha)\in Q_0$ define 
\[
\eta'_i(\alpha)=\max\left\{ r~\colon~ \sum_{r'< r}p_{i,r'} < \alpha,~ \parit(r)= \parit(p_i(t))\right\}.  
\]
For example if $p_i(t)=3t^{-2}+4+3t^2$, 
\[
(\eta'_i(1),\ldots ,\eta'_i(10))=(-2,-2,-2,0,0,0,0,2,2,2).
\]
We define 
\[
\eta_i(\alpha)=\begin{cases} \eta'_i(\alpha) & \textrm{if }i\leq s'\\
\eta'_i(\alpha)+1 &\textrm{if }s'<i\leq s.
\end{cases}
\]
Note that for all $(i,\alpha)\in Q_0$ there is an equality 
\begin{equation}\label{eta_parity}
    \eta_i(\alpha)\equiv 0 \quad\textrm{ (mod $2$).} 
\end{equation}
We set 
\begin{equation*}
F(x_{i,\alpha})=t^{\eta_i(\alpha)}z^{v_i}.
\end{equation*}

Passing to completions, $F$ induces a morphism $G^{\qtrop}_Q\rightarrow G^{\qtrop}$ which we denote by $\hat{F}$.  From \eqref{bosvfer} and \eqref{eta_parity} it follows that
\[
\hat{F}(\EE(p(t)x^v)=\EE(p(t)F(x^v))
\]
for all $0\neq v\in K$ and all $p(t)\in\mathbb{Z}[t^{\pm 1}]$.

We define a $L_0^{+}$-grading on $\f{g}_Q$ by pulling back the $L_0^{+}$-grading on the image of $F$.  Then the scattering diagram $\f{D}_{\Stab}$ for the category of $\mathbb{C}Q$-modules satisfies
\begin{enumerate}
    \item For each $i\leq s'$ and $\alpha\leq p_i(1)$ there is an incoming wall $(\g{i,\alpha}^{\perp},\EE(-x_{i,\alpha}))$ \item
    For each $s'<i\leq s$ and $\alpha\leq p_i(1)$ there is an incoming wall $(\g{i,\alpha}^{\perp},\EE(-t^{-1}x_{i,\alpha}))$
    \item
    All walls in $\f{D}_{\Stab}$ are of the form $(\f{d},\EE(-p_v(t)x^v))$ with $p_v(t)\in\bb{Z}_{\geq 0}[t^{\pm 1}]$ and $\f{d}\subset v^{\perp}$ for $v\in K$. 
    \item
    $\parit(p_v(t))\equiv \chi_Q(v,v)+1$ (mod $2$).
\end{enumerate}
The first two parts of the claim follow from Example \ref{InEx}.  The last two parts are a special case of Proposition \ref{Pos1}.  The morphism
\begin{align*}
    R\colon \mathbb{R}^{Q_0}\rightarrow &L_{0,\mathbb{R}}\\
    \g{i,\alpha}\mapsto &v_i
\end{align*}
induces a surjection of vector spaces, with $R^*\omega=B_Q$.  As in Lemma \ref{QuotientScat}, $R$ and $F$ induce a consistent scattering diagram $(R,F)_*\f{D}_{\Stab}$ in $L_{\bb{R}}$ over $\f{g}$, giving the desired scattering diagram.  For the statement regarding the parity of $\f{D}$, identify $\bb{Z}^s$ with $\bb{Z}^{Q(V)_0}$, and let $\wt{R}$ denote the projection $\bb{Z}^{Q_0}\rar \bb{Z}^{Q(V)_0}$, $\wt{R}(1_{i,\alpha})=1_i$.  Then the parity statement follows from \eqref{eta_parity}, property (4) above, and the observation that $\chi_Q=\wt{R}^*\lambda$.

If $Q(V)$ is acyclic, then $Q(V,P)$ is genteel by Corollary \ref{GenteelCor}, so the incoming walls listed in Properties (1) and (2) of $\f{D}_{\Stab}$ above are the only incoming walls of $\f{D}_{\Stab}$.  The claim that the only incoming walls of $(R,F)_*\f{D}_{\Stab}$ are as in \eqref{DWallsProp} follows using Lemma \ref{QuotientScat} (assuming we choose the representative of $\f{D}_{\Stab}$ to be saturated).

If we additionally have $p_i(t)=1$ for each $i$, then by Corollary \ref{MRcor}, we can represent $\f{D}_{\Stab}$ so that each $p_v(t)$ as in Property (3) above has Lefschetz type.  Since in this case $F$ maps $x_{i,\alpha}$ to $z^{v_i}$, all scattering functions of $(R,F)_*\f{D}_{\Stab}$ will still have Lefschetz type, as desired. 
\end{proof}

\subsubsection{Proof of positivity for the basic two-wall cases}

We now prove the positivity statement of Theorem \ref{PosScat} in the cases in which the initial scattering diagram has only two walls.  The general case then follows by \S \ref{PosSect}.

\begin{cor}\label{2WallPos}
Consider the scattering diagram
\begin{align}\label{2WallPosIn}
    \f{D}_{\In}=\{(v_1^{\omega\perp},\EE(-t^{m_1}z^{v_1})), (v_2^{\omega\perp},\EE(-t^{m_2}z^{v_2}))\}
\end{align}
where $v_1,v_2\in L_0^{+}$.  Then there is some $\f{D}$ representing the equivalence class of $\scat(\f{D}_{\In})$ such that all walls of $\f{D}$ are either one of the two incoming walls, or outgoing walls of the form $(\f{d}_{v},\EE(-t^mz^v))$ for $
\f{d}_v\subset v^{\omega \perp}
$ and $v=\alpha  v_1+\beta v_2$, with $\alpha,\beta\in\mathbb{Z}_{\geq 1}$.
\end{cor}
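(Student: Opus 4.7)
The plan is to deduce Corollary \ref{2WallPos} by a direct application of Proposition \ref{Bridge_scat}. Setting $s=2$ and $V=(v_1,v_2)$, I would take polynomials $p_i(t)=t^{m_i}$, after possibly swapping $v_1$ and $v_2$ so that the even-parity exponent precedes the odd-parity one, matching the parity labelling required by the proposition. Since $s=2$, the quiver $Q(V)$ has two vertices with arrows flowing in only one direction (or none), hence is acyclic, and Corollary \ref{GenteelCor} implies that $Q(V,P)$ is genteel. The final clause of Proposition \ref{Bridge_scat} then produces a representative $\f{D}$ of $\scat(\f{D}_{\In})$ whose incoming walls are exactly the two initial walls, and whose remaining walls have the form $(\f{d},\EE(-p_a(t)z^{v_a}))$ with $a=(a_1,a_2)\in\bb{Z}_{\geq 0}^2$, $p_a(t)\in\bb{Z}_{\geq 0}[t^{\pm 1}]$, and $v_a=a_1v_1+a_2v_2$.

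To refine these walls so that each scattering function is a single $\EE$-of-monomial, I would expand $p_a(t)=\sum_k c_{a,k}t^k$ and invoke \eqref{plus-prod}, which gives
\[
\EE(-p_a(t)z^{v_a})=\prod_k\EE(-t^kz^{v_a})^{c_{a,k}}.
\]
Example \ref{EquivEx}(1) then allows replacement of the single wall by $c_{a,k}$ copies of $(\f{d},\EE(-t^kz^{v_a}))$ for each $k$ with $c_{a,k}\neq 0$, yielding an equivalent scattering diagram of the desired form.

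It remains to verify that every non-incoming wall satisfies $a_1,a_2\geq 1$. After applying the change-of-monoid trick of \S \ref{MonoidTrick}, I may assume $v_1,v_2$ form a basis of $L_0$, so $L_0^+$ lies in the positive quadrant they span. Any non-incoming wall with direction $nv_i$ for $n\geq 1$ has support inside the hyperplane $v_i^{\omega\perp}$. Since $\f{g}_{v_i}^{\parallel}$ is Abelian and the incoming wall already carries the full hyperplane with function $\EE(-t^{m_i}z^{v_i})$, any outgoing contribution there commutes with the incoming one; by Examples \ref{EquivEx}(1)--(2) together with the uniqueness clause of Theorem \ref{KSGS}, such walls may be absorbed into the incoming wall without altering the equivalence class. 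Choosing a minimal representative removes them entirely.

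The main obstacle I anticipate is this last step: Proposition \ref{Bridge_scat} a priori permits outgoing walls with pure direction $nv_i$, and confirming that they can be eliminated within the equivalence class requires careful bookkeeping of supports against those of the incoming hyperplanes. The key geometric input is that, in rank two, any outgoing wall in direction $v_i$ must occupy a half-line of $v_i^{\omega\perp}$ on the side opposite $v_i$, so that merging it (via Example \ref{EquivEx}(2)) with one half of the corresponding incoming wall leaves every closed path-ordered product unchanged.
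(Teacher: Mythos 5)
Your proposal follows exactly the same route as the paper: take $V=(v_1,v_2)$, $P=(t^{m_1},t^{m_2})$, observe $Q(V)=\moKr^{(|\omega(v_1,v_2)|)}$ is acyclic, and invoke the genteel clause of Proposition~\ref{Bridge_scat}. The paper's published proof is in fact even terser than yours and simply says ``property (2) yields the claim''; your explicit refinement of polynomial scattering functions into monomial ones via \eqref{plus-prod} and Example~\ref{EquivEx}(1) is the step the paper leaves implicit, and it is entirely correct.

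The last paragraph of your argument, however, contains a genuine gap. You correctly notice that Proposition~\ref{Bridge_scat}(2) only gives $a\in\bb{Z}^2_{\geq 0}$ while the Corollary asserts $\alpha,\beta\geq 1$ for outgoing walls — a point the paper itself elides — but your proposed fix does not work. Merging a hypothetical outgoing wall with direction $nv_1$ into the incoming hyperplane via Example~\ref{EquivEx}(1)--(2) does \emph{not} remove its contribution; it merely replaces $\EE(-t^{m_1}z^{v_1})$ on part of the hyperplane by $\EE(-t^{m_1}z^{v_1})\cdot f_{\f{d}}$. ``Choosing a minimal representative'' has no meaning here and smuggles in the very claim you want to prove. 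The correct and short argument goes through Example~\ref{EquivEx}(3) instead: let $p$ be a general point of $v_1^{\omega\perp}\setminus v_2^{\omega\perp}$. In a small ball around $p$, the only wall of $\f{D}_{\In}$ is the full hyperplane $(v_1^{\omega\perp},\EE(-t^{m_1}z^{v_1}))$, and since $G^{\parallel}_{v_1}$ is Abelian, path-ordered products around any closed loop near $p$ are already trivial. Therefore $\scat(\f{D}_{\In})$ adds no walls through $p$, and $f_{p,\f{D}}=\EE(-t^{m_1}z^{v_1})$ for all general $p\in v_1^{\omega\perp}$ and for any representative $\f{D}$. An outgoing wall $\f{d}\subset v_1^{\omega\perp}$ of codimension $1$ would necessarily contain some such general point $p$, and its function would multiply $f_{p,\f{D}}$ non-trivially; so no such wall exists. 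The same argument applies with the indices swapped.
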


\begin{proof}
We apply Proposition \ref{Bridge_scat} in the case where $V=(v_1,v_2)$ and $P=(t^{m_1},t^{m_2})$.  Since $Q(V)=\Kr{|\omega(v_1,v_2)|}$ is acyclic, the resulting incoming walls are precisely as in \eqref{2WallPosIn}, and then property (2) from Proposition \ref{Bridge_scat} yields the claim. 
\end{proof}

\subsubsection{Dense regions}\label{denseSec}
We next consider some examples that make precise our claim from the introduction that ``almost all'' two wall examples have dense regions.

\begin{eg}\label{dense_example}
In Corollary \ref{2WallPos}, set $\omega(v_1,v_2)=1$, $m_1=0$ and $m_2=-1$.  Then the scattering diagram $\f{D}$ is the stability scattering diagram for the quiver with vertices $\{1,2\}$, a loop $c$  at $2$, and an arrow $a$ from $2$ to $1$.  Let $\alpha\leq\beta$.  Fix numbers $\lambda_1,\ldots,\lambda_{\alpha}\in\bb{C}$.  We make $\bb{C}^{\alpha}\oplus\bb{C}^{\beta}$ into a $\bb{C}Q$-module as follows: firstly $\cdot \lp{i}$ acts on $\bb{C}^{\alpha}\oplus\bb{C}^{\beta}$ via projection onto the $i$th factor.  Secondly, the linear operator $\cdot a\colon \bb{C}^{\alpha}\rightarrow\bb{C}^{\beta}$ acts on row vectors from the right via the matrix 
\[
\left(\begin{array}{cccc} \Id_{\alpha\times\alpha} & 0& \ldots & 0\end{array}\right)
\]
while $\cdot c$ acts  via the linear map $\bb{C}^{\beta}\rightarrow \bb{C}^{\beta}$ defined via the matrix 
\[
\left(\begin{array}{ccccc}  0&1&0&0&\ldots\\
 0& 0& 1 &0&\ldots\\
 &&\ldots&&\\
0&0&\ldots&0&1\\
\lambda_1&\lambda_2&\ldots&&\lambda_{\beta}
\end{array}\right).
\]
If $V$ is a submodule of $\rho$ of dimension vector $(x,y)$, the fact that $V\cdot \lp{2}$ is preserved by $\cdot c$ implies that $y\geq x+\beta-\alpha$ whenever $x>0$.  Since $-B(\udim(\rho),(x,y))=-\alpha y +\beta x$, we have $-B(\udim(\rho),\udim(V))=-\alpha y\leq 0$ if $x=0$, and if $x> 0$, we have \begin{align*}
    -B(\udim(\rho),(x,y))  & \leq -\alpha x -\alpha\beta+\alpha^2+\beta x=(\beta-\alpha)(x-\alpha)\leq 0.
\end{align*} 
Hence, $\rho$ is $-B(\udim(\rho),\cdot)$-semistable.

In fact, if $x=\alpha$, then $y=\beta$ and so $V=\rho$. Hence, if $V$ is a non-trivial proper submodule of $\rho$, then we must have $x<\alpha$ and so if $\alpha<\beta$ we find $-B(\udim(\rho),\udim(V))<0$.  So $\rho$ is in fact $-B(\udim(\rho),\cdot)$-stable.  Moreover
\begin{enumerate}
    \item 
    If $\alpha=\beta$, all indecomposable $-B(\udim(\rho),\cdot)$ King-semistable modules arise this way.  Since $\rho$ contains a $(1,1)$-dimensional representation spanned by the vectors $v'$ and $v$, where $v$ is any eigenvector of $\cdot c$ and $v'$ satisfies $v'\cdot a=v$, there are no $-B((1,1),\cdot)$ King-stable $(n,n)$-dimensional representations for $n\geq 2$.
    \item
    If $\alpha=1$, all $-B(\udim(\rho),\cdot)$ King-semistable (and hence stable) $(\alpha,\beta)$-dimensional $\bb{C}Q$-modules arise this way.  As such, there is an isomorphism $\Msp_{(1,\beta)}^{-B(\udim(\rho),\cdot)\sst}(Q)\cong \bb{A}^{\beta}$. Geometrically, this also follows from the fact that the moduli space $\Msp_{(1,\beta)}^{-B(\udim(\rho),\cdot)\sst}(Q)$ can be identified with the Hilbert scheme of $\beta$ points on $\bb{A}^1$, which is identified with $\CSym^{\beta}\bb{A}^1\cong\bb{A}^{\beta}$ via the Hilbert--Chow morphism. 
\end{enumerate}
We have drawn the diagram $\f{D}$ of Corollary \ref{2WallPos} in Figure \ref{scat_fig}.  One ellipsis in the diagram indicates that for every $n\in \bb{Z}_{\geq 1}$ there is a wall with function starting $\EE(t^{-n}z^{(1,n)}+\cdots)$, a fact that we deduce from (2).  From (1) we deduce that there are no higher order terms on the wall through $(-1,-1)$, i.e. the $z^{(2,2)}$ coefficient is zero.  The grey region indicates that, because of the existence of the above stable modules, by Lemma \ref{dimension_lemma} there is a dense region of nonzero walls in the scattering diagram --- in fact, the existence of these stable modules implies that $z^{(\alpha,\beta)}$ has a nonzero coefficient in the scattering function in which it appears for every $0<\alpha<\beta$.  
\end{eg}
\begin{figure}[ht]
\bigskip{\center 
\begin{tikzpicture}
\draw [black, fill=gray] (0,0) -- (2.7,-2.7) -- (2.7,0)--(0,0);
\draw[->](-3,0) --(3,0);
\draw[->](0,3) --(0,-3);
\draw[->](0,0) --(3,-3);
\draw[->](0,0) --(3,-1.5);
\draw[->](0,0) --(3,-1);

\node at (0.3,3.25){$\EE(-z^{(1,0)})$};
\node at (-3.2,-0.25){$\EE(-t^{-1}z^{(0,1)})$};
\node at (3.7,-3.3){$\EE(-t^{-1}z^{(1,1)})$};
\node at (4.5,-1.7){$\EE(-t^{-2}z^{(1,2)}+\ldots)$};
\node at (4.5,-1.1){$\EE(-t^{-3}z^{(1,3)}+\ldots)$};
\node at (3.5,-.5){$\ldots$};

\end{tikzpicture}}
\caption[]{Scattering diagram for $\omega=\left(\begin{array}{cc}0&1\\-1&0\end{array}\right)$ with initial scattering functions $\EE(-t^{-1}z_2)$ and $\EE(-z_1)$.}
\label{scat_fig}
\end{figure} 

\begin{eg}[Density of walls for Kronecker quivers]
\label{Kronecker_example}
We now consider a situation like that of \cite[Ex. 1.15]{GHKK}. I.e., consider the two-dimensional scattering diagram $\scat(\f{D}_{\In})$ for $\f{D}_{\In}=\{(v_i^{\Lambda \perp},\EE(-z^{v_i}))\colon i=1,2\}$ with $\Lambda(v_1,v_2)=n$, i.e., the scattering diagram associated to the $n$-Kronecker quiver $\Kr{n}$.  For convenience, take $v_1=(0,1)$, $v_2=(-1,0)$.  It follows from the mutation invariance of $\f{D}$ (Proposition \ref{mut-inv}) that $\f{D}\setminus \f{D}_{\In}$ is preserved by the action of $\mu\coloneqq \left(\begin{matrix} 0 & -1 \\ 1 & n 
\end{matrix}\right)$.  

For $n\geq 2$, it follows that the rays of the cluster complex accumulate along the eigenrays of $\mu$ generated by $(1,-\lambda)$ for $\lambda=\frac{n\pm \sqrt{n^2-4}}{2}$.   Let $\sigma_{\bad}$ denote the interior of the cone generated by these two rays.  It has often been suggested (e.g., in \cite[Ex. 1.6]{GPS}, \cite[Ex. 1.4]{GP} and \cite[Ex. 1.15]{GHKK}) that every ray of rational slope in $\sigma_{\bad}$ appears to support a nontrivial wall of $\f{D}$.  Using Example \ref{dense_example}, we can see that this is indeed true.

\begin{figure}
\bigskip{\center 
\begin{tikzpicture}
\draw [black, fill=gray] (0,0) -- (3,-1.3) -- (3,-3) -- (1.3,-3)--(0,0);
\draw[->](-3,0) --(3,0);
\draw[->](0,3) --(0,-3);
\draw[->](0,0) --(1,-3);
\draw[->](0,0) --(3,-.7);
\draw[->](0,0) --(3,-1.0);
\draw[->](0,0) --(3,-1.15);
\draw[->](0,0) --(3,-1.23);
\draw[->](0,0) --(.7,-3);
\draw[->](0,0) --(1,-3);
\draw[->](0,0) --(1.15,-3);
\draw[->](0,0) --(1.23,-3);
\node at (-3.5,0.3){$\EE(-z^{v_2})$};
\node at (0.45,3.2){$\EE(-z^{v_1})$};
\node at (2,-2){$\sigma_{\bad}$};
\end{tikzpicture}}
\caption[]{Sketch of the scattering diagram for the $n$-Kronecker quiver $\Kr{n}$, $n\geq 3$.}
\label{scat_fig_Kr}
\end{figure}
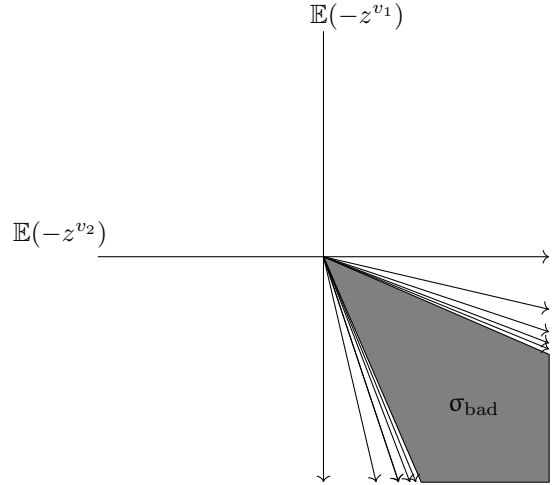 

It suffices to check this for the classical version of $\f{D}$.  In this case, we can apply the change of lattice trick of \cite[\S C.3, Step IV]{GHKK} to replace our initial walls with the pair of walls $\lim_{t\rar 1} \EE(-nz^{u_1})$ and $\lim_{t\rar 1} \EE(-nz^{u_2})$ with $\Lambda(u_1,u_2)=1$.  We choose a basis so that $u_1=(0,1)$, $u_2=(0,-1)$.

We can factor these initial walls into $2n-2$ walls $\{\f{d}_{ij}\colon  i=1,2, j=1,\ldots,n-1\}$, where $f_{\f{d}_{i1}}= \lim_{t\rar 1} \EE(-2z^{u_i})$, and $f_{\f{d}_{ij}}= \lim_{t\rar 1}  \EE(-z^{u_i})$ for $j=2,\ldots,n-2$.  We perturb these walls by translating each of $\f{d}_{i1}$ down and towards the left, and translating the other walls by small generic amounts.  Then the interaction of the $\f{d}_{i1}$'s produces a limiting ray with function $\lim_{t\rar 1}\EE(-(t+t^{-1})z^{u_1+u_2})$ (this follows from, say, Example \ref{coh_K2}).  When this interacts with $\lim_{t\rar 1} \EE(-z^{u_i})$, Example \ref{dense_example} implies that the resulting outgoing rays include every ray of rational slope in the cone spanned by $u_1+u_2$ and $u_1+u_2+u_i$.  Then by letting ray $\bb{R}_{\geq 0}(u_1+u_2+u_i)$ interact with another $\EE(-z^{u_i})$-wall (if there are others), we extend this to $u_1+u_2+2u_i$, and so on.  This shows that every ray of rational slope in the cone $C\coloneqq \bb{R}_{\geq 0}\langle(n-1,-1),(1,1-n)\rangle$ has non-trivial attached scattering function.  Finally, noting that $\mu((1,-1))=(1,1-n)$, and so $C$ overlaps $\mu(C)$, we see that the cones $\mu^k(C)$ for $k\in \bb{Z}$ completely cover the interior of $\sigma_{\bad}$, and the claim follows.

In fact, we can see a bit more than density of the walls --- we see that $z^{v}$ has a nonzero-coefficient along $\bb{R}_{\geq 0}(-v)$ for every nonzero $v\in -\sigma_{\bad}$.  Indeed, using the above arguments, the analogous statement about $z^{(\alpha,\beta)}$ having nonzero coefficients for all $0<\alpha<\beta$ in Example \ref{dense_example} implies the claim for $-v\in \bb{R}_{>0} \langle(1,-1),(1,1-n)\rangle$.  We extend this to include the ray $\bb{R}_{>0} (1,-1)$ using Reineke's description of this ray \cite[Thm. 6.4]{ReinCompos} (cf. \cite[Ex. 1.15]{GHKK}).  Then applying $\mu^k$  for all $k\in \bb{Z}$ as before yields the claim for the full region.  In terms of quiver moduli, this can be viewed as saying that there exist stable representations of $\Kr{n}$ ($n\geq 3$) of dimension vector $v$ for all $v\in \bb{R}_{> 0} \langle (1,\frac{n-\sqrt{n^2-4}}{2}),(1,\frac{n+\sqrt{n^2-4}}{2}) \rangle\cap \bb{Z}_{\geq 0}^2$.
\end{eg}

\begin{rmk}
Via Theorem \ref{MRtheorem} and Lemma \ref{dimension_lemma}, the statement above regarding nonvanishing of the coefficients for $z^{(\alpha,\beta)}$ is equivalent to the existence of stable modules of dimension vector $(\alpha,\beta)$.  So rather than using the above example to deduce the existence of such stable modules, one could hope to directly show the existence of the stable modules, and then deduce the nonvanishing of the coefficients from this.  Indeed, as pointed out to us by Pierrick Bousseau, there is an argument taking as its inputs only results from the quiver literature \cite{Kac} and \cite{Scho} which demonstrates the existence of such stable modules, without invoking any DT theory or scattering diagram techniques.
\end{rmk}

Putting the above examples together, we see that if $\lvert \omega(v_1,v_2)\lvert \neq 0$ in Corollary \ref{2WallPos}, the only two choices of $(n,m_1,m_2)$ that do not give rise to regions in $L_{\bb{R}}$ of nonzero measure in which walls of $\f{D}$ are dense are $(1,0,0)$ and $(2,0,0)$.

\subsubsection{Parities: Proof of Theorem \ref{parity_cor}}\label{parity_proof}
Consider $\f{D}_{\In}$ given by \eqref{DIn} as in Theorem \ref{PosScat}, where the polynomials $p_i(t)$ are assumed to satisfy the positivity and parity assumptions of Theorem \ref{parity_cor}.  
Let $\f{D}^{\circ}$ be the consistent scattering diagram constructed in Proposition \ref{Bridge_scat} starting with the elements $v_1,\ldots v_s$ and polynomials $p_1(t),\ldots,p_s(t)$.  Let $\f{D}^{\circ}_{\In}$ be the set of incoming walls of $\f{D}^{\circ}$.  Then by construction, $\f{D}_{\In}\subset \f{D}^{\circ}_{\In}$, and $\f{D}^{\circ}$ satisfies the parity conditions in the statement of Proposition \ref{Bridge_scat}.

It is not clear that the set of walls in $\f{D}_{\In}^{\circ}$ is finite, so we work to arbitrary finite order $k$, for $k$ large enough to ensure that no walls of $\f{D}_{\In}$ are trivial in $\f{g}_k$.  We will add the subscript $k$ to a scattering diagram to indicate the subset of walls which are nontrivial in $\f{g}_k$ (but still working over $\f{g}$ rather than projecting to $\f{g}_k$).  By extending all initial walls to be full hyperplanes as in \S \ref{ComparisonTricks}, we can let $(\f{D}^{\circ}_{\In})_k=\{(v_i^{\omega\perp},-p_{i}(t)z^{v_{i}})\}_{i\in I}$ for $I$ some finite index-set.  Then $\f{D}_{\In}$ is the subset of walls corresponding to $i\in I'$ for some $I'\subset I$ which we identify with $\{1,\ldots,s\}$.

Similarly to the change of monoid trick (cf. \S \ref{MonoidTrick}), let $\wt{L}\coloneqq \bb{Z}\langle \wt{u}_1,\ldots,\wt{u}_r,\wt{v}_i\colon i\in I\rangle$ and let $\pi\colon \wt{L}\rar L$ be the map sending $\wt{v}_i\mapsto v_i$ and $\wt{u}_i\mapsto u_i$ for $\{u_i\colon i=1,\ldots,r\}$ any basis for $L$. Let $\wt{\omega}\coloneqq \pi^* \omega$.  Define 
\begin{align*}
p\colon &\wt{L}\rightarrow \bb{Z}\langle e_1,\ldots e_s\rangle\\
&\wt{v}_i\mapsto e_i& i\in I'\\
&\wt{v}_i\mapsto 0 &i\in I\setminus I'\\
&\wt{u}_i\mapsto 0&i=1,\ldots,r
\end{align*}
and set $\wt{\lambda}=p^*\lambda$.  Let $\wt{\sigma}\subset \wt{L}_{\bb{R}}$ be the cone spanned by $\wt{v}_i$ for $i\in I$ and let $\wt{L}^+=\wt{L}\cap \wt{\sigma}\setminus\{0\}$.  We define a homomorphism of Lie algebras
\begin{align*}
    \varpi\colon& \f{g}_{\wt{L}^+,\wt{\omega}}\rightarrow \f{g}_{L^+,\omega}\\
    &\hat{z}^{\tilde{v}}\mapsto z^{\pi(\tilde{v})}/(1)_t.
\end{align*}

We then consider 
$$(\wt{\f{D}}^{\circ}_{\In})_k\coloneqq \{(\wt{v}_i^{\wt{\omega}\perp},-p_{i}(t)z^{\wt{v}_{i}})\}_{i\in I},$$
with $\wt{\f{D}}_{\In}$ denoting the subset consisting of the walls with $i\in I'$.  Then $\wt{\f{D}}\coloneqq \scat(\wt{\f{D}}_{\In})$ is a subset of the walls of $\wt{\f{D}}_k^{\circ}\coloneqq \scat((\wt{\f{D}}^{\circ}_{\In})_k)$, specifically, the walls $(\f{d},\EE(-p_{\f{d}}(t)z^{\wt{v}_{\f{d}}}))$ with $\wt{v}_{\f{d}}$ in the span of $\{\wt{v}_i\}_{i\in I'}$.  

By Lemma \ref{QuotientScat}, $\pi$ and $\varpi$ induce consistent scattering diagrams $(\pi,\varpi)_*(\wt{\f{D}}_k^{\circ})$ and $(\pi,\varpi)_*(\wt{\f{D}})$ in $L_{\bb{R}}$, defined as in \eqref{piD}.  By the positivity statement from Theorem \ref{PosScat}, up to equivalence, all walls of  $\wt{\f{D}}_k^{\circ}$ are of the form $(\f{d},\EE(-p_{\f{d}}(t)z^{\wt{v}_{\f{d}}}))$ for $p_{\f{d}}(t)$ a Laurent polynomial with positive coefficients.  Up to equivalence, we have $\f{D}=(\pi,\varpi)_*(\wt{\f{D}})$, so then for all general $x\in L_{\bb{R}}$, we can write (in the notation of Example \ref{EquivEx}(3))
\begin{align*}
f_{x,\f{D}}=\EE\left(-\sum_{v\in x^{\omega\perp}} a_{x,v}(t)z^v\right)
\end{align*}
    and
\begin{align*}
    f_{x,(\pi,\varpi)_*(\wt{\f{D}}_k^{\circ})}=\EE\left(-\sum_{v\in x^{\omega\perp}} [a_{x,v}(t)+a^{\circ}_{x,v}(t)]z^v \right)
\end{align*}
where each $a_{x,v}(t)$ and $a^{\circ}_{x,v}(t)$ is a Laurent polynomial in $t$ with positive integer coefficients.  Since $((\pi,\varpi)_*(\wt{\f{D}}_k^{\circ}))_k$ is equivalent to $\f{D}_k^{\circ}$, and since the latter satisfies the desired parity condition by Proposition \ref{Bridge_scat}, we have that $\parit(a_{x,v}(t)+a^{\circ}_{x,v}(t))=\wt{\lambda}(\wt{v},\wt{v})+1$ for all $v\in L^+\setminus (k+1)L^+$, where $\wt{v}\in \pi^{-1}(v)$.  The positivity then implies that $\parit(a_{x,v}(t))=\lambda(p(\wt{v}),p(\wt{v}))+1$ for $v\in L^+\setminus (k+1)L^+$ as well.  Since $k$ was arbitrary, the claim follows.
\qed

\subsection{Pre-Lefschetz for the basic two-wall cases}

We now apply the constructions of \S \ref{ref_DT_sec} to prove the stronger pL version of Corollary \ref{2WallPos}.  The general case of the pL statement of Theorem \ref{PosScat} then follows by \S \ref{PosSect}.

Recall from (\ref{plPolys}) the definition of the basic polynomials $\pl_m(t)$ of pL type. 

\begin{prop}\label{2WallpL}
Consider the scattering diagram
\begin{align}
    \f{D}_{\In}=\{\left(v_1^{\omega\perp},\EE(-\!\pl_{m_1}(t)z^{v_1})\right), \left(v_2^{\omega\perp},\EE(-\!\pl_{m_2}(t)z^{v_2})\right)\}
\end{align}
with $v_1,v_2\in L_0^+$ and $m_1,m_2\in\mathbb{Z}$.   Then there is some $\f{D}$ representing the equivalence class of $\scat(\f{D}_{\In})$ such that all walls of $\f{D}$ are either one of the two incoming walls, or outgoing walls of the form $(\f{d}_{v},\EE(-\!\pl_m(t)z^v))$ for $\f{d}_v\subset v^{\omega\perp}$ with $v=\alpha v_1+\beta v_2$, $\alpha,\beta\geq 1$.
\end{prop}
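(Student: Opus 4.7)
The plan is to translate the proposition into the Donaldson--Thomas theory of acyclic quivers, where Theorem~\ref{acyclic_scat_thm} already delivers the much stronger Lefschetz conclusion. First I would apply the perturbation and change of monoid reductions of \S\ref{PosSect} to arrive at the stated two-wall setup with $v_1,v_2$ primitive. Next, I would normalize the exponents $m_i$ by applying the $\omega$-preserving algebra automorphism $z^{v_i}\mapsto t^{2c_i}z^{v_i}$, extended linearly by any even-valued linear form on $L_0$: by \eqref{bosvfer}, this sends $\EE(-\pl_{m_i}(t)z^{v_i})$ to $\EE(-\pl_{m_i+2c_i}(t)z^{v_i})$ (using the identity $\pl_m(t)\cdot t^{2c}=\pl_{m+2c}(t)$ for even $2c$), and preserves pL type of \emph{every} wall function of $\scat(\f{D}_{\In})$ since pL is closed under multiplication by even powers of $t$. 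Choosing $c_i$ appropriately reduces to the three base cases $(m_1,m_2)\in\{(0,0),\,(0,1),\,(1,1)\}$.

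For $(m_1,m_2)=(0,0)$ the two initial walls are $\EE(-z^{v_i})$ and the $2\times 2$ matrix $(\omega(v_i,v_j))$ is the signed adjacency matrix of an acyclic Kronecker-type quiver, so Theorem~\ref{acyclic_scat_thm} directly yields a representative of $\scat(\f{D}_{\In})$ whose scattering functions are all of Lefschetz type (hence pL). For the remaining two cases I would exploit the identification $\chi_t(\BPS^{\zeta}_{(1,1)}(\Kr{2}))=[2]_t=\pl_1(t)$ coming from Example~\ref{coh_K2}: the $\Kr{2}$ scattering of walls $\EE(-z^{u_{i,1}})$ and $\EE(-z^{u_{i,2}})$ with $\tilde{\omega}(u_{i,1},u_{i,2})=2$ produces a ``central'' wall at $u_{i,1}+u_{i,2}$ carrying exactly $\EE(-\pl_1(t)z^{u_{i,1}+u_{i,2}})$. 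I would therefore construct an enlarged lattice $\tilde{L}$ containing fresh generators $u_{i,1},u_{i,2}$ for each $i$ with $m_i=1$, with $u_{i,1}+u_{i,2}$ identified with $v_i$, and define $\tilde{\omega}$ so that (a) the Kronecker relation $\tilde{\omega}(u_{i,1},u_{i,2})=2$ holds, (b) the restriction to $N\coloneqq\mathbb{Z}\langle v_1,v_2\rangle$ recovers $\omega$, and (c) the signed adjacency matrix of the resulting 3- or 4-vertex quiver $\tilde{Q}$ is acyclic (one arranges all arrows to flow ``downhill'' from the $u_{i,j}$'s to the remaining vertices). Theorem~\ref{acyclic_scat_thm} applied to $\tilde{Q}$ then produces a representative of $\scat(\tilde{\f{D}}_{\In})$ every wall of which carries a Lefschetz-type function.

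The main obstacle will be correctly extracting $\scat(\f{D}_{\In})$ from $\scat(\tilde{\f{D}}_{\In})$. A naive application of Lemma~\ref{RestrictScat} to the inclusion $i\colon N\hookrightarrow\tilde{L}$ produces, in place of each full hyperplane $v_i$-incoming wall, only a positive $v_i$-\emph{ray} (because the $\Kr{2}$ central wall itself lives only in the positive $(u_{i,1}+u_{i,2})$-direction emanating from its joint), so the restricted scattering is \emph{not} equivalent to $\scat(\f{D}_{\In})$: a direct computation of path-ordered products around the origin in $N_{\mathbb{R}}$ shows the two differ in the interior outgoing walls by a compensating factor coming from the missing $-v_i$ half-line. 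Since by Corollary~\ref{2WallPos} the outgoing walls of $\scat(\f{D}_{\In})$ are supported in the open cone $\{\alpha v_1+\beta v_2\colon\alpha,\beta\geq 1\}$, the strategy will be to show that these interior outgoing wall functions may be obtained from the (Lefschetz) outgoing walls of the restricted scattering by conjugation by $\EE(\pm\pl_1(t)z^{v_2})$ (and $\EE(\pm\pl_1(t)z^{v_1})$ in case $(1,1)$). Lemma~\ref{pL_prop} exactly controls the adjoint action of $\EE(-\epsilon\pl_n(t)z^{v})$ and ensures that pL type is preserved under such conjugations, yielding the desired pL property for every outgoing wall of $\scat(\f{D}_{\In})$. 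The case $(1,1)$ proceeds in parallel with both walls simultaneously resolved via $\Kr{2}$-subquivers, and the acyclicity of the enlarged quiver is preserved.
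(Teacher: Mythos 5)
Your strategy correctly isolates the key arithmetic fact ($\chi_t(\BPS^\zeta_{(1,1)}(\Kr{2}))=[2]_t=\pl_1(t)$) and the reduction to $m_1,m_2\in\{0,1\}$ via even powers of $t$ mirrors the paper's proof exactly. The paper also encodes each odd-parity incoming factor via a $\Kr{2}$-subquiver, so the high-level idea coincides. Where the proposals diverge is in how each converts the Kronecker contribution into a genuine full-hyperplane \emph{incoming} wall carrying $\EE(-\pl_1(t)z^{v_i})$, and this is where your argument has a genuine gap.

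You correctly identify that the restriction $i^*\scat(\tilde{\f{D}}_{\In})$ along $N\hookrightarrow\tilde L$ replaces the desired incoming hyperplane $v_i^{\omega\perp}$ by a single outgoing \emph{ray} carrying $\EE(-\pl_1(t)z^{v_i})$, because the central $\Kr{2}$ wall is outgoing in $\tilde L_{\bb R}$. Hence $i^*\scat(\tilde{\f{D}}_{\In})$ and $\scat(\f{D}_{\In})$ are genuinely inequivalent (they have different incoming walls), and the uniqueness in Theorem~\ref{KSGS} forces their third-quadrant outgoing walls to differ. Your proposed fix — passing between the two via conjugation and citing Lemma~\ref{pL_prop} — does not close this gap. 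Lemma~\ref{pL_prop} controls $\Ad_{\EE(-\epsilon\pl_n(t)z^v)}(z^p)$, i.e.\ the adjoint action on a \emph{monomial} of the quantum torus, and is used to propagate pL type along broken lines. What you need here is a statement about the adjoint action on \emph{group elements} of $\hat G$: that after conjugating the outgoing factor of $i^*\scat(\tilde{\f{D}}_{\In})$ by $\EE(\pm\pl_1(t)z^{v_2})$, the result again factors as a product of terms $\EE(-\pl_m(t)z^v)$ supported on rays in $v^{\omega\perp}$, with each $\pl_m(t)$ (or at least each coefficient) of pL type. Such a refactorization into dilogarithm-on-ray terms with pL type coefficients \emph{is precisely the content of the proposition} and does not follow from Lemma~\ref{pL_prop}; the naive conjugation computation does not give you a ray-by-ray decomposition at all.

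The paper avoids this obstruction entirely by working with a single acyclic quiver $Q$ (the $\Kr{2}$-subquivers are already glued in via $n$ arrows) and by exploiting a pair of nearby stability conditions $\zeta(\pm)$ together with a slope interval $I=[-\delta,\delta]$. The key point is the equality $\Mst^{\zeta(+)}_I(Q)=\Mst^{\zeta(-)}_I(Q)$: applying the Harder--Narasimhan wall-crossing formula~\eqref{GrandForm} to the same open substack with the two stability conditions produces, on the $\zeta(-)$ side, exactly the product of two incoming factors $\EE(-[m_i+1]_t z^{v_i})$, and, on the $\zeta(+)$ side, the same two end factors together with the interior Lefschetz-type DT invariants of the acyclic quiver $Q$. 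Setting the two sides equal is exactly the two-wall consistency relation, sidestepping any attempt to realize the full hyperplane geometrically inside an auxiliary lattice. You would need an analogue of this interval/stability argument — not a conjugation invoking Lemma~\ref{pL_prop} — to make your version of the proof go through.
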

\begin{proof}
As in the proof of Proposition \ref{Bridge_scat}, after an appropriate substitution of $z^{v_1}$ and $z^{v_2}$ we may assume that $m_1,m_2\in \{0,1\}$.  Re-ordering the walls if necessary, we assume $\omega(v_1,v_2)=n\in\mathbb{Z}_{\geq 0}$.  We also assume that $n>0$, since otherwise $\f{D}_{\In}$ is already consistent.  We construct a quiver $Q$ via the following recipe.
\begin{enumerate}
    \item First, we define quivers $Q^{(i)}$, for $i=1,2$ by the rule:
    \begin{itemize}
        \item If $m_i=1$ then $Q^{(i)}$ is isomorphic to the Kronecker quiver $\Kr{2}$, with vertices $1^{(i)}$ and $2^{(i)}$ and 2 arrows from $2^{(i)}$ to $1^{(i)}$.
        \item If $m_i=0$ then $Q^{(i)}$ is isomorphic to the quiver $\LQ{0}$, with one vertex, denoted $1^{(i)}$, and no arrows.
    \end{itemize}
    \item We define $Q'$ to be the disjoint union of $Q^{(1)}$ and $Q^{(2)}$.
    \item We adjoin $n$ arrows from $1^{(2)}$ to $1^{(1)}$ to $Q'$, to form the quiver $Q$.
\end{enumerate}

\begin{figure}[ht]
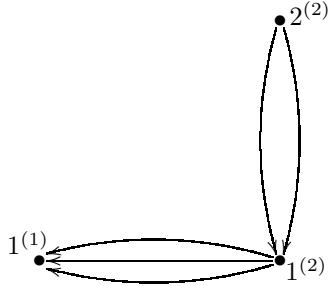

\[
\xy
{\ar (15,0); (-15,0)};
{\ar@/^.50pc/ (15,-0.5); (-15,-1)};
{\ar@/_.50pc/ (15,0.5); (-15,1)};
{\ar@/_.50pc/ (15.5,31); (15.5,1)};
{\ar@/^.50pc/ (16.5,31); (16.5,1)};
(-16,0)*{\bullet};
(16,0)*{\bullet};
(16,32)*{\bullet};
(19.5,-1)*{1^{(2)}};
(20,33)*{2^{(2)}};
(-17.5,2.5)*{1^{(1)}}
\endxy
\]
\caption[]{An example of the quiver $Q$ arising in the proof of Proposition \ref{2WallpL} for $m_1=0$, $m_2=1$ and $n=3$.}
\label{example_quiver}
\end{figure}

In Figure \ref{example_quiver} we have drawn the quiver $Q$ for the case $m_1=0$, $m_2=1$, $n=3$.  Note that regardless of the values of $m_1,m_2,n$, the quiver $Q$ is acyclic.  

Fix an integer $k\geq 0$, and denote by $\pi\colon \widehat{G}_Q\rightarrow G_{Q,k}$ the projection defined as in \S\ref{qTorLie}.  Next, fix numbers $0\leq \epsilon\ll \delta\ll 1\ll M\ll N$, which will depend on $k$.  We define the stability conditions $\zeta(\pm)$ for $Q$ as follows.
\begin{itemize}
    \item
    If $m_1=1$, we set $\zeta(\pm)(1^{(1)})=M\pm\epsilon$ and $\zeta(\pm)(2^{(1)})=-M$.
    \item
    If $m_1=0$ we set $\zeta(\pm)(1^{(1)})=\pm \epsilon$.
    \item 
    If $m_2=1$, we set $\zeta(\pm)(1^{(2)})=N$ and $\zeta(\pm)(2^{(2)})=-N$.
    \item
    If $m_2=0$ we set $\zeta(\pm)(1^{(2)})=0$.
\end{itemize}
Let $\zeta$ be one of $\zeta(\pm)$.  Let $\rho$ be a $\mathbb{C}Q$-module, with $\dim(\rho)\leq k$, satisfying the condition that each of the subquotients appearing in the Harder--Narasimhan filtration of $\rho$ with respect to the stability condition $\zeta$ has slope contained in $I\coloneqq [-\delta,\delta]$.  Then in particular, $\rho$ has slope contained in $[-\delta,\delta]$.  It follows that if $Q^{(2)}$ is isomorphic to the Kronecker quiver, the restriction of the dimension vector of $\rho$ to $Q^{(2)}$ must be $(a,a)$ for some $a\in \mathbb{Z}_{\geq 0}$. Then the contribution of $Q^{(2)}$ to the slope of $\rho$ is zero, whichever quiver $Q^{(2)}$ is, and so if $Q^{(1)}$ is isomorphic to the Kronecker quiver, the restriction of the dimension vector of $\rho$ to $Q^{(1)}$ must be of the form $(b,b)$ for some $b\in\mathbb{Z}_{\geq 0}$.  

Let $\rho$ be a $\mathbb{C}Q$-module with $\dim(\rho)\leq k$, and with the dimension vector of $\rho$ constant after restriction to $Q^{(1)}$, and also constant after restriction to $Q^{(2)}$. We claim that the following are equivalent
\begin{enumerate}
    \item All of the subquotients appearing in the Harder--Narasimhan filtration of $\rho$ with respect to $\zeta(-)$ have slope in $I$
    \item The restrictions $\rho^{(i)}$ of $\rho$ to $Q^{(i)}$ are $\zeta(+)$ semistable for $i=1,2$
    \item All of the subquotients appearing in the Harder--Narasimhan filtration of $\rho$ with respect to $\zeta(+)$ have slope in $I$.
\end{enumerate}
\begin{itemize}
    \item[$(1)\Leftrightarrow (2)$] The equivalence is trivial if one of $\rho^{(1)}$ or $\rho^{(2)}$ is trivial, so assume that they are both nontrivial.  So $\rho^{(2)}$ is a proper nontrivial submodule of $\rho$, and destabilizes it (for the stability condition $\zeta(-)$).  In particular, picking $\rho'\subset\rho^{(2)}$ to be the maximal submodule to attain the maximal slope of a submodule of $\rho^{(2)}$, $\rho'$ is the maximal destabilizing submodule of $\rho$ and it has slope in $I$ if and only if $\rho^{(2)}$ is semistable (with respect to both stability conditions).  Similarly, the maximal destabilizing quotient of $\rho$ with respect to $\zeta(-)$ is a quotient of $\rho^{(1)}$, and it has slope in $I$ if and only if $\rho^{(1)}$ is semistable with respect to both stability conditions.
    \item[$(3)\Rightarrow (2)$] If $\rho'\subset \rho^{(2)}$ was a destabilizing submodule with respect to $\zeta(+)$, it would also be a submodule of $\rho$ with $\zeta(+)$-slope greater than $\delta$, which is impossible by the condition on $\rho$.  Similarly, if $\rho'\subset \rho^{(1)}$ was destabilizing, then $\rho'\oplus \rho^{(2)}$, with its natural $\mathbb{C}Q$-module structure, would have slope greater than $\delta$.  So we deduce that both $\rho^{(1)}$ and $\rho^{(2)}$ are semistable.
    \item[$(2)\Rightarrow (3)$] Say that $\rho^{(1)}$ and $\rho^{(2)}$ are semistable.  Let $\rho'\subset \rho$ be the maximal destabilizing submodule with respect to the stability condition $\zeta(+)$, i.e. the first non-trivial term in the Harder--Narasimhan filtration of $\rho$.  If $Q^{(2)}$ is isomorphic to the Kronecker quiver, let $(a,b)$ be the dimension vector of $\rho'$ restricted to $Q^{(2)}$.  Since this restriction is a submodule of $\rho^{(2)}$ we must have $a\leq b$.  On the other hand, since $\rho'$ destabilizes $\rho$ we must have $a\geq b$.  We deduce that $a=b$, and so whichever quiver $Q^{(2)}$ is, the contribution of $Q^{(2)}$ to the slope of $\rho'$ is zero.  Now assume that $Q^{(1)}$ is isomorphic to the Kronecker quiver.  By the same argument, the dimension vector of $\rho'$, restricted to $Q^{(1)}$, must be constant.  We deduce that the slope of $\rho'$ lies within $I$.  Applying the same argument to $\rho/\rho'$, we see that the slope of all the subquotients in the Harder--Narasimhan filtration of $\rho$ belong to $I$.
\end{itemize}

From $(1)\Leftrightarrow (3)$ we deduce that
\begin{equation}
\label{zeta_ind}
\Mst^{\zeta(+)}_I(Q)=\Mst^{\zeta(-)}_I(Q).
\end{equation}

Consider the case in which $Q^{(1)}$ is the single vertex quiver, and write $e\in K$ for the generator corresponding to the vertex $1^{(1)}$.  We have
\[
\chi_{\Wt,K}\left(\bigoplus_{v\in \mathbb{Z}_{\geq 0}\cdot e}\HO_c(\Mst^{\zeta(\pm)\sstable}_{v}(Q))_{\vir}^*\right)=\EE(x^{e})
\]
by (\ref{ref_ferm}).

Now consider the case in which $Q^{(1)}$ is isomorphic to the Kronecker quiver.  Write $e$ for the sum of the generators in $K$ corresponding to the vertices $1^{(1)}$ and $2^{(1)}$.  Then by (\ref{ref_Kron}) we have
\[
\chi_{\Wt,K}\left(\bigoplus_{v\in \mathbb{Z}_{\geq 0}\cdot e}\HO_c(\Mst^{\zeta(\pm)\sstable}_{v}(Q))_{\vir}^*\right)=\EE(x^{e}[2]_t).
\]
The same results hold for $Q^{(2)}$; we let $f$ denote the element in $K$ defined analogously to $e$ but for $Q^{(2)}$.  Applying (\ref{GrandForm}) with $\zeta=\zeta(-)$ we deduce that
\begin{equation}
\label{half_one}    
\pi\left(\chi_{\Wt,K}\left(\bigoplus_{v\in K}\HO_c(\Mst^{\zeta(-)}_{I,v}(Q),\mathbb{Q})^*_{\vir}\right)\right)=\pi\left(\EE([m_2+1]_tx^f)\EE([m_1+1]_tx^e)\right).
\end{equation}
Now for the stability condition $\zeta(+)$ there may be more semistable modules with slope contained in $I$, but they must have dimension vector of the form $\alpha e+\beta f$ for $\alpha,\beta\in \mathbb{Z}_{>0}$.  Applying (\ref{GrandForm}) again, we deduce that 
\begin{align}
\label{half_two}
&\pi\left(\chi_{\Wt,K}\left(\bigoplus_{v\in K}\HO_c(\Mst^{\zeta(+)}_{I,v}(Q),\mathbb{Q})^*_{\vir}\right)\right)=\\&\pi\left(\EE([m_1+1]_tx^e)\prod_{\substack{0\xrightarrow{\beta/\alpha}\infty\\\alpha\neq 0,\beta\neq 0}}\EE(\chi_{\Wt}(\BPS^{\zeta(+)}_{\alpha e+\beta f}(Q))x^{\alpha e+\beta f}) \EE([m_2+1]_tx^f)\right).\nonumber
\end{align}
Combining (\ref{zeta_ind}), (\ref{half_one}), and (\ref{half_two}), we deduce that 
\begin{align}
\label{two_halves}
&\pi\left(\EE([m_2+1]_tx^f)\EE([m_1+1]_tx^e)\right)=\\&\pi\left(\EE([m_1+1]_tx^e)\prod_{\substack{0\xrightarrow{\beta/\alpha}\infty\\\alpha\neq 0,\beta\neq 0}}\EE(\chi_{\Wt}(\BPS^{\zeta(+)}_{\alpha e+\beta f}(Q))x^{\alpha e+\beta f})\nonumber \EE([m_2+1]_tx^f)\right).
\end{align}
Note that $\BPS_{\alpha e+\beta f}^{\zeta(+)}$ is independent of the large $M,N$ we pick in the above argument.  We denote this mixed Hodge structure instead by $\BPS_{\alpha e+\beta f}^{\lim}$.  In particular, it does not depend on $k$.  Letting $k$ tend to infinity, in the limit (\ref{two_halves}) becomes
\begin{align}
\label{two_halves_two}
\EE([m_2+1]_tx^f)\EE([m_1+1]_tx^e)=\EE([m_1+1]_tx^e)\prod_{\substack{0\xrightarrow{\beta/\alpha}\infty\\\alpha\neq 0,\beta\neq 0}}\EE(\chi_{\Wt}(\BPS^{\lim}_{\alpha e+\beta f}(Q))x^{\alpha e+\beta f}) \EE([m_2+1]_tx^f).
\end{align}

By Corollary \ref{MRcor}, each term $\EE(\chi_{\Wt}(\BPS^{\lim}_{\alpha e+\beta f}(Q))x^{\alpha e+\beta f})$ is of Lefschetz type, and so is of pL type.  Note that $x^ex^f=t^nx^{e+f}$.  Let $\hat{\mathfrak{g}}^{\circ}\subset \hat{\mathfrak{g}}_{Q}$ be the sub Lie algebra obtained by taking the closure of the span of the elements $\hat{x}^{g}$ for $g=\alpha e+ \beta f$, with $\alpha,\beta\in\mathbb{Z}_{\geq 0}$ and $\alpha+\beta>0$.  Denote by $L'$ the lattice spanned by $v_1$ and $v_2$.  There is an isomorphism of Lie algebras $\hat{\mathfrak{g}}^{\circ}\rightarrow \mathfrak{g}_{L'^+,\omega}$ sending $\hat{x}^{\alpha e+\beta f}$ to $\hat{z}^{\alpha v_1+ \beta v_2}$.  Exponentiating, we obtain an isomorphism of pro-unipotent algebraic groups
\[
\Phi\colon \hat{G}^{\circ}\xrightarrow{\cong}\hat{G}_{L'}.
\]

Finally, we use the identity \eqref{two_halves_two} to construct $\f{D}$.  Let $V=v_1^{\omega\perp}\cap v_2^{\omega\perp}$.  From $\omega(v_1,v_2)\neq 0$ and $v_{i}\in v_{i}^{\omega\perp}$ it follows that $V$ has codimension 2.  We define $\f{D}$ to be the scattering diagram containing $\f{D}_{\In}$ as well as a wall
\begin{align}\label{Dout}
(V-\bb{R}_{\geq 0}\cdot (\alpha v_1+\beta v_2),\EE(-\chi_{\Wt}(\BPS^{\lim}_{\alpha e+\beta f}(Q))z^{\alpha v_1+\beta v_2}))
\end{align}
for each $\alpha,\beta\in\mathbb{Z}_{>0}$.  Then $V$ is the only joint in $\f{D}$, and consistency around this joint amounts to the equality
\begin{align}\label{PL_eq}
\EE(-[m_1+1]_tz^{v_{1}})&\EE(-[m_2+1]_tz^{v_{2}})\\\nonumber=&\EE(-[m_2+1]_tz^{v_2})\prod_{\substack{\infty \xrightarrow{\beta/\alpha}0\\\alpha\neq 0,\beta\neq 0}}\EE(-\chi_{\Wt}(\BPS^{\lim}_{\alpha e+\beta f}(Q))z^{\alpha v_1+\beta v_2}) \EE(-[m_1+1]_tz^{v_1}),
\end{align}
cf. Figure \ref{PL_fig} where this corresponds to the equality of $\theta_{\gamma_1,\f{D}}$ and $\theta_{\gamma_2,\f{D}}$.  Finally, \eqref{PL_eq} follows from applying $\Phi$ to \eqref{two_halves_two} and inverting each side. 
\begin{figure}[ht]
\begin{tikzpicture}
\draw[->](-3,0) --(3.1,0);
\draw[->](0,3) --(0,-3.1);
\draw [black, fill=lightgray] (0,0) -- (2.9,0) -- (2.9,-2.9) -- (0,-2.9)--(0,0);
\node at (0.1,3.25){$\EE(-t^{m_1}z^{v_1})$};
\node at (-3.2,-0.25){$\EE(-t^{m_2}z^{v_2})$};
\node at (1.8,2.195){$\cdot$};
\node at (2.195,1.8){$\cdot$};
\draw [dashed,->] (1.8,2.2)  to [out=-180, in=90]  (-2.2,-1.8);
\draw [dashed,->] (2.2,1.8)  to [out=-90, in=0]  (-1.8,-2.2);
\node at (1,1.8){$\gamma_1$};
\node at (1.8,1.0){$\gamma_2$};
\end{tikzpicture}
\caption[]{Sketch of $\f{D}$, with outgoing walls as in \eqref{Dout} living in the grey region.}
\label{PL_fig}
\end{figure}
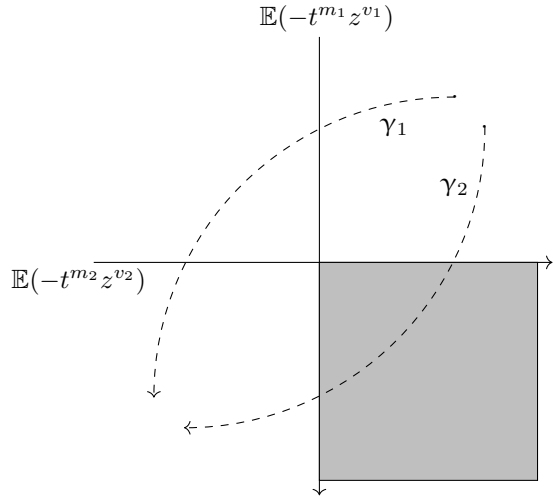 
\end{proof}

The substitution we used in the above proof in order to assume that $m_1,m_2\in \{0,1\}$ preserves the pL property but not the Lefschetz property, explaining why this argument does not yield preservation of Lefschetz type.  To prove Conjecture \ref{Lefschetz_conjecture}, it would be sufficient to have a version of Proposition \ref{2WallpL} stating that if the two incoming walls carry elements of the form $\EE(-[r_1]_tz^{v_1})$ and $\EE(-[r_2]_tz^{v_2})$ for $r_1,r_2\in \mathbb{Z}_{>0}$, then all outgoing walls can be chosen to carry elements of the form $\EE(-[r]_tz^{\alpha v_1+\beta v_2})$ for $r,\alpha,\beta\in\mathbb{Z}_{>0}$.  The reason we cannot easily adapt the proof of Proposition \ref{2WallpL} to prove this statement is that for $r$-Kronecker quivers with $r>2$ there are non-primitive DT invariants, i.e. we can have $\Omega_{\Kr{r},(n,n)}^{(1,-1)}\neq 0$ for $n\geq 2$ (see Examples \ref{coh_K2} and \ref{coh_Kplus}).
\appendix

\section{Mutation invariance}

Here we prove the mutation invariance properties of the scattering diagrams, broken lines, and theta functions.  The arguments are based on those of \cite{GHKK} in the classical setting.

\subsection{Mutation invariance of the scattering diagram}\label{MutInvSect}

Let $(S,\Lambda)$ be a compatible pair.  Recall the map $B_{1}\colon N\rightarrow M$ sending $e\mapsto B_S(e,\cdot)$.  Throughout the appendix we denote $v_k\coloneqq B_{1}(e_{k})$ for $k\in I$.  For $k\in I\setminus F$, recall the piecewise-linear maps $T_k\colon M_{\bb{R}}\rar M_{\bb{R}}$ from \eqref{Tj}, i.e.,
\begin{align*}
    T_k(m)\coloneqq 
    \begin{cases} m+\langle e_k,m\rangle v_k & \mbox{if $\langle e_k,m\rangle \geq 0$} \\ m & \mbox{otherwise.}
    \end{cases}
\end{align*}
Let
\begin{align*}
    \s{H}_{k,+}\coloneqq \{m\in M_{\bb{R}}\colon\langle e_k,m\rangle \geq 0\} \quad \mbox{and} \quad  \s{H}_{k,-}\coloneqq \{m\in M_{\bb{R}}\colon\langle e_k,m\rangle \leq 0\},
\end{align*}
i.e., $\s{H}_{k,\pm}$ are the domains of linearity for $T_k$.  
Let $T_{k,+}$ and $T_{k,-}$ denote the integral linear functions on $M_{\bb{R}}$ obtained by extending $T_k|_{\s{H}_{k,+}}$ and $T_k|_{\s{H}_{k,-}}$, respectively.  In particular, $T_{k,-}=\Id_{M_{\mathbb{R}}}$.

Note that each of $T_{k,\pm}$ induces a $\kk_t$-module automorphism of $\Pi(M)\coloneqq \prod_{m\in M} \kk(t^{1/D})$  
by taking the $m$-factor to the $T_{k,\pm}(m)$-factor.  Each $M$-graded algebra, Lie algebra, or pro-unipotent algebraic group we consider naturally embeds as an $M$-graded $\kk_t$-module, $\kk[t^{\pm 1}]$-module, or set, respectively, into $\prod_{m\in M} \kk(t^{1/D})$ via $\sum_{m\in M} a_mz^m \mapsto (a_m)_m$,  i.e., the coefficient of $z^m$ gives the $m$-factor of $\prod_{m\in M} \kk(t^{1/D})$.  We can then define $T_{k,\pm}(\sum_{m\in M} a_mz^m)\coloneqq \sum_{m\in M} a_mz^{T_{k,\pm}(m)}$, where the right-hand side is viewed as the element $(a_m)_{T_{k,\pm}(m)}\in \prod_{m\in M} \kk(t^{1/D})$.  In particular, since $T_{k,\pm}$ both respect $\Lambda$, they induce $\kk_t$-algebra isomorphisms $T_{k,\pm}\colon \kk_t^{\Lambda}[M]\rar \kk_t^{\Lambda}[M]$.

Let $\f{d}_k$ denote the wall $(e_k^{\perp},\Psi_t(z^{v_k}))$ in $M_{\bb{R}}$.   
 Let $\f{D}_S$ denote the consistent scattering diagram $\f{D}^{\s{A}_{q}}$ defined in \S\ref{Initial} for the compatible pair $(S,\Lambda)$, and similarly, let $\f{D}_{\mu_k(S)}$ denote the analogous scattering diagram for the compatible pair $(\mu_k(S),\Lambda)$.  Note that while both scattering diagrams are in $M_{\bb{R}}$, $\f{D}_S$ is defined over the Lie algebra $\f{g}_{M_{S,0}^+,\Lambda}$ for $M_{S,0}^+\coloneqq (B_1(N_{\uf})\cap \sigma)\setminus\{0\}$, where $\sigma\subset M_{\bb{R}}$ is the cone spanned by $\{v_i=B_1(e_i)\}_{i\in I\setminus F}$, while $\f{D}_{\mu_k(S)}$ is defined over $\f{g}_{M_{\mu_k(S),0}^+,\Lambda}$ for $M_{\mu_k(S),0}^{+}\coloneqq (B_1(N_{\uf})\cap \mu_k(\sigma))\setminus \{0\}$, 
 where $\mu_k(\sigma)\subset M_{\bb{R}}$ is the cone spanned by $\{B_1(\mu_k(e_i))\}_{i\in I\setminus F}$ for $\mu_k$ as in \eqref{eprime}.  The following is the quantum analog of \cite[Thm. 1.24]{GHKK}.

\begin{prop}\label{mut-inv}
Up to equivalence, the walls of $\f{D}_{\mu_k(S)}$ can be given as follows.  For each $\f{d}\in \f{D}_{S}\setminus \{\f{d}_k\}$, we have one or two walls, given as
\begin{align*}
    (T_k(\f{d}\cap \s{H}_{k,-}),T_{k,-}(f_{\f{d}})), \hspace{.25 in} (T_k(\f{d}\cap \s{H}_{k,+}),T_{k,+}(f_{\f{d}})),
\end{align*}
throwing out one of these if the support is of codimension at least $2$.  The only other wall is 
\begin{align*}
    \f{d}'_k\coloneqq (e_k^{\perp},\Psi_t(z^{-v_k})).
\end{align*}
\end{prop}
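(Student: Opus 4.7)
My plan is to define a candidate scattering diagram $\f{D}'$ in $M_{\bb{R}}$ by the explicit recipe in the statement and then appeal to the uniqueness clause of Theorem \ref{KSGS} to conclude $\f{D}' \sim \f{D}_{\mu_k(S)}$. Concretely, $\f{D}'$ consists of the wall $\f{d}_k'=(e_k^{\perp},\Psi_t(z^{-v_k}))$ together with, for every $(\f{d},f_{\f{d}})\in \f{D}_S\setminus\{\f{d}_k\}$, the pair of walls $(T_k(\f{d}\cap \s{H}_{k,\pm}), T_{k,\pm}(f_{\f{d}}))$, discarding whichever of these two has codimension-$\geq 2$ support. Since $T_{k,\pm}$ are linear bijections of $M_{\bb{R}}$ preserving $\Lambda$, these pairs really are walls over $\f{g}_{M_{\mu_k(S),0}^+,\Lambda}$ (noting that $T_k$ sends $\sigma_{S,\s{A}}$ onto $\sigma_{\mu_k(S),\s{A}}$), and the counting of directions shows that everything outside the incoming part is genuinely outgoing, as required for the uniqueness step.

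I would next match the incoming walls. The wall coming from index $k$ in $\f{D}_S$, namely $\f{d}_k$, is discarded and replaced by $\f{d}_k'$; using \eqref{eprime} we have $e_{\mu_k(S),k}=-e_{S,k}$, so $B_1(e_{\mu_k(S),k})=-v_k$ and $e_{\mu_k(S),k}^{\perp}=e_k^{\perp}$, whence $\f{d}_k'$ is precisely the incoming wall of $\f{D}_{\mu_k(S)}$ for index $k$. For $i\in(I\setminus F)\setminus\{k\}$, the formula $e_{\mu_k(S),i}=e_{S,i}+\max(0,B(e_i,e_k))e_k$ together with \eqref{Tkei} shows that the two transformed pieces $T_{k,\pm}(e_{S,i}^{\perp}\cap \s{H}_{k,\pm})$ fit together to form $e_{\mu_k(S),i}^{\perp}$, with scattering function $\Psi_t(z^{B_1(e_{\mu_k(S),i})})$ on each piece. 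So up to equivalence (using Example \ref{EquivEx}(2) to glue back together walls that meet in codimension $\geq 2$) the set of incoming walls of $\f{D}'$ is exactly $\f{D}_{\In}^{\s{A}_q}$ for the seed $\mu_k(S)$.

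The heart of the argument is consistency of $\f{D}'$. I would check $\theta_{\gamma,\f{D}'}=\id$ on a small loop $\gamma$ around each codimension-two joint $J$ of $\f{D}'$. There are two cases. If $J\not\subset e_k^{\perp}$ then $J$ lies in the interior of one half-space $\s{H}_{k,\pm}$, and $\tilde J\coloneqq T_{k,\pm}^{-1}(J)$ is a joint of $\f{D}_S$; consistency of $\f{D}_S$ around $\tilde J$, together with Lemma \ref{TjAdLem} applied to the linear automorphism $\phi=T_{k,\pm}$ of $M$ (which preserves $\Lambda$, hence induces a $\kt$-algebra automorphism of $\mr{\s{A}}_q^S$ intertwining all relevant $\Ad_{\Psi_t(z^{v})}$ with $\Ad_{\Psi_t(z^{T_{k,\pm}(v)})}$), yields consistency of $\f{D}'$ around $J$. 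If instead $J\subset e_k^{\perp}$, then the walls of $\f{D}'$ meeting $J$ are $\f{d}_k'$ together with pairs $T_{k,\pm}(\f{d}\cap\s{H}_{k,\pm})$ coming from walls $\f{d}$ of $\f{D}_S$ that meet the corresponding joint $\tilde J$ of $\f{D}_S$ containing a piece of $\f{d}_k$. Consistency of $\f{D}_S$ around $\tilde J$ gives an identity in $\hat G$ involving $\Psi_t(z^{v_k})^{\pm 1}$ and the $f_{\f{d}}^{\pm 1}$. I would convert this into the consistency identity for $\f{D}'$ around $J$ by using the key algebraic identity
\[
\Ad_{\Psi_t(z^{v_k})}(z^m)\;=\;z^{T_{k,+}(m)}\cdot(\text{binomial correction})
\]
extracted from \eqref{tAdBinomial} and the defining formula for $T_k$, together with the inverse identity for $\Ad_{\Psi_t(z^{-v_k})}^{-1}$; these encode the slogan that $T_k$ is the tropicalization of conjugation by $\Psi_t(z^{v_k})$, and they are precisely what is needed to absorb the discrepancy between conjugating by $\Psi_t(z^{v_k})$ (in $\f{D}_S$) and conjugating by $\Psi_t(z^{-v_k})^{-1}$ (in $\f{D}'$) on each side of $e_k^{\perp}$.

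Once both items are in place, the uniqueness part of Theorem \ref{KSGS} forces $\f{D}'\sim \f{D}_{\mu_k(S)}$, which is the claim. The main obstacle is the joint computation along $e_k^{\perp}$: while combinatorially it reduces to a single rewriting that is automatic in the classical limit (because there $\Psi_t(z^{v_k})$ degenerates to $1+z^{v_k}$ and $T_k$ is literally the tropicalization of the cluster mutation), the quantum version must be carried out at the level of $\hat G$, and the genuine content is the compatibility of the conjugation action of $\Psi_t(z^{v_k})$ on all monomials $z^m$ with the piecewise-linear break of $T_k$ along $e_k^{\perp}$. Assuming this identity, the remaining bookkeeping is formal.
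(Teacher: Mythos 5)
Your overall strategy is exactly the one the paper uses---produce a candidate $\f{D}'$ by transporting $\f{D}_S$ through $T_k$, check that its incoming walls agree with those of $\f{D}_{\mu_k(S)}$, check consistency, and invoke the uniqueness in Theorem~\ref{KSGS}. The consistency analysis is also organized the same way (only joints on $e_k^\perp$ are nontrivial, and there the content is a single algebraic identity encoding the fact that $T_k$ tropicalizes $\Ad_{\Psi_t(z^{v_k})}$). But there is one genuine gap and one imprecision that matter.

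The gap is at the very first step, where you claim that the pairs $\bigl(T_k(\f{d}\cap\s{H}_{k,\pm}),\,T_{k,\pm}(f_{\f{d}})\bigr)$ ``really are walls over $\f{g}_{M_{\mu_k(S),0}^+,\Lambda}$'' because $T_{k,\pm}$ preserve $\Lambda$ and $T_k$ sends $\sigma_{S,\s{A}}$ onto $\sigma_{\mu_k(S),\s{A}}$. The second assertion is false: $T_k(v_k)=v_k$ (since $\langle e_k,v_k\rangle=B(e_k,e_k)=0$), while the $k$-th generator of $\sigma_{\mu_k(S),\s{A}}$ is $B_1(\mu_k(e_k))=-v_k$, so $T_k(\sigma_{S,\s{A}})\neq\sigma_{\mu_k(S),\s{A}}$. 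Thus it is not at all formal that $T_{k,\pm}(f_{\f{d}})$ lands in $\hat G_{\mu_k(S)}$; one needs to know that the directions $T_{k,\pm}(v_{\f{d}})$ of the \emph{non-initial} walls of $\f{D}_S$ actually lie in the cone $\sigma_{\mu_k(S),\s{A}}$, and this is precisely the content of \cite[Thm.~1.28]{GHKK} in the classical setting. The paper sidesteps re-proving that theorem quantumly by first establishing positivity of scattering functions (Theorem~\ref{PosScat}), so that no terms vanish under $\lim_{t\to 1}$ and the quantum wall structure is literally inherited from the classical one; this is the content of Lemma~\ref{Tkscat}, and your proposal cannot do without some substitute for it. A secondary imprecision: the two pieces $T_{k,\pm}(e_i^\perp\cap\s{H}_{k,\pm})$ do not ``fit together to form $e_{\mu_k(S),i}^\perp$'' when $B(e_i,e_k)\neq 0$---only the incoming half is transported onto a half of $e_{\mu_k(S),i}^\perp$ with the right function; the other half becomes an \emph{outgoing} wall with a different support and function, and that is all you need (the paper's Step~1 says exactly this). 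Finally, your ``key algebraic identity'' $\Ad_{\Psi_t(z^{v_k})}(z^m)=z^{T_{k,+}(m)}\cdot(\text{binomial correction})$ is not quite the right statement (the leading $z$-exponent of $\Ad_{\Psi_t(z^{v_k})}(z^m)$ for $m\in\s{H}_{k,-}$ is $m$, not $T_{k,+}(m)=m-\Lambda(v_k,m)v_k$); what one actually needs and what the paper verifies is
\[
T_{k,+}^{-1}\circ\Ad_{\Psi_t(z^{-v_k})^{-1}}(z^m)=\Ad_{\Psi_t(z^{v_k})}(z^m)\qquad\text{for }m\in\s{H}_{k,-},
\]
proved by a direct manipulation of \eqref{AdPsi}. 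With Lemma~\ref{Tkscat} supplied and this identity substituted for your slogan, the rest of your bookkeeping goes through as you describe.
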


Here, if $f_{\f{d}}=1+\sum_{r=1}^{\infty} a_rz^{rv_{\f{d}}}\in \hat{G}_S\coloneqq \exp (\hat{\f{g}}_{M_{S,0}^+,\Lambda})\subset \prod_{m\in M} \kk(t^{1/D})$, then by definition, $T_{k,\pm}(f_{\f{d}})=1+\sum_{r=1}^{\infty} a_rz^{T_{k,\pm}(rv_{\f{d}})}\in \prod_{m\in M} \kk(t^{1/D})$.  The fact that this latter sum is actually contained in $\hat{G}_{\mu_k(S)}\coloneqq \exp (\hat{\f{g}}_{M_{\mu_k(S),0}^+,\Lambda})$ is not at all obvious.  This technical issue is dealt with in \cite{GHKK} using their Theorem 1.28.  Fortunately, we can circumvent the need to generalize this theorem to the quantum setup, instead using positivity to recover the following directly from its classical analog:
\begin{lem}\label{Tkscat}
The walls described in Proposition \ref{mut-inv} are well-defined and form a scattering diagram $T_k(\f{D}_S)$ in $M_{\bb{R}}$ over $\f{g}_{M_{\mu_k(S),0}^+,\Lambda}$.
\end{lem}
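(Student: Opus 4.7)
The plan is to leverage the positivity of $\f{D}_S$ together with its classical limit in order to invoke the classical analog of the lemma rather than redo the underlying Lie-algebraic bookkeeping (i.e., the analog of \cite[Thm.\ 1.28]{GHKK}) in the quantum setting. The only real issue is to check that for each wall $(\f{d},f_{\f{d}})\in \f{D}_S\setminus\{\f{d}_k\}$, the proposed new walls have attached functions that actually lie in $\hat G_{\mu_k(S)}\coloneqq \exp(\hat\f{g}_{M_{\mu_k(S),0}^+,\Lambda})$; once this is known, finiteness of $T_k(\f{D}_S)$ at each order is automatic from the fact that $T_{k,\pm}$ are integer linear bijections of $M$.

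First I would normalize the representative of the equivalence class of $\f{D}_S$. By Lemma~\ref{cluster-scat-pos} (i.e., Theorem~\ref{PosScat} applied to $\f{D}_S$), we may assume every wall of $\f{D}_S$ has the form
\[
(\f{d},\,\EE(-p_{\f{d}}(t)z^{v_{\f{d}}})), \qquad v_{\f{d}}\in M_{S,0}^+,\ p_{\f{d}}(t)\in \Z_{\geq 0}[t^{\pm 1}].
\]
Since $T_{k,\pm}$ is a $\kk_t$-algebra automorphism of $\kk_t^{\Lambda}[M]$ taking $z^m$ to $z^{T_{k,\pm}(m)}$, and since $\EE$ is defined functorially in terms of the underlying graded $\kk_t$-module, we immediately get
\[
T_{k,\pm}\bigl(\EE(-p_{\f{d}}(t)z^{v_{\f{d}}})\bigr)=\EE\bigl(-p_{\f{d}}(t)z^{T_{k,\pm}(v_{\f{d}})}\bigr).
\]
Thus each proposed new wall of $T_k(\f{D}_S)$ is again of the form $(\f{d}',\EE(-p(t)z^w))$ with $w=T_{k,\pm}(v_{\f{d}})$; membership in $\hat G_{\mu_k(S)}$ will follow provided $w\in M_{\mu_k(S),0}^+$.

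The core of the argument is then to reduce the membership statement $T_{k,\pm}(v_{\f{d}})\in M_{\mu_k(S),0}^+$ to its classical counterpart. By positivity of $p_{\f{d}}(t)$, we have $p_{\f{d}}(1)>0$, so applying $\lim_{t\to1}$ to $\f{D}_S$ gives the classical scattering diagram of \cite{GHKK} associated to $(S,\Lambda)$, and the set of directions $\{v_{\f{d}}\}$ of its walls coincides with the corresponding set for $\f{D}_S$ (no cancellation occurs under $\lim_{t\to 1}$). The classical version of Proposition~\ref{mut-inv}, i.e.\ \cite[Thm.~1.24]{GHKK}, then asserts that for every wall direction $v$ of $\lim_{t\to 1}\f{D}_S$ other than $v_k$ itself, both $T_{k,+}(v)$ and $T_{k,-}(v)$ (whenever the corresponding half-cone is nontrivial) lie in $M_{\mu_k(S),0}^+$. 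Via the identification of directions just noted, the same conclusion holds for every wall of $\f{D}_S\setminus\{\f{d}_k\}$.

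Finally I would verify the finiteness condition. Under our chosen representative, for each $N$ there are only finitely many $v_{\f{d}}\in M_{S,0}^+$ of index $\leq N$ appearing as directions of walls in $\f{D}_S$. Since $T_{k,\pm}$ are linear bijections sending the generators of $M_{S,0}^+$ (other than $v_k$) to generators of $M_{\mu_k(S),0}^+$ by \eqref{Tkei}, there is a uniform bound on how much the index in the target monoid can differ from the index in the source, so the corresponding finiteness property passes to $T_k(\f{D}_S)$. The extra wall $\f{d}'_k=(e_k^{\perp},\Psi_t(z^{-v_k}))$ is of course well-defined since $-v_k=B_1(e_{\mu_k(S),k})\in M_{\mu_k(S),0}^+$. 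The main obstacle is entirely concentrated in the second step: one must be confident that positivity really does preserve the direction set under classical specialization, so that \cite[Thm.~1.24]{GHKK} suffices to handle the Lie-algebra membership question without any additional quantum-specific argument.
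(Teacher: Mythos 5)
Your proposal is correct and follows essentially the same approach as the paper's proof: use Theorem~\ref{PosScat} to normalize the representative of $\f{D}_S$, observe that positivity means the walls survive (with the same $z$-exponents) under $\lim_{t\to 1}$, and then import the conclusion of the classical mutation-invariance result \cite[Thm.~1.24]{GHKK} to get membership of the transported scattering functions in $\hat{G}_{\mu_k(S)}$. The paper's version is just terser, leaving the observation that $T_{k,\pm}$ commutes with $\EE$ and the finiteness check implicit; both of those points are filled in correctly in your write-up.
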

\begin{proof}
By Theorem \ref{PosScat}, all walls of $\f{D}_S$ specialize to non-trivial walls of the corresponding classical scattering diagram with no cancellation being possible.  Furthermore, this classical specialization clearly preserves the $z$-exponents of all terms of the scattering functions. Thus, the fact that the classical specialization of $T_k(\f{D}_S)$ is a scattering diagram over $\lim_{t\rar 1} (\f{g}_{M_{\mu_k(S),0}^+,\Lambda})$ \cite[Thm 1.24]{GHKK} implies the desired statement for the quantum scattering diagram.
\end{proof}

\begin{myproof}[Proof of Proposition \ref{mut-inv}]
We know from Lemma \ref{Tkscat} that $T_k(\f{D}_S)$ is a scattering diagram over $\f{g}_{M_{\mu_k(S),0}^+,\Lambda}$.  We will now show that the incoming walls of $T_k(\f{D}_S)$ are the same as the incoming walls of $\f{D}_{\mu_k(S)}$, and we will then show that $T_k(\f{D}_S)$ is consistent.  Proposition \ref{mut-inv} then follows from the uniqueness statement of Theorem \ref{KSGS}.

\textit{Step 1, incoming walls coincide:}  Using the equivalence of Example \ref{EquivEx}(2), let us split up all walls $(\f{d},f_{\f{d}})$ under consideration into their intersections $(\f{d}\cap \s{H}_{k,+},f_{\f{d}})$ and $(\f{d}\cap \s{H}_{k,-},f_{\f{d}})$, throwing away any empty intersections (and keeping only one copy of $\f{d}_k=\f{d}_k\cap \s{H}_{k,\pm}$).  Now since $T_{k,\pm}$ are linear and injective, it is clear that a wall of the form $(\f{d}\cap \s{H}_{k,\pm},f_{\f{d}})$ contains its direction $-v_{\f{d}}$ if and only if $T_k(\f{d}\cap \s{H}_{k,\pm})$ contains its direction $T_{k,\pm}(-v_{\f{d}})$.  That is, $T_k$ maps incoming walls to incoming walls and maps outgoing walls to outgoing walls.

Next we check that the functions on the incoming walls of $T_k(\f{D}_S)$ match those on the incoming walls of $\f{D}_{\mu_k(S)}$.  Consider an incoming wall $(\f{d}_i\cap \s{H}_{k,\pm},f_{\f{d}_i})$ of the modified $\f{D}_{S,\In}$, where $\f{d}_i=e_i^{\perp}$ and $f_{\f{d}_i}=\Psi_t( z^{v_i})$.  The corresponding wall in $T_k(\f{D}_{S,\In})$ is $(T_k(\f{d}_i\cap \s{H}_{k,\pm}),\Psi_t(z^{T_k(v_i)}))$ if $i\neq k$, or $(\f{d}_k,\Psi_t(z^{-v_i}))$ if $i=k$.  In any case, \eqref{Tkei} implies that this is the same as $(T_k(\f{d}_i\cap \s{H}_{k,\pm}),\Psi_t( z^{B_1(\mu_k(e_i))}))$, which is the corresponding initial wall of $\f{D}_{\mu_k(S),\In}$.  We have thus shown that the incoming walls of $T_k(\f{D}_S)$ and $\f{D}_{\mu_k(S)}$ coincide.

\textit{Step 2, $T_k(\f{D}_S)$ is consistent:} Here we follow the strategy of \cite[Proof of Thm. 1.24, Step II]{GHKK}.  
It suffices to check consistency of $T_k(\f{D}_S)$ around joints contained in $e_k^{\perp}$ because this is the only locus where $T_k$ fails to be linear.  For non-initial walls $(\f{d},f_{\f{d}})\in \f{D}_{S}\setminus \f{D}_{S,\In}$, the vector $v_{\f{d}}\in \sigma$ is not contained in an extremal ray of $\sigma$, i.e., is not equal to a multiple of any $e_i$.  So $\f{d}=\f{d}_k$ is the only wall with $v_{\f{d}}^{\Lambda\perp}=e_k^{\perp}$.  I.e., $\f{D}_S$ has no walls contained in $e_k^{\perp}$ other than $\f{d}_k$.

Now, let $\f{j}$ be one such joint in $e_k^{\perp}$. Let $\gamma$ denote a small closed path around $\f{j}$ based in $\s{H}_{k,-}$.  To any finite order, we can consider a path $\gamma=\gamma_4\circ \gamma_3\circ \gamma_2\circ \gamma_1$ such that:
\begin{itemize}
    \item $\gamma_1$ is a path starting in $\s{H}_{k,-}$ and immediately crossing $\f{d}_k$, crossing no other walls,
    \item $\gamma_2$ crosses every wall containing $\f{j}$ with interior in $\s{H}_{k,+}$, but crosses no other walls,
    \item $\gamma_3$ is a path starting in $\s{H}_{k,+}$ and immediately crossing $\f{d}_k$, crossing no other walls, and    
    \item $\gamma_4$ crosses every wall containing $\f{j}$ with interior in $\s{H}_{k,-}$, returning to the initial point of $\gamma_1$.
\end{itemize}

It is sufficient to check that 
\begin{align}\label{thetaEq}
    \theta_{\gamma,T_k(\f{D}_S)}=\theta_{\gamma,\f{D}_S},
\end{align} since we know the latter is the identity because $\f{D}_S$ is consistent. 
We have
\begin{align*}
    {\theta_{\gamma_1,T_k(\f{D}_S)}}&={\theta_{\gamma_1,\f{D}_{\mu_k(S)}}} \\
    {\theta_{\gamma_2,T_{k,+}(\f{D}_S)}}&=T_{k,+}(\theta_{\gamma_2,\f{D}_S})  \\
    \theta_{\gamma_3,T_k(\f{D}_S)}&=\theta_{\gamma_3,\f{D}_{\mu_k(S)}} \\
    \theta_{\gamma_4,T_k(\f{D}_S)}&=\theta_{\gamma_4,\f{D}_S}.
\end{align*}

Recall that the existence of a compatible $\Lambda$ implies that $\ker(B_{S,1})\cap M_{S,0}^+=\emptyset$, and similarly $\ker(B_{\mu_k(S),1})\cap M_{\mu_k(S),0}^+=\emptyset$.  Hence, $\hat{G}_S$ and $\hat{G}_{\mu_k(S)}$ have trivial center, thus are isomorphic to their Adjoint representation.  It follows that elements of $\hat{G}_S$ and $\hat{G}_{\mu_k(S)}$ are determined by their conjugation actions on $\kk_{t,\sigma}^{\Lambda}\llb M\rrb$ and $\kk_{t,\mu_k(\sigma)}^{\Lambda}\llb M\rrb$, respectively, and in fact, these actions are already determined by their restrictions to monomials $z^m$ with $m\in \s{H}_{k,-}$.

Suppose $f\in \kk_t^{\Lambda}[M]$.  Then $T_{k,+}^{-1}(f)$ is also in $\kk_t^{\Lambda}[M]\subset \kk_{t,\sigma}^{\Lambda}\llb M\rrb$, hence $\Ad_{\theta_{\gamma_2,\f{D}_S}}(T_{k,+}^{-1}(f))\in \kk_{t,\sigma}^{\Lambda}\llb M\rrb$.  We thus see that we can factor the conjugation homomorphism 
$$\left.{\Ad_{T_{k,+}(\theta_{\gamma_2,\f{D}_S})}}\right|_{\kk_t^{\Lambda}[M]}\colon \kk_t^{\Lambda}[M]\hookrightarrow \kk_{t,\mu_k(\sigma)}^{\Lambda}\llb M\rrb$$
into the composition $$\left.T_{k,+}\right|_{T_{k,+}^{-1}(\kk_{t,\mu_k(\sigma)}^{\Lambda}\llb M\rrb)}\circ {\theta_{\gamma_2,\f{D}_S}} \circ \left.T_{k,+}^{-1}\right|_{\kk_t^{\Lambda}[M]}.$$

Let $\theta_{\f{d}_k}\coloneqq \theta_{\gamma_1,\f{D}_S}=\Psi_t(z^{v_k})$, and let $\theta_{\f{d}'_k}\coloneqq  \theta_{\gamma_1,\f{D}_{\mu_k(S)}}=\Psi_t(z^{-v_k})^{-1}$.  Note that $\theta_{\gamma_3,\f{D}_S}=\theta_{\f{d}_k}^{-1}$, and similarly, $\theta_{\gamma_3,\f{D}_{\mu_k(S)}}=\theta_{\f{d}'_k}^{-1}$.  Since $T_{k,-}=\Id$, we have $\theta_{\gamma_4,\f{D}_{S}}=\theta_{\gamma_4,\f{D}_{\mu_k(S)}}$.  In summary, we have on $\kk_t^{\Lambda}[M]$ that
\begin{align*}
    \Ad_{\theta_{\gamma,\f{D}_{\mu_k(S)}}} =  \Ad_{\theta_{\gamma_4,\f{D}_{S}}}\circ \Ad_{\theta_{\f{d}_{k}'}^{-1}} \circ (T_{k,+}\circ \Ad_{\theta_{\gamma_2,\f{D}_S}} \circ \left.T_{k,+}^{-1}\right)\circ \Ad_{\theta_{\f{d}'_k}}.
\end{align*}

Hence, \eqref{thetaEq} will follow if we show that 
\begin{align*}
    T_{k,+}^{-1} \circ \Ad_{\theta_{\f{d}'_k}}(z^m) = \Ad_{\theta_{\f{d}_k}}(z^m)
\end{align*}
for all $m\in \s{H}_{k,-}$, i.e., $0<\langle -e_k,m\rangle = \Lambda(v_k,m)$, cf.  \eqref{Lambda}.   
Applying \eqref{AdPsi}, we have
\begin{align*}
    T_{k,+}^{-1} \circ \Ad_{\theta_{\f{d}'_k}}(z^m) &= T_{k,+}^{-1}\left(z^m \prod_{k=1}^{|\Lambda(-v_k,m)|} \left(1+t^{\sign(\Lambda(-v_k,m))(2k-1)}z^{-v_k}\right)\right) \\
    &= z^{m+\Lambda(v_k,m) v_k} \prod_{k=1}^{|\Lambda(v_k,m)|} \left(1+t^{-(2k-1)}z^{-v_k}\right) \\
     &= z^{m}t^{\Lambda(v_k,m)^2} \prod_{k=1}^{|\Lambda(v_k,m)|}  \left(z^{v_k}+t^{-(2k-1)}\right).  
\end{align*}
In the second equality, we applied $T_{k,+}^{-1}$ and used $-\langle e_k,m\rangle = \Lambda(v_k,m)$  again.   We obtained the third equality by factoring $$z^{m+\Lambda(v_k,m)v_k}=t^{\Lambda(v_k,m)^2} z^mz^{\Lambda(v_k,m)v_k}$$ and then distributing the last monomial.  Noting that $t^{\Lambda(v_k,m)^2}=\prod_{k=1}^{|\Lambda(v_k,m)|} t^{2k-1}$, we see that the last expression can be rewritten as
\begin{align}\label{thetadk}
     z^{m}\prod_{k=1}^{|\Lambda(v_k,m)|} \left(t^{2k-1}z^{v_k}+1\right).
\end{align}
Since $\sign(\Lambda(v_k,m))=1$, we see using \eqref{AdPsi} again that this is indeed equivalent to $\Ad_{\theta_{\f{d}_k}}(z^m)$, as desired.
\end{myproof}

\subsection{Mutation invariance of broken lines}\label{MutInvBL}

We continue to work with the map $T_k\colon M_{\bb{R}}\rar M_{\bb{R}}$ and the induced maps $T_{k,\pm}\colon \kk_t[M]\rar \kk_t[M]$ as in \S \ref{MutInvSect}.  We write $\vartheta_{p,\sQ}^S$ to indicate the theta function $\vartheta_{p,\sQ}$ associated to the consistent scattering diagram $\f{D}_S$ defined at the start of \S \ref{MutInvSect}, and similarly we write $\vartheta_{p,\sQ}^{\mu_k(S)}$ for the theta function defined with respect to $\f{D}_{\mu_k(S)}$.

\begin{prop}\label{MutInvBrokenLines}
For any $p\in M$ and generic $\sQ\in M_{\bb{R}}$, $T_k$ defines a bijection between broken lines with respect to $\f{D}_S$ with ends $(p,\sQ)$ and broken lines with respect to $\f{D}_{\mu_k(S)}$ with ends $(T_k(p),T_k(\sQ))$.  If $z^m$ is the final monomial of one such broken line for $\f{D}_S$, then the final monomial of the corresponding broken line for $\f{D}_{\mu_k(S)}$ is $z^{T_{k,+}(m)}$ if $\sQ\in \s{H}_{k,+}$ and $z^{T_{k,-}(m)}$ if $\sQ\in \s{H}_{k,-}$.  Hence,
\begin{align}\label{TkTheta}
    T_{k,\pm}(\vartheta_{p,\sQ}^S)=\vartheta_{T_k(p),T_{k}(\sQ)}^{\mu_k(S)},
\end{align} where the sign in the subscript of $T_{k,\pm}$ on the left-hand side is determined by $\sQ\in \s{H}_{k,\pm}$.
\end{prop}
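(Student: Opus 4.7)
The overall strategy is to show that applying $T_k$ to the underlying paths, together with applying $T_{k,\pm}$ to the attached monomials depending on the half-space of each segment, gives the desired bijection. The key structural input is Proposition~\ref{mut-inv}, which identifies the walls of $\f{D}_{\mu_k(S)}$ with the images under $T_k$ of the walls of $\f{D}_S$, together with the substitution of $\f{d}_k = (e_k^\perp, \Psi_t(z^{v_k}))$ by $\f{d}'_k = (e_k^\perp, \Psi_t(z^{-v_k}))$. The key algebraic input is the identity $T_{k,+} \circ \Ad_{\theta_{\f{d}_k}} = \Ad_{\theta_{\f{d}'_k}}$ on $\kk_t$-linear combinations of monomials $z^m$ with $m\in \s{H}_{k,-}$, which was established as the final step in the proof of Proposition~\ref{mut-inv}.

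Given a broken line $\gamma$ in $\f{D}_S$ with ends $(p,\sQ)$ and attached monomials $c_iz^{v_i}$, I define $T_k(\gamma)$ to have underlying path $T_k\circ\gamma$, and on each straight segment lying in $\s{H}_{k,\pm}$ to have attached monomial $T_{k,\pm}(c_iz^{v_i}) = c_iz^{T_{k,\pm}(v_i)}$. Since $T_{k,\pm}$ are linear, the directional constraint $\gamma'(\tau)=-v_i$ pushes forward to $(T_k\circ\gamma)'(\tau)=-T_{k,\pm}(v_i)$, and the initial condition transforms $z^p$ into $z^{T_k(p)}$. For a bend of $\gamma$ at a wall $\f{d}\neq \f{d}_k$, Proposition~\ref{mut-inv} identifies the corresponding wall of $\f{D}_{\mu_k(S)}$ as $(T_k(\f{d}\cap\s{H}_{k,\pm}),T_{k,\pm}(f_{\f{d}}))$; since $T_{k,\pm}$ is a $\kk_t$-algebra automorphism of $\kk_t^{\Lambda}[M]$, the requirement that the new monomial of $T_k(\gamma)$ be a term of $\Ad_{T_{k,\pm}(\theta_{\f{d}})}$ of the old follows by applying $T_{k,\pm}$ to the corresponding property for $\gamma$.

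The delicate step, and the main obstacle, is handling crossings of $e_k^\perp$. Suppose $\gamma$ crosses $e_k^\perp$ from $\s{H}_{k,-}$ into $\s{H}_{k,+}$ with monomial $cz^m$ arriving and monomial $c'z^{m+jv_k}$ leaving, where $c'z^{m+jv_k}$ is some term of $\Ad_{\theta_{\f{d}_k}}(cz^m)$ (allowing $j=0$ when $\gamma$ does not bend at $\f{d}_k$, in which case $c'=c$). By construction, the leaving monomial of $T_k(\gamma)$ is $T_{k,+}(c'z^{m+jv_k})$, so I must verify that this is a term of $\Ad_{\theta_{\f{d}'_k}}(cz^m)$. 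This is precisely the content of the identity $T_{k,+}\circ \Ad_{\theta_{\f{d}_k}} = \Ad_{\theta_{\f{d}'_k}}$ on $\kk_t[z^{L_0}]$-spanned monomials with $m\in \s{H}_{k,-}$. A pleasant feature of this identity is that it swaps the roles of the ``identity term'' on each side: the $cz^m$ term of $\Ad_{\theta_{\f{d}_k}}(cz^m)$, corresponding to $\gamma$ not bending at $\f{d}_k$, maps under $T_{k,+}$ to the top-order term $cz^{T_{k,+}(m)}$ of $\Ad_{\theta_{\f{d}'_k}}(cz^m)$, which corresponds to the maximal bend of $T_k(\gamma)$ at $\f{d}'_k$, and vice versa.

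Invertibility of the construction follows by the symmetric argument applied to the mutation $\mu_k(\mu_k(S))=S$ (noting that the map $T_k$ associated to $\mu_k(S)$ is $T_k^{-1}$, still piecewise linear with pieces $T_{k,\pm}^{-1}$), together with the inverse identity $T_{k,+}^{-1}\circ\Ad_{\theta_{\f{d}'_k}}=\Ad_{\theta_{\f{d}_k}}$. The statement on theta functions is an immediate consequence: summing the monomials attached to the final segment of each broken line with ends $(p,\sQ)$ gives $\vartheta^S_{p,\sQ}$, and the bijection pairs each such broken line with a broken line in $\f{D}_{\mu_k(S)}$ with ends $(T_k(p),T_k(\sQ))$ whose final monomial is obtained by applying $T_{k,\pm}$, where the sign is determined by the half-space containing $\sQ$. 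Summing yields \eqref{TkTheta}.
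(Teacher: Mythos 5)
Your proposal tracks the paper's proof closely: the same definition of $T_k(\gamma)$, the same intertwining argument for walls away from $e_k^\perp$ via Proposition~\ref{mut-inv}, and the same algebraic identity for crossings of $e_k^\perp$ (you cite it directly from the end of the proof of Proposition~\ref{mut-inv}; the paper re-derives it in context). There are, however, two real gaps.

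First, you handle only crossings of $e_k^\perp$ from $\s{H}_{k,-}$ into $\s{H}_{k,+}$. A broken line can also cross in the opposite direction, in which case the arriving exponent $m$ lies strictly in $\s{H}_{k,+}$, the $\f{D}_S$ wall-crossing element is $\Psi_t(z^{v_k})^{-1}$ (not $\Psi_t(z^{v_k})$), and the $\f{D}_{\mu_k(S)}$ one is $\Psi_t(z^{-v_k})$; moreover $T_{k,+}$ is now applied to the \emph{arriving} monomial rather than to the leaving one. The identity you would need is
\[
\Ad_{\Psi_t(z^{-v_k})}\bigl(T_{k,+}(z^m)\bigr)=\Ad_{\Psi_t(z^{v_k})^{-1}}(z^m), \qquad m\in \s{H}_{k,+},
\]
which is analogous to, but not a formal consequence of, the identity $T_{k,+}\circ \Ad_{\theta_{\f{d}_k}}=\Ad_{\theta_{\f{d}'_k}}$ that you cite (the latter is established only on $\s{H}_{k,-}$-monomials, and inverting it does not interchange the two domains). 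A separate, if similar, computation is needed, and the paper carries it out.

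Second, the invertibility argument is not correct as stated. In the convention \eqref{eprime} used in this paper, seed mutation is \emph{not} involutive: $\mu_k(\mu_k(e_i))=e_i+B(e_i,e_k)e_k$ for $i\neq k$, so $\mu_k(\mu_k(S))\neq S$. Correspondingly $T_k^{\mu_k(S)}\neq T_k^{-1}$: the former is nontrivial on $\s{H}_{k,-}$ and the identity on $\s{H}_{k,+}$, whereas $T_k^{-1}$ is the identity on $\s{H}_{k,-}$ and nontrivial on $\s{H}_{k,+}$ (in fact $T_k^{\mu_k(S)}=T_{k,+}\circ T_k^{-1}$). The bijectivity claim should instead be argued directly: $T_k$ is a piecewise-linear bijection of $M_\bb{R}$ with inverse $T_k^{-1}$ having pieces $T_{k,\pm}^{-1}$, so one simply reruns the forward argument with $T_k^{-1}$ in place of $T_k$ and the $e_k^\perp$-crossing identities read in the reverse direction.
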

We follow the proof of the analogous classical result, \cite[Prop. 3.6]{GHKK}.
\begin{proof}
Given a broken line $\gamma$ with ends $(p,\sQ)$ whose underlying map is $\gamma\colon (-\infty,0]\rar M_{\bb{R}}$, the underlying map for $T_k(\gamma)$ is of course taken to be $T_k\circ \gamma$.  Subdivide the domains of linearity of $\gamma$ so that each lies entirely in either $\s{H}_{k,+}$ or $\s{H}_{k,-}$.  For a domain of linearity $L\subset \s{H}_{k,\pm}$, $T_k$ takes the attached monomial $c_Lz^{m_L}$ to $T_{k,\pm}(c_Lz^{m_L})$.  We wish to check that this gives a broken line.

Let us assume that the representative of $\f{D}_S$ is chosen so that each scattering function is of the form $\Psi_t(t^{2a}z^v)$ or $\Psi_{t^{2^r}}(t^{2^r(2a+1)}z^{2^rv})$ for various $a\in \bb{Z}$, $r\in \bb{Z}_{\geq 0}$, and $v\in M_{S,0}^+$, as in Theorem \ref{PosScat}.  Let $\hat{G}_S^{\circ}$ denote the subgroup of $\hat{G}_S$ generated by the scattering functions of $\f{D}_S$, so Lemma \ref{Tkscat} ensures that $T_{k,\pm}$ take $\hat{G}_S^{\circ}$ into $\hat{G}_{\mu_k(S)}$.  For $(\f{d},f_{\f{d}})$ a wall of $\f{D}_S$, $p\in M$, and $\epsilon=\sign(\Lambda(v_{\f{d}},m))$, we see that
\begin{align}\label{TkAd}
    T_{k,\pm}(\Ad_{f_{\f{d}}^{\epsilon}
    }(z^p)) = \Ad_{T_{k,\pm}(f_{\f{d}})^{\epsilon}}(T_{k,\pm}(z^p)).
\end{align}
Indeed, by \eqref{tAdBinomial}, $\Ad_{f_{\f{d}}^{\epsilon}}(z^p)$ and $\Ad_{T_{k,\pm}(f_{\f{d}})^{\epsilon}}(z^p)$ are both contained in $\kk_t^{\Lambda}[M]$, so the equality follows from the fact that, since $T_{k,\pm}$ respects $\Lambda$, the maps $T_{k,\pm}\colon \hat{G}_S^{\circ}\rar \hat{G}_{\mu_k(S)}$ and $T_{k,\pm}\colon \kk_t^{\Lambda}[M]\rar \kk_t^{\Lambda}[M]$ are group and algebra homomorphisms which intertwine the actions of the groups on the algebras (cf. the proof of Lemma \ref{TjAdLem}).  As a result, to show that $T_k(\gamma)$ is a broken line for $\f{D}_{\mu_k(S)}$, it remains to check what happens when $\gamma$ crosses $e_k^{\perp}$.

Let $L_1,L_2$ be consecutive refined domains of linearity of $\gamma$ which lie on opposite sides of $e_k^{\perp}$, and let $c_iz^{m_i}$ denote the monomial attached to $L_i$.  For $L_1\subset \s{H}_{k,-}$, we have $\Lambda(v_k,m_1)>0$, and so $c_2z^{m_2}$ is any term in
\begin{align}\label{c2m2}
    \Ad_{\Psi_t(z^{v_k})} (c_1z^{m_1}) = c_1z^{m_1} \prod_{n=1}^{|\Lambda(v_k,m_1)|}  \left(1+t^{\sign(\Lambda(v_k,m_1))(2n-1)}z^{v_k}\right)
\end{align}
where the equality is a special case of \eqref{AdPsi}, with $\epsilon=1$.  Applying $T_{k,+}$, we have that $T_{k,+}(c_2z^{m_2})$ is a term in
\begin{align*}
    c_1z^{m_1-\Lambda(v_k,m_1)v_k} \prod_{n=1}^{|\Lambda(v_k,m_1)|}  \left(1+t^{\sign(\Lambda(v_k,m_1))(2n-1)}z^{v_k}\right).
\end{align*}
Applying the same manipulations which lead up to \eqref{thetadk}, we find that this is equal to
\begin{align*}
    c_1z^{m_1} \prod_{n=1}^{|\Lambda(-v_k,m_1)|}  \left(t^{\sign(\Lambda(-v_k,m_1))(2n-1)}z^{-v_k}+1\right)=\Ad_{\Psi_t(z^{-v_k})}(c_1z^{m_1}).
\end{align*}
Since the right-hand side is the corresponding wall-crossing for $\f{D}_{\mu_k(S)}=T_k(\f{D}_S)$, it follows that $T_k(\gamma)$ does in fact satisfy the correct bending rule.

Similarly, if we instead had $L_1\subset \s{H}_{k,+}$, then $T_{k,-}(c_2z^{m_2})=c_2z^{m_2}$ could be any term in the right-hand side of \eqref{c2m2} (although the sign of $\Lambda(v_k,m_1)$ is now $-1$).  On the other hand, if we apply $T_k$ before the wall-crossing, we would have to apply $T_{k,+}$ to $c_1z^{m_1}$, yielding $c_1z^{m_1-\Lambda(v_k,m_1)v_k}$.  One then computes that
\begin{align*}
\Ad_{\Psi_t(z^{-v_k})}(c_1z^{m_1-\Lambda(v_k,m_1)v_k}) =c_1z^{m_1-\Lambda(v_k,m_1)v_k} \prod_{k=1}^{|\Lambda(-v_k,m)|}  \left(1+t^{\sign(\Lambda(-v_k,m))(2k-1)}z^{-v_k}\right),
    \end{align*}
and applying similar manipulations as before, noting that now $\Lambda(v_k,m)<0$ so  $z^{-\Lambda(v_k,m_1)v_k}=z^{|\Lambda(v_k,m_1)|v_k}$, we find that this indeed equals the right-hand side of \eqref{c2m2}, as desired.

We have thus shown that $T_k$ takes broken lines to broken lines.  That this is a bijection follows from noting that $T_k^{-1}\colon M_{\bb{R}}\rar M_{\bb{R}}$ induces the inverse map on broken lines.
\end{proof}

\subsection{Mutation invariance of theta functions}\label{MutInvTheta}

Recall the linear maps $\psi_{\jj,\sQ}$ (depending on generic $\sQ\in M_{\bb{R}}$) and $\psi_{\jj}\coloneqq \psi_{\jj,\sQ_{\jj}}$ as defined in \eqref{psijQ} and \eqref{psij}, respectively.  From now on, we will denote these by $\psi_{\jj,\sQ}^{\s{A}}$ and $\psi_{\jj}^{\s{A}}$.

Applying this to $S^{\prin}$, we obtain $\psi^{\s{A}^{\prin}}_{\jj,\sQ}$ and $\psi_{\jj}^{\s{A}^{\prin}}$ mapping $M^{\prin}_{\bb{R}}\rar M^{\prin}_{\bb{R}}$ (in \S \ref{AtlasX}, $\psi_{\jj}^{\s{A}^{\prin}}$ was denoted $\psi_{\jj}^{\prin}$).  By restriction to $N'_{\bb{R}}=\xi(N_{\bb{R}})\subset M^{\prin}_{\bb{R}}$, we obtain linear automorphisms $\psi^{\s{X}}_{\jj,\sQ}$ and $\psi_{\jj}^{\s{X}}\colon N_{\bb{R}}\rar N_{\bb{R}}$ which induce maps on $\Pi(N)$ and $\kt$-algebra automorphisms of $\kk_t^{B}[N]$.

More directly, let \begin{align*}
    T_{j}^{S,\s{X}}(n)\coloneqq \begin{cases}
n+B(n,e_j) e_j & \mbox{if $B(n,e_j) \geq 0$},\\
n & \mbox{otherwise},
\end{cases}
\end{align*}
and then let
\begin{align}\label{TjX}
    T_{\jj}^{\s{X}}=T^{S_{\jj_s},\s{X}}_{j_s}\circ \cdots \circ T^{S_{\jj_1},\s{X}}_{j_1}\colon N_{\bb{R}}\rar N_{\bb{R}}.
\end{align}
Then $\psi^{\s{X}}_{\jj,\sQ}$ is obtained by restricting $T_{\jj}^{\s{X}}$ to a neighborhood of $\s{Q}$ and extending linearly.  We note that the definition as the restriction of $\psi^{\s{A}^{\prin}}_{\jj,\sQ}$ has the advantage of easily applying to generic $\sQ\in M_{\bb{R}}^{\prin}$, not just generic $\sQ\in N_{\bb{R}}$.

\begin{cor}\label{TjCor}
Let $\s{V}$ denote $\s{A}$ or $\s{X}$.  Let $S$ be a seed, and if $\s{V}=\s{A}$, pick a compatible $\Lambda$.  Then for any tuple $\jj\in (I\setminus F)^s$ and any generic $\sQ\in L_{\s{V},\bb{R}}$, $\psi_{\jj,\sQ}^{\s{V}}$ induces a $\kt$-algebra isomorphism
\begin{align*}
    \psi_{\jj,\sQ}^{\s{V}}\colon \iota_{\sQ}(\s{V}_q^{\can,S})\risom \iota_{T_{\jj}^{\s{V}}(\sQ)}(\s{V}_q^{\can,S_{\jj}})
\end{align*}
with $\psi_{\jj,\sQ}^{\s{V}}(\vartheta^S_{p,\sQ})=\vartheta^{S_{\jj}}_{T_{\jj}^{\s{V}}(p),T_{\jj}^{\s{V}}(\sQ)}$.  In particular, $\psi_{\jj}^{\s{V}}(\vartheta_p)\coloneqq \vartheta_{T_{\jj}^{\s{V}}(p)}$ defines a $\kt$-algebra isomorphism $\psi_{\jj}^{\s{V}}\colon \s{V}_q^{\can,S}\risom \s{V}_q^{\can,S_{\jj}}$.  
Similarly if we replace each instance of $\can$ with $\midd$.
\end{cor}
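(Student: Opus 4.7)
The plan is to induct on the length $s$ of the mutation sequence $\jj = (j_1,\ldots,j_s)$, with the base case $s=1$ essentially packaged by Proposition \ref{MutInvBrokenLines}. First I would treat the $\s{A}$ case. For $\jj = (j)$, the bijection of broken lines from Proposition \ref{MutInvBrokenLines} yields the identity $T_{j,\pm}(\vartheta^S_{p,\sQ}) = \vartheta^{\mu_j(S)}_{T_j(p), T_j(\sQ)}$ for $\sQ \in \s{H}_{j,\pm}$, and since $T_{j,\pm}$ agrees with the linearization $\psi_{j,\sQ}^{\s{A}}$ on a neighborhood of $\sQ$, this becomes $\psi_{j,\sQ}^{\s{A}}(\vartheta^S_{p,\sQ}) = \vartheta^{\mu_j(S)}_{T_j(p), T_j(\sQ)}$. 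Because $\psi_{j,\sQ}^{\s{A}}$ preserves $\Lambda$ (as observed after \eqref{psijQ}), it induces a $\kt$-algebra isomorphism of the completed quantum tori, and by the topological basis property from Proposition \ref{TopBasis} this restricts to an isomorphism on the canonical subalgebras as claimed.

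For the inductive step, write $\jj = \jj' \cdot (j_s)$ with $\jj'$ of length $s-1$ and decompose $\psi_{\jj,\sQ}^{\s{A}} = \psi_{j_s, T_{\jj'}(\sQ)}^{\s{A}} \circ \psi_{\jj',\sQ}^{\s{A}}$; after a mild perturbation ensuring that $T_{\jj_k}(\sQ)$ is generic for every intermediate $k$ (the non-generic locus being measure zero in each scattering diagram $\f{D}_{S_{\jj_k}}$), the inductive hypothesis applied to $\jj'$ composed with the base case applied to $(S_{\jj'}, j_s, T_{\jj'}(\sQ))$ gives the claim for $\jj$. For the $\s{X}$ case I would then use Lemmas \ref{xirho} and \ref{prin2X}: the embedding $\xi$ intertwines mutations and sends $\s{X}$-theta functions to the $M$-degree zero $\s{A}^{\prin}$-theta functions, while the piecewise-linear map $T_{\jj}^{\s{A}^{\prin}}$ preserves $\xi(N_{\bb{R}})$ (since the walls of $\f{D}^{\s{A}_q^{\prin}}$ have directions in $N'=\xi(N)$) and restricts there to $T_{\jj}^{\s{X}}$, as one checks directly by comparing \eqref{Tj} with \eqref{TjX}. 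The $\s{X}$ statement therefore drops out of the $\s{A}^{\prin}$ statement by restriction along $\xi$.

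Finally, for the $\midd$ case, the key observation is that $p \in \Theta^{\midd}_{\s{V},S}$ is equivalent to $\vartheta^S_{p,\sQ}$ receiving contributions from only finitely many broken lines. Since the iterated broken-line bijection of Proposition \ref{MutInvBrokenLines} is a genuine bijection (not merely a surjection), $p \in \Theta^{\midd}_{\s{V},S}$ if and only if $T_{\jj}^{\s{V}}(p) \in \Theta^{\midd}_{\s{V},S_{\jj}}$, so the canonical isomorphism restricts to a middle isomorphism. The hard part is purely bookkeeping: managing iterated genericity in the inductive step, and checking explicitly that $T_{\jj}^{\s{X}}$ really is the restriction of $T_{\jj}^{\s{A}^{\prin}}$ to $\xi(N_{\bb{R}})$ so that all broken lines counted upstairs match those counted downstairs. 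Neither is deep given Proposition \ref{MutInvBrokenLines}, but both must be handled carefully to ensure the bases on both sides correspond under $\psi_{\jj,\sQ}^{\s{V}}$.
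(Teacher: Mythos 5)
Your reduction to Proposition \ref{MutInvBrokenLines}, your treatment of the $\s{X}$ case via $\xi$ and $\s{A}^{\prin}$, and your handling of the $\midd$ case via finiteness of broken lines all match the paper's approach. The gap is in the step where you say that ``Because $\psi_{j,\sQ}^{\s{A}}$ preserves $\Lambda$\ldots it induces a $\kt$-algebra isomorphism of the completed quantum tori, and by the topological basis property from Proposition \ref{TopBasis} this restricts to an isomorphism on the canonical subalgebras.'' The two ambient completions are $\wh{\s{A}}_q^S=\kk_{\sigma_{S,\s{A}},t}^{\Lambda}\llb M\rrb$ and $\wh{\s{A}}_q^{S_j}=\kk_{\sigma_{S_j,\s{A}},t}^{\Lambda}\llb M\rrb$, taken with respect to genuinely different cones. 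The piecewise-linear map $T_j$ carries $\sigma_{S,\s{A}}$ to $\sigma_{S_j,\s{A}}$, but the linear piece $\psi_{j,\sQ}^{\s{A}}\in\{T_{j,+},T_{j,-}\}$ does not: $\sigma_{S,\s{A}}$ is not contained in a single half-space $\s{H}_{j,\pm}$, so $\psi_{j,\sQ}^{\s{A}}$ sends $\sigma_{S,\s{A}}$ to some cone that is neither $\sigma_{S,\s{A}}$ nor $\sigma_{S_j,\s{A}}$. Hence $\psi_{j,\sQ}^{\s{A}}$ does not give a well-defined continuous map $\wh{\s{A}}_q^S\to\wh{\s{A}}_q^{S_j}$, and one cannot simply cite Proposition \ref{TopBasis}. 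The paper explicitly flags exactly this: a bijection of pointed bases respecting structure constants does not, on its own, produce an isomorphism of topological algebras whose topologies come from different cones.

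The paper's resolution --- and the content your proposal is missing --- is to introduce the (convex but not strongly convex) cone $\sigma_{S,S_j}=\bb{R}_{\geq 0}\langle B_1(e_i)\colon i\in I\setminus(F\cup\{j\})\rangle+\bb{R}B_1(e_j)$, which contains both $\sigma_{S,\s{A}}$ and $\sigma_{S_j,\s{A}}$, and to embed both $\iota_{\sQ}(\s{A}_q^{\can,S})$ and $\iota_{T_j(\sQ)}(\s{A}_q^{\can,S_j})$ into the common completion $\kk_{\sigma_{S,S_j},t}^{\Lambda}\llb M\rrb$. That both embed relies on a separate observation about broken lines: between two crossings of $e_j^{\perp}$ a broken line must pick up a kink from a wall $\f{d}$ with $v_{\f{d}}\in M^{+_{S,S_j}}$, so no completion in the $\pm B_1(e_j)$ direction is actually needed. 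Since $T_j$ preserves $M^{+_{S,S_j}}$, it acts as a homeomorphism of $\kk_{\sigma_{S,S_j},t}^{\Lambda}\llb M\rrb$ as a topological $\kk_t$-module, and this transports the topology from one canonical subalgebra to the other, justifying that the formal bijection of bases is in fact an isomorphism. Without some such device, your argument proves only the formal bijection, not the isomorphism of $\kt$-algebras.
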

\begin{proof}
Take $\s{V}=\s{A}$.  The claim that $\psi_{\jj,\sQ}^{\s{A}}(\vartheta_{m,\sQ})$ and $\vartheta_{T_{\jj}^{\s{A}}(m),T_{\jj}^{\s{A}}(\sQ)}$ are equal in $\prod_{n\in N} \kk_t$ follows from applying \eqref{TkTheta} inductively. The fact that the isomorphism respects the structure constants follows from the mutation invariance of broken lines in Proposition \ref{MutInvBrokenLines} because, by Proposition \ref{StructureConstants}, the broken lines directly determine the structure constants.

However, the topological structures of $\s{A}_q^{\can,S}$ and $\s{A}_q^{\can,S_{\jj}}$ are different, so a bijection between the formal bases respecting the structure constants is not sufficient to conclude that this gives an isomorphism.  To prove this, it suffices to consider the case $\jj=(j)$ since the isomorphisms can then be applied inductively.  We have \begin{align*}
    \sigma_{S,\s{A}}=&\bb{R}_{\geq 0} \langle B_1(e_i)\colon i\in I\setminus F\rangle\subset M_{\bb{R}}\\
    \sigma_{S_{j},\s{A}}=&\bb{R}_{\geq 0}\langle B_1(\mu_j(e_i))\colon i\in I\setminus F\rangle\subset M_{\bb{R}}.
\end{align*}
Note that both cones are contained in the convex (but not strongly convex) cone $$\sigma_{S,S_j}\coloneqq \bb{R}_{\geq 0}\langle B_1(e_i)\colon i\in I\setminus (F\cup \{j\})\rangle+\bb{R}B_1(e_j).$$

We can define $\kk_{\sigma_{S,S_j},t}^{\Lambda}\llb M\rrb\coloneqq \kk_t^{\Lambda}[M]\otimes_{\kk_t^{\Lambda}[M^{\oplus_{S,S_{j}}}]}\kk_t^{\Lambda}\llb M^{\oplus_{_{S,S_{j}}}}\rrb$, where $M^{\oplus_{S,S_j}}\coloneqq M\cap \sigma_{S,S_j}$.  Here, $\kk_t^{\Lambda}\llb M^{\oplus_{_{S,S_{j}}}}\rrb$ is the completion of $\kk_t^{\Lambda}[M^{\oplus_{S,S_j}}]$ with respect to the maximal monomial ideal generated by $z^m$ with $m\in M^{+_{S,S_j}}\coloneqq M^{\oplus_{S,S_j}}\setminus M^{\times_{S,S_j}}$, where $M^{\times_{S,S_j}}$ is the monoid of invertible elements of $M^{\oplus_{S,S_j}}$. 

We see that $\s{A}_q^{\can,S}$ and $\s{A}_q^{\can,S_j}$ are both naturally contained in $\kk_{\sigma_{S,S_j},t}^{\Lambda}\llb M\rrb$ --- between any two instances of a broken line $\gamma$ crossing $e_j^{\perp}$, there must be an instance of $\gamma$ crossing a wall $\f{d}$ with $v_{\f{d}}\in M^{+_{S,S_j}}$, so the completion in the $\pm e_j$ direction is not necessary.  Furthermore, since $T_j$ preserves $M^{+_{S,S_j}}$, it acts via homeomorphism and automorphism on $\kk_{\sigma_{S,S_j},t}^{\Lambda}\llb M\rrb$ viewed as a topological $\kk_t$-module.  Since this action takes $\iota_{\sQ_j}(\s{A}_q^{\can,S})$ to $\iota_{T_j(\sQ_j)}(\s{A}_q^{\can,S_j})$ (for $\sQ_j\in \s{C}_j$), it identifies $\s{A}_q^{\can,S}$ and $\s{A}_q^{\can,S_j}$ as $\kk_t$-modules which now have the same topology.   The claim follows. 

For the $\s{X}$-version, we first consider the compatible pair $(S^{\prin},\Lambda^{\prin})$ for $\Lambda^{\prin}$ as in \eqref{LambdaPrin} and recall the map $\xi$ of Lemma \ref{xirho}.  By construction, $\psi_{\jj,\sQ}^{\s{A}^{\prin}}\circ \xi = \xi \circ \psi_{\jj,\sQ}^{\s{X}}$.  The $\s{X}$-version of the result thus follows from the $\s{A}^{\prin}$-version, and this is a special case of the $\s{A}$-version above.

For the final statement, we just have to check that $\Theta_{\s{V}}^{\midd}$ is preserved by $T_{\jj}^{\s{V}}$.  This again follows from Proposition \ref{MutInvBrokenLines} since $\Theta_{\s{V}}^{\midd}$ is defined in terms of the finiteness of certain sets of broken lines.
\end{proof}

\subsubsection{The formal quantum upper cluster algebra}\label{FormalUp}
Let $\s{V}$ denote $\s{A}$ or $\s{X}$.  For $\sQ\in C_{S_{\jj}}^+\subset \s{C}$ (in $M_{\bb{R}}$ for $\s{V}=\s{A}$ or $M_{\bb{R}}^{\prin}$ for $\s{V}=\s{X}$), denote $\iota_{\s{Q}}$ by $\iota_{\jj}$.  Recall that $\psi_{\jj}^{\s{V}}\circ \iota_{\jj}$ agrees with $\mu^{\s{V}}_{\jj}$ on $\s{V}_q^{\up}$.  Motivated by this, we say that an element $f\in \wh{\s{V}}_q^S$ satisfies the \textbf{formal Laurent phenomenon} if $\psi_{\jj}^{\s{V}}\circ \iota_{\jj}(f)\in \wh{\s{V}}_q^{S_{\jj}}$ for each $\jj$.  The algebra of such $f\in \wh{\s{V}}_q^S$ is what we call the \textbf{formal quantum upper cluster algebra}, denoted $\wh{\s{V}}_q^{\up}$.  Note that the cluster atlas $\{\iota_{\jj}\}_{\jj}$ can naturally be viewed as an atlas on $\wh{\s{V}}_q^{\up}$.

 Since $(\psi_{\jj}^{\s{V}})^{-1}(\s{V}_q^{S_{\jj}})=\s{V}_q^{S_{\jj}}$, we can view $\s{V}_q^{\up}$ as 
\begin{align*}
    \s{V}_q^{\up}=\bigcap_{\jj} \iota_{\jj}^{-1}(\s{V}_q^{S_{\jj}})\subset \wh{\s{V}}_q^S.
\end{align*}
Thus,
\begin{align*}
    \s{V}_q^{\up}\subset \wh{\s{V}}_q^{\up}.
\end{align*}

Let $p\in L$ be arbitrary.  For $\sQ_0\in C_S^+$, we have $\vartheta^S_{p,\sQ_0}\in \wh{\s{V}}_q^S$ by construction.  If $\sQ_{\jj}$ is instead in some other $C_{S_{\jj}}^+$, then it follows from Corollary \ref{TjCor} that $\psi_{\jj}^{\s{V}}(\vartheta^S_{p,\sQ_{\jj}})$ is in $\wh{\s{V}}_q^{S_{\jj}}$.  Since $\vartheta^S_{p,\sQ_{\jj}} = \iota_{\jj}(\vartheta^S_{p,\sQ_{0}})$, it follows that $\vartheta_{p,\sQ_0} \in \wh{\s{V}}_q^{\up}$.  I.e., while the Laurent phenomenon holds for all cluster monomials, this formal version of the Laurent phenomenon holds for all theta functions.
\begin{cor}[The formal quantum Laurent phenomenon]\label{FormalLaurent}
$\s{V}_q^{\can} \subset \wh{\s{V}}_q^{\up}$.
\end{cor}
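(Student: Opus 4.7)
The argument is essentially sketched in the paragraph immediately preceding the corollary, so my proposal is to make that argument rigorous and then leverage the algebra structure of $\wh{\s{V}}_q^{\up}$. The plan is to verify the formal Laurent phenomenon for each individual theta function, to check that $\wh{\s{V}}_q^{\up}$ is closed under multiplication, and finally to observe that $\s{V}_q^{\can}$ is by definition generated by theta functions.

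For the first step, I would fix $p \in L_{\s{V}}$ and a generic basepoint $\sQ_0 \in C_{S}^+ \subset \s{C}$, so that $\vartheta_p \coloneqq \vartheta^S_{p,\sQ_0}$ lies in $\wh{\s{V}}_q^S$ by construction (Proposition \ref{TopBasis}). Given any mutation sequence $\jj$, pick a generic point $\sQ_{\jj}$ in the chamber $\s{C}_{S_{\jj}}$. By Theorem \ref{CPS} and the definition of $\iota_{\jj}$ as a path-ordered product from $\sQ_0$ to $\sQ_{\jj}$, we have $\iota_{\jj}(\vartheta^S_{p,\sQ_0}) = \vartheta^S_{p,\sQ_{\jj}}$. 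Applying Corollary \ref{TjCor} (in the form of the mutation-invariance identity $\psi^{\s{V}}_{\jj,\sQ_{\jj}}(\vartheta^S_{p,\sQ_{\jj}}) = \vartheta^{S_{\jj}}_{T_{\jj}^{\s{V}}(p),\,T_{\jj}^{\s{V}}(\sQ_{\jj})}$), we see that $\psi_{\jj}^{\s{V}} \circ \iota_{\jj}(\vartheta_p)$ is a theta function for the mutated seed $S_{\jj}$ based at the point $T_{\jj}^{\s{V}}(\sQ_{\jj}) \in C_{S_{\jj}}^+$, hence lies in $\wh{\s{V}}_q^{S_{\jj}}$. This is exactly the formal Laurent condition, so $\vartheta_p \in \wh{\s{V}}_q^{\up}$.

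Next, I would check that $\wh{\s{V}}_q^{\up}$ is a $\Zt$-subalgebra of $\wh{\s{V}}_q^S$. This is routine: the map $\iota_{\jj} = \Ad_{\f{p}}$ is a $\Zt$-algebra automorphism of $\wh{\s{V}}_q^S$ by Lemma \ref{AdHomeo}, $\psi_{\jj}^{\s{V}}$ is a $\Zt$-algebra automorphism of the formal quantum torus, and $\wh{\s{V}}_q^{S_{\jj}}$ is itself a $\Zt$-algebra. Hence the preimage under the homomorphism $\psi_{\jj}^{\s{V}} \circ \iota_{\jj}$ of $\wh{\s{V}}_q^{S_{\jj}} \subset \prod_{v} \kk(t^{1/D})$ is closed under sums and products; intersecting over all $\jj$ preserves this, so $\wh{\s{V}}_q^{\up}$ is a subalgebra.

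Combining these two steps yields the corollary: since $\s{V}_q^{\can}$ is by definition the $\Zt$-subalgebra of $\wh{\s{V}}_q^S$ generated by the theta functions $\{\vartheta_p\}_{p \in L_{\s{V}}}$, and each generator lies in the subalgebra $\wh{\s{V}}_q^{\up}$, we conclude $\s{V}_q^{\can} \subset \wh{\s{V}}_q^{\up}$. There is no serious obstacle in this proof; the only subtlety worth double-checking is that the basepoint-change identity $\iota_{\jj}(\vartheta^S_{p,\sQ_0}) = \vartheta^S_{p,\sQ_{\jj}}$ really produces an element of the domain on which $\psi_{\jj}^{\s{V}}$ acts compatibly, but this is already encoded in the statement of Corollary \ref{TjCor} together with Theorem \ref{CPS}.
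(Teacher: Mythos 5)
Your proposal is correct and follows the same route as the paper, which sketches exactly this argument in the paragraph immediately preceding the corollary: each $\vartheta_p$ satisfies the formal Laurent phenomenon via Theorem~\ref{CPS} and Corollary~\ref{TjCor}, and $\s{V}_q^{\can}$ is generated by the $\vartheta_p$. Your additional explicit check that $\wh{\s{V}}_q^{\up}$ is a subalgebra is a detail the paper leaves implicit in the definition, and is a sensible thing to spell out.
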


\bibliographystyle{alpha}  
\bibliography{biblio}        
\index{Bibliography@\emph{Bibliography}}%

\end{document}